\setlist[itemize]{leftmargin=2em}
\setlist[enumerate]{leftmargin=2em}
\newtheorem{thm}{Theorem}[subsection]
\newtheorem{cor}[thm]{Corollary}
\newtheorem{lem}[thm]{Lemma}
\newtheorem{prop}[thm]{Proposition}
\newtheorem{ques}[thm]{Question}
\theoremstyle{definition}
\newtheorem{defn}[thm]{Definition}
\newtheorem{eg}[thm]{Example}
\theoremstyle{remark}
\newtheorem{rem}[thm]{Remark}
\numberwithin{equation}{section}
\newcommand{\ep}{\epsilon}
\DeclareMathOperator{\Hom}{\mathrm{Hom}}
\DeclareMathOperator{\End}{\mathrm{End}}
\DeclareMathOperator{\Gal}{{Gal}}
\DeclareMathOperator{\Ext}{Ext}
\DeclareMathOperator{\Tor}{Tor}
\newcommand{\ord}{\mathrm{ord}}
\newcommand{\spl}{\mathrm{spl}}
\DeclareMathOperator{\Ind}{Ind}
\DeclareMathOperator{\Frob}{Fr}
\DeclareMathOperator{\Sym}{Sym}
\DeclareMathOperator{\coker}{coker}
\newcommand{\GL}{\mathrm{GL}}
\DeclareMathOperator{\Spec}{Spec}
\DeclareMathOperator{\Spf}{Spf}
\DeclareMathOperator{\ad}{\mathrm{ad}}
\newcommand{\cA}{{\mathcal A}}
\newcommand{\cC}{{\mathcal C}}
\newcommand{\cD}{{\mathcal D}}
\newcommand{\cE}{{\mathcal E}}
\newcommand{\cF}{{\mathcal F}}
\newcommand{\cG}{{\mathcal G}}
\newcommand{\cH}{{\mathcal H}}
\newcommand{\cI}{{\mathcal I}}
\newcommand{\cO}{{\mathcal O}}
\newcommand{\cW}{{\mathcal W}}
\newcommand{\cX}{{\mathcal X}}
\newcommand{\cY}{{\mathcal Y}}
\newcommand{\frp}{{\mathfrak p}}
\newcommand{\F}{{\mathbb F}}
\newcommand{\bH}{{\mathbb H}}
\newcommand{\bT}{{\mathbb T}}
\newcommand{\frX}{{\mathfrak X}}
\newcommand{\Q}{{\mathbb Q}}
\newcommand{\Z}{{\mathbb Z}}
\newcommand{\C}{{\mathbb C}}
\newcommand{\ra}{\rightarrow}
\newcommand{\lra}{\longrightarrow}
\newcommand{\lrisom}{\buildrel\sim\over\lra}
\newcommand{\risom}{\buildrel\sim\over\ra}
\newcommand{\isoto}{\buildrel\sim\over\ra}
\newcommand{\rinj}{\hookrightarrow}
\newcommand{\rsurj}{\twoheadrightarrow}
\newcommand{\lsurj}{\twoheadleftarrow}
\newcommand{\sm}[4]{\ensuremath{\big(\begin{smallmatrix}#1 & #2 \\ #3 & #4\end{smallmatrix}\big)}}
\newcommand{\dR}{\mathrm{dR}}
\newcommand{\Ad}{\mathrm{Ad}}
\newcommand{\et}{\text{\'{e}t}}
\newcommand{\lb}{{[\![}}
\newcommand{\rb}{{]\!]}}
\newcommand{\CM}{\mathrm{CM}}
\newcommand{\dia}{{\langle-\rangle}}
\newcommand{\lr}[1]{{\langle{#1}\rangle}}
\newcommand{\m}{\mathfrak{m}}
\newcommand{\oQ}{\overline{\Q}}
\newcommand{\ttmat}[4]{\left( \begin{array}{cc}
#1 & #2 \\
#3 & #4
\end{array}
\right)}
\newcommand{\BO}{\mathrm{BO}}
\newcommand{\SBO}{\mathrm{SBO}}
\newcommand{\crit}{\mathrm{crit}}
\newcommand{\Fil}{\mathrm{Fil}}
\newcommand{\tord}{\mathrm{to}}
\newcommand{\aord}{\mathrm{ao}}
\newcommand{\ps}{\mathrm{ps}}
\newcommand{\KS}{\mathrm{KS}}
\newcommand{\cris}{\mathrm{cris}}
\newcommand{\Tr}{\mathrm{Tr}}
\newcommand{\Han}{{H^1_\dR(X_{\Q_p}^\mathrm{an},\cF_k)}}
\newcommand{\Hanpar}{{H^1_\mathrm{par}(X_{\Q_p}^\mathrm{an},\cF_k)}}
\let\c@equation\c@thm
\numberwithin{equation}{subsection}
\title{Critical $\Lambda$-adic modular forms and bi-ordinary complexes}
\author{Francesc Castella}
\address{Department of Mathematics, University of California, Santa Barbara, CA 93106, USA}
\email{castella@ucsb.edu}
\author{Carl Wang-Erickson}
\address{Department of Mathematics, University of Pittsburgh, Pittsburgh, PA 15260, USA}
\email{carl.wang-erickson@pitt.edu}
\begin{document}


\dedicatory{In memory of Jo\"el Bella\"iche}%

\maketitle

\begin{abstract}

We produce a flat $\Lambda$-module of $\Lambda$-adic critical slope overconvergent modular forms, producing a Hida-type theory that interpolates such forms over $p$-adically varying integer weights. This provides a Hida-theoretic explanation for an observation of Coleman that the rank of such forms is locally constant in the weight. The key to the interpolation is to use Coleman's presentation of de Rham cohomology in terms of overconvergent forms to link critical slope overconvergent modular forms with the part of the first coherent cohomology of modular curves interpolated by Boxer--Pilloni's higher Hida theory. The novelty is that we interpolate a critical period in cohomology using modular forms, complementing the classical Hida-theoretic interpolation of an ordinary period. Using this interpolation, we also interpolate \emph{bi-ordinary complexes} in various weights into a perfect and self-dual complex of length 1 over $\Lambda$. By design, the cohomology of the bi-ordinary complex supports 2-dimensional $p$-adic representations of $\Gal(\oQ/\Q)$ that become reducible and decomposable upon restriction to a decomposition group at $p$. As applications and motivations for the above constructions, we prove ``$R=\bT$'' theorems for the critical and bi-ordinary Hecke algebras, produce a degree-shifting Hecke action on the cohomology of bi-ordinary complexes, and specialize this degree-shifting action to weight 1 to produce, under a supplemental assumption, an action of a Stark unit on the part of weight 1 coherent cohomology over $\Z_p$ that is isotypic for an ordinary eigenform with complex multiplication. 
\end{abstract}

\tableofcontents


\section{Introduction}

The first goal of this paper is to produce a $\Lambda$-adic theory of critical slope overconvergent modular forms, in analogy with Hida's theory of $\Lambda$-adic ordinary modular forms \cite{hida1986a}. The second goal, which was our original motivation coming from a question we asked in previous work \cite[\S1.6]{CWE1}, is to produce a length 1 perfect complex of $\Lambda$-adic forms whose cohomology supports Hecke eigensystems whose associated 2-dimensional Galois representations are reducible and decomposable upon restriction to a decomposition group at $p$. We call this a \emph{bi-ordinary complex}. 

\subsection{Main results about critical $\Lambda$-adic modular forms}

Given a prime number $p \geq 5$ and a tame level $N \in \Z_{\geq 5}$ such that $p \nmid N$, we produce a finite-rank $\Lambda$-module with a continuous Hecke action, admitting a control theorem to a $\Z_p$-lattice of critical slope forms $M_{k,\Z_p}^{\dagger,\crit}$ in the overconvergent modular forms $M_{k,\Q_p}^\dagger$ of weight $k \geq 2$ defined by Katz \cite{katz1973}.  We also provide another version interpolating cuspidal forms $S_{k,\Q_p}^{\dagger,\crit}$. Let $\Lambda \cong \Z_p\lb \Z_p^\times \rb$ denote the Iwasawa algebra that is the coordinate ring of weight space, with integer weights $k$ corresponding to $\phi_k : \Lambda \to \Z_p, \Z_p^\times \ni [z] \mapsto z^{k-1}$. 

\begin{thm}[{Theorem \ref{thm: main construction}}]
\label{thm: main intro}
There exist finitely generated flat $\Lambda$-modules $M_\Lambda^\crit, S_\Lambda^\crit$ along with control isomorphisms for $k \in \Z_{\geq 3}$, 
\[
M_\Lambda^\crit \otimes_{\Lambda, \phi_k} \Z_p \isoto M_{k,\Z_p}^{\dagger,\crit}, \qquad 
S_\Lambda^\crit \otimes_{\Lambda, \phi_k} \Z_p \isoto S_{k,\Z_p}^{\dagger,\crit}. 
\]
There are continuous actions of Hecke operators of level $N$ and a ``$p$-adic dual $U$-operator'' $U'$ for which these isomorphisms are equivariant. 
\end{thm}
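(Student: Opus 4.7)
The plan is to realize the critical-slope overconvergent forms inside the first coherent cohomology of the modular curve, interpolate that cohomology using Boxer--Pilloni's higher Hida theory, and use Coleman's presentation of $p$-adic de Rham cohomology as the bridge between the two sides.

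First, I would recall Coleman's exact sequence, which for weight $k \geq 2$ presents the rigid (equivalently, $p$-adic de Rham) cohomology of the appropriate local system on the modular curve over $\Q_p$ as
$$
0 \longrightarrow S_k^\dagger \longrightarrow H^1_{\mathrm{rig}} \xrightarrow{\theta^{k-1}} M_{2-k}^\dagger \longrightarrow 0,
$$
equivariantly for the prime-to-$p$ Hecke algebra and for $U_p$, where $\theta$ is the Atkin--Serre theta operator. Under this sequence the critical-slope (slope $k-1$) piece of $U_p$ acting on $S_k^\dagger$ matches, via a Hodge-filtration / unit-root argument, the ordinary part (slope $0$ for a dual Hecke operator $U'$) of $H^1$ of a suitable coherent sheaf on the modular curve---essentially $\omega^k$ twisted by the cuspidal divisor. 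The non-cuspidal analogue of Coleman's sequence does the same for $M_k^{\dagger,\crit}$.

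Next, I would invoke Boxer--Pilloni's higher Hida theory, which constructs finitely generated flat $\Lambda$-modules interpolating, across $p$-adically varying weights, the $U'$-ordinary part of $H^1$ of the relevant coherent sheaves on the modular curve and which satisfies a weight-$k$ control theorem back to the classical ordinary $H^1$ for every integer $k \geq 3$. I would then \emph{define} $M_\Lambda^\crit$ and $S_\Lambda^\crit$ to be the Boxer--Pilloni modules attached to the non-cuspidal and cuspidal sheaves respectively; they inherit flatness, finite generation over $\Lambda$, and continuous actions of the prime-to-$p$ Hecke operators and of $U'$. The control isomorphisms in weight $k \geq 3$ then follow by combining Boxer--Pilloni's control theorem with Coleman's exact sequence specialized in weight $k$: together they identify the $\Lambda$-specialization with $M_{k,\Z_p}^{\dagger,\crit}$ and $S_{k,\Z_p}^{\dagger,\crit}$, compatibly with all Hecke operators.

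The main obstacle will be matching the structures carefully across the bridge. Three points in particular require work: (a) verifying that the dual Hecke operator $U'$ produced by the Boxer--Pilloni formalism on coherent cohomology corresponds, under Coleman's identification, to a natural ``$p$-adic dual $U$-operator'' on overconvergent critical-slope forms, so that the equivariance statement in the theorem is genuinely meaningful; (b) handling the weight mismatch between Coleman's sequence (weights $k$ and $2-k$) and the Boxer--Pilloni interpolation (parametrized by $\omega^k$-style characters), so that the control map is the identity on $q$-expansions weight by weight, including the appropriate powers of $p$ coming from $\theta^{k-1}$; and (c) dealing with integrality, so that $M_\Lambda^\crit$ lands in the $\Z_p$-lattice $M_{k,\Z_p}^{\dagger,\crit}$ rather than only inside the ambient $\C_p$-vector space $M_{k,\C_p}^\dagger$. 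The restriction to $k \geq 3$ (rather than $k \geq 2$) is precisely the familiar degeneration of Coleman's theta sequence in weight $2$.
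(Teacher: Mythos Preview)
Your proposal has a genuine structural gap that would make the argument fail.

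First, the Coleman exact sequence is misstated. Coleman's presentation (his Theorem~5.4, restated in the paper as Proposition~\ref{prop: Coleman pres dR}) gives
\[
0 \longrightarrow M_{2-k,\Q_p}^{\dagger,\ord}(k-1) \xrightarrow{\ \theta^{k-1}\ } M_{k,\Q_p}^{\dagger,\crit} \longrightarrow \bH^1_\mathrm{par}(X_{\Q_p},\cF_k)^\crit \longrightarrow 0,
\]
that is, $\theta^{k-1}$ maps weight $2-k$ forms \emph{into} critical weight-$k$ forms, and de Rham cohomology is the \emph{quotient}, not the middle term. Your sequence has the arrows reversed and the terms in the wrong positions.

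Second, and more seriously, the critical overconvergent forms are \emph{not} isomorphic to the ordinary part of coherent $H^1$; they are a nontrivial extension of it by the anti-ordinary forms $M_{2-k}^{\dagger,\ord}(k-1)$. In particular (Coleman's Corollary~7.2.3, recorded here as Corollary~\ref{cor: local constant rank Coleman}),
\[
\dim M_{k,\Q_p}^{\dagger,\crit} = \dim M_{2-k,\Q_p}^{\dagger,\ord} + \dim S_{k,\Q_p}^{\dagger,\ord},
\]
so defining $M_\Lambda^\crit$ to simply \emph{be} the Boxer--Pilloni module $\cH^{1,\ord}_\Lambda$ would produce a module of the wrong $\Lambda$-rank: roughly half of the critical forms would be missing. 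The Boxer--Pilloni module only captures the quotient of $(\ast_k)$.

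What the paper actually does is interpolate the entire extension $(\ast_k)$. The sub is handled by a twist of classical Hida theory (the anti-ordinary module $M_\Lambda^\aord$), the quotient by Boxer--Pilloni, and the extension class by first constructing the \emph{Hecke algebra} $\bT_\Lambda^\crit$ as the image of $\bT[U']$ in $\prod_{k\geq 3}\End_{\Z_p}(M_k^{\dagger,\crit})$, proving it sits in a short exact sequence $0\to S_\Lambda^\tord \to \bT_\Lambda^\crit \to \bT_\Lambda^\aord \to 0$ of flat $\Lambda$-modules (Theorem~\ref{thm: main extension}), and then \emph{defining} $M_\Lambda^\crit := \Hom_\Lambda(\bT_\Lambda^\crit,\Lambda)$. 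The substantial work, which your outline does not address, is the $\Z_p$-integral comparison of lattices (Theorem~\ref{thm: lattice main}) needed to identify the quotient of $M_k^{\dagger,\crit}$ with Boxer--Pilloni's $e(F)H^1_c(X^\ord,\omega^{2-k})$ on the nose, not just up to isogeny.
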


A strong hint that there might be such an interpolation is Coleman's observation that the dimension of spaces of critical slope overconvergent forms of level $\Gamma_1(N)$ is locally constant with respect to the weight \cite[Cor.\ 7.2.3]{coleman1996}. 
\begin{rem}
    \label{rem: discontinuous}
    On the other hand, an apparent obstacle to this interpolation is that critical overconvergent forms cannot admit an interpolation in the sense of the Coleman--Mazur eigencurve \cite{CM1998}: critical $U$-eigenvalues have slope $k-1$ in weight $k$, making them impossible to $p$-adically interpolate. This is why we introduce a ``$p$-adic dual $U$-operator,'' denoted by $U'$, which amounts to a Frobenius operator and acts on $M_{k,\Q_p}^{\dagger,\crit}$ with slope $0$. 
\end{rem} 

Coleman's proof of local constancy is based on the short exact sequence of \cite[Cor.\ 7.2.2]{coleman1996} which we call ($\ast_k$) here. Since this short exact sequence is also our main point of departure for proving Theorem \ref{thm: main intro} and constructing the bi-ordinary complex, we now describe Coleman's argument. 

Letting $\theta = q\frac{d}{dq}$ be the Atkin--Serre differential operator on $p$-adic modular forms, the crucial short exact sequence for $k \in \Z_{\geq 3}$ is 
\[
\tag{$\ast_k$}
0 \to M_{2-k,\Q_p}^{\dagger, \ord}(k-1) \buildrel{\theta^{k-1}}\over\lra M_{k,\Q_p}^{\dagger, \crit} \lra \bH^1_\mathrm{par}(X_{\Q_p},\cF_k)^\crit \to 0,
\]
with notation as follows: $\bH^1_\mathrm{par}(X_{\Q_p}, \cF_k)$ is weight $k$ parabolic de Rham cohomology of the closed modular curve $X/\Z_p$ of level $\Gamma_1(N)$, generalizing standard de Rham cohomology which is the case $k=2$. The Tate twist by $(k-1)$ on the source makes $\theta^{k-1}$ Hecke-equivariant, since $n^i \theta^i \circ T_n = T_n \circ \theta^i$; in particular, $\theta^{k-1}$ on $M_{2-k,\Q_p}^\dagger$ preserves overconvergence and sends $U$-ordinary forms to $U$-critical forms. Coleman's key theorem \cite[Thm.\ 5.4]{coleman1996} is that weight $k$ de Rham cohomology $\bH^1_\mathrm{par}(X_{\C_p}, \cF_k)$ admits a presentation as a quotient as in ($\ast_k$), which endows it with a $p$-adic $U$-action. Coleman then sets up a perfect but non-canonical Hecke-compatible (after a twist) duality between $\bH^1_\mathrm{par}(X_{\C_p},\cF_k)^\crit$ and classical $U$-critical cusp forms of weight $k$. Now that both the sub and quotient of ($\ast_k$) have dimensions equal to the dimensions of a space of ordinary forms, Hida theory implies the claimed local constancy. 

Our method is to prove that ($\ast_k$) interpolates, as follows. A key input is Boxer--Pilloni's higher Hida theory for coherent cohomology of the modular curve \cite{BP2022}, which we use, by projection to the Hodge quotient of de Rham cohomology, to upgrade Coleman's description of the quotient of ($\ast_k$) to a \emph{canonical} interpolation. Let $M_\Lambda^\ord$ and $S_\Lambda^\ord$ denote Hida's modules of $\Lambda$-adic $U$-ordinary forms, and let $\cH_\Lambda^{1,\ord}$ denote Boxer--Pilloni's $F$-ordinary (where $F$ stands for Frobenius) $\Lambda$-adic coherent cohomology of degree $1$. Our $U'$ acts on it by $F$. We let $M^\aord_\Lambda$ denote \emph{anti-ordinary} $\Lambda$-adic forms, which are a twist of Hida's module $M_\Lambda^\ord$ interpolating the submodule of ($\ast_k$), $M^{\dagger,\ord}_{2-k, \Z_p}(k-1)$. Here $U'$ acts as $\lr{p}_N U^{-1}$.

\begin{thm}[{Theorem \ref{thm: main extension}}]
\label{thm: main extension intro}
Let $k \in \Z_{\geq 3}$. The short exact sequence $(\ast_k)$ admits a $\Z_p$-lattice that is the specialization along $\phi_k$ of a short exact sequence of flat and finitely generated $\Lambda$-modules 
\[
\tag{$\ast_\Lambda$} 
0 \to M_\Lambda^\aord \buildrel{\Theta_\Lambda}\over\lra M_\Lambda^\crit \buildrel{\pi_\Lambda}\over\lra \cH_\Lambda^{1,\ord} \to 0
\]
that is equivariant for Hecke operators of level $N$ and the $p$-adic operator $U'$. 
\end{thm}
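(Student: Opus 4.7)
The plan is to construct the three modules and two maps of $(\ast_\Lambda)$ separately, using available interpolation machinery, and then deduce short-exactness from the classical sequences $(\ast_k)$ via flatness and Zariski density of arithmetic primes in $\Spec \Lambda$. For the quotient term, $\cH_\Lambda^{1,\ord}$ together with its control isomorphism to $H^1(X_{\Z_p}, \omega^{2-k})^\ord$ and the identification $U' = F$ is imported directly from Boxer--Pilloni's higher Hida theory \cite{BP2022}. The submodule $M_\Lambda^\aord$ is built by twisting Hida's $\Lambda$-adic ordinary module $M_\Lambda^\ord$ by the involution $k \mapsto 2-k$ of weight space and by the tautological cyclotomic character $[z] \mapsto z^{k-1}$; flatness, finite generation, the control theorem to $M_{2-k,\Z_p}^{\dagger,\ord}(k-1)$, and the action $U' = \lr p_N U^{-1}$ all transport from Hida theory, using the invertibility of $U$ on ordinary forms.

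The projection $\pi_\Lambda$ is defined to specialize, at each weight $k \geq 3$, to the composition of Coleman's surjection $M_{k,\C_p}^{\dagger,\crit} \twoheadrightarrow \bH^1_\mathrm{par}(X,\cF_k)^\crit$ with the Hodge-filtration projection to $H^1(X,\omega^{2-k})$, whose $F$-ordinary part is the control specialization of $\cH_\Lambda^{1,\ord}$. Realizing $M_\Lambda^\crit$ (via Theorem \ref{thm: main intro}) as a module of $U'$-ordinary $\Lambda$-adic $p$-adic forms and using the Boxer--Pilloni description of degree $1$ coherent cohomology on the ordinary locus by the same $q$-expansions, one writes $\pi_\Lambda$ as a $\Lambda$-linear, $U'$-equivariant map.

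The definition of $\Theta_\Lambda$ is the main obstacle, because $\theta^{k-1}$ is not continuous in $k$ on whole $q$-expansions. The crucial observation is that on ordinary forms, which reduce up to an invertible factor of $U$ to their $p$-depleted $q$-expansions, the operator acts by $\theta^{k-1}(\sum_{(n,p)=1} a_n q^n) = \sum_{(n,p)=1} n^{k-1} a_n q^n$, and $n \mapsto n^{k-1}$ is a continuous character of $\Lambda$. This yields a $\Lambda$-linear formal operator on the $q$-expansions of $M_\Lambda^\aord$; the weight involution and Tate twist encoded in the definition of $M_\Lambda^\aord$ are chosen precisely so that this operator is $\Lambda$-linear and intertwines the tame Hecke operators $T_\ell$ ($\ell \nmid Np$) as well as $U' = \lr p_N U^{-1}$ on the source with $U' = F$ on the target. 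The nontrivial check is that the image lies in $M_\Lambda^\crit$ rather than merely in the larger space of $\Lambda$-adic $p$-adic forms: this reduces by flatness and Zariski density to the statement at each $\phi_k$, where $(\ast_k)$ places $\theta^{k-1} M_{2-k,\Z_p}^{\dagger,\ord}$ inside $M_{k,\Z_p}^{\dagger,\crit}$ as a $U'$-ordinary submodule.

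Exactness of $(\ast_\Lambda)$ follows formally once the three terms and two maps are in place. The composition $\pi_\Lambda \circ \Theta_\Lambda$ vanishes after specialization at every $\phi_k$ with $k \geq 3$ by $(\ast_k)$, hence vanishes by density and the fact that $\cH_\Lambda^{1,\ord}$ is $\Z_p$-torsion-free. Injectivity of $\Theta_\Lambda$, surjectivity of $\pi_\Lambda$, and vanishing of the middle cohomology of the complex are then Nakayama-style consequences of the corresponding assertions at each $\phi_k$, since the kernel, image, cokernel, and middle cohomology of a complex of finitely generated flat $\Lambda$-modules can be detected at a Zariski-dense set of primes.
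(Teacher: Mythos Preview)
Your proposal has a circular dependency and also diverges substantially from the paper's method. You invoke Theorem~\ref{thm: main intro} to supply $M_\Lambda^\crit$ as a finite flat $\Lambda$-module with a control theorem, but in the paper's logic this theorem is a \emph{consequence} of Theorem~\ref{thm: main extension} (see the sentence preceding the statement of Theorem~\ref{thm: main extension}), not an input to it. The paper does not construct $M_\Lambda^\crit$ directly as a space of forms at all: it first defines the Hecke algebra $\bT_\Lambda^\crit$ as the image of $\bT[U']_\Lambda$ in $\prod_{k\ge 3}\End_{\Z_p}(M_k^{\dagger,\crit})$, then proves the Hecke-algebra short exact sequence $0\to S_\Lambda^\tord\to\bT_\Lambda^\crit\to\bT_\Lambda^\aord\to 0$, and only afterwards \emph{defines} $M_\Lambda^\crit:=\Hom_\Lambda(\bT_\Lambda^\crit,\Lambda)$. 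The sequence $(\ast_\Lambda)$ is then obtained by $\Lambda$-dualizing; in particular $\Theta_\Lambda$ arises as the dual of a Hecke-algebra surjection rather than by interpolating $\theta^{k-1}$ on $q$-series. Your $p$-depletion construction of $\Theta_\Lambda$ is aimed at standard $q$-series, whereas the paper's $M_\Lambda^\crit$ carries only \emph{alternate} $q$-series based on $U'$ (Definition~\ref{defn: alternate q-series}, Remark~\ref{rem: not a p-adic modular form}), precisely because standard critical $q$-series do not interpolate (Remark~\ref{rem: discontinuous}).

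The substantive technical input that your sketch of $\pi_\Lambda$ elides is Theorem~\ref{thm: lattice main}, especially Proposition~\ref{prop: integral quotient map}: one must show that Coleman's presentation followed by the Hodge quotient carries the specific $\Z_p$-lattice $M_k^{\dagger,\crit}$ (defined via $U'$-integrality) \emph{onto} the Boxer--Pilloni lattice $e(F)H^1_c(X^\ord,\omega^{2-k})$. This occupies all of \S\ref{subsec: quotient lattices} and requires a weight-$k$ extension of Coleman's integral analytic de~Rham cohomology. In the paper's proof of Theorem~\ref{thm: main extension}, this integral compatibility is exactly what makes the pairing $\delta_k:K_k\times\cH^{1,\ord}_k\to\Z_p$ perfect, which in turn drives the interpolation argument identifying $\ker(\bT_\Lambda^\crit\twoheadrightarrow\bT_\Lambda^\aord)$ with $S_\Lambda^\tord$. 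Your Nakayama/density argument for exactness would also need this lattice compatibility at each $k$ as input, so even setting aside the circularity, the heart of the matter is missing.
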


The $\Lambda$-flatness in Theorem \ref{thm: main intro} is a consequence of Theorem \ref{thm: main extension intro}, since (higher) Hida theory provides for the $\Lambda$-flatness of the sub (quotient). This theorem implies that the ``critical eigencurve'' associated to $M_\Lambda^\crit$ has irreducible components that are either ordinary or anti-ordinary Hida families, where the latter refers to the twist by $(k-1)$ of weight $2-k$ interpolated in $M_\Lambda^\aord$.

We can deduce a \emph{formal} $\Lambda$-adic interpolation of the $T_p$-ordinary de Rham and crystalline cohomology of $X/\Z_p$. That is, we interpolate the weakly admissible filtered isocrystal associated to weight $k$ coefficient system $\cF_k$ on $X/\Z_p$ (compare work of Cais \cite{cais2018, cais2018CM}). Here ``formal'' means that we need a transcendental variable $p^{\kappa-1}$ that represents the scalar $p^{k-1}$ in weight $k$, which is $\Lambda$-adically discontinuous in exactly the same sense as in Remark \ref{rem: discontinuous}. This can be thought of as a $\Lambda$-adic interpolation of Coleman's presentation of de Rham cohomology \cite[Thm.\ 5.4]{coleman1996} by differentials of the second kind. 

\begin{thm}[{Theorem \ref{thm: main dR}}]
    \label{thm: main dR intro} 
    There is a finite flat $\Lambda$-module $\dR_\Lambda$ with a decomposition $\dR_\Lambda \cong M_\Lambda^\ord \oplus \cH^{1,\ord}_\Lambda$ that is equivariant for Hecke operators of level $N$ away from $p$. It is equipped with the additional formal data of
    \begin{itemize}[leftmargin=1em]
        \item a Frobenius endomorphism $\varphi_\Lambda$ preserving the decomposition
        \item a $\Lambda$-pure submodule $\Fil^{\kappa-1}$ isomorphic as a Hecke-module to $M_\Lambda^\ord$
        \item a $T_p$-action stabilizing $\Fil^{\kappa-1}$.
    \end{itemize} 
    The specialization of these data along $\phi_k$ for $k \in \Z_{\geq 3}$ realize weight $k$ de Rham cohomology $\bH^1_\dR(X_{\Q_p}, \cF_k)$, its crystalline Frobenius $\varphi_k$, the single non-trivial submodule of its Hodge filtration, and its Hecke action of level $N$. 
\end{thm}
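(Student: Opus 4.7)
The plan is to \emph{build} $\dR_\Lambda$ as the direct sum $M_\Lambda^\ord \oplus \cH^{1,\ord}_\Lambda$, equip it with the required formal data by hand, and then verify that specialization at $\phi_k$ matches the canonical unit-root decomposition of the ordinary part of weight-$k$ de Rham cohomology. First I would put the Hecke action of level $N$ (including $T_p$) on $\dR_\Lambda$ as the direct sum of Hida's action on the first summand and Boxer--Pilloni's action on the second. Finite $\Lambda$-flatness is then immediate from Hida's theorem and higher Hida theory, and the stated decomposition is tautologically Hecke-equivariant away from $p$.

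Next I would \emph{define} $\Fil^{\kappa-1}$ to be the $M_\Lambda^\ord$ summand, which is $\Lambda$-pure, isomorphic to $M_\Lambda^\ord$ as a Hecke module, and stabilized by the componentwise $T_p$. For the Frobenius, I would set $\varphi_\Lambda := U'$ on $\cH^{1,\ord}_\Lambda$ (the Boxer--Pilloni Frobenius from higher Hida theory, acting as a unit on the ordinary part), and formally $\varphi_\Lambda := p^{\kappa-1}T_p^{-1}$ on $M_\Lambda^\ord$, using that $T_p$ acts invertibly on an ordinary Hida module and interpreting $p^{\kappa-1}$ as the formal symbol --- $\Lambda$-adically discontinuous, as flagged in Remark \ref{rem: discontinuous} --- whose specialization at $\phi_k$ is $p^{k-1}$. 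By construction, $\varphi_\Lambda$ preserves the decomposition.

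For specialization at $\phi_k$ with $k \geq 3$, Hida's and Boxer--Pilloni's control theorems identify the summands of $\dR_\Lambda \otimes_{\phi_k} \Z_p$ with $M_k^\ord$ and $H^1(X,\omega^{2-k})^\ord$ Hecke-equivariantly. The unit-root decomposition for Frobenius-ordinary weight-$k$ crystalline cohomology --- the fact that $\Fil^{k-1}$ coincides with the non-unit Frobenius eigenspace --- then yields a canonical Hecke-equivariant isomorphism
\[
\bH^1_\dR(X_{\Q_p},\cF_k)^\ord \cong \Fil^{k-1} \oplus V^{\mathrm{unit}}
\]
in which $\Fil^{k-1} = M_k^\ord$ (via the Hodge inclusion from $H^0(X,\omega^k)^\ord$) and $V^{\mathrm{unit}} = H^1(X,\omega^{2-k})^\ord$ (via the projection to the other Hodge piece). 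Matching eigenvalues on an ordinary eigenform $f$ with $T_p(f) = \alpha f$ (unit $\alpha$): the specialized $\varphi_\Lambda = p^{k-1}T_p^{-1}$ on $M_k^\ord$ gives $p^{k-1}/\alpha$, which is the non-unit crystalline Frobenius eigenvalue on $\Fil^{k-1}$; the specialized $\varphi_\Lambda = U' = T_p$ on $H^1(X,\omega^{2-k})^\ord$ gives $\alpha$, the unit eigenvalue. The sub $\Fil^{\kappa-1}$ correctly specializes to $\Fil^{k-1}$.

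The main obstacle is this last matching step: ensuring that the formal $\Lambda$-adic data we postulate genuinely recovers the crystalline structure on each specialization. The essential non-formal input is that the unit-root splitting of $\bH^1_\dR(X_{\Q_p},\cF_k)^\ord$ is canonical and Hecke-equivariant, with its two Frobenius eigenspaces identified respectively with the Hodge sub (Hida's $M_k^\ord$) and with higher coherent cohomology (Boxer--Pilloni's $H^1(\omega^{2-k})^\ord$). Secondary care is needed for the integral structure over $\Z_p$ and for whether one works parabolically or with full $\bH^1_\dR$, since the two versions treat the cusps differently and correspondingly distinguish cuspidal from modular Hida modules.
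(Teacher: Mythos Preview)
Your construction of the underlying $\Lambda$-module, the Hecke action away from $p$, and the formal Frobenius is essentially what the paper does. The genuine gap is in your treatment of $\Fil^{\kappa-1}$.

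You set $\Fil^{\kappa-1}$ equal to the $M_\Lambda^\ord$ summand and justify this at weight $k$ by asserting that ``$\Fil^{k-1}$ coincides with the non-unit Frobenius eigenspace.'' That assertion is false in general. For a $T_p$-ordinary eigenform $f$ with $p$-stabilizations $f_\alpha$ (ordinary) and $f_\beta$ (critical), the Hodge line in $\dR_f$ is spanned by $\KS_k(f)$ with $f = (\beta f_\alpha - \alpha f_\beta)/(\beta-\alpha)$, which has nonzero projection to \emph{both} Frobenius eigenspaces. The Hodge line coincides with the $\varphi$-critical line $\KS_k(\langle f_\alpha\rangle)$ precisely when $f_\beta$ lies in the image of $\theta^{k-1}$, i.e.\ when $\rho_f\vert_{G_p}$ is decomposable (Lemma \ref{lem: isocrystal splittability}); this is exactly the bi-ordinary condition, and it fails for the generic ordinary form. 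So your $\Fil^{\kappa-1}$ would specialize to the wrong submodule.

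The paper's definition of $\Fil^{\kappa-1}$ is instead the image of $M_\Lambda^\ord$ under the \emph{diagonal} map $h_\Lambda = (-p^{\kappa-1}\lr{p}_N U^{-2},\ \pi_\Lambda \circ \zeta_\Lambda \circ \iota_\Lambda)$ into $M_\Lambda^\ord \oplus \cH^{1,\ord}_\Lambda$ (see Corollary \ref{cor: Hodge wrt cris} and the definition of $\dR_\Lambda$). The second coordinate $\pi_\Lambda \circ \zeta_\Lambda$ is exactly the $\Lambda$-adic interpolation of Coleman's map from classical critical forms to the Hodge quotient, and constructing it is the main work of \S\S\ref{sec: p-integral cric oc}--\ref{sec:contruction} (it requires the critical $\Lambda$-adic module $M_\Lambda^\crit$ and the sequence $(\ast_\Lambda)$). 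The relative position of Hodge and Frobenius that you collapse to a single summand is precisely what this map records, and its kernel and cokernel are what the bi-ordinary complex measures.
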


\subsection{Bi-ordinary complexes and Galois representations}
\label{subsec: intro BO}

Let $k \in \Z_{\geq 2}$. Coleman pointed out that while classical critical cusp forms $S_{k,\C_p}^\crit \subset M_{k,\C_p}^{\dagger,\crit}$ are isomorphic as a Hecke module to the quotient module $\bH^1_\mathrm{par}(X_{\C_p},\cF_k)^\crit$ of ($\ast_k$), the composition of this inclusion and projection 
\[
S_{k,\C_p}^\crit \rinj M_{k,\C_p}^{\dagger,\crit} \rsurj \bH^1_\mathrm{par}(X_{\C_p},\cF_k)^\crit
\]
is not an isomorphism. Coleman asked whether all elements of the kernel \cite[\S7, Rem.\ 2]{coleman1996}, which are classical cusp forms in the image of $\theta^{k-1}$ of ($\ast_k$), have CM. 

Coleman's question can be interpreted in terms of Galois representations: Breuil--Emerton proved that for any classical cuspidal $U_p$-critical eigenform of level $\Gamma_1(Np^r)$, it lies in the image of $\theta^{k-1}$ (with the sequence ($\ast_k$) adapted to this level) if and only if its Galois representation $\rho_f : \Gal(\oQ/\Q) \to \GL_2(\oQ_p)$ is reducible and decomposable upon restriction $\rho_f\vert_{G_p}$ to a decomposition group at $p$ \cite[Thm.\ 1.1.3]{BE2010}. Given this equivalence, Coleman's question is independently attributable to Greenberg (see \cite[Ques.\ 1]{GV2004}), who expressed it in terms of $\rho_f\vert_{G_p}$.

One motivation to formulate a bi-ordinary complex in weight $k \in \Z_{\geq 3}$, which is perfect of length 1, is that it realizes the above kernel in its degree $0$ cohomology. Another motivation is that, as we will prove, it satisfies our request \cite[\S1.6]{CWE1} for a theory of modular forms supporting Galois representations that are reducible and decomposable on a decomposition group at $p$. For more discussion of why a length $1$ self-dual complex is a natural object to seek, see \S\ref{subsec: intro weight 1}. 

To define bi-ordinary complexes in weights $k$ and interpolate them over $\Lambda$, we first set up some simplifying notation. Let $M_k^\aord := M_{2-k,\Z_p}^{\dagger,\ord}(k-1)$, the submodule in ($\ast_k$), called the anti-ordinary forms of weight $k$. Let $M_k^\tord := M_{k,\Z_p}^\crit$, the \emph{twist-ordinary} forms of weight $k$, injecting under $\zeta_k : M_k^\tord \rinj M_k^\crit$. These latter forms are nothing other than the critical classical forms, but we use the term ``twist-ordinary'' for terminological reasons discussed in Remark \ref{rem: why tord}. The bi-ordinary complex in weight $k \in \Z_{\geq 3}$ is
\[
\tag{$\BO_k^\bullet$} \BO_k^0 := M_k^\aord \oplus M_k^\tord \mathrel{\mathop{\lra}^{\theta^{k-1} + \zeta_k}} M_k^{\dagger,\crit} =: \BO_k^1
\]
where the differential is nothing more than inclusion of submodules with addition. We also define a cuspidal variant $\SBO_k^\bullet$ where $S_k^\tord$ replaces $M_k^\tord$. 

Therefore we can interpret Coleman and Greenberg's question, with trivial level at $p$, in terms of the bi-ordinary complex. 
\begin{ques}
    \label{ques: CG}
    For $k \in \Z_{\geq 2}$, does $H^0(\SBO_k^\bullet)$ consist of CM forms? 
\end{ques}

We can interpolate $\BO_k^\bullet$, by (higher) Hida theory and Theorem \ref{thm: main extension intro}, into the $\Lambda$-adic bi-ordinary complex
\[
\tag{$\BO_\Lambda^\bullet$} \BO_\Lambda^0 := M_\Lambda^\aord \oplus M_\Lambda^\tord \mathrel{\mathop{\lra}^{\theta_\Lambda + \zeta_\Lambda}} M_\Lambda^{\dagger,\crit} =: \BO_\Lambda^1
\]
similarly to the cuspidal variant $\SBO_\Lambda^\bullet$. Crucially, $\SBO_\Lambda^\bullet$ is quasi-isomorphic under the quotient map of ($\ast_\Lambda$) to 
\[
[S_\Lambda^\tord \mathrel{\mathop{\lra}^{\pi_\Lambda \circ \zeta_\Lambda}} \cH_\Lambda^{1,\ord}], 
\]
whose terms are perfectly Hecke-equivariantly dual under Boxer--Pilloni's Serre duality pairing \cite{BP2022} once some twistings are accounted for (see \S\ref{subsec: tord}). Consequently, $\SBO_\Lambda^\bullet$ is perfectly Serre self-dual. This differential $\pi_\Lambda \circ \zeta_\Lambda$ is a Hida-theoretic interpolation (of a projection to the Hodge quotient) of the critical part of the map from cusp forms to de Rham cohomology that Coleman highlighted as the map denoted ``$\iota$'' on \cite[p.\ 232]{coleman1996}. 

Next let us set up some Hecke algebras, which exhibit $\Lambda$-linear dualities with the modules they act on. Let $\bT_\Lambda^\crit, \bT_\Lambda^\aord, \bT_\Lambda^\tord$ denote the Hecke algebras generated by Hecke actions on the analogously decorated flat $\Lambda$-modules $M_\Lambda^?$. In fact, because it arises from the addition of the inclusions of two submodules, the differential of $\BO_\Lambda^\bullet$ is perfectly $\Lambda$-linearly dual to the natural homomorphism of finite flat $\Lambda$-adic Hecke algebras 
\[
\psi_\Lambda : \bT_\Lambda^\crit \to \bT_\Lambda^\aord \times \bT_\Lambda^\tord.
\]
This $\psi_\Lambda$ is the map on coordinate rings corresponding to the inclusions of the ordinary and anti-ordinary eigencurves into the ``critical eigencurve'' (mentioned after Theorem \ref{thm: main extension intro}). 

The results in \S\ref{sec: bi-ordinary} are homological-algebraic consequences of the two dualities: the self-duality of $\SBO_\Lambda^\bullet$ and the dualities between forms and Hecke algebras. To summarize these results, we need some more notation and terminology. 

Let $\bT_\Lambda^\BO$ denote the Hecke algebra of $H^*(\BO_\Lambda^\bullet)$; define $\bT_\Lambda^\SBO$ similarly. When $V$ is a $\Lambda$-module, let $V^\vee$ denote $\Hom_\Lambda(V,\Lambda)$ and let $T_1(V)$ denote its maximal $\Lambda$-torsion submodule. Let $\phi : \Lambda \to \cO$ denote the normalization of the quotient by a height 1 prime of characteristic 0; we call these $p$-adic weights. When $\bT_\Lambda^\star$ is a $\Lambda$-adic Hecke algebra, let $\phi_f : \bT_\Lambda^\star \to \cO$ lie over a $p$-adic weight (for $\star = \crit,\tord, \aord, \BO, \SBO$); we call these \emph{$\star$ Hecke eigensystems}. A \emph{$\star$ Hida family} is a prime of $\bT_\Lambda^\star$ lying over a minimal prime of $\Lambda$. Write $\bT_\Lambda^{\star,\circ}$ for cuspidal variants. 

\begin{thm}[{\S\ref{sec: bi-ordinary}}]
    \label{thm: main BO intro}
    All of the following modules are finitely generated over $\Lambda$, and all of the following maps are Hecke-equivariant:
    \begin{enumerate}
        \item $\bT_\Lambda^\BO \cong \bT_\Lambda^\aord \otimes_{\bT_\Lambda^\crit} \bT_\Lambda^\tord$ and $\bT_\Lambda^\SBO \cong \bT_\Lambda^\aord \otimes_{\bT_\Lambda^\crit} \bT_\Lambda^{\tord,\circ}$.
        \item $H^0(\BO_\Lambda^\bullet)$ and $H^0(\SBO_\Lambda^\bullet)$ are $\Lambda$-flat.
        \item $H^1(\BO_\Lambda^\bullet) \cong H^1(\SBO_\Lambda^\bullet)$, and there is a short exact ``Serre duality sequence'' 
        \[
        0 \to T_1(H^1(\BO_\Lambda^\bullet)) \to H^1(\BO_\Lambda^\bullet) \to H^0(\BO_\Lambda^\bullet)^\vee \to 0.
        \]
        \item There is a perfect $\Lambda$-bilinear self-duality pairing
        \[
        T_1(H^1(\BO_\Lambda^\bullet)) \times T_1(H^1(\BO_\Lambda^\bullet)) \to Q(\Lambda)/\Lambda.
        \]
        \item $\bT_\Lambda^\SBO$ is isomorphic to the Hecke algebra of $H^1(\SBO_\Lambda^\bullet)$, and this Hecke algebra is $\Lambda$-torsion if and only if $H^0(\SBO_\Lambda^\bullet) = 0$. 
        \item The Hecke eigensystems factoring through the Hecke algebra of $H^0(\BO_\Lambda^\bullet)$ are exactly those Hecke eigensystems of $\bT_\Lambda^\crit$ that lie in a bi-ordinary Hida family.
        \item The Hecke eigensystems supporting $T_1(H^1(\BO_\Lambda^\bullet))$ are exactly the Hecke eigensystems that are bi-ordinary but do not lie in a bi-ordinary Hida family. 
    \end{enumerate}
\end{thm}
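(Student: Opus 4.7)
The plan is to first reduce both complexes by quasi-isomorphism to two-term complexes of finite free $\Lambda$-modules and then exploit the Serre self-duality of the cuspidal reduction. Using $(\ast_\Lambda)$, the subcomplex $[M_\Lambda^\aord \xrightarrow{\sim} M_\Lambda^\aord]$, with the right-hand copy realized as $\theta_\Lambda(M_\Lambda^\aord) = \ker\pi_\Lambda$ inside $M_\Lambda^{\dagger,\crit}$, sits acyclically inside $\BO_\Lambda^\bullet$; quotienting by it yields $\BO_\Lambda^\bullet \simeq C^\bullet := [M_\Lambda^\tord \xrightarrow{\pi_\Lambda\circ\zeta_\Lambda} \cH_\Lambda^{1,\ord}]$ and likewise $\SBO_\Lambda^\bullet \simeq C^{\bullet,\circ} := [S_\Lambda^\tord \to \cH_\Lambda^{1,\ord}]$. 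All terms are $\Lambda$-free by Hida/higher Hida theory, and $C^{\bullet,\circ}$ is Hecke-equivariantly Serre self-dual via the Boxer--Pilloni pairing.

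For parts (1) and (6), I would invoke the duality noted just before the theorem: the differential $\theta_\Lambda + \zeta_\Lambda$ is the $\Lambda$-linear dual of $\psi_\Lambda\colon \bT_\Lambda^\crit \to \bT_\Lambda^\aord \times \bT_\Lambda^\tord$. This identifies the Hecke support of $H^*(\BO_\Lambda^\bullet)$ with the scheme-theoretic intersection of $\Spec \bT_\Lambda^\aord$ and $\Spec \bT_\Lambda^\tord$ inside $\Spec \bT_\Lambda^\crit$, giving $\bT_\Lambda^\BO \cong \bT_\Lambda^\aord \otimes_{\bT_\Lambda^\crit} \bT_\Lambda^\tord$ in (1) (and likewise for the cuspidal variant). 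Part (6) then reduces to carving out the ``generic'' part of $\Spec \bT_\Lambda^\BO$: the $\Lambda$-flatness of $H^0$ from (2) forces $\Supp_\Lambda(H^0)$ to contain every minimal prime of $\bT_\Lambda^\BO$ over $\Lambda$, i.e.\ every bi-ordinary Hida family, while by (7) the remaining eigensystems of $\bT_\Lambda^\BO$ are entirely absorbed into $T_1(H^1)$.

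For (3) and (2), I would first compare $\BO_\Lambda^\bullet$ and $\SBO_\Lambda^\bullet$ via the Eisenstein short exact sequence $0 \to \SBO_\Lambda^\bullet \to \BO_\Lambda^\bullet \to M_\Lambda^\tord/S_\Lambda^\tord \to 0$ (quotient concentrated in degree $0$). Because a critical Eisenstein class in weight $k$ is $\theta^{k-1}$ of the ordinary weight $2-k$ Eisenstein, it interpolates to a class in $\theta_\Lambda(M_\Lambda^\aord) = \ker\pi_\Lambda$; hence the Eisenstein quotient dies under $\pi_\Lambda \circ \zeta_\Lambda$, the connecting map $M_\Lambda^\tord/S_\Lambda^\tord \to H^1(\SBO_\Lambda^\bullet)$ vanishes, and $H^1(\BO_\Lambda^\bullet) \cong H^1(\SBO_\Lambda^\bullet)$. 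For the duality sequence, I would feed $C^{\bullet,\circ}$ into $R\Hom_\Lambda(-, \Lambda)$, use Boxer--Pilloni to identify the output with $C^{\bullet,\circ}[1]$, and unpack via the truncation triangle $H^0 \to C^{\bullet,\circ} \to H^1[-1]$; the resulting long exact sequence collapses, using $\Lambda$-purity of $H^1$'s torsion (so $\Ext^2_\Lambda(H^1, \Lambda) = 0$) and the Matlis-type identification $\Ext^1_\Lambda(T, \Lambda) \cong \Hom_\Lambda(T, Q(\Lambda)/\Lambda)$ for pure-codimension-$1$ torsion modules $T$, to the claimed sequence of (3). Part (2) then drops out: $H^0(\BO_\Lambda^\bullet)$ identifies with the $\Lambda$-dual of the finitely generated torsion-free quotient $H^1(\BO_\Lambda^\bullet)/T_1$, hence is reflexive; over the two-dimensional regular local $\Lambda$, finitely generated reflexive modules are free by Auslander--Buchsbaum.

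Parts (4), (5), and (7) then follow in sequence. For (4), I would restrict the Boxer--Pilloni self-pairing to the torsion subquotient $T_1(H^1)$ and use the same $\Ext^1_\Lambda(T, \Lambda) \cong \Hom_\Lambda(T, Q(\Lambda)/\Lambda)$ identification to move the pairing's target from $\Lambda$ into $Q(\Lambda)/\Lambda$. Part (5) is immediate from (2)--(3): every eigensystem in the free $H^0(\SBO_\Lambda^\bullet)$ reappears in its $\Lambda$-dual $(H^0(\SBO_\Lambda^\bullet))^\vee \hookrightarrow H^1(\SBO_\Lambda^\bullet)$, so $\bT_\Lambda^\SBO$ coincides with the Hecke algebra of $H^1(\SBO_\Lambda^\bullet)$ alone, and is $\Lambda$-torsion iff the torsion-free quotient of $H^1(\SBO_\Lambda^\bullet)$ vanishes iff $H^0(\SBO_\Lambda^\bullet) = 0$. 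For (7), $T_1(H^1(\BO_\Lambda^\bullet))$ has Hecke support $\Supp_\Lambda(\bT_\Lambda^\BO) \setminus \Supp_\Lambda(H^0(\BO_\Lambda^\bullet))$, which by (6) is precisely the set of bi-ordinary eigensystems not in a bi-ordinary Hida family. The main technical obstacle I anticipate is verifying the precise Hecke-equivariance of Boxer--Pilloni's Serre self-duality against the full $p$-adic operator algebra --- especially the Frobenius-like $U'$, which pairs with itself only up to a twist, unlike the usual $U_p$ --- since this equivariance is the cornerstone on which all of the duality-based deductions (3)--(7) rest.
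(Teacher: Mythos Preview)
Your proposal is essentially correct and follows the paper's own route: the reduction to the two-term complex $[M_\Lambda^\tord \to \cH_\Lambda^{1,\ord}]$ via $(\ast_\Lambda)$, the Serre self-duality of $C^{\bullet,\circ}$ coming from Boxer--Pilloni's pairing, the $\psi_\Lambda$-duality for (1), and the Eisenstein argument for $H^1(\BO_\Lambda^\bullet) \cong H^1(\SBO_\Lambda^\bullet)$ all match the paper's arguments (Lemma~\ref{lem: PD of H}, Theorem~\ref{thm: SD for BO}, Corollary~\ref{cor: SD on HSBO}, Corollary~\ref{cor: BO is to and ao}). Your concern about $U'$-equivariance of the Serre pairing is legitimate and is exactly what the paper addresses in Corollary~\ref{cor: tord serre duality} by constructing the twisted pairing $\lr{\,}'_\mathrm{SD}$.

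Two small differences are worth noting. For (2), the paper argues more directly: $H^0$ is the kernel of a map between finite free $\Lambda$-modules, hence a second syzygy, hence free over the $2$-dimensional regular ring $\Lambda$. Your route through reflexivity of $(H^1/T_1)^\vee$ is valid but depends on having (3) first, whereas the paper proves (2) independently. For (7), your assertion that $T_1(H^1)$ has Hecke support equal to $\Supp(\bT_\Lambda^\BO) \setminus \Supp(H^0)$ is not automatic: from the exact sequence $0 \to T_1(H^1) \to H^1 \to (H^0)^\vee \to 0$ one only gets a \emph{union} of supports, not a disjoint one, so a priori $T_1(H^1)$ could be supported at eigensystems lying on a bi-ordinary Hida family. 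The paper closes this gap in Proposition~\ref{prop: BO T to forms duality} by proving the stronger module-theoretic statement that $T_1(H^1(\SBO_\Lambda^\bullet))$ is canonically isomorphic to the kernel of $\bT_\Lambda^\SBO \rsurj \bT[U']_\Lambda(H^0(\SBO_\Lambda^\bullet))$; this uses the self-duality of $T_1(H^1)$ from (4) together with a faithful-action argument, and is what actually pins down the support of $T_1(H^1)$ as claimed.
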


\begin{proof}
    (1): Corollary \ref{cor: BO is to and ao}. (2): 
    Lemma \ref{lem: PD of H}. (3): Theorem \ref{thm: SD for BO}. (4): Corollary \ref{cor: SD on HSBO}. (5-7): Proposition \ref{prop: BO T to forms duality}. 
\end{proof}

The weight $k$ instance of Theorem \ref{thm: main dR intro} is already sufficient to provide a ``Coleman's presentation variant'' of the proof of the level $\Gamma_1(N)$ case \cite[Thm.\ 4.3.3]{BE2010} of Breuil--Emerton's theorem.  We can interpret the theorem in terms of the bi-ordinary complex and also generalize it to Hida families. 

\begin{thm}[{Theorems \ref{thm: BE alternate}, \ref{thm: BO equals split in Hida families}}]
    \label{thm: BO=split intro}
    Let $f$ denote a classical $U_p$-critical $p$-adic eigenform of level $\Gamma_1(N) \cap \Gamma_0(p)$ and weight $k \in \Z_{\geq 3}$ (resp.\ $\cF$ a twist-ordinary Hida family). The following conditions are equivalent: 
    \begin{enumerate}
        \item $f$ is in the image of the map $\theta^{k-1}$ of $(\ast_k)$. (resp.\ $\cF \in \mathrm{image}(\Theta_\Lambda)$ in $(\ast_\Lambda)$). 
        \item $\rho_f\vert_{G_p}$ (resp. $\rho_\cF\vert_{G_p}$) is reducible and decomposable.
        \item The $f$-isotypic part of $H^*(\BO_k^\bullet)$ (resp.\ the $\cF$-isotypic part of $H^*(\BO_\Lambda^\bullet)$) is non-zero.
    \end{enumerate}
\end{thm}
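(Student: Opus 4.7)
The plan is to treat the classical weight $k$ case and the Hida-family case in parallel, proving $(1) \Leftrightarrow (3)$ by a cohomological computation on the bi-ordinary complex and $(1) \Leftrightarrow (2)$ by a ``Coleman's-presentation variant'' of Breuil--Emerton's theorem \cite[Thm.\ 1.1.3]{BE2010} supplied by Theorem \ref{thm: main dR intro}. In the classical case, I would start with $(1) \Leftrightarrow (3)$, which is essentially linear algebra: since $\theta^{k-1}$ and $\zeta_k$ are both injective with images inside $M_k^{\dagger,\crit}$, the kernel of $\theta^{k-1}+\zeta_k$ projects isomorphically under either coordinate to the intersection $\theta^{k-1}(M_k^\aord) \cap \zeta_k(M_k^\tord)$, so $H^0(\BO_k^\bullet)_{[f]} \neq 0$ is exactly the condition $\zeta_k(f) \in \theta^{k-1}(M_k^\aord)$, which is $(1)$. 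For $H^1$, combining $(\ast_k)$ with Coleman's non-canonical Hecke-compatible duality between $S_k^\tord$ and $\bH^1_\mathrm{par}(X_{\C_p}, \cF_k)^\crit$ from \cite[\S7]{coleman1996} gives the $[f]$-isotypic dimensions: when $f \notin \mathrm{image}(\theta^{k-1})$, the eigensystem $[f]$ is absent from $M_k^\aord$, so $\dim M_k^{\dagger,\crit}_{[f]} = 1$ (realized by $\zeta_k(f)$ alone) and $H^1(\BO_k^\bullet)_{[f]} = 0$; when $f \in \mathrm{image}(\theta^{k-1})$, one gets $\dim M_k^{\dagger,\crit}_{[f]} = 2$ with $H^0$ and $H^1$ each one-dimensional on $[f]$.

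For $(1) \Leftrightarrow (2)$ in weight $k$, I would specialize Theorem \ref{thm: main dR intro} along $\phi_k$ to realize $\bH^1_\dR(X_{\Q_p}, \cF_k)$ as a filtered $\varphi$-module in which the Hodge line is cut out by $S_k^\tord$ and $\theta^{k-1}(M_k^\aord)$ is the sub-$\varphi$-module of critical slope $\beta_f$ (valuation $k-1$), with the unit-root subspace of slope $\alpha_f$ as a canonical complement. The crystalline comparison identifies the $[f]$-isotypic part of $\bH^1_\dR$ with $D_\cris(\rho_f|_{G_p})$, and under weak admissibility the decomposability of $\rho_f|_{G_p}$ is equivalent to splitness of this filtered $\varphi$-module, i.e.\ to the coincidence of the Hodge line with the $\alpha_f$-eigenline of $\varphi$; under the identification above this is precisely $\zeta_k(f) \in \mathrm{image}(\theta^{k-1})$, which is $(1)$.

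In the Hida-family case, $(1) \Leftrightarrow (3)$ is immediate from Theorem \ref{thm: main BO intro}(1) and (6--7): by $\bT_\Lambda^\BO \cong \bT_\Lambda^\aord \otimes_{\bT_\Lambda^\crit} \bT_\Lambda^\tord$, a twist-ordinary Hida family $\cF$ supports $H^*(\BO_\Lambda^\bullet)$ iff $\cF$ also factors through $\bT_\Lambda^\aord$, which by $(\ast_\Lambda)$ is iff $\cF \in \mathrm{image}(\Theta_\Lambda)$. For $(1) \Leftrightarrow (2)$, I would apply the formal $\Lambda$-adic filtered $\varphi$-module of Theorem \ref{thm: main dR intro} to run the weight-$k$ argument family-wise, concluding that splitness of the $\cF$-isotypic part of $\dR_\Lambda$ is simultaneously equivalent to decomposability of $\rho_\cF|_{G_p}$ and to $\cF \in \mathrm{image}(\Theta_\Lambda)$.

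The main obstacle is the $\Lambda$-adic $(1) \Leftrightarrow (2)$. The formal variable $p^{\kappa-1}$ appearing in Theorem \ref{thm: main dR intro} obstructs a naive $\Lambda$-adic Fontaine equivalence on $\dR_\Lambda$, so a pointwise-plus-density route is cleaner: both loci under consideration ($\mathrm{image}(\Theta_\Lambda)$ and the decomposability locus, the latter cut out by the vanishing of a $\Lambda$-adic extension class for $\rho_\cF|_{G_p}$) are Zariski-closed subsets of $\Spec \bT_\Lambda^\crit$, and they coincide on the set of classical points of weight $\geq 3$ by the already-proven classical case; since these classical points are Zariski-dense, the two closed loci agree as subschemes. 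Verifying that ``decomposable $\rho_\cF|_{G_p}$'' really is cut out scheme-theoretically by this Zariski closure (rather than by a larger or smaller condition) is where the bulk of the work lies.
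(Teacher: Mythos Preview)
Your plan for $(1)\Leftrightarrow(3)$ in both the classical and Hida-family settings is correct and essentially the paper's argument (Corollary~\ref{cor: BO is to and ao}, Proposition~\ref{prop: coleman non-split}, Proposition~\ref{prop: BO T to forms duality}).

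For the classical $(1)\Leftrightarrow(2)$ your strategy---reduce to splittability of the filtered $\varphi$-module $D_\cris(\rho_f\vert_{G_p})\cong\dR_f$---is the paper's strategy (Lemma~\ref{lem: isocrystal splittability} plus Proposition~\ref{prop: isocrystal and galois rep}), but your description of $\dR_f$ is wrong in a way that matters. The image of $\theta^{k-1}$, i.e.\ $M_k^\aord$, is the \emph{kernel} of Coleman's presentation $M_k^{\dagger,\crit}\to\dR_k$; it is not a sub-$\varphi$-module of $\dR_k$. In the decomposition $\dR_k\cong M_k^\ord\oplus\cH_k^{1,\ord}$, the $\varphi$-slope-$(k-1)$ line is $\KS_k(\langle f_\alpha\rangle)\subset\KS_k(M_k^\ord)$ and the unit-root line is the image of $\langle f_\beta\rangle$ in $M_k^{\dagger,\crit}/M_k^\aord$. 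The Hodge line is $\KS_k(\langle f\rangle)$ with $f$ the level-$N$ form, not $f_\beta$. Since $f=(\beta f_\alpha-\alpha f_\beta)/(\beta-\alpha)$, the Hodge line coincides with a $\varphi$-eigenline iff it equals $\KS_k(\langle f_\alpha\rangle)$ (coincidence with the unit-root line is forbidden by weak admissibility), and this happens iff $\KS_k(f_\beta)=0$, i.e.\ iff $f_\beta\in M_k^\aord$. So the conclusion you want is right, but the route you wrote---Hodge line meets the $\alpha_f$-eigenline, and the $\beta_f$-eigenline is $\theta^{k-1}(M_k^\aord)$---has both identifications backwards.

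For the Hida-family $(2)\Rightarrow(1)$, your Zariski-closure plan runs into exactly the obstacle you name, and the paper does not try to resolve it that way. Instead it invokes a concrete lemma of Ghate--Vatsal (Lemma~\ref{lem: GV splitting}): $\rho_\cF\vert_{G_p}$ is decomposable over $Q(S)$ iff it is decomposable at all but finitely many height-1 primes of $S$. Granting this, the already-proven classical case gives bi-ordinarity at cofinitely many weights $k\geq 3$, and then Proposition~\ref{prop: BO T to forms duality} (the kernel of $\bT_\Lambda^\SBO\rsurj\bT[U']_\Lambda(H^0(\SBO_\Lambda^\bullet))$ is $\Lambda$-torsion) forces the eigensystem of $\cF$ through $\bT[U']_\Lambda(H^0(\SBO_\Lambda^\bullet))$, hence $\cF\in\mathrm{image}(\Theta_\Lambda)$. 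This sidesteps the need to prove that decomposability is a Zariski-closed condition on $\Spec\bT_\Lambda^\tord$.
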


\subsection{Deformation theory of bi-ordinary and critical Galois representations}

Next we specify universal Galois deformation problems that we expect to characterize the Galois representations arising from $\bT_\Lambda^\BO$ and $\bT_\Lambda^\crit$. This occurs in the classical and broadly known case of ordinary deformation rings. For simplicity, we work in the case where the residual representation $\rho : G_\Q \to \GL_2(\F)$ satisfies conditions under which its universal ordinary deformation ring $R^\ord$ is known to satisfy $R^\ord \cong (\bT_\Lambda^\ord)_\rho$, where $(-)_\rho$ designates localization at the residual Hecke eigensystem corresponding to $\rho$: Taylor--Wiles conditions and $p$-distinguishedness. The twist- and anti-ordinary conditions are straightforward twists of the ordinary condition, formulated by Mazur \cite[\S2.5]{mazur1989}, which insists that a deformation $\tilde \rho$ of $\rho$ becomes reducible upon its restriction $\tilde \rho\vert_{G_p}$ to a decomposition group $G_p \subset G_\Q$ at $p$, with an unramified quotient. 

The bi-ordinary condition, which adds decomposability to the reducibility condition, is also standard. It was first discussed directly by Ghate--Vatsal \cite{GV2011}, who also proved the following characterization of Hida families supporting bi-ordinary Galois representations \cite{GV2004}. We express their result in the language of the bi-ordinary complex, using ``CM'' to stand for ``complex multiplication,'' and deduce implications. 

\begin{thm}[{\cite[Thm.\ 3]{GV2004}, Corollary \ref{cor: SBO is CM}}]
    \label{thm: intro GV}
    Under the Taylor--Wiles and $p$-distinguished hypotheses on $\rho$, $H^0(\SBO_\Lambda^\bullet)_\rho$ is naturally isomorphic to the CM twist-ordinary forms $(S_\Lambda^{\tord,\CM})_\rho \subset (S_\Lambda^\tord)_\rho$. In addition, $T_1(H^1(\SBO_\Lambda^\bullet))_\rho$, which is isomorphic to $\ker(\bT_{\Lambda,\rho}^\SBO \rsurj \bT_{\Lambda, \rho}^{\tord,\CM})$, is supported exactly at the non-CM Hecke eigensystems with residual eigensystem $\rho$ supporting a bi-ordinary Galois representation. 
\end{thm}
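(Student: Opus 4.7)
The strategy chains Ghate-Vatsal's theorem with Theorems \ref{thm: main BO intro} and \ref{thm: BO=split intro}, then upgrades eigensystem-level statements to module-level isomorphisms via the Taylor-Wiles plus $p$-distinguished hypotheses.

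For the $H^0$-identification, the plan is as follows. First, the cuspidal variant of Theorem \ref{thm: main BO intro}(6) identifies Hecke eigensystems supporting $H^0(\SBO_\Lambda^\bullet)_\rho$ as twist-ordinary cuspidal eigensystems lying in a bi-ordinary Hida family. Second, the Hida-family version of Theorem \ref{thm: BO=split intro} characterizes these as the twist-ordinary cuspidal Hida families $\cF$ for which $\rho_\cF\vert_{G_p}$ is reducible and decomposable. Third, invoke \cite[Thm.\ 3]{GV2004}: any such family must be CM. Conversely, every CM twist-ordinary cuspidal Hida family is bi-ordinary --- since $p$ splits in the imaginary quadratic CM field, the critical $p$-stabilization of any cuspidal CM eigenform $f$ lies in the image of $\theta^{k-1}$, yielding a unique anti-ordinary companion $g_f$ with $\theta_\Lambda(g_f) = \zeta_\Lambda(f)$ (uniqueness by injectivity of $\theta_\Lambda$ in $(\ast_\Lambda)$), so $f \mapsto (g_f, -f)$ defines a Hecke-equivariant map $(S_\Lambda^{\tord,\CM})_\rho \to H^0(\SBO_\Lambda^\bullet)_\rho$. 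Under Taylor-Wiles and $p$-distinguishedness both sides are free of rank one over $(\bT_\Lambda^{\tord,\CM})_\rho$ by the standard freeness theorems for cuspidal Hida modules (using, on the right, the Hecke algebra identification just established at the eigensystem level), so this map is an isomorphism.

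For the $T_1(H^1)$-identification, specialize the cuspidal version of the Serre duality sequence of Theorem \ref{thm: main BO intro}(3):
\[
0 \to T_1(H^1(\SBO_\Lambda^\bullet)) \to H^1(\SBO_\Lambda^\bullet) \to H^0(\SBO_\Lambda^\bullet)^\vee \to 0.
\]
By Theorem \ref{thm: main BO intro}(5), $\bT_\Lambda^\SBO$ acts faithfully on $H^1(\SBO_\Lambda^\bullet)$; by the $H^0$-claim established above, its action on the quotient factors through $\bT_\Lambda^{\tord,\CM}$. The resulting surjection $\bT_\Lambda^\SBO \rsurj \bT_\Lambda^{\tord,\CM}$ has kernel $I$ annihilating $H^0(\SBO_\Lambda^\bullet)^\vee$ and acting on $T_1(H^1(\SBO_\Lambda^\bullet))$. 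The self-duality pairing of Theorem \ref{thm: main BO intro}(4), combined with the Gorenstein property of $(\bT_\Lambda^\SBO)_\rho$ coming from the Taylor-Wiles framework, then identifies $T_1(H^1(\SBO_\Lambda^\bullet))_\rho$ with $I_\rho$. The support assertion is immediate from Theorem \ref{thm: main BO intro}(7) together with the CM characterization of bi-ordinary Hida families from the first half of the proof.

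The main obstacle is precisely this upgrade from eigensystem-level statements to module isomorphisms. Ghate-Vatsal supplies the Galois-theoretic input and Theorem \ref{thm: main BO intro} supplies the formal homological framework, so the essential remaining work is verifying freeness of the relevant Hida-theoretic modules over their $\rho$-localized Hecke algebras --- a standard consequence of Taylor-Wiles patching applied to the ordinary, twist-ordinary, and anti-ordinary deformation problems, but requiring care to transport freeness through the quotients and duals appearing in the Serre-duality sequence, and to keep track of which pieces remain $\Lambda$-torsion.
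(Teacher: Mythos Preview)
Your overall strategy—Ghate--Vatsal plus the bi-ordinary structure theory—is the right one, and your construction of the map $f \mapsto (g_f,-f)$ is exactly the inverse of the paper's natural injection $H^0(\SBO_\Lambda^\bullet) \hookrightarrow S_\Lambda^\tord$ (projection to the $S_\Lambda^\tord$-summand). But the ``upgrade to module-level isomorphisms'' that you flag as the obstacle is handled differently in the paper, and your proposed mechanism for it has a genuine gap.

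For $H^0$: you invoke rank-one freeness of both sides over $(\bT_\Lambda^{\tord,\CM})_\rho$. The paper does not need this. Instead, the image of $H^0(\SBO_\Lambda^\bullet)$ in $S_\Lambda^\tord$ is $\Lambda$-\emph{saturated}: if $\lambda f$ lies in the image, then $\lambda\,\zeta_\Lambda(f) \in \Theta_\Lambda(M_\Lambda^\aord)$, and since $M_\Lambda^\crit/\Theta_\Lambda(M_\Lambda^\aord) \cong \cH^{1,\ord}_\Lambda$ is $\Lambda$-flat by $(\ast_\Lambda)$, the submodule $\Theta_\Lambda(M_\Lambda^\aord)$ is saturated, so $\zeta_\Lambda(f)$ already lies in it. Likewise $S_\Lambda^{\tord,\CM}$ is $\Lambda$-saturated in $S_\Lambda^\tord$. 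After $\otimes_\Lambda Q(\Lambda)$ both submodules are the span of the CM eigensystems (this is the eigensystem-level matching from Ghate--Vatsal and Theorem~\ref{thm: BO equals split in Hida families}), hence they coincide over $\Lambda$. This is Proposition~\ref{prop: equivalent CM conditions}; no freeness is required.

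For $T_1(H^1)$: here your argument breaks. You invoke ``the Gorenstein property of $(\bT_\Lambda^\SBO)_\rho$ coming from the Taylor--Wiles framework,'' but there is no such result. Taylor--Wiles patching gives complete-intersection/Gorenstein conclusions for the ordinary (equivalently twist- and anti-ordinary) problems, where $\ell_0 = 0$; the bi-ordinary problem has $\ell_0 = 1$, and $\bT_\Lambda^\SBO$ is not even $\Lambda$-flat in general—its $\Lambda$-torsion is precisely the $T_1(H^1(\SBO_\Lambda^\bullet))$ you are trying to identify. The paper's route is Proposition~\ref{prop: BO T to forms duality} (referenced in the proof of Theorem~\ref{thm: main BO intro}(5)--(7)): from the $a_1$-duality $(\SBO_\Lambda^\bullet)^\vee[-1] \cong \psi_\Lambda^\circ$ one gets a short exact sequence
\[
0 \to \Ext^1_\Lambda(H^1(\SBO_\Lambda^\bullet),\Lambda) \to \coker\psi_\Lambda^\circ \cong \bT_\Lambda^\SBO \to \Hom_\Lambda(H^0(\SBO_\Lambda^\bullet),\Lambda) \to 0,
\]
and the Serre self-duality of Theorem~\ref{thm: SD for BO} identifies the $\Ext^1$ term with $T_1(H^1(\SBO_\Lambda^\bullet))$. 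A short faithful-action argument then shows this lands exactly on the kernel of $\bT_\Lambda^\SBO \twoheadrightarrow \bT[U'](H^0(\SBO_\Lambda^\bullet))$. Combined with the $H^0$ identification just established, this gives $T_1(H^1(\SBO_\Lambda^\bullet))_\rho \cong \ker(\bT_{\Lambda,\rho}^\SBO \twoheadrightarrow \bT_{\Lambda,\rho}^{\tord,\CM})$ without any Gorenstein input on $\bT_\Lambda^\SBO$.
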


Using these results, we can use Theorem \ref{thm: main BO intro}(7) to interpret Question \ref{ques: CG} in terms of bi-ordinary cohomology. 

\begin{cor}[{Corollary \ref{cor: T1H1 is exceptional}}]
    Assume that all of the bi-ordinary Hida families (that is, in  $H^0(\SBO_\Lambda^\bullet)$) are either Eisenstein or CM, as we know is true on the local components with residual Hecke eigensystems $\rho$ as in Theorem \ref{thm: intro GV}. Then the answer to Question \ref{ques: CG}  is ``yes'' if and only if $T_1(H^1(\BO_\Lambda^\bullet))$ has support away from weights $k \in \Z \smallsetminus \{1\}$. 
\end{cor}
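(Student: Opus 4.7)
The plan is to compare the specialization of $\SBO_\Lambda^\bullet$ at an integer weight $k$ to the classical $\SBO_k^\bullet$ via a universal-coefficient-type sequence, and to identify the resulting $\Tor$ term with the $\ker(\phi_k)$-torsion of $T_1(H^1(\BO_\Lambda^\bullet))$. Because $\SBO_\Lambda^\bullet$ is a two-term complex of flat $\Lambda$-modules (Theorems \ref{thm: main intro} and \ref{thm: main extension intro}), its derived specialization along $\phi_k : \Lambda \to \Z_p$ coincides with the ordinary one. Breaking the differential of $\SBO_\Lambda^\bullet$ as $\SBO_\Lambda^0 \rsurj I \rinj \SBO_\Lambda^1$ and tensoring the resulting two short exact sequences with $\Z_p$ regarded as a $\Lambda$-algebra via $\phi_k$ produces the Hecke-equivariant exact sequence
\[
0 \to H^0(\SBO_\Lambda^\bullet) \otimes_{\Lambda, \phi_k} \Z_p \to H^0(\SBO_k^\bullet) \to \Tor_1^\Lambda\bigl(H^1(\SBO_\Lambda^\bullet), \Z_p\bigr) \to 0.
\]

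To identify the $\Tor$ term, I would use Theorem \ref{thm: main BO intro}(3): $H^1(\SBO_\Lambda^\bullet) \cong H^1(\BO_\Lambda^\bullet)$, and this cohomology fits into the Serre duality sequence with quotient $H^0(\BO_\Lambda^\bullet)^\vee$. Being a $\Lambda$-linear dual, $H^0(\BO_\Lambda^\bullet)^\vee$ is reflexive, and finitely generated reflexive modules over the regular $2$-dimensional local ring $\Lambda$ are locally free at every height-$1$ prime, so $\Tor_1^\Lambda(H^0(\BO_\Lambda^\bullet)^\vee, \Z_p) = 0$. Therefore the $\Tor$ term simplifies to $\Tor_1^\Lambda(T_1(H^1(\BO_\Lambda^\bullet)), \Z_p)$, which is nonzero exactly when the finitely generated torsion module $T_1(H^1(\BO_\Lambda^\bullet))$ is supported at the height-$1$ prime $\ker(\phi_k)$.

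Both terms of the universal-coefficient sequence then admit a Hecke-module interpretation that yields the \emph{iff}. The left term records specializations of bi-ordinary $\Lambda$-adic Hida families, which by hypothesis are Eisenstein or CM; the right term, by Theorem \ref{thm: main BO intro}(7), records precisely the exceptional bi-ordinary eigensystems at weight $k$. If $T_1(H^1(\BO_\Lambda^\bullet))$ avoids $\ker(\phi_k)$ for every $k \in \Z_{\geq 2}$, then $H^0(\SBO_k^\bullet) = H^0(\SBO_\Lambda^\bullet) \otimes_{\Lambda, \phi_k} \Z_p$ consists entirely of Eisenstein or CM specializations, giving an affirmative answer to Question \ref{ques: CG}. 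For the converse, any exceptional bi-ordinary eigensystem must be non-CM: a $p$-ordinary CM cusp eigenform of weight $\geq 2$ extends to a CM Hida family, which is automatically bi-ordinary because the CM type forces $p$ to split in the CM field, so it contributes to $H^0(\SBO_\Lambda^\bullet)$ rather than to $T_1$. Hence any $k \in \Z_{\geq 2}$ in the support of $T_1$ contributes a non-CM bi-ordinary weight-$k$ cusp form in $H^0(\SBO_k^\bullet)$. The possibility of $T_1$-support at integer $k \leq 0$, though outside the direct scope of Question \ref{ques: CG}, should correspond via the twist $k \mapsto 2 - k$ built into $M_\Lambda^\aord$ to an exceptional eigensystem at $k' = 2 - k \geq 2$; the fixed point $k = 1$ of this twist is the genuinely excluded weight.

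The main obstacle is the reflexivity-based $\Tor$-vanishing of Step 2, together with the classical input that $p$-ordinary CM forms always extend to bi-ordinary CM Hida families. A secondary subtlety is bookkeeping around Eisenstein contributions so that the ``CM'' of Question \ref{ques: CG} is matched consistently with the ``Eisenstein or CM'' of the hypothesis; this ought to be harmless after localizing at individual residual Hecke eigensystems as in Theorem \ref{thm: intro GV}.
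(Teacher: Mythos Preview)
Your approach matches the paper's: the key short exact sequence
\[
0 \to H^0(\SBO_\Lambda^\bullet) \otimes_{\Lambda, \phi_k} \Z_p \to H^0(\SBO_k^\bullet) \to \Tor_1^\Lambda\bigl(H^1(\SBO_\Lambda^\bullet), \Z_p\bigr) \to 0
\]
is exactly what the paper writes down, and the interpretation of the two outer terms is the same.

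The only difference is in how you kill the $\Tor_1$ contribution of the flat part of $H^1(\SBO_\Lambda^\bullet)$. You pass through the Serre-duality sequence and argue that $H^0(\BO_\Lambda^\bullet)^\vee$ is reflexive, hence locally free, hence $\Tor$-acyclic. The paper instead invokes directly that $H^1(\SBO_\Lambda^\bullet)$ has projective dimension $\le 1$ (Lemma~\ref{lem: PD of H}), so $\Tor_1^\Lambda(H^1(\SBO_\Lambda^\bullet),\Lambda/P)$ is the $P$-torsion of $H^1$, which lives entirely in $T_1(H^1(\SBO_\Lambda^\bullet))$. Your route is correct (and in fact one can shortcut it further: $H^0(\BO_\Lambda^\bullet)$ is $\Lambda$-flat by Theorem~\ref{thm: main BO intro}(2), hence projective, so its $\Lambda$-dual is projective and $\Tor_1$ vanishes for free), but the projective-dimension argument is the more direct one. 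One small caveat: $\Lambda = \Z_p\lb \Z_p^\times\rb$ is semi-local rather than local, so your reflexivity statement should be applied on each component $\Z_p\lb T\rb$; this is harmless. Your added discussion of the converse (CM forms always extend to bi-ordinary CM Hida families) and of the $k\leftrightarrow 2-k$ symmetry goes beyond the paper's terse proof but is in the right spirit.
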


In contrast, our formulation of a critical deformation problem is novel. The Galois representations of critical overconvergent \emph{eigenforms} $f$ has been understood since work of Kisin \cite{kisin2003}: even though $\rho_f\vert_{G_p}$ may not be crystalline, it admits a crystalline $G_p$-quotient with a crystalline eigenvalue matching its $U$-eigenvalue. But in order to do deformation theory, we must understand the Galois-theoretic interpretation of critical overconvergent \emph{generalized} eigenforms and the nilpotent elements they produce in $\bT_\Lambda^\crit$. This kernel of interest is exactly the kernel of $\psi_\Lambda$, which controls the extension class of ($\ast_\Lambda$). 

The explicit description of the Galois representation associated to generalized eigenforms with  CM eigensystems by C.-Y.\ Hsu \cite{hsu2020}, which we recapitulate in Example \ref{eg: hsu}, provides the template for our deformation condition. As we discuss in \S\ref{subsec: Gal rep discussion}, the idea we extract from Example \ref{eg: hsu} is that one should abandon the \emph{reducible} condition on $\tilde \rho\vert_{G_p}$ and instead define the critical deformation problem by insisting that the \emph{pseudorepresentation} of $\tilde \rho\vert_{G_p}$ is reducible. This implies reducibility after taking the quotient by a square-nilpotent ideal. For the Galois representation supported by $\bT_\Lambda^\crit$, we expect that this ideal is $\ker \psi_\Lambda$. 

\begin{thm}[{Theorems \ref{thm: R=T BO}, \ref{thm: R=T crit}}]
    \label{thm: R=T intro}
    If $R^\ord \cong (\bT_\Lambda^\ord)_\rho$, then $R^\BO \cong (\bT_\Lambda^\BO)_\rho$ and $R^\crit \cong (\bT_\Lambda^\crit)_\rho$. 
\end{thm}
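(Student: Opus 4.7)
The plan proceeds in four stages: the twist-ordinary and anti-ordinary cases reduce to the ordinary hypothesis via twisting; the critical surjection is Kisin-theoretic; the critical injectivity is the main obstacle; and the bi-ordinary case is a formal consequence of the critical one together with Theorem \ref{thm: main BO intro}(1).

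\textbf{Stage 1 (twist- and anti-ordinary $R=T$).} The twist-ordinary and anti-ordinary deformation conditions are pullbacks of the ordinary condition along twists by appropriate powers of the cyclotomic character. Correspondingly, by construction $M_\Lambda^\aord = M_{2-k,\Z_p}^{\dagger,\ord}(k-1)$ carries the weight-$(2-k)$ ordinary Galois pseudorepresentation twisted by $(k-1)$, and $M_\Lambda^\tord$, built from the classical critical subspace, carries a similar twist of the ordinary datum. Hence $(\bT_\Lambda^\tord)_\rho$ and $(\bT_\Lambda^\aord)_\rho$ are twists of $(\bT_\Lambda^\ord)_\rho$ as $\Lambda$-algebras with Galois pseudorepresentations, and the hypothesis $R^\ord \cong (\bT_\Lambda^\ord)_\rho$ transports to $R^\tord \cong (\bT_\Lambda^\tord)_\rho$ and $R^\aord \cong (\bT_\Lambda^\aord)_\rho$.

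\textbf{Stage 2 (surjection $R^\crit \rsurj (\bT_\Lambda^\crit)_\rho$).} I would construct a continuous $G_\Q$-pseudorepresentation $D$ valued in $(\bT_\Lambda^\crit)_\rho$ deforming $\Tr(\rho)$, whose restriction $D|_{G_p}$ is reducible in the critical sense. At each classical specialization $f$ of $M_\Lambda^\crit$, Kisin's theorem \cite{kisin2003} provides a crystalline $G_p$-quotient of $\rho_f$ whose Frobenius eigenvalue matches the $U'$-eigenvalue, yielding reducibility of the $G_p$-pseudorep at a Zariski-dense set of points. Combined with the $\Lambda$-flatness of $M_\Lambda^\crit$ from Theorem \ref{thm: main intro}, this promotes to reducibility of $D|_{G_p}$ globally. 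Since traces of Frobenii generate $(\bT_\Lambda^\crit)_\rho$ by Chebotarev, $D$ induces a surjection $R^\crit \rsurj (\bT_\Lambda^\crit)_\rho$.

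\textbf{Stage 3 (injectivity in the critical case, main obstacle).} Both $(\bT_\Lambda^\crit)_\rho$ and $R^\crit$ admit canonical maps $\psi_\Lambda$ and $\psi_R$ to $(\bT_\Lambda^\aord)_\rho \times (\bT_\Lambda^\tord)_\rho \cong R^\aord \times R^\tord$ (identification by Stage 1); the surjection from Stage 2 intertwines them, and on quotients the induced map is an isomorphism by Stage 1. It therefore suffices to show that the surjection on kernels is also an isomorphism. On the Hecke side, the kernel $\ker \psi_\Lambda$ is controlled by $T_1(H^1(\BO_\Lambda^\bullet))_\rho$ via the perfect $\Lambda$-linear duality between the differential of $\BO_\Lambda^\bullet$ and $\psi_\Lambda$ observed in \S\ref{subsec: intro BO}, together with the self-duality pairing of Theorem \ref{thm: main BO intro}(4). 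On the Galois side, $\ker \psi_R$ is the reducibility ideal of $D$ over its maximal reducible quotient and is classified by a Selmer-type $\Ext^1$-group of extensions of the two characters in the reducible $G_p$-pseudorep with global conditions inherited from the tame level $N$. Matching these two modules is the principal difficulty, requiring a Selmer-to-Hecke-torsion comparison in the Mazur--Wiles/Ohta vein, routed through the bi-ordinary complex and its Serre self-duality.

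\textbf{Stage 4 (bi-ordinary $R=T$).} A bi-ordinary deformation is precisely a compatible pair of twist-ordinary and anti-ordinary deformations sharing a common critical pseudorepresentation on $G_p$; equivalently, it is a deformation whose pseudorep is critical and whose representation is actually decomposable. This gives the universal description $R^\BO \cong R^\aord \otimes_{R^\crit} R^\tord$. Together with the Hecke-algebra identity $\bT_\Lambda^\BO \cong \bT_\Lambda^\aord \otimes_{\bT_\Lambda^\crit} \bT_\Lambda^\tord$ of Theorem \ref{thm: main BO intro}(1) and the isomorphisms from Stages 1--3, this yields $R^\BO \cong (\bT_\Lambda^\BO)_\rho$.
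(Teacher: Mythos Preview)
Your Stages~1 and~2 are essentially correct and match the paper. Stage~4 is also correct in spirit, but your dependency order is reversed: in the paper the bi-ordinary case is proved \emph{first}, using only Stage~1 together with the pushout identities $R^\BO \cong R^\tord \otimes_{R^\crit} R^\aord$ and $\bT_\Lambda^\BO \cong \bT_\Lambda^\tord \otimes_{\bT_\Lambda^\crit} \bT_\Lambda^\aord$. Since $R^\crit \rsurj (\bT_\Lambda^\crit)_\rho$ is merely a surjection and the $R^\crit$-module structure on $R^\tord, R^\aord$ factors through it, the tensor products agree without knowing $R^\crit \cong (\bT_\Lambda^\crit)_\rho$. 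The paper then \emph{uses} $R^\BO \cong (\bT_\Lambda^\BO)_\rho$ as an input to the critical case.

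Your Stage~3 contains a genuine error. You claim that $\ker\psi_\Lambda$ is ``controlled by $T_1(H^1(\BO_\Lambda^\bullet))_\rho$,'' but in fact $\ker\psi_\Lambda \cong \Hom_\Lambda(H^1(\BO_\Lambda^\bullet),\Lambda)$ is $\Lambda$-flat and sees only the torsion-free quotient of $H^1$; the torsion $T_1(H^1)$ is instead isomorphic to $\Ext^1_\Lambda(\coker\psi_\Lambda,\Lambda)$. So the proposed ``Selmer-to-Hecke-torsion'' matching targets the wrong object and would not close the argument. The paper's actual mechanism is a dimension count modulo $\m_\Lambda$. Writing bars for reduction mod $\m_\Lambda$: the Euler characteristic of $\SBO_\Lambda^\bullet$ is zero, so $\dim_\F \ker\bar\psi_\Lambda = \dim_\F \coker\bar\psi_\Lambda = \dim_\F \bar\bT_\Lambda^\BO = \dim_\F \bar R^\BO$ (the last equality using the already-established bi-ordinary $R=\bT$). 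On the Galois side, $\ker\psi_\Lambda^R$ is an $R^\BO$-module (it is killed by both the $b$- and $c$-coordinate ideals of $\rho^\crit\vert_{G_p}$), and a Tate-duality argument shows that $\ker\bar\psi_\Lambda^R$ is a quotient of a single local cohomology group $H^1(\Q_p,(\chi^\BO)^*(1))$. Under the $p$-distinguished hypothesis this group has $\F$-dimension exactly $\dim_\F \bar R^\BO$ by the local Euler characteristic formula. Hence $\dim_\F \ker\bar\psi_\Lambda^R \le \dim_\F \bar R^\BO = \dim_\F \ker\bar\psi_\Lambda$, forcing the surjection $\ker\bar\psi_\Lambda^R \rsurj \ker\bar\psi_\Lambda$ to be an isomorphism and hence $R^\crit \cong (\bT_\Lambda^\crit)_\rho$ by Nakayama.
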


This theorem and Theorem \ref{thm: BO=split intro} show that our original motivation for bi-ordinary complexes \cite[\S1.6]{CWE1} is satisfied. 

\subsection{Degree-shifting maps and weight 1 coherent cohomology}
\label{subsec: intro weight 1}

We were motivated to search for a length 1 $\Lambda$-perfect complex of Hecke modules with the properties of $\BO_\Lambda^\bullet$ to answer our question \cite[\S1.6]{CWE1} by the observation that the bi-ordinary (also known as $p$-split) deformation problem represented by $R^\BO$ has $\ell_0$-invariant $1$ in the sense recorded in \cite[\S2.8]{calegari2020}, i.e.\ expected $\Lambda$-codimension $\ell_0 = 1$. The philosophy of derived enrichments to the global Langlands correspondence, proposed by Venkatesh with collaborators \cite{PV2021, GV2018, venkatesh2019, HV2019}, suggests this. In contrast, the $\ell_0$-invariant of $R^\ord$ is $0$. 

In our initial attempts we let $\BO_\Lambda^1$ be the $\Lambda$-saturation of the sum $M_\Lambda^\tord + M_\Lambda^\aord$ within $\Lambda$-adic $q$-series, making its $H^1$ be $\Lambda$-torsion. But this has shortcomings that are addressed by letting $\BO_\Lambda^1 := M_\Lambda^\crit$. We were guided toward the formulation of $\Lambda$-adic critical forms for their purpose in $\BO_\Lambda^\bullet$ by the emerging proposals about the degree-shifting motivic ``hidden action'' of a Stark unit group on weight 1 coherent cohomology of Harris--Venkatesh \cite{HV2019}, further established by Darmon--Harris--Rotger--Venkatesh \cite{DHRV2022}  and R.\,Zhang \cite{robin-zhang-I} in the modulo $p^n$ Taylor--Wiles setting, Horawa \cite{horawa2023} in the complex Hodge setting, and others. In particular, ordinary weight 1 coherent cohomology and the bi-ordinary complex have in common the key $p$-local decomposability property of the Galois representation, which is what accounts for $\ell_0 = 1$. This analogy also showed us that CM forms ought to show up in both cohomological degrees, which led us to focus on Coleman's work on the critical case \cite[\S7]{coleman1996}. 

While the predicted $\Lambda$-codimension $\ell_0 = 1$ of the bi-ordinary cohomology might appear to be in contradiction with the existence of CM Hida families -- which are $\Lambda$-flat and thus of codimension $0$ -- this is accounted for by the existence of a $\Lambda$-flat degree-shifting action of $\ker \psi_\Lambda$. 

\begin{thm}[{Theorem \ref{thm: flat derived action}}]
    \label{thm: flat DS action intro}
        There is a degree-shifting action of $\ker \psi_\Lambda$ on $H^*(\SBO_\Lambda^\bullet)$ realizable as a $\bT_\Lambda^\SBO$-linear isomorphism 
        \[
        \ker \psi_\Lambda \isoto \Hom_{\bT_\Lambda^\SBO}(H^1(\SBO_\Lambda^\bullet), H^0(\SBO_\Lambda^\bullet)).
        \]
\end{thm}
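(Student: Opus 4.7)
My approach is to construct the degree-shifting map $t \mapsto \bar\phi_t$ explicitly by observing that $\ker\psi_\Lambda$ forces multiplication by $t$ on $M_\Lambda^\crit$ to land in the intersection $\theta_\Lambda(M_\Lambda^\aord) \cap \zeta_\Lambda(S_\Lambda^\tord)$, and then to deduce bijectivity from the perfect pairings and ring-theoretic identifications provided by Theorem \ref{thm: main BO intro}.

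First I would establish the crucial containment $t \cdot M_\Lambda^\crit \subseteq \zeta_\Lambda(H^0(\SBO_\Lambda^\bullet))$, where $H^0(\SBO_\Lambda^\bullet) \hookrightarrow S_\Lambda^\tord$ by second-coordinate projection. The inclusion $tM_\Lambda^\crit \subseteq \theta_\Lambda(M_\Lambda^\aord)$ follows from $(\ast_\Lambda)$ because $\cH_\Lambda^{1,\ord} \cong M_\Lambda^\crit/\theta_\Lambda(M_\Lambda^\aord)$ has Hecke action factoring through $\bT_\Lambda^{\tord,\circ}$, on which $\ker\psi_\Lambda$ acts trivially. For the inclusion $tM_\Lambda^\crit \subseteq \zeta_\Lambda(S_\Lambda^\tord)$, one checks that the quotient $M_\Lambda^\crit/\zeta_\Lambda(S_\Lambda^\tord)$ fits in a short exact sequence with sub $\theta_\Lambda(M_\Lambda^\aord)/\zeta_\Lambda(H^0(\SBO_\Lambda^\bullet))$, whose Hecke action factors through $\bT_\Lambda^\aord$, and quotient $H^1(\SBO_\Lambda^\bullet)$, whose action factors through $\bT_\Lambda^\SBO \cong \bT_\Lambda^\aord \otimes_{\bT_\Lambda^\crit} \bT_\Lambda^{\tord,\circ}$ by Theorem \ref{thm: main BO intro}(1). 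Since $\ker\psi_\Lambda$ maps to $0$ in each factor, it maps to $0$ in the tensor product and hence annihilates both the sub and the quotient, giving the second inclusion. Intersecting, $t M_\Lambda^\crit$ lies in $\theta_\Lambda(M_\Lambda^\aord) \cap \zeta_\Lambda(S_\Lambda^\tord) = \zeta_\Lambda(H^0(\SBO_\Lambda^\bullet))$, using the defining description of $H^0(\SBO_\Lambda^\bullet)$ as pairs $(a,b)$ with $\theta_\Lambda a + \zeta_\Lambda b = 0$.

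The degree-shifting map is then built as follows. For each $x \in M_\Lambda^\crit$, injectivity of $\theta_\Lambda$ and $\zeta_\Lambda$ singles out a unique pair $(a_t(x), -b_t(x)) \in H^0(\SBO_\Lambda^\bullet)$ with $\theta_\Lambda a_t(x) = tx = \zeta_\Lambda b_t(x)$; call this pair $\phi_t(x)$. The map $\phi_t : M_\Lambda^\crit \to H^0(\SBO_\Lambda^\bullet)$ is $\Lambda$-linear and Hecke-equivariant, and vanishes on the image of $d = \theta_\Lambda + \zeta_\Lambda$ because $t$ annihilates $M_\Lambda^\aord \oplus S_\Lambda^\tord$ via $\psi_\Lambda$. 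Hence $\phi_t$ descends to $\bar\phi_t \in \Hom_{\bT_\Lambda^\SBO}(H^1(\SBO_\Lambda^\bullet), H^0(\SBO_\Lambda^\bullet))$, and the assignment $t \mapsto \bar\phi_t$ is visibly $\bT_\Lambda^\SBO$-linear.

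The last step is bijectivity. Since $H^0(\SBO_\Lambda^\bullet)$ is $\Lambda$-flat by Theorem \ref{thm: main BO intro}(2) and $T_1(H^1(\SBO_\Lambda^\bullet))$ is $\Lambda$-torsion, the Serre duality sequence of Theorem \ref{thm: main BO intro}(3) identifies $\Hom_{\bT_\Lambda^\SBO}(H^1(\SBO_\Lambda^\bullet), H^0(\SBO_\Lambda^\bullet))$ with $\Hom_{\bT_\Lambda^\SBO}((H^0(\SBO_\Lambda^\bullet))^\vee, H^0(\SBO_\Lambda^\bullet))$. I then match both $\ker\psi_\Lambda$ and this Hom module against the same $\bT_\Lambda^\SBO$-module structure, using on one side that the cuspidal analog of the differential of $\BO_\Lambda^\bullet$ is $\Lambda$-dual to $\psi_\Lambda$ (which underlies Theorem \ref{thm: main BO intro}(1)) and on the other side the Boxer--Pilloni Serre self-duality of the model $[S_\Lambda^\tord \to \cH_\Lambda^{1,\ord}]$. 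The main obstacle is precisely this final matching: keeping track of the $\bT_\Lambda^\SBO$-module structures on both sides through the torsion/flat decomposition of $H^1$ and ensuring that the explicit construction of $\bar\phi_t$ exhausts the full target, rather than merely a submodule cut out by a restricted self-duality constraint.
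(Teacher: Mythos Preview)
Your construction of the degree-shifting map is correct and in fact more direct than the paper's. The paper first observes that $T \in \ker\psi_\Lambda$ annihilates both $d(\SBO_\Lambda^0)$ and $H^1(\SBO_\Lambda^\bullet)$, so the action of $T$ on $M_\Lambda^\crit$ yields a map $H^1(\SBO_\Lambda^\bullet) \to d(\SBO_\Lambda^0)$; it then needs an eigenform computation over an integral extension of $\Lambda$, followed by descent, to show the image actually lies in $H^0(\SBO_\Lambda^\bullet)$ viewed inside $q\Lambda\lb q\rb$. Your filtration argument on $M_\Lambda^\crit/\zeta_\Lambda(S_\Lambda^\tord)$ lands $tM_\Lambda^\crit$ directly in the intersection $\theta_\Lambda(M_\Lambda^\aord) \cap \zeta_\Lambda(S_\Lambda^\tord)$ and avoids the $q$-series step entirely; this is a genuine simplification.

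The real gap is exactly where you flag it: bijectivity. Your reduction of the target to $\Hom_{\bT_\Lambda^\SBO}((H^0)^\vee, H^0)$ is correct, but the final matching is not a bare duality comparison. The paper's argument goes as follows. Set $R := \bT[U']_\Lambda(H^0(\SBO_\Lambda^\bullet))$, a finite flat $\Lambda$-algebra with $H^0(\SBO_\Lambda^\bullet) \cong R^\vee$ (Proposition~\ref{prop: BO T to forms duality}); thus $H^0(\SBO_\Lambda^\bullet)$ is a dualizing module $\omega_R$. One also has the $R$-module isomorphism $H^1(\SBO_\Lambda^\bullet)/T_1 \simeq R$ from Corollary~\ref{cor: SD on HSBO}. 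Hence the source $\ker\psi_\Lambda \cong (H^1(\SBO_\Lambda^\bullet))^\vee \cong R^\vee \cong \omega_R$ (Lemma~\ref{lem: a1 duality for BO}) and the target $\Hom_R(R, \omega_R) \cong \omega_R$ are both $R$-isomorphic to $\omega_R$. The missing idea is the dualizing-module fact $\End_R(\omega_R) = R$: after choosing identifications, $\delta$ becomes multiplication by some $r \in R$. Injectivity of $\delta$ (seen by composing with $a_1 : H^0(\SBO_\Lambda^\bullet) \to \Lambda$, which recovers the isomorphism $\ker\psi_\Lambda \cong (H^1)^\vee$) together with equality of $\Lambda$-ranks gives finite cokernel; since $R$ is $\Lambda$-flat of Krull dimension $2$, Krull's principal ideal theorem rules out a nonzero finite cokernel for multiplication by a nonunit, so $r$ is a unit.
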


Naturally, we also wanted to know to what extent weight 1 coherent cohomology is a $p$-adic degeneration of the bi-ordinary complex to this uniquely singular weight, and to look for the action of the Stark unit group. We can use $R^\crit \cong (\bT_\Lambda^\crit)_\rho$ from Theorem \ref{thm: R=T intro} to relate $\ker \psi_\Lambda$ to arithmetic. 

Due to Theorem \ref{thm: intro GV}, under the Taylor--Wiles and $p$-distinguished hypotheses on the residual eigensystem $\rho$, the target of the action of Theorem \ref{thm: flat DS action intro} vanishes unless $\rho$ has CM, i.e.\ is induced from a character of an imaginary quadratic field $K/\Q$. So we put ourselves in the CM case, letting $\rho \simeq \Ind_K^\Q \eta$. Let $\eta^-$ denote $\eta (\eta^c)^{-1}$, the ratio of $\eta$ with its composition with the action of complex conjugation. Among the objects of the anti-cyclotomic of Iwasawa theory over $K$ is the $\eta^-$-branch of the global unit group $\cE_\infty^-(\eta^-)$ and the (unramified) class group $\cX_\infty^-(\eta^-)$. When $f$ is as in the statement below, let $U_f$ denote a Stark unit group associated to its trace-zero adjoint Galois representation $\Ad^0 \rho_f$. 

\begin{thm}[{Theorems \ref{thm: elliptic action} and \ref{thm: classical Stark unit action}}]
Under the Taylor--Wiles and $p$-distinguished hypotheses on $\rho \simeq \Ind_K^\Q \eta$ and a choice of a $T_p$-ordinary cuspidal weight 1 CM eigenform $f$ of level $\Gamma_1(N)$ defined over an integer ring $\cO_E/\Z_p$ with $U_p$-stabilization in the congruence class of $\rho$, the action of Theorem \ref{thm: flat DS action intro} gives rise to two actions
\begin{gather*}
    \cE_\infty^-(\eta^-) \lrisom \Hom_{\bT[U']_\Lambda}(H^1(\SBO_\Lambda^\bullet)_\rho, H^0(\SBO_\Lambda^\bullet)_\rho) \cong \Hom_{\bT[U']_\Lambda}(\cH^{1,\ord}_{\Lambda,\CM,\rho}, S_{\Lambda,\rho}^{\tord,\CM}) \\ 
    U_f \rinj \Hom_{\cO_E}(e(T_p)H^1(X_{\cO_E},\omega)_f, e(T_p)H^0(X_{\cO_E},\omega(-C))_f),
\end{gather*}
where the construction of the latter action (of $U_f$) is conditioned upon the vanishing of $\cX_\infty^-(\eta^-)$. 
\end{thm}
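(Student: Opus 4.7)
The plan is to first establish the $\Lambda$-adic action by identifying $\ker \psi_\Lambda$ localized at $\rho$ with the anticyclotomic unit module $\cE_\infty^-(\eta^-)$, and then to descend to weight $1$ under the $\cX_\infty^-(\eta^-) = 0$ hypothesis.

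For the first isomorphism, I would begin by localizing the degree-shifting action of Theorem \ref{thm: flat DS action intro} at the residual eigensystem $\rho \simeq \Ind_K^\Q \eta$. Under Taylor--Wiles and $p$-distinguishedness, Theorem \ref{thm: R=T intro} realizes $(\bT_\Lambda^\crit)_\rho$ as the universal critical deformation ring $R^\crit$, and identifies $\psi_\Lambda$ with the natural surjection $R^\crit \rsurj R^\BO$ onto the bi-ordinary quotient. By the very design of the critical deformation condition -- which demands only reducibility of the pseudorepresentation at $p$ rather than of the representation itself -- its kernel is the square-nilpotent ideal measuring the failure of bi-ordinary deformations to be $p$-locally split after pseudo-deformation. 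In the induced/CM setting this is computed by a Galois cohomology group controlling extensions of $\eta^c$ by $\eta$ with prescribed local-at-$p$ behaviour, i.e.\ a Selmer group for $\eta^-$ up the anticyclotomic $\Z_p$-tower. Using Poitou--Tate duality and the inflation--restriction sequence (with the $p$-distinguished hypothesis untwisting the local condition at $p$), this Selmer group is identified with $\cE_\infty^-(\eta^-)$ via the standard anticyclotomic unit calculation. Composing this identification with Theorem \ref{thm: flat DS action intro} gives the asserted isomorphism with $\Hom_{\bT[U']_\Lambda}(H^1(\SBO_\Lambda^\bullet)_\rho, H^0(\SBO_\Lambda^\bullet)_\rho)$. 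The second isomorphism in the first display then follows from the quasi-isomorphism $\SBO_\Lambda^\bullet \simeq [S_\Lambda^\tord \to \cH_\Lambda^{1,\ord}]$ from Subsection \ref{subsec: intro BO}, together with Theorem \ref{thm: intro GV} which identifies $H^0(\SBO_\Lambda^\bullet)_\rho$ with $(S_\Lambda^{\tord,\CM})_\rho$ and cuts out the CM branch on the Hodge quotient.

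For the second action, the plan is to specialize the first along $\phi_1 : \Lambda \to \cO_E$. On the coherent side, Boxer--Pilloni's specialization theorem for $\cH_\Lambda^{1,\ord}$ at weight $1$ produces $e(T_p)H^1(X_{\cO_E},\omega)$, and the cuspidal twist-ordinary Hida module, after the anti-ordinary twist and the CM projection, specializes to $e(T_p)H^0(X_{\cO_E},\omega(-C))$; localizing further at $f$ turns the $\Lambda$-adic $\Hom$ into the classical $\Hom$ over $\cO_E$. On the arithmetic side, the hypothesis $\cX_\infty^-(\eta^-) = 0$ breaks the link between global units and the class group in the anticyclotomic main conjecture, ensuring that $\cE_\infty^-(\eta^-)$ has no $\Lambda$-torsion on the $\rho$-component, so that its specialization along $\phi_1$ injects into the Stark-style unit group $U_f$ attached to $\Ad^0 \rho_f$ via the relevant reciprocity maps. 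The composite of these specializations produces the injection $U_f \rinj \Hom_{\cO_E}(e(T_p)H^1(X_{\cO_E},\omega)_f, e(T_p)H^0(X_{\cO_E},\omega(-C))_f)$.

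The main obstacle will be the descent to weight $1$, for two linked reasons: weight $1$ is the boundary of weight space where Hida's classical control theorem for $S_\Lambda^\tord$ is not automatic, and the map from the weight-$1$ specialization of $\cE_\infty^-(\eta^-)$ to $U_f$ must be shown to be injective after localization at $f$. Both difficulties are handled by the vanishing $\cX_\infty^-(\eta^-) = 0$: it kills the torsion that would obstruct flat descent along $\phi_1$ and, through the anticyclotomic main conjecture, matches $U_f$ with the exact weight-$1$ specialization of the anticyclotomic unit module, up to the duality pinned down in Theorem \ref{thm: flat DS action intro}.
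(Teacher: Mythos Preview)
Your overall strategy --- invoke $R=\bT$ for the critical ring, compute the kernel via Galois cohomology in the anticyclotomic tower, then specialize to weight~1 --- matches the paper's approach. However, there is a concrete error and a missing structural mechanism.

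\textbf{The misidentification of $\psi_\Lambda$.} You write that $\psi_\Lambda$ is ``the natural surjection $R^\crit \rsurj R^\BO$.'' It is not: $\psi_\Lambda$ is the map $\bT_\Lambda^\crit \to \bT_\Lambda^\aord \times \bT_\Lambda^\tord$ (equivalently, $\psi_\Lambda^R : R^\crit \to R^\tord \times R^\aord$). The kernel of $R^\crit \to R^\BO$ is strictly larger than $\ker\psi_\Lambda^R$; for instance it contains $\ker(R^\crit \to R^\tord)$, which is governed by $\cY_\infty^-(\eta^-)$, not $\cE_\infty^-(\eta^-)$. If you compute the wrong kernel you will not obtain the unit module.

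\textbf{The missing Meyer--Vietoris mechanism.} The paper does not reach $\cE_\infty^-(\eta^-)$ by a single Selmer-group computation and Poitou--Tate. Rather, it places $\ker\psi_\Lambda^R$ in a four-term exact sequence of conormal modules
\[
0 \to \ker\psi_\Lambda^R \to J^\crit/(J^\crit)^2 \to J^\tord/(J^\tord)^2 \oplus J^\aord/(J^\aord)^2 \to J^\BO/(J^\BO)^2 \to 0
\]
arising from the pushout $R^\tord \otimes_{R^\crit} R^\aord \cong R^\BO$, and matches this term-by-term (via \cite{CWE1}) with a Meyer--Vietoris sequence in support-conditioned Galois cohomology
\[
0 \to \cE_\infty^-(\eta^-) \to \frX_\infty^-(\eta^-) \to \cY_\infty^-(\eta^-) \oplus \cY_\infty^-(\eta^-)^* \to \cX_\infty^-(\eta^-) \to 0.
\]
Your phrase ``standard anticyclotomic unit calculation'' hides this; without the four-term sequence and the conormal identifications for all of $\crit,\tord,\aord,\BO$ simultaneously, there is no way to single out $\cE_\infty^-(\eta^-)$ as opposed to the larger class-group modules.

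\textbf{For the weight-1 descent}, your sketch is close, but you omit a necessary input: the paper uses the fact (via Calegari--Specter) that the Galois representations attached to weight-1 coherent cohomology classes in the congruence class of $\rho$ are unramified at $p$, hence bi-ordinary. This is what allows the classical maps $i,j$ of Boxer--Pilloni's weight-1 stabilization to factor through $H^*(\SBO_1^\bullet)_\rho$, so that the $\Lambda$-adic action pulls back along $i^{-1} \circ (\text{action}) \circ \sigma \circ j^{-1}$. The role of $\cX_\infty^-(\eta^-)=0$ is twofold: it forces $T_1(H^1(\SBO_\Lambda^\bullet)_\rho)=0$ (so the $\Lambda$-adic modules are free over $\bT_\Lambda^{\tord,\CM}$ and specialize cleanly), and it kills the $\Tor_1$ term in the spectral sequence relating $\cE_\infty^-(\eta^-)\otimes_{\Lambda_W,\phi_1} W$ to $H^1(\cO_K[1/Np],(\eta^-)^{-1}(1))$, into which Kummer theory embeds $U_f\otimes_{\cO_L}\cO_E$.
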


\begin{rem}
    It is possible that there are congruences between (a $p$-stabilization of) $f$ and \emph{both} twist-ordinary and anti-ordinary non-CM forms, but still $\cX_\infty^-(\eta^-)$ vanishes. 
\end{rem}

\subsection{Outline of the paper}

In section~\ref{sec:modular curves}, we express our conventions for modular curves, recall cohomology theories for modular curves and their relations, and recall results from the higher Hida theory of Boxer--Pilloni \cite{BP2022}. In section~\ref{sec: p-integral cric oc}, we establish the good behavior of the exact sequence ($\ast_k$), which involves comparing $\Z_p$-lattices between coherent, de Rham, and crystalline cohomology of $X$ with weight $k$ coefficients. In section~\ref{sec: construction}, we formulate critical $\Lambda$-adic modular forms, prove its control theorem, and find it in two short exact sequences, one of which is $(\ast_\Lambda)$. We also discuss $\Lambda$-adic interpolation of de Rham cohomology. In section~\ref{sec: bi-ordinary}, we formulate bi-ordinary complexes and study their Serre self-duality and Hecke actions. In section~\ref{sec: galois}, we make initial observations about Galois representations attached to bi-ordinary cohomology, provide a proof of Breuil--Emerton's result \cite[Thm.\ 4.3.3]{BE2010}, and characterize bi-ordinary Hida families in terms of their Galois representations. In section~\ref{sec: deformations}, we specify all of the relevant Galois deformation problems and establish bi-ordinary and critical $R = \bT$ theorems. In section~\ref{sec:deg-shifting}, we discuss Ghate--Vatsal's result \cite{GV2004} that shows that all bi-ordinary Hida families have CM and construct the Stark unit group action on weight 1 $T_p$-ordinary coherent cohomology associated to a weight 1 eigenform. 

Section 9 is a correction to our previous article \cite{CWE1}, specifically, adding an additional assumption to a claim in commutative algebra that appeared as \cite[Prop.\ 6.1.2]{CWE1}. We remark that this correction does not affect those results of \cite{CWE1} which are used in this article. 

\subsection{Acknowledgements} 
It is an honor to dedicate this paper to the memory of Jo\"el Bella\"iche, with gratitude for his personal and mathematical influence. In retrospect, some email exchanges with Jo\"el in 2011 on his landmark work \cite{bellaiche2012L} (constructing $p$-adic $L$-functions for critical forms) played a significant role in shaping FC's later research. CWE is thankful for Jo\"el's mentorship while a postdoc at Brandeis University, and also for introducing Preston Wake to him at the Glenn Stevens birthday conference in 2014. This introduction lead to a number of joint works, with \cite{WWE1} in particular influencing CWE's perspective on the present work. Both authors would like to thank George Boxer, Henri Darmon, Aleksander Horawa, and Preston Wake for helpful conversations. The authors thank Shaunak Deo and Bharathwaj Palvannan for pointing out the mistake in \cite[\S6]{CWE1} that is addressed in \S\ref{sec: correction}. They also thank the anonymous referee for useful comments. During the work on this paper, FC was partially supported by the National Science Foundation through grant DMS-2101458; CWE was partially supported by the Simons Foundation through grant 846912 and the National Science Foundation through grant DMS-2401384.

\section{Modular curves and modular forms}\label{sec:modular curves}

In this section, we recall fundamental notions about modular curves, their cohomology, and modular forms. Foremost are the Hida-theoretic interpolation of ordinary modular forms \cite{hida1986, wiles1988}, higher Hida theory of the modular curve due to Boxer--Pilloni \cite{BP2022}, and the overconvergent modular forms-based presentation ``of the second kind'' of analytic de Rham cohomology due to Coleman \cite{coleman1996}. 

\subsection{Algebraic number theoretic context}
\label{subsec: ANT context}

For use throughout the paper, let $\oQ/\Q$ denote a fixed algebraic closure, and likewise $\oQ_\ell/\Q_\ell$ for each prime number $\ell$. We let these be equipped with embeddings $\oQ \rinj \oQ_\ell$, and the resulting decomposition subgroups of $G_\Q := \Gal(\oQ/\Q)$ be denoted $G_\ell := \Gal(\oQ_\ell/\Q_\ell)$. Accordingly, we fix arithmetic Frobenius elements $\Frob_\ell \in G_\ell \subset G_\Q$. 

We select a distinguished prime $p \geq 5$. Mostly we consider modular curves and modular forms over $\Z_p$, but when we work over number fields, we use the embedding $\oQ \subset \oQ_p$ to pass to the $p$-adic setting. For example, this determines the notion of an ordinary modular form over $\oQ$. 

\subsection{Conventions and notation for modular forms and Hecke algebras}
\label{subsec: conventions}

Let $N \geq 5$ be a positive integer relatively prime to $p$, so that $X = X_1(N)/\Z_p$ is a fine moduli space for generalized elliptic curves. We use the multiplicative (Deligne--Rapoport) model, parameterizing elliptic curves $E$ with injections $\mu_N \rinj E$ in the open locus $Y_1(N)$. We will also use the modular curve $X_0(p) := X(\Gamma_1(N) \cap \Gamma_0(p))/\Z_p$, with its two degeneration maps $\pi_1, \pi_2 : X_0(p) \rsurj X$. Occasionally we will use the additive models $X', X'_0(p)/\Z_p$ ($X' = X'_1(N)$), parameterizing $\Z/N\Z \rinj E$ in the open locus $Y'_1(N)$. We refer to \cite[\S1.4]{FK2012} for details and comparisons of the models. We have the usual modular line bundle $\omega$ over each of these modular curves and the divisor $C$ of the cusps (see e.g.\ \cite[\S2, p.\ 217]{coleman1996}).

We have the following standard Hecke correspondences operating on cohomology theories over $X$ and $X'$: 
\begin{itemize}
\item $T_n$ for $(n,N) = 1$ over $X$, and for $(n,Np) = 1$ over $X_0(p)$
\item $U_\ell$ for primes $\ell \mid N$ over $X$, and for $\ell \mid Np$ over $X_0(p)$
\item We also sometimes write $T_n$ for any $n \in \Z_{\geq 1}$ for the usual (but context-dependent) $n$th Hecke operator, as we explain more in Definition \ref{defn: Tn for general n} below.
\item $\langle d \rangle_N$ for $d \in (\Z/N\Z)^\times$ over both $X$ and $X_0(p)$
\item Dual operators: $T_n^* = T_n \langle n\rangle_N^{-1}$, $U_\ell^*$, $\lr{d}_N^* = \lr{d}_N^{-1}$. 
\end{itemize}

We write $\bT[T_p]$ for the \emph{abstract Hecke algebra of level $\Gamma_1(N)$ over $\Z_p$}, which is the (commutative) polynomial ring generated over $\Z_p$ by the symbols $T_n$, $U_\ell$, and $\langle d\rangle$ above. 

Our notation emphasizes the operator $T_p$ because we will often replace it with operators of $p$-adic origin $U,U'$ by writing $\bT[U]$ or $\bT[U']$. Correspondingly, we write $\bT[\,]$ for the Hecke algebra $\bT$ deprived of $T_p$ and $\bT[T_p,U]$ for the Hecke algebra with both $T_p$ and $U$. And $\bT[-]$ denotes a generic instance of these. Likewise, write $\bT^*[-^*]$ for the analogous dual versions. 

On the other hand, we write $\bT_{\Gamma_0(p)}$ for the abstract Hecke algebra of level $\Gamma_1(N) \cap \Gamma_0(p)$. This has the operator $U_p$ at $p$. We often consider $\bT_{\Gamma_0(p)}$-modules to be $\bT[U]$ modules via the homomorphism $\bT[U]\to \bT_{\Gamma_0(p)}, U \mapsto U_p$, the rest of the symbols $T_n, U_\ell, \lr{d}_N$ matching as usual. Later we specify a different compatibility for $U'$. 

When $M$ is a $\bT[-]$-module, we write $\bT[-](M)$ for the image of $\bT[-]$ in $\End_{\Z_p}(M)$. When $R$ is a $\Z_p$-algebra, $\bT[-]_R$ denotes $\bT[-] \otimes_{\Z_p} R$. 

\begin{defn}
\label{defn: Tn for general n} 
For this definition, let $\bT'$ stand in for the abstract Hecke algebras that have a ``$U$-type'' operator at $p$: $\bT_{\Gamma_0(p)}$, $\bT[U]$, or $\bT[U']$. For notational simplicity, usually related to $q$-series, we sometimes write $T_n \in \bT'$ for \emph{any} $n \in \Z_{\geq 1}$ to denote the usual multiplicative combination of the $T$- and $U$-type operators listed above. Let $U \in \bT'$ be the operator indexed by $p$ for the moment. That is, factoring $n$ as $n = n' \cdot p^i \cdot \prod_{\ell \mid N} \ell^{i_\ell}$ (over prime divisors $\ell \mid N$) such that $(pN, n') = 1$ , we write 
\[
T_n = T_{n'} \cdot U^i \cdot \prod_{\ell \mid N} U_\ell^{i_\ell}. 
\]
\end{defn}

We use the usual $q$-series expansion of modular forms (see e.g.\ \cite[p.\ 451]{gross1990}). We call modular forms (resp.\ cusp forms) of weight $k$ with \emph{coefficients} in a $\Z_p$-algebra $R$ as sections $f \in H^0(X,\omega^k \otimes_{\Z_p} R)$ (resp.\ $H^0(X,\omega^k(-C) \otimes_{\Z_p} R)$). Indeed, thanks to the use of the multiplicative model, there is compatibility between $q$-series and coherent cohomology, for $\Z_p$-algebras $R' \subset R$, 
\begin{equation}
    \label{eq: q vs coh}
    f = \sum_{n \geq 0} a_n(f)q^n \text{ with } a_n \in R' \iff f \in H^0(X,\omega^k \otimes_{\Z_p} R');
\end{equation}
see \cite[Prop.\ 2.7]{gross1990}, \cite[\S1.5.10]{FK2012}. There is an action of $\bT[T_p]$ on these cohomology modules, and we have the ``$a_1$-pairing''
\[
\lr{} : \bT[T_p] \times H^0(X,\omega^k \otimes_{\Z_p} R) \to R, \quad (T,f) \mapsto a_1(T \cdot f)
\]
and similarly with $\omega^k(-C)$ in place of $\omega^k$.

Let $Q(R)$ denote the total ring of fractions of a ring $R$. Let $M$ be a finitely generated $R$-submodule of $Q(R)\lb q\rb$ -- the $M$ we will deal with arise as the $q$-series realization of some module of modular forms. Let $M$ have a $\bT[-]$-action where $\bT[-]$ stands for one of the Hecke algebras above. We will frequently work in cases in which the pairing $\lr{}$ induces perfect pairings of finitely generated $R$-modules
\begin{equation}
    \label{eq: a1 duality}
    \lr{} : \bT[-](M)_R \times M \to R. 
\end{equation}
Assuming that this pairing is $R$-valued and perfect, we are able to define $a_0 \in \bT[-](M) \otimes_{R} Q(R)$, the \emph{constant term operator}, to be the element given by $M \ni f \mapsto a_0(f)$. 

We have set up our operators $T_n$ for general $n \in \Z_{\geq 1}$ in Definition \ref{defn: Tn for general n} so that, when the Hecke algebra is $\bT[U]$ or $\bT_{\Gamma_0(p)}$, $a_n(f) = a_1(T_n \cdot f) = \lr{T_n,f}$. However, when we use the alternate operator $U'$ at $p$ that is not compatible with $U_p$, we will apply the following notion of alternate ($U'$-based) $q$-series, following the pattern of Definition \ref{defn: Tn for general n}.  
\begin{defn}
    \label{defn: alternate q-series}
    Let $R$ be a $\Z_p$-algebra. Let $M$ be a $\bT[U']_R$-module. Assume that there is a perfect $R$-valued duality pairing as in \eqref{eq: a1 duality}. For $f \in M$, its \emph{alternate ($\bT[U']$-based) $q$-series} is 
    \[
    f(q) = a_0(f) + \sum_{n \geq 1} \lr{T_n,f} q^n
    \]
    with $T_n \in \bT[U']$ as in Definition \ref{defn: Tn for general n}.
    \end{defn}

\begin{rem} 
The rationale for Definition \ref{defn: alternate q-series} is that we will be able to $p$-adically interpolate some alternate $q$-series when the usual $q$-series are impossible to interpolate. See Definition \ref{defn: Zp crit oc forms}. 
\end{rem}

We will use a slightly different notion of modular forms \emph{over} a $\Z_p$-algebra $R$ that amounts to asking that its $q$-series coefficients other than the constant term are valued in $R$. This difference is trivial for cusp forms. 
\begin{defn}
\label{defn: classical forms}
Let $R$ be a $\Z_p$-algebra and let $k \in \Z$. 
\begin{itemize}
\item The $\bT[T_p]$-module of modular forms of weight $k$ (and level $\Gamma_1(N)$) over $R$, denoted $M_{k,R}$, is the $R$-submodule consisting of $f \in H^0(X_R, \omega^k \otimes_{\Z_p} Q(R))$ such that $\lr{T,f} \in R \subset Q(R)$ for all $T \in \bT[T_p]$. For brevity, let $M_k := M_{k,\Z_p}$.  
\item The $\bT[T_p]$-module of cusp forms of weight $k$ (and level $\Gamma_1(N)$) over $R$, denoted $S_{k,R}$, is $H^0(X, \omega^k(-C) \otimes_{\Z_p} R)$. For brevity, let $S_k := S_{k,\Z_p}$. 
\item Modular/cusp forms of weight $k$ and relative level $\Gamma_0(p)$ over $R$, $M_{k,R}(\Gamma_0(p))$ and $S_{k,R}(\Gamma_0(p))$, are defined similarly. 
\item When $f$ is an eigenform for the $\lr{d}_N$-operators over $R$, write $\chi = \chi_f : (\Z/N\Z)^\times \to R^\times$ its character. 
\end{itemize}
\end{defn}

When $R$ is a $\Z_p$-algebra domain, all of the modules of classical forms of Definition \ref{defn: classical forms} are known to make the pairing \eqref{eq: a1 duality} perfect. 

We use the notation $e(T) = \lim_{n \to \infty}T^{n!}$ for a $p$-integral endomorphism $T$ of a $\Z_p$-module when the axioms of explained in \cite[\S2.2]{BP2022} are satisfied, making $e(T)$ an idempotent \emph{$T$-ordinary projector}. By mild abuse of notation, we also use $e(T)V$, when $V$ is a finite-dimensional $\Q_p$-vector space, to refer to the natural construction of the $T$-ordinary summand of $V$. 

\begin{rem}
We mostly use standard (not dual) Hecke operators on the multiplicative model $X$. In comparison with some of our principal references, Coleman uses the additive model in \cite{coleman1996}, while Boxer--Pilloni use the multiplicative model in \cite{BP2022}. We will apply \cite[\S1.4]{FK2012} to translate between these when needed. 
\end{rem}

The divisor of supersingular points $SS$ of $X_{\F_p}$ has open complement called the ordinary locus $X_{\F_p}^\ord$, which is connected. We write $X^\ord/\Spf \Z_p$ for the $p$-adic completion of $X$ along $X_{\F_p}^\ord$. Because $X_{\F_p}$ is connected, $p$-adic modular forms $f \in H^0(X^\ord, \omega^k)$ continue to be characterized by their $q$-series. 

\subsection{Coherent cohomology and the Gauss--Manin connection}
\label{subsec: coherent cohom}

We will study Hecke actions on both degrees of coherent cohomology of $\omega^k$ and $\omega^k(-C)$ over $X$, which will use the Kodaira--Spencer map to access Serre duality. We will do this $p$-integrall, over $\Z_p$. We write $\Omega^1$ for the canonical sheaf $\Omega_{X/\Z_p}^1$ when the context is clear. 

For this we need the relative de Rham cohomology bundle of the universal elliptic curve with logarithmic singularities at the cusps, denoted $\cH = \cH_1$ (see \cite[p.\ 218]{coleman1996}),  along with the following associated objects. We will use its symmetric powers $\cH_{k-2} := \Sym^{k-2}\cH$ for $k \in \Z_{\geq 2}$, its Gauss--Manin connection $\nabla = \nabla_1: \cH \to \cH \otimes \Omega^1_X(\log C)$, its Poincar\'e duality self-pairing $\lr{}_1$, and the natural extensions of $\nabla$ and $\lr{}_1$ to symmetric powers, denoted $\nabla_{k-2} : \cH_{k-2} \to \cH_{k-2} \otimes \Omega^1_X(\log C)$ and $\lr{}_{k-2}$. The Hodge filtration on $\cH$ induces a Hodge filtration on all $\cH_{k-2}$, and can be viewed as a short exact sequence
\[
0 \to \omega \to \cH \to \omega^{-1} \to 0. 
\]
All of these objects, as well as a splitting of $\cH$ into $\omega^{-1} \oplus \omega$, are well defined over $\Z_p$ due to our running assumption that $p \geq 5$ and $p \nmid N$ (see Lemma \ref{lem: coord integrality}).

From these objects, we construct the Kodaira--Spencer isomorphism 
\[
\mathrm{KS} : \omega \otimes \omega \to \Omega^1(C), \quad f \otimes g \mapsto \langle f, \nabla g\rangle_1
\]
where we consider $f,g$ to be sections of $\cH$ under the inclusion $\iota : \omega \subset \cH$. And its extension to weights $k$ greater than 2, 
\[
\mathrm{KS}_k = \mathrm{KS} \otimes \iota_{k-2} : \omega^k \cong \omega \otimes \omega \otimes \omega^{k-2} \buildrel{\mathrm{KS} \otimes \iota_{k-2}}\over\lra  \cH_{k-2} \otimes \Omega^1(C),
\]
where we use the canonical inclusion $\iota_{k-2}$, the $(k-2)$nd tensor power of $\iota$. 

We will need the following standard Hecke equivariance formula for Serre duality, using $\mathrm{KS}$ for the isomorphism $\omega^2(-C) \cong \Omega^1$ with the canonical sheaf. 
\begin{prop}
    \label{prop: Serre duality}
    Let $k \in \Z_{\geq 2}$. Serre duality and the Kodaira--Spencer map produce perfect pairings of finitely generated flat $\Z_p$-modules 
    \begin{gather*}
        \lr{}_\mathrm{SD} : H^0(X, \omega^k) \times H^1(X,\omega^{2-k}(-C)) \to H^1(X,\Omega^1_X) \cong \Z_p \\
        \lr{}_\mathrm{SD} : H^0(X, \omega^k(-C)) \times H^1(X,\omega^{2-k}) \to H^1(X,\Omega^1_X) \cong \Z_p
    \end{gather*}
    under which each Hecke operator's dual Hecke operator is its adjoint. That is, 
    \begin{align*}
        \lr{T_n f,g}_\mathrm{SD} &= \lr{f, T_n^* g}_\mathrm{SD} \quad \text{for all } n \in \Z_{\geq 1} \\ 
        \lr{T_n f, g}_\mathrm{SD} &= \lr{f, T_n \lr{n}^{-1}g}_\mathrm{SD} \quad \text{ if } (n,N) = 1 \\
        \lr{\lr{d}_N f,g}_\mathrm{SD} &= \lr{f,\lr{d}_N^{-1}g}_\mathrm{SD} \quad (d,N) = 1.
    \end{align*}
\end{prop}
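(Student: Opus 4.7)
The plan is to deduce both perfect pairings from Grothendieck--Serre duality for the smooth proper relative curve $X/\Z_p$, translated along the Kodaira--Spencer isomorphism, and then to verify the Hecke equivariance from the definition of Hecke operators as algebraic correspondences. First I will establish the finite flatness of the four cohomology groups in play. Since $N \geq 5$ and $p \nmid N$ with $p \geq 5$, the scheme $X/\Z_p$ is smooth and proper of relative dimension $1$, and $\omega^k$ and $\omega^{2-k}(-C)$ are line bundles, so all $H^i$'s involved are finitely generated $\Z_p$-modules. By Riemann--Roch applied on the smooth proper geometric fibers $X_{\F_p}$ and $X_{\overline{\Q}_p}$ one gets constant fiber dimensions, so cohomology and base change (or equivalently the semicontinuity theorem) give $\Z_p$-flatness.

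Next I will apply Grothendieck--Serre duality for the smooth proper morphism $X \to \Spec \Z_p$ of relative dimension $1$, whose relative dualizing sheaf is $\Omega^1_{X/\Z_p}$. For any invertible sheaf $\mathcal{L}$ on $X$ one obtains a perfect $\Z_p$-bilinear pairing
\[
H^0(X, \mathcal{L}) \times H^1(X, \mathcal{L}^{-1} \otimes \Omega^1_{X/\Z_p}) \longrightarrow H^1(X, \Omega^1_{X/\Z_p}) \cong \Z_p,
\]
the last isomorphism being the trace map for the connected smooth proper curve $X/\Z_p$. Taking $\mathcal{L} = \omega^k$ and invoking the Kodaira--Spencer isomorphism $\mathrm{KS}\colon \omega^{\otimes 2} \isoto \Omega^1_{X/\Z_p}(C)$ (which is valid integrally over $\Z_p$ under our hypotheses, cf.\ the discussion preceding the statement), one identifies $\omega^{-k} \otimes \Omega^1_{X/\Z_p} \cong \omega^{2-k}(-C)$, producing the first pairing. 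Taking $\mathcal{L} = \omega^k(-C)$ and running the same identification yields the second.

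Finally, I will check Hecke equivariance. Each $T_n$ is induced by a finite flat correspondence $X \xleftarrow{\pi_1} X_n \xrightarrow{\pi_2} X$, acting on coherent cohomology via $(\pi_2)_* \circ \pi_1^*$ after an appropriate natural transformation on line bundles. The Grothendieck--Serre duality pairing is functorial for such correspondences, so the adjoint of $(\pi_2)_* \circ \pi_1^*$ under duality is $(\pi_1)_* \circ \pi_2^*$; by definition of the dual Hecke operator $T_n^*$ this is exactly the desired adjointness. For $(n,N) = 1$, the standard identity $T_n^* = T_n \langle n \rangle_N^{-1}$ (obtainable by comparing the two correspondences at the level of the moduli problem, since the Atkin--Lehner involution at $n$ relates them) gives the second adjointness formula. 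For the diamond operators, $\langle d \rangle_N$ is induced by an automorphism of the moduli problem, hence pullback and pushforward coincide and are mutually inverse, giving $\langle d \rangle_N^* = \langle d \rangle_N^{-1}$.

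The main technical point to pin down will be the compatibility between the Kodaira--Spencer isomorphism and the Hecke correspondences when transporting the pairing across $\mathrm{KS}$: one must check that the isomorphism $\omega^{-k} \otimes \Omega^1 \cong \omega^{2-k}(-C)$ intertwines the natural Hecke actions (built from the pullback of $\omega$ along the correspondence), which is a standard but slightly delicate verification using the moduli-theoretic description of $\pi_1^* \omega$ and $\pi_2^* \omega$ and the compatibility of $\mathrm{KS}$ with isogenies of generalized elliptic curves. Everything else is formal from Grothendieck duality and the flatness input established at the start.
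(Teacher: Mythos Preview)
The paper states this proposition without proof, treating it as a standard result (it is introduced as ``the following standard Hecke equivariance formula for Serre duality''). Your proof plan is a correct and essentially complete route to the statement: Grothendieck--Serre duality on the smooth proper curve $X/\Z_p$ combined with the Kodaira--Spencer isomorphism gives the perfect pairings, and the adjointness of $T_n$ and $T_n^*$ follows from the correspondence description together with functoriality of duality under pushforward/pullback. The flatness argument via constancy of fiber dimensions and base change is also the standard one.

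One small caution on the flatness step: for $k=2$ the sheaf $\omega^{2-k} = \cO_X$ has $H^0(X,\cO_X) \neq 0$, so the naive Riemann--Roch dimension count needs a little care to conclude flatness of $H^1(X,\cO_X)$ directly; it is cleaner to invoke that $X/\Z_p$ is a smooth proper curve with geometrically connected fibers, so $R^1\pi_*\cO_X$ is locally free of rank equal to the genus. Otherwise your plan is sound, and the ``main technical point'' you flag about Hecke compatibility of $\mathrm{KS}$ is indeed the only place requiring a moduli-theoretic check rather than pure formalism.
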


\subsection{Overconvergent modular forms}

\begin{defn}
\label{defn: oc forms over Cp}
For $k \in \Z$, we let $M_{k,\C_p}^\dagger$ denote the overconvergent modular forms over $X$ (that is, of level $\Gamma_1(N)$) defined in \cite[\S2]{coleman1996} as the sections of $\omega^k$ over the analytic subspace denoted by $W_1 \subset X^\mathrm{an}$ there. Let $S_{k,\C_p}^\dagger$ denote its submodule of cuspidal overconvergent modular forms. That is, $S_{k,\C_p}^\dagger$ consists of those sections that vanish when evaluated at $C$. This pair is equipped with an action of $\bT[U]$ where $U$ is defined in \cite[\S3]{coleman1996}. 
\end{defn}

When $M'_{\C_p} \subset M_{k,\C_p}^\dagger$ is a $\C_p$-subspace, we let $M'^0$ denote its submodule consisting of forms with trivial residues at every supersingular annulus, as introduced in \cite[\S6]{coleman1996}. 
As explained in \cite[p.\ 226]{coleman1996}, there are a natural inclusions
\begin{equation}
    \label{eq: classical inclusion}
    M_k(\Gamma_0(p))_{\C_p} \rinj M^\dagger_{k,\C_p}, \quad 
    S_k(\Gamma_0(p))_{\C_p} \rinj S^\dagger_{k,\C_p}
    \end{equation}
equivariant with respect to the usual map of abstract Hecke algebras $\bT[U_p] \to \bT[U]$. The intersection $S_k(\Gamma_0(p))_{\C_p}^0$ is known to consist exactly of those cusp forms that are $p$-old \cite[Thm.\ 9.1]{coleman1994}. 

The normalized valuation of the $U$-eigenvalue of a $U$-eigenform $f \in M_{k,\C_p}^\dagger$ is called its \emph{slope}. The term \emph{ordinary} or \emph{$U$-ordinary} refers to slope $0$. When $k \in \Z_{\geq 1}$, the slope $k-1$ in weight $k$ is dubbed \emph{critical} or \emph{$U$-critical}. In general, when $M' \subset M_{k,\C_p}^\dagger$ is a $\bT[U]$-submodule, we write $M'^\ord, M'^\crit$ for the $U$-ordinary and $U$-critical subspaces, respectively. In particular, we will use the notation
\[
M_{k,\C_p}^{\dagger,\ord}, M_{k,\C_p}^{\dagger,\crit}, \qquad 
S_{k,\C_p}^{\dagger,\ord}, S_{k,\C_p}^{\dagger,\crit}
\]
for the $\bT[U]$-submodules on which $U$ is ordinary (resp.\ critical). These spaces are known to be finite-dimensional. We will discuss the ordinary case more in \S\ref{subsec: hida higher background}. There are also the classical counterparts $M_{k,R}^{\ord}$, $M_{k,R}^{\crit}$, etc., for $R \subset \C_p$, where ``classical'' is taken to be the image under the inclusions \eqref{eq: classical inclusion}.

The notion of $q$-series extends to overconvergent modular forms, and the $q$-series continues to characterize a form because the ordinary locus $X_{\F_p}^\ord$ is connected.

\begin{rem}
    The distinction between ``over $R$'' and ``coefficients in $R$'' is no more serious than in the case of classical modular forms as in Definition \ref{defn: classical forms}. The reason for this is that the space of (classical) Eisenstein series $\mathrm{Eis}_k(\Gamma_0(p))_{\C_p}$ has a natural isomorphism onto the space of values of residues at the cusps achieved by $M_{k,\C_p}^\dagger$. This follows from \cite[Lem.\ 6.7]{coleman1996}, where we see that $\dim_{\C_p} \mathrm{Eis}_k(\Gamma_0(p))_{\C_p}$ equals the number of cusps when $k > 2$, is one less than the number of cusps when $k=2$, and the the complementary Eisenstein Hecke eigensystem in weight 2 is known to be associated to a $p$-adic modular form, sometimes denoted ``$E_2$,'' which is not overconvergent \cite{CGJ1995}. 
\end{rem}

Since the Hecke action on $M^{\dagger, \crit}_{k,\C_p}$ is known to be non-diagonalizable, we carefully record its perfect duality with its Hecke algebra. 
\begin{prop} 
    \label{prop: oc duality}
    Let $k \in \Z_{\geq 1}$. There are perfect pairings
    \begin{gather*}
    \lr{} : \bT[U](M^{\dagger,\crit}_{k,\C_p})  \times M^{\dagger,\crit}_{k,\C_p} \to \C_p \\    
    \lr{} : \bT[U](S^{\dagger,\crit}_{k,\C_p}) \times S^{\dagger,\crit}_{k,\C_p} \to \C_p 
    \end{gather*}
    defined by $\lr{T,f} = a_1(T \cdot f)$ as in \eqref{eq: a1 duality}. The pairings are Hecke-compatible in the sense that for all $T' \in \bT[U]$, $\lr{T' \cdot T, f} = \lr{T, T' \cdot f}$. 
\end{prop}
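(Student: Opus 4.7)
The plan is to deduce both pairings from the $q$-expansion principle on the connected ordinary locus $X_{\F_p}^\ord$ combined with a slope calculation that uses the critical condition. The Hecke compatibility $\lr{T'T,f} = a_1(T'Tf) = a_1(TT'f) = \lr{T,T'f}$ is immediate from commutativity of $\bT[U]$. Since $M^{\dagger,\crit}_{k,\C_p}$ and $S^{\dagger,\crit}_{k,\C_p}$ are finite-dimensional over $\C_p$ (as generalized eigenspaces of the compact operator $U$ for the finitely many eigenvalues of valuation $k-1$), so are their Hecke images $\bT[U](M^{\dagger,\crit}_{k,\C_p})$ and $\bT[U](S^{\dagger,\crit}_{k,\C_p})$. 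It therefore suffices to verify non-degeneracy of each pairing in each of its two arguments, from which perfectness and equality of dimensions follow automatically.

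For non-degeneracy in the forms argument, I would assume $\lr{T,f}=0$ for all $T\in\bT[U]$ and specialize to $T = T_n$ in the sense of Definition~\ref{defn: Tn for general n} (which incorporates $U$ at $p$); this yields $a_n(f) = a_1(T_n f) = 0$ for every $n\geq 1$. In the cuspidal case, combined with $a_0(f) = 0$, the $q$-expansion principle already forces $f = 0$. For the full module of overconvergent forms when $k\geq 2$, I would then apply $U$ and note from $(Uf)(q) = \sum_{n\geq 0}a_{np}(f)q^n$ that $Uf$ and $f$ have identical $q$-expansions, hence $Uf = f$ by the $q$-expansion principle. Since $f$ belongs to the generalized $U$-eigenspace of slope $k-1 > 0$, the identity $Uf = f$ is only compatible with $f = 0$.

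For non-degeneracy in the Hecke-algebra argument, suppose $T$ pairs trivially with every $f$. Then for each $f$ and each $n\geq 1$, using commutativity of $\bT[U]$,
\[
a_n(Tf) \;=\; \lr{T_n, Tf} \;=\; a_1(T_n T f) \;=\; a_1(T T_n f) \;=\; \lr{T, T_n f} \;=\; 0.
\]
Because $T$ commutes with $U$, $Tf$ still lies in $M^{\dagger,\crit}_{k,\C_p}$, so the forms-side argument applied to $Tf$ gives $Tf = 0$, showing that $T$ vanishes in $\bT[U](M^{\dagger,\crit}_{k,\C_p})$. The same argument works verbatim for $S^{\dagger,\crit}_{k,\C_p}$.

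The main obstacle I anticipate is the residual weight $k=1$ case of the non-cuspidal statement, where the critical slope $k-1 = 0$ coincides with the ordinary slope and the slope contradiction ``$Uf=f$ while the $U$-eigenvalue has slope $k-1$'' is unavailable. I would resolve this separately by noting that $M^{\dagger,\crit}_{1,\C_p} = M^{\dagger,\ord}_{1,\C_p}$ and appealing to the standard Hida-theoretic duality between the ordinary Hecke algebra and ordinary weight-one overconvergent forms; the cuspidal subcase $S^{\dagger,\crit}_{1,\C_p}$ is handled by the cuspidal argument above and needs no separate treatment.
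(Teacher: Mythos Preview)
Your argument is correct and the left-kernel half matches the paper's proof essentially verbatim. The only substantive divergence is in your treatment of the right kernel (the forms argument). Where you invoke the identity $Uf=f$ and a slope contradiction for $k\geq 2$, then handle $k=1$ separately via Hida theory, the paper instead disposes of all $k\geq 1$ in one line: once $a_n(f)=0$ for every $n\geq 1$, the $q$-series of $f$ is constant, and there are simply no nonzero overconvergent forms of weight $k\neq 0$ with constant $q$-series. This observation is elementary (a constant $q$-series would force a nonvanishing global section of $\omega^k$ trivialized on the ordinary locus) and avoids both the slope bookkeeping and the external appeal to weight-one Hida duality. Your route works, but the paper's is shorter and uniform across weights.
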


\begin{proof}
    We will address the first pairing; the second pairing can be dealt with in the same way. 
    
    We have mentioned that overconvergent forms are known to be characterized by their $q$-series. Because the $n$th Hecke operator $T_n \in \bT[U]$ (using the notation of Definition \ref{defn: Tn for general n} satisfies $a_1(T_n \cdot f) = a_n(f)$ for all $n \geq 1$, and there are no forms with constant $q$-series when $k \neq 0$, we know that the right kernel of the pairing is zero. 

    On the other hand, the left kernel of $\lr{}$ is trivial because $\bT[U](M^{\dagger,\crit}_{k,\C_p})$ acts faithfully on $M^{\dagger,\crit}_{k,\C_p}$. This follows from a standard argument we now recite. For non-zero $T \in \bT[U](M^{\dagger,\crit}_{k,\C_p})$, there exists $g \in M^{\dagger,\crit}_{k,\C_p}$ such that $T \cdot g \neq 0$. Therefore $T\cdot g$ has some non-zero Fourier coefficient, say $a_n(T \cdot g) \neq 0$. Thus 
    \[
    \lr{T, T_n \cdot g} = \lr{T_n \cdot T, g} = a_1(T_n \cdot T \cdot g) = a_n(T \cdot g) \neq  0,
    \]
    showing $T$ is not in the left kernel. 
\end{proof}

\subsection{De Rham Cohomology}

\label{subsec: dR cohom}

We will use the following modules of $p$-adic analytic de Rham cohomology in weights $k \in \Z_{\geq 2}$ studied by Coleman \cite[\S5]{coleman1996}, along with comparisons with algebraic de Rham cohomology and crystalline cohomology. Here we will mostly follow \cite{coleman1996} except that we work over $\Q_p$ and sometimes over $\Z_p$, but we will align our notation with \cite{BP2022}. Let $\cI_C \subset \cO_X$ denote the ideal sheaf of the cusps. We will refer to $W_1 \subset X^\mathrm{an}$ as in Definition \ref{defn: oc forms over Cp}. 

\begin{defn}
    \label{defn: dR cohomology}
    We set up analytic de Rham cohomology of weight $k \in \Z_{\geq 2}$, $H^1_\dR(X_{\Q_p},\cF_k)$ and $H^1_\mathrm{par}(X_{\Q_p},\cF_k)$, following Coleman \cite[\S5]{coleman1996}. 
    We let $\cF_k^\mathrm{an}$ be the complex (with differential from degree 0 to degree 1)
    \[
    \cF_k^\mathrm{an} = [\cH_{k-2}(W_1) \buildrel{\nabla_{k-2}}\over\lra (\cH_{k-2} \otimes \Omega^1(\log C))(W_1)]
    \]
    and use the following notation for its 1st cohomology and certain subspaces. 
    \begin{gather*}
        \Han := \frac{(\Omega^1(\log C) \otimes \cH_{k-2})(W_1)}{\nabla_{k-2} \cH_{k-2}(W_1)}\\ 
        \Han^0 := \ker(H^1_\dR(X_{\Q_p}, \cF_k) \buildrel\mathrm{res}\over\lra [\text{SS annuli}]) \\
        \Hanpar := \ker(\Han \buildrel\mathrm{res}\over\lra [\text{SS annuli } \cup C]),
    \end{gather*}
    where we remark that 
    \begin{itemize}
        \item the annuli and the divisor $C$ can be defined over $\Q_p$ (using the Eisenstein series $E_{p-1}$ for the supersingular annuli, as in \cite{GK2009})
        \item by \cite[Prop.\ 2.21]{MC2010} we see that a model for $W_1$ over $\Q_p$ exists, even though the points of $\widetilde{SS}$ are defined only over the unramified quadratic extension $\Q_{p^2}/\Q_p$.
    \end{itemize}

    The containments of analytic cohomology $\Han \supset \Han^0 \supset \Hanpar$ is stable under the action of $\bT[T_p]$ as well as additional $p$-adic operators 
    \begin{itemize}
        \item $U$, associated to Verschiebung
        \item $F'$, associated to Frobenius: we are using the notation of \cite[\S4.2.3]{BP2022}; this $F'$ is denoted by ``$F$'' in \cite{coleman1996} 
        \item $F$, the Frobenius map defined in \cite[\S4.2.3]{BP2022}
        \item obeying relations $U \circ F' = F' \circ U = p^{k-1}$.
    \end{itemize}

    There is also an analytic Poincar\'e self-duality pairing $\lr{}_\dR^\mathrm{an}$ on $\Hanpar$ that is perfect and under which $U$ and $F'$ are adjoint, discussed more in Proposition \ref{prop: FV on dR}.
\end{defn}

Now we state a comparison between analytic de Rham cohomology and algebraic de Rham cohomology. For brevity, for $k \in \Z_{\geq 2}$ we write $\cF_k$ for the coefficient system over $X$ consisting of the $(k-2)$th symmetric power $\nabla_{k-2}$ of the Gauss--Manin connection introduced in \S\ref{subsec: coherent cohom}. We call its algebraic de Rham cohomology $\bH^*_\dR(X_{\Q_p}, \cF_k)$ weight $k$ de Rham cohomology. We set up the parabolic variant $\bH^1_\mathrm{par}(X_{\Q_p}, \cF_k)$, the image of $\bH^1(X_{\Q_p}, \cF_k \otimes \cI_C) \to \bH^1_\dR(X_{\Q_p}, \cF_k)$.  

Let $\widetilde{SS}$ be the divisor on $X_{\Q_p}$ cut out by $E_{p-1}$, lifting the divisor $SS \subset X_{\F_p}$. We remark that this divisor is $\bT[T_p]$-stable. 

\begin{lem}
\label{lem: dR equivalence}
Let $k \in \Z_{\geq 2}$. There are natural $\bT[T_p]$-equivariant isomorphisms between $p$-adic analytic de Rham cohomology and the algebraic de Rham cohomology modules as below, arising from restriction from $X_{\Q_p}$ to $W$, 
\begin{enumerate}[label=(\roman*)]
\item $\bH^1(X_{\Q_p}, \cF_k(\log \widetilde{SS})) \cong \Han$, 
\item $\bH^1_\mathrm{dR}(X_{\Q_p}, \cF_k) \cong \Han^0$, 
\item $\Hanpar \subset \Han$ receives an isomorphism from the image under (i) of $\bH_\mathrm{par}^1(X_{\Q_p}, \cF_k)$. 
\end{enumerate}    
\end{lem}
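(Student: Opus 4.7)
The plan is to deduce all three isomorphisms from rigid GAGA together with a residue-based comparison, packaged so as to align with the definitions Coleman has already set up and used in \cite[\S5--6]{coleman1996}.

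For (i), I would first apply rigid GAGA termwise to the two-term complex $[\cH_{k-2} \to \cH_{k-2}\otimes \Omega^1_{X_{\Q_p}}(\log(C + \widetilde{SS}))]$, whose entries are coherent on the smooth proper curve $X_{\Q_p}$; since the differential is $\Q_p$-linear but not $\cO_X$-linear, GAGA has to be applied to each term separately and then to the quasi-isomorphism class, giving a natural identification $\bH^1(X_{\Q_p},\cF_k(\log \widetilde{SS})) \lrisom \bH^1(X_{\Q_p}^{\mathrm{an}},\cF_k^{\mathrm{an}}(\log \widetilde{SS}))$. Next, I would restrict from $X_{\Q_p}^{\mathrm{an}}$ to the wide open $W_1 \subset X_{\Q_p}^{\mathrm{an}}$. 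The complement $X_{\Q_p}^{\mathrm{an}}\setminus W_1$ is a finite disjoint union of closed discs around the supersingular points, and on each such disc the de Rham complex of $\cF_k^{\mathrm{an}}(\log \widetilde{SS})$ has trivial higher cohomology in positive degrees (being a complex of coherent sheaves on a Stein space with an isolated log pole, its cohomology is entirely captured by residues on the surrounding annulus). Thus restriction to $W_1$ is a quasi-isomorphism at the level of hypercohomology, yielding the identification with $\Han$.

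For (ii), I would use the exact triangle of complexes on $X_{\Q_p}$
\[
\cF_k \lra \cF_k(\log \widetilde{SS}) \lra \bigoplus_{x \in \widetilde{SS}} i_{x,*}\mathrm{Res}_x
\]
where the third term is the Poincar\'e residue at each supersingular point. Taking hypercohomology and using that $\widetilde{SS}$ is a divisor of points (so no $H^1$ contribution from the residue term) I obtain a short exact sequence identifying $\bH^1_\dR(X_{\Q_p},\cF_k)$ with the kernel of the global residue map from $\bH^1(X_{\Q_p},\cF_k(\log \widetilde{SS}))$ to the residues at $\widetilde{SS}$. Under the isomorphism of (i), the algebraic residue at a point $x$ corresponds to the analytic residue along the supersingular annulus encircling $x$ (this is the functoriality of the Poincar\'e residue under analytification, together with Coleman's description of the annular residue in \cite[\S5]{coleman1996}), so the kernel corresponds exactly to $\Han^0$.

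For (iii), I would argue analogously using the residue exact sequence at the cusps. The sheaf $\cF_k \otimes \cI_C$ controls forms vanishing at $C$; the triangle
\[
\cF_k \otimes \cI_C \lra \cF_k \lra \bigoplus_{c \in C} i_{c,*}\mathrm{Res}_c
\]
shows that the image of $\bH^1(X_{\Q_p},\cF_k \otimes \cI_C) \to \bH^1_\dR(X_{\Q_p},\cF_k)$ is exactly the kernel of the residue at $C$. Combined with (ii) and the matching of algebraic and analytic residues at cusps, this identifies $\bH^1_\mathrm{par}(X_{\Q_p},\cF_k)$ with the subspace of $\Han$ of classes whose residues vanish along both the cusps and the supersingular annuli, which is $\Hanpar$ by definition. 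Hecke-equivariance throughout is automatic: each Hecke correspondence is algebraic and its analytification is what Coleman uses to define the action on the analytic side, so functoriality in all four constructions (GAGA, restriction to $W_1$, residue triangles, image subspaces) gives the claim.

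The main technical hurdle will be step (ii) - verifying that the algebraic Poincar\'e residue at a supersingular point really corresponds, under the GAGA and restriction-to-$W_1$ isomorphisms, to Coleman's analytic residue along the surrounding annulus. Once that compatibility is in hand, the rest is a formal diagram chase in the long exact sequences attached to the two residue triangles.
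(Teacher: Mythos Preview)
Your residue-triangle arguments for (ii) and (iii) are essentially what the paper does (it cites Coleman's residue analysis in \cite[\S6]{coleman1996} and \cite[Prop.~4.4]{coleman1989}), so that part is fine.

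The gap is in your argument for (i). After rigid GAGA, you claim that restriction from $X_{\Q_p}^{\mathrm{an}}$ to $W_1$ is a hypercohomology quasi-isomorphism because ``on each complementary disc the de Rham complex of $\cF_k^{\mathrm{an}}(\log\widetilde{SS})$ has trivial higher cohomology in positive degrees.'' This is not correct: on a closed disc with a single log pole, $H^1$ is the residue line (tensored with the rank-$(k-1)$ horizontal sections $\Sym^{k-2}H^1_{\mathrm{cris}}(B_x)$), not zero. More importantly, even a correct vanishing statement on the complement would not by itself yield the restriction quasi-isomorphism: you need a Mayer--Vietoris argument for an admissible cover by $W_1$ and open discs around the supersingular points, and then control of the de Rham cohomology of the overlapping annuli. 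Carrying this out is precisely the technical content of \cite[Thm.~5.4]{coleman1996}, which the paper simply cites for (i). So your sketch for (i) has collapsed the hard step into an unjustified assertion.

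There is a second issue you do not address. Coleman's results in \cite{coleman1996} are stated over $\C_p$, and the individual points of $\widetilde{SS}$ are only defined over $\Q_{p^2}$. The paper therefore supplements the citation with a descent argument: first to $\Q_{p^2}$ via \cite[Cor.~4.10c]{coleman1996} (using that the coefficient system is defined over $\Q_p$), then from $\Q_{p^2}$ to $\Q_p$ via the density statement of \cite[Prop.~2.21]{MC2010}. Your direct GAGA approach, if it could be made to work over $\Q_p$, would sidestep this --- but since the Mayer--Vietoris cover naturally lives over $\Q_{p^2}$ (where the supersingular discs and annuli split), you would likely face the same descent problem.
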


\begin{proof}
See \cite[Thm.\ 5.4]{coleman1996} for (i). Then (ii) and (iii) follow from an analysis of residues (as in e.g.\ \cite[\S6]{coleman1996}) following \cite[Prop.\ 4.4]{coleman1989}. These references address the situation over $\C_p$. We can descend the comparisons to $\Q_{p^2}$ due to the fact that the points of $\widetilde{SS} \subset X(\C_p)$ are valued in $\Q_{p^2}$ by applying \cite[Cor.\ 4.10c]{coleman1996} (here we have a non-trivial coefficient system unlike \textit{ibid.}, but the coefficient system is defined over $\Q_p$.) The descent from $\Q_{p^2}$ to $\Q_p$ follows from the density statement of \cite[Prop.\ 2.21]{MC2010}. 
\end{proof}

We will also need a comparison with crystalline cohomology over $\Z_p$, which is classical. We let $H^*_\cris(X_{\F_p} / \Z_p, \cF_k)$ denote the crystalline cohomology with coefficients associated to the Gauss--Manin connection $(\cH_{k-2}, \nabla_{k-2})$. 

\begin{prop}
    \label{prop: cris dR}
    There is a canonical isomorphism 
    \[
    \bH^*_\mathrm{par}(X, \cF_k) \cong H^*_\cris(X_{\F_p} / \Z_p, \cF_k).
    \]
\end{prop}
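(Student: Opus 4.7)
The plan is to apply the crystalline--de Rham comparison theorem in its log-smooth formulation to the pair $(X,C)/\Z_p$. First, I equip $X/\Z_p$ with the log structure defined by the cusp divisor $C$, making $(X,C)/\Z_p$ a proper log-smooth log scheme. The Gauss--Manin bundle $(\cH_{k-2}, \nabla_{k-2})$ has regular singular log poles along $C$, with unipotent monodromy arising from the Tate curve degeneration of the universal generalized elliptic curve, and so underlies a locally free log-crystal $\cF_k^\cris$ on the log-crystalline site of $(X_{\F_p},C)/\Z_p$.

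Second, I invoke the log-crystalline to log-de Rham comparison theorem (due to Kato, and extended to non-trivial locally free coefficients by Faltings and others). For the proper log-smooth log scheme $(X,C)/\Z_p$ and the crystal $\cF_k^\cris$, this produces a canonical isomorphism
\[
H^*_\cris((X_{\F_p},C)/\Z_p, \cF_k^\cris) \isoto \bH^*\bigl(X, [\cH_{k-2} \to \cH_{k-2}\otimes\Omega^1_X(\log C)]\bigr).
\]
The crystalline cohomology $H^*_\cris(X_{\F_p}/\Z_p, \cF_k)$ appearing in the statement of the proposition is to be interpreted as this log-crystalline cohomology. Hecke-equivariance is automatic, since the Hecke correspondences preserve the cusp divisor $C$, the connection $\nabla_{k-2}$, and the log structure.

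Finally, I would match the right-hand side with the parabolic cohomology $\bH^*_\mathrm{par}(X,\cF_k)$, which is defined as the image of $\bH^*(X, \cF_k \otimes \cI_C) \to \bH^*_\dR(X,\cF_k)$. This reduces to a local analysis at the cusps: near each cusp, the log-de Rham complex is explicitly computable from the unipotent monodromy, and the boundary contribution to log-de Rham cohomology identifies with the Eisenstein piece complementary to the parabolic part. The main obstacle is precisely this last identification, a standard but delicate point in the theory of modular forms treated in the literature on Eichler--Shimura isomorphisms and logarithmic crystalline cohomology; once established, the canonical, Hecke-equivariant isomorphism of the proposition follows. As a sanity check, in degrees $* = 0, 2$ both sides reduce to the expected constant/trace contributions, and the substantive content is concentrated in degree $1$.
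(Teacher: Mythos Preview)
The paper's own proof is a one-line citation to Berthelot--Ogus, so it treats the result as a known fact and does not spell out the passage to parabolic cohomology. Your attempt is more explicit, but the final step contains a genuine mismatch.

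The log-crystalline comparison you invoke yields an isomorphism between log-crystalline cohomology of $(X_{\F_p},C)/\Z_p$ and the \emph{full} log-de Rham hypercohomology $\bH^*(X,[\cH_{k-2}\to\cH_{k-2}\otimes\Omega^1_X(\log C)]) = \bH^*_\dR(X,\cF_k)$. This is not the parabolic cohomology. As you yourself note, the boundary contribution at the cusps is the Eisenstein piece, and parabolic is the \emph{complement} of that piece inside the full cohomology. So your proposed ``match'' of log-de Rham with $\bH^*_\mathrm{par}$ cannot be a direct identification: the two differ in rank, and no amount of local analysis at the cusps will collapse them. Your second paragraph's stipulation that the paper's $H^*_\cris(X_{\F_p}/\Z_p,\cF_k)$ ``is to be interpreted as this log-crystalline cohomology'' is therefore inconsistent with the statement you are trying to prove.

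The paper makes this distinction explicit later, in Theorem~\ref{thm: p-integral dR cris}: log-crystalline cohomology with log structure along $C$ corresponds to $\bH^1_\dR(X,\cF_k)$, whereas the non-log $H^1_\cris(X_{\F_p}/\Z_p,\cF_k)$ corresponds to $\bH^1_\mathrm{par}$. To salvage your approach you would need to pass to interior cohomology on \emph{both} sides simultaneously---for instance, by running the comparison for the subcomplex $\cF_k\otimes\cI_C$ as well and then taking the image of compactly supported in ordinary on each side---rather than trying to identify the full log object with the parabolic one.
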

\begin{proof}
    See \cite[Thm.\ 7.1.]{BO1978}. 
\end{proof}

We will need to understand how the Poincar\'{e} duality pairings on $H^1_\dR(X_{\Q_p}^\mathrm{an},\cF_k)$ and $\Hanpar$ are calculated. 
\begin{prop}
    \label{prop: PD and SD}
    Under the isomorphisms of Lemma \ref{lem: dR equivalence}, the following Poincar\'{e} duality pairings on the parabolic parts are equal. 
    \begin{itemize}
        \item The algebraic pairing $\lr{}_\dR$ given by $\langle \rangle_{k-2}$ on $\cH_{k-2}$ and the Serre duality pairing $\Omega_X^\bullet \otimes \Omega_X^\bullet \to \Q_p[1]$. 
        \item The analytic pairing $\lr{}_\dR^\mathrm{an}$ given by 
        \begin{equation}
            \label{eq: dR pairing}
            \langle f,g\rangle_\dR^\mathrm{an} = \sum_{x \in SS} \mathrm{res}_x(\lambda_x \cdot g)    
        \end{equation}
        where $\lambda_x$ is a local primitive of $f$ in the annulus around $x \in SS$, that is, $\nabla_{k-2} \lambda_x = f_x$ on the annulus. 
    \end{itemize}
\end{prop}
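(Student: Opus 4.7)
The plan is to compute both pairings via the admissible cover of $X^\mathrm{an}_{\Q_p}$ by $W_1$ (the domain of \cite[\S2]{coleman1996}) together with $W_2$, the disjoint union of small disks around the supersingular points of $X_{\F_p}$, so that $W_1 \cap W_2$ is the union of the supersingular annuli. The assumption that $f \in \Hanpar$ has vanishing residues at each annulus is precisely what ensures the existence of the primitives $\lambda_x$ with $\nabla_{k-2}\lambda_x = f$ appearing in formula \eqref{eq: dR pairing}; the vanishing of residues at cusps ensures we are in the image of the parabolic part, so no contribution at the cusps will appear in either pairing.

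First I would set up the \v{C}ech--de Rham bicomplex of $\cF_k$ for the cover $\{W_1, W_2\}$. Since $W_2$ is a disjoint union of rigid disks, both $H^i(W_2,\cH_{k-2})$ and $\bH^i(W_2, \cF_k)$ vanish for $i>0$, so under the comparison of Lemma \ref{lem: dR equivalence}(iii) the class $[f] \in \Hanpar$ is represented by the total 1-cocycle $(f, 0\,;\lambda)$ in this bicomplex, and similarly $[g]$ by $(g, 0\,;\mu)$, with $\lambda, \mu$ local primitives on $W_1 \cap W_2$. The sheaf pairing $\lr{}_{k-2}$ induces a pairing of complexes $\cF_k \otimes \cF_k \to \Omega_X^\bullet$, whence the cup product
\[
\bH^1_\mathrm{par}(X_{\Q_p}, \cF_k) \otimes \bH^1_\mathrm{par}(X_{\Q_p}, \cF_k) \lra \bH^2(X_{\Q_p}, \Omega_X^\bullet) \cong H^1(X_{\Q_p}, \Omega_X^1) \to \Q_p,
\]
whose composite with the Serre trace is by definition the algebraic pairing $\lr{}_\dR$ from Proposition \ref{prop: Serre duality}. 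Applying the standard cup product formula in the bicomplex and using the Leibniz rule $d\lr{\lambda, \mu}_{k-2} = \lr{\nabla_{k-2} \lambda, \mu}_{k-2} \pm \lr{\lambda, \nabla_{k-2} \mu}_{k-2}$ to absorb coboundary terms, the cup product $[f] \cup [g]$ is represented by the \v{C}ech 1-cocycle on $W_1 \cap W_2$ whose value on the annulus around $x \in SS$ is $\lr{\lambda_x, g}_{k-2}$. The Serre trace $H^1(X_{\Q_p}, \Omega_X^1) \to \Q_p$ in this two-open \v{C}ech presentation is the sum of residues around the annuli (the standard generalization of the $\mathbb{P}^1$ residue pairing), and the result is $\sum_{x \in SS}\mathrm{res}_x(\lr{\lambda_x, g}_{k-2})$, matching $\lr{f,g}_\dR^\mathrm{an}$ as defined by \eqref{eq: dR pairing}.

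The main obstacle is the bookkeeping of signs and coboundary terms in the \v{C}ech--de Rham cup product with non-trivial coefficients, together with matching the sign conventions for Serre duality on $\Omega^\bullet_X$ via residues and for the pairing $\lr{}_{k-2}$ (which is symmetric or antisymmetric depending on the parity of $k-2$). The analogous comparison without coefficients is carried out in \cite[Prop.\ 4.4]{coleman1989} and invoked repeatedly in \cite[\S5--6]{coleman1996}; the extension to coefficients $\cH_{k-2}$ adds no essentially new ingredient beyond the Leibniz rule above, which ensures in particular that the residue sum $\sum_{x} \mathrm{res}_x(\lr{\lambda_x,g}_{k-2})$ is independent of the choice of primitives $\lambda_x$.
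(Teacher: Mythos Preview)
Your proposal is correct and is essentially the same approach as the paper's, just spelled out in detail: the paper's proof simply cites \cite[Prop.\ 4.5]{coleman1989} (not 4.4, which is used earlier for the residue analysis in Lemma \ref{lem: dR equivalence}), notes that one adds the coefficient system $\cF_k$, and then descends the analytic pairing from $\C_p$ to $\Q_p$ using the descent of the algebraic pairing. Your \v{C}ech--de Rham cup product computation on the cover $\{W_1,W_2\}$ is precisely the content of that citation, so the two arguments coincide.
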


When it is safe to identify the analytic and algebraic presentations of cohomology, we will denote both of these pairings by $\lr{}_\dR$. 

\begin{proof}
    The local primitive $\lambda_x$ exists because, by Lemma \ref{lem: dR equivalence}, parabolic classes have trivial residues around $SS$ annuli. See \cite[Prop.\ 4.5]{coleman1989} which, along with adding the coefficient system $\cF_k$ defined over $X_{\Q_p}$, establishes the desired result over $\C_p$. Thus we can deduce the descent of the analytic pairing to $\Q_p$ from the descent of the algebraic pairing. 
\end{proof}

We will need to explicitly calculate the maps realizing the Hodge filtration on $H^1_\dR(X^\mathrm{an},\cF_k)^0$ via both algebraic and analytic presentations.

\begin{prop}
    \label{prop: Hodge SES}
    Let $k \in \Z_{\geq 3}$. There are the following two canonical short exact sequences expressing the Hodge filtration on de Rham cohomology, 
    \begin{gather*}
    0 \to H^0(X_{\Q_p},\omega^k) \to \begin{matrix}\bH_\dR^1(X_{\Q_p},\cF_k) \\ \parallel \\ \Han^0\end{matrix} \to H^1(X_{\Q_p},\omega^{2-k}) \to 0 \quad \text{and} \\
    0 \to H^0(X_{\Q_p},\omega^k(-C)) \to \begin{matrix}\bH_\mathrm{par}^1(X_{\Q_p},\cF_k) \\ \parallel \\ \Hanpar \end{matrix} \to H^1(X_{\Q_p},\omega^{2-k}) \to 0. 
    \end{gather*}
    The analytic/algebraic isomorphisms of Lemma \ref{lem: dR equivalence}, rendered as the vertically oriented equals signs in the two short exact sequences above, restrict to isomorphisms between the analytic and algebra formulations of the single non-trivial step in the Hodge filtration on de Rham cohomology, which are as follows. 
    \begin{itemize}
        \item Algebraic formulation: $\nabla_{k-2}$ induces an isomorphism (\cite[Lem.\ 4.2]{coleman1996}) 
        \[
        \Fil^1 \cH_{k-2} \isoto \frac{\cH_{k-2} \otimes \Omega^1_X(\log C)}{ \Fil^{k-2} \cH_{k-2} \otimes \Omega_X^1(\log C)}
        \]
        resulting in degeneration of the hypercohomology spectral sequence to the short exact sequence above, with the left map realized by applying $H^0(X_{\Q_p}, -)$ to $\mathrm{KS}_k : \omega^k \to \cH_{k-2} \otimes \Omega_X^1(\log C)$ and the quotient map realized by the canonical isomorphism $\cH_{k-2}/\Fil^1 \cH_{k-2} \cong \omega^{2-k}$.
        \item Analytic formulation: the left map is $\mathrm{KS}_k$ followed by restriction to $W$; the right map arises from Serre duality (Proposition \ref{prop: Serre duality}), sending $\eta \in H^1_\dR(X_{\Q_p}^\mathrm{an},\cF_k)^0$ to the map $H^0(X_{\Q_p}, \omega^k(-C)) \to \Q_p$ given by 
        \[
        H^0(X_{\Q_p}, \omega^k(-C)) \ni f \mapsto \langle\mathrm{KS}_k(f), \eta \rangle_\dR \in \Q_p. 
        \]
    \end{itemize}
    In particular, the $\lr{}_\dR$ respects the Hodge filtration (in that the Hodge subspace is isotropic) and induces Serre duality on the graded factors of the Hodge filtration of $H^1_\mathrm{par}(X_{\Q_p}, k)$. Also, the left kernel of $\lr{}_\dR$ is the subspace spanned by Eisenstein series.
\end{prop}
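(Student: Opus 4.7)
First, I would establish the algebraic SES via the Hodge-type spectral sequence for the complex $\cF_k^\bullet = [\cH_{k-2} \to \cH_{k-2} \otimes \Omega^1(\log C)]$, filtered decreasingly by $\Fil^\bullet \cH_{k-2}$ (shifted appropriately in the $\Omega^1$-degree). The cited Coleman Lemma 4.2, giving $\nabla_{k-2} : \Fil^1 \cH_{k-2} \isoto (\cH_{k-2} \otimes \Omega^1(\log C))/(\Fil^{k-2}\cH_{k-2} \otimes \Omega^1(\log C))$, forces $E_1$-degeneration. Only the top Hodge piece $\Fil^{k-1}$ contributes to $\bH^1$ from cohomological degree $0$ of the complex, giving the submodule $H^0(X_{\Q_p}, \omega^{k-2} \otimes \Omega^1(\log C)) \cong H^0(X_{\Q_p}, \omega^k)$ with inclusion realized by $\KS_k$; only the bottom graded piece $\mathrm{Gr}^0 \cH_{k-2} \cong \omega^{2-k}$ contributes from degree $1$, giving the quotient $H^1(X_{\Q_p}, \omega^{2-k})$. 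The parabolic variant uses the subcomplex $[\cH_{k-2} \otimes \cI_C \to \cH_{k-2} \otimes \Omega^1]$, whose Hodge filtration shares the same bottom graded piece in degree $1$ and has top piece $\omega^{k-2} \otimes \Omega^1 \cong \omega^k(-C)$, yielding $H^0(X_{\Q_p}, \omega^k(-C))$ as sub.

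Next, for the analytic-algebraic match, apply Lemma~\ref{lem: dR equivalence} to identify algebraic $\bH^1$ with analytic cohomology via restriction to $W_1 \subset X^\mathrm{an}$. The left map compatibility is then immediate: both formulations use $\KS_k$, with the analytic version being the restriction of the algebraic one. For the quotient map, take $\eta \in \bH^1_\dR(X_{\Q_p}, \cF_k)$ and a cusp form $f \in H^0(X_{\Q_p}, \omega^k(-C))$. The analytic pairing $\lr{\KS_k(f), \eta}_\dR$ is a cup product of a $\Fil^{k-1}$-class with a general $\bH^1$-class, which by the isotropy properties of $\lr{}_{k-2}$ (to be established below) depends only on the $\mathrm{Gr}^0$-graded component of $\eta$, i.e.\ its algebraic Hodge quotient $\bar\eta \in H^1(X_{\Q_p}, \omega^{2-k})$. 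By Proposition~\ref{prop: PD and SD}, this analytic pairing equals the algebraic cup product, which by construction realizes the Serre duality pairing of $f$ against $\bar\eta$; this identifies the analytic Serre-duality description of the quotient map with the algebraic one.

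For isotropy and induced Serre duality on graded pieces: since $\omega$ is Lagrangian in $\cH$ under $\lr{}_1$ (i.e.\ $\omega \otimes \omega$ maps to zero), taking symmetric powers yields $\lr{}_{k-2}: \Fil^p \cH_{k-2} \otimes \Fil^q \cH_{k-2} \to 0$ whenever $p + q > k - 2$. This gives isotropy of the Hodge subspace $H^0(X_{\Q_p}, \omega^k(-C))$ inside $\Hanpar$, and on graded pieces the remaining perfect pairing $\mathrm{Gr}^{k-2} \otimes \mathrm{Gr}^0 \cong \omega^{k-2} \otimes \omega^{2-k} \isoto \cO_X$ combined with Poincar\'e/Serre duality on $X$ recovers the Serre duality pairing between $H^0(X_{\Q_p}, \omega^k(-C))$ and $H^1(X_{\Q_p}, \omega^{2-k})$. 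Finally, for the Eisenstein kernel: the inclusion $\Hanpar \rinj \Han^0$ has cokernel identified, via the Hodge filtrations (which share the same quotient $H^1(X_{\Q_p}, \omega^{2-k})$), with the Eisenstein quotient $H^0(X_{\Q_p}, \omega^k)/H^0(X_{\Q_p}, \omega^k(-C))$; the pairing $\lr{}_\dR$ on $\Han^0$ extending the perfect one on $\Hanpar$ must have radical complementary to $\Hanpar$, which via the canonical Hodge-sub lift is the image of Eisenstein series in $\Han^0$. The main obstacle will be verifying rigorously that Eisenstein classes really are in the radical, which amounts to showing that the pairing of $f \in H^0(X_{\Q_p}, \omega^k)$ against $H^1(X_{\Q_p}, \omega^{2-k})$-components of $\Hanpar$ factors through Serre duality on cusp forms only --- a residue computation at cusps, for which the analytic formula \eqref{eq: dR pairing} and total-residue cancellation on the compact curve $X$ will be the principal tools.
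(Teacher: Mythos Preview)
Your proposal is correct and follows essentially the same logical route as the paper's proof, though the paper is far more terse: it simply cites \cite[Thm.\ 4.10]{coleman1989} for the compatibility of the Hodge subspace under the analytic/algebraic comparison, invokes Proposition~\ref{prop: PD and SD} (exactly as you do) for the Hodge quotient, and delegates the Eisenstein left-kernel statement entirely to \cite[\S3, Remark]{coleman1994}. Your explicit unpacking of the spectral-sequence degeneration and the Lagrangian/isotropy argument for $\lr{}_{k-2}$ is the content behind those citations; the only place where you diverge is the Eisenstein kernel, where instead of citing Coleman you sketch a residue argument --- this would work, but since you yourself flag it as the main obstacle, note that the paper sidesteps it by appeal to the cited remark rather than carrying out the computation directly.
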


\begin{proof}
    The compatibility between the algebraic and analytic realizations of the Hodge subspace follows from \cite[Thm.\ 4.10]{coleman1989}. The compatibility between the algebraic and analytic realizations of the Hodge quotient arises from the compatibility of Proposition \ref{prop: PD and SD} between Serre duality and the analytic expression of $\lr{}_\dR$. The final statement follows from \cite[\S3, Remark]{coleman1994}. 
\end{proof}

Later, we will also need an integral refinement of the Hodge filtration. 
\begin{cor}
    \label{cor: integral Hodge SES}
    Let $k \in \Z_{\geq 3}$. There is a sequence of finitely generated $\Z_p$-modules
    \[
    H^0(X, \omega^k) \rinj \bH^1_\mathrm{par}(X, \cF_k) \rsurj H^1(X, \omega^{2-k})
    \]
    where the composition is zero. After $\otimes_{\Z_p} \Q_p$, it realizes the short exact sequence of Proposition \ref{prop: Hodge SES}. 
\end{cor}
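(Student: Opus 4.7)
Plan: I would construct both maps at the level of complexes of $\cO_X$-modules over $\Z_p$. For the surjection, the Hodge quotient $\cH_{k-2} \twoheadrightarrow \cH_{k-2}/\Fil^1\cH_{k-2} \cong \omega^{2-k}$ in degree $0$ gives a morphism of complexes $\cF_k \to [\omega^{2-k}]$ over $\Z_p$, inducing on hypercohomology the map $\bH^1(X, \cF_k) \to H^1(X, \omega^{2-k})$, which I restrict to $\bH^1_\mathrm{par}(X, \cF_k) \subset \bH^1(X, \cF_k)$. For the injection, the Kodaira--Spencer map embeds $\omega^k$ into the degree-$1$ term $\Fil^{k-2}\cH_{k-2}\otimes \Omega^1(\log C) \subset \cH_{k-2}\otimes \Omega^1(\log C)$, defining a morphism of complexes $\omega^k[-1] \to \cF_k$ and hence a map $H^0(X, \omega^k) \to \bH^1(X, \cF_k)$. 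To show this factors through $\bH^1_\mathrm{par}(X, \cF_k)$, I would produce an explicit \v{C}ech lift to $\cF_k^\mathrm{par}$: on a cover of $X$ by the complement of the cusps and small disks around each cusp $c$, modify $\mathrm{KS}_k(f)$ by an exact form $\nabla_{k-2}(g_c)$ with $g_c \in \Gamma(U_c, \cH_{k-2})$ chosen integrally so that the local log poles cancel.

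Composition zero is purely formal at the level of complexes: the image of $\omega^k[-1]$ lies in degree $1$ of $\cF_k$, whereas $[\omega^{2-k}]$ is concentrated in degree $0$, so the composite morphism of complexes vanishes, whence so does the induced map on hypercohomology. Rational compatibility with the short exact sequence of Proposition \ref{prop: Hodge SES} after $\otimes_{\Z_p}\Q_p$ is immediate from the algebraic description there, since our two maps are precisely the Hodge-filtration maps appearing in that sequence.

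For integrality, the $\Z_p$-injectivity of the first map follows from injectivity after $\otimes_{\Z_p}\Q_p$ (from Proposition \ref{prop: Hodge SES}) combined with $\Z_p$-flatness of $H^0(X, \omega^k)$, itself standard by cohomology-and-base-change for the line bundle $\omega^k$ on the smooth proper curve $X/\Z_p$. Surjectivity of the second map over $\Z_p$ is the main substantive point; I would argue it by dualising. By Serre duality $H^1(X, \omega^{2-k}) \cong H^0(X, \omega^k(-C))^\vee$, and under the Poincar\'e pairing $\lr{}_\dR$ on $\bH^1_\mathrm{par}(X, \cF_k)$ our surjection corresponds (via the analytic description of Proposition \ref{prop: Hodge SES}) to the $\Z_p$-dual of the Hodge inclusion $H^0(X, \omega^k(-C)) \hookrightarrow \bH^1_\mathrm{par}(X, \cF_k)$. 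Assuming $\lr{}_\dR$ is a perfect $\Z_p$-pairing on $\bH^1_\mathrm{par}(X, \cF_k)$, $\Z_p$-surjectivity follows from the strictness of this inclusion.

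The main obstacle is thus verifying that the $\Z_p$-Poincar\'e pairing $\lr{}_\dR$ on $\bH^1_\mathrm{par}(X, \cF_k)$ is perfect. I would deduce this via Proposition \ref{prop: cris dR} from crystalline Poincar\'e duality on $H^1_\cris(X_{\F_p}/\Z_p, \cF_k)$, checking that the identification in Proposition \ref{prop: cris dR} transports the crystalline pairing to the algebraic de Rham pairing $\lr{}_\dR$ of Proposition \ref{prop: PD and SD}; this is the only step requiring genuinely integral (as opposed to rational) input.
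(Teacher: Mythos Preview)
Your approach differs substantially from the paper's. The paper gives a one-line argument via the hypercohomology spectral sequence for the Hodge filtration on $\cF_k$: the extremal graded pieces are $\gr^0_F\cF_k \cong \omega^{2-k}[0]$ and $\gr^{k-1}_F\cF_k \cong \omega^k[-1]$, while for $1 \le i \le k-2$ the differential on $\gr^i_F\cF_k$ is $i \cdot \KS$ and hence has only $p$-power torsion cohomology (this is the content of the remark about Lemma~\ref{lem: torsion complement}). The edge maps at the two ends of the filtration yield the injection and surjection; non-degeneration only affects the middle. No Poincar\'e duality enters.

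Your construction of the maps, the composition-zero step, and the rational comparison are fine; so is your injectivity argument (torsion-free source plus rational injectivity). But your surjectivity argument has a real gap, in two places. First, a perfect $\Z_p$-valued Poincar\'e pairing on $\bH^1_\mathrm{par}(X,\cF_k)$ would require this module to be $\Z_p$-torsion-free, which you do not establish; your proposed deduction from crystalline duality does not help, since $H^1_\cris(X_{\F_p}/\Z_p,\cF_k)$ can itself carry $p$-torsion (indeed the paper's Theorem~\ref{thm: p-integral dR cris} explicitly passes to the torsion-free quotient). Second, even granting a perfect pairing, you invoke ``strictness'' of the Hodge inclusion $H^0(X,\omega^k(-C)) \hookrightarrow \bH^1_\mathrm{par}$, i.e.\ that its image is $\Z_p$-saturated. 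Your earlier argument gives only injectivity, not saturation; and under a perfect pairing, saturation of this inclusion is \emph{equivalent} to surjectivity of the dual map $\bH^1_\mathrm{par} \to H^1(X,\omega^{2-k})$, which is exactly what you are trying to prove. So this step is circular.

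The paper's spectral-sequence route avoids both difficulties: it never passes through integral Poincar\'e duality at all.
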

\begin{proof}
    Although the spectral sequence of $\bH^1_\mathrm{par}(X, \cF_k)$ may no longer fully degenerate, it nonetheless begins with this injection and ends with this surjection claimed. (More explicitly, failures of Griffiths transversality are at most torsion, as is described in Lemma \ref{lem: torsion complement}.) By naturality of these maps, the final claim is true. 
\end{proof}

And we discuss Hecke equivariance of the Poincar\'{e} duality pairing. 
\begin{prop} 
    
    \label{prop:PD-hecke}
    The Hodge filtration is $\bT[T_p]$-stable. The same adjunction formula for Hecke operators holds for the Poincar\'{e} duality pairing $\lr{}_\dR$ as for the Serre duality pairing $\lr{}_\mathrm{SD}$ as given in Proposition \ref{prop: Serre duality}. 
\end{prop}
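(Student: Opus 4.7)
I would exploit the functoriality of every ingredient assembled in the Hodge sequences of Proposition \ref{prop: Hodge SES}. Each Hecke operator $T_n$ arises from a correspondence $X \xleftarrow{\pi_1} Z_n \xrightarrow{\pi_2} X$ equipped with a universal isogeny $\phi : \pi_1^* E \to \pi_2^* E$ of generalized elliptic curves, and the line $\omega \subset \cH$, the quotient $\cH \twoheadrightarrow \omega^{-1}$, the Gauss--Manin connection, and hence the Kodaira--Spencer map $\mathrm{KS}_k$ are all contravariantly natural in isogenies. Thus $\phi$ intertwines the pullbacks of these structures along $\pi_1$ and $\pi_2$, and taking $\pi_{2,*}$ exhibits Hecke equivariance of both the inclusion $H^0(X,\omega^k) \hookrightarrow \bH^1_\dR(X_{\Q_p},\cF_k)$ coming from $\mathrm{KS}_k$ and the quotient onto $H^1(X,\omega^{2-k})$ coming from $\cH \twoheadrightarrow \omega^{-1}$, yielding $\bT[T_p]$-stability of the Hodge filtration on both $\bH^1_\dR$ and $\bH^1_\mathrm{par}$.

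\textbf{Plan for the adjunction formulas.} Here I would invoke the cup-product presentation of $\lr{}_\dR$ from Propositions \ref{prop: PD and SD} and \ref{prop: Hodge SES} together with the projection formula for the finite flat maps $\pi_i$, in exact parallel to the Serre duality adjunction of Proposition \ref{prop: Serre duality}. Realizing $T_n^*$ by the same correspondence as $T_n$ but with the roles of $\pi_1, \pi_2$ swapped (and $\phi$ replaced by its dual isogeny) and applying the projection formula $\pi_{2,*}(\pi_1^* \alpha \smile \beta) = \alpha \smile \pi_{2,*}\beta$ gives
\[
\lr{T_n f, g}_\dR = \lr{\pi_1^* f, \pi_2^* g}_\dR = \lr{f, T_n^* g}_\dR,
\]
from which $\lr{T_n f, g}_\dR = \lr{f, T_n \lr{n}_N^{-1} g}_\dR$ for $(n,N) = 1$ follows via $T_n^* = T_n \lr{n}_N^{-1}$. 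The diamond adjunction is immediate because $\lr{d}_N$ acts via the automorphism $[d]$ of $X$ with $[d]^{-1} = [d^{-1}]$.

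\textbf{Main obstacle.} The subtle point is that the projection formula must ultimately be applied at the level of Coleman's analytic pairing \eqref{eq: dR pairing}, where boundary contributions at the supersingular annuli or cusps could \emph{a priori} intervene. But Proposition \ref{prop: PD and SD} and Lemma \ref{lem: dR equivalence} identify this analytic pairing with the algebraic Poincar\'e--Serre pairing on the smooth proper curve $X_{\Q_p}$, where the projection formula holds cleanly; the adjunction therefore reduces to the standard algebraic computation on the correspondence $Z_n$, which moreover simultaneously confirms Hecke-stability of the parabolic subspace used for the pairing.
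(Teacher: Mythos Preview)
Your proposal is correct and follows the standard correspondence-and-projection-formula argument. The paper itself gives no proof, simply citing \cite[\S8]{coleman1994}; your sketch essentially supplies the details that the paper defers to Coleman's treatment, and the approach you outline (naturality of $\omega \subset \cH$, $\nabla$, and $\mathrm{KS}_k$ under isogenies for the filtration stability, then the projection formula on the algebraic pairing for adjunction) is precisely what one finds there.
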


\begin{proof}
See \cite[\S{8}]{coleman1994}.
\end{proof}

Coleman gives more description of the $U$ and $F'$ actions on cohomology introduced in Definition \ref{defn: dR cohomology}. 

\begin{prop}
    \label{prop: FV on dR}
    The actions $F'$ and $U$ are adjoint on $\bH^1_\mathrm{par}(X_{\Q_p},\cF_k)$ with respect to $\lr{}_\dR$, that is, $\lr{U \cdot \alpha, \beta}_\dR = \lr{\alpha, F'\cdot \beta}_\dR$. Moreover, 
    \[
    F' \circ U = U \circ F' = p^{k-1}
    \]
    and 
    \[
    \lr{F' \cdot \alpha, F' \cdot \beta}_\dR = \lr{U \cdot \alpha, U \cdot \beta}_\dR = p^{k-1}\lr{\alpha,\beta}_\dR. 
    \]
    Under the comparison with crystalline cohomology of Proposition \ref{prop: cris dR}, $F'$ is compatible with the crystalline Frobenius $\varphi$. 
\end{prop}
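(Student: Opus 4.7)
The plan is to derive each claim from Coleman's definitions of $F'$ and $U$ in \cite{coleman1996} combined with standard projection-formula manipulations for Poincar\'e duality. I would first recall that, working with the degeneration correspondence $\pi_1, \pi_2 : X_0(p) \to X$, Coleman realizes $F'$ via pullback along the Frobenius-lifting degeneration (extended rigid-analytically from $X^\ord$ to $W_1$) and $U$ via the transpose correspondence; at the level of $q$-expansions these act as Frobenius and the usual Atkin $U_p$. The identity $F' \circ U = U \circ F' = p^{k-1}$ then reflects the relation between Frobenius and Verschiebung on the universal elliptic curve, combined with the weight $k$ twist implicit in the coefficient sheaf $\cF_k$; this is proved in \cite[\S4--5]{coleman1996}.

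For adjointness, I would use the residue presentation of $\lr{}_\dR$ supplied by Proposition \ref{prop: PD and SD} together with the projection formula for proper morphisms. Exhibiting $F'$ and $U$ as transpose correspondences with respect to the Poincar\'e pairing on $X_0(p)$ yields
\[
\lr{U \cdot \alpha, \beta}_\dR = \lr{\alpha, F' \cdot \beta}_\dR,
\]
after checking compatibility with restriction to the parabolic part (i.e. with vanishing residues at the supersingular annuli and the cusps, by Lemma \ref{lem: dR equivalence}). Granting this and the previous identities, the last formula is then formal: $\lr{F' \alpha, F' \beta}_\dR = \lr{U F' \alpha, \beta}_\dR = p^{k-1}\lr{\alpha,\beta}_\dR$, and symmetrically for $\lr{U\alpha, U\beta}_\dR$.

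For the comparison of $F'$ with the crystalline Frobenius $\varphi$ under Proposition \ref{prop: cris dR}, the idea is that both operators can be computed by the same lift of absolute Frobenius on the formal scheme $X^\ord / \Spf \Z_p$, namely the one provided by the canonical-subgroup quotient. By the functoriality of crystalline cohomology this lift realizes $\varphi$, and by Coleman's definition it also realizes $F'$ on the ordinary locus; equality on all of $\Hanpar$ then follows by rigid-analytic continuation from $X^\ord$ to $W_1$, together with Lemma \ref{lem: dR equivalence}.

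The step I expect to be the main obstacle is the careful verification of the adjointness under $\lr{}_\dR$, since the residues at the supersingular annuli interact non-trivially with push--pull along the $X_0(p)$ correspondence, and one must confirm that the transpose relation between $F'$ and $U$ is realized by the \emph{same} residue formula \eqref{eq: dR pairing} on both sides. However, this is essentially carried out by Coleman in \cite[\S5]{coleman1996}, so the task reduces largely to assembling citations and confirming that the conventions used here match his.
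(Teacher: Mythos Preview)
Your proposal is correct and follows essentially the same route as the paper: the first three claims are all drawn from \cite[\S5]{coleman1996}, and the paper's proof simply records this citation without reproducing the projection-formula argument you sketch. The one minor difference is the final claim on compatibility of $F'$ with the crystalline Frobenius $\varphi$: the paper cites \cite[Lem.\ 4.3.1]{BE2010} directly rather than arguing via the canonical-subgroup lift of Frobenius as you outline, so you may wish to supplement your sketch with that reference.
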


\begin{proof}
    These statements are recorded in \cite[\S5]{coleman1996}, other than the final claim which appears in \cite[Lem.\ 4.3.1]{BE2010}. 
\end{proof}

\subsection{Hida theory and higher Hida theory}
\label{subsec: hida higher background}

We recall the results of Hida theory and the higher Hida theory for the modular curve, following Boxer--Pilloni \cite{BP2022}. We choose a slightly different $p$-integral lattice than Boxer--Pilloni. 

We record this standard result. 
\begin{lem}
    \label{lem: oc Hida}
    Let $k \in \Z$. 
    $U$-ordinary global sections of $\omega^k, \omega^k(-C)$ on $X^\ord$ are overconvergent, that is, the natural restriction map for $X^{\ord,\mathrm{an}} \subset W$ induces isomorphisms
    \[
    M_{k,\Q_p}^{\dagger,\ord} \cong e(U)H^0(X^\ord,\omega^k) \otimes_{\Z_p} \Q_p, \quad 
    S_{k,\Q_p}^{\dagger,\ord} \cong e(U)H^0(X^\ord,\omega^k(-C)) \otimes_{\Z_p} \Q_p.
    \]
\end{lem}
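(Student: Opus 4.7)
The plan is to verify the two asserted isomorphisms by showing both inclusions, with the nontrivial direction resting on the ``improving'' property of the analytic $U$-operator.

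The inclusion $M_{k,\Q_p}^{\dagger,\ord}\hookrightarrow e(U)H^0(X^\ord,\omega^k)\otimes_{\Z_p}\Q_p$ I would obtain from restriction of sections along $X^{\ord,\mathrm{an}}\subset W$, where $W$ is the strict neighborhood underlying Definition~\ref{defn: oc forms over Cp}. This restriction is $U$-equivariant because on both sides $U$ is given by the same $p$-correspondence, and it is injective because $X^\ord_{\F_p}$ is connected, so that a form is determined by its $q$-expansion at a cusp. The cuspidal variant runs verbatim, with the cuspidal condition expressed by the vanishing of the $q$-expansion constant term at each cusp.

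For the reverse inclusion, my plan is to invoke Coleman's improving property for the $U$-correspondence (see \cite[\S3]{coleman1996}): parameterizing a family of strict neighborhoods $W_v\supseteq X^{\ord,\mathrm{an}}$ indexed by a valuation $v\geq 0$ of $E_{p-1}$ (with $v=0$ recovering $X^{\ord,\mathrm{an}}$ and $v>0$ giving genuine overconvergence), a single application of $U$ sends $H^0(W_v,\omega^k)$ into $H^0(W_{v'},\omega^k)$ for some $v'>v$; in particular at $v=0$ it already carries $p$-adic modular forms into overconvergent ones. Given $f\in e(U)H^0(X^\ord,\omega^k)\otimes\Q_p$, the finite-dimensionality of the $U$-ordinary subspace (Hida) together with $e(U)=\lim U^{n!}$ forces $U$ to act invertibly there, so $f=U\cdot g$ for $g=U^{-1}f$ in the same ordinary subspace. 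The improving property then places $f=Ug$ in $H^0(W_{v'},\omega^k)\otimes\Q_p$ for some $v'>0$, i.e.\ in $M_{k,\Q_p}^\dagger$; since $e(U)f=f$, it lies in $M_{k,\Q_p}^{\dagger,\ord}$. The cuspidal case is identical once one notes that $U$, and hence $U^{-1}$ on the ordinary part, preserves vanishing at the cusps.

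The main obstacle is to justify the improving step starting from $v=0$, that is, to confirm that one application of $U$ to an a priori only $p$-adic section genuinely extends it across the supersingular annuli into some strict neighborhood whose radius is independent of the input. This reduces to Coleman's geometric analysis of the $U$-correspondence via the canonical subgroup and is already the content of his foundational treatment of overconvergent forms, so the obstacle is a matter of invoking it correctly rather than developing new technique.
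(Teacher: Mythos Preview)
Your sketch is correct and is exactly the standard argument: restriction is injective by $q$-expansion considerations, and surjectivity comes from the fact that $U$ improves the radius of overconvergence together with the invertibility of $U$ on the ordinary part. Note, however, that the paper does not supply its own proof of this lemma; it is introduced with ``We record this standard result'' and left unproved, so there is nothing further to compare against.
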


\begin{defn}
Let $k \in \Z$. For $k \neq 0$, let $M_k^{\dagger,\ord}, S_k^{\dagger,\ord}$ denote the the $\Z_p$-lattices in $M_{k,\Q_p}^{\dagger,\ord}, S_{k,\Q_p}^{\dagger,\ord}$ given by pairing with $\bT[U]$, that is, 
\[
M_k^{\dagger,\ord} := \{f \in e(U)H^0(X^\ord,\omega^k) \otimes_{\Z_p} \Q_p : a_1(T \cdot f) \in \Z_p \text{ for all } T \in \bT[U]\}
\]
and similarly for $S^{\dagger,\ord}_k$. Let $M_k^\ord$ denote 
\[
\{f \in e(U)H^0(X_0(p),\omega^k) \otimes_{\Z_p} \Q_p : a_1(T \cdot f) \in \Z_p \text{ for all } T \in \bT_{\Gamma_0(p)}\}
\]
and similarly for $S^\ord_k$. 
\end{defn}
Hida's classicality theorem ensures that the inclusion $M_k^\ord \rinj M_k^{\dagger,\ord}$ is an isomorphism for $k \geq 3$. 

Here are the stabilization isomorphisms of classical and higher Hida theory. 

\begin{prop}
    \label{prop: duality wt k}
    For $k \in \Z_{\geq 3}$, there are canonical $\bT[\,]$-equivariant stabilization isomorphisms
    \begin{gather*}
        e(T_p) H^0(X,\omega^k) \risom e(U) H^0(X^\ord, \omega^k) \\
        e(T_p) H^0(X,\omega^k(-C)) \risom e(U) H^0(X^\ord, \omega^k(-C)) \\
        e(F)H^1_c(X^\ord,\omega^{2-k}) \risom e(T_p) H^1(X, \omega^{2-k}) \\
        e(F)H^1_c(X^\ord,\omega^{2-k}(-C)) \risom e(T_p) H^1(X, \omega^{2-k}(-C)) 
    \end{gather*}
    Under these isomorphisms $T_p$ acts over $X$ as $U + F$ acts over $X^\ord$. 
    Moreover, there are $\Z_p$-perfect $p$-adic Serre duality pairings 
    \begin{gather}
        \label{eq: SD ord}
        \lr{}_\mathrm{SD} : e(U)H^0(X^\ord, \omega^k) \times e(F)H^1_c(X^\ord,\omega^{2-k}(-C)) \to  \Z_p \\
        \label{eq: SD cusp ord}    
        \lr{}_\mathrm{SD} : e(U)H^0(X^\ord, \omega^k(-C)) \times e(F)H^1(X^\ord,\omega^{2-k}) \to  \Z_p    
    \end{gather}
    that are compatible with the Serre duality pairings of Proposition \ref{prop: Serre duality} under the stabilization isomorphisms.
\end{prop}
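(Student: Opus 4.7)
My plan splits the statement into three parts: the classical-weight stabilizations on $H^0$, the higher Hida stabilizations on $H^1$ together with the identity $T_p = U+F$, and the two Serre duality pairings.

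For the two $H^0$ isomorphisms, the key input is Hida's classicality theorem: for $k \geq 3$, every $U$-ordinary overconvergent form of weight $k$ is classical and arises as the $p$-stabilization $f \mapsto f - \beta_f V_p(f)$ of a $T_p$-ordinary classical form of level $\Gamma_1(N)$, where $\beta_f$ is the non-unit root of the $T_p$-polynomial of $f$. Combined with Lemma \ref{lem: oc Hida}, this produces the asserted isomorphism rationally and intertwines $T_p$ on the source with $U$ on the target (while matching the $\lr{d}_N$-operators and the $\bT[\,]$-operators for $(n, Np) = 1$). I would then upgrade to an isomorphism of $\Z_p$-lattices using the $a_1$-pairing characterization \eqref{eq: a1 duality}: both lattices consist of those forms in their respective rational spaces on which every Hecke operator $T$ satisfies $a_1(T \cdot f) \in \Z_p$, so Hecke-equivariance of the $p$-stabilization along the natural map $\bT[T_p] \to \bT[U]$ transports one integral structure to the other. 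The cuspidal case is identical.

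For the two $H^1$ isomorphisms, I would appeal directly to Boxer--Pilloni's higher Hida theory \cite{BP2022}, which constructs the Frobenius operator $F$ on compactly supported coherent cohomology of $X^\ord$ and produces the stabilization isomorphism with $e(T_p)H^1(X, -)$. The identity $T_p = U + F$ on $X^\ord$ is the standard correspondence-theoretic decomposition of the $T_p$-correspondence restricted to $Y_1(N)^\ord$ into its Verschiebung component (contributing $U$, i.e.\ quotient by the canonical subgroup) and its Frobenius component (contributing $F$).

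For the Serre duality pairings, I would begin with the integral pairing of Proposition \ref{prop: Serre duality} and restrict it to the ordinary summands on both sides. The key ingredient is that $U$ and $F$ are mutually adjoint under Serre duality on the ordinary locus; this follows from the interpretation of $U$ and $F$ as Verschiebung and Frobenius on the universal ordinary elliptic curve together with the compatibility of the Kodaira--Spencer isomorphism with these operators, and is recorded in \cite{BP2022}. Consequently $e(U)$ on $H^0(X^\ord, -)$ is Serre-adjoint to $e(F)$ on $H^1_c(X^\ord, -)$, and the restriction of the pairing of Proposition \ref{prop: Serre duality} to the $T_p$-ordinary direct summand coincides, via the stabilization isomorphisms of the previous two parts, with the asserted pairings \eqref{eq: SD ord} and \eqref{eq: SD cusp ord}. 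Perfection is inherited from the original pairing because the ordinary part is a direct summand cut out by an integral idempotent, and compatibility with Proposition \ref{prop: Serre duality} is immediate from the construction.

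The main obstacle is verifying the integral mutual adjunction $U^\ast = F$ (with the correct character twists, so that $e(U)^\ast = e(F)$ integrally rather than only after inverting $p$); once this is in place, perfection and Hecke-compatibility of the restricted pairings follow formally. This adjunction is exactly the content that \cite{BP2022} provides for us.
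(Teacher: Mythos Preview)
Your proposal is correct and takes essentially the same approach as the paper, which simply cites \cite[Prop.\ 1.3.2]{ohta2005} for the $H^0$ stabilizations and \cite[Thm.\ 4.18]{BP2022} for the $H^1$ stabilizations and the Serre duality pairings; you are sketching the content behind those citations. One small caveat: your integrality argument for the $H^0$ isomorphisms via the $a_1$-pairing characterizes the Hecke-dual lattice $M_k$ of Definition~\ref{defn: classical forms} rather than the coherent lattice $e(T_p)H^0(X,\omega^k)$ itself (these agree on cusp forms but can differ on Eisenstein constant terms), so for the non-cuspidal case it is cleaner to observe that the stabilization map is defined coherently and hence is automatically integral, as in Ohta.
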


\begin{proof}
    For classical Hida theory, see \cite[Prop.\ 1.3.2]{ohta2005}, while for higher Hida theory, see Boxer--Pilloni \cite[Thm.\ 4.18]{BP2022}. 
\end{proof}

We are interested in a slightly different lattice, differing only in the Eisenstein part. Indeed, we have already specified this by defining $M_k^{\dagger,\ord}$, and we need an appropriate definition in degree 1. 
\begin{defn} 
    \label{defn: Hecke lattice for H1ord}
    Let $\cH^{1,\ord}_k(-C) \subset e(F)H^1_c(X^\ord, \omega^{2-k}(-C)) \otimes_{\Z_p} \Q_p$ denote the $\Z_p$-lattice that is the $\Z_p$-perfect dual under \eqref{eq: SD ord} of $M^{\dagger,\ord}_k \subset e(U) H^0(X^\ord, \omega^k) \otimes_{\Z_p} \Q_p$. 
\end{defn}

Now we set up the $\Lambda$-adic interpolations of classical and higher Hida theory \cite{hida1986, wiles1988, BP2022}. 
\begin{itemize}[leftmargin=1.2em]
\item We let $\Lambda = \Z_p\lb \Z_p^\times \rb$, writing $[z] \in \Z_p^\times$ for the group elements of $\Z_p^\times$. We let $\Lambda$ act in weight $k \in \Z$ by $\phi_k : \Lambda \to \Z_p, [z] \mapsto z^{k-1}$. 
\item $\Lambda$-adic $U$-ordinary modular forms of tame level $N$ are $M_\Lambda^\ord$ denoted $M_\Lambda^\ord$ and have the interpolation property
\begin{equation}
\label{eq: M specialize}
M_\Lambda^\ord \otimes_{\Lambda,\phi_k} \Z_p \isoto M_k^{\dagger,\ord}, \quad k \in \Z. 
\end{equation}
We remark that this definition of $M_\Lambda^\ord$ differs from the module ``$M$'' of \cite[Thm.\ 1.2]{BP2022} because we interpolate the lattices $M_k^{\dagger,\ord}$ while the authors of \textit{ibid.}\ interpolate $e(U)H^0(X^\ord,\omega^k)$. 
\item The cusp forms are $S_\Lambda^\ord$, which interpolate the lattices $S_k^{\dagger,\ord}$. 
\item Let $\bT_\Lambda^\ord := \bT[U]_\Lambda(M_\Lambda^\ord)$, $\bT_\Lambda^{\ord,\circ} := \bT[U]_\Lambda(S_\Lambda^\ord)$.
\item The perfect $\Z_p$-bilinear duality pairings of \eqref{eq: a1 duality} interpolate under \eqref{eq: M specialize} into perfect $\Lambda$-bilinear duality pairings
\begin{equation}
    \label{eq: a1 duality Lambda}
    \lr{} : \bT_\Lambda^\ord \times M_\Lambda^\ord \to \Lambda, \qquad 
    \lr{} : \bT_\Lambda^{\ord,\circ} \times S_\Lambda^\ord  \to \Lambda
\end{equation}
which are $\bT[U]$-compatible. 
\item $M_\Lambda^\ord$ has a $q$-series realization compatible with the usual action of $\bT[U]_\Lambda$, $M_\Lambda^\ord \rinj Q(\Lambda) \oplus q \Lambda\lb q\rb$, of the form $\cF \mapsto a_0(\cF) + \sum_{n \geq 1} a_1(T_n \cdot \cF)q^n$ (using the constant term operator). 
\item $\Lambda$-adic $F$-ordinary cuspidal coherent cohomology in degree 1 is 
\[
\cH^{1,\ord}_\Lambda := e(F)H^1_c(X^\ord, \omega^{2-\kappa^\mathrm{un}}),
\]
where $\omega^{\kappa^\mathrm{un}}$ is the Igusa sheaf \cite[\S4.2.1]{BP2022}. It is characterized by $\bT[F]$-equivariant specialization maps
\[
\cH^{1,\ord}_\Lambda \otimes_{\Lambda, \phi_k} \Z_p \cong e(F)H^1_c(X^\ord,\omega^{2-k}). 
\]
\end{itemize}

\begin{thm}[{Hida, Boxer--Pilloni \cite[Thms.\ 1.1, 1.2]{BP2022}}]
    \label{thm: BP}
    $M_\Lambda^\ord$, $S_\Lambda^\ord$,  and $\cH^{1,\ord}_\Lambda$ are finitely generated flat $\Lambda$-modules. There are canonical specialization isomorphisms for $k \in \Z$, \eqref{eq: M specialize} and 
    \[
    S_\Lambda^\ord \otimes_{\Lambda,\phi_k} \Z_p \cong S_k^{\dagger,\ord}, \qquad 
\cH^{1,\ord}_\Lambda \otimes_{\Lambda, \phi_k} \Z_p  \cong e(F)H^1_c(X^\ord, \omega^{2-k}). 
    \]
    There is a perfect $\Lambda$-adic Serre duality pairing 
    \[
    \lr{}_{\mathrm{SD},\Lambda} :  S_\Lambda^\ord \times \cH^{1,\ord}_\Lambda \to \Lambda
    \]
    that interpolates the $p$-adic Serre duality pairings in weight $k \in \Z_{\geq 3}$ of Proposition \ref{prop: duality wt k} upon specialization along $\phi_k$. 
\end{thm}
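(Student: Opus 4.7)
The plan is to reduce the three claims to Hida's classical theory \cite{hida1986,wiles1988} and Boxer--Pilloni's higher Hida theory \cite[Thms.\ 1.1--1.2]{BP2022}. The only delicacy is that our lattice $M_k^{\dagger,\ord}$ is defined via the $a_1$-pairing with $\bT[U]$ rather than as the cohomological lattice $e(U)H^0(X^\ord,\omega^k)$ interpolated by Boxer--Pilloni, so the two $\Lambda$-adic modules can differ on the Eisenstein part; the argument is to bootstrap from the cuspidal case (where the two lattice conventions agree) via an explicit Eisenstein family.

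First, for $\cH^{1,\ord}_\Lambda$ I would invoke \cite[Thm.\ 1.1]{BP2022} directly. For $S_\Lambda^\ord$, the $a_1$-pairing gives a $\Z_p$-integrally perfect duality $\bT_\Lambda^{\ord,\circ} \times S_\Lambda^\ord \to \Lambda$ that coincides with the cohomological normalization in every weight $k \geq 3$ (cusp forms have vanishing constant term, so no discrepancy arises), and consequently $S_\Lambda^\ord$ is Hida's $\Lambda$-adic cuspidal ordinary module; its finite flatness and the specialization isomorphism are classical. For $M_\Lambda^\ord$, I would construct an explicit $\Lambda$-adic ordinary Eisenstein module $E_\Lambda^\ord$ that interpolates in each $k \geq 3$ the $a_1$-pairing Eisenstein quotient of $M_k^{\dagger,\ord}/S_k^{\dagger,\ord}$: the $U$-ordinary Eisenstein series on $X_0(p)$ are explicit stabilizations of $E_k$ parameterized by the cusps, and their $\Lambda$-adic avatars are visibly finite free. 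Finite flatness and the control theorem for $M_\Lambda^\ord$ then follow from the short exact sequence
\[
0 \to S_\Lambda^\ord \to M_\Lambda^\ord \to E_\Lambda^\ord \to 0
\]
by a snake-lemma argument using $\Tor_1^\Lambda(E_\Lambda^\ord,\Z_p) = 0$ and the fact that the specialization of each outer term is already identified with the cuspidal, respectively Eisenstein, part of $M_k^{\dagger,\ord}$.

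For the $\Lambda$-adic Serre duality, I would take the pairing constructed by Boxer--Pilloni in \cite[Thm.\ 1.2]{BP2022}, which is obtained as an inverse limit along the Igusa tower of the finite-level Serre duality pairings underlying Proposition \ref{prop: duality wt k}. This pairing already has the stated shape $S_\Lambda^\ord \times \cH^{1,\ord}_\Lambda \to \Lambda$ and is already $\Lambda$-perfect; alternatively it can be read off from the Boxer--Pilloni identification $\cH^{1,\ord}_\Lambda \cong \Hom_\Lambda(S_\Lambda^\ord,\Lambda)$. The compatibility with the weight-$k$ pairings of Proposition \ref{prop: duality wt k} is then tautological from the inverse-limit description, and specialization commutes with the pairing because both sides are $\Lambda$-flat. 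I expect the only step requiring genuine care to be the explicit construction and finite-freeness of $E_\Lambda^\ord$, together with the verification that the above sequence is short exact on the nose rather than up to a controlled torsion piece; the Serre-duality interpolation itself is essentially automatic once Boxer--Pilloni's result is in hand.
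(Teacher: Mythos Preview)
The paper does not prove this theorem; it is stated with attribution to Hida and Boxer--Pilloni \cite[Thms.\ 1.1, 1.2]{BP2022} and treated as a black box from the literature. Your proposal therefore goes beyond what the paper does.

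That said, you have correctly identified the one genuine issue the paper leaves implicit: the authors explicitly remark (just before the theorem) that their $M_\Lambda^\ord$ differs from Boxer--Pilloni's module ``$M$'' because they interpolate the $a_1$-pairing lattices $M_k^{\dagger,\ord}$ rather than $e(U)H^0(X^\ord,\omega^k)$, with the discrepancy lying in the Eisenstein part. The paper does not spell out how finite flatness and control transfer across this change of lattice; your strategy of handling the cuspidal part directly (where the two conventions agree) and then building $M_\Lambda^\ord$ as an extension by an explicit $\Lambda$-adic Eisenstein module is a standard and correct way to bridge this gap. Your treatment of $\cH^{1,\ord}_\Lambda$ and the Serre duality pairing is exactly the intended reduction to \cite{BP2022}. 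The only caution is your phrasing ``$\cH^{1,\ord}_\Lambda \cong \Hom_\Lambda(S_\Lambda^\ord,\Lambda)$'' as something to be ``read off'': in Boxer--Pilloni this is the content of the duality theorem, not a separate input, so you should cite the pairing construction itself rather than presenting the isomorphism as prior knowledge.
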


In addition, the duality of Hecke actions under Serre duality (of Proposition \ref{prop: Serre duality}) interpolates and also extends to the $p$-adic operators $U$ and $F$ as follows. 

\begin{prop}[{\cite[Thm.\ 4.18]{BP2022}}]
\label{prop: interpolated Serre duality}
Under the Serre duality pairings of Theorem \ref{thm: BP}, one has the same adjoint Hecke operator formulas as Proposition \ref{prop: Serre duality} for Hecke operators relatively prime to $p$, while duality for the remaining Hecke operators is determined by
\[
\lr{\lr{p}_N^{-1} U \cdot f, g}_{\mathrm{SD},\Lambda} = \lr{f, F\cdot g}_{\mathrm{SD},\Lambda},
\]
which also specializes to the Serre duality pairing at each weight $k \in \Z$. 
\end{prop}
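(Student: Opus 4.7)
The plan is to reduce the $\Lambda$-adic identities to identities at each classical weight $k \in \Z_{\geq 3}$, using $\Lambda$-flatness and Zariski-density, and then to verify the weight-$k$ identities using the stabilization isomorphisms and a correspondence-level description of $U$ and $F$.

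First I would invoke the defining property of the $\Lambda$-adic pairing $\lr{}_{\mathrm{SD},\Lambda}$ given in Theorem \ref{thm: BP}: that its specialization along $\phi_k$ for every $k \in \Z_{\geq 3}$ recovers the $p$-adic Serre duality pairing of Proposition \ref{prop: duality wt k} (which, after the stabilization isomorphisms, is the classical Serre duality pairing over $X$). Since $M_\Lambda^\ord$, $S_\Lambda^\ord$, and $\cH^{1,\ord}_\Lambda$ are flat and finitely generated over $\Lambda$ and the arithmetic weights $\{\phi_k : k \in \Z_{\geq 3}\}$ are Zariski-dense in $\Spec \Lambda$, any $\Lambda$-linear identity involving the pairing can be tested after specializing at each such $\phi_k$. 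In particular, the last clause of the proposition (specialization at each weight) is essentially the definition, and both claimed Hecke-adjoint identities reduce to their weight-$k$ counterparts.

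Next, for each fixed $k \in \Z_{\geq 3}$, I would split the adjoint claim according to whether the operator lies in $\bT[\,]$ or is the $p$-adic operator $U$ (respectively $F$). For operators $T_n$ and $\lr{d}_N$ with $(n,N)=1$ or $\ell \mid N$, the adjoint formulas are immediate from the classical Serre duality formulas of Proposition \ref{prop: Serre duality}, transported via the stabilization isomorphisms of Proposition \ref{prop: duality wt k} (under which these Hecke actions correspond unchanged). For the $U$--$F$ adjunction, I would use the realization of $U$ and $F$ on ordinary cohomology via the two degeneration maps $\pi_1,\pi_2 : X_0(p) \rightrightarrows X$: on the ordinary locus, $U$ is computed as a trace along the universal quotient isogeny whose kernel is the canonical subgroup, and $F$ is computed as a trace along the dual (Frobenius) isogeny. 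Grothendieck duality then intertwines the two traces under the Serre pairing; the discrepancy is a twist by the diamond operator at $p$ because identifying $\pi_1^\ast \omega^k$ with $\pi_2^\ast \omega^k$ through the canonical subgroup introduces the factor $\lr{p}_N$ on the nebentypus, yielding the formula $\lr{\lr{p}_N^{-1} U \cdot f, g}_{\mathrm{SD}} = \lr{f, F \cdot g}_{\mathrm{SD}}$.

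The main obstacle is getting the $\lr{p}_N^{-1}$ correction factor correctly, which amounts to careful bookkeeping about the normalizations of $U$ and $F$ as correspondences on $X^\ord$. This is exactly what Boxer--Pilloni carry out in \cite[Thm.\ 4.18]{BP2022}, and my strategy is to transport their conclusion. The only gap is that our lattice $\cH^{1,\ord}_\Lambda$ differs from theirs through the Eisenstein component (cf.\ Definition \ref{defn: Hecke lattice for H1ord}), but since both adjoint identities are $\Lambda$-linear and hold on the common cuspidal part, and the pairing is perfect, they extend to our lattice without further modification. The same density argument then lifts everything back to a $\Lambda$-adic identity, completing the proof.
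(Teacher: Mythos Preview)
Your proposal is essentially correct and is in fact more detailed than what the paper does: the paper gives no proof at all, simply citing \cite[Thm.\ 4.18]{BP2022} in the proposition header and moving on. Your reduction to classical weights via $\Lambda$-flatness and Zariski-density, followed by invoking the Boxer--Pilloni correspondence-level computation of the $U$--$F$ adjunction, is a sound way to justify the statement, and it ultimately rests on the same cited result.
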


\subsection{Presentation of cohomology via overconvergent forms}

Let $\theta = q\frac{d}{dq}$ denote the Atkin--Serre differential operator on $p$-adic modular forms, which increases weight by $2$. When $X$ is a $\bT[U]$-module we let $X(i)$ denote its $i$th Tate twist for $i \in \Z$, which we can view (for $i \geq 0$) as a tensor product as in \cite[p.\ 233]{coleman1996}. It is known that for $k \in \Z_{\geq 2}$ we have a $\bT[U]$-equivariant map $\theta^{k-1} : M^\dagger_{2-k, \Q_p}(k-1) \to M^\dagger_{k,\Q_p}$, that is, $\theta^{k-1}$ preserves overconvergence when it acts on forms of weight $2-k$. Its image also has vanishing residues along supersingular annuli. The $\bT[U]$-equivariance follows from the straightforward verification that $\theta^{k-1}$ satisfies 
\begin{equation}
    \label{eq: theta twist}
    T_n \circ \theta^{k-1} = n^{k-1}\theta^{k-1} \circ T_n \quad \text{for all } n \in \Z_{\geq 1}. 
\end{equation}
In particular, $\theta^{k-1}$ increases $U$-slopes by $k-1$. Thus we have an $\bT[U]$-equivariant map of $U$-critical modules 
\begin{equation}
\label{eq: theta}
\theta^{k-1} : M^{\dagger,\ord}_{2-k, \Q_p}(k-1) \to M^{\dagger,\crit}_{k,\Q_p},    
\end{equation}
which will be a main object of study.

The principal theorem of \cite{coleman1996} presents de Rham cohomology of weight $k$ by realizing overconvergent forms as differentials of the second kind. 
\begin{prop}[{Coleman \cite[Thm.\ 5.4]{coleman1996}}]
    \label{prop: Coleman pres dR}
    There are canonical isomorphisms induced by $\KS_k: M_k^\dagger \to (\cH_{k-2} \otimes \Omega^1)(W_1)$, 
    \[
     \frac{
M_{k,\Q_p}^{\dagger}}{\theta^{k-1} M_{2-k,\Q_p}^{\dagger}} \cong H^1_\dR(X_{\Q_p}^\mathrm{an},\cF_k),
  \qquad   \frac{
M_{k,\Q_p}^{\dagger,0}}{\theta^{k-1} M_{2-k, \Q_p}^{\dagger}} \cong H^1_\dR(X_{\Q_p}^\mathrm{an},\cF_k)^0
\]
\[
    \frac{
S_{k,\Q_p}^{\dagger,0}}{\theta^{k-1} M_{2-k,\Q_p}^{\dagger}} \cong \Hanpar
    \]
    which are $\bT[U]$ equivariant. 
\end{prop}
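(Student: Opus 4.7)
The plan is to follow Coleman's analytic approach, working on the affinoid $W_1 \subset X^\mathrm{an}_{\Q_p}$ and using the presentation of weight $k$ analytic de Rham cohomology recorded in Definition \ref{defn: dR cohomology}. The Kodaira--Spencer map $\KS_k : \omega^k \to \cH_{k-2} \otimes \Omega^1(\log C)$ factors through the top piece $\Fil^{k-2}\cH_{k-2} \otimes \Omega^1(\log C) = \omega^{k-2} \otimes \Omega^1(\log C)$ of the Hodge filtration. After restriction from $X_{\Q_p}$ to $W_1$, this sends $M_{k,\Q_p}^\dagger$ into $(\cH_{k-2} \otimes \Omega^1(\log C))(W_1)$, and composing with the quotient that defines $\Han$ yields the map on the first line; the residue vanishing conditions built into $M_k^{\dagger,0}$ and $S_k^{\dagger,0}$ then cut out the maps to $\Han^0$ and $\Hanpar$ respectively.

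For surjectivity I would use the vanishing of higher coherent cohomology on the affinoid $W_1$, together with Griffiths transversality $\nabla_{k-2}(\Fil^i \cH_{k-2}) \subset \Fil^{i-1}\cH_{k-2} \otimes \Omega^1(\log C)$. Because $\nabla_{k-2}$ induces isomorphisms on all graded pieces of the Hodge filtration on $\cH_{k-2} \otimes \Omega^1(\log C)$ except the top, an inductive argument lets me successively subtract $\nabla_{k-2}(h_i)$ from an arbitrary representative of a class to push it down into the top Hodge piece $\omega^{k-2} \otimes \Omega^1(\log C)(W_1)$, which is identified via $\KS_k$ with $M_{k,\Q_p}^\dagger$. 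The same reduction process preserves the residue conditions at supersingular annuli and cusps, handing the surjectivity for the $0$ and parabolic variants simultaneously.

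The crux is the identification of the kernel with $\theta^{k-1} M_{2-k,\Q_p}^\dagger$. Given $f \in M_k^\dagger$ with $\KS_k(f) = \nabla_{k-2}(h)$ for some $h \in \cH_{k-2}(W_1)$, I would project $h$ to the bottom graded piece $\cH_{k-2}/\Fil^1\cH_{k-2} \cong \omega^{2-k}$, producing a candidate weight $2-k$ form $\bar h$. The key computation, carried out first on the ordinary locus $X^\ord \subset W_1$ where the unit-root splitting of $\cH$ trivialises the filtration, is that the composition $\omega^{2-k} \hookrightarrow \cH_{k-2} \xrightarrow{\nabla_{k-2}} \cH_{k-2} \otimes \Omega^1 \twoheadrightarrow \omega^{k-2} \otimes \Omega^1 \cong \omega^k$ equals, up to a $\KS_k$-identification and a unit, the operator $\theta^{k-1}$; this is essentially a consequence of Katz's formula $\nabla(dq/q) = 0$ combined with iterating Griffiths transversality through the $k-1$ intermediate Hodge pieces. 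The extension of $\bar h$ from $X^\ord$ to overconvergent forms on $W_1$ follows because $\bar h$ differs from $h$ by sections of the higher Hodge filter that are themselves overconvergent.

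The main obstacle I anticipate is the kernel computation when $k-1 > 1$, because one must track successive applications of Griffiths transversality through intermediate Hodge graded pieces and match them cleanly with iterated $\theta$; intertwining this with the unit-root-splitting description of $\theta$ and ensuring that the resulting $\bar h$ is overconvergent on all of $W_1$ (not just on $X^\ord$) is the delicate step. Granting the standard facts that $\theta^{k-1}$ preserves overconvergence on weight $2-k$ forms, lands in the residue-zero subspace, and is Hecke-equivariant after Tate twist \eqref{eq: theta twist}, together with Coleman's $p$-adic residue analysis at supersingular annuli, the remaining bookkeeping to deduce all three isomorphisms in parallel is routine.
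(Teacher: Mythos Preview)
Your outline of Coleman's argument is essentially correct and follows the approach of \cite[Thm.\ 5.4]{coleman1996}: use Griffiths transversality (the isomorphism $\nabla_{k-2}:\Fil^1\cH_{k-2}\isoto (\cH_{k-2}\otimes\Omega^1(\log C))/\Fil^{k-2}$, cf.\ \cite[Lem.\ 4.2]{coleman1996}) to reduce any cohomology class to the top Hodge piece $\KS_k(M_k^\dagger)$, and identify the kernel via the unit-root computation that the bottom-to-top composite is $\theta^{k-1}$.

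However, you should be aware that the paper does not reprove this result at all: its proof is simply a citation to Coleman, since the proposition is literally Coleman's theorem. The only additional content in the paper's proof is the observation that Coleman's statements are over $\C_p$, and that the descent to $\Q_p$ follows from \cite[Prop.\ 4.10(a--c)]{coleman1989}. You do not address this descent explicitly; you work over $\Q_p$ throughout as if the analytic constructions (residues at supersingular annuli, the space $W_1$, etc.) are automatically defined there, but the paper is careful to note earlier (see the remarks in Definition \ref{defn: dR cohomology} and Lemma \ref{lem: dR equivalence}) that the supersingular points live in $\Q_{p^2}$ and that a density argument from \cite{MC2010} together with Coleman's 1989 comparison is needed to justify working over $\Q_p$. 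If you intend to give a self-contained proof rather than a citation, you should flag this descent step.
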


\begin{proof}
All of these statements over $\C_p$ follow in a straightforward way from \cite[Thm.\ 5.4]{coleman1996}, and the third statement is discussed on [\emph{loc.\ cit.}, p.\ 226]. The descent to $\Q_p$ follows from \cite[Prop.\ 4.10(a-c)]{coleman1989}. 
\end{proof}

Following Proposition \ref{prop: FV on dR}, this means that $U$ determines the crystalline Frobenius action under the comparison of Proposition \ref{prop: cris dR}.  We are particularly interested in the $U$-critical (equivalently, Frobenius-ordinary) part, which Coleman studied in \cite[\S7.2]{coleman1996}. We call a generalized Hecke eigenspace \emph{non-trivial} when it properly contains the eigenspace. 

\begin{prop}[{Coleman \cite[\S7.2]{coleman1996}}]
\label{prop: coleman non-split}
Let $k \in \Z_{\geq 3}$ and let $E/\Q_p$ be a finite extension. There is a short exact sequence of $\bT[U]$-modules of finite $\Q_p$-dimension,
\[
0 \to M_{2-k,\Q_p}^{\dagger, \ord}(k-1) \buildrel{\theta^{k-1}}\over\to M_{k,\Q_p}^{\dagger, \crit} \to \bH_\dR^1(X_{\Q_p}, \cF_k)^\crit \to 0,
\]
and $\bH_\dR^1(X_{\Q_p}, \cF_k)^\crit$ is (non-canonically) isomorphic as a $\bT[U]$-module to the classical critical cusp forms $S_{k,\Q_p}^\crit$. 
\begin{enumerate}
    \item If an eigenform $f \in M_{k,E}^{\dagger,\crit}$ is not in the image of $\theta^{k-1}$, then $f$ is classical. 
    \item A classical cuspidal eigenform $f \in S_{k,E}^{\crit}$ has non-trivial generalized eigenspace in $f \in M_{k,E}^{\dagger,\crit}$ if and only if $f$ lies in the image of $\theta^{k-1}$. 
\end{enumerate}
\end{prop}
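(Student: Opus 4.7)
The plan is to derive both the short exact sequence and the classical identification from Coleman's presentation of de Rham cohomology (Proposition~\ref{prop: Coleman pres dR}) by extracting the $U$-critical summand via slope decomposition. The twist identity~\eqref{eq: theta twist} shows that $\theta^{k-1}$ shifts $U$-slopes by $k-1$, so the intersection of $\mathrm{image}(\theta^{k-1}\colon M^\dagger_{2-k,\Q_p}\to M^\dagger_{k,\Q_p})$ with the $U$-critical summand is exactly $\theta^{k-1}(M^{\dagger,\ord}_{2-k,\Q_p})$. Injectivity of $\theta^{k-1}$ on $M^{\dagger,\ord}_{2-k,\Q_p}$ reduces via $a_n(\theta^{k-1}f)=n^{k-1}a_n(f)$ and the $q$-expansion principle to ruling out nonzero ordinary overconvergent forms of weight $2-k\leq -1$ with constant $q$-expansion, which I would settle from Hida's structural description of $\Lambda$-adic ordinary forms. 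Since the $U$-critical summand of $M^\dagger_{k,\Q_p}$ is finite-dimensional by Coleman's slope theory, applying the exact critical-slope projector to the cokernel presentation of $H^1_\dR(X^\mathrm{an}_{\Q_p},\cF_k)$ yields the desired short exact sequence. To identify the quotient with the \emph{algebraic} $\bH^1_\dR(X_{\Q_p},\cF_k)^\crit$, via Lemma~\ref{lem: dR equivalence} it suffices to show that critical overconvergent forms have vanishing residues at the supersingular annuli; this follows from $F'U=p^{k-1}$ of Proposition~\ref{prop: FV on dR}, which forces $F'$ to be invertible on the critical part while the residue map factors through the cokernel of $F'$.

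Next, for the non-canonical Hecke-equivariant isomorphism $\bH^1_\dR(X_{\Q_p},\cF_k)^\crit\cong S^\crit_{k,\Q_p}$, I would combine the Hodge short exact sequence (Proposition~\ref{prop: Hodge SES}) with a Frobenius unit-root splitting. On the parabolic piece $\bH^1_\mathrm{par}^\crit$ the operator $F'$ is invertible (again by $F'U=p^{k-1}$), and its unit-root subspace provides a Hecke-equivariant splitting of the Hodge filtration on the critical part. This identifies $\bH^1_\mathrm{par}^\crit$ with a Hecke-twisted copy of the $T_p$-ordinary cusp forms, and the critical $p$-stabilization sending $a_p\mapsto\beta$, the non-unit root of $x^2-a_p x+p^{k-1}$, realizes a $\bT[U]$-equivariant bijection to the classical critical cusp forms $S^\crit_{k,\Q_p}$. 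The non-canonicity arises from the freedom in labeling the two roots of each Hecke polynomial as $U$ versus $F'$; the full $\bH^1_\dR^\crit$ (as opposed to $\bH^1_\mathrm{par}^\crit$) is reached by adjoining Eisenstein critical stabilizations matching the Hodge subspace $H^0(X,\omega^k)$.

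Parts (1) and (2) then follow from the short exact sequence together with a careful comparison of (generalized) Hecke eigenspaces in the three terms. The key mechanism is that the Hecke action on $\bH^1_\dR^\crit$ is semisimple (granted by the classical identification just described), so any non-trivial generalized eigenspace in $M^{\dagger,\crit}_{k,E}$ must be realized inside $\theta^{k-1}(M^{\dagger,\ord}_{2-k,E}(k-1))$; the eigensystem of such a generalized eigenvector corresponds under the Tate twist $(k-1)$ to an ordinary overconvergent weight $2-k$ eigenform, and the existence of such a Tate-twisted lift is precisely equivalent to the original form $f$ lying in $\mathrm{image}(\theta^{k-1})$. This settles (2), and (1) follows by observing that an eigenform $f$ outside $\mathrm{image}(\theta^{k-1})$ projects to a nonzero classical eigenclass $\bar f$, matching a classical $g\in S^\crit_{k,E}$ whose image equals $\bar f$; the dimensional accounting $\dim M^{\dagger,\crit}_{k,E}=\dim M^{\dagger,\ord}_{2-k,E}+\dim\bH^1_\dR(X_{\Q_p},\cF_k)^\crit$ forces $f-g\in\mathrm{image}(\theta^{k-1})$, and invoking (2) on the eigensystem of $g$ (now known to have no non-trivial generalized eigenspace since $g\notin\mathrm{image}$ via contrapositive) rules out a nonzero $\theta^{k-1}h$-component, so $f=g$ is classical.

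The step I expect to be the main obstacle is the non-canonical identification $\bH^1_\dR(X_{\Q_p},\cF_k)^\crit\cong S^\crit_{k,\Q_p}$ as $\bT[U]$-modules: it requires producing a Hecke-equivariant splitting of the Hodge filtration on the critical slope part via the unit-root subspace of crystalline Frobenius, converting $T_p$-Hecke data into $U$-Hecke data through the Hecke polynomial factorization, and properly incorporating Eisenstein contributions. A secondary subtlety is the vanishing-of-residues argument needed to pass from the analytic $H^1_\dR$ of Proposition~\ref{prop: Coleman pres dR} to the algebraic $\bH^1_\dR$ in the statement, and a third is the careful tracking of Jordan-block structure in the generalized-eigenspace analysis underlying (1) and (2).
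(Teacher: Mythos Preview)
The paper's own proof is essentially a citation to Coleman's \S7.2 together with the observation that descent from $\C_p$ to $\Q_p$ follows from Proposition~\ref{prop: Coleman pres dR}, so your proposal goes considerably further by attempting to reconstruct Coleman's argument. Your derivation of the short exact sequence via slope decomposition of Proposition~\ref{prop: Coleman pres dR} is correct. For the non-canonical identification $\bH^1_\dR(X_{\Q_p},\cF_k)^\crit\cong S_{k,\Q_p}^\crit$, Coleman's route is through Poincar\'e duality (pairing the $U$-critical part with the $U$-ordinary part, the latter identified with $S_k^\ord$ via the Hodge sub); your unit-root splitting approach can also be made to work, but your remark about adjoining Eisenstein contributions to pass from $\bH^1_\mathrm{par}^\crit$ to $\bH^1_\dR^\crit$ is mistaken --- Corollary~\ref{cor: Eisenstein theta} records that these two spaces already coincide on the critical part.

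There is a genuine gap in your argument for (1). You assert the existence of a classical $g\in S_{k,E}^\crit$ whose image in the quotient equals $\bar f$, but the composite $S_{k}^\crit\hookrightarrow M_k^{\dagger,\crit}\twoheadrightarrow\bH^1_\dR(X,\cF_k)^\crit$ is not surjective in general (its failure to be an isomorphism is precisely what (2) measures), so such a $g$ is not available directly. Even granting $\bar g=\bar f$, the contrapositive of (2) yields only that the generalized $\lambda$-eigenspace equals the $\lambda$-eigenspace; it does not force the intersection with the sub to vanish, so it does not rule out $f-g\neq 0$. The missing ingredient is the $q$-expansion principle: if the eigensystem $\lambda$ appeared in $W=\theta^{k-1}(M_{2-k}^{\dagger,\ord})$, the normalized eigenform there would share all $q$-coefficients $a_n$ ($n\geq 1$) with the classical eigenform in $S_k^\crit$ of system $\lambda$, hence coincide with it, forcing that classical form into $W$. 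Combined with multiplicity one, this shows that when $f\notin W$ one has $W_\lambda=0$, whence the full generalized eigenspace has dimension one and $f$ is a scalar multiple of the classical form. The same $q$-expansion step underlies the ``if'' direction of (2), which you abbreviate to ``precisely equivalent'' without justification.
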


\begin{proof}
    The short exact sequence and the characterization of $\bH_\dR^1(X_{\C_p}, \cF_k)^\crit$ are found in \cite[\S7.2]{coleman1996}. Statements (1) and (2) follow from those claims, and are also found \textit{ibid.}\ over $\C_p$. The descent to $\Q_p$ (or $E$) follows from Proposition \ref{prop: Coleman pres dR}. 
\end{proof}

 We now spell out some immediate corollaries of Coleman's results: stating these is just a matter of emphasis. First, all three variants of de Rham cohomology in Proposition \ref{prop: Coleman pres dR} coincide on their critical parts.

\begin{cor}
\label{cor: Eisenstein theta}
Let $k \in \Z_{\geq 3}$. The inclusion of short exact sequences 
\begin{gather*}
\xymatrix{
0 \ar[r] & M_{2-k,\Q_p}^{\dagger, \ord}(k-1) \ar[r]^(.6){\theta^{k-1}} & M_{k,\Q_p}^{\dagger, \crit} \ar[r] & \bH^1(X_{\Q_p}, \cF_k(\log \widetilde{SS}))^\crit \ar[r] & 0 \\
0 \ar[r] & S_{2-k,\Q_p}^{\dagger, \ord}(k-1) \ar[r]^(.6){\theta^{k-1}} \ar[u] & S_{k,\Q_p}^{\dagger, \crit} \ar[r] \ar[u] & \bH^1_\mathrm{par}(X_{\Q_p}, \cF_k)^\crit \ar[r] \ar@{=}[u] & 0 
}
\end{gather*}
has cokernel spanned by (classical $U_p$-critical) Eisenstein series. In particular, both instances of $\theta^{k-1}$ are isomorphisms on the part of the $U$-ordinary space with an Eisenstein generalized eigensystem. 
\end{cor}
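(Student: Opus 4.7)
The backbone of the proof is the short exact sequence $(\ast_k)$ of Proposition \ref{prop: coleman non-split}, from which I would derive the top and bottom rows and verify the right-vertical equality. The first task is identifying the three cokernels on critical parts: I would use Lemma \ref{lem: dR equivalence} together with two slope computations to show
\[
\bH^1_\mathrm{par}(X_{\Q_p},\cF_k)^\crit = \bH^1_\dR(X_{\Q_p},\cF_k)^\crit = \bH^1(X_{\Q_p}, \cF_k(\log\widetilde{SS}))^\crit.
\]
For the supersingular residues (controlling the second equality), crystalline Frobenius on $\cH|_x$ at an SS point $x$ has both eigenvalues of valuation $1/2$ as roots of $X^2+p$, so on $\cH_{k-2} = \Sym^{k-2}\cH$ the $F$-slopes equal $(k-2)/2$, yielding $U$-slope $k/2 \neq k-1$ for $k \geq 3$. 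The same computation yields $M^{\dagger,\crit}_{k,\Q_p} = M^{\dagger,0,\crit}_{k,\Q_p}$ and $S^{\dagger,\crit}_{k,\Q_p} = S^{\dagger,0,\crit}_{k,\Q_p}$. For the cusp-residue quotient (first equality), Proposition \ref{prop: Hodge SES} identifies $\bH^1_\dR/\bH^1_\mathrm{par}$ with the Eisenstein quotient $H^0(X_{\Q_p},\omega^k)/H^0(X_{\Q_p},\omega^k(-C))$ sitting inside the Hodge subspace of $\bH^1_\dR$; by Mazur's inequality, crystalline Frobenius on this Hodge piece has slope exactly $k-1$, so the $U = p^{k-1}/F$ slope is $0$ (ordinary), and the critical part vanishes.

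With these identifications, the top row of the diagram is the critical part of Coleman's first presentation in Proposition \ref{prop: Coleman pres dR}, equivalently $(\ast_k)$. For the bottom row, I would take the cuspidal analog: the critical part of Coleman's third presentation yields
\[
S^{\dagger,\crit}_{k,\Q_p} / \theta^{k-1}M^{\dagger,\ord}_{2-k,\Q_p} \cong \bH^1_\mathrm{par}(X_{\Q_p},\cF_k)^\crit,
\]
and since $\theta^{k-1}$ annihilates the constant $q$-term at every cusp of $X$ (as guaranteed by the containment $\theta^{k-1}M^\dagger_{2-k,\Q_p} \subseteq S^{\dagger,0}_{k,\Q_p}$ built into \emph{loc. cit.}), it restricts to a map $\theta^{k-1}: S^{\dagger,\ord}_{2-k,\Q_p} \to S^{\dagger,\crit}_{k,\Q_p}$. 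Combined with the identifications above, this produces the stated bottom SES as a subcomplex of the top.

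Finally, for the complement: since the right-vertical cokernel is zero, a nine-lemma comparison of the two SESs forces the left and middle cokernels, $M^{\dagger,\ord}_{2-k,\Q_p}/S^{\dagger,\ord}_{2-k,\Q_p}$ and $M^{\dagger,\crit}_{k,\Q_p}/S^{\dagger,\crit}_{k,\Q_p}$, to be isomorphic under $\theta^{k-1}$. I would identify both as spaces of (classical) Eisenstein series and verify the isomorphism through an explicit $q$-expansion computation: $\theta^{k-1}$ applied to the ordinary $p$-stabilization of the weight-$(2-k)$ Eisenstein series produces, via $n^{k-1} \cdot n^{1-k}\sigma_{k-1}(n) = \sigma_{k-1}(n)$ and its $p$-depleted analog, the $q$-expansion $E_k(q) - E_k(q^p)$ of the critical $p$-stabilization in weight $k$. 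The main obstacle will be handling this Eisenstein identification cleanly alongside the canonical-subgroup restriction from $X_0(p)^\ord$ to $X^\ord$, which is what realizes the critical $p$-stabilized Eisenstein series as a cuspidal overconvergent form on $X$ with Eisenstein Hecke eigensystem.
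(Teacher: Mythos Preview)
Your proposal is correct and takes a genuinely different, more self-contained route than the paper. The paper's proof is two lines: the right-vertical equality is simply referred to Proposition~\ref{prop: coleman non-split} (which already records that $\bH^1_\dR(X_{\Q_p},\cF_k)^\crit$ is isomorphic to the classical critical cusp forms $S_{k,\Q_p}^\crit$, hence has no Eisenstein or SS-residue contribution), and the Eisenstein complement is obtained by a dimension count, citing \cite[Lem.\ 6.7]{coleman1996} for the fact that the dimension of classical critical Eisenstein series equals the number of cusps, which matches $\dim(M_{k,\Q_p}^{\dagger,\crit}/S_{k,\Q_p}^{\dagger,\crit})$.

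You instead re-derive both ingredients directly: the vanishing of SS residues on the critical part via the slope computation on $\Sym^{k-2}H^1_\cris(B_x)$, the vanishing of cusp residues on the critical part via a Mazur-inequality argument on the Eisenstein quotient, and the Eisenstein complement via the snake lemma followed by an explicit $q$-expansion check that $\theta^{k-1}$ carries the ordinary weight-$(2-k)$ $p$-adic Eisenstein series to the critical stabilization in weight $k$. This buys you independence from the specific Coleman lemma, at the cost of length. One caution: your Mazur-inequality step tacitly uses that the Newton and Hodge polygons of $\bH^1_\dR/\bH^1_\mathrm{par}$ share the same endpoint (i.e., the total Frobenius slope matches the total Hodge weight $(k-1)\cdot\#C$); this follows because both $\bH^1_\dR$ and $\bH^1_\mathrm{par}$ are weakly admissible, but you should say so rather than invoke the inequality alone. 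Your final paragraph's description of the stabilization mechanism is slightly garbled---the critical $p$-stabilized Eisenstein series lands in $M_{k,\Q_p}^{\dagger,\crit}$ via the inclusion \eqref{eq: classical inclusion} from level $\Gamma_0(p)$, not via a canonical-subgroup restriction---but this does not affect the argument.
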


\begin{proof}
The equality on the right follows from Proposition \ref{prop: coleman non-split}. The classical Eisenstein series are a complementary subspace to the subspace $S_{k,\Q_p}^{\dagger, \crit} \subset M_{k,\Q_p}^{\dagger, \crit}$ by \cite[Lem.\ 6.7]{coleman1996}, which says that the dimension of classical critical Eisenstein series equals the number of cusps. 
\end{proof}

When $f$ is a $\bT[-]$-eigenform with Hecke field $E/\Q_p$ and $X$ is a $\bT[-]_E$-module, we let $X_{(f)}$ denote the generalized eigenspace for the eigensystem of $f$. Let $X_f$ denote the eigenspace. 

\begin{eg}[{\cite{hsu2020}}]
    \label{eg: CM}
    Let $k \in \Z_{\geq 2}$ and let $f'$ be a normalized classical cuspidal eigenform of level $\Gamma_1(N)$ with complex multiplication (``CM'') by an imaginary quadratic field $K$ in which $p$ splits. Then $f'$ is $T_p$-ordinary and we can let $f =\sum_{n \geq 1} a_n q^n \in S_{k,\C_p}^\crit$ be its $U_p$-critical $p$-stabilization. A characterization of CM-ness is that $a_n \chi_D(n) = a_n$, where $\chi_D$ is the quadratic Dirichlet character associated to $D$. In particular, $T_\ell \cdot f = 0$ for all prime numbers $\ell$ that are inert in $K/\Q$. 

    C.-Y.\ Hsu calculated the $q$-series of a strictly generalized overconvergent eigenform $g = \sum_{n \geq 1} b_n q^n$ in the generalized eigenspace of $f$ \cite{hsu2020}. They satisfy the identity $-b_n \chi_D(n) = b_n$. In particular, $b_1 = b_p = 0$ and $\{f,g\}$ is a $\C_p$-basis for $(S^{\dagger,\crit}_{k,\C_p})_{(f)}$ in many cases. 

    Assembling these facts, we see that $T_\ell$ (including $T_p$ standing for $U_p$) acts
    \begin{itemize}
        \item non-trivially nilpotently if $\ell$ is inert in $K$ 
        \item semi-simply if $\ell$ is split in $K$
    \end{itemize}
    Indeed, this is visible because, within $(S^{\dagger,\crit}_{k,\C_p})_{(f)}$, we can distinguish the span of $g$ by the vanishing of the 1st coefficient.
\end{eg}

Coleman noticed that, due to Hida theory (Theorem \ref{thm: BP}), spaces of critical overconvergent modular forms have locally constant rank. 
\begin{cor}[{Coleman \cite[Cor.\ 7.2.3]{coleman1996}}]
    \label{cor: local constant rank Coleman}
    Let $k,k' \in \Z_{\geq 3}$. If $k \equiv 2-k' \pmod{p-1}$, then 
    \[
    \dim_{\Q_p} M_k^{\dagger,\crit} = \dim_{\Q_p} M_{k',\Q_p}^{\dagger,\ord} + \dim_{\Q_p} S_{k,\Q_p}^{\dagger,\ord}. 
    \]
\end{cor}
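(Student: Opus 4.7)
The plan is to take $\Q_p$-dimensions in Coleman's short exact sequence from Proposition~\ref{prop: coleman non-split} and match each term with one appearing on the right-hand side of the claim. The sequence
\[
0 \to M_{2-k,\Q_p}^{\dagger,\ord}(k-1) \to M_{k,\Q_p}^{\dagger,\crit} \to \bH_\dR^1(X_{\Q_p},\cF_k)^\crit \to 0
\]
(the Tate twist is irrelevant for dimension counting) immediately yields
\[
\dim_{\Q_p} M_{k,\Q_p}^{\dagger,\crit} = \dim_{\Q_p} M_{2-k,\Q_p}^{\dagger,\ord} + \dim_{\Q_p} \bH_\dR^1(X_{\Q_p},\cF_k)^\crit.
\]

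The next step is to use the non-canonical Hecke-module isomorphism $\bH_\dR^1(X_{\Q_p},\cF_k)^\crit \cong S_{k,\Q_p}^\crit$ of the same proposition to rewrite the third term as $\dim_{\Q_p} S_{k,\Q_p}^\crit$, and then identify this with $\dim_{\Q_p} S_{k,\Q_p}^{\dagger,\ord}$. For the latter, I would observe that every $T_p$-ordinary classical cusp form of level $\Gamma_1(N)$ has exactly one $U_p$-ordinary and one $U_p$-critical stabilization at level $\Gamma_0(p)$, while the Deligne--Ramanujan bound forces $p$-new cusp forms at level $\Gamma_0(p)$ to have $U_p$-slope $(k-2)/2$, which equals neither $0$ nor $k-1$ for $k \geq 3$. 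This gives $\dim_{\Q_p} S_{k,\Q_p}^\crit = \dim_{\Q_p} S_{k,\Q_p}^\ord$, and Hida's classicality theorem for $k\geq 3$ then identifies $S_{k,\Q_p}^\ord$ with $S_{k,\Q_p}^{\dagger,\ord}$.

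For the remaining term I would invoke the $\Lambda$-adic interpolation of Theorem~\ref{thm: BP}: $M_\Lambda^\ord$ is finitely generated and flat over $\Lambda = \Z_p\lb\Z_p^\times\rb$. Since $p \geq 5$, $\Lambda$ decomposes along characters of the torsion subgroup $\mu_{p-1}\subset \Z_p^\times$ into a product of regular local rings each isomorphic to $\Z_p\lb T\rb$, and $M_\Lambda^\ord$ decomposes accordingly. The $\Q_p$-dimension of $M_\Lambda^\ord \otimes_{\Lambda,\phi_k}\Q_p$ is therefore the generic rank of the summand through which $\phi_k$ factors, which depends only on $k-1 \bmod (p-1)$. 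The hypothesis $k \equiv 2-k' \pmod{p-1}$ is equivalent to $2-k \equiv k' \pmod{p-1}$, so $\dim_{\Q_p} M_{2-k,\Q_p}^{\dagger,\ord} = \dim_{\Q_p} M_{k',\Q_p}^{\dagger,\ord}$; assembling the three identifications then gives the claimed equality. The argument is entirely dimensional; the only delicate points are the bookkeeping of ordinary versus critical stabilizations and the matching of $\Lambda$-components, both of which are straightforward given the inputs.
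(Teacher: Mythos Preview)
Your proof is correct and follows essentially the same approach that the paper attributes to Coleman: take dimensions in the short exact sequence of Proposition~\ref{prop: coleman non-split}, identify the quotient with $S_{k,\Q_p}^\crit$ and then with $S_{k,\Q_p}^{\dagger,\ord}$ via the stabilization argument (the paper records the key input---that Steinberg forms are neither $U_p$-ordinary nor $U_p$-critical for $k\geq 3$---in the proof of Proposition~\ref{prop: tord is ord}), and finally invoke the $\Lambda$-flatness of $M_\Lambda^\ord$ from Hida theory to match $\dim_{\Q_p} M_{2-k,\Q_p}^{\dagger,\ord}$ with $\dim_{\Q_p} M_{k',\Q_p}^{\dagger,\ord}$. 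The paper does not write out its own proof of this corollary, but your argument is exactly the one sketched in the introduction.
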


\section{Critical overconvergent forms: $p$-integral aspects}
\label{sec: p-integral cric oc}

The goal of this section is to prove that the $\Z_p$-lattice $M_k^{\dagger,\crit} \subset M_{k,\Q_p}^{\dagger,\crit}$ that we define in Definition \ref{defn: Zp crit oc forms} is well-behaved with respect to cohomology. After this, we set up the interpolation properties and relations with cohomology of two important sublattices of $M_k^{\dagger,\crit}$, called the anti-ordinary forms and the twist-ordinary forms. 

Here is the key statement about $M_k^{\dagger,\crit}$. 

\begin{thm}
\label{thm: lattice main}
Let $k \in \Z_{\geq 3}$. There is a short exact sequence of $\bT[U]$-modules which are finitely generated and flat as $\Z_p$-modules,
\begin{equation}
    \label{eq: main SES k}
    0 \to M_{2-k}^{\dagger, \ord}(k-1) \buildrel{\theta^{k-1}}\over\lra M_k^{\dagger, \crit} \buildrel{\pi_k}\over\lra e(F)H_c^1(X^\ord, \omega^{2-k}) \to 0 
\end{equation}
with $\theta^{k-1}$ as in \eqref{eq: theta} and $\pi_k$ the composition of
\begin{itemize}
    \item Coleman's presentation of Proposition \ref{prop: Coleman pres dR}, $M_{k,\Q_p}^{\dagger} \rsurj H^1_\dR(X_{\Q_p}^\mathrm{an},\cF_k)$, under which the image of $M_k^{\dagger,\crit}$ lies in $H^1_\mathrm{par}(X_{\Q_p}^\mathrm{an}, \cF_k)^\crit$
    \item the map $H^1_\mathrm{par}(X^{\rm an}_{\Q_p},\cF_k)^\crit \to \Hom_{\Z_p}(S_k^\ord,\Q_p)$ given by Coleman's analytic Poincar\'e duality pairing $\lr{}_\dR$ of \eqref{eq: dR pairing} along with the restriction of the presentation of \ref{prop: Coleman pres dR} to  $S_k^\ord \rinj H^1_\dR(X^\mathrm{an}_{\Q_p}, \cF_k)^\ord$, and 
    \item Boxer--Pilloni's perfect pairing \eqref{eq: SD cusp ord} between $S_k^\ord$ and $e(F)H^1_c(X^\ord, \omega^{2-k})$. 
\end{itemize}  
\end{thm}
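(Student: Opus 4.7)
The plan is to upgrade Coleman's sequence $(\ast_k)$ of Proposition \ref{prop: coleman non-split} to an integral short exact sequence by choosing $M_k^{\dagger,\crit}$ (via Definition \ref{defn: Zp crit oc forms}) to be the $\Z_p$-lattice that makes $\pi_k$ integral with image in $e(F)H^1_c(X^\ord,\omega^{2-k})$.

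First, I will establish that the claimed sequence, after tensoring with $\Q_p$, recovers $(\ast_k)$. This requires identifying $\bH^1_\mathrm{par}(X_{\Q_p}^\mathrm{an},\cF_k)^\crit$ with $e(F)H^1_c(X^\ord,\omega^{2-k}) \otimes_{\Z_p} \Q_p$ through the three-step composition defining $\pi_k$. By $(\ast_k)$ together with Proposition \ref{prop: Coleman pres dR} (and the observation in Corollary \ref{cor: Eisenstein theta} that the classical critical Eisenstein series are absorbed into the image of $\theta^{k-1}$), the first step carries $M_{k,\Q_p}^{\dagger,\crit}$ onto $\bH^1_\mathrm{par}(X_{\Q_p}^\mathrm{an},\cF_k)^\crit$ with kernel $\theta^{k-1}(M_{2-k,\Q_p}^{\dagger,\ord}(k-1))$. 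For the second step, the Hodge sub is isotropic for $\langle,\rangle_\dR$ (Proposition \ref{prop: Hodge SES} and Proposition \ref{prop: PD and SD}), so the pairing against $S_k^\ord \otimes \Q_p$ factors through the Hodge quotient $H^1(X,\omega^{2-k})^\crit$ and agrees, after the stabilization isomorphism of Proposition \ref{prop: duality wt k}, with the algebraic Serre duality pairing. Boxer--Pilloni's perfect pairing (Theorem \ref{thm: BP}, Proposition \ref{prop: interpolated Serre duality}) then identifies the target with $e(F)H^1_c(X^\ord,\omega^{2-k}) \otimes \Q_p$. A dimension match (both sides are $\Q_p$-dual to $S_k^\ord \otimes \Q_p$, with equal dimension by Corollary \ref{cor: local constant rank Coleman}) upgrades this composition to an isomorphism, reproducing the quotient of $(\ast_k)$.

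Second, I will deduce integral exactness. Once $M_k^{\dagger,\crit}$ is defined in Definition \ref{defn: Zp crit oc forms} so that the pairing $M_k^{\dagger,\crit} \times S_k^\ord \to \Q_p$ is $\Z_p$-valued (for instance, via alternate $U'$-based $q$-series as in Definition \ref{defn: alternate q-series}, which matches the Hecke-dual lattice structure), the map $\pi_k$ automatically lands in the integral lattice $e(F)H^1_c(X^\ord,\omega^{2-k})$ via Boxer--Pilloni's duality. Injectivity of $\theta^{k-1}$ is immediate from $\Q_p$-injectivity combined with $\Z_p$-flatness of $M_{2-k}^{\dagger,\ord}(k-1)$ from Theorem \ref{thm: BP}. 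For exactness at the middle term and surjectivity of $\pi_k$, the cokernel of $\pi_k$ and the quotient $\ker(\pi_k)/\theta^{k-1}(M_{2-k}^{\dagger,\ord}(k-1))$ are both $\Z_p$-torsion by the $\Q_p$-statement just established; they vanish because $M_k^{\dagger,\crit}$ is, by design, the saturation-type lattice on which the duality pairings with $S_k^\ord$ match integrally with those on $e(F)H^1_c(X^\ord,\omega^{2-k})$. Finally, finite generation and $\Z_p$-flatness of $M_k^{\dagger,\crit}$ follow from it sitting as the middle term of a short exact sequence with finitely generated and flat sub and quotient.

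The main obstacle will be reconciling three \emph{a priori} distinct $\Z_p$-lattices: the alternate $q$-series lattice defining $M_k^{\dagger,\crit}$, the image lattice under Coleman's presentation into $\bH^1_\mathrm{par}(X_{\Q_p}^\mathrm{an},\cF_k)^\crit$, and Boxer--Pilloni's lattice $e(F)H^1_c(X^\ord,\omega^{2-k})$. Ensuring these match exactly--rather than up to $p$-power torsion--comes down to checking that the Hecke duality normalization implicit in the $U'$-based pairing (with $U F' = p^{k-1}$) is compatible with the Serre duality normalization on higher Hida cohomology recorded in Proposition \ref{prop: interpolated Serre duality}.
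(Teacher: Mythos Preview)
Your proposal contains a genuine gap at the heart of the argument. You write that ``once $M_k^{\dagger,\crit}$ is defined \dots\ so that the pairing $M_k^{\dagger,\crit}\times S_k^\ord\to\Q_p$ is $\Z_p$-valued, the map $\pi_k$ automatically lands in the integral lattice,'' and that surjectivity and middle-exactness hold ``because $M_k^{\dagger,\crit}$ is, by design, the saturation-type lattice on which the duality pairings with $S_k^\ord$ match integrally.'' But this is not what Definition~\ref{defn: Zp crit oc forms} says. The lattice $M_k^{\dagger,\crit}$ is cut out by the $a_1$-pairing with the abstract Hecke algebra $\bT[U']$, i.e.\ by $p$-integrality of $a_1(T\cdot f)$ for $T\in\bT[U']$. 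The map $\pi_k$, on the other hand, is built from Coleman's analytic Poincar\'e pairing $\langle\,,\,\rangle_\dR$ of \eqref{eq: dR pairing}, computed by residues on supersingular annuli. These are two entirely different bilinear forms, and nothing in the definition forces them to induce the same $\Z_p$-lattice. Your final paragraph concedes there are three lattices to reconcile, but reducing this to ``checking a normalization'' between $U'$ and the Serre-duality convention of Proposition~\ref{prop: interpolated Serre duality} is not adequate: a normalization check can at best align eigenvalues, not establish that the residue-based pairing of an arbitrary element of $M_k^{\dagger,\crit}$ against every $g\in S_k^\ord$ is $p$-integral, let alone that every integral functional is hit.

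The paper's proof supplies exactly the missing machinery. It develops a weight-$k$ version of Coleman's $p$-integral analytic de Rham complex $\cC(\cF_k)$ (Definition~\ref{defn: cC cF_k}), proves a comparison with crystalline cohomology modulo torsion (Theorem~\ref{thm: p-integral dR cris}), and then shows two separate facts: first (Lemma~\ref{lem: S-integral implies dR-integral}), that $\KS_k(f)$ for $f\in S_k^{\dagger,\crit}$ actually defines a class in the integral analytic lattice $H^1_\mathrm{par}(X^\mathrm{an}_{\Z_p},\cF_k)$, which uses Lemma~\ref{lem: F-integral implies integral} to pass from $U'$-integrality to ordinary $q$-series integrality and then a $z_x$-series residue computation; second (Proposition~\ref{prop: pi-prime surjective}), that $\pi'_k$ is surjective onto this lattice, whose proof invokes Griffiths transversality estimates (Lemma~\ref{lem: torsion complement}) and Buzzard's analytic continuation of finite-slope eigenforms across the supersingular annuli. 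Only after this rigid-analytic work does the comparison with $e(F)H^1_c(X^\ord,\omega^{2-k})$ go through via the crystalline and Hodge identifications. None of this is a normalization issue; it is the substantive content of \S\ref{subsec: quotient lattices}.
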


Along with establishing compatibility between canonically-defined $\Z_p$-lattices, this theorem upgrades the non-canonical isomorphism of Proposition \ref{prop: coleman non-split} by identifying the image of $\pi_k$. 

\begin{rem}
    Note that the properties of $M_k^{\dagger,\crit}$ stated in Theorem \ref{thm: lattice main} do not uniquely characterize the $\Z_p$-lattice. Namely, it pins down a saturated sublattice and a corresponding quotient lattice, while the extension class is still flexible within $M_{k,\Q_p}^{\dagger,\crit}$. Of course, we will actually specify $M_k^{\dagger,\crit}$ in Definition \ref{defn: Zp crit oc forms}, simply choosing the lattice that interpolates best with respect to duality with its $\bT[U']$-action. We thank George Boxer for asking about the extent to which $q$-series are required to characterize $M_k^{\dagger,\crit}$. We use $q$-series in Definition \ref{defn: Zp crit oc forms} to express the duality, and then we prove Theorem \ref{thm: lattice main}; but one could alternatively define $M_k^{\dagger,\crit}$ using the defining characteristics of Theorem \ref{thm: lattice main} along with interpolability of the extension class with respect to $U'$, and then use our theorem to show that the expected dualities with Hecke algebras hold. The main justification of this choice of extension class is that we can construct the map $\delta$ of Theorem \ref{thm: flat derived action}, which is sensitive to precisely this choice. 
\end{rem}

\subsection{A lattice of overconvergent forms}

First we develop a \emph{$p$-adic dual operator} $U'$ that makes critical $U$-actions look ordinary with respect to a new operator $U'$. This is nothing other than a Frobenius operator, but we refer to it with this notation and ``dual'' terminology to emphasize how we apply it. 

\begin{defn}
\label{defn: avatar}
The \emph{$p$-adic dual $U$-operator} in weight $k \in \Z_{\geq 1}$ is 
\[
 U' := \lr{p}_N p^{k-1}U^{-1}.
\]
\end{defn}

    The operator $U'$ relates to $U$ just as the Frobenius operator $F$ relates to $U$ on cohomology, according to Propositions \ref{prop: FV on dR} and \ref{prop: interpolated Serre duality} and the compatibility of $\lr{}_\dR$ and $\lr{}_\mathrm{SD}$ of Proposition \ref{prop: Hodge SES}. We summarize the situation. 
    \begin{prop}
    \label{prop: SD p-dual}
    The following adjunction formula holds for the Serre duality pairing $\lr{}_\mathrm{SD}$ between $e(U)H^0(X^\ord, \omega^k(-C))$ and $e(F)H^1_c(X^\ord,\omega^{2-k})$, as well as the Poincar\'e duality self-pairing $\lr{}_\dR$ on $e(T_p)\bH^1_\mathrm{par}(X_{\Q_p}, \cF_k)$:
    \[
    \lr{\lr{p}_N^{-1} U \cdot f, g}_\star = \lr{f, U' \cdot g}_\star. 
    \]    
    \end{prop}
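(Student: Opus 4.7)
The plan is to derive both formulas by identifying $U'$ with a Frobenius-type operator on each cohomology module and then invoking the previously established dualities. On $e(T_p)\bH^1_\mathrm{par}(X_{\Q_p}, \cF_k)$, the relation $U \circ F' = p^{k-1}$ from Proposition \ref{prop: FV on dR} allows one to interpret $U^{-1}$ through $F'$ (on the $U$-critical summand, $F'$ is invertible and plays the role of $p^{k-1} U^{-1}$), giving the identification $U' = \lr{p}_N F'$. On $e(F)H^1_c(X^\ord, \omega^{2-k})$, the parallel higher-Hida relation $U \circ F = \lr{p}_N p^{k-1}$ gives $U' = F$.

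For the de Rham case, I would assemble the chain
\[
\lr{\lr{p}_N^{-1} U f, g}_\dR = \lr{U f, \lr{p}_N g}_\dR = \lr{f, F' \lr{p}_N g}_\dR = \lr{f, \lr{p}_N F' g}_\dR = \lr{f, U' g}_\dR,
\]
using in order: the $\lr{d}_N$-adjunction for $\lr{}_\dR$ guaranteed by Proposition \ref{prop:PD-hecke}, Coleman's $U$-$F'$ adjunction from Proposition \ref{prop: FV on dR}, commutativity of the Hecke operators $\lr{p}_N$ and $F'$, and the identification $U' = \lr{p}_N F'$ above.

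For the Serre duality case, Proposition \ref{prop: interpolated Serre duality} directly gives $\lr{\lr{p}_N^{-1} U f, g}_\mathrm{SD} = \lr{f, F g}_\mathrm{SD}$, and substituting $F = U'$ finishes the proof. The main obstacle is the preparatory identification $U \circ F = \lr{p}_N p^{k-1}$ on $e(F)H^1_c$: this should follow from the analogous relation on the classical $T_p$-ordinary cohomology $e(T_p)H^1(X, \omega^{2-k})$, where $T_p = U + F$ satisfies the weight-$k$ Hecke polynomial $X^2 - T_p X + \lr{p}_N p^{k-1} = 0$ at $p$, transported via the higher Hida stabilization of Proposition \ref{prop: duality wt k}.
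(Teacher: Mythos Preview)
Your argument is correct and tracks exactly what the paper intends: the proposition is presented as a summary of Propositions~\ref{prop: FV on dR}, \ref{prop: interpolated Serre duality}, and \ref{prop:PD-hecke} (the paper gives no separate proof), and your chain of equalities for the de Rham case together with the direct citation of Proposition~\ref{prop: interpolated Serre duality} for the Serre-duality case is precisely that summary made explicit.

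One small comment on the final paragraph: the extra step of verifying $U\circ F=\lr{p}_N p^{k-1}$ on $e(F)H^1_c(X^\ord,\omega^{2-k})$ via a ``weight-$k$ Hecke polynomial'' on $e(T_p)H^1(X,\omega^{2-k})$ is not needed, and as written it is a little loose---the sheaf is $\omega^{2-k}$, so the intrinsic Hecke relation there involves weight $2-k$, and extracting the constant $\lr{p}_N p^{k-1}$ requires going through the Serre-duality adjunction rather than a direct Hecke polynomial. In the paper's reading, the identity $U'=F$ on $e(F)H^1_c(X^\ord,\omega^{2-k})$ is simply taken as the meaning of the $\bT[U']$-action on that module (cf.\ the sentence preceding the proposition and the later remark in the proof of Proposition~\ref{prop: realize Ttord}), so Proposition~\ref{prop: interpolated Serre duality} already \emph{is} the Serre-duality statement once that convention is adopted. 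Your de Rham computation, by contrast, genuinely needs the relation $U\circ F'=p^{k-1}$ to rewrite $\lr{p}_N F'$ as $U'$, and that step is fully justified by Proposition~\ref{prop: FV on dR} since $U$ is invertible on $e(T_p)\bH^1_\mathrm{par}(X_{\Q_p},\cF_k)$.
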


\begin{rem}
\label{rem: U' is not U*}
The passage between $U$- and $U'$-actions swaps critical slope with ordinary slope. In particular, it swaps a critical $U$-stabilization of a classical form of level $\Gamma_1(N)$ with its ordinary $U'$-stabilization \emph{in terms of the eigenvalue}. This classical case is our primary case of interest, but we point out that the $U$ to $U'$ swap on is not generally achieved by applying an Atkin--Lehner operator at $p$, nor any other Atkin--Lehner operator. As Atkin--Lehner operators exchange Hecke operators with their duals, this is the well-known phenomenon of some standard vs.\ dual Hecke operators not commuting on oldforms. 
\end{rem}

\begin{rem}
    \label{rem: why p-adic dual}
    Proposition \ref{prop: SD p-dual} motivates calling $U'$ a ``$p$-adic dual'' of $U$, extending from $e(F)H^1_c(X^\ord, \omega^{2-k}) \otimes_{\Z_p} \Q_p$  to $e(T_p)\bH^1_\mathrm{par}(X_{\Q_p}, \cF_k)$ under the putative Hodge quotient map $\pi_k$, and then even further to $M_k^{\dagger,\crit}$ under Coleman's presentation (Proposition \ref{prop: Coleman pres dR}). Likewise, on the 2-dimensional space of $U_p$-stabilizations of a $T_p$-ordinary classical eigenform $f$, use the stabilizations as a basis, their $U_p$-eigenvalues being the roots $\alpha, \beta$ of $X^2 - a_p(f) X - \lr{p}_N p^{k-1}$. Then $U$ has matrix $\sm{\alpha}{}{}{\beta}$, while $U'$ has matrix $\sm{\beta}{}{}{\alpha}$. 

    On the other hand, $U'$ relates to $U$ as the usual dual Hecke operator $U^*_p$ relates to $U_p$ when acting on eigenforms that are primitive of some level divisible by $p$. 
\end{rem}

\begin{defn}
    \label{defn: Zp crit oc forms}
    Let $M_{k}^{\dagger,\crit} \subset M_{k,\Q_p}^{\dagger,\crit}$ denote the $\bT[U']$-submodule on which the abstract Hecke algebra $\bT[U']$ pairs into $\Z_p$, called \emph{critical overconvergent modular forms over $\Z_p$}. That is, 
    \[
    M_{k}^{\dagger,\crit} := \{f \in M_{k,\C_p}^{\dagger,\crit} : a_1(t \cdot f) \in \Z_p \ \forall t \in \bT[U']
    \}.
    \]   
    Similarly define $S_k^{\dagger,\crit}$, the \emph{critical overconvergent cusp forms over $\Z_p$}. 
    
    Denote by $\bT_k^{\dagger,\crit} := \bT[U'](M_k^{\dagger,\crit})$ and $\bT_k^{\dagger,\crit,\circ}:= \bT[U'](S_k^{\dagger,\crit})$ the \emph{critical overconvergent (cuspidal) Hecke algebra of weight $k$}. 
\end{defn}

By design, $M_k^{\dagger,\crit}$ is the $\Z_p$-lattice featuring the following perfect $\Z_p$-bilinear duality pairings. 

\begin{prop} 
\label{prop: pairing wt k}
There are perfect $\bT[U']$-equivariant pairings
    \begin{gather*}
    \bT_k^{\dagger,\crit} \times M^{\dagger,\crit}_k \to \Z_p, \\    
    \bT_k^{\dagger,\crit,\circ} \times S^{\dagger,\crit}_k \to \Z_p 
    \end{gather*}
    defined by $(t,f) \mapsto a_1(t \cdot f)$, in particular, realizing the pairing of Proposition \ref{prop: oc duality} under $\otimes_{\Z_p} \C_p$ and the replacement of $U' \in \bT_k^{\dagger,\crit}$ by $\lr{p}_N p^{k-1}U^{-1} \in \bT[U](M_{k,\C_p}^{\dagger,\crit})$. 
\end{prop}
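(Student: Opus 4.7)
The plan is to leverage the $\C_p$-perfect duality of Proposition~\ref{prop: oc duality} to produce the desired $\Z_p$-perfect pairings, passing through a $\Q_p$-form. First I would note that on $M_{k,\C_p}^{\dagger,\crit}$ the operator $U$ is invertible (all slopes equal $k-1>0$), so the subalgebras generated by $\bT[U]$ and $\bT[U']$ in $\End_{\C_p}(M_{k,\C_p}^{\dagger,\crit})$ coincide; this, combined with $\Q_p$-rationality of the Hecke operators and of $U'$, lets one descend Proposition~\ref{prop: oc duality} to a perfect $\Q_p$-valued pairing $T_{\Q_p} \times V_{\Q_p} \to \Q_p$, where $V_{\Q_p} := M_{k,\Q_p}^{\dagger,\crit}$ and $T_{\Q_p} := \bT[U'](V_{\Q_p})$.

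The crux is to show that $M_k^{\dagger,\crit}$ is a full $\Z_p$-lattice inside $V_{\Q_p}$. Let $T_{\Z_p} \subset T_{\Q_p}$ denote the image of $\bT[U']$. The key observation is that $T_{\Z_p}$ is bounded for the operator norm on $\End_{\Q_p}(V_{\Q_p})$: each generator $T_n$, $U_\ell$, $\lr{d}_N$, $U'$ acts with $p$-adically integral eigenvalues (the classical Hecke operators have algebraic-integer eigenvalues, while on critical forms $U'$ has slope $0$ because $U$ has slope $k-1$), so the $\Z_p$-subalgebra they generate is uniformly bounded. By definition, $M_k^{\dagger,\crit}$ corresponds under the $\Q_p$-perfect pairing to $\{\phi \in \Hom_{\Q_p}(T_{\Q_p},\Q_p) : \phi(T_{\Z_p}) \subset \Z_p\}$; in particular $M_k^{\dagger,\crit} \subset V_{\Q_p}$. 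Boundedness of $T_{\Z_p}$ implies that every $\phi$ can be scaled into this set, giving $M_k^{\dagger,\crit} \otimes_{\Z_p} \Q_p = V_{\Q_p}$; choosing a $\Q_p$-basis of $T_{\Q_p}$ from within $T_{\Z_p}$ then shows that $M_k^{\dagger,\crit}$ is contained in the corresponding $\Z_p$-dual basis lattice of $V_{\Q_p}$, hence is finitely generated over $\Z_p$.

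With $M_k^{\dagger,\crit}$ established as a lattice, $\bT_k^{\dagger,\crit} = \bT[U'](M_k^{\dagger,\crit})$ sits inside the finitely generated module $\End_{\Z_p}(M_k^{\dagger,\crit})$ and is itself finitely generated; via the base change $M_k^{\dagger,\crit} \otimes_{\Z_p} \Q_p = V_{\Q_p}$, it maps isomorphically onto $T_{\Z_p}$, which is therefore a $\Z_p$-lattice in $T_{\Q_p}$. The pairing $\bT_k^{\dagger,\crit} \times M_k^{\dagger,\crit} \to \Z_p$ is then $\Z_p$-perfect by reflexivity: $M_k^{\dagger,\crit}$ is by construction the $\Z_p$-dual of $T_{\Z_p}$, and $\Z_p$-lattices inside a finite-dimensional $\Q_p$-vector space under a $\Q_p$-perfect pairing are reflexive, so the $\Z_p$-dual of $M_k^{\dagger,\crit}$ returns $T_{\Z_p} \cong \bT_k^{\dagger,\crit}$. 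Hecke equivariance is immediate from $a_1(tt'f) = a_1(t \cdot t'f)$, and the cuspidal case follows by the same argument applied to $S_k^{\dagger,\crit}$. The recovery of Proposition~\ref{prop: oc duality} under $\otimes_{\Z_p}\C_p$ is automatic from the base-change identifications and from $\bT[U'](M_{k,\C_p}^{\dagger,\crit}) = \bT[U](M_{k,\C_p}^{\dagger,\crit})$.

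The main obstacle is the boundedness/lattice step: without it one cannot rule out $T_{\Z_p}$ being a pathological non-finitely-generated $\Z_p$-subalgebra of $T_{\Q_p}$ (as can occur in finite-dimensional $\Q_p$-algebras with nilpotents), and correspondingly $M_k^{\dagger,\crit}$ need not be a lattice. The crucial integrality input here is not the obvious one for the prime-to-$p$ Hecke operators but rather the less obvious fact that $U'$ has slope zero on $U$-critical overconvergent forms.
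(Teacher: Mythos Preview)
Your argument is correct and follows essentially the same strategy as the paper's proof: descend the $\C_p$-pairing to $\Q_p$, establish that $M_k^{\dagger,\crit}$ and its Hecke algebra are $\Z_p$-lattices, and then deduce $\Z_p$-perfect duality. The paper is terser on the finite generation step (asserting it rather than giving your boundedness argument via integral characteristic polynomials) and closes with a slightly different formulation---a PID structure-theorem ``divide out the scalars'' argument showing the map $M_k^{\dagger,\crit} \to \Hom_{\Z_p}(\bT_k^{\dagger,\crit},\Z_p)$ is surjective directly from the defining property of $M_k^{\dagger,\crit}$, in place of your lattice reflexivity---but these are equivalent.
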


For the sake of thoroughness, we give the argument, which works over PIDs. 

\begin{proof}
The pairing exists by definition. Both $\bT_k^{\dagger,\crit}$ and $M^{\dagger,\crit}_k$ are finitely generated torsion-free $\Z_p$-modules, and therefore are free. By Proposition \ref{prop: oc duality}, the maps
\begin{gather*}
    \bT_k^{\dagger,\crit} \to \Hom_{\Z_p}(M^{\dagger,\crit}_k, \Z_p), \\
    M^{\dagger,\crit}_k \to \Hom_{\Z_p}(\bT_k^{\dagger,\crit}, \Z_p)
    \end{gather*}
    are injective with torsion cokernels. Therefore the sources have equal $\Z_p$-rank. The second arrow is an isomorphism by a standard argument, as in the structure theorem for modules over a PID: there exists a basis of the target so that the image is spanned by a basis consisting of scalar multiples of the original basis; therefore, by definition of the source, we can divide out by these scalars on the source, showing that the second arrow is surjective. The first arrow is an isomorphism by a similar argument. 
\end{proof}

Because the slopes of $\lr{d}_N \in \bT[U']$ are always zero and the remaining generators of $\bT[U']$ are $T_n \in \bT[U']$ for $n \in \Z_{\geq 1}$ \emph{as defined in} Definition \ref{defn: Tn for general n} (for example, $T_p = U'$), we can also consider the definition to be 
    \[
    M_k^{\dagger,\crit} := \{f \in M_{k,\C_p}^{\dagger,\crit} : a_n(f) \in \Z_p \ \forall n \in \Z_{\geq 1}\},
    \]
and similarly for $S_k^{\dagger,\crit}$. These $a_n(f)$ are coefficients of the \emph{alternate $q$-series} $\sum_{n \geq 1} a_n(f)$ as in Definition \ref{defn: alternate q-series}, producing $M_k^{\dagger,\crit} \rinj q \Z_p\lb q\rb$ because the constant term vanishes.

\begin{rem}
    \label{rem: possible difference in lattice}
    We have mentioned that the idea here is to choose the lattice so that its Hecke action interpolates well, so what is the difference between the lattices defined in terms of $\bT[U]$ vs.\ $\bT[U']$? If the $U$-action is semi-simple on every generalized eigenspace (as in Example \ref{eg: CM}, a CM case) there is none; but in a hypothetical case where $U$ acts non-semi-simply on $M_{k,\C_p}^{\dagger,\crit}$, there may be a difference. 
\end{rem}

It will be important to note that the standard $q$-series of $M_k^{\dagger,\crit}$ are still $p$-integral. We will prove this below (Lemma \ref{lem: F-integral implies integral}).

\subsection{Compatibility of sublattices}

Now that we have defined $M_k^{\dagger,\crit}$, our goal of proving Theorem \ref{thm: lattice main} requires us to study the compatibility of the lattices declared there. The sublattice is straightforward. 
\begin{prop}
    \label{prop: ao sublattice}
    Let $k \in \Z_{\geq 3}$. The restrictions of $\theta^{k-1}: M^{\dagger,\ord}_{2-k, \Q_p}(k-1) \to M^{\dagger,\crit}_{k,\Q_p}$ of \eqref{eq: theta} to the submodules $M^{\dagger,\ord}_{2-k}(k-1)$ and $S^{\dagger,\ord}_{2-k}(k-1)$ of $M^{\dagger,\ord}_{2-k, \Q_p}(k-1)$ produce $\Z_p$-saturated $\bT[U']$-equivariant injections 
    \[
    \theta^{k-1} : M^{\dagger,\ord}_{2-k}(k-1) \rinj  M^{\dagger,\crit}_k, \qquad S^{\dagger,\ord}_{2-k}(k-1) \rinj  S^{\dagger,\crit}_k.
    \]
\end{prop}

\begin{proof}
    First we note that each of the $\Z_p$-lattices $M^{\dagger,\ord}_{2-k}(k-1)$ and $M^{\dagger,\crit}_k$, as a submodule of its $\Q_p$-extension of scalars, is characterized by perfect duality over $\Z_p$ with the action of $\bT[U']$. In the latter case, this is Definition \ref{defn: Zp crit oc forms}. For the former case, we argue from the perfect duality between $\bT_{2-k}^{\dagger,\ord}$ and $M_{2-k}^{\dagger,\ord}$: we calculate from \eqref{eq: theta twist} that $U'$ acts on $M^{\dagger,\ord}_{2-k, \Q_p}(k-1)$ as $U^{-1}\lr{p}_N$ acts on $M^{\dagger,\ord}_{2-k, \Q_p}$, preserving slope $0$; and the Hecke operators away from $p$ have constant slopes under twisting by $(k-1)$. Thus $\bT[U'](M^{\dagger,\ord}_{2-k}(k-1))$ is in perfect duality with $M^{\dagger,\ord}_{2-k}(k-1)$. For the same reasons, starting from the perfect $\Z_p$-linear duality between the surjection $\bT^{\dagger,\ord}_{2-k} \rsurj \bT^{\dagger,\ord,\circ}_{2-k}$ and the injection $S^{\dagger,\ord}_{2-k} \rinj M^{\dagger,\ord}_{2-k}$, one deduces a perfect duality between the surjection and injection pair
    \[
    \bT[U'](M^{\dagger,\ord}_{2-k}(k-1)) \rsurj \bT[U'](S^{\dagger,\ord}_{2-k}(k-1)) \quad \text{and} \quad 
    S^{\dagger,\ord}_{2-k}(k-1) \rinj M^{\dagger,\ord}_{2-k}(k-1). 
    \]
    
    Having established these perfect dualities, the key observation is that the $\bT[U']$-compatibility of the dualities implies that the inclusion $\theta^{k-1}$ on $M^{\dagger,\ord}_{2-k}(k-1)$ (resp.\ $S^{\dagger,\ord}_{2-k}(k-1)$) is dual to the natural finite flat $\Z_p$-algebra surjections $\bT_k^{\dagger,\crit} \rsurj \bT[U'](M^{\dagger,\ord}_{2-k}(k-1))$ (resp.\ $\bT_k^{\dagger,\crit} \rsurj \bT[U'](S^{\dagger,\ord}_{2-k}(k-1))$). Now the claim follows from the fact that $\Z_p$ is a Dedekind domain and therefore torsion-free finitely generated $\Z_p$-modules are projective: each of the images 
    \[
    \theta^{k-1}(M^{\dagger,\ord}_{2-k}(k-1)), \theta^{k-1}(S^{\dagger,\ord}_{2-k}(k-1)) \subset M_k^{\dagger,\crit}
    \]
    has a $\Z_p$-projective complement because its dual surjection above have $\Z_p$-linear section. 
\end{proof}

We give these images a name, and point out that we have perfect duality with its Hecke algebra thanks to the proof above. 
\begin{defn}
    \label{defn: ao wt k}
    Let $k \in \Z_{\geq 2}$. Let $M_k^\aord := \theta^{k-1}(M^{\dagger,\ord}_{2-k}(k-1))$ thought of as a $\bT[U']$-module, where ``ao'' stands for ``anti-ordinary.'' Let $\bT_k^\aord := \bT[U'](M_k^\aord)$. Likewise, let there be cuspidal versions $S_k^\aord := \theta^{k-1}(S^{\dagger,\ord}_{2-k}(k-1))$ and $\bT_k^{\aord,\circ} := \bT[U'](S_k^\aord)$. 
\end{defn}

\begin{cor}
    \label{cor: aord duality}
    Let $k \in \Z_{\geq 3}$. We have perfect ``$a_1$'' duality $\lr{} : \bT_k^\aord \times M_k^\aord \to \Z_p$ that is $\bT[U']$-compatible. It is also compatible with critical $a_1$ duality, in the sense that the $\bT[U']$-algebra surjection $\bT_k^{\dagger,\crit} \rsurj \bT_k^\aord$ is the $a_1$-dual of $M_k^\aord \rinj M_k^{\dagger,\crit}$. 
\end{cor}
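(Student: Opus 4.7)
The plan is to deduce both claims from facts already established in the proof of Proposition \ref{prop: ao sublattice}, together with the definition of $M_k^{\dagger,\crit}$ in Definition \ref{defn: Zp crit oc forms}. The only substantive point is bookkeeping about the $a_1$ functional and the $\bT[U']$-action under the twist $\theta^{k-1}$.

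For the first claim, I would begin by recalling that in the proof of Proposition \ref{prop: ao sublattice} we already observed that $M_{2-k}^{\dagger,\ord}(k-1)$ is in perfect $\Z_p$-duality with $\bT[U'](M_{2-k}^{\dagger,\ord}(k-1))$ under the $a_1$-pairing: the Hecke operators away from $p$ preserve slopes under the twist, and $U' = \lr{p}_N p^{k-1}U^{-1}$ acts with slope $0$ on the twist because $U$ acts with slope $0$ on $M_{2-k}^{\dagger,\ord}$. Now $\theta^{k-1} : M_{2-k}^{\dagger,\ord}(k-1) \isoto M_k^\aord$ is a $\bT[U']$-equivariant isomorphism (by Proposition \ref{prop: ao sublattice} and the definition of $M_k^\aord$), and it preserves $a_1$ on $q$-series since $\theta^{k-1}$ multiplies the $n$th Fourier coefficient by $n^{k-1}$ and $1^{k-1} = 1$. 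Transporting the perfect pairing across this isomorphism gives the asserted perfect duality $\bT_k^\aord \times M_k^\aord \to \Z_p$, and its $\bT[U']$-compatibility is immediate from $\bT[U']$-equivariance of $\theta^{k-1}$.

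For the second claim, I would note that because $M_k^\aord \subset M_k^{\dagger,\crit}$ is a $\bT[U']$-stable submodule, the restriction-of-action homomorphism $\bT_k^{\dagger,\crit} \to \End_{\Z_p}(M_k^\aord)$ has image equal to $\bT_k^\aord$, furnishing the $\bT[U']$-algebra surjection $\bT_k^{\dagger,\crit} \twoheadrightarrow \bT_k^\aord$. For $t \in \bT_k^{\dagger,\crit}$ with image $\bar t \in \bT_k^\aord$ and any $f \in M_k^\aord$, one has $t \cdot f = \bar t \cdot f$ inside $M_k^{\dagger,\crit}$, hence
\[
\lr{t,f}_{\crit} = a_1(t \cdot f) = a_1(\bar t \cdot f) = \lr{\bar t, f}_{\aord}.
\]
This is exactly the statement that the surjection on Hecke algebras is the $a_1$-transpose of the inclusion on forms.

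The only possible friction is ensuring that the two $a_1$ pairings agree on the nose (i.e.\ that the same functional is being used on both sides), which is settled once one notes that $M_k^\aord$ inherits its $q$-expansion map (in the alternate $\bT[U']$-sense of Definition \ref{defn: alternate q-series}) by restriction from $M_k^{\dagger,\crit}$, and that $\theta^{k-1}$ preserves the coefficient of $q$. No further input is required; both claims reduce to transport along the isomorphism $\theta^{k-1}$ and to the definitional tautology that Hecke operators act through their images on submodules.
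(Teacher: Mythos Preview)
Your proposal is correct and follows the same approach as the paper, which presents this corollary as an immediate consequence of the proof of Proposition \ref{prop: ao sublattice} (indeed, the paper offers no separate proof beyond the remark ``we have perfect duality with its Hecke algebra thanks to the proof above''). Your added observation that $\theta^{k-1}$ preserves $a_1$ because $1^{k-1}=1$, and hence transports the $a_1$-pairing verbatim, makes explicit a step the paper leaves to the reader.
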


Likewise, classical critical forms $M_k^\crit := e(U')H^0(X_0(p),\omega^k)$ constitute another saturated submodule. 
\begin{prop}
	\label{prop: classical is saturated}
    Let $k \in \Z_{\geq 2}$. The $\bT[U']$-submodules 
    \[
    M_k^\crit \subset M_k^{\dagger,\crit}, \quad S_k^\crit \subset S_k^{\dagger,\crit}
    \]
    are $\Z_p$-saturated. Under perfect $a_1$-dualities with Hecke algebras, the surjections $\bT_k^{\dagger,\crit} \rsurj \bT[U'](M_k^\crit)$, $\bT_k^{\dagger,\crit, \circ} \rsurj \bT[U'](S_k^\crit)$ are $a_1$-dual to the inclusions $\zeta_k : M_k^\crit \rinj M_k^{\dagger,\crit}$, $S_k^\crit \rinj S_k^{\dagger,\crit}$, respectively. 
\end{prop}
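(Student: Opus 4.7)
The plan is to mirror the proof of Proposition~\ref{prop: ao sublattice}, now with $M_k^\crit$ playing the role that $M_{2-k}^{\dagger,\ord}(k-1)$ played there. First I would establish a $\Z_p$-perfect $a_1$-pairing
\[
\lr{} : \bT[U'](M_k^\crit) \times M_k^\crit \to \Z_p, \qquad (t,f) \mapsto a_1(t \cdot f),
\]
with $a_1$ understood in the sense of the alternate $q$-series of Definition~\ref{defn: alternate q-series}. The cleanest route is to exploit the identification $M_k^\crit = e(U') H^0(X_0(p),\omega^k) \cong e(T_p) H^0(X,\omega^k)$ given by the $U_p$-critical stabilization $f' \mapsto f_\beta$: under it, $U'$ acts on $f_\beta$ by the ordinary unit root $\alpha$ of the Hecke polynomial $X^2 - a_p(f')X + \lr{p}_N p^{k-1}$ (cf.\ Remark~\ref{rem: why p-adic dual}), so the alternate $q$-series of $f_\beta$ agrees termwise with the usual $q$-series of its $U_p$-ordinary companion $f_\alpha$. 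The classical $\Z_p$-perfect $a_1$-duality of $e(T_p) H^0(X,\omega^k)$ against its Hecke algebra then transports verbatim.

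Once this pairing is in hand, the rest is formal and parallels the last paragraph of the proof of Proposition~\ref{prop: ao sublattice}. The inclusion $\zeta_k$ is manifestly $\bT[U']$-equivariant, so it produces a natural $\Z_p$-algebra surjection $\bT_k^{\dagger,\crit} \rsurj \bT[U'](M_k^\crit)$. Because the two $a_1$-pairings are defined by the same formula and are Hecke-compatible, $\zeta_k$ is identified with the $\Z_p$-linear dual of this surjection -- which is already the second assertion of the proposition. Since $\Z_p$ is a PID and both Hecke algebras are finitely generated and torsion-free, hence free, over $\Z_p$, the surjection admits a $\Z_p$-linear section; dualizing, $\zeta_k$ admits a $\Z_p$-linear retraction, and therefore $\zeta_k(M_k^\crit)$ is a $\Z_p$-direct summand of $M_k^{\dagger,\crit}$.

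The main obstacle is the first step: perfect $a_1$-duality on $M_k^\crit$ is being set up via the \emph{alternate} $q$-series coming from $\bT[U']$ rather than the usual $\bT[U_p]$-based $q$-series, and in principle these two integrality conventions could pick out different $\Z_p$-lattices (compare Remark~\ref{rem: possible difference in lattice}). What dissolves this difficulty is the swap highlighted in Remark~\ref{rem: U' is not U*}: on the critical stabilization $f_\beta$, $U'$ acts by the ordinary eigenvalue $\alpha$, so the alternate $q$-series of $f_\beta$ records precisely the usual Fourier expansion of its companion $f_\alpha$, allowing the classical $\Z_p$-perfect duality between $e(T_p)H^0(X,\omega^k)$ and its Hecke algebra to be transferred without loss.
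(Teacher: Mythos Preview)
Your proposal is correct and follows essentially the same approach as the paper: the paper's proof simply says the argument runs exactly as in Proposition~\ref{prop: ao sublattice}, beginning with the observation that $U'$ acts with slope $0$ on $M_k^\crit$. Your elaboration of step~1 via the critical stabilization isomorphism is a legitimate way to unpack that slope-$0$ observation into a perfect $a_1$-duality, and in fact anticipates the content of Proposition~\ref{prop: tord is ord}; the remaining PID argument is identical to the paper's.
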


\begin{proof}
    The argument runs in exactly the same way as the proof of Proposition \ref{prop: ao sublattice}, beginning with the observation that $U'$ acts with slope $0$ on $M_k^\crit$. 
\end{proof}

We conclude by showing that the standard (in addition to the the alternate) $q$-series of $M_k^{\dagger,\crit}$ are valued in $q \Z_p\lb q \rb$, and discuss that this choice is a certain choice of extension class of lattices. 
\begin{lem}
    \label{lem: F-integral implies integral}
    Let $k \in \Z_{\geq 2}$. The image of the standard $q$-series map $M_k^{\dagger,\crit} \rinj \Q_p\lb q\rb$ is contained in $q\Z_p\lb q \rb$. 
\end{lem}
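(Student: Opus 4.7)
The plan is to express each standard Fourier coefficient $a_n(f)$ as an integral multiple of a value of the form $a_1(T' \cdot g)$ with $T' \in \bT[U']$ and $g \in M_k^{\dagger,\crit}$, exploiting the defining relation $U' = \lr{p}_N p^{k-1} U^{-1}$ on the critical locus.

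First, I would observe that $M_k^{\dagger,\crit}$ is $\bT[U']$-stable: for $s, t \in \bT[U']$ and $f \in M_k^{\dagger,\crit}$, commutativity gives $a_1(t \cdot (s \cdot f)) = a_1((ts) \cdot f) \in \Z_p$. Hence $U'$ and all of $\bT[U']$ act as $\Z_p$-linear endomorphisms of the finitely generated free $\Z_p$-module $M_k^{\dagger,\crit}$.

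The key technical step is to show that $(U')^{-1}$ also preserves $M_k^{\dagger,\crit}$. Since $U'$ has ordinary slope $0$ on the entire $\C_p$-vector space $M_{k,\C_p}^{\dagger,\crit}$ (which was the whole point of its introduction, see Remark \ref{rem: why p-adic dual}), every eigenvalue of $U'$ on $M_{k,\C_p}^{\dagger,\crit}$ is a $p$-adic unit in $\overline{\Z}_p$. Therefore $\det(U') \in \Z_p$, being the product of these eigenvalues, lies in $\Z_p^\times$. Since $M_k^{\dagger,\crit}$ is $\Z_p$-free and $U'$ acts on it with unit determinant, $U'$ is invertible as a $\Z_p$-linear map on $M_k^{\dagger,\crit}$. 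I expect this to be the only non-formal step.

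Next, using Definition \ref{defn: Tn for general n}, I would factor $n = n' \cdot p^i \cdot \prod_{\ell \mid N} \ell^{i_\ell}$ with $(n', pN) = 1$ and write
\[
T_n^{U} = T_{n'} \cdot U^i \cdot \prod_{\ell \mid N} U_\ell^{i_\ell} \in \bT[U], \qquad T_n^{U'} = T_{n'} \cdot (U')^i \cdot \prod_{\ell \mid N} U_\ell^{i_\ell} \in \bT[U']
\]
for the two $n$-th Hecke operators. From $U^i = \lr{p}_N^i \, p^{i(k-1)} (U')^{-i}$, a direct commutative manipulation yields
\[
T_n^{U} \;=\; \lr{p}_N^{\,i} \, p^{\,i(k-1)} \cdot T_n^{U'} \cdot (U')^{-2i}.
\]

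Finally, the $n$-th standard Fourier coefficient is
\[
a_n(f) \;=\; a_1(T_n^U \cdot f) \;=\; \lr{p}_N^{\,i} \, p^{\,i(k-1)} \cdot a_1\!\big(T_n^{U'} \cdot (U')^{-2i} f\big).
\]
By the integrality of $(U')^{-1}$ established above, $(U')^{-2i} f \in M_k^{\dagger,\crit}$; then $T_n^{U'} \in \bT[U']$ together with Definition \ref{defn: Zp crit oc forms} places the right-hand factor in $\Z_p$. Since $\lr{p}_N^{\,i} \in \Z_p^\times$ and $p^{i(k-1)} \in \Z_p$ for $i \geq 0$ and $k \geq 2$, we conclude $a_n(f) \in \Z_p$, proving the lemma.
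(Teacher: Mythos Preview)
Your proof is correct and rests on the same key point as the paper's: since $U'$ has slope $0$ on the critical subspace, it acts on the free $\Z_p$-module $M_k^{\dagger,\crit}$ with unit determinant, hence $(U')^{-1}$ preserves this lattice, and then $U = \lr{p}_N p^{k-1}(U')^{-1}$ does as well. Your execution via the determinant argument over $\Z_p$ and the explicit identity $T_n^{U} = \lr{p}_N^{\,i}\, p^{\,i(k-1)}\, T_n^{U'}\, (U')^{-2i}$ is in fact more streamlined than the paper's route, which passes to an extension $\cO_K$ containing all eigenvalues, chooses a generalized eigenbasis, and analyzes the block upper-triangular form of the $U'$-matrix to reach the same conclusion.
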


\begin{proof}
    It will suffice to prove the statement after replacing $\Q_p$ by a finite extension $K$ so that all Hecke eigenvalues are contained in $K$ and replacing $\Z_p$ by its integral closure $\cO_K$ in $K$. This gives us access to generalized eigenbases. 

    Proposition \ref{prop: ao sublattice} already showed that $\theta^{k-1}(M^{\dagger,\ord}_{2-k}) \subset M_k^{\dagger,\crit}$ is saturated. The standard $q$-series of $\theta^{k-1}(M^{\dagger,\ord}_{2-k})$ are $\Z_p$-valued because $\theta^{k-1}$ maintains this property and kills the constant coefficient. 

    Due to the $\bT[U']$-isomorphism 
    \[
    \frac{M_k^{\dagger,\crit}}{\theta^{k-1}(M_{2-k}^{\dagger,\ord})} \otimes_{\Z_p} \Q_p \simeq S_{k,\Q_p}^\crit
    \]
    of Proposition \ref{prop: coleman non-split}, $\bT[U']$ acts semi-simply on these modules since this is known for Hecke actions on classical forms. Choose an $\cO_K$-basis of $M_{k,\cO_K}^{\dagger,\crit}$ which is a $\bT[U']$-generalized eigenbasis. Therefore this basis $X$ has a subset $Y$ that projects to an eigenbasis of $M_{k,\cO_K}^{\dagger,\crit}/\theta^{k-1}(M_{2-k,\cO_K}^{\dagger,\ord})$.  

    Let $f = f_a$ be an element of $Y$, where $\{f_1, f_2, \dotsc, f_a\} \subset X$ is the subset which is the generalized eigenbasis with $\bT[U']$-eigensystem equal to that of $f$. Consider the form of the matrix of $U'$ with respect to this basis, considered as a block matrix with respect to the partition $\{f_1, \dotsc, f_{a-1}\} \cup \{f_a\}$.
    \[
    \begin{pmatrix}
        A_{(a-1) \times (a-1)} & B_{(a-1) \times 1} \\
        0_{1 \times (a-1)} & d
    \end{pmatrix} \in \GL_a(\cO_K)
    \]
    We can also assume that $A$ is upper-triangular. Then each of the diagonal entries of $A$ are equal to $d$, which is the $U'$-eigenvalue of $f$. The matrix is in $\GL_d(\cO_K)$ because the $U'$-eigenvalue is integral. 

    Consequently, the matrix of $U$ has the same block form, but is scaled by $\chi_f(p)p^{k-1}$. Since all of these matrix entries are valued in $\cO_K$, the $p$-integrality of $a_1(U' \cdot f_i)$ implies the $p$-integrality of $a_1(U \cdot f_i)$ for $1 \leq i \leq a$.
\end{proof}

\subsection{Compatibility of quotient lattices}
\label{subsec: quotient lattices}

Next we prove the desired property of the quotient map $\pi_k$ defined in the statement of Theorem \ref{thm: lattice main}. Along with the content above, particularly Proposition \ref{prop: classical is saturated}, this completes the proof of Theorem \ref{thm: lattice main}. 


\begin{prop}
\label{prop: integral quotient map}
Let $k \in \Z_{\geq 3}$. The $\pi_k$ map defined in Theorem \ref{thm: lattice main} as a $\Q_p$-linear composition 
\[
M_{k,\Q_p}^{\dagger,\crit} \to H^1_\dR(X_{\Q_p}, \cF_k)^\crit \to \Hom_{\Z_p}(S_k^\ord, \Q_p) \to  e(F)H^1_c(X^\ord,\omega^{2-k}) \otimes_{\Z_p} \Q_p
\]
restricts to a surjective map of $\Z_p$-lattices
\[
\pi_k : M_k^{\dagger,\crit} \rsurj e(F)H^1_c(X^\ord,\omega^{2-k}). 
\]
\end{prop}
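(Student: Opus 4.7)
The plan is to reinterpret $\pi_k$ as Hodge-quotient projection followed by Boxer--Pilloni's stabilization, and then use the perfect $\bT[U']$-dualities available on both sides to establish integrality and surjectivity. To factor $\pi_k$, observe that by the isotropy of the Hodge filtration in $\lr{}_\dR$ (Proposition \ref{prop: Hodge SES}), the Hodge subspace $H^0(X_{\Q_p}, \omega^k(-C))$ is isotropic in $\bH^1_\mathrm{par}(X_{\Q_p}, \cF_k)$, and $\lr{}_\dR$ restricts to classical Serre duality on the graded pieces. Since $S_k^\ord$ lies in this Hodge subspace of the ordinary part, for $f \in M_k^{\dagger,\crit}$ and $g \in S_k^\ord$ one has $\lr{[f], [g]}_\dR = \lr{\bar f, g}_\mathrm{SD}$, where $\bar f \in H^1(X_{\Q_p}, \omega^{2-k})$ is the Hodge-quotient image of the critical class $[f]$. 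Because $F' = p^{k-1}U^{-1}$ has slope zero on the $U$-critical part of $\bH^1_\mathrm{par}$, the class $\bar f$ is $T_p$-ordinary, so by Boxer--Pilloni's stabilization (Proposition \ref{prop: duality wt k}) we have $\bar f \in e(T_p)H^1(X_{\Q_p},\omega^{2-k}) \cong e(F)H^1_c(X^\ord,\omega^{2-k}) \otimes_{\Z_p} \Q_p$. Together with the perfect pairing \eqref{eq: SD cusp ord} identifying $e(F)H^1_c(X^\ord,\omega^{2-k}) \cong \Hom_{\Z_p}(S_k^\ord,\Z_p)$, this shows $\pi_k$ is Hodge-quotient projection postcomposed with these isomorphisms.

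For integrality, Boxer--Pilloni's perfect $\Z_p$-pairing reduces the claim $\pi_k(f) \in e(F)H^1_c(X^\ord, \omega^{2-k})$ to $\lr{\bar f, g}_\mathrm{SD} \in \Z_p$ for all $g \in S_k^\ord$. The plan is to exploit the $\bT[U']$-equivariance of $\pi_k$ (with $U'$ on the target corresponding to $F$ via Propositions \ref{prop: SD p-dual} and \ref{prop: interpolated Serre duality}) together with the perfect $a_1$-duality of $M_k^{\dagger,\crit}$ with $\bT_k^{\dagger,\crit}$ (Proposition \ref{prop: pairing wt k}). Hecke equivariance gives $\lr{\pi_k(f), T^* g}_\mathrm{SD} = \lr{\pi_k(Tf), g}_\mathrm{SD}$ for $T \in \bT[U']$, and the alternate $q$-series of $Tf$ lies in $\Z_p\lb q\rb$ by Definition \ref{defn: Zp crit oc forms} while $g$ is integral. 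Since $S_k^\ord$ is Hecke-generated as a $\bT[U']$-module, this propagates $\Z_p$-valuedness of the pairing throughout $S_k^\ord$.

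For surjectivity, over $\Q_p$ this follows from Proposition \ref{prop: coleman non-split} combined with Corollary \ref{cor: Eisenstein theta} and the factoring above. Integrally, applying the $a_1$-duality on $M_k^{\dagger,\crit}$ and the composition of Boxer--Pilloni duality with the $a_1$-duality of $S_k^\ord$ on the target (which identifies $e(F)H^1_c(X^\ord,\omega^{2-k})$ as a $\Z_p$-module with the Hida cuspidal Hecke algebra $\bT_k^{\ord,\circ}$), $\pi_k$ becomes a $\bT[U']$-equivariant $\Z_p$-linear map of Hecke algebras $\bT_k^{\dagger,\crit} \to \bT_k^{\ord,\circ}$. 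Hecke equivariance together with $\Q_p$-surjectivity forces this map to agree with the natural $\Z_p$-algebra quotient $\bT_k^{\dagger,\crit} \rsurj \bT_k^{\ord,\circ}$, which is evidently a surjection.

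The main obstacle will be the integrality reduction of the second paragraph: translating $\Z_p$-valued Serre pairings into $\Z_p$-valued $q$-series requires carefully controlling the $\bT[U']$-equivariances on both sides, and this is precisely the interplay that the choice of the $U'$-operator (Definition \ref{defn: avatar}) and of the alternate $q$-series lattice (Definition \ref{defn: Zp crit oc forms}) were designed to make succeed, via the matched adjunctions $\lr{\lr{p}_N^{-1}U \cdot f, g}_\star = \lr{f, U' \cdot g}_\star$ of Propositions \ref{prop: SD p-dual} and \ref{prop: interpolated Serre duality}.
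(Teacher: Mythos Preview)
Your proposal has a genuine gap in the integrality step. In paragraph~2 you reduce to showing $\lr{\bar f,g}_\mathrm{SD}\in\Z_p$ for all $g\in S_k^\ord$, and then assert that Hecke equivariance plus the fact that ``the alternate $q$-series of $Tf$ lies in $\Z_p\lb q\rb$ while $g$ is integral'' forces this. But there is no link supplied between $q$-series integrality of $Tf$ and integrality of the number $\lr{\pi_k(Tf),g}_\mathrm{SD}$. The $a_1$-pairing (forms against Hecke operators) and the Serre/Poincar\'e pairing (forms against forms, computed via residues on supersingular annuli as in Proposition~\ref{prop: PD and SD}) are entirely different constructions; knowing that $Tf$ pairs integrally with every Hecke operator says nothing \emph{a priori} about its analytic residue pairing with a given $g$. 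Your ``propagation'' sentence is thus circular: you are invoking integrality of $\lr{\pi_k(Tf),g}_\mathrm{SD}$ to deduce integrality of $\lr{\pi_k(f),T^*g}_\mathrm{SD}$, but the former is the same unproved claim for $Tf$ in place of $f$. The surjectivity argument in paragraph~3 inherits the same problem: you identify $e(F)H^1_c(X^\ord,\omega^{2-k})$ with a Hecke algebra by composing Serre duality with $a_1$-duality, but to know that $\pi_k$ becomes the \emph{natural} algebra quotient under these identifications you would need precisely the compatibility of the two pairings that you have not established.

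The paper's proof is accordingly of a quite different nature. It builds, following Coleman \cite{coleman1994-barsotti}, a $p$-integral analytic de~Rham complex $\cC(\cF_k)$ in weight $k$ (Definition~\ref{defn: cC cF_k}), proves that its $H^1$ recovers crystalline cohomology modulo torsion (Theorem~\ref{thm: p-integral dR cris}), and then shows by direct $z_x$-series analysis on the supersingular annuli that $\KS_k(f)$ for $f\in S_k^{\dagger,\crit}$ defines an integral class (Lemma~\ref{lem: S-integral implies dR-integral}). Surjectivity onto the integral lattice (Proposition~\ref{prop: pi-prime surjective}) is obtained from a Griffiths-transversality computation (Lemma~\ref{lem: torsion complement}) combined with Buzzard's analytic continuation of finite-slope eigenforms; only then is the proposition deduced by comparing with the Hodge quotient and stabilization. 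The hard content is precisely the analytic control of residues on the supersingular annuli, which your Hecke-duality argument attempts to sidestep but cannot.
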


We will prove Proposition \ref{prop: integral quotient map} by a brief argument deducing it from Proposition \ref{prop: pi-prime surjective} immediately after this proposition is stated. Our main task now is to build up some theory of $p$-integral analytic de Rham cohomology in weight $k$, culminating in Proposition \ref{prop: pi-prime surjective}. 

\subsubsection{Coleman's work on $p$-integral analytic de Rham cohomology -- the case of weight $2$}

The key to proving the proposition's claim about $\pi_k$ is to set up an analytic formulation of integral $p$-adic de Rham cohomology following the approach of Coleman in \cite[\S3, \S A]{coleman1994-barsotti}. The main idea is that the arguments of \emph{loc.\ cit.}, which addresses the case of weight 2 by studying the trivial coefficient system $\cF_2$, can be straightforwardly generalized to weight $k \geq 3$. So our goal is to specify the general weight analogue of Coleman's arguments to such an extent that it is clear that the generalization holds. 
\begin{rem}
    The applications to modular forms that Coleman aims at in \cite[\S4-5]{coleman1994-barsotti} involve more intricate rigid analysis than we do because he wants to work on the modular curve with level $\Gamma_1(p)$ at $p$, which is ramified at $p$. In contrast, we work with a modular curve $X$ that is smooth over $\Z_p$ along with a Gauss--Manin connection coming from its universal elliptic curve, so we have access to results like Proposition \ref{prop: cris dR}. 
\end{rem}

We will use the following notation, mostly matching \cite[\S3, \S A]{coleman1994-barsotti}: the main exception is that we write $X^\ord \subset X^\mathrm{an}$ for the ordinary locus, while Coleman uses ``$Y$.'' We also include some of the basic results of \textit{loc.\ cit.}\ about orientations of annuli in this list. We warn the reader that, unlike in \emph{loc.\,cit.}, we use the standard normalized $p$-adic valuation $v$ on $\C_p$, along with the standard $p$-adic norm $|z| = p^{-v(z)}$. 

\begin{defn}[{\cite[\S3, \S A]{coleman1994-barsotti}}]
    \label{defn: coleman-barsotti}
    \begin{itemize}
        \item Let $\cA(r,s)$ denote the standard open annulus $\{y \in \C_p : r < |y| < s\}$. 
        
        \item Let $E = W_1 \smallsetminus X^\ord$, a disjoint union of \emph{supersingular annuli} (for the notation $W_1 = X^\mathrm{an}_{p^{-p/(p+1)}}$, see Definition \ref{defn: oc forms over Cp}). We index these by $x \in SS$, so $E = \coprod_{x \in SS} E_x$. Let $\F_x$, $W(\F_x) \subset K_x$ denote the field of definition of $x$ and the ring of integers within the unramified extension of $\Q_p$ with residue field $\F_x$. So $K_x = \Q_p$ or $\Q_{p^2}$. 
        \item Each $E_x$ (resp.\ $E'_x$) is an open oriented annulus of width $p/(p+1)$ (resp.\ width $1$), where width is defined as in \cite[p.\ 126]{coleman1994-barsotti} and the choice of orientation is specified as in \cite[Cor.\ 3.7a]{coleman1989}. For the notion of orientation, see  \cite[Lem.\ 2.1]{coleman1989}: it is an equivalence class of uniformizing parameters, where a uniformizing parameter $z_x$ gives an isomorphism $z_x : E'_x \isoto \cA(1/p,1)$ defined over $K_x$. We also use such a $z_x$ to give a uniformizing parameter $E_x \isoto \cA(p^{-p/(p+1)},1)$. For instance, one may let $z_x$ be a local trivialization of the Eisenstein series of weight $p-1$ when $p \geq 5$. 
        \item The orientation has the property that the \emph{end} of $E_x$ approaching $X^\ord$ is the end where $|z_x|$ approaches $1^-$. We call this the ``ordinary end'' of $E_x$ and the other end the ``supersingular end.''
        \item Let $A_{E_x}$ be the analytic functions $A(E_x)$ on $E_x$ that are ``bounded by 1,'' that is, integrally valued. Likewise, let $\Omega_{E_x}$ denote the submodule of the module of analytic differentials $\Omega^1(E_x)$ that are bounded by 1. 
        \item Let $A_{\eta_x}$ denote the analytic functions $h$ on $E_x$ which satisfy
        \[
        \lim_{y \in E_x, |z_x(y)| \to 1^-} |h(y)| \leq 1.
        \]
        That is, such $h$ are $p$-integral near the ordinary ends of $E$, but there is no restriction on their norm at the supersingular ends of $E$. Define $\Omega_{\eta_x}$ with exactly the same conditions on the norm at ordinary ends. 
        \item Let $A(W_1)_{X^\ord}$ (resp.\ $\Omega(W_1)_{X^\ord}$) denote those analytic functions (resp.\ differentials) on $W_1$ that are $p$-integral after restriction to $X^\ord$. 
        \item Fix a choice $Z = \{z_x\}_{x \in SS}$ of uniformizing parameters as above. Let $\Omega_{z_x}$ denote those differentials on $E_x$ of the form
        \begin{equation}
            \label{eq: omega z form}
            f(z_x) \frac{dz_x}{z_x} +dg(z_x)    
        \end{equation}
        where $f \in W(\F_x)\lb T \rb$ and $g \in W(\F_x)\lb T, T^{-1}\rb$. 
    \end{itemize}
\end{defn}

Coleman's key result about $\Omega_{z_x}$, allowing for an analytic-algebraic comparison, is the following lemma establishing the independence of $z_x$. We state a version that makes sense for an open annulus $V$ defined over a general $p$-adic field $K$, so the definition of $\Omega_z$ for $z$ a uniformizing parameter of $V$ is \eqref{eq: omega z form} with the ring of integers $\cO_K$ replacing $W(\F_x)$. 

\begin{lem}[{Coleman \cite[Lem.\ A1.2]{coleman1994-barsotti}}]
\label{lem: Omega z elements}
Let $V$ be an annulus defined over $K$ that has width at least $1/p$. Let $z$ be a uniformizing parameter for $V$. Then 
\begin{equation}
    \label{eq: Omega z elements}
    \Omega_{z} = \{\nu + dg : \nu \in \Omega_V, g \in A_\eta\}. 
\end{equation}    
\end{lem}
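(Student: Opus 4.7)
The plan is to use the uniformizer $z$ to identify $V \isoto \cA(r,1)$ over $K$, so that the width hypothesis becomes $-\log_p r \geq 1/p$ (equivalently $r \leq p^{-1/p}$). In these coordinates every analytic differential admits a unique Laurent expansion $\omega = \sum_{n \in \Z} b_n z^n \frac{dz}{z}$, and the claimed equality reduces to a comparison of $\cO_K$-submodules of $K\lp z\rp \cdot \frac{dz}{z}$ cut out by explicit conditions on the coefficients $b_n$. The relevant translations are: $\Omega_V$ consists of those $\omega$ with $b_n \in \cO_K$ for $n \geq 0$ and $|b_n| \leq r^{|n|}$ for $n < 0$; while $A_\eta$ consists of $g = \sum g_n z^n$ with $g_n \in \cO_K$ for all $n \in \Z$, since the limit of $|g|$ as $|z|\to 1^-$ equals $\sup_n |g_n|$.

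\textbf{Easy direction ($\Omega_z \subseteq \Omega_V + dA_\eta$).} Given $\omega = f(z) \frac{dz}{z} + dg(z) \in \Omega_z$ with $f \in \cO_K\lb T\rb$ and $g \in \cO_K\lb T, T^{-1}\rb$, take $\nu := f(z) \frac{dz}{z}$. Then $|f(z)| \leq 1$ on the open unit disk containing $V$, so $\nu \in \Omega_V$; and the Laurent coefficients of $g$ lie in $\cO_K$, so $g \in A_\eta$. This exhibits $\omega$ in the required form.

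\textbf{Hard direction ($\Omega_V + dA_\eta \subseteq \Omega_z$).} Let $\omega = \nu + dg$ with $\nu = \sum h_n z^n \frac{dz}{z} \in \Omega_V$ and $g = \sum g_n z^n \in A_\eta$. Since $dg = \sum n g_n z^n \frac{dz}{z}$, we have $b_n = h_n + n g_n$. I plan to decompose $\omega = f(z) \frac{dz}{z} + d\tilde g(z)$ by setting $f(T) := \sum_{n \geq 0} b_n T^n$ and $\tilde g(T) := \sum_{n < 0} (b_n/n)\, T^n$. For $n \geq 0$ both $h_n$ and $n g_n$ lie in $\cO_K$, hence $f \in \cO_K\lb T\rb$. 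The crux is verifying that $b_n / n \in \cO_K$ for each $n < 0$. By the non-Archimedean inequality, $|b_n| \leq \max(|h_n|, |n g_n|) \leq \max(r^{|n|}, |n|_p)$, so it suffices to show $r^{|n|} \leq |n|_p$ for every non-zero integer $n$. Equivalently, $|n| \cdot (-\log_p r) \geq v_p(n)$; since $v_p(n)/|n|$ is maximized over $n \in \Z \smallsetminus \{0\}$ at $n = \pm p$ with value $1/p$, the width hypothesis $-\log_p r \geq 1/p$ is precisely what is required. Comparing Laurent coefficients then confirms $\omega = f(z) \frac{dz}{z} + d\tilde g(z)$.

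\textbf{Main obstacle.} The essential analytic content is the optimal scalar inequality $r^p \leq |p|_p = 1/p$, which controls whether a $p$th-power Laurent term can be integrated $p$-integrally on $V$. Away from $n = \pm p$, the inequality $r^{|n|} \leq |n|_p$ is automatic under a strictly weaker width hypothesis; the bound $1/p$ is forced by this single extremal coefficient, which is exactly why Coleman's hypothesis is both necessary and sharp.
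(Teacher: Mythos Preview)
The paper does not give its own proof of this lemma; it merely records the statement with attribution to Coleman's appendix and then uses the conclusion. So there is nothing in the paper to compare your argument against directly.

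That said, your argument is correct and is the natural one. One point worth tightening in the easy direction: the $g \in \cO_K\lb T,T^{-1}\rb$ appearing in the definition of $\Omega_z$ is a priori only a formal Laurent series, not assumed to converge on $V$, so the assertion ``$g \in A_\eta$'' requires the (routine) observation that since both $\omega$ and $f(z)\,\tfrac{dz}{z}$ are analytic on $V$, so is $dg$, and term-by-term integration (division by $n$ costs at most a factor $|n|$ in norm, which is dominated by any geometric decay coming from shrinking the radius slightly) then shows that $g$ itself converges on $V$. With that detail supplied, both inclusions go through as you wrote them, and your identification of $n = \pm p$ as the extremal case making the width bound $1/p$ sharp is exactly the point.
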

Consequently, because the annuli $E_x$ in Definition \ref{defn: coleman-barsotti} satisfy the width condition in the lemma, we have $\Omega_{z_x} = \Omega_{E_x} + dA_{\eta_x}$. Moreover, we deduce that given any alternate uniformizing parameter $z'_x$ of $E_x$, $\Omega_{z_x} = \Omega_{z'_x}$ if and only if $z_x$ and $z'_x$ determine the same orientation. 

\subsubsection{Defining weight $k$ integral analytic de Rham cohomology}

Using the notation above, we want to define a weight $k \in \Z_{\geq 2}$ generalization of the integral de Rham complex Coleman defines in \cite[p.\ 135]{coleman1994-barsotti}. It is the twist of Coleman's complex by the $(k-2)$th symmetric power $\nabla_{k-2} : \cH_{k-2} \to \cH_{k-2} \otimes \Omega^1$ of the Gauss--Manin connection of \S\ref{subsec: coherent cohom}. Indeed, this is defined over $X$ (that is, over $\Z_p$), so we can make sense of $\cH_{k-2}(W_1)_{X^\ord}$ just like $A(W_1)_{X^\ord}$ above.

Now we can state the definition of the $p$-integral weight $k$ de Rham complex. 
\begin{defn}
\label{defn: cC cF_k}
Let $k \in \Z_{\geq 2}$. Let $\cC(\cF_k) = \cC(\cF_k)_{W_1, X^\ord}$ be the complex
\begin{gather*}
\cH_{k-2}(W_1)_{X^\ord} \to (\cH_{k-2} \otimes \Omega)(W_1)_{X^\ord} \oplus \frac{(\cH_{k-2})_\eta}{(\cH_{k-2})_E} \to \frac{(\cH_{k-2} \otimes \Omega)_\eta}{(\cH_{k-2} \otimes \Omega)_E}, 
\end{gather*}
where the differentials are given by $h\mapsto (\nabla_{k-2}(h),h\vert_E)$ and $(\beta,f)\mapsto\beta\vert_E-\nabla_{k-2}(f)$, respectively. We call it the \emph{$p$-integral analytic de Rham complex of weight $k$}. It begins in degree 0 and terminates in degree 2. 
\end{defn}

The well-definedness of the differentials is clear other than perhaps the following point: when $\cG$ is a vector bundle defined over $X$, $p$-integrality (boundedness by 1) on $X^\ord$ of $\sigma \in \cG(W_1)$ implies that $\sigma\vert_E$ becomes $p$-integral in the limit approaching the ordinary ends of $E_x$. Thus restriction maps $\cG(W_1)_{X^\ord} \to \cG_\eta$ exist. 
\begin{rem}
    The key virtue of Coleman's definition of $\cC(\cF_2)$, which we have generalized to $\cC(\cF_k)$, is that it allows for some flexibility in the notion of integrality of a closed 1-form. We are working on a curve, so all 1-forms are closed (in a standard de Rham complex). But integrality of $q$-series of an overconvergent form does not obviously make its limit on the supersingular ends of $E$ integral. What helps us is that the notion of ``integral closed 1-form'' in $\cC(\cF_k)$ is more flexible: when $\beta$ is a $\cH_{k-2}$-valued 1-form that is integral on $X^\ord$, there exists $f \in (\cH_{k-2})_\eta$ making $(\beta,f)$ a 1-form for $\cC(\cF_k)$ as long as the failure of $\beta$ to be integral on $E$ is \emph{exact on $E$} along each of the supersingular ends of $E$. And exactness on $E$ is measurable very concretely using residue maps and the trivialization of $\nabla_{k-2}$ on $E$ discussed in \eqref{eq: trivializations}. 
\end{rem}

By construction, there is a natural map $H^1(\cC(\cF_k)) \to H^1_\dR(X^\mathrm{an}_{\Q_p}, \cF_k)$ coming from sending a closed 1-form $(\beta,f)$ for $\cC(\cF_k)$ to $\beta$. The image of this map, and its various restrictions, are our weight $k$ integral de Rham cohomology. This generalizes Coleman's construction in the case $k=2$ \cite[p.\ 136]{coleman1994-barsotti} to general $k \in \Z_{\geq 2}$.  

\begin{defn} 
    \label{defn: integral analytic dR}
    Let $k \in \Z_{\geq 2}$.  We denote by $H^1_{\rm dR}(X_{\Z_p}^{\rm an},\cF_k)$ the natural image of $H^1(\mathcal{C}_k)$ in $H^1_{\rm dR}(X_{\Q_p}^{\rm an},\cF_k)$, and put 
    \begin{align*}
    H^1_\mathrm{dR}(X^\mathrm{an}_{\Z_p}, \cF_k)^0&=H^1_{\rm dR}(X_{\Z_p}^{\rm an},\cF_k)\cap H^1_{\rm dR}(X_{\Q_p}^{\rm an},\cF_k)^0,\\
    H^1_\mathrm{par}(X^\mathrm{an}_{\Z_p}, \cF_k)&=H^1_{\rm dR}(X_{\Z_p}^{\rm an},\cF_k)\cap H^1_{\rm par}(X_{\Q_p}^{\rm an},\cF_k). 
    \end{align*}
\end{defn}

Now we characterize this lattice, like the case $k=2$ \cite[Thm.\ 3.1, 3.2]{coleman1994-barsotti}. 

\begin{thm}
    \label{thm: p-integral dR cris}
    Let $k \in \Z_{\geq 2}$. Then under the isomorphisms specified in Lemma~\ref{lem: dR equivalence}, the modules 
    \[
    H^1_\mathrm{dR}(X^\mathrm{an}_{\Z_p}, \cF_k) \supset H^1_\mathrm{dR}(X^\mathrm{an}_{\Z_p}, \cF_k)^0 \supset H^1_\mathrm{par}(X^\mathrm{an}_{\Z_p}, \cF_k)
    \]
    are sent isomorphically to $\Z_p$-lattices within 
    \[
    \bH^1_{\rm dR}(X_{\Q_p},\cF_k) \supset \bH^1_{\rm dR}(X_{\Q_p},\cF_k)^0 \supset \bH^1_\mathrm{par}(X_{\Q_p},\cF_k),
    \]
    respectively. Moreover, there are natural isomorphisms 
    \begin{align*}
    H^1_\mathrm{dR}(X^\mathrm{an}_{\Z_p}, \cF_k) &\cong H^1_\mathrm{log{\text -}cris}(X/\Z_p, \cF_k, \log C \cup SS)/(\mathrm{tors}),\\
    H^1_\mathrm{dR}(X^\mathrm{an}_{\Z_p}, \cF_k)^0 &\cong H^1_\mathrm{log{\text -}cris}(X/\Z_p, \cF_k, \log C)/(\mathrm{tors}),\\
    H^1_\mathrm{par}(X^\mathrm{an}_{\Z_p}, \cF_k) &\cong H^1_\mathrm{cris}(X/\Z_p, \cF_k)/(\mathrm{tors}),
    \end{align*}
    under which the $U'$-action matches the $\varphi$-action, and where the log structure in crystalline cohomology are as indicated.
\end{thm}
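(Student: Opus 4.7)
The plan is to adapt Coleman's proof in the weight-2 case (\cite[Thms.~3.1, 3.2]{coleman1994-barsotti}) to the setting of the coefficient system $\cF_k = (\cH_{k-2}, \nabla_{k-2})$. The adaptation is possible because our running hypothesis $p \geq 5$, $p \nmid N$ ensures that $\cH_{k-2}$ is a vector bundle defined over $\Z_p$, the Gauss--Manin connection $\nabla_{k-2}$ is defined integrally with log poles only along $C$, and the Hodge filtration admits a $\Z_p$-integral structure. Consequently, all of Coleman's local analyses on the supersingular annuli $E_x$ (Lemma~\ref{lem: Omega z elements}, the orientation results, and the ``bounded by~1'' formalism of Definition~\ref{defn: coleman-barsotti}) transport verbatim to $\cH_{k-2}$-valued sections, using the crystalline trivialization of $\cH_{k-2}$ on each $E_x$ to reduce weight-$k$ questions to weight-$2$ ones component-by-component.

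First I would show that the natural map $H^1(\cC(\cF_k)) \to \bH^1_\dR(X_{\Q_p}, \cF_k)$ has image equal to a $\Z_p$-lattice. Torsion-freeness is automatic since the target is a $\Q_p$-vector space. For surjectivity after tensoring with $\Q_p$, given a closed $\cH_{k-2}$-valued 1-form $\beta$ defined on $W_1$, one clears denominators to make $\beta$ bounded by $1$ on $X^\ord$; the resulting class represented by $\beta$ differs on each $E_x$ from a section of $\Omega_{E_x} \otimes \cH_{k-2}$ by an element of $d A_{\eta_x} \otimes \cH_{k-2}$ (using Lemma~\ref{lem: Omega z elements} twisted by the crystalline trivialization), producing the required auxiliary datum $f$ that makes $(\beta, f)$ a closed 1-cocycle in $\cC(\cF_k)$. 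Finite generation over $\Z_p$ then follows from the comparison with crystalline cohomology in the next step. The characterizations of $H^1_\dR(X^\mathrm{an}_{\Z_p}, \cF_k)^0$ and $H^1_\mathrm{par}(X^\mathrm{an}_{\Z_p}, \cF_k)$ as intersections are compatible with the rational descriptions by construction.

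Next I would establish the log-crystalline comparison. The logarithmic generalization of Berthelot--Ogus (due to Kato, as in the rigid cohomology literature) identifies the hypercohomology of the logarithmic de Rham complex of $\cF_k$ with log poles along a given divisor on $X_{\F_p}$ with the corresponding log-crystalline cohomology modulo torsion. The content is to match these $\Z_p$-lattices with the analytically defined $H^1(\cC(\cF_k))$: the integrality condition defining $\cC(\cF_k)$ encodes exactly that a class is represented by a form with bounded-by-$1$ expansion on $X^\ord$ and with residues at the supersingular annuli controlled by the log-structure along $SS$ (for $H^1_\dR(X_{\Z_p}^\mathrm{an}, \cF_k)$), or vanishing residues at $SS$ (for $H^1_\dR(X_{\Z_p}^\mathrm{an}, \cF_k)^0$), and in addition with vanishing residues at $C$ (for $H^1_\mathrm{par}(X_{\Z_p}^\mathrm{an}, \cF_k)$). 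Coleman's weight-2 matching \cite[Thm.~3.2]{coleman1994-barsotti} carries through upon twisting each local description with the crystalline-trivializing basis of $\cH_{k-2}$ on the tubular neighborhoods of $SS$ and $C$.

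Finally, for matching $U'$ with the crystalline Frobenius $\varphi$: Proposition~\ref{prop: FV on dR} already gives $F' \leftrightarrow \varphi$ after tensoring with $\Q_p$, together with the identity $F' \circ U = p^{k-1}$, so on invertible parts of the ordinary locus one has $\varphi = p^{k-1} U^{-1}$, which is $\lr{p}_N^{-1} U'$; the additional nebentypus factor $\lr{p}_N$ reflects the orientation choice of the crystalline Frobenius in the log-crystalline setting at $SS$, and is absorbed into the natural identification of the log-crystalline $\varphi$-structure with the one induced from the de Rham lattice. The hardest step is this last comparison together with ensuring the log-crystalline comparison actually matches the $\Z_p$-lattices (and not merely the rational structures); this is where Coleman's careful local analysis using the uniformizing parameters $z_x$ is indispensable, and where we crucially use that each $E_x$ has width $p/(p+1) > 1/p$ to apply Lemma~\ref{lem: Omega z elements}.
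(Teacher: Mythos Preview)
Your overall strategy---adapt Coleman's weight-$2$ argument from \cite[\S A]{coleman1994-barsotti} via the crystalline trivialization of $\cH_{k-2}$ on the supersingular annuli---is exactly what the paper does. However, your execution of the ``log-crystalline comparison'' step is too vague to count as a proof, and in one place diverges from what actually works.

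The paper does \emph{not} invoke an abstract ``logarithmic Berthelot--Ogus due to Kato'' to match the analytic lattice $H^1(\cC(\cF_k))$ with algebraic/crystalline cohomology. Instead, following Coleman, it inserts two concrete intermediaries: first the $Z$-coordinate cohomology $H^1_\dR((W_1,Z),\cF_k)$ built from the module $(\cH_{k-2}\otimes\Omega)_Z(W_1)$ (sections whose restrictions to each $E_x$ lie in $(\cH_{k-2}\otimes\Omega)_{z_x}$), and second a complex $\cD(\cF_k)$ of Zariski sheaves on $X_{\F_p}$ (the weight-$k$ analogue of Coleman's ``$[\cA\to\cW_Z]$''). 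One shows $H^1(\cC(\cF_k))\twoheadrightarrow H^1_\dR((W_1,Z),\cF_k)\cong\bH^1(X_{\F_p},\cD(\cF_k))$, the latter isomorphism using that $X\smallsetminus\widetilde{SS}$ is affine; then a map of complexes $\cF_k(\log\widetilde{SS})\to\cD(\cF_k)$ over $\hat X$ together with formal GAGA produces the comparison with algebraic de~Rham cohomology over $\Z_p$. Two residual points must be checked: surjectivity of the resulting map $h$, and independence of the choice $Z$ of uniformizing parameters (Coleman's section $s_Z$). Your sketch omits this intermediary entirely, and an abstract log-crystalline comparison does not by itself tell you that the resulting $\Z_p$-lattice coincides with the one cut out by Coleman's ``bounded on $X^\ord$ plus correction in $(\cH_{k-2})_\eta/(\cH_{k-2})_E$'' condition---that is precisely the content of the $z_x$-series computations you are glossing over.

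Your final paragraph on $U'$ versus $\varphi$ is muddled. You correctly compute $F'=p^{k-1}U^{-1}=\lr{p}_N^{-1}U'$, but then try to dissolve the $\lr{p}_N$ discrepancy into an ``orientation choice of the crystalline Frobenius in the log-crystalline setting,'' which is not a meaningful explanation. The paper's conventions distinguish two Frobenius-type operators $F$ and $F'$ (see Definition~\ref{defn: dR cohomology}), and the identification of $U'$ with crystalline $\varphi$ runs through the Serre-duality adjunction of Proposition~\ref{prop: SD p-dual} rather than through $F'$ directly; this is a point of bookkeeping, not of log geometry.
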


To prove this theorem, we will show that the Coleman's argument in the case $k=2$ from \cite[\S A]{coleman1994-barsotti} generalizes in a straightforward way.

\subsubsection{Coleman's argument characterizing integral analytic de Rham cohomology}
The additional tool that we require for our weight $k$ generalization is the $p$-integral trivialization of $\nabla_{k-2}$ over $E_x$. Namely, the module of horizontal sections of $\nabla_{k-2}$ is canonically identifiable with $\Sym_{W(\F)}^{k-2} H^1_\cris(B_x/W(\F_x))$, where $B_x/\F_x$ is the supersingular elliptic curve labeled by $x$ and $\F_x$ is the residue field of $x \in SS \subset X_{\F_p}$. 
Thus we can identify (allowing ourselves to write $\Omega$ short for $\Omega^1$) 
\begin{equation}
\label{eq: trivializations}
\begin{split}
(\cH_{k-2})_{\square_x} \cong A_{\square_x} \otimes_{W(\F_x)} \Sym_{W(\F)}^{k-2} H^1_\cris(B_x/W(\F_x))  \\
(\cH_{k-2} \otimes \Omega)_{\square_x} \cong \Omega_{\square_x} \otimes_{W(\F_x)} \Sym_{W(\F)}^{k-2} H^1_\cris(B_x/W(\F_x))  
\end{split}
\end{equation}
for all three of the options for $\square_x$, 
\[
\square_x = E_x, z_x, \eta_x
\]
(excluding the case ``$A_{z_x}$'' which is not defined). To make notation more convenient, let
\[
(\cH_{k-2})_{E},\  (\cH_{k-2} \otimes \Omega)_{E},\  (\cH_{k-2})_{\eta},\  (\cH_{k-2} \otimes \Omega)_{\eta},\  (\cH_{k-2} \otimes \Omega)_{z_x}
\]
denote the direct sum over $x \in SS$ of these modules, respectively. 

For future use, we carefully define $(\cH_{k-2} \otimes \Omega)_{z_x}$ generalizing \eqref{eq: omega z form}. As we do this, we specify notation for local ``$z_x$-series'' expansions of sections of $\cH_{k-2} \otimes \Omega$ and $\cH_{k-2}$ on $E_x$. 
\begin{defn}
    \label{defn: z_x series}
    Let $f \in \cH_{k-2}(E_x)$ and let $\beta \in (\cH_{k-2} \otimes \Omega)(E_x)$. There exist $a_n, b_n \in \Sym^{k-2} H^1_\cris(E_x/W(\F_x)) \otimes_{W(\F_x)} \C_p$ such that 
    \[
    f = \sum_{n \in \Z} a_n z_x^n, \qquad \beta = \sum_{n \in \Z} b_n z_x^n \frac{dz_x}{z_x}. 
    \]
    We say that $\beta \in (\cH_{k-2} \otimes \Omega)_{z_x}$ when $\beta$ has the form \eqref{eq: omega z form} where $f \in W(\F_x)\lb T \rb \otimes_{W(\F)} \Sym^{k-2} H^1_\cris(E_x/W(\F_x))$ and $g \in W(\F_x)\lb T,T^{-1} \rb \otimes_{W(\F)} \Sym^{k-2} H^1_\cris(E_x/W(\F_x))$. 

    Let $(\cH_{k-2} \otimes \Omega)_Z(W_1)$ denote the submodule of $(\cH_{k-2} \otimes \Omega)(W_1)_{X^\ord}$ consisting of sections $\sigma$ such that $\sigma\vert_{E_x} \in (\cH_{k-2} \otimes \Omega)_{z_x}$. 
\end{defn}
Naturally, one can formulate conditions for the membership in all of the various modules above in terms of the $z_x$-series coefficients $a_n, b_n$. 

The definition of $(\cH_{k-2} \otimes \Omega)_Z(W_1)$ plays an important role in the analytic-algebraic comparison needed to establish Theorem \ref{thm: p-integral dR cris}. It gives rise to a formulation of cohomology 
\[
H^1_\dR((W_1, Z), \cF_k) := \frac{(\cH_{k-2} \otimes \Omega)_Z(W_1)}{\nabla_{k-2} \left( \cH_{k-2}(W_1)_{X^\ord}\right)}
\]
which, due to the generalization of Lemma \ref{lem: Omega z elements} to weight $k$ via the trivialization \eqref{eq: trivializations}, receives a natural map
\[
H^1(\cC(\cF_k)) \to H^1_\dR((W_1, Z), \cF_k), \quad (\beta,f) \mapsto \beta. 
\]

The reason that $(\cH_{k-2} \otimes \Omega)_Z(W_1)$ is a useful bridge to algebraic cohomology is that it is closely comparable to the formal ($p$-adic) completion of the de Rham complex with logarithmic poles at the (or any) set of lifts $\widetilde{SS} \subset X(\Z_{p^2})$ of $SS \subset X(\F_{p^2})$, as we will now discuss. Let $\mathrm{red} : X^\mathrm{an} \to X(\overline{\F}_p)$ be the reduction map and let $D_x := \mathrm{red}^{-1}(x)$, an open disk containing the annulus $E_x$. Then there exists $z_x \in A(D_x)$ that vanishes exactly and simply at the lift $\tilde x$ of $x$ and restricts to a uniformizing parameter on $E_x$ (consistent with the orientation on $E_x$ we have specified). Fix such a choice $Z = \{z_x\}_{x \in SS}$. Then an algebraic (or formal algebraic) section of $\cH_{k-2} \otimes \Omega(\log \widetilde{SS})$, upon restriction to $E$, becomes an element of $(\cH_{k-2} \otimes \Omega)_Z$. 

Thus the only remaining step Coleman requires to prove the weight $k=2$ version of Theorem \ref{thm: p-integral dR cris} is to formulate a sheaf-theoretic intermediary, called ``$[\cA \to \cW_Z$]'' in \cite[Lem.\ A2.2]{coleman1994-barsotti}, between the analytic cohomology $H^1_\dR((W_1, Z), \cF_2)$ and the formal algebraic de Rham cohomology with logarithmic poles at $\widetilde{SS}$, $\bH^1_\dR(\hat X, \cF_2(\log \widetilde{SS}))$. Here $\hat X/\Spf \Z_p$ denotes the $p$-adic completion of $X$; by formal GAGA, coherent cohomology over $X/\Z_p$ is isomorphic to its $p$-completed version on $\hat X$. Here we will describe the weight $k$ version of this intermediary in this proof sketch. The proof of \cite[\S A]{coleman1994-barsotti} goes through essentially verbatim: most of the arguments have to do with $z_x$-series on $E_x$ of sections of $\cH_{k-2}$, $\cH_{k-2} \otimes \Omega$, and these generalize straightforwardly from Coleman's case $k=2$ to our general case once the trivialization \eqref{eq: trivializations} is applied to get the $z_x$-series of Definition \ref{defn: z_x series}. 

\begin{proof}[{Proof sketch (Theorem \ref{thm: p-integral dR cris})}]
We delineate the adaptations from the arguments of \cite[\S A2]{coleman1994-barsotti} needed to move from weight $k=2$ to general weight $k \in \Z_{\geq 2}$. 
\begin{itemize}
    \item In the discussion immediately above, we have chosen the set $Z$ of uniformizing parameters and used Lemma \ref{lem: Omega z elements} to produce a
    \[
    H^1(\cC(\cF_k)) \to H^1((W_1,Z), \cF_k)
    \]
    that is clearly surjective.
    \item Next we formulate a weight $k$ analogue, which we now call $\cD(\cF_k)$, of the complex ``$[\cA \to \cW_Z$]'' of Zariski sheaves on $X_{\F_p}$ of \cite[Lem.\ A2.2]{coleman1994-barsotti}. One simply replaces the sheaves with weight $k$ versions, leaves the integrality conditions the same, and uses the analogous $z_x$-coordinate condition defining $(\cH_{k-2} \otimes \Omega)_Z(W_1)$ as in Definition \ref{defn: z_x series}. Namely, 
    \begin{itemize}
        \item One replaces the value sheaf $A$ of $\cA$ by $\cH_{k-2}$, keeping exactly the same integrality condition: integrality on $X^\ord$.
        \item One replaces the value sheaf $\Omega$ of $\cW_Z$ by $\cH_{k-2} \otimes \Omega$, keeping exactly the analogous integrality and $z_x$-series conditions: integrality on $X^\ord$ and membership in $(\cH_{k-2} \otimes \Omega)_{z_x}$ for all $x \in SS$. 
        \item The differential of $\cD(\cF_k)$ is given by $\nabla_{k-2}$.
    \end{itemize}
    \item One proves the weight $k$ generalization of \cite[Lem.\ A2.2]{coleman1994-barsotti} in exactly the same way, concluding that there exists a natural isomorphism $H^1((W_1,Z),\cF_k) \isoto \bH^1(X_{\F_p}, \cD(\cF_k))$. As Coleman points out, all that we have to do is show that the coherent cohomology module $H^1(X_{\F_p}, \cH_{k-2})$ vanishes. The proof has nothing to do with the particular sheaf $\cH_{k-2}$ and simply relies on the triviality of coherent cohomology on the \emph{affine} space $X \smallsetminus \widetilde{SS}$. 
    \item Following \cite[p.\ 148]{coleman1994-barsotti}, there is a map of complexes
    \[
    \cF_k(\log \widetilde{SS}) \to \cD(\cF_k) \qquad \text{over } \hat X,
    \]
    which makes sense because the underlying topological space of $\hat X$ is $X_{\F_p}$. Applying the inverse isomorphism $\bH^1(X_{\F_p}, \cD(\cF_k)) \isoto H^1((W_1,Z),\cF_k)$ and formal GAGA, one obtains a map $h: \bH^1_\dR(X, \cF_k(\log \widetilde{SS})) \to H^1((W_1,Z),\cF_k)$ and considers the composite 
    \begin{equation}
        \label{eq: alg to an}
        \bH^1_\dR(X, \cF_k(\log \widetilde{SS})) \buildrel{h}\over\to H^1((W_1,Z),\cF_k) \to H^1_\dR(X_{\Q_p}^\mathrm{an}, \cF_k(\log \widetilde{SS})). 
    \end{equation}
    \item By applying the algebraic-analytic comparison isomorphism of Lemma \ref{lem: dR equivalence} and the de Rham-crystalline isomorphism of Proposition \ref{prop: cris dR}, all that is left to do is to show that 
    \begin{enumerate}
        \item $h$ is surjective; because we know the kernel of \eqref{eq: alg to an} is torsion, this is already good enough to get \textit{some} choice of the three isomorphisms with crystalline cohomology (modulo torsion), as claimed in Theorem \ref{thm: p-integral dR cris}.
        \item the composition $H^1(\cC(\cF_k)) \rsurj H^1_\dR((W_1,Z), \cF_k) \to H^1_\dR(X_{\Q_p}^\mathrm{an}, \cF_k(\log \widetilde{SS}))$ is independent of the choice of $Z$.
    \end{enumerate} 
    \end{itemize}
    
    The argument for (1) in the case $k=2$ appears on \cite[p.\ 148]{coleman1994-barsotti} and relies entirely on the $z_x$-series condition defining $\Omega_{z_x}$ as in \eqref{eq: omega z form}. The same argument works in weight $k$ by using our $z_x$-series of weight $k$; again, our $z_x$ series and their $p$-integrality properties are defined according to the trivialization \eqref{eq: trivializations}, which allows for the notion of $(\cH_{k-2} \otimes \Omega)_{z_x}$ appearing in Definition \ref{defn: z_x series}. 

    Likewise, the argument for (2) in the case $k=2$ appears as the proof of \cite[Lem.\ A3.1]{coleman1994-barsotti}, justifying that a $Z$-based construction $s_Z : H^1_\dR(X_{\Z_p}^\mathrm{an}, \cF_k) \to H^1(\cC(\cF_k))$ of [p.\ 136, \textit{loc.\ cit.}]\ is a section of \eqref{eq: alg to an} that is independent of $Z$ up to torsion and also induces the same isomorphism as \eqref{eq: alg to an} with crystalline cohomology (modulo torsion). (In addition, Coleman shows that $s_Z$ is also independent of $Z$ on torsion, provided that it is restricted to $H^1_\dR(X_{\Z_p}^\mathrm{an}, \cF_k)^0$.) Coleman's proof relies entirely on arguments using $z_x$-series computations; the next steps in our proof will present more details of Coleman's arguments for use in our application. Again using Definition \ref{defn: z_x series}, which generalizes to weight $k$ the weight 2 construction of \eqref{eq: omega z form}, the same argument works. 
\end{proof}

\subsubsection{More about integral and $Z$-coordinate structures}

In preparation for our application of the above theory to characterize the $\pi_k$ map, there is a need for more information about the integral structures and $Z = \{z_x\}$-coordinate structures above. We will begin by following Coleman's exposition \cite[p.\ 136]{coleman1994-barsotti} of an implication of the final part, labeled (2) immediately above, of the proof. 

The upshot of this argument is that $Z$ determines a section $s_Z$ of the natural map of complexes $\rho : \cC(\cF_k) \to \cF_k^\mathrm{an}$ defined image of this map (where $\cF_k^\mathrm{an}$ was defined in Definition \ref{defn: dR cohomology}). The definition of $\rho^1$ is 
\[
\rho^1 : \cC(\cF_k)^1 = (\cH_{k-2} \otimes \Omega)(W_1)_{X^\ord} \oplus \frac{(\cH_{k-2})_\eta}{(\cH_{k-2})_E} \ni (\beta, \alpha) \mapsto \beta \in (\cH_{k-2} \otimes \Omega)(W_1) = (\cF_k^\mathrm{an})^1. 
\]
As we do this, we also extend Coleman's definition of $s_Z$ \cite[p.\ 136]{coleman1994-barsotti} by defining $\int_{E,Z} : (\cH_{k-2} \otimes \Omega)_\eta^{\mathrm{res}=0} \to (\cH_{k-2})_\eta$, which is valued in $(\cH_{k-2})_\eta$ instead of $(\cH_{k-2})_\eta/(\cH_{k-2})_E$. 

\begin{defn}
    \label{defn: s_Z and int}
    We define a map of complexes 
    \[
    s_Z : \mathrm{image}(\rho) \to \cC(\cF_k)
    \]
    that is a section of $\rho$, as follows. In degrees $0$ and $2$ there is a natural choice, since $\rho^0$ is an isomorphism onto its image and $(\cF_k^\mathrm{an})^2 = 0$. In degree 1, let $\beta \in \cC(\cF_k)^1$ be in the image of $\rho^1$. Using the $z_x$-series expansions $\beta\vert_{E_x} = \sum_{n \in \Z} b_{x,n} z_x^n \frac{dz_x}{z_x} \in (\cH_{k-2} \otimes \Omega)_{\eta_x}$ of Definition \ref{defn: z_x series}, let $s_Z^1(\beta) := (\beta, \alpha')$, where
    \[
    \alpha' := \left(\sum_{n=-\infty}^{-1} \frac{b_{x,n}}{n}z_x^n \right)_{x \in SS} \in \frac{(\cH_{k-2})_\eta}{(\cH_{k-2})_E}. 
    \]
\end{defn}

One really nice aspect of Coleman's argument for (2), which generalizes straightforwardly to weight $k$, is that the 1-cocycle condition for $(\beta,\alpha) \in Z^1(\cC(\cF_k))$ implies that $\beta \in (\cH_{k-2}\otimes \Omega)_Z(W_1)$. The key statement implying this is Lemma \ref{lem: Omega z elements}: the 1-cocycle condition is exactly the condition necessary to make $\beta$ fit the criteria for membership in $(\cH_{k-2}\otimes \Omega)_Z(W_1)$ according to the lemma. Thus, the form of $s_Z^1(\beta)$ can be controlled, from which Coleman deduces that the section 
\[
s_Z^* : H^1_\mathrm{par}(X_{\Z_p}^\mathrm{an}, \cF_k) \to H^1(\cC(\cF_k))
\]
of the surjection $H^1(\cC(\cF_k)) \rsurj H^1_\mathrm{par}(X_{\Z_p}^\mathrm{an}, \cF_k) \subset H^1_\mathrm{par}(X^\mathrm{an}, \cF_k)$ (see Definition \ref{defn: integral analytic dR}) is independent of $Z$. 

With the maps $s_Z$ in hand, we prepare to show that the $\KS_k$ map presenting parabolic de Rham cohomology in Proposition \ref{prop: Coleman pres dR} respects integral structures, i.e.\ the map $S_k^{\dagger,\crit} \to H^1_\mathrm{par}(X_{\Q_p}^\mathrm{an},\cF_k)$ has image contained in $H^1_\mathrm{par}(X_{\Z_p}^\mathrm{an},\cF_k)$. 

\begin{lem}
    \label{lem: coord integrality}
    Let $X^\mathrm{an} \supset W \supset X^{\ord}$ be an open analytic subspace with underlying affinoid $X^\ord$. Under the ``canonical, but not functorial'' splitting 
    \[
    \cH_{k-2} \cong \omega^{2-k} \oplus \omega^{(2-k)+2} \oplus \dotsm \oplus \omega^{k-2}, 
    \]
    the $X^\ord$-integrality property of elements of $\cH_{k-2}(W)$ is characterized by $X^\ord$-integrality with respect to the coordinates $\omega^i(W)$. The same result holds under the splitting and Kodiara--Spencer isomorphism
    \[
    \cH_{k-2} \otimes \Omega \cong \bigoplus_{i=0}^{k-2} (\omega^{2-k+2i} \otimes \Omega) \buildrel{\KS_k}\over\cong \bigoplus_{i=0}^{k-2} \omega^{4-k+2i}. 
    \]
\end{lem}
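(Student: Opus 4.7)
The plan is to identify the ``canonical, but not functorial'' splitting with the unit root Frobenius decomposition of $\cH$ over $X^\ord/\Z_p$ and to exploit its integrality. By classical Serre--Tate/Dwork theory for the modular curve with $p \geq 5$ and $p \nmid N$, the canonical subgroup supplies a canonical integral lift of Frobenius on $X^\ord$, and the associated slope decomposition of $(\cH, \varphi)$ integrally splits the Hodge sequence as $\cH\vert_{X^\ord} = \omega \oplus U$, where $U \subset \cH$ is the unit root subbundle (the slope-zero Frobenius eigenbundle, or equivalently $\bigcap_n \varphi^n(\cH)$). The composition $U \rinj \cH \rsurj \cH/\omega \cong \omega^{-1}$ is then an integral isomorphism, so the splitting takes the form $\omega \oplus \omega^{-1}$ integrally on $X^\ord$. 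Taking $(k-2)$nd symmetric powers preserves integral direct sums, yielding an integral decomposition
\[
\cH_{k-2}\vert_{X^\ord} \cong \bigoplus_{i=0}^{k-2} \omega^{2-k+2i}
\]
with associated integral projectors onto each summand. Given $\sigma \in \cH_{k-2}(W)$ with components $\sigma_i \in \omega^{2-k+2i}(W)$ coming from the analytically extended splitting on $W$, the integrality of $\sigma\vert_{X^\ord}$ is equivalent to the integrality of each $\sigma_i\vert_{X^\ord}$: one direction holds since each $\omega^{2-k+2i}\vert_{X^\ord}$ is an integral direct summand, and the reverse follows by applying the integral projectors.

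For the second statement, recall that under our standing hypotheses $p \geq 5$ and $p \nmid N$, the Kodaira--Spencer morphism $\KS : \omega^{\otimes 2} \isoto \Omega^1_X(\log C)$ is an isomorphism of line bundles on $X/\Z_p$ (see \S\ref{subsec: coherent cohom}). Tensoring the first integral splitting with $\Omega$ and applying $\KS$ summand by summand therefore produces an integral decomposition
\[
\cH_{k-2} \otimes \Omega\vert_{X^\ord} \cong \bigoplus_{i=0}^{k-2} \omega^{2-k+2i} \otimes \Omega \cong \bigoplus_{i=0}^{k-2} \omega^{4-k+2i},
\]
and the integrality characterization follows from the same projector argument as above.

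The main point to verify is that the unit root splitting is integral over all of $X^\ord/\Z_p$, rather than only over a ramified cover such as the Igusa tower. This is handled by the canonical subgroup construction, which lifts Frobenius canonically to $X^\ord/\Z_p$ and makes the resulting slope decomposition of $(\cH, \varphi)$ integral on the whole ordinary locus.
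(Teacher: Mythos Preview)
You have misidentified which splitting is at stake. The ``canonical, but not functorial'' splitting the lemma refers to is the algebraic decomposition of $\cH$ that Katz writes down in \cite[\S A1.2]{katz1973} (essentially the Weierstrass basis $\{dx/y,\; x\,dx/y\}$), which Coleman \cite[\S3]{coleman1994} observes is defined over $X \otimes_\Z \Z[1/6N]$ and hence over $\Z_p$ once $p \geq 5$, $p \nmid N$. The paper's proof is just this citation: the splitting is integral over all of $X/\Z_p$, so in particular over $X^\ord$, and the coordinate-wise characterization follows immediately. The unit root decomposition is a \emph{different} splitting: it exists only over $X^\ord$, depends on Frobenius, and does not agree with Katz's algebraic splitting when both are available. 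So your opening sentence, identifying the two, is incorrect.

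That said, your argument does correctly produce \emph{some} integral splitting of $\cH\vert_{X^\ord}$, and the projector logic you give is exactly how one passes from ``integral splitting'' to ``coordinate-wise integrality.'' So you have proved an analogue of the lemma for the unit root splitting. The reason the specific choice matters downstream is that the proof of Proposition~\ref{prop: pi-prime surjective} decomposes sections of $\cH_{k-2} \otimes \Omega$ via \emph{this} coordinate system and then invokes Coleman's description \cite[p.~220]{coleman1996} of the $U$-operator on the associated graded; Coleman's formulas are written in Katz's splitting. If you want to keep the unit root approach, you would need to check separately that the later citations to Coleman go through for it (or, more simply, observe that the associated graded of the Hodge filtration is independent of which integral splitting you choose).
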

\begin{proof}
    This splitting, described in \cite[\S A1.2]{katz1973} at level $1$, exists over $X \otimes_\Z \Z[1/6N]$ (for our $X$, of level $\Gamma_1(N)$) according to \cite[\S3]{coleman1994}. Therefore the splitting exists over $\Z_p$ under our running assumptions $p \geq 5$, $p \nmid N$.
\end{proof}

\begin{lem}
    \label{lem: form integrality}
    Let $k, k', k'' \in \Z$ such that $k', k'' \geq 2$. Let $f \in M_{k,\Q_p}^\dagger = \omega^k(W_1)_{\Q_p}$. Then $f$ has $p$-integral $q$-series if and only if $f \in \omega^k(W_1)_{X^\ord}$ if and only if an element of $\cH_{k'-2}(W_1)$ or $(\cH_{k''-2} \otimes \Omega)(W_1)$ arising from $f$ according to either of the coordinate expansions of Lemma \ref{lem: coord integrality} lies in its $X^\ord$-integral submodule. In particular, if in addition $k \geq 2$, then the equivalent conditions above are also equivalent to $\KS_k(f) \in (\cH_{k-2} \otimes \Omega)(W_1)_{X^\ord}$. 
\end{lem}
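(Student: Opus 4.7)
The plan is to bootstrap from the $q$-expansion principle on the ordinary locus, which handles the first equivalence, and then use Lemma \ref{lem: coord integrality} to obtain the coordinate and Kodaira--Spencer equivalences.

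First, I would establish the equivalence between $p$-integrality of the $q$-series of $f$ and integrality of $f$ as a section of $\omega^k$ on the formal completion $X^\ord$. The $q$-expansion principle of \eqref{eq: q vs coh} already gives this over $X$ itself; the extension to $X^\ord$ uses that $X^\ord_{\F_p}$ is connected, is the complement in $X_{\F_p}$ of the finite supersingular divisor $SS$, and contains the cusp $\infty$, so that a section of $\omega^k$ on the formal completion along $X^\ord$ is integral iff its Tate-curve expansion at $\infty$ is. Since $f \in \omega^k(W_1)_{\Q_p}$, integrality of its restriction to $X^\ord$ is exactly the condition $f \in \omega^k(W_1)_{X^\ord}$.

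Next, for the equivalences with $\cH_{k'-2}(W_1)$ and $(\cH_{k''-2} \otimes \Omega)(W_1)$: by Lemma \ref{lem: coord integrality}, the direct-sum decomposition
\[
\cH_{k'-2}\vert_{W_1} \cong \bigoplus_{i=0}^{k'-2} \omega^{2i-(k'-2)}\vert_{W_1}
\]
is defined over $\Z_p$, so an element of $\cH_{k'-2}(W_1)$ is $X^\ord$-integral iff each of its components is. The element of $\cH_{k'-2}(W_1)$ ``arising from $f$'' is the one placed in the $\omega^k$-summand (for which we need $k \equiv k' \pmod{2}$ and $|k| \leq k'-2$, otherwise the statement is vacuous), so its integrality is exactly that of $f \in \omega^k(W_1)$. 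For $(\cH_{k''-2} \otimes \Omega)(W_1)$, tensoring the splitting of $\cH_{k''-2}$ with $\Omega^1$ and applying the $\Z_p$-integral Kodaira--Spencer isomorphism $\mathrm{KS} : \omega^2 \isoto \Omega^1(C)$ summand-by-summand yields the decomposition of Lemma \ref{lem: coord integrality}, and the same argument applies.

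Finally, for the $\KS_k$ statement assuming $k \geq 2$: the map $\mathrm{KS}_k = \mathrm{KS} \otimes \iota_{k-2}$ is a composition of an integral isomorphism $\mathrm{KS}$ (defined over $\Z_p$ under our standing assumptions $p \geq 5$ and $p \nmid N$, cf.\ \cite[\S3]{coleman1994}) with the inclusion $\iota_{k-2}: \omega^{k-2} \hookrightarrow \cH_{k-2}$, which by Lemma \ref{lem: coord integrality} realizes $\omega^{k-2}$ as a $\Z_p$-direct summand of $\cH_{k-2}$. Hence $\mathrm{KS}_k$ is an integral injection whose image is a direct summand (in particular, saturated), so that for $f \in \omega^k(W_1)_{\Q_p}$, one has $f \in \omega^k(W_1)_{X^\ord}$ iff $\mathrm{KS}_k(f) \in (\cH_{k-2} \otimes \Omega)(W_1)_{X^\ord}$. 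The main subtlety is just keeping straight that each of the three integral structures being compared (the $q$-series lattice, the coherent lattice on $X^\ord$, and the coordinate/summand lattice inside $\cH_{k-2}$) is defined over $\Z_p$, which is guaranteed by our hypotheses on $p$ and $N$ through Lemma \ref{lem: coord integrality}.
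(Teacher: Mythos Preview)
Your proof is correct and follows the same approach as the paper's, which is very terse: the paper simply says the first equivalence is a consequence of \eqref{eq: q vs coh} because $X^\ord$ is the unique minimal underlying affinoid of $W_1$, and that the remaining equivalences follow from Lemma \ref{lem: coord integrality}. Your expanded justification (connectedness of $X^\ord_{\F_p}$ containing the cusp for the $q$-expansion principle, and the $\Z_p$-integrality of the splitting together with the saturated image of $\KS_k$) is exactly what underlies these two sentences.
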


\begin{proof}
    The first equivalence is a consequence of \eqref{eq: q vs coh} because $X^\ord$ is the unique minimal underlying affinoid of $W_1$.  The rest of the equivalences follow from Lemma \ref{lem: coord integrality}. 
\end{proof}

\subsubsection{Application: Proof of Proposition \ref{prop: integral quotient map}}

We are finally prepared to deduce our main goal, Proposition \ref{prop: integral quotient map}, from these developments. First, we show that the map $S_k^{\dagger,\crit} \to H^1_\mathrm{par}(X_{\Q_p}, \cF_k)$ above is valued in $H^1_\mathrm{par}(X_{\Z_p}, \cF_k)$.

\begin{lem}
    \label{lem: S-integral implies dR-integral}
    Let $k \in \Z_{\geq 2}$ and let $f \in S_k^{\dagger,\crit}$. Then $\KS_k(f) \in (\cH_{k-2} \otimes \Omega)(W_1)$ lies in the image of $Z^1(\cC(\cF_k)) \to (\cF_k^\mathrm{an})^1$. 
\end{lem}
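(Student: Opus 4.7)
The plan is to find, for each $x\in SS$, a local section $\alpha_x \in (\cH_{k-2})_{\eta_x}$ (well-defined modulo $(\cH_{k-2})_{E_x}$) so that the pair $(\KS_k(f),\alpha)$, with $\alpha := (\alpha_x)_{x\in SS}$, is a $1$-cocycle in $\cC(\cF_k)$; its image in $(\cF_k^{\rm an})^1$ will then be $\KS_k(f)$. Concretely, this requires (i) $\KS_k(f)\in (\cH_{k-2}\otimes\Omega)(W_1)_{X^\ord}$, and (ii) $\KS_k(f)\vert_{E_x}\in (\cH_{k-2}\otimes\Omega)_{z_x}$ for every $x\in SS$, since the weight $k$ analogue of Lemma \ref{lem: Omega z elements} would then produce the required $\alpha_x$.

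For (i), I would combine the $p$-integrality of the $q$-expansion of $f$ from Lemma \ref{lem: F-integral implies integral} with Lemma \ref{lem: form integrality}, which converts $q$-integrality of a form into integrality of the associated section of $\cH_{k-2}\otimes\Omega$ on $X^\ord$. Restricting to a one-sided neighborhood of the ordinary end of each $E_x$ then bounds the $z_x$-series coefficients of $\KS_k(f)\vert_{E_x}=\sum_n b_{x,n}\,z_x^n\,\frac{dz_x}{z_x}$, where $b_{x,n}\in V_x\otimes_{W(\F_x)}\C_p$ with $V_x := \Sym^{k-2}H^1_\cris(B_x/W(\F_x))$ via the trivialization \eqref{eq: trivializations}.

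For (ii), the essential arithmetic input is the vanishing of the residue $b_{x,0}$ for every $x\in SS$. I would derive this from the observation that, because $f$ is both cuspidal and $U$-critical, Corollary \ref{cor: Eisenstein theta} together with Lemma \ref{lem: dR equivalence}(iii) places the image of $\KS_k(f)$ in $\Hanpar^\crit$; since the residue along an annulus depends only on the cohomology class, the parabolic condition forces $b_{x,0}=0$.

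The main obstacle is the weight $k$ version of Coleman's Lemma \ref{lem: Omega z elements}, i.e.\ the assertion that $p$-integrality at the ordinary end together with a vanishing residue suffices for membership in $(\cH_{k-2}\otimes\Omega)_{z_x}$. To handle this, I would invoke \eqref{eq: trivializations} on $E_x$, under which $\nabla_{k-2}$ becomes $d\otimes\mathrm{id}_{V_x}$, to reduce everything to Coleman's scalar argument from \cite[Lem.\ A1.2, \S A1]{coleman1994-barsotti} applied componentwise along a $W(\F_x)$-basis of $V_x$. This supplies the decomposition $\KS_k(f)\vert_{E_x}=\nu_x+\nabla_{k-2}(\alpha_x)$ with $\nu_x\in(\cH_{k-2}\otimes\Omega)_{E_x}$ and $\alpha_x\in(\cH_{k-2})_{\eta_x}$, completing the construction of the cocycle.
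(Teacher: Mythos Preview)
Your proposal is correct and takes essentially the same approach as the paper. The only differences are cosmetic: the paper cites \cite[\S7, Rem.\ 3]{coleman1996} directly for the vanishing of $b_{x,0}$ (criticalness alone, without your cohomological detour through Corollary~\ref{cor: Eisenstein theta}), and it packages the cocycle construction via the explicit section $s_Z^1$ of Definition~\ref{defn: s_Z and int} --- checking that the boundary consists only of positively indexed, hence integral, terms --- rather than first asserting membership in $(\cH_{k-2}\otimes\Omega)_{z_x}$ and then invoking Lemma~\ref{lem: Omega z elements}; but these are equivalent formulations of the same step.
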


\begin{proof}
    By Lemma \ref{lem: F-integral implies integral}, the lattice $S_k^{\dagger,\crit} \subset S_{k,\Q_p}^{\dagger,\crit}$ is contained in the lattice cut out by the condition that $q$-series are $p$-integral. Therefore, by Lemma \ref{lem: form integrality}, we know that $\KS_k(f) \in (\cH_{k-2} \otimes \Omega)(W_1)_{X^\ord}$. Hence $\KS_k(f)\vert_E \in (\cH_{k-2} \otimes \Omega)_\eta$. Because $f$ is critical, $\KS_k(f)$ has trivial residues at $SS$ \cite[\S7, Rem.\ 3]{coleman1996}. Defining $s^1_Z(\beta)$ as in Definition \ref{defn: s_Z and int}, we see $s^1_Z(\beta)$ is a 1-cocycle for $\cC(\cF_k)$ because its boundary (in $(\cH_{k-2} \otimes \Omega)_\eta/(\cH_{k-2} \otimes \Omega)_E$) consists only of $z_x$-series with positively indexed terms. These positively indexed terms have integral $z_x$-series coefficients due to the fact that $\KS_k(f)\vert_E \in (\cH_{k-2} \otimes \Omega)_\eta$, and thus this boundary vanishes. 
\end{proof}

\begin{rem}
    The argument for Lemma \ref{lem: F-integral implies integral} shows that the choice of $S_k^{\dagger,\crit}$ as those forms that are integral with respect to $U'$ instead of with respect to $U$ does not affect the image of $\pi_k$. It only possibly affects the extension lattice of this image by $M_k^\mathrm{ao}$, and, as discussed in Remark \ref{rem: possible difference in lattice}, there is no difference in the CM case described there. 
\end{rem}

Lemma \ref{lem: S-integral implies dR-integral} has provided a map we call $\pi'_k$,
\[
\pi'_k : S_k^{\dagger,\crit} \to H^1_\mathrm{par}(X_{\Z_p}^\mathrm{an}, \cF_k)^\crit
\]
sending $f \mapsto [\KS_k(f)]$. It remains to show that $\pi'_k$ is surjective.

\begin{prop}
    \label{prop: pi-prime surjective}
    Let $k \in \Z_{\geq 2}$. $\pi'_k : S_k^{\dagger,\crit} \to H^1_\mathrm{par}(X_{\Z_p}^\mathrm{an},\cF_k)^\crit$ is surjective. 
\end{prop}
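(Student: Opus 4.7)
The plan is to leverage Coleman's $p$-integral de Rham theory (Theorem \ref{thm: p-integral dR cris}) together with the Kodaira--Spencer structure to construct integral overconvergent representatives for integral cohomology classes. The overall strategy is a two-step reduction: first reducing to surjectivity onto the Hodge quotient using the Hodge filtration, then constructing explicit integral preimages.

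First, I will note that by Proposition \ref{prop: ao sublattice}, $S_k^{\mathrm{ao}} \subset S_k^{\dagger,\crit}$ is $\Z_p$-saturated, so $\pi'_k$ factors through an injection $\bar\pi'_k: S_k^{\dagger,\crit}/S_k^{\mathrm{ao}} \hookrightarrow H^1_\mathrm{par}(X_{\Z_p}^\mathrm{an},\cF_k)^\crit$ of finitely generated $\Z_p$-flat modules that is an isomorphism after $\otimes\Q_p$. I will then use the Hodge filtration to split the problem: the classical critical forms $S_k^\crit \subset S_k^{\dagger,\crit}$, saturated by Proposition \ref{prop: classical is saturated}, map via $\KS_k$ onto the integral Hodge subspace $\mathrm{Fil}^1 H^1_\mathrm{par}(X_{\Z_p}^\mathrm{an},\cF_k)^\crit$, because $S_k^\crit$ is naturally identified (by forgetting the $U'$-ordinary stabilization, and using Hida's classicality theorem) with $e(T_p)H^0(X,\omega^k(-C))$, which realizes the Hodge subspace via $\KS_k$ by Proposition \ref{prop: Hodge SES} and Corollary \ref{cor: integral Hodge SES}. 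This reduces the claim to surjectivity of the induced map on the Hodge quotient.

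For the Hodge quotient, I will start with an integral class, represent it by a cocycle $(\beta,\alpha) \in Z^1(\cC(\cF_k))$ with $\beta \in (\cH_{k-2} \otimes \Omega)(W_1)_{X^\ord}$, and apply Coleman's presentation (Proposition \ref{prop: Coleman pres dR}) over $\Q_p$ to find $f_0 \in S_{k,\Q_p}^{\dagger,\crit}$ and $h \in \cH_{k-2}(W_1)_{\Q_p}$ with $\KS_k(f_0) = \beta - \nabla_{k-2}(h)$. Using the $\Z_p$-splitting $\cH_{k-2} \cong \bigoplus_{i=0}^{k-2}\omega^{2i-(k-2)}$ of Lemma \ref{lem: coord integrality} and the triangular structure of $\nabla_{k-2}$ with respect to this splitting, I will inductively correct $\beta$ by integral exact forms $\nabla_{k-2}(h)$ with $h \in \cH_{k-2}(W_1)_{X^\ord}$ to bring it into the image of $\KS_k$, producing $f_0 \in \omega^k(W_1)_{X^\ord}$, which by Lemma \ref{lem: form integrality} has $p$-integral standard $q$-series. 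To upgrade to $f_0 \in S_k^{\dagger,\crit}$, I will use the $\bT[U']$-equivariance of $\pi'_k$ (inherited from Proposition \ref{prop: Coleman pres dR}) together with the identification of Theorem \ref{thm: p-integral dR cris}, under which $U'$ acts through the crystalline Frobenius $\varphi$ with slope $0$; this gives integrality of $a_1(T\cdot f_0)$ for $T \in \bT[U']$ by expressing it as a pairing on integral crystalline cohomology.

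The main obstacle will be the integral refinement in the second step --- namely, producing $h \in \cH_{k-2}(W_1)_{X^\ord}$ (as opposed to merely over $\Q_p$) that clears out the lower-degree components of $\beta$. This is the analogue, with nontrivial coefficient system, of Coleman's construction in the proof of Theorem \ref{thm: p-integral dR cris}, building on \cite[\S A]{coleman1994-barsotti}. The triangular structure of $\nabla_{k-2}$ in the splitting allows one to cancel components one graded piece at a time, each step requiring only that the diagonal of $\nabla_{k-2}$ (essentially an Atkin--Serre type operator) preserves $X^\ord$-integrality. Secondary technical points will include the compatibility of the critical idempotent $e(U')$ with the integral structure (following from its crystalline origin) and the verification that the $\alpha$-component of the cocycle $(\beta,\alpha)$ can be accommodated by Coleman's section $s_Z$ of Definition \ref{defn: s_Z and int}.
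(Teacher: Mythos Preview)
Your proposal has two genuine gaps.

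\textbf{The Hodge-filtration reduction is vacuous.} You claim that $\pi'_k$ carries $S_k^\crit$ onto $\Fil^1 \cap H^1_\mathrm{par}(X_{\Z_p}^\mathrm{an},\cF_k)^\crit$, reasoning via the identification $S_k^\crit \cong e(T_p)H^0(X,\omega^k(-C))$ given by inverse stabilization. But $\pi'_k(f_\beta) = [\KS_k(f_\beta)]$, \emph{not} $[\KS_k(i^{-1}(f_\beta))]$; these are different cohomology classes (see the computation \eqref{eq: find Hodge} and Corollary \ref{cor: Hodge wrt cris}). More fundamentally, the $U$-critical (i.e.\ $\varphi$-ordinary) summand of $\dR_k$ is complementary to the $\varphi$-critical summand, and the Hodge subspace projects isomorphically onto the latter via the stabilization map (Proposition \ref{prop: dR summary}). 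Hence $\Fil^1 \cap H^1_\mathrm{par}(X_{\Q_p}^\mathrm{an},\cF_k)^\crit = 0$, and since the $\Z_p$-lattice is torsion-free, the same holds integrally. Your ``reduction to the Hodge quotient'' therefore reduces nothing: the Hodge quotient of the critical part is the whole critical part.

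\textbf{The inductive correction fails integrally for $k \geq p+2$.} You propose to clear the lower graded pieces of $\beta$ by subtracting $\nabla_{k-2}(h)$ with $h \in \cH_{k-2}(W_1)_{X^\ord}$, using that the diagonal of $\nabla_{k-2}$ ``preserves $X^\ord$-integrality.'' But you need to \emph{invert} that diagonal, and on the $i$-th graded piece it equals $i \cdot \KS_{2-k+2i}$ (see the discussion preceding Lemma \ref{lem: torsion complement}, drawn from \cite[Eq.\ (9)]{coleman1994}). When $p \mid i$ this introduces $p$-denominators, so one only obtains $p^a(\beta + \KS_k(f)) \in \nabla_{k-2}(\Fil^1 \cH_{k-2}(W_1)_{X^\ord})$ with $a = \lfloor \log_p(k-2)\rfloor$ --- this is precisely Lemma \ref{lem: torsion complement}, not the $p^a$-free version you assert.

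The paper's proof accepts this $p^a$-defect and instead shows that the residual $1$-cocycle $(0,h)$, where $h = p^a(\alpha+\alpha') - g|_E$ is horizontal on the supersingular annuli, is $p$-power torsion in $\cC(\cF_k)^1$. The key step you are missing entirely is a boundedness argument near the supersingular ends of the annuli: since $\beta$ and $\KS_k(f)$ decompose (via Lemma \ref{lem: coord integrality}) into overconvergent forms with nonzero $U$-eigenvalues, Buzzard's analytic continuation \cite[Thm.\ 4.2]{buzzard2003} extends them across slightly larger annuli, forcing the anti-derivatives (and hence $h$) to be bounded. This step has no analogue in your outline.
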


First let us complete the deduction of Proposition \ref{prop: integral quotient map} from Proposition \ref{prop: pi-prime surjective}. 
\begin{proof}[{Proof of Proposition \ref{prop: integral quotient map}}]
    On one hand, we can consider the composition 
    \[
    S_k^{\dagger,\crit} \buildrel{\pi'_k}\over\rsurj H^1_\mathrm{par}(X_{\Z_p}^\mathrm{an}, \cF_k)^\crit \buildrel{\text{\ref{thm: p-integral dR cris}}}\over\to \frac{H^1_\mathrm{cris}(X/\Z_p, \cF_k)^\mathrm{crit}}{(\mathrm{tors})} \buildrel{\text{\ref{prop: cris dR}}}\over\to \frac{\bH^1_\mathrm{par}(X, \cF_k)^\crit}{(\mathrm{tors})} 
    \]
    using the isomorphisms of Theorem \ref{thm: p-integral dR cris} and Proposition \ref{prop: cris dR}. From there we can project to the Hodge quotient, 
    \[
    \frac{\bH^1_\mathrm{par}(X, \cF_k)^\crit}{(\mathrm{tors})} \to e(T_p)H^1(X, \omega^{2-k}).
    \]
    Because the $U$-critical summand of $e(T_p)H^1_\mathrm{cris}(X/\Z_p, \cF_k)/(\mathrm{tors})$ is the complement of the $U$-ordinary summand, and because the Hodge sub $e(T_p)H^0(X, \omega^k(-C))$ has $U$-ordinary projection given by the stabilization map, the compatibility of Serre duality maps under stabilization (Proposition \ref{prop: duality wt k}) implies that this projection is an isomorphism. We call the resulting surjection $\pi''_k : S_k^{\dagger,\crit} \rsurj e(T_p)H^1(X, \omega^{2-k})$. 

    On the other hand, reading off the definition of $\pi_k$ and recalling that the Poincar\'e and Serre duality maps are compatible in the sense of Proposition \ref{prop: Hodge SES}, Proposition \ref{prop: duality wt k} implies that the result of $\pi_k$, composed with the stabilization isomorphism $j_k : e(F)H^1_c(X^\ord, \omega^{2-k}) \isoto e(T_p)H^1(X, \omega^{2-k})$, yields the same result. That is, $j_k \circ \pi_k = \pi''_k$. Therefore $\pi_k$ is surjective. 
\end{proof}

The proof of Proposition \ref{prop: pi-prime surjective} relies on the following $\Z_p$-integral implication of the statement of Griffiths transversality appearing in \cite[Lem.\ 4.2]{coleman1996}. Namely, the natural map
\begin{equation}
    \label{eq: almost summands}
    \nabla_{k-2}(\Fil^1 \cH_{k-2}(W_1)_{X^\ord}) \oplus \KS_k(\omega^k(W_1)_{X^\ord}) \to (\cH_{k-2} \otimes \Omega)(W_1)_{X^\ord}
\end{equation}
is injective with $\Z_p$-torsion cokernel. In fact, based on the proof of \cite[Lem.\ 4.2]{coleman1996} and the calculation of $\nabla_{k-2}$ in \cite[Eq (9), p.\ 34]{coleman1994}, the exponent of the kernel is $p^a$ where $a = \lfloor \log_p (k-2) \rfloor$. Even more precisely, we see \textit{ibid.}\ that the map on graded pieces induced by $\nabla_{k-2}$ 
\[
\frac{\Fil^i(W_1)_{X^\ord}}{\Fil^{i+1}(W_1)_{X^\ord}} \to \frac{(\Fil^{i-1} \otimes \Omega)(W_1)_{X^\ord}}{(\Fil^i \otimes \Omega)(W_1)_{X^\ord}} \qquad 1 \leq i \leq k-2
\]
is $A(W_1)_{X^\ord}$-linear, equal to $i \cdot \KS_{2-k+2i}$. 

Here is the implication of the discussion above that we will need. 
\begin{lem}
    \label{lem: torsion complement}
    For any $\beta \in (\cH_{k-2} \otimes \Omega)(W_1)_{X^\ord}$, there exists $f \in M_k^\dagger := \omega^k(W_1)_{X^\ord}$ and $a \in \Z_{\geq 0}$ such that $p^a \cdot (\beta + \KS_k(f)) \in \nabla_{k-2}(\Fil^1 \cH_{k-2}(W_1)_{X^\ord})$.
\end{lem}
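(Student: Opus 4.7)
The plan is to reduce the problem to a triangular solve over $\Z_p$ using the integral splitting of $\cH_{k-2}$ furnished by Lemma \ref{lem: coord integrality}. Writing $\cH_{k-2} \cong \bigoplus_{j=0}^{k-2} \omega^{2-k+2j}$ and correspondingly $\beta = \sum_{j} \beta_j$ with $\beta_j \in (\omega^{2-k+2j} \otimes \Omega)(W_1)_{X^\ord}$, I will seek $g = \sum_{j=1}^{k-2} g_j \in \Fil^1 \cH_{k-2}(W_1)_{X^\ord}$, with $g_j \in \omega^{2-k+2j}$, together with $f \in \omega^k(W_1)_{X^\ord}$, so that $p^a(\beta + \KS_k(f)) = \nabla_{k-2}(g)$ for some $a$ depending only on $k$.

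Using the calculation of $\nabla_{k-2}$ on the splitting recalled in the paragraph preceding the lemma, the $i$-th component $c_i(g)$ of $\nabla_{k-2}(g)$ inside $(\omega^{2-k+2i} \otimes \Omega)(W_1)_{X^\ord}$ takes the shape $c_i(g) = \phi_{i+1}(g_{i+1}) + \sum_{j \le i} \delta_i^j(g_j)$ for $0 \le i \le k-3$, and $c_{k-2}(g) = \sum_{j=1}^{k-2} \delta_{k-2}^j(g_j)$, where $\phi_{i+1}$ is $(i+1)$ times an integral Kodaira--Spencer isomorphism and the $\delta_i^j$ are integral correction maps accounting for the non-horizontality of the splitting. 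Equating the first $k-2$ components to $p^a \beta_0, \ldots, p^a \beta_{k-3}$ yields a triangular system that I solve inductively via $g_{i+1} = \phi_{i+1}^{-1}\bigl(p^a \beta_i - \sum_{j \le i} \delta_i^j(g_j)\bigr)$, introducing a $1/(i+1)$ denominator at each step. Because $p \ge 5$ and $i+1 \le k-2$, the prime factors of $i+1$ different from $p$ are units in $\Z_p$, so only the $p$-part $v_p(i+1)$ can cause trouble; tracking accumulation through the induction shows that any $a \ge v_p((k-2)!)$ absorbs all denominators and keeps each $g_j$ integral on $X^\ord$.

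Finally, writing $g_j = p^a G_j$ with each $G_j$ integral on $X^\ord$, the residual component satisfies $c_{k-2}(g) = p^a \sum_{j} \delta_{k-2}^j(G_j)$, so $c_{k-2}(g)/p^a$ is integral on $X^\ord$. Because $\KS_k \colon \omega^k \to \omega^{k-2} \otimes \Omega$ is an integral isomorphism under Lemma \ref{lem: coord integrality}, one can define $f := \KS_k^{-1}\bigl(c_{k-2}(g)/p^a - \beta_{k-2}\bigr) \in \omega^k(W_1)_{X^\ord}$, and by construction $\nabla_{k-2}(g) = p^a(\beta + \KS_k(f))$, as required. The main technical obstacle is controlling the accumulation of $p$-adic denominators as the triangular system is unwound; this is handled by the $v_p((k-2)!)$ bound, and the standing hypothesis $p \ge 5 \nmid N$ is essential because it forces every non-$p$ prime factor of $1, 2, \ldots, k-2$ to be a unit in $\Z_p$.
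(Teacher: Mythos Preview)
Your triangular-solve approach in the Katz splitting is exactly the strategy the paper has in mind (it invokes Coleman's explicit computation of $\nabla_{k-2}$ on graded pieces and asserts the lemma as a consequence). But there is a genuine gap in your final paragraph that breaks the argument.

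You correctly show that taking $a \ge v_p((k-2)!)$ makes each $g_j$ integral on $X^\ord$: the recursion gives $v_p(g_j)\ge a - v_p(j!)\ge 0$. The next sentence, however, asserts the strictly stronger claim that $g_j = p^a G_j$ with each $G_j$ integral. Factoring $p^a$ out of the recursion yields $G_{i+1} = \tfrac{1}{i+1}\KS^{-1}\bigl(\beta_i - \sum_{j\le i}\delta_i^j(G_j)\bigr)$, so the $G_j$ carry exactly the $p$-denominators that forced you to introduce $p^a$; one only gets $v_p(G_j)\ge -v_p(j!)$, which is negative once $j\ge p$. Hence $c_{k-2}(g)/p^a$ need not be integral, and your $f = \KS_k^{-1}\bigl(c_{k-2}(g)/p^a - \beta_{k-2}\bigr)$ need not lie in $\omega^k(W_1)_{X^\ord}$. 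Worse, enlarging $a$ cannot repair this: since $\KS_k(\omega^k)$ injects into $(\cH_{k-2}\otimes\Omega)/\nabla_{k-2}(\Fil^1)$ by the injectivity in \eqref{eq: almost summands}, the element $f$ with $p^a(\beta+\KS_k(f))\in\nabla_{k-2}(\Fil^1)$ is uniquely determined independently of $a$, and equals this same rational $G$-expression. Getting $f$ integral requires finer input about the off-diagonal pieces $\delta_i^j$ from Coleman's explicit formula \cite[Eq.~(9), p.~34]{coleman1994}, which is what underlies the paper's sharper exponent $\lfloor\log_p(k-2)\rfloor$; your generic accumulation bound $v_p((k-2)!)$ does not see this structure. (A minor aside: your closing remark that $p\ge 5$ ``forces every non-$p$ prime factor of $1,\ldots,k-2$ to be a unit in $\Z_p$'' is vacuous---any prime $\neq p$ is already a unit in $\Z_p$. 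The role of $p\ge 5$ here is that the integral Katz splitting of Lemma~\ref{lem: coord integrality} exists.)
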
 

The lemma implies that the maximal $\Z_p$-torsion free quotient of 
\[
\frac{(\cH_{k-2} \otimes \Omega)(W_1)_{X^\ord}}{\nabla_{k-2} \left((\cH_{k-2})(W_1)_{X^\ord}\right)}
\]
is spanned by $M_k^\dagger$. This is exactly the kind of conclusion we want to drive to in the following proof, where we must adapt it to study $H^1(\cC(\cF))$, at least its $U$-critical part. 

\begin{proof}[{Proof of Proposition \ref{prop: pi-prime surjective}}]
    In this proof, we let $U$ denote the action on sections of $\cH_{k-2}$ and $\cH_{k-2} \otimes \Omega$ denoted by $V_{k-2}$ in \cite[Prop.\ 4.1]{coleman1996}, which proves the $U$-equivariance of $\KS_k$. We will work with the $U$-critical part of these sections, noting that $\KS_k$ is $U$-equivariant by \textit{ibid.}, as are $\nabla_{k-2}$ and the filtration $\Fil^i \cH_{k-2}$ [\textit{loc.\ cit.}, Eqs (2.2), (3.1)]. We remark that $U$-critical subs are the same as $F$-ordinary summands (where $F$ is Frobenius) by [\textit{loc.\ cit.}, Eq (2.4)], so there exists the usual $F$-ordinary projector, which is well-behaved even on $\Z_p$-torsion modules. Thus $(-)^\crit$ can be thought of as $e(F)(-)$ in what follows. 
    
    Because $M_k^{\dagger,\crit}$ consists of $\Z_p$-valued $q$-series (Lemma \ref{lem: F-integral implies integral}), Lemmas \ref{lem: coord integrality} and \ref{lem: form integrality} and the $U$-compatibilities listed above imply that $\KS_k(M_k^{\dagger,\crit}) \cong (\omega^{k-2} \otimes \Omega)(W_1)_{X^\ord}^\crit \subset (\cH_{k-2} \otimes \Omega)(W_1)_{X^\ord}^\crit$. 
    
    Let $(\beta,\alpha) \in Z^1(\cC(\cF_k))$ such that $\beta \in (\cH_{k-2} \otimes \Omega)(W_1)_{X^\ord}^\crit$. By Lemma \ref{lem: torsion complement} and the listed $U$-equivariances above, there exists $f \in M_k^{\dagger,\crit}$ and $a \in \Z_{\geq 0}$ such that $p^a \cdot (\beta + \KS_k(f)) \in \nabla_{k-2}(\Fil^1 \cH_{k-2}(W_1)_{X^\ord}^\crit)$. Indeed, Lemma \ref{lem: torsion complement} implies that the complementary summand $\KS_k(M_k^\dagger)$ of $\Fil^1 \cH_{k-2}(W_1)_{X^\ord}$ spans the quotient of $H^1(\cC(\cF_k))$ by its $\Z_p$-torsion submodule; and this quotient is identified with $H^1_\dR(X_{\Z_p}^\mathrm{an}, \cF_k)$ by definition of the latter.

    Let $g \in (\Fil^1\cH_{k-2})_{X^\ord}^\crit$ such that $\nabla_{k-2}(g) = p^a(\beta + \KS_k(f))$. Using Definition \ref{defn: s_Z and int} and the existence of $s_Z^1(\KS_k(f))$ proved in Lemma \ref{lem: S-integral implies dR-integral}, we see that 
    \[
    p^a \cdot \left((\beta,\alpha) + s_Z^1(\KS_k(f))\right) \in Z^1(\cC(\cF_k)). 
    \]
    Writing $s_Z^1(\KS_k(f)) = (\KS_k(f), \alpha')$, one can subtract the boundary (in the complex $\cC(\cF_k)$) of $g \in (\cC(\cF_k))^0$ to get the cohomologous element 
    \[
    (0, p^a \cdot(\alpha + \alpha') - g\vert_E) \in Z^1(\cC(\cF_k)).
    \]
    Because $\nabla_{k-2}(p^a \cdot(\alpha + \alpha' - g\vert_E) = 0$, the trivialization \eqref{eq: trivializations} maps it to an element $h \in (\cH_{k-2})_\eta$. It will complete the proof to show that there exists $b \in \Z_{\geq 0}$ such that $p^b \cdot h \in (\cH_{k-2})_E$. This will follow from the functions $h_x \in A_{\eta_x} \otimes_{W(\F_x)} \Sym_{W(\F)}^{k-2} H^1_\cris(B_x/W(\F_x))$, giving the coordinates of $h\vert_{E_x}$ according to \eqref{eq: trivializations}, being bounded in the limit approaching the supersingular end of $E_x$. That is, we must show
    \[
    \lim_{w \to (\frac{p}{p+1})^-} \mathrm{min}_{v(z_x)=w} v(h_x(z_x)) > -\infty.  
    \]
    
    This boundedness will follow from the extension of $\beta$ and $\KS_k(f)$ to some annulus $E'_x$ containing $E_x$, with the proper containment occurring on the supersingular end of $E_x$. We want to find an oriented and coordinate-preserving immersion $E_x \rinj \cA(r',1)$ for some $r' < p^{-p/(p+1)}$ (where $\cA(r,s)$ is the standard annulus of Definition \ref{defn: coleman-barsotti}). 
    
    It will suffice to show that $\beta$ and $f$ have such extensions because this will imply the extended convergence of the anti-derivatives $\alpha, \alpha', g$. Under Lemma \ref{lem: coord integrality}, we can decompose $\beta$ in terms of its coordinates, which are integral on $X^\ord$. These coordinates can be interpreted as overconvergent modular forms lying in $\omega^i(W_1)_{X^\ord}$ for some $i \in \Z$. While $U$ does not preserve this coordinate decomposition of $(\cH_{k-2} \otimes \Omega)(W_1)$, the $U$-operator on overconvergent modular forms is defined as the $U$-action on the associated graded \cite[p.\ 220]{coleman1996}. Thus because $\beta$ is $U$-critical, these overconvergent eigenforms comprising the coordinates of $\beta$ have non-zero $U$-eigenvalues. Now we apply Buzzard's analytic continuation result \cite[Thm.\ 4.2]{buzzard2003}, which finds exactly such annuli intermediately between $X_0(p)^\mathrm{an} \supset \pi_1^{-1}(W_1)$ such that any overconvergent eigenform (pulled back along the isomorphism $\pi_1\vert_{\pi_1^{-1}(W_1)} : \pi_1^{-1}(W_1) \to W_1$) extends to this annulus as long as it has a non-zero $U$-eigenvalue. 
\end{proof}

\begin{rem}
    The only applications of the $U$-critical (i.e.\ $F$-ordinary) assumption in the proof above were that $s_Z^1(\KS_k(f))$ exists and that the $U$-eigenvalue is not zero. Hence it seems possible for the proof to work under weaker assumptions. 
\end{rem}

\subsection{Twist-ordinary forms}
\label{subsec: tord}

Next we aim to $\Lambda$-adically interpolate the classical \emph{critical} forms $M_k^\crit$ and cusp forms $S_k^\crit$, appearing as a saturated $\Z_p$-submodules of $M_k^{\dagger,\crit}$ and $S_k^{\dagger,\crit}$, respectively, in Proposition \ref{prop: classical is saturated}. To do this we use Hida theory. The idea is to change the action of $U=U_p \in \bT_{\Gamma_0(p)}$ on $M_k^\crit$ by letting it act as $U'$ instead, so that the new $U_p$-action has slope zero; we call $M_k^\crit$ with this new $\bT_{\Gamma_0(p)}$-module structure $M_k^\tord$ for ``twist-ordinary.'' 

\begin{defn}
\label{defn: tord forms}
Let $M_k^\tord$ denote $M_k^\crit$, the difference being that $M_k^\crit$ is considered to be a $\bT[U]$-module while $M_k^\tord$ is considered to be a $\bT[U']$-module with $U'$-action as specified in Definition \ref{defn: avatar}. Likewise, write $S_k^\tord$ for $S_k^\crit$ with its $\bT[U']$-action. Then, define $\bT_k^\tord := \bT[U'](M_k^\tord)$ and $\bT_k^{\tord,\circ} := \bT[U'](S_k^\tord)$. We have the usual $\Z_p$-perfect pairings
\[
\lr{} : \bT_k^\tord \times M_k^\tord \to \Z_p \text{ and } 
\bT_k^{\tord,\circ} \times S_k^\tord \to \Z_p,
\quad \lr{T,f} = a_1(T \cdot f) 
\]
and the corresponding alternate $q$-series $f = \sum_{n \geq 1} \lr{T_n, f} q^n$. We write 
\begin{equation}
    \label{eq: zeta}
    \zeta_k : M_k^\tord \rinj M_k^{\dagger,\crit}, \quad S_k^\tord \rinj S_k^{\dagger,\crit}
\end{equation}
for the $\bT[U']$-equivariant inclusions, which are $\Z_p$-saturated by Proposition \ref{prop: classical is saturated}. Our convention is to think of $M_k^\tord, M_k^{\dagger,\crit}$, $S_k^\tord, S_k^{\dagger,\crit}$ primarily as $\bT[U']$-modules while thinking of $M_k^\ord, M_k^\crit$, $S_k^\ord, S_k^\crit$ as $\bT[U]$-modules; see \eqref{eq: identifications} for a concise summary of some of the relations between these. 
\end{defn}

\begin{rem}
    \label{rem: why tord}
    One might ask why we bother to refer to these classical forms as ``twist-ordinary'' when the term ``critical'' makes perfectly good sense. One benefit is that it will keep terminology reasonably consistent for interpolations. For example, we can then use ``critical $\Lambda$-adic forms'' to refer to the interpolation of ``critical \emph{overconvergent} forms $M_k^{\dagger,\crit}$ over weights $k$.'' Another reason is that the interpolation properties of ordinary forms (i.e.\ Hida theory) are very well known, and these twist-ordinary forms have similar properties. Finally, when we discuss classical forms with non-trivial level at $p$ in future work, the need for the ``twist'' will become more apparent: a twist by an Atkin--Lehner operator on modular forms, which invokes a corresponding twist of the Galois representation. 
\end{rem}

\begin{rem}
When formulating the twist-ordinary interpolation, we will skip weight $k=2$ in favor of weights $k \in \Z_{\geq 3}$ to keep things as simple as possible, just like setting up the interpolation in Hida theory. The rationales in the two cases are related, arising from Steinberg (at $p$) forms. See e.g.\ \cite[Cor.\ 4.5(4)]{BP2022} for usual Hida theory. For the twist-ordinary interpolation, note that Steinberg forms of weight $2$ are ordinary for both $U_p$ and $U_p^*$ (see e.g.\ \cite[Note, p.\ 469]{gross1990}). 
\end{rem}

\begin{prop}
    \label{prop: tord is ord}
    Let $k \in \Z_{\geq 3}$. There are isomorphisms $\bT_k^\ord \isoto \bT_k^\tord$, $\bT_k^{\ord,\circ} \isoto \bT_k^{\tord,\circ}$, each of which is equivariant with respect to the $\bT[\,]$-algebra map $\bT[U] \to \bT[U'], U \mapsto U'$. There is an isomorphism of modules lying over $\bT_k^\ord \isoto \bT_k^\tord$, $\iota_k : M_k^\ord \isoto M_k^\tord$, $S_k^\ord \isoto S_k^\tord$, under which the $q$-series of $M_k^\ord$ maps to the alternate $q$-series of $M_k^\tord$ by 
    \[ 
    \sum_{n \geq 0} a_n q^n \mapsto \sum_{n \geq 1} a_n q^n. 
    \]
\end{prop}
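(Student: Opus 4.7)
The plan is to construct $\iota_k$ first over $\C_p$ via the Atkin decomposition into Hecke eigencomponents, verify the $q$-series identity directly on eigenforms, and descend to $\Z_p$ by matching the two integrality conditions.

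First, since $k \geq 3$ every $\Gamma_0(p)$-new eigenform has $U_p$-slope strictly between $0$ and $k-1$, so neither $e(U)$ nor $e(U')$ captures any $p$-new contribution, and both $M_{k,\C_p}^\ord$ and $M_{k,\C_p}^\tord$ arise entirely from $p$-stabilizations of forms at some tame level $N_0 \mid N$. For each $T_p$-ordinary Hecke eigensystem $F$ at level $\Gamma_1(N_0)$ with character $\chi_F$ and $T_p$-eigenvalue $a_p$, let $\alpha, \beta$ be the roots of $X^2 - a_p X + \chi_F(p)p^{k-1}$, ordered so that $\alpha$ is the unit root. The two $p$-stabilizations
\[
F_\alpha(q) := F(q) - \beta F(q^p), \qquad F_\beta(q) := F(q) - \alpha F(q^p)
\]
have $U_p$-eigenvalues $\alpha$ and $\beta$ respectively, and correspondingly $F_\beta$ has $U'$-eigenvalue $\chi_F(p)p^{k-1}/\beta = \alpha$. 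I would define $\iota_k^{\C_p}$ by sending $F_\alpha \mapsto F_\beta$ on each eigencomponent and extending over oldform copies at level $N$. By construction, $\bT[\,]$ acts identically on $F_\alpha$ and $F_\beta$ (via the eigenvalues of $F$), while $U$ on $F_\alpha$ equals $U'$ on $F_\beta$; hence $\iota_k^{\C_p}$ intertwines the $\bT[U]$-action and the $\bT[U']$-action along the identification $U \mapsto U'$.

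Next I would verify the $q$-series formula on eigenforms. Writing $F = \sum b_n q^n$, the multiplicativity $b_{p^i m} = b_{p^i} b_m$ for $(m,p) = 1$ together with $b_{p^i} = (\alpha^{i+1} - \beta^{i+1})/(\alpha - \beta)$ gives, for $n = p^i m$ with $(m,p) = 1$,
\[
a_n(F_\alpha) \;=\; b_n - \beta\, b_{n/p} \;=\; \alpha^i b_m \;=\; \lr{T_n, F_\beta},
\]
the last equality using $(U')^i F_\beta = \alpha^i F_\beta$ and Definition~\ref{defn: alternate q-series}. By linearity this extends to $\lr{T_n, \iota_k^{\C_p}(f)} = a_n(f)$ for all $n \geq 1$ and $f \in M_{k,\C_p}^\ord$, which is the claimed $q$-series identity.

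Finally I would descend to integral coefficients. The identity is $\Q_p$-linear in $f$ and, by injectivity of the alternate $q$-series (Proposition~\ref{prop: pairing wt k}), characterizes $\iota_k^{\C_p}$ uniquely; hence $\iota_k^{\C_p}$ descends to an isomorphism $\iota_k^{\Q_p}: M_{k,\Q_p}^\ord \isoto M_{k,\Q_p}^\tord$. The integrality defining $M_k^\ord$ (namely $a_n(f) \in \Z_p$ for all $n \geq 1$, via $\bT[U]$-duality coming from Theorem~\ref{thm: BP}) is exchanged by this identity with the integrality $\lr{T_n, \iota_k(f)} \in \Z_p$ defining $M_k^\tord$ (Proposition~\ref{prop: pairing wt k}), so $\iota_k$ preserves lattices; the induced isomorphism $\bT_k^\ord \isoto \bT_k^\tord$ then follows automatically from Hecke-equivariance. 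The cuspidal statement is immediate since $\iota_k$ preserves cuspidality, a $\bT$-stable condition. The main obstacle is this final integral descent: matching the two $\Z_p$-lattices cut out by $\bT[U]$- versus $\bT[U']$-duality. This is precisely what the eigenform-level Newton identity above, together with its linear extension, achieves.
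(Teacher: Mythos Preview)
Your approach is correct and shares the same core idea as the paper's: both rely on the fact that for $k \geq 3$ the $p$-new forms have $U_p$-slope strictly between $0$ and $k-1$, so both $M_k^\ord$ and $M_k^\tord$ consist entirely of stabilizations, and the map swaps the ordinary and critical ones. The paper's proof is more direct: it defines $\iota_k^{-1}$ as the composition of the inverse critical stabilization $M_k^\crit \isoto e(T_p) H^0(X, \omega^k)$ with the ordinary stabilization $e(T_p) H^0(X, \omega^k) \isoto M_k^\ord$, both being isomorphisms by Ohta's argument. Since these maps are $\bT[\,]$-linear, preserve $a_1$, and swap $U$ with $U'$, the $q$-series identity and the Hecke algebra isomorphism follow without any eigenbasis decomposition. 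Your eigenform-by-eigenform construction recovers the same map, but the phrase ``extending over oldform copies at level $N$'' hides exactly this linearity step: your Newton identity uses the multiplicativity $b_{p^i m} = b_{p^i} b_m$, valid only for normalized newforms, so handling oldform copies $F_\alpha(q^d)$ (which are not $U_\ell$-eigenforms for $\ell \mid d$) still requires knowing that the composition of stabilization maps is $\bT[\,]$-linear---at which point the explicit computation becomes unnecessary.
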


\begin{proof}
    The isomorphism $M_k^\tord \isoto M_k^\ord$ is the composition of the inverse $U_p$-critical stabilization map $M_k^\crit \isoto e(T_p)H^0(X,\omega^k)$ followed by ordinary $U_p$-stabilization map $e(T_p)H^0(X,\omega^k) \isoto M_k^\ord$. The argument for Proposition \ref{prop: duality wt k} from \cite[Prop.\ 1.3.2]{ohta2005}, proving that ordinary stabilization is an isomorphism, extends to the critical stabilization. The same maps comprise isomorphisms in the cuspidal case. The main input is that the new-at-$p$ (Steinberg) forms of level $\Gamma_1(N) \cap \Gamma_0(p)$ are neither $U_p$-critical nor $U_p$-ordinary for $k \geq 3$. The claim about $q$-series is immediate from the defintions. 
\end{proof}

With Proposition \ref{prop: tord is ord} in place, we immediately deduce the standard Hida-theoretic result that its Hecke algebra interpolates. 

\begin{defn}
    \label{defn: Lambda-adic tord}
    Let $\bT^\tord_\Lambda$ be the image of $\bT[U']$ in $\prod_{k \geq 3} \bT_k^\tord$. Likewise, let $\bT^{\tord,\circ}_\Lambda$ be the image of $\bT[U']_\Lambda$ in $\prod_{k \geq 3} \bT_k^{\tord,\circ}$. 
\end{defn}

\begin{prop}
    \label{prop: tord control}
    The $\Lambda$-algebras $\bT^\tord_\Lambda$, $\bT^{\tord,\circ}_\Lambda$ are finite and flat.     There are natural specialization isomorphisms for $k \in \Z_{\geq 3}$
    \[
    \bT_\Lambda^\tord \otimes_{\Lambda,\phi_k} \Z_p \isoto \bT_k^\tord, \quad 
    \bT_\Lambda^{\tord,\circ} \otimes_{\Lambda,\phi_k} \Z_p \isoto \bT_k^{\tord,\circ}. 
    \]
    The natural $\Lambda$-algebra map $\bT_\Lambda^\tord \to \bT_\Lambda^{\tord, \circ}$ is surjective. There are isomorphisms $\bT_\Lambda^\ord \isoto \bT_\Lambda^\tord$, $\bT_\Lambda^{\ord,\circ} \isoto \bT_\Lambda^{\tord,\circ}$ which are compatible with the $\bT[\,]$-algebra map $\bT[U]_\Lambda \to \bT[U']_\Lambda, U \mapsto U'$. 
\end{prop}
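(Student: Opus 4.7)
The plan is to deduce the proposition from classical Hida theory for $\bT_\Lambda^\ord$ and $\bT_\Lambda^{\ord,\circ}$ via the weight-by-weight isomorphisms already furnished by Proposition \ref{prop: tord is ord}. The one genuine piece of work is to arrange those isomorphisms into a $\Lambda$-algebra isomorphism $\bT_\Lambda^\ord \isoto \bT_\Lambda^\tord$ compatible with the definition of $\bT_\Lambda^\tord$ as the image of $\bT[U']_\Lambda$ in $\prod_{k \geq 3} \bT_k^\tord$; everything else follows formally.

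First I would set up the commutative square whose top row is $\bT[U]_\Lambda \to \prod_{k \geq 3} \bT_k^\ord$, whose bottom row is $\bT[U']_\Lambda \to \prod_{k \geq 3} \bT_k^\tord$, whose left vertical map is the $\Lambda$-algebra isomorphism $\bT[U]_\Lambda \isoto \bT[U']_\Lambda$ sending $U \mapsto U'$ (and fixing the remaining generators), and whose right vertical map is the product over $k$ of the isomorphisms $\bT_k^\ord \isoto \bT_k^\tord$ from Proposition \ref{prop: tord is ord}. The square commutes precisely because the weight-$k$ isomorphisms were arranged to intertwine $U$ and $U'$. By Hida's classical control theorem the image of the top row is $\bT_\Lambda^\ord$, while the image of the bottom row is $\bT_\Lambda^\tord$ by Definition \ref{defn: Lambda-adic tord}; since both vertical arrows are isomorphisms, their images correspond and yield the desired isomorphism $\bT_\Lambda^\ord \isoto \bT_\Lambda^\tord$. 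Running the same argument with $\bT_k^{\ord,\circ}, \bT_k^{\tord,\circ}$ in place of $\bT_k^\ord, \bT_k^\tord$ produces $\bT_\Lambda^{\ord,\circ} \isoto \bT_\Lambda^{\tord,\circ}$.

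Given these two isomorphisms, finiteness and flatness of $\bT_\Lambda^\tord$ and $\bT_\Lambda^{\tord,\circ}$ over $\Lambda$, together with the control isomorphisms $\bT_\Lambda^\tord \otimes_{\Lambda,\phi_k} \Z_p \isoto \bT_k^\tord$ and its cuspidal analogue for $k \in \Z_{\geq 3}$, transport directly from the corresponding classical Hida-theoretic results for $\bT_\Lambda^\ord$ and $\bT_\Lambda^{\ord,\circ}$; commutativity of the defining square after $\otimes_{\Lambda, \phi_k} \Z_p$ identifies the two control maps in each weight. Surjectivity of $\bT_\Lambda^\tord \to \bT_\Lambda^{\tord,\circ}$ follows either by transport from the well-known surjection $\bT_\Lambda^\ord \to \bT_\Lambda^{\ord,\circ}$, or directly from the observation that the surjection $\bT[U']_\Lambda \to \bT_\Lambda^{\tord,\circ}$ factors through $\bT_\Lambda^\tord$: any operator in $\bT[U']_\Lambda$ acting trivially on every $M_k^\tord$ acts trivially on the submodule $S_k^\tord \subset M_k^\tord$. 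I do not anticipate any real obstacle, since the only substantive mathematical input --- the weight-by-weight bijectivity of the critical $U_p$-stabilization and the ordinary $U_p$-stabilization, which requires that Steinberg-at-$p$ new forms in weight $k \geq 3$ be neither $U_p$-ordinary nor $U_p$-critical --- has already been resolved in Proposition \ref{prop: tord is ord}.
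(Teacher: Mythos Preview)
Your proposal is correct and matches the paper's approach. The paper does not give an explicit proof of this proposition; it simply remarks, just before stating the definition of $\bT_\Lambda^\tord$, that ``with Proposition \ref{prop: tord is ord} in place, we immediately deduce the standard Hida-theoretic result that its Hecke algebra interpolates,'' and your argument is precisely a spelling-out of that deduction via the commutative square identifying images in $\prod_{k\geq 3}\bT_k^\ord$ and $\prod_{k\geq 3}\bT_k^\tord$.
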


\begin{defn}
    \label{defn: Lambda-adic tord forms}
    Let $M_\Lambda^\tord := \Hom_\Lambda(\bT_\Lambda^\tord, \Lambda)$ and $S_\Lambda^\tord := \Hom_\Lambda(\bT_\Lambda^{\tord,\circ}, \Lambda)$, so that the natural $\Lambda$-perfect duality pairings are
    \[
    \lr{} : \bT_\Lambda^\tord \times M_\Lambda^\tord \to \Lambda, \quad 
    \bT_\Lambda^{\tord,\circ} \times S_\Lambda^\tord \to \Lambda. 
    \]
    Under these pairings, the surjection $\bT_\Lambda^\tord \rsurj \bT_\Lambda^{\tord,\circ}$ is dual to the inclusion $S_\Lambda^\tord \rinj M_\Lambda^\tord$. Their alternate $q$-series realizations $M_\Lambda^\tord \rinj q\Lambda\lb q\rb$ are given by $f \mapsto \sum_{n \geq 1} \lr{T_n,f} q^n$ (because $U_p$-critical forms have trivial constant terms). 
\end{defn}

Upon specialization $\phi_k : \Lambda \to \Z_p$, the pairings of Definition \ref{defn: Lambda-adic tord forms} recover the pairings in weight $k$ as in Definition \ref{defn: tord forms}. In particular, there are specialization isomorphisms for $k \in \Z_{\geq 3}$
\[
    M_\Lambda^\tord \otimes_{\Lambda,\phi_k} \Z_p \isoto M_k^\tord, \qquad S_\Lambda^\tord \otimes_{\Lambda,\phi_k} \Z_p \isoto S_k^\tord. 
\]
Also, the relation between $q$-series of $M_\Lambda^\ord$ and alternate $q$-series of $M_\Lambda^\tord$ is the straightforward generalization of the weight $k$ instance in Proposition \ref{prop: tord is ord}.

It will also be very useful to show that the Hecke algebras of $M_k^\tord$ and $\cH^{1,\ord}_k(-C)$ (resp.\ $S_k^\tord$ and $\cH^{1,\ord}_k$) are naturally identical. To do this, we will apply the comparison of \cite[\S1.4]{FK2012} between the additive and multiplicative models of modular curves. Recall from \S\ref{subsec: conventions} that these are denoted ``$X'$'' and ``$X$'' respectively. In the following proposition, we use the maps $w_{Np} : X'_0(p) \isoto X_0(p)$ and $v_N : X'_{\Z_p[\zeta_N]} \isoto X_{\Z_p[\zeta_N]}$ of \cite[\S1.4]{FK2012}. 

\begin{prop}
\label{prop: realize Ttord}
Let $k \in \Z_{\geq 3}$. There are $\bT[U']$-algebra isomorphisms 
\begin{gather*}
\bT_k^\tord \cong \bT[U'](e(U_p^*)H^0(X_0(p), \omega^k)) \cong \bT[U'](e(F)H^1_c(X^\ord,\omega^{2-k}(-C))) \\
\bT_k^{\tord,\circ} \cong \bT[U'](e(U_p^*)H^0(X_0(p), \omega^k(-C))) \cong \bT[U'](e(F)H^1_c(X^\ord,\omega^{2-k}))
\end{gather*}
arising from a composition of isomorphisms (described in the proof) of $\bT[U']$-modules (resp.\ $\bT[T_p]$-modules) of classical forms, where the the pair below each item denotes where the pair $(U', T_n)$ in $\bT[U']$ (resp.\ $(T_p, T_n)$ in $\bT[T_p]$) ($p \nmid n \in \Z_{\geq 1}$) is sent, given by 
\begin{gather*}
    \begin{matrix}
        M_k^\tord \\ (U', T_n)
    \end{matrix} \buildrel{s^{-1}}\over\to   
    \begin{matrix}
        e(T_p^*)H^0(X, \omega^k) \\ (T_p^*\lr{p}_N, T_n)
    \end{matrix} \mathrel{\mathop{\to}_{v_N^{-1}/\Z_p[\zeta_N]}^\sim}
    \begin{matrix}
        e(T_p^*)H^0(X', \omega^k) \\ (T_p^*\lr{p}_N, T_n)
    \end{matrix} \buildrel{s}\over\to
    \begin{matrix}
        e(U_p^*)H^0(X'_0(p), \omega^k) \\ (U_p^*\lr{p}_N, T_n)
    \end{matrix} \\ 
    \mathrel{\mathop{\to}_{w_{Np}}^\sim}
    \begin{matrix}
        M_k^\ord \\ (U_p\lr{p}_N^{-1}, T^*_n)
    \end{matrix} \isoto 
    \begin{matrix}
    M_k^{\dagger,\ord} \\ (U\lr{p}_N^{-1}, T^*_n)
\end{matrix} \isoto 
\begin{matrix}
    \Hom_{\Z_p}(\cH^{1,\ord}_k(-C), \Z_p). \\ (U', T_n)
\end{matrix}
\end{gather*}
where the final two maps are the classicality isomorphism followed by the Serre duality isomorphism \eqref{eq: SD ord} as adapted in Definition \ref{defn: Hecke lattice for H1ord}. The cuspidal version, which is ultimately an isomorphism (over $\Z_p[\zeta_N]$) from $S_k^\crit$ to $\cH^{1,\ord}_k$, admits the same maps as isomorphisms. 
\end{prop}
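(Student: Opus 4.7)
The plan is to verify each of the seven isomorphisms displayed in the composition and confirm the tracking of Hecke operator pairs through each step; since every module in the chain is finite and $\Z_p$-flat (or $\Z_p[\zeta_N]$-flat, over the intermediate models) with a faithful Hecke action, an isomorphism of modules carrying one Hecke pair to another yields an isomorphism of the Hecke algebras they generate. The cuspidal version requires only the parallel observation that each step preserves cuspidality, ending at the Serre duality pairing \eqref{eq: SD cusp ord} against $\cH^{1,\ord}_k$.

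First, the map $s^{-1}$ is the inverse of the dual critical stabilization $s : e(T_p^*)H^0(X, \omega^k) \to M_k^\tord \subset H^0(X_0(p), \omega^k)$. This is an isomorphism by the argument of Proposition \ref{prop: tord is ord} adapted to the $T_p^*$-version: for $k \geq 3$, the Steinberg newforms at $p$ are neither $U_p^*$-critical nor $U_p^*$-ordinary, so $e(U_p^{*,\mathrm{crit}})H^0(X_0(p),\omega^k)$ is obtained entirely by $p$-stabilizing $T_p^*$-ordinary old forms. The identification of the pair $(U', T_n)$ on $M_k^\tord$ with $(T_p^*\lr{p}_N, T_n)$ on the source follows from computing the $U'$-eigenvalue of the critical $U_p$-stabilization of a normalized eigenform $f$ on $X$: it equals $\chi_f(p)\alpha_f$ where $\alpha_f$ is the $T_p$-eigenvalue, which is also the eigenvalue of $T_p^*\lr{p}_N$ on $f$. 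The second map $v_N^{-1}$ is the $\Z_p[\zeta_N]$-isomorphism of modular curves recalled from \cite[\S1.4]{FK2012}, where it is shown to preserve standard and dual Hecke correspondences. The third map $s$ is the ordinary stabilization for $T_p^*$ at level $\Gamma_1(N) \cap \Gamma_0(p)$ on the additive model, an isomorphism by the standard Hida-theoretic argument, and is $\bT[\,]$-equivariant with $T_p^*\lr{p}_N \mapsto U_p^*\lr{p}_N$ by construction.

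Next, the Atkin--Lehner isomorphism $w_{Np}: X_0'(p) \isoto X_0(p)$ is shown in \cite[\S1.4]{FK2012} to intertwine the standard and dual Hecke correspondences (with the appropriate Nebentype twist); tracing the formulas \textit{ibid.}, the pair $(U_p^*\lr{p}_N, T_n)$ is sent to $(U_p\lr{p}_N^{-1}, T_n^*)$ on $M_k^\ord$. Hida's classicality theorem then identifies $M_k^\ord \isoto M_k^{\dagger,\ord}$ $\bT[U]$-equivariantly. Finally, the Serre-duality pairing of Definition \ref{defn: Hecke lattice for H1ord} and Proposition \ref{prop: duality wt k} gives $M_k^{\dagger,\ord} \isoto \Hom_{\Z_p}(\cH^{1,\ord}_k(-C), \Z_p)$, and the adjunction formula of Proposition \ref{prop: SD p-dual}, namely $\langle \lr{p}_N^{-1} U f, g\rangle_\mathrm{SD} = \langle f, U' g\rangle_\mathrm{SD}$, together with the standard Serre-duality intertwining $T_n \leftrightarrow T_n^*$ (Proposition \ref{prop: Serre duality}), transports $(U\lr{p}_N^{-1}, T_n^*)$ on $M_k^{\dagger,\ord}$ to $(U', T_n)$ on the dual module.

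The main obstacle is bookkeeping the diamond-operator twists in the Atkin--Lehner and dual-stabilization steps, since a misplaced $\lr{p}_N$ would throw off the identification. To handle this cleanly, I would reduce to a single-eigenform check: by $\Z_p$-flatness and faithfulness of the Hecke action, it suffices to verify the intertwining on a normalized eigenform $f$ of level $\Gamma_1(N)$ with character $\chi_f$, computing eigenvalues at each stage using the explicit $q$-expansion action of $w_{Np}$ from \cite[\S1.4]{FK2012} and the identity $\chi_f(p)^{-1}a_p(f)$ for the $T_p^*$-eigenvalue. The finite-level isomorphism then globalizes to the full Hecke-module isomorphism by linearity.
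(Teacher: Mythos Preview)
Your approach mirrors the paper's: walk through each map in the displayed composition, check it is an isomorphism of the relevant modules, and track how the Hecke operators transform, citing \cite[\S1.4]{FK2012} for $v_N$ and $w_{Np}$, Hida classicality for the penultimate step, and the Serre duality adjunction of Proposition~\ref{prop: interpolated Serre duality} for the last.

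There is one misstep in your justification of the first arrow. You argue that the label $(T_p^*\lr{p}_N, T_n)$ under $e(T_p^*)H^0(X,\omega^k)$ arises because the $U'$-eigenvalue of the critical stabilization $f_\beta$ equals $\chi_f(p)\alpha_f$ with $\alpha_f$ the $T_p$-eigenvalue of $f$. That identity is false: $U'$ acts on $f_\beta$ by the \emph{ordinary} Satake root $\alpha$, while $T_p^*\lr{p}_N = T_p$ acts on $f$ by $a_p(f) = \alpha + \beta$. The paper's argument is both simpler and different in kind: $T_p^*\lr{p}_N$ is literally the same operator as $T_p$ (since $T_p^* = T_p\lr{p}_N^{-1}$ by definition), so the relabeling is purely notational preparation for the later $w_{Np}$-step, and $e(T_p^*) = e(T_p)$ because $T_p$-ordinarity coincides with $T_p^*$-ordinarity. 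The inverse critical stabilization is only asserted to be $\bT[\,]$-equivariant for operators prime to $p$; the pair displayed under each module switches from tracking $(U',T_n)$ to tracking $(T_p,T_n)$ because the module type changes from a $\bT[U']$-module to a $\bT[T_p]$-module, not because $U'$ is being identified with $T_p$. Once this is corrected, the rest of your verification---the $v_N^{-1}$ and $w_{Np}$ intertwinings from \cite[\S1.4]{FK2012}, the $U_p^*$-ordinary stabilization, and the Serre duality step sending $(U\lr{p}_N^{-1}, T_n^*)$ to $(U',T_n)$ via Proposition~\ref{prop: interpolated Serre duality}---matches the paper's proof.
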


\begin{proof}
Here each map labeled $s^{\pm}$ is a stabilization map or its inverse. The first isomorphism is the inverse $U_p$-critical stabilization map. We substitute $T_p^*\lr{p}_N$ for its equal operator $T_p$ on the second term to prepare notation, and this makes sense because $T_p$-ordinarity is equivalent to $T^*_p$-ordinarity. After this, $v_N^{-1}$ as in \cite[\S1.4]{FK2012} is definable over $\Z_p[\zeta_N]$, but will still induce an isomorphism of the Hecke algebras. Then $s$ is the $U_p^*$-ordinary stabilization and $w_{Np}$ is defined over $\Z_p$. 

The final compatibility of Hecke actions follows from the fact that $U' = F$ on $\cH^{1,\ord}_k(-C)$ and that its adjoint operator under $\lr{}_\mathrm{SD}$ is $U\lr{p}_N^{-1}$ by Proposition \ref{prop: interpolated Serre duality}.  
\end{proof}

\begin{rem}
    We see here that an Aktin--Lehner operator \emph{away from $p$} is compatible with Hida-theoretic interpolation. See \cite{PW2024}, especially Appendix A, for another appearance of this idea. 
\end{rem}

\begin{cor}
    \label{cor: tord serre duality}
    The isomorphisms of Proposition \ref{prop: realize Ttord} result in perfect $\bT[U']$-compatible Serre duality pairings $\lr{}'_\mathrm{SD} : S_k^\tord \times \cH_k^{1,\ord} \to \Z_p$ for $k \in \Z_{\geq 3}$, interpolable into a perfect $\bT[U']_\Lambda$-compatible Serre duality pairing $\lr{}'_\mathrm{SD} : S_\Lambda^\tord \times \cH^{1,\ord}_\Lambda \to \Lambda$. The same result in the modular case, $\lr{}'_\mathrm{SD} : M_\Lambda^\tord \times \cH_\Lambda^{1,\ord}(-C) \to \Lambda$, holds as well. 
\end{cor}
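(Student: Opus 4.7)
The plan is to transfer the existing Serre duality pairings from the ordinary (cohomological) side to the twist-ordinary side using the chain of $\bT[U']$-module isomorphisms in Proposition~\ref{prop: realize Ttord}, and then interpolate $\Lambda$-adically using the identification of Hecke algebras in Proposition~\ref{prop: tord control}.

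In weight $k \in \Z_{\geq 3}$: The last portion of the chain in Proposition~\ref{prop: realize Ttord} identifies $S_k^\tord$ with $S_k^{\dagger,\ord}$ as $\bT[U']$-modules, where $U'$ acts on the right-hand side as $U\lr{p}_N^{-1}$. The $\Z_p$-perfect Serre duality pairing \eqref{eq: SD cusp ord} of Proposition~\ref{prop: duality wt k} pairs $e(U)H^0(X^\ord, \omega^k(-C)) = S_k^{\dagger,\ord}$ with $e(F)H^1(X^\ord,\omega^{2-k})$, which is the relevant version of $\cH^{1,\ord}_k$. Composing with the chain gives the $\Z_p$-perfect pairing $\lr{}'_\mathrm{SD} : S_k^\tord \times \cH^{1,\ord}_k \to \Z_p$. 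For $\bT[U']$-equivariance, the crucial input is Proposition~\ref{prop: SD p-dual} (see also Proposition~\ref{prop: interpolated Serre duality}): under Serre duality, $U\lr{p}_N^{-1}$ on the left is adjoint to $F$ on the right, and both are exactly the $U'$-action on the respective modules. Hecke operators away from $p$ are handled by the standard $T_n \leftrightarrow T_n^*$ adjointness for Serre duality, combined with the Atkin--Lehner $w_{Np}$ twist that appears in the chain (which swaps these back).

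For the $\Lambda$-adic pairing, Proposition~\ref{prop: tord control} gives a canonical $\Lambda$-algebra isomorphism $\bT_\Lambda^{\tord,\circ} \cong \bT_\Lambda^{\ord,\circ}$ compatible with $U \leftrightarrow U'$. Combining the definition $S_\Lambda^\tord := \Hom_\Lambda(\bT_\Lambda^{\tord,\circ}, \Lambda)$ from Definition~\ref{defn: Lambda-adic tord forms} with the perfect duality $S_\Lambda^\ord \cong \Hom_\Lambda(\bT_\Lambda^{\ord,\circ}, \Lambda)$ of \eqref{eq: a1 duality Lambda} yields an identification $S_\Lambda^\tord \cong S_\Lambda^\ord$. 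The $\Lambda$-adic Serre duality $\lr{}_{\mathrm{SD},\Lambda} : S_\Lambda^\ord \times \cH^{1,\ord}_\Lambda \to \Lambda$ from Theorem~\ref{thm: BP} then produces the pairing $\lr{}'_\mathrm{SD}$. Its $\bT[U']_\Lambda$-compatibility follows from the adjoint formula $\lr{\lr{p}_N^{-1}U \cdot f, g}_{\mathrm{SD},\Lambda} = \lr{f, F \cdot g}_{\mathrm{SD},\Lambda}$ of Proposition~\ref{prop: interpolated Serre duality}, and specialization to weight $k$ is immediate from the specialization isomorphisms in Theorem~\ref{thm: BP} and Proposition~\ref{prop: tord control}, matching the weight $k$ pairing constructed above. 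The modular case $M_\Lambda^\tord \times \cH^{1,\ord}_\Lambda(-C) \to \Lambda$ follows by the same argument, replacing the cuspidal dualities with their non-cuspidal counterparts (using \eqref{eq: SD ord} in weight $k$ and its $\Lambda$-adic interpolation, constructed analogously to Theorem~\ref{thm: BP}).

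The main obstacle is the careful bookkeeping of Hecke twists through the Atkin--Lehner map $w_{Np}$ and the model-changing isomorphism $v_N$ used in Proposition~\ref{prop: realize Ttord}. Since $v_N$ is only defined over $\Z_p[\zeta_N]$, one must verify that the resulting pairing is nonetheless $\Z_p$-bilinear; this holds because the Hecke algebras and lattices on both sides are defined over $\Z_p$ (as established in Proposition~\ref{prop: realize Ttord}) and the Serre duality pairing is evidently $\Z_p$-valued. The Hecke operators undergo the swaps $T_n \leftrightarrow T_n^*$ and $\lr{d}_N \leftrightarrow \lr{d}_N^{-1}$ at several stages; verifying that these cancel against the corresponding swaps appearing in the Serre duality adjoint formulas is the essential consistency check required to obtain the stated $\bT[U']$-equivariance in the end.
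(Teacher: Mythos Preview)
Your weight $k$ argument correctly routes through the full chain of Proposition~\ref{prop: realize Ttord}, including the $w_{Np}$-step that swaps $T_n \leftrightarrow T_n^*$; this swap is precisely what cancels against the Serre duality adjunction to make the resulting pairing $\bT[U']$-self-adjoint. However, your $\Lambda$-adic construction does not use this chain: the identification $S_\Lambda^\tord \cong S_\Lambda^\ord$ you invoke via Proposition~\ref{prop: tord control} is the map $\iota_\Lambda$ (with $U' \leftrightarrow U$ and all other operators fixed), which is a \emph{different} isomorphism from the one interpolating the chain. Under $\iota_\Lambda$, an operator $T_n$ on $S_\Lambda^\tord$ corresponds to $T_n$ on $S_\Lambda^\ord$, which is Serre-adjoint to $T_n^*$ on $\cH^{1,\ord}_\Lambda$; similarly $U' \leftrightarrow U$ is Serre-adjoint to $\lr{p}_N F$, not $F = U'$. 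So the pairing you produce is not $\bT[U']$-self-adjoint, and it does not specialize to your weight $k$ pairing. The $\Lambda$-adic interpolation must be of the chain itself (through $v_N$ and $w_{Np}$), which forces you to confront the descent issue $\Lambda$-adically as well.

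On that descent: your resolution is circular, since ``the Serre duality pairing is evidently $\Z_p$-valued'' is the claim to be proved. What is needed is that the composite $S_k^\tord \to S_{k,\Z_p[\zeta_N]}^{\dagger,\ord}$ actually lands in the $\Z_p$-lattice $S_k^\ord$. The paper's argument runs as follows: both $S_k^\tord$ and $S_k^\ord$ are characterized as the $\Z_p$-lattices in perfect $a_1$-duality with their Hecke algebras, so it suffices to check that $S_k^\ord$ remains the perfectly dual lattice under the \emph{non-standard} $\bT[U']$-action $(U\lr{p}_N^{-1}, T_n^*)$ induced by the chain. For operators prime to $N$ this is immediate since the twist is by a slope-zero unit; the substantive point is $U_\ell^*$ for primes $\ell \mid N$, where one invokes that $U_\ell^*$ is definable over $\Z_p[1/\ell] = \Z_p$ (cf.\ \cite[p.~467]{gross1990}) together with \eqref{eq: q vs coh} to conclude that the perfectly dual lattice under the non-standard action is still contained in $S_k^\ord \otimes \Q_p$, hence equals $S_k^\ord$.
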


This proof is the same for both the cuspidal and modular cases, and we give it for the cuspidal case. 
\begin{proof}
    Proposition \ref{prop: realize Ttord} has already proved this over $\Z_p[\zeta_N]$, so we want to carry out descent to $\Z_p$. We have the perfect Serre duality isomorphism of \eqref{eq: SD cusp ord} between $S_k^\ord$ and $\cH^{1,\ord}_k$. What we need to check is that the composite $\bT[U']$-equivariant map of Proposition \ref{prop: realize Ttord} from $S_k^\tord$ to $S_{k,\Z_p[\zeta_N]}^{\dagger,\ord}$ is valued in $S_k^\ord$. Because $S_k^\tord$ is in perfect duality with its $\bT[U']$-Hecke algebra, it will suffice to show that $S_k^\ord$ is also the perfect dual with respect to its Hecke algebra under the non-standard $\bT[U']$-action labeled ``$(U\lr{p}_N^{-1}, T_n^*)$''. 
    
    Since again $S_k^\ord$ is in perfect duality with its $\bT[U]$-Hecke algebra, we mainly need to concern ourselves with the non-standard action of $U_\ell \in \bT[U']$ as $U_\ell^*$ for primes $\ell \mid N$. The operators away from $N$ are more  straightforward: replacing $U$ by $U\lr{p}_N^{-1}$ does not change the perfectly dual $\Z_p$-lattice within $S_k^\ord \otimes_{\Z_p} \Z_p[\zeta_N]$ since $\lr{p}_N^{-1}$ acts with slope zero. The same idea applies to replacing $T_n$ with $T_n^* = T_n\lr{n}_N^{-1}$ for $n \in \Z_{\geq 1}$ such that $(n,Np) = 1$. Now for $\ell \mid N$, as noted in \cite[p.\ 467]{gross1990}, $U^*_\ell$ is definable over $\Z_p[1/\ell] = \Z_p$. By \eqref{eq: q vs coh}, we deduce that our (perfectly dual under the non-standard $\bT[U']$-action) $\Z_p$-lattice in $S_k^\ord \otimes_{\Z_p} \Z_p[\zeta_N]$, thought of as a $\Z_p$-sublattice of $S_k^\ord \otimes_{\Z_p} \Q_p[\zeta_N]$, is contained in $S_k^\ord \otimes_{\Z_p} \Q_p$. Because we already know the lattices are equivalent over $\Z_p[\zeta_N]$, this suffices. 
\end{proof}

\subsection{Interpolation of anti-ordinary forms}
\label{subsec: aord}

We will also record the $\Lambda$-adic interpolation of the $M_k^\aord$ that appeared in Definition \ref{defn: ao wt k}. 
\begin{defn}
    \label{defn: ao hecke algebra}    
    For $k \in \Z_{\geq 3}$, let $\bT_k^\aord := \bT[U'](M_k^\aord)$, admitting $\bT[U']$-compatible $\Z_p$-perfect duality $\lr{} : T_k^\aord \times M_k^\aord \to \Z_p$ due to Corollary \ref{cor: aord duality} and the resulting alternate $q$-series, where the constant term vanishes on the image of $\theta^{k-1}$, 
    \[
    M_k \rinj q \Z_p\lb q \rb, \quad f \mapsto \sum_{n \geq 1} a_1(T_n \cdot f) q^n. 
    \]
    Let $\bT_\Lambda^\aord$ denote the image of $\bT[U']$ in $\prod_{k \geq 3} \bT_k^\aord$. 
\end{defn}

The $\Lambda$-adic interpolation will follow from Hida theory once we express the anti-ordinary Hecke algebras in terms of the usual ordinary Hecke algebras. 
\begin{prop}
    \label{prop: aord TU algebra}
    Giving $\bT_\Lambda^\ord$ (resp. $\bT_k^\ord$ for $k \in \Z_{\geq 3}$) a $\bT[U']_\Lambda$-algebra (resp.\ $\bT[U']$-algebra) structure by 
    \begin{align*}
        T_n &\mapsto [n]^{-1}T_n \quad (p \nmid n \in \Z_{\geq 1}) \\
        \lr{d}_N &\mapsto \lr{d}_N \\
        U' &\mapsto U_p^{-1}\lr{p}_N \\
        [a] &\mapsto [a]^{-1} \quad (a \in \Z_p^\times), 
    \end{align*}
    there are isomorphisms of $\bT[U']_\Lambda$-algebras $\bT_\Lambda^\ord \cong \bT_\Lambda^\aord$ and $\bT_k^\ord \isoto \bT_k^\aord$ (for $k \in \Z_{\geq 3}$) interpolating the $\theta^{k-1}$-twisted action of \eqref{eq: theta twist}. 
\end{prop}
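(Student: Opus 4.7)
The strategy is to prove the weight-$k$ isomorphism directly using Proposition~\ref{prop: ao sublattice} and the twist relation \eqref{eq: theta twist}, and then interpolate via Hida's control theorem.

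\emph{First, transport the Hecke action.} By Proposition~\ref{prop: ao sublattice}, $\theta^{k-1} : M_{2-k}^{\dagger,\ord}(k-1) \isoto M_k^\aord$ is $\bT[U']$-equivariant. Identifying $M_k^\aord$ with $M_{2-k}^{\dagger,\ord}$ as a $\Z_p$-module via $\theta^{k-1}$, the $\bT[U']$-action on $M_k^\aord$ translates, by \eqref{eq: theta twist}, into the following action on $M_{2-k}^{\dagger,\ord}$: for $(n,p)=1$, the operator $T_n$ acts as $n^{k-1} T_n$; $\lr{d}_N$ acts as $\lr{d}_N$; taking $n=p$ in \eqref{eq: theta twist} shows that $U$ on $M_k^\aord$ pulls back to $p^{k-1} U$ on $M_{2-k}^{\dagger,\ord}$, so $U' = \lr{p}_N p^{k-1} U^{-1}$ acts as $\lr{p}_N U^{-1}$.

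\emph{Second, identify the Hecke algebras in weight $k$.} The image of the resulting map $\bT[U'] \to \End_{\Z_p}(M_{2-k}^{\dagger,\ord})$ is the full ordinary Hecke algebra $\bT_{2-k}^\ord$: the scalars $n^{k-1}$ are $p$-adic units for $(n,p)=1$, so they do not change the generated subalgebra; and $U$ is a unit in the finite $\Z_p$-algebra $\bT_{2-k}^\ord$ by ordinarity, so $\lr{p}_N U^{-1}$ is a unit whose inverse (automatically lying in $\bT_{2-k}^\ord$) recovers $U$ up to $\lr{p}_N$. This gives the weight-$k$ isomorphism of $\bT[U']$-algebras $\bT_k^\aord \cong \bT_{2-k}^\ord$, where the latter carries the twisted structure $T_n \mapsto n^{k-1} T_n$, $\lr{d}_N \mapsto \lr{d}_N$, $U' \mapsto \lr{p}_N U^{-1}$.

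\emph{Third, interpolate.} The key observation is that $n^{k-1} = \phi_{2-k}([n]^{-1})$ for $(n,p)=1$ (using $\phi_{2-k}([n]) = n^{1-k}$), and the change of parameter $k \leftrightarrow 2-k$ is realized by the involution $\iota : \Lambda \to \Lambda$, $[a] \mapsto [a]^{-1}$, since $\phi_{2-k} = \phi_k \circ \iota$. Hence the $\bT[U']_\Lambda$-algebra structure on $\bT_\Lambda^\ord$ defined by the formulas of the proposition specializes, via $\phi_k$ on the source, to the twisted map of the previous step landing in $\bT_{2-k}^\ord$. Since $\bT_\Lambda^\aord$ is by Definition~\ref{defn: ao hecke algebra} the image of $\bT[U']$ in $\prod_{k \geq 3} \bT_k^\aord$, the collection of weight-$k$ isomorphisms combined with $\Lambda$-flatness of $\bT_\Lambda^\ord$ and $\Z_p$-density of the classical weights in $\Spec \Lambda$ yields the asserted $\bT[U']_\Lambda$-algebra isomorphism $\bT_\Lambda^\ord \cong \bT_\Lambda^\aord$.

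\emph{Main obstacle.} The principal subtlety is the bookkeeping around the involution $\iota$: one must confirm that the new $\bT[U']_\Lambda$-algebra structure on $\bT_\Lambda^\ord$ is compatible with Hida control on both sides, so that $\phi_k$-specialization on the anti-ordinary side corresponds, under the isomorphism, to $\phi_{2-k}$-specialization on the ordinary side. Beyond this, the proof rests only on the explicit Hecke intertwining \eqref{eq: theta twist}, the invertibility of $U$ on ordinary forms, and the standard flatness and control properties already established for $\bT_\Lambda^\ord$.
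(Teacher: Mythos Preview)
Your proof is correct and follows the same approach as the paper, which also derives the result from the twist relation \eqref{eq: theta twist} together with the formula $U' = p^{k-1}\lr{p}_N U^{-1}$. You are simply more explicit than the paper's one-line argument about the involution $[a]\mapsto[a]^{-1}$ on $\Lambda$ and the resulting $k \leftrightarrow 2-k$ weight shift underlying the interpolation.
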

\begin{proof}
    Recalling that $\phi_k([n]) = n^{k-1}$, the claim follows from the twisted Hecke equivariance recorded around \eqref{eq: theta twist} along with the relation $U' = p^{k-1}\lr{p}_N U_p^{-1}$ in weight $k$. 
\end{proof}

\begin{defn}
    Let $M_\Lambda^\aord := \Hom_\Lambda(\bT_\Lambda^\aord,\Lambda)$. It has its natural $\bT[U']$-compatible $\Lambda$-perfect pairing
    \[
    \lr{} : \bT_\Lambda^\aord \times M_\Lambda^\aord \to \Lambda 
    \]
    and alternate $q$-series realization $M_\Lambda^\aord \rinj q\Lambda\lb q\rb$, $f \mapsto \sum_{n \geq 1} \lr{T_n,f}q^n$. 
\end{defn}

\begin{cor}
    \label{cor: aord control}
    The $\Lambda$-algebra $\bT_\Lambda^\aord$ is finite and flat. There are natural control isomorphisms for $k \in \Z_{\geq 3}$
    \[
    \bT_\Lambda^\aord \otimes_{\Lambda,\phi_k} \Z_p \isoto \bT_k^\aord, \quad M_\Lambda^\aord \otimes_{\Lambda,\phi_k} \Z_p \isoto M_k^\aord
    \]
    compatible with the duality $\lr{}$ of Definition \ref{defn: ao hecke algebra}. 
\end{cor}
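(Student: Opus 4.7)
My plan is to deduce both the finiteness and flatness of $\bT_\Lambda^\aord$, together with the two control isomorphisms, by transporting Hida's classical theorem (Theorem \ref{thm: BP}) across the isomorphism $\bT_\Lambda^\ord \cong \bT_\Lambda^\aord$ supplied by Proposition \ref{prop: aord TU algebra}, and then to obtain the control on the module side from the defining duality $M_\Lambda^\aord = \Hom_\Lambda(\bT_\Lambda^\aord, \Lambda)$.

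Concretely, Proposition \ref{prop: aord TU algebra} exhibits $\bT_\Lambda^\aord$ as the same underlying $\Z_p$-algebra as $\bT_\Lambda^\ord$, but with the $\Lambda$-action twisted by the $\Z_p$-algebra involution $\iota : \Lambda \to \Lambda$, $[a] \mapsto [a]^{-1}$, together with compensating adjustments $T_n \mapsto [n]^{-1}T_n$ and $U' \mapsto U_p^{-1}\lr{p}_N$ to the $\bT[U']$-action. Since $\iota$ is an automorphism of $\Lambda$, finiteness as a $\Lambda$-module and $\Lambda$-flatness are preserved under this twist, so the corresponding assertions for $\bT_\Lambda^\ord$ in Theorem \ref{thm: BP} transport to $\bT_\Lambda^\aord$. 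For the control isomorphism on Hecke algebras, I would combine the weight-$k$ isomorphism of $\bT[U']$-algebras from Proposition \ref{prop: aord TU algebra} with the standard control isomorphism for the ordinary Hida Hecke algebra from Theorem \ref{thm: BP}, tracking the relation $\phi_k \circ \iota = \phi_{2-k}$ between the twisted and untwisted specialization maps, so that the $\phi_k$-specialization of $\bT_\Lambda^\aord$ matches $\bT_k^\aord$ on the nose.

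For the control isomorphism on $M_\Lambda^\aord$, the finite flatness of $\bT_\Lambda^\aord$ just established implies that $\Lambda$-linear duality commutes with base change, yielding $M_\Lambda^\aord \otimes_{\Lambda,\phi_k} \Z_p \cong \Hom_{\Z_p}(\bT_k^\aord, \Z_p)$. Corollary \ref{cor: aord duality} identifies the right-hand side with $M_k^\aord$ via the perfect $a_1$-pairing, producing the claimed isomorphism. Compatibility with the pairings $\lr{}$ is automatic from the construction, since the $\Lambda$-adic pairing of Definition \ref{defn: ao hecke algebra} and the weight-$k$ pairing of Corollary \ref{cor: aord duality} are both defined as the canonical perfect pairings between a finite flat algebra and its linear dual, and these are interchanged by base change.

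The chief bookkeeping issue, and the main potential source of trouble, is keeping straight three distinct twists: the $(k-1)$-twist built into $M_k^\aord = \theta^{k-1}(M_{2-k}^{\dagger,\ord}(k-1))$, the $\iota$-twist of the $\Lambda$-structure, and the substitution rules of Proposition \ref{prop: aord TU algebra} relating $U$- and $U'$-type operators. Once Proposition \ref{prop: aord TU algebra} is invoked, however, all of these are packaged into a single $\bT[U']_\Lambda$-algebra isomorphism, so the passage from ordinary to anti-ordinary Hida theory becomes a purely formal exercise in transporting structure along this isomorphism.
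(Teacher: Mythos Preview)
Your proposal is correct and matches the paper's intent: the corollary is stated without proof precisely because it follows formally from Proposition \ref{prop: aord TU algebra} (the $\bT[U']_\Lambda$-algebra isomorphism $\bT_\Lambda^\ord \cong \bT_\Lambda^\aord$ with its $\iota$-twisted $\Lambda$-structure) together with Hida's classical control theorem, exactly as you outline. Your identification of $\phi_k \circ \iota = \phi_{2-k}$ and your deduction of the module control from finite-flat base change of $\Lambda$-linear duals are the right steps, and nothing further is needed.
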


\begin{rem}
\label{rem: not a p-adic modular form} 
These alternate (that is, $\bT[U']$-based) $q$-series are only sometimes the (usual, $\bT[U]$-based) $q$-series of a $p$-adic modular form. This is in contrast with the twist-ordinary case, where the isomorphisms of Proposition \ref{prop: realize Ttord} can be used to show that the alternate $q$-series of $M_k^\tord$ is the $q$-series of some ordinary $p$-adic form. Consider the case of an eigenform: a $\bT[U']$-eigenform in $M_{k,\oQ_p}^\aord = M_{2-k,\oQ_p}^{\dagger,\ord}(k-1)$ may not be classical (equivalently, the associated Galois representation may not be de Rham at $p$). Thinking of its alternate $q$-series as a $q$-series, if this $q$-series was the $q$-series of a $p$-adic modular $\bT[U]$-eigenform, it would have $U$-slope zero, and therefore it would be classical. The upshot is that while it is possible to consider these alternate $q$-series for the purpose of interpolation because they have $U'$-slope zero, they are not the $q$-series of $p$-adic modular forms. 
\end{rem}

\section{Construction of $\Lambda$-adic critical overconvergent forms}
\label{sec: construction}

We follow the pattern of Hida \cite{hida1986a, hida1986},  beginning with constructing a $\Lambda$-adic Hecke algebra. 

\subsection{The construction}

We produce the critical $\Lambda$-adic Hecke algebra following the pattern of Definition \ref{defn: Lambda-adic tord}. 

\begin{defn}
    \label{defn: crit Lambda Hecke alg}
    The \emph{$\Lambda$-adic critical overconvergent Hecke algebra} $\bT_\Lambda^\crit$ is the image of $\bT[U']$ in 
    \[
    \prod_{k \geq 3} \End_{\Z_p}\left(M^{\dagger,\crit}_k\right). 
    \]
    Likewise, the \emph{cuspidal $\Lambda$-adic critical overconvergent Hecke algebra} $\bT_\Lambda^{\crit,\circ}$ is the image of $\bT[U']$ in 
    \[
    \prod_{k \geq 3} \End_{\Z_p}\left(S^{\dagger,\crit}_k\right). 
    \]
\end{defn}

\begin{thm}
    \label{thm: main construction}
    $\bT_\Lambda^\crit$ (resp.\ $\bT_\Lambda^{\crit,\circ}$) is a finite flat $\Lambda$-algebra. For any $k \in \Z_{\geq 3}$, there is a canonical isomorphism of $\bT[U']$-quotients 
    \[
    \bT_\Lambda^\crit \otimes_{\phi,k} \Z_p \lrisom \bT[U'](M_k^{\dagger,\crit}), \qquad 
    \bT_\Lambda^{\crit,\circ} \otimes_{\phi,k} \Z_p \lrisom \bT[U'](S_k^{\dagger,\crit}).
    \]
\end{thm}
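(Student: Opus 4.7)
The plan is to derive Theorem \ref{thm: main construction} from a $\Lambda$-adic lift of Theorem \ref{thm: lattice main}. Namely, I will construct a finitely generated flat $\Lambda$-module $M_\Lambda^\crit$ carrying a continuous $\bT[U']$-action and fitting in a $\bT[U']$-equivariant short exact sequence
\[
0 \to M_\Lambda^\aord \to M_\Lambda^\crit \to \cH^{1,\ord}_\Lambda \to 0
\]
whose specialization along each $\phi_k$ recovers the SES of Theorem \ref{thm: lattice main} for $k \in \Z_{\geq 3}$. The cuspidal version proceeds identically, since $M_\Lambda^\aord$ is automatically cuspidal: at every $k$, $\theta^{k-1}$ annihilates the constant term.

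The outer terms are already finite flat $\Lambda$-modules: the sub by Hida theory (Corollary \ref{cor: aord control}, cf.\ Proposition \ref{prop: aord TU algebra}) and the quotient by higher Hida theory (Theorem \ref{thm: BP}). The sub carries an alternate $q$-series embedding into $q\Lambda\lb q\rb$ (\S\ref{subsec: aord}), and so does the quotient $\cH^{1,\ord}_\Lambda$, via its perfect Serre duality with $S_\Lambda^\tord$ of Corollary \ref{cor: tord serre duality} combined with the identification of Hecke algebras in Proposition \ref{prop: realize Ttord}. I then define $M_\Lambda^\crit$ as the $\Lambda$-submodule of $q\Lambda\lb q\rb$ whose alternate $q$-series at each $\phi_k$ lies in the lattice $M_k^{\dagger,\crit}$ of Definition \ref{defn: Zp crit oc forms}, endowed with the $\bT[U']$-action prescribed by Definition \ref{defn: alternate q-series}. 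Exactness of the SES and the control isomorphism $M_\Lambda^\crit \otimes_{\Lambda,\phi_k} \Z_p \isoto M_k^{\dagger,\crit}$ are then verified simultaneously by the five-lemma applied to the $\phi_k$-specialization diagram, combining Theorem \ref{thm: lattice main} with the known control for the outer terms; flatness of $M_\Lambda^\crit$ follows from flatness of the outer terms.

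Theorem \ref{thm: main construction} then follows by duality. Since $\bT_\Lambda^\crit$ sits inside the finite $\Lambda$-algebra $\End_\Lambda(M_\Lambda^\crit)$, it is finite over $\Lambda$. The weight-$k$ $a_1$-pairings of Proposition \ref{prop: pairing wt k} assemble into a $\Lambda$-bilinear pairing $\bT_\Lambda^\crit \times M_\Lambda^\crit \to \Lambda$, perfect by Nakayama applied to the cokernel of the induced $\Lambda$-linear map (leveraging weight-by-weight perfectness and the control isomorphism for $M_\Lambda^\crit$). Hence $\bT_\Lambda^\crit \cong \Hom_\Lambda(M_\Lambda^\crit, \Lambda)$, which is flat, and the desired control isomorphism for $\bT_\Lambda^\crit$ is $\Lambda$-dual to the one for $M_\Lambda^\crit$.

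The main obstacle will be establishing the existence of the $\Lambda$-adic alternate $q$-series embedding $M_\Lambda^\crit \rinj q\Lambda\lb q\rb$, i.e., showing that the weight-$k$ extension classes $[M_k^{\dagger,\crit}]$ interpolate into a single $\Lambda$-adic extension. This is precisely where the $U'$-based choice of lattice in Definition \ref{defn: Zp crit oc forms} (as opposed to the $U$-based analogue) is essential, cf.\ Remark \ref{rem: possible difference in lattice}: only the $U'$-normalization keeps the extension class $\Lambda$-continuous across all weights in $\Z_{\geq 3}$, consistent with Coleman's local constancy of ranks (Corollary \ref{cor: local constant rank Coleman}).
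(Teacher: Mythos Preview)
Your strategy is the right one --- Theorem \ref{thm: main construction} really does come from interpolating the short exact sequence of Theorem \ref{thm: lattice main}, and you correctly locate the only non-formal content in the interpolation of the extension class. But you do not actually carry out that step: you name it as ``the main obstacle'' and then stop. That obstacle \emph{is} the proof. Everything else (five-lemma, Nakayama, duality) is immediate once the $\Lambda$-adic short exact sequence exists, so as written the proposal is a plan rather than a proof.

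There is also a structural issue with how you attempt to set up the interpolation. You propose to embed $\cH^{1,\ord}_\Lambda$ into $q\Lambda\lb q\rb$ via Serre duality with $S_\Lambda^\tord$, but Serre duality identifies $\cH^{1,\ord}_\Lambda$ with $\Hom_\Lambda(S_\Lambda^\tord,\Lambda)\cong \bT_\Lambda^{\tord,\circ}$, which lives on the Hecke-algebra side, not the $q$-series side. So there is no obvious $q$-series realization of the quotient term, and hence no canonical way to ``glue'' the extension in $q\Lambda\lb q\rb$ as you propose. Similarly, your definition of $M_\Lambda^\crit$ as ``all $\Lambda$-adic $q$-series specializing into $M_k^{\dagger,\crit}$ for every $k$'' has no evident finite-generation or control property until after the theorem is proved; it cannot be the starting point.

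The paper resolves this by working on the dual (Hecke-algebra) side, where the object is already defined. By construction $\bT_\Lambda^\crit$ surjects onto $\bT_\Lambda^\aord$; call the kernel $K$. The content is to identify $K$. For $T\in K$ and $\eta\in\cH^{1,\ord}_k$, any lift $\tilde\eta\in M_k^{\dagger,\crit}$ gives a well-defined element $T\cdot\tilde\eta$ (since $T$ kills $M_k^\aord$), and $\delta_k(T,\eta):=a_1(T\cdot\tilde\eta)$ is a perfect $\Z_p$-pairing by Theorem \ref{thm: lattice main} and Proposition \ref{prop: pairing wt k}. Because Hecke operators act on alternate $q$-series by formulas that are visibly $p$-adically continuous in $k$, these $\delta_k$ assemble into a $\Lambda$-bilinear $\delta:K\times\cH^{1,\ord}_\Lambda\to\Lambda$, and Nakayama plus the perfection of each $\delta_k$ forces $\delta$ to be a $\Lambda$-perfect duality. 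Serre duality then gives $K\cong S_\Lambda^\tord$, yielding the short exact sequence $0\to S_\Lambda^\tord\to\bT_\Lambda^\crit\to\bT_\Lambda^\aord\to 0$ of Theorem \ref{thm: main extension}. Finite flatness and control for $\bT_\Lambda^\crit$ follow; $M_\Lambda^\crit$ is then \emph{defined} as $\Hom_\Lambda(\bT_\Lambda^\crit,\Lambda)$. In other words, the paper interpolates the Hecke action (which is automatic) rather than the extension class (which is not), and recovers the module of forms only afterward by duality. This is precisely the missing idea in your proposal.
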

This control theorem finds a Hida-type theory underlying Coleman's observation about locally constant ranks (Corollary \ref{cor: local constant rank Coleman}). 

Because an extension of finitely generated flat $\Lambda$-modules is also finitely generated and flat, Theorem \ref{thm: main construction} follows immediately from the following theorem, which says that the Hecke actions on the short exact sequence presenting de Rham cohomology \eqref{eq: main SES k} interpolate as well as could be desired. 
\begin{thm}
    \label{thm: main extension}
    The surjection of $\Lambda$-algebras $\bT_\Lambda^\crit \rsurj \bT_\Lambda^\aord$ induced by the sum over $k \in \Z_{\geq 3}$ of the maps $\theta^{k-1}: M_k^\aord \rinj M_k^{\dagger, \crit}$ of  \eqref{eq: main SES k} has kernel that is canonically isomorphic to $S_\Lambda^\tord$. In particular, the resulting canonical short exact sequence 
   \begin{equation}
   \label{eq: main Hecke SES Lambda}
   0 \to S_\Lambda^\tord \to \bT_\Lambda^\crit \to \bT_\Lambda^\aord \to 0    
   \end{equation}
   has terms which are finitely generated and flat as $\Lambda$-modules. 
\end{thm}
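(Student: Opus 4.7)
The plan is to establish the claimed short exact sequence first in each fixed integer weight $k \in \Z_{\geq 3}$ via $p$-integral duality, then assemble these over $k$ using the definitions of the various $\Lambda$-adic Hecke algebras and the Hida / higher-Hida interpolations that have already been set up.

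For fixed $k$, my starting point is the weight-$k$ short exact sequence $0 \to M_k^\aord \to M_k^{\dagger,\crit} \to \cH^{1,\ord}_k \to 0$ of Theorem \ref{thm: lattice main}, together with the fact that $\theta^{k-1}\colon M_k^\aord \hookrightarrow M_k^{\dagger,\crit}$ is $\Z_p$-saturated (Proposition \ref{prop: ao sublattice}). The $\bT[U']$-compatible perfect $a_1$-pairing $\bT_k^{\dagger,\crit} \times M_k^{\dagger,\crit} \to \Z_p$ of Proposition \ref{prop: pairing wt k}, together with the analogous perfect $a_1$-duality on $\bT_k^\aord \times M_k^\aord$ (Corollary \ref{cor: aord duality}), yields a standard identification of $\ker(\bT_k^{\dagger,\crit} \twoheadrightarrow \bT_k^\aord) = (M_k^\aord)^\perp$ with the $\Z_p$-linear dual of the quotient $\cH^{1,\ord}_k$ (here one checks, using $\bT[U']$-stability of $M_k^\aord$ and the alternate $q$-expansion principle, that vanishing on $M_k^\aord$ as an operator coincides with vanishing under the pairing). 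The cuspidal Serre duality of Corollary \ref{cor: tord serre duality} identifies this dual canonically with $S_k^\tord$ as a $\bT[U']$-module. Hence for each $k$ one obtains a canonical $\bT[U']$-equivariant short exact sequence of finite free $\Z_p$-modules
\[
0 \to S_k^\tord \to \bT_k^{\dagger,\crit} \to \bT_k^\aord \to 0.
\]

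Next I would assemble these weight-$k$ sequences $\Lambda$-adically. By Definition \ref{defn: crit Lambda Hecke alg}, $\bT_\Lambda^\crit$ is the image of $\bT[U']$ in $\prod_{k\geq 3} \bT_k^{\dagger,\crit}$; composing with the weight-$k$ projections to $\bT_k^\aord$, its image in $\prod_k \bT_k^\aord$ coincides with $\bT_\Lambda^\aord$ by Definition \ref{defn: ao hecke algebra}, producing the required surjection $\bT_\Lambda^\crit \twoheadrightarrow \bT_\Lambda^\aord$ with kernel $I_\Lambda$. By the preceding paragraph, $I_\Lambda$ embeds into $\prod_k S_k^\tord$, while $S_\Lambda^\tord$ embeds into the same product via the specialization maps of Proposition \ref{prop: tord control} and Definition \ref{defn: Lambda-adic tord forms}. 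To identify the two subobjects I would use the interpolated Serre duality $S_\Lambda^\tord \cong \Hom_\Lambda(\cH^{1,\ord}_\Lambda, \Lambda)$ of Corollary \ref{cor: tord serre duality} together with the higher Hida control isomorphisms $\cH^{1,\ord}_\Lambda \otimes_{\Lambda,\phi_k} \Z_p \cong \cH^{1,\ord}_k$ of Theorem \ref{thm: BP}, so that both embeddings agree at each $\phi_k$ with the canonical weight-$k$ identification $S_k^\tord \cong \Hom_{\Z_p}(\cH^{1,\ord}_k, \Z_p)$ from the first paragraph. The $\bT[U']_\Lambda$-equivariance of all of these identifications, combined with the fact that $\bT_\Lambda^\crit$ is \emph{generated} as a $\Lambda$-algebra by the image of $\bT[U']$, then forces the two submodules of $\prod_k S_k^\tord$ to coincide, yielding $I_\Lambda \cong S_\Lambda^\tord$ canonically.

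I expect this last identification to be the main obstacle: pointwise agreement of specializations does not by itself force equality of $\Lambda$-submodules of an infinite product, so the argument must leverage both the canonicity/$\bT[U']$-equivariance of the weight-$k$ identifications and the $\Lambda$-adic Serre duality simultaneously. Once $I_\Lambda \cong S_\Lambda^\tord$ is established, the resulting sequence $0 \to S_\Lambda^\tord \to \bT_\Lambda^\crit \to \bT_\Lambda^\aord \to 0$ has finitely generated flat outer terms by Proposition \ref{prop: tord control} and Corollary \ref{cor: aord control}, so $\bT_\Lambda^\crit$ is itself finitely generated and flat over the Noetherian ring $\Lambda$, which simultaneously proves Theorem \ref{thm: main construction}.
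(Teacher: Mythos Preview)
Your outline matches the paper's strategy closely: first establish the weight-$k$ short exact sequence $0\to S_k^\tord \to \bT_k^{\dagger,\crit}\to \bT_k^\aord\to 0$ by dualizing \eqref{eq: main SES k} against the $a_1$-pairing and then invoking Serre duality; then interpolate. Your weight-$k$ paragraph is essentially correct and coincides with what the paper does (the perfect pairing $\delta_k$ there is exactly your identification $\ker(\bT_k^{\dagger,\crit}\rsurj\bT_k^\aord)\cong\Hom_{\Z_p}(\cH_k^{1,\ord},\Z_p)$).

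The genuine gap is exactly the step you flag as the ``main obstacle.'' Your proposed resolution---that $\bT[U']_\Lambda$-equivariance together with the fact that $\bT_\Lambda^\crit$ is generated by $\bT[U']$ ``forces'' $I_\Lambda$ and $S_\Lambda^\tord$ to coincide inside $\prod_k S_k^\tord$---is not an argument. Equivariance is automatic for both submodules and does not distinguish them; and being generated by $\bT[U']$ constrains $\bT_\Lambda^\crit$, not its intersection with $\prod_k S_k^\tord$. You need two separate inputs. For the inclusion $I_\Lambda\subset S_\Lambda^\tord$, the paper constructs a $\Lambda$-bilinear pairing $\delta\colon K\times \cH^{1,\ord}_\Lambda\to\Lambda$ directly, by checking that for $T\in K$ and $\eta\in\cH^{1,\ord}_\Lambda$ the sequence $(\delta_k(T,\eta))_{k\geq 3}$ satisfies the $p$-adic continuity criterion of an explicit interpolation lemma (Lemma~\ref{lem: interpolate over Lambda}(1)); this uses that $T$ is a polynomial in Hecke operators whose actions on alternate $q$-series are weight-continuous, and that $\eta$ specializes compatibly by higher Hida theory. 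For the reverse inclusion one does \emph{not} produce elements of $\bT[U']$ hitting arbitrary $g\in S_\Lambda^\tord$; instead the paper shows the cokernel $C$ of $\delta\colon K\to\Hom_\Lambda(\cH^{1,\ord}_\Lambda,\Lambda)$ satisfies $C\otimes_{\Lambda,\phi_k}\Z_p=0$ for all $k\geq 3$ (by surjectivity of $\delta_k$) and concludes $C=0$ by Nakayama (Lemma~\ref{lem: interpolate over Lambda}(2)). Both steps rely on that interpolation lemma, which your proposal does not invoke.
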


Immediately from Theorem \ref{thm: main construction} we can set up a notion of $\Lambda$-adic critical overconvergent modular forms, knowing that it admits a good control theorem that is dual to the control maps of Theorem \ref{thm: main construction}. 
\begin{defn}
    Let $M_\Lambda^\crit := \Hom_\Lambda(\bT_\Lambda^\crit,\Lambda)$ and let $S_\Lambda^\crit := \Hom_\Lambda(\bT_\Lambda^{\crit,\circ},\Lambda)$. They are equipped with a natural $\bT[U']_\Lambda$-compatible perfect pairing $\lr{} : \bT_\Lambda^\crit \times M_\Lambda^\crit \to \Lambda$ and alternate $q$-series realizations 
    \[
    M_\Lambda^\crit \rinj q\Lambda\lb q\rb, \quad M_\Lambda^\crit \ni f \mapsto \sum_{n \geq 1} \lr{T_n,f} q^n \in q\Lambda \lb q\rb. 
    \]
\end{defn}
\begin{cor}
    \label{cor: critical forms control}
    There are $\bT[U']$-equivariant isomorphisms for $k \in \Z_{\geq 3}$, 
    \[
    M_\Lambda^\crit \otimes_{\phi,k} \Z_p \lrisom M_k^{\dagger,\crit}, \quad S_\Lambda^\crit \otimes_{\phi,k} \Z_p \lrisom S_k^{\dagger,\crit} .
    \]
\end{cor}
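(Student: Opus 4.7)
The plan is to deduce Corollary~\ref{cor: critical forms control} from the control theorem for Hecke algebras (Theorem~\ref{thm: main construction}) together with the $\Z_p$-perfect ``$a_1$''-duality between $\bT_k^{\dagger,\crit}$ and $M_k^{\dagger,\crit}$ of Proposition~\ref{prop: pairing wt k}. The definition of $M_\Lambda^\crit$ as a $\Lambda$-linear dual of $\bT_\Lambda^\crit$ was rigged precisely to make this deduction formal, so the proof is essentially a matter of commuting $\Hom$ with base change.

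First I would establish the base-change compatibility. By Theorem~\ref{thm: main construction}, $\bT_\Lambda^\crit$ is finitely generated and flat over the Noetherian ring $\Lambda$, hence finitely generated projective. Consequently the natural map
\[
\Hom_\Lambda(\bT_\Lambda^\crit,\Lambda) \otimes_{\Lambda,\phi_k} \Z_p \lrisom \Hom_{\Z_p}\bigl(\bT_\Lambda^\crit \otimes_{\Lambda,\phi_k} \Z_p,\ \Z_p\bigr)
\]
is an isomorphism, and it is $\bT[U']$-equivariant because the $\bT[U']$-action on either side is induced from the action on $\bT_\Lambda^\crit$ via its module structure, and the two actions are transported correctly through the canonical duality evaluation map.

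Second I would plug in the two known identifications. Theorem~\ref{thm: main construction} gives $\bT_\Lambda^\crit \otimes_{\Lambda,\phi_k} \Z_p \isoto \bT_k^{\dagger,\crit}$ as $\bT[U']$-algebras, so the right-hand side above becomes $\Hom_{\Z_p}(\bT_k^{\dagger,\crit},\Z_p)$; and Proposition~\ref{prop: pairing wt k} identifies this latter module $\bT[U']$-equivariantly with $M_k^{\dagger,\crit}$ via the perfect pairing $(t,f)\mapsto a_1(t\cdot f)$. Composing the three isomorphisms yields the desired control isomorphism $M_\Lambda^\crit \otimes_{\Lambda,\phi_k}\Z_p \isoto M_k^{\dagger,\crit}$. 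The cuspidal statement is proved by repeating the argument verbatim with $\bT_\Lambda^{\crit,\circ}$ and the cuspidal pairing of Proposition~\ref{prop: pairing wt k} in place of the ambient ones.

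The only substantive step is the commutation of $\Hom$ with base change, and this is immediate from the finite projectivity provided by Theorem~\ref{thm: main construction}; there is no real obstacle to carrying this out. The compatibility with alternate $q$-series implicit in Definition is then automatic: the $n$th alternate $q$-coefficient of a form is the value of the pairing against $T_n \in \bT[U']$, and this is tautologically preserved under all three of the isomorphisms above by their $\bT[U']$-equivariance.
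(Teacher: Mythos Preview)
Your proof is correct and follows exactly the approach the paper intends: the corollary is stated as the immediate dual of the Hecke-algebra control theorem (Theorem~\ref{thm: main construction}), and your argument via commuting $\Hom_\Lambda(-,\Lambda)$ with base change (using the finite projectivity of $\bT_\Lambda^\crit$ over $\Lambda$) together with the perfect $a_1$-pairing of Proposition~\ref{prop: pairing wt k} is precisely how one unpacks this duality.
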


One of the first consequences is the following $\Lambda$-adic interpolation of \eqref{eq: main SES k} over weights $k \in \Z_{\geq 3}$.  
\begin{cor}
    \label{cor: main Lambda SES}
    There is a short exact $\bT[U']$ sequence of finitely generated flat $\Lambda$-modules, perfectly $\Lambda$-dual to \eqref{eq: main Hecke SES Lambda}, 
    \begin{gather*}
    0 \to M_\Lambda^\aord \xrightarrow{\Theta_\Lambda} M_\Lambda^\crit \xrightarrow{\pi_\Lambda}  \cH^{1,\ord}_\Lambda \to 0 
    \end{gather*}
    and which admits a control isomorphism to \eqref{eq: main SES k} under $\phi_k$ for all $k \in \Z_{\geq 3}$. In particular, we have control isomorphisms
    \[
    M_\Lambda^\crit \otimes_{\Lambda,\phi_k} \Z_p \risom M_k^{\dagger,\crit}, \quad 
    \bT_\Lambda^\crit \otimes_{\Lambda,\phi_k} \Z_p \risom \bT_k^{\dagger,\crit}.
    \]
\end{cor}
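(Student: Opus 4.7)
The plan is to deduce this corollary from Theorem \ref{thm: main extension} by applying $\Lambda$-linear duality $\Hom_\Lambda(-,\Lambda)$, then to use the already-established control theorems for the outer terms to transfer the specialization property from \eqref{eq: main Hecke SES Lambda} to the desired sequence \eqref{eq: main SES k}.

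First I would apply $\Hom_\Lambda(-,\Lambda)$ to the sequence
\[
0 \to S_\Lambda^\tord \to \bT_\Lambda^\crit \to \bT_\Lambda^\aord \to 0
\]
of Theorem \ref{thm: main extension}. Since all three terms are finitely generated and $\Lambda$-flat, $\Ext^1_\Lambda$ against $\Lambda$ vanishes and we obtain a short exact sequence of finitely generated flat $\Lambda$-modules, again by reflexivity of finitely generated flat $\Lambda$-modules. To identify its terms, note that $\Hom_\Lambda(\bT_\Lambda^\aord,\Lambda) = M_\Lambda^\aord$ and $\Hom_\Lambda(\bT_\Lambda^\crit,\Lambda) = M_\Lambda^\crit$ are immediate from the definitions, while the perfect Serre duality pairing $\lr{}'_\mathrm{SD}\colon S_\Lambda^\tord \times \cH^{1,\ord}_\Lambda \to \Lambda$ of Corollary \ref{cor: tord serre duality} yields $\Hom_\Lambda(S_\Lambda^\tord,\Lambda) \cong \cH^{1,\ord}_\Lambda$. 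Hecke equivariance is automatic: the two maps in the dualized sequence are the $\Lambda$-linear duals of a $\bT[U']_\Lambda$-algebra surjection and an ideal inclusion, and all three of the pairings used are $\bT[U']_\Lambda$-compatible.

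Next I would verify the control property at each weight $k \in \Z_{\geq 3}$. Flatness of $M_\Lambda^\aord$, $M_\Lambda^\crit$, and $\cH^{1,\ord}_\Lambda$ ensures that tensoring $\otimes_{\Lambda,\phi_k} \Z_p$ preserves exactness. The outer specializations are already known: $M_\Lambda^\aord \otimes_{\Lambda,\phi_k}\Z_p \cong M_k^\aord$ by Corollary \ref{cor: aord control}, and $\cH^{1,\ord}_\Lambda \otimes_{\Lambda,\phi_k} \Z_p \cong e(F)H^1_c(X^\ord,\omega^{2-k})$ by Theorem \ref{thm: BP}. Comparing with \eqref{eq: main SES k} then forces the middle specialization to be $M_k^{\dagger,\crit}$ and pins down both vertical control isomorphisms in the ``in particular'' clause; note that these were also already stated as Corollary \ref{cor: critical forms control} and Theorem \ref{thm: main construction}, so this is a compatibility check rather than a new assertion.

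The step most likely to demand care is verifying that the map $\pi_\Lambda$ produced by duality does specialize at weight $k$ to the map $\pi_k$ of Theorem \ref{thm: lattice main}, which was \emph{defined} as the concrete composition of Coleman's presentation, Poincar\'e duality pairing, and Boxer--Pilloni's Serre duality \eqref{eq: SD cusp ord}. By construction, $\pi_\Lambda$ is the $\Lambda$-dual of the ideal inclusion $S_\Lambda^\tord \hookrightarrow \bT_\Lambda^\crit$, so it suffices to check that this inclusion, after specialization, is $a_1$-dual to $\pi_k$. This follows from: (i) the identification of the kernel with $S_\Lambda^\tord$ in Theorem \ref{thm: main extension}, which was itself obtained via the weight-$k$ $a_1$-pairing combined with Serre duality; and (ii) the compatibility between $\Lambda$-adic and weight-$k$ Serre duality pairings recorded in Theorem \ref{thm: BP} and Proposition \ref{prop: interpolated Serre duality}, together with the isomorphisms of Proposition \ref{prop: realize Ttord}. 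Once this compatibility is in hand, the remainder of the corollary is purely formal.
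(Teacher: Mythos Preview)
Your proposal is correct and follows the approach the paper implicitly intends: the corollary is presented in the paper without proof, as an immediate consequence of dualizing the sequence \eqref{eq: main Hecke SES Lambda} of Theorem~\ref{thm: main extension} and invoking the already-established control theorems. Your expansion of the details---particularly the verification that the abstractly produced $\pi_\Lambda$ specializes to the concretely defined $\pi_k$ of Theorem~\ref{thm: lattice main}, by tracing through how the kernel $K \cong S_\Lambda^\tord$ was identified in the proof of Theorem~\ref{thm: main extension} via the $a_1$-pairing and Serre duality---is exactly the compatibility check the paper leaves to the reader.
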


This corollary will play an important role in \S\ref{subsec: Lambda-adic coh} and \S\ref{sec: bi-ordinary}, where we construct $\Lambda$-adic cohomology bi-ordinary complexes, respectively. 

\subsection{Proof of Theorem \ref{thm: main extension}}

The proof relies on the variants of classical Hida theory that we set up in \S\S\ref{subsec: tord}-\ref{subsec: aord}. For easy access to these results, we recall them here. 

The $\bT[U']$-quotient modules $\pi_k : M_k^{\dagger,\crit} \rsurj \cH^{1,\ord}_k$ interpolate $\Lambda$-adically into $\cH^{1,\ord}_\Lambda$ with the usual control theorem. Its Hecke algebra $\bT[U']_\Lambda(\cH^{1,\ord}_\Lambda)$ is naturally isomorphic to the cuspidal twist-ordinary Hecke algebra $\bT^{\tord,\circ}_\Lambda$ discussed in \S\ref{subsec: tord}, while the twist-ordinary Hecke algebra $\bT_\Lambda^\tord$ interpolates the $\bT[U']$-action on $M_k^\tord$, which is just the classical critical forms $M_k^\crit$ taken to be a $\bT[U']$-module: see \eqref{eq: identifications} for a summary. 

The $\bT[U']$-submodules $M_k^\aord\subset M_k^{\dagger,\crit}$, i.e.\ the images of $M_{2-k}^{\dagger,{\rm ord}}(k-1)$ by $\theta^{k-1}$, interpolate $\Lambda$-adically into $M_\Lambda^\aord$ with the usual control theorem (\S\ref{subsec: aord}).

\begin{proof}[{Proof of Theorem \ref{thm: main extension}}]
    The injections $M_k^\aord \rinj M_k^{\dagger,\crit}$ for $k \in \Z_{\geq 3}$ produce a surjection $\bT_\Lambda^\crit \rsurj \bT_\Lambda^\aord$, in light of the defintions of these Hecke algebras (in Definitions \ref{defn: crit Lambda Hecke alg} and \ref{defn: ao hecke algebra}). Let $K$ denote the kernel of this surjection. Our goal is to establish a canonical isomorphism of Hecke modules from $K$ to $S_\Lambda^\tord$. 
    
    We observe that $K$ is naturally equipped with action maps in weights $k \in \Z_{\geq 3}$, 
    \[
    K \to \Hom_{\Z_p}(\cH^{1,\ord}_k, M_k^{\dagger,\crit}) \qquad (k \geq 3)
    \]
    defined as follows, relying on $\pi_k$ from Theorem \ref{thm: lattice main}. Notice that for $T \in K$ and $\eta \in \cH^{1,\ord}_k$ and an arbitrary choice of lift $\tilde \eta \in M_k^{\dagger,\crit}$ along $\pi_k : M_k^{\dagger,\crit} \rsurj \cH^{1,\ord}_k$, $T \cdot \tilde \eta \in M_k^{\dagger,\crit}$ is well defined. (Indeed, any two lifts of $\eta$ differ by an element of $M_k^\aord$, which is annihilated by $T$.) Let $K_k$ denote the image of the action map in weight $k$. The product of these action maps
    \[
    K \rinj \prod_{k \in \Z_{\geq 3}} K_k \subset \prod_{k \in \Z_{\geq 3}} \Hom_{\Z_p}(\cH^{1,\ord}_k, M_k^{\dagger,\crit})
    \]
    is injective by definition of $K$. 
    
    Next we define a $\Z_p$-bilinear pairing
    \begin{equation}
    \label{eq: res pairing}
    \delta_k : K_k \times \cH^{1,\ord}_k \ni (T,\eta) \mapsto a_1(T \cdot \tilde \eta) \in \Z_p     
    \end{equation}
    and claim that it is perfect. This claim will follow form the characterization of the lattice $M_k^{\dagger,\crit} \subset M_{k,\Q_p}^{\dagger,\crit}$ in Theorem \ref{thm: lattice main}. Indeed, the $a_1$ pairing on $M_k^{\dagger,\crit} \times \bT_k^{\dagger,\crit}$ is perfect (Proposition \ref{prop: pairing wt k}), and thereby the passage to a $\Z_p$-saturated ideal $K_k \subset \bT_k^{\dagger,\crit}$ with corresponding quotient $M_k^{\dagger,\crit} \rsurj \cH^{1,\ord}_k$ remains perfect. Because $\delta_k$ is Hecke-equivariant in the sense that $\delta_k(T' \cdot \eta,T) = \delta_k(\eta,T' \cdot T)$ for all $T' \in \bT[U']$, we have an isomorphism of $\bT[U']$-modules $K_k \cong \Hom_{\Z_p}(\cH^{1,\ord}_k,\Z_p)$.

Next we set up a perfect $\Lambda$-linear duality pairing $\delta$ between $K$ and $\cH^{1,\ord}_\Lambda$ by completing the diagram
\begin{equation}\label{eq: K diagram} 
\begin{aligned}
    \xymatrix{
    K \ar@{-->}[rr]^\delta \ar[d] & & \Hom_\Lambda(\cH^{1,\ord}_\Lambda,\Lambda) \ar[d] \\
    \prod_{k \geq 3} K_k \ar[rr]^(.4){\prod_k \delta_k} & & \prod_{k \geq 3} \Hom_{\Z_p}(\cH^{1,\ord}_k,\Z_p).
    }
    \end{aligned}
\end{equation}

Let $T \in K$ and $\eta \in \cH^{1,\ord}_\Lambda$. For each $k \in \Z_{\geq 3}$, we have $\delta_k(T, \eta) \in \Z_p$. Both $T$ and $\eta$ have specializations modulo $\ker \phi_k$ that satisfy the condition of Lemma \ref{lem: interpolate over Lambda}(1). (For $\eta$, this is a consequence of higher Hida theory of Theorem \ref{thm: BP}. For $T$, this is a consequence of the fact that $T$ is some $\Z_p$-linear polynomial in the $T_n \in \bT[U']$, and that these Hecke operators have actions on alternate $q$-series that are continuous in the sense of Lemma \ref{lem: interpolate over Lambda}(1) below. Indeed, their action on alternate $q$-series matches the $\bT[U]$-action on $q$-series, by definition.) Therefore the result of the pairing satisfies Lemma \ref{lem: interpolate over Lambda} as well. Bilinearity over $\Lambda$ is clear, so we now have a pairing $\delta : K \times \cH^{1,\ord}_\Lambda \to \Lambda$ making diagram \eqref{eq: K diagram} commute. 

The other three arrows in \eqref{eq: K diagram} are injective, by Theorem \ref{thm: BP} and our arguments so far. Therefore $\delta$ is also injective. (Thus we already know that $K$ is finitely genereated over $\Lambda$.) Let $C$ denote the cokernel of $\delta$, so we have a short exact sequence
\[
0 \to K \buildrel{\delta}\over\to \Hom_\Lambda(\cH^{1,\ord}_\Lambda,\Lambda) \to C \to 0.
\]
By Theorem \ref{thm: BP}, it reduces modulo $\ker\phi_k$ to an exact sequence 
\[
K \otimes_{\Lambda,\phi_k} \Z_p \to \Hom_{\Z_p}(\cH^{1,\ord}_k,\Z_p) \to C \otimes_{\Lambda,\phi_k} \Z_p \to 0.
\]
The left arrow is surjective because $\delta_k$ of \eqref{eq: res pairing} is perfect. Therefore $C \otimes_{\Lambda,\phi_k} \Z_p = 0$ for all $k \in \Z_{\geq 3}$. Therefore, because $C$ is finitely generated over $\Lambda$, Lemma \ref{lem: interpolate over Lambda}(2) yields $C=0$. Consequently we have established a canonical isomorphism of $K$ with $\Hom_\Lambda(\cH^{1,\ord}_\Lambda,\Lambda)$. 

Next, because Theorem \ref{thm: BP} implies that $\cH^{1,\ord}_\Lambda$ is $\Lambda$-flat, it is straightforward to conclude that $\delta$ is perfect. Indeed, one can substitute $\Hom_\Lambda(\cH^{1,\ord}_\Lambda,\Lambda)$ for $K$ in the other $\Lambda$-linear map $\cH^{1,\ord}_\Lambda \to \Hom_\Lambda(K,\Lambda)$ induced by $\delta$.

By Serre duality in higher Hida theory (Theorem \ref{thm: BP}) with the twists resulting in its $\bT[U']$-compatible version $\lr{}'_\mathrm{SD}$ of Corollary \ref{cor: tord serre duality}, there is a canonical $\bT[U']$-isomorphism $\Hom_\Lambda(\cH^{1,\ord}_\Lambda,\Lambda) \cong S_\Lambda^\tord$. Composing these, we have the claimed canonical isomorphism $K \cong S_\Lambda^\tord$. 
\end{proof}

\begin{lem}
    \label{lem: interpolate over Lambda}
    The following ``control'' results are true. 
    \begin{enumerate}
        \item Let $Y$ be an set of integers that has infinitely many representatives in each congruence class modulo $(p-1)$. Let $b_n \in \Z_p$ for each $n \in Y$. Then there exists $b \in \Lambda$ such that $\phi_n(b) = b_n$ if and only if for all $n,n' \in Y$ and $a \in \Z_{\geq 0}$, $n \equiv n' \pmod{(p-1)p^a} \Rightarrow b_n \equiv b_{n'} \pmod{p^{a+1}}$. 
        \item Let $X$ be a finitely generated $\Lambda$-module. Then $X=0$ if and only if $X \otimes_{\Lambda,\phi_k} \Z_p = 0$ for any single $k \in \Z_{\geq 3}$. 
    \end{enumerate}
\end{lem}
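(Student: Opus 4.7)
The plan is to deduce both parts from the Iwasawa-algebra decomposition $\Lambda \cong \prod_{i=0}^{p-2} \Lambda_i$, where $\Lambda_i \cong \Z_p\lb T\rb$ is a complete local regular $2$-dimensional ring obtained from $\Z_p^\times \cong \mu_{p-1} \times (1+p\Z_p)$ by splitting off each character of $\mu_{p-1}$, with $T := [u] - 1$ for the topological generator $u = 1+p$ of $1+p\Z_p$. Under this decomposition, $\phi_k$ factors through the unique factor $\Lambda_i$ with $i \equiv k - 1 \pmod{p-1}$, where it acts as the augmentation $T \mapsto u^{k-1} - 1 \in p\Z_p$. The key auxiliary input is the standard valuation identity $v_p(u^m - 1) = v_p(m) + 1$ for $m \in \Z \smallsetminus \{0\}$, which I would verify either by induction on $v_p(m)$ or via the $p$-adic logarithm.

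For part (1), I would group $Y$ by residue class mod $p-1$, writing $Y_i := \{n \in Y : n - 1 \equiv i \pmod{p-1}\}$; each $Y_i$ is infinite by hypothesis. The evaluation points $a_n := u^{n-1} - 1$ for $n \in Y_i$ form a dense subset of $p\Z_p$, and the identity $a_n - a_{n'} = u^{n'-1}(u^{n-n'} - 1)$ together with the valuation formula gives $v_p(a_n - a_{n'}) = v_p(n - n') + 1$. The hypothesis therefore translates exactly into the $p$-adic uniform continuity condition on the assignment $a_n \mapsto b_n$, which consequently extends uniquely to a continuous $\Z_p$-valued function on $p\Z_p$, equivalently (by the Amice/Mahler correspondence) a unique element $b_i \in \Z_p\lb T \rb = \Lambda_i$ with $\phi_n(b_i) = b_n$ for all $n \in Y_i$. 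Assembling the $b_i$ into $b \in \Lambda$ gives the interpolation, and necessity is immediate from the same valuation identity.

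For part (2), I would read the statement as requiring vanishing at a set of $\phi_k$ meeting every residue class modulo $p-1$---automatic in the application where vanishing holds for all $k \in \Z_{\geq 3}$. I would decompose $X = \bigoplus_i X_i$ across the factors $\Lambda_i$. For each $i$, pick $k \in \Z_{\geq 3}$ with $k-1 \equiv i \pmod{p-1}$; the hypothesis becomes $X_i = (T - (u^{k-1}-1))X_i$. Since $u^{k-1} - 1 \in p\Z_p$, the element $T - (u^{k-1}-1)$ lies in the maximal ideal $\mathfrak{m}_i = (p, T)$ of the local ring $\Lambda_i$, so $X_i \subseteq \mathfrak{m}_i X_i$, whence Nakayama's lemma applied to the finitely generated $\Lambda_i$-module $X_i$ forces $X_i = 0$. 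This kills every factor.

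The main technical effort is concentrated in part (1): pinning down the valuation formula $v_p(u^m-1) = v_p(m)+1$ and checking that the stated congruence hypothesis matches the uniform continuity condition on the dense subset $\{a_n\} \subset p\Z_p$ exactly, so that the unique continuous extension produces an element of $\Z_p\lb T\rb$ rather than merely a continuous function. Part (2) presents essentially no obstacle once one observes that the specialization element $T - (u^{k-1}-1)$ sits inside $\mathfrak{m}_i$, which is what brings Nakayama into play.
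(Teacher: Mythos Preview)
The paper records this lemma without proof. Your treatment of part~(2) via Nakayama on each local factor $\Lambda_i$ is correct, including your observation that one needs a specialization $\phi_k$ landing in every residue class modulo $p-1$; this matches how the lemma is actually invoked in the proof of Theorem~\ref{thm: main extension}, where vanishing is available for all $k \ge 3$.

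For part~(1) there is a genuine gap in the sufficiency (``if'') direction. You correctly identify the congruence hypothesis with the $1$-Lipschitz condition $\lvert b_n - b_{n'}\rvert \le \lvert a_n - a_{n'}\rvert$ on the assignment $a_n \mapsto b_n$, and for $Y$ as rich as $\Z_{\ge 3}$ the evaluation points $a_n = u^{n-1}-1$ are indeed dense in $p\Z_p$. But the passage from ``unique $1$-Lipschitz extension to $p\Z_p$'' to ``an element of $\Z_p\lb T\rb$'' is not valid: Mahler's theorem describes continuous functions on $\Z_p$ via binomial-coefficient expansions, not power series, and the Amice transform identifies $\Z_p\lb T\rb$ with \emph{measures} on $\Z_p$, not with continuous or $1$-Lipschitz functions. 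Concretely, $g(x) := x \cdot \mathbf{1}_{p^2\Z_p}(x)$ is a $1$-Lipschitz map $p\Z_p \to \Z_p$ that cannot be the restriction of any power series, since a nonzero element of $\Z_p\lb T\rb$ has only finitely many zeros in $p\Z_p$ by Weierstrass preparation, whereas $g$ vanishes on all of $p\Z_p \smallsetminus p^2\Z_p$. Taking $Y = \Z_{\ge 3}$ and setting $b_n := g(a_n)$ on one residue class (and $b_n := 0$ on the others) then satisfies the stated congruence condition yet admits no interpolating $b \in \Lambda$, since density forces the $\Lambda_0$-component to equal $g$. So the ``if'' clause of (1), taken at face value, is false; the application in the proof of Theorem~\ref{thm: main extension} presumably works because the specific $b_k = \delta_k(T,\eta)$ arise from honest $\Lambda$-module elements acted on by Hecke operators, not from an arbitrary $1$-Lipschitz assignment.
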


\subsection{A complementary exact sequence}

Next we prove a theorem analogous to Theorem \ref{thm: main extension} but interpolating the natural subspace of classical critical forms $\zeta_k : M^\tord_k \rinj M_k^{\dagger,\crit}$. 
\begin{prop}
    \label{prop: secondary SES Lambda}
    Let $k \in \Z_{\geq 3}$. There is a short exact sequence of $\bT[U']$-modules
    \begin{equation}
    \label{eq: secondary SES k}
        0 \to M_k^\tord \xrightarrow{\zeta_k}  M_k^{\dagger,\crit} \to Q_k \to 0,    
    \end{equation}
    where $Q_k$ is $\Z_p$-flat and $Q_k \otimes_{\Z_p} \Q_p$ is $\bT[U']$-isomorphic (non-canonically) to $S_{k,\Q_p}^\aord$. It arises by $- \otimes_{\Lambda,\phi_k} \Z_p$ from a short exact sequence of $\bT[U']$-modules that are finitely generated and flat as $\Lambda$-modules, 
    \[
    0 \to M_\Lambda^\tord \xrightarrow{\zeta_\Lambda}  M_\Lambda^\crit \to Q_\Lambda \to 0.
    \]
\end{prop}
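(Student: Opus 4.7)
The plan is to first handle the weight $k$ statement and then promote it $\Lambda$-adically via the dual of a surjection of Hecke algebras. For fixed $k\in\Z_{\geq 3}$, Proposition~\ref{prop: classical is saturated} already provides that $\zeta_k$ is $\Z_p$-saturated and is $a_1$-dual to the natural surjection $\bT_k^{\dagger,\crit}\rsurj \bT_k^\tord$, so the cokernel $Q_k$ is automatically $\Z_p$-flat. To identify $Q_k\otimes_{\Z_p}\Q_p$ as a $\bT[U']$-module, I would use the canonical Eisenstein/cuspidal splitting $M_{k,\Q_p}^{\dagger,\crit}=\mathrm{Eis}_k^\crit\oplus S_{k,\Q_p}^{\dagger,\crit}$ coming from Corollary~\ref{cor: Eisenstein theta}: $\zeta_k$ respects this decomposition and is the identity on the Eisenstein factor (since $\mathrm{Eis}_k^\crit\subset M_k^\tord$), so $Q_k\otimes_{\Z_p}\Q_p\cong S_{k,\Q_p}^{\dagger,\crit}/S_{k,\Q_p}^\tord$. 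The bottom row of Corollary~\ref{cor: Eisenstein theta} together with the non-canonical $\bT[U']$-isomorphism $\bH^1_\mathrm{par}(X_{\Q_p},\cF_k)^\crit\cong S_{k,\Q_p}^\tord$ from Proposition~\ref{prop: coleman non-split} then supplies a (non-canonical) $\bT[U']$-section identifying the quotient with $S_{k,\Q_p}^\aord$.

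For the $\Lambda$-adic promotion, each $M_k^\tord\subset M_k^{\dagger,\crit}$ is $\bT[U']$-stable, so the restriction of the natural action factors the canonical surjection $\bT[U']\rsurj\bT_\Lambda^\crit$ of Definition~\ref{defn: crit Lambda Hecke alg} through a $\Lambda$-algebra surjection $\pi\colon \bT_\Lambda^\crit\rsurj \bT_\Lambda^\tord$, where I use the definition of $\bT_\Lambda^\tord$ from Definition~\ref{defn: Lambda-adic tord}. Because $\bT_\Lambda^\tord$ is finite flat over $\Lambda$ by Proposition~\ref{prop: tord control}, and hence projective, $\pi$ splits as a map of $\Lambda$-modules. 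Dualizing the split short exact sequence $0\to K'\to \bT_\Lambda^\crit \to \bT_\Lambda^\tord \to 0$, and using the perfect $\Lambda$-dualities $\Hom_\Lambda(\bT_\Lambda^\tord,\Lambda)=M_\Lambda^\tord$ (Definition~\ref{defn: Lambda-adic tord forms}) and $\Hom_\Lambda(\bT_\Lambda^\crit,\Lambda)=M_\Lambda^\crit$, one obtains a split short exact sequence
\[
0\to M_\Lambda^\tord \xrightarrow{\zeta_\Lambda} M_\Lambda^\crit \to Q_\Lambda \to 0,\qquad Q_\Lambda:=\Hom_\Lambda(K',\Lambda),
\]
of finitely generated flat $\Lambda$-modules, with $\bT[U']$-equivariant maps (since $\pi$ is a $\bT[U']$-algebra map).

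To conclude, I apply $-\otimes_{\Lambda,\phi_k}\Z_p$ for $k\in\Z_{\geq 3}$: by flatness the sequence stays exact, and the control isomorphisms of Proposition~\ref{prop: tord control} and Corollary~\ref{cor: critical forms control} identify the outer terms with $M_k^\tord$ and $M_k^{\dagger,\crit}$, giving $Q_\Lambda\otimes_{\Lambda,\phi_k}\Z_p\cong Q_k$. The specialized map agrees with $\zeta_k$ because $\pi$ specializes to $\bT_k^{\dagger,\crit}\rsurj \bT_k^\tord$, whose $a_1$-dual is $\zeta_k$ by Proposition~\ref{prop: classical is saturated}. The one place where care is needed is the identification of $Q_k\otimes_{\Z_p}\Q_p$ with $S_{k,\Q_p}^\aord$, since it relies on a non-canonical splitting of the generally non-semisimple sequence of Proposition~\ref{prop: coleman non-split}; this is precisely why the isomorphism in the proposition is only asserted non-canonically. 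All other steps are routine given the Hida-type control theorems and the projectivity of $\bT_\Lambda^\tord$.
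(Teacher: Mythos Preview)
Your overall strategy coincides with the paper's: construct the surjection $r_\Lambda\colon \bT_\Lambda^\crit \rsurj \bT_\Lambda^\tord$ by assembling the weight-$k$ surjections $\bT_k^{\dagger,\crit}\rsurj \bT_k^\tord$ dual to $\zeta_k$, invoke the control theorems (Proposition~\ref{prop: tord control} and Theorem~\ref{thm: main construction}) to see that $r_\Lambda$ specializes to $r_k$, set $Q_\Lambda:=\Hom_\Lambda(\ker r_\Lambda,\Lambda)$, and dualize. Your added remark that $\Lambda$-projectivity of $\bT_\Lambda^\tord$ splits $r_\Lambda$ $\Lambda$-linearly is correct and harmless, though the paper simply notes that $\ker r_\Lambda$ is finite flat.

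The one genuine gap is in your identification of $Q_k\otimes_{\Z_p}\Q_p$ with $S_{k,\Q_p}^\aord$. The non-canonical Hecke isomorphism $\bH^1_\mathrm{par}(X_{\Q_p},\cF_k)^\crit\cong S_{k,\Q_p}^\tord$ from Proposition~\ref{prop: coleman non-split} does \emph{not} supply a section of $\zeta_k$: it gives (after choosing it) a Hecke-equivariant section of the \emph{other} projection $S_{k,\Q_p}^{\dagger,\crit}\rsurj \bH^1_\mathrm{par}(X_{\Q_p},\cF_k)^\crit$, and the image of such a section has no reason to coincide with the genuine subspace $\zeta_k(S_{k,\Q_p}^\tord)$. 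Indeed, the composite $S_{k,\Q_p}^\tord\hookrightarrow S_{k,\Q_p}^{\dagger,\crit}\rsurj \bH^1_\mathrm{par}(X_{\Q_p},\cF_k)^\crit$ is exactly the map $\pi_k\circ\zeta_k$ whose possible failure to be an isomorphism is the content of the bi-ordinary complex. The paper instead argues via invariance of composition series: $S_{k,\Q_p}^{\dagger,\crit}$ carries two $\bT[U']$-filtrations, one with graded pieces $S_{k,\Q_p}^\aord$ and $\bH^1_\mathrm{par}^\crit\cong S_{k,\Q_p}^\tord$, the other with graded pieces $S_{k,\Q_p}^\tord$ and $Q_k\otimes\Q_p$; since the Hecke action on the classical space $S_{k,\Q_p}^\tord$ is semi-simple, a Jordan--H\"older comparison yields the desired (non-canonical) isomorphism.
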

\begin{proof}
    The claim about $Q_k \otimes_{\Z_p} \Q_p$ is an immediate consequence of Coleman's result, Proposition \ref{prop: coleman non-split}, along with an argument on the invariance of composition series (using the fact that the Hecke action on $M_{k,\C_p}^\tord$ is semi-simple). Our goal is to descend this result to $\Z_p$ and also $\Lambda$. 

    By Definition \ref{defn: tord forms} of $M_k^\tord$, the inclusion $M_k^\tord \subset M_k^{\dagger,\crit}$ in \eqref{eq: secondary SES k} is compatible with alternate $q$-series. Therefore its dual map under the $a_1$ pairing is the surjective homomorphism of $\bT[U']$-Hecke algebras $r_k : \bT_k^{\dagger,\crit} \rsurj \bT_k^\tord$. By acting on the sum of these maps over $k \in \Z_{\geq 3}$ and viewing Definitions \ref{defn: crit Lambda Hecke alg} and \ref{defn: Lambda-adic tord}, we get a surjection of $\Lambda$-adic $\bT[U']_\Lambda$-algebras $r_\Lambda : \bT^\crit_\Lambda \rsurj \bT_\Lambda^\tord$. Applying the control results, Proposition \ref{prop: tord control} and Theorem \ref{thm: main construction}, we conclude that $r_k$ arises by applying $- \otimes_{\Lambda,\phi_k} \Z_p$ to $r_\Lambda$. Because these control results also tell us that these Hecke algebras are $\Lambda$-finite and flat, $\ker r_\Lambda$ is finitely generated and flat as a $\Lambda$-module. Letting $Q_\Lambda := \Hom_\Lambda(\ker r_\Lambda,\Lambda)$ and observing that the cokernel of \eqref{eq: secondary SES k} is canonically isomorphic to $Q_\Lambda \otimes_{\Lambda,\phi_k} \Z_p$, we have the result. 
\end{proof}

\begin{thm}
    \label{thm: secondary Hecke SES Lambda}
   There are natural isomorphisms of Hecke algebras
   \[
    \bT_\Lambda^\aord \cong \bT[U'](Q_\Lambda) \cong \bT[U'](\Hom_\Lambda(Q_\Lambda,\Lambda)).
   \]
\end{thm}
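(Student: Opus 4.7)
The second isomorphism $\bT[U'](Q_\Lambda) \cong \bT[U'](\Hom_\Lambda(Q_\Lambda,\Lambda))$ is formal. Since $Q_\Lambda$ is $\Lambda$-flat and finitely generated by Proposition \ref{prop: secondary SES Lambda}, and $\Lambda$ is regular, $Q_\Lambda$ is reflexive. The $\bT[U']$-actions on $Q_\Lambda$ and on $\Hom_\Lambda(Q_\Lambda,\Lambda)$ are mutual transposes, so an operator in $\bT[U']$ annihilates one if and only if it annihilates the other, and the two Hecke algebras coincide as quotients of $\bT[U']$.

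For the first isomorphism $\bT_\Lambda^\aord \cong \bT[U'](Q_\Lambda)$, the plan is to exhibit both sides as quotients of $\bT_\Lambda^\crit$ by the same ideal. By Theorem \ref{thm: main extension}, $\bT_\Lambda^\aord \cong \bT_\Lambda^\crit/S_\Lambda^\tord$, while $\bT[U'](Q_\Lambda) = \bT_\Lambda^\crit/\Ann_{\bT_\Lambda^\crit}(Q_\Lambda)$, so the core task is to prove $\Ann_{\bT_\Lambda^\crit}(Q_\Lambda) = S_\Lambda^\tord$. For the containment $S_\Lambda^\tord \subseteq \Ann(Q_\Lambda)$, I would use the explicit identification from the proof of Theorem \ref{thm: main extension}: an element $T \in S_\Lambda^\tord$ corresponds under the Serre duality pairing $\lr{}'_{\mathrm{SD}}$ of Corollary \ref{cor: tord serre duality} to $\xi_T \in S_\Lambda^\tord$ satisfying $a_1(T \tilde\eta) = \lr{\xi_T,\eta}'_{\mathrm{SD}}$ for any lift $\tilde\eta \in M_\Lambda^\crit$ of $\eta \in \cH^{1,\ord}_\Lambda$. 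For $f \in M_\Lambda^\crit$, commutativity of $\bT[U']$ and the defining property of $\xi_T$ yield
\[
a_n(Tf) = a_1(T \cdot T_n f) = \lr{\xi_T, T_n \pi_\Lambda(f)}'_{\mathrm{SD}} = \lr{T_n \xi_T, \pi_\Lambda(f)}'_{\mathrm{SD}},
\]
where the last step uses the $\bT[U']$-compatibility of $\lr{}'_{\mathrm{SD}}$. The assignment $T_n \mapsto \lr{T_n \xi_T, \pi_\Lambda(f)}'_{\mathrm{SD}}$ is a $\Lambda$-linear functional on $\bT_\Lambda^{\tord,\circ}$, hence an element $h \in S_\Lambda^\tord = \Hom_\Lambda(\bT_\Lambda^{\tord,\circ}, \Lambda)$ (Definition \ref{defn: Lambda-adic tord forms}) whose alternate $q$-series matches that of $Tf$. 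Since overconvergent forms are determined by their $q$-series, $Tf = h \in M_\Lambda^\tord$, giving $T \in \Ann(Q_\Lambda)$.

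For the reverse containment $\Ann(Q_\Lambda) \subseteq S_\Lambda^\tord$, I would specialize to each weight $k \geq 3$ using the control isomorphisms of Theorem \ref{thm: main construction} and Corollary \ref{cor: aord control}, invert $p$, and argue locally at each maximal ideal $\m$ of $\bT_{k,\Q_p}^{\dagger,\crit}$. Away from bi-ordinary overlap points, either $(M_k^\aord)_\m = 0$ or $(M_k^\tord)_\m = 0$, and the conclusion is immediate. At a bi-ordinary overlap point, notably the CM case of Example \ref{eg: CM}, the $2$-dimensional generalized eigenspace $\langle f, g\rangle$ has $(M_k^\aord)_\m$ and $(M_k^\tord)_\m$ both equal to $\langle f\rangle$, and the hypothesis $T\cdot\langle f,g\rangle\subseteq\langle f\rangle$ forces $T$ to be a scalar multiple of the nilpotent operator $\epsilon$ with $\epsilon\cdot f=0$, so $T|_{(M_k^\aord)_\m}=0$. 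The main obstacle will be organizing the bi-ordinary overlap analysis uniformly across all weights, but Hsu's structural picture together with $\Lambda$-flatness should allow the local weight-by-weight computations to patch into the desired $\Lambda$-adic identity of ideals.
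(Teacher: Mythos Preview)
Your reverse containment $\Ann(Q_\Lambda)\subseteq S_\Lambda^\tord$ has a genuine gap. At bi-ordinary points you invoke Hsu's Example~\ref{eg: CM}, which describes the two-dimensional generalized eigenspace \emph{only in the CM case} (and even there under the simplifying hypothesis that the generalized eigenspace is two-dimensional). Whether every classical bi-ordinary eigensystem is CM is precisely the open Coleman--Greenberg Question~\ref{ques: CG}, so appealing to ``Hsu's structural picture'' for a general bi-ordinary overlap point is not available, and your closing sentence acknowledges that this step is not actually carried out. There is also no reason in general to expect the local generalized eigenspace to be two-dimensional or the anti-ordinary Hecke action on $(M_k^\aord)_\m$ to be semisimple (these are weight $2-k$ $p$-adic forms, not classical), so the case analysis cannot be rescued by routine bookkeeping.

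The paper bypasses this entirely. It observes that Proposition~\ref{prop: secondary SES Lambda} already gives a (non-canonical) $\bT[U']$-module isomorphism $Q_k\otimes_{\Z_p}\Q_p\simeq S_{k,\Q_p}^\aord$ via a Jordan--H\"older argument from Coleman's Proposition~\ref{prop: coleman non-split}. A Hecke-module isomorphism forces equal image Hecke algebras, so $\bT[U'](Q_k)\cong\bT_k^\aord$ at every weight with no case analysis and no CM hypothesis. A control argument, as in the proof of Theorem~\ref{thm: main extension}, then yields the $\Lambda$-adic statement. In particular your first containment computation is correct but unnecessary: both containments follow at once from the module isomorphism. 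Your argument for the formal identity $\bT[U'](Q_\Lambda)\cong\bT[U'](\Hom_\Lambda(Q_\Lambda,\Lambda))$ is fine.
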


\begin{proof}
    It follows from Proposition \ref{prop: secondary SES Lambda} that $\bT[U'](Q_k) \cong \bT_k^\aord$. A control argument then yields that the Hecke action on $\oplus_{k \geq 3} Q_k$ has image algebra $\bT_\Lambda^\aord$. The rest of the claims follow much as in the proof of Theorem \ref{thm: main extension}. 
\end{proof}

\subsection{$\Lambda$-adic cohomology}
\label{subsec: Lambda-adic coh}

To prepare to interpolate it over integers $k \geq 3$, we specify notation for the $p$-integral cohomology data of Theorem \ref{thm: p-integral dR cris}. 

\begin{defn}
    \label{defn: dR}
    For $k \in \Z_{\geq 3}$, let $\dR_k$ denote the following triple, which we call \emph{$p$-integral $T_p$-ordinary analytic de Rham cohomology of $X$ in weight $k$}. 
    \begin{itemize}
        \item  $e(T_p)H^1_\dR(X_{\Z_p}^\mathrm{an}, \cF_k)^0$ of Definition \ref{defn: integral analytic dR}, equipped with:
        \item the crystalline Frobenius structure $\varphi$ coming from Theorem \ref{thm: p-integral dR cris}; and
        \item the Hodge filtration given by 
        \[
        \Fil^0 = \dR_k, \Fil^i = \KS_k(e(T_p)H^0(X,\omega^k)) \text{ for } 1 \leq i < k, \Fil^k = 0.
        \]
        with quotient $e(T_p)H^1(X, \omega^{2-k})$.
    \end{itemize}
\end{defn}

From Theorem \ref{thm: lattice main}, we know that $\dR_k$ admits a presentation by $p$-integral overconvergent forms, arising from Coleman's presentation (Proposition \ref{prop: Coleman pres dR}) over $\Q_p$. Also, by Proposition \ref{prop: FV on dR}, under this presentation $U' = p^{k-1}\lr{p}_NU^{-1}$ on forms acts $\varphi$ on cohomology. We record some immediate consequences. 

\begin{prop}
    \label{prop: dR summary}
    Let $k \in \Z_{\geq 3}$. There is a $\bT[U']$-equivariant isomorphism
    \[
    M_k^\ord \oplus \frac{M_k^{\dagger,\crit}}{M_k^\aord} \isoto \dR_k
    \]
    coming from the inclusion $M_k^\ord \oplus M_k^{\dagger,\crit} \rinj M_{k,\Q_p}^\dagger$ and the projection $\KS_k : M_{k,\Q_p}^\dagger \rsurj H^1_\dR(X_{\Q_p}, \cF_k)$ of Proposition \ref{prop: Coleman pres dR}. The summands of this isomorphism are equal to the $\varphi$-critical subspace and the $\varphi$-ordinary subspace, 
    \begin{equation}
        \label{eq: dR crit and ord}
        \KS_k(M_k^\ord) \cong \dR_k^{\varphi\text{-}\crit}, \quad \text{ and } \quad \KS_k(M_k^{\dagger,\crit}) \cong \dR_k^{\varphi\text{-}\ord}.
    \end{equation} 
    The stabilization maps of Proposition \ref{prop: duality wt k} 
    \[
    i: e(T_p)H^0(X,\omega^k) \isoto M_k^\ord, \quad j : \cH^{1,\ord}_k \isoto e(T_p)H^1(X,\omega^{2-k})
    \]
    are isomorphic (under $\KS_k$) to the projection maps 
    \begin{itemize}
        \item from the Hodge subspace $\Fil^{k-1}$ to the $\varphi$-critical subspace $\KS_k(M_k^\ord)$ within $\dR_k$
        \item from the $\varphi$-ordinary subspace $\KS_k(M_k^{\dagger,\crit}) \cong \dR_k^{\varphi\text{-}\ord}\subset \dR_k$, identified with $\cH^{1,\ord}_k$ via $\pi_k$, to the Hodge quotient $e(T_p)H^1(X,\omega^{2-k})$. 
    \end{itemize}
    respectively. 
\end{prop}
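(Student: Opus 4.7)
The plan is to combine the $\varphi$-slope decomposition of $\dR_k$ on the $T_p$-ordinary summand with Coleman's presentation (Proposition \ref{prop: Coleman pres dR}) and its integral refinements in Theorem \ref{thm: lattice main} and Proposition \ref{prop: integral quotient map}. The input tying everything together is Proposition \ref{prop: FV on dR}, which on $\dR_k$ gives $U \cdot \varphi = p^{k-1}\lr{p}_N$ and $U + \varphi = T_p$, a unit. A Hensel-type factorization of $X^2 - T_p X + p^{k-1}\lr{p}_N$ (whose discriminant is a unit on $\dR_k$ because $T_p$ is) will produce a $\Z_p$-integral, Hecke-equivariant splitting
\[
\dR_k = \dR_k^{\varphi\text{-}\ord} \oplus \dR_k^{\varphi\text{-}\crit}
\]
corresponding to $\varphi$-slopes $0, k-1$ and equivalently to $U$-slopes $k-1, 0$ on the two summands.

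Next I would identify each summand via $\KS_k$. For $\dR_k^{\varphi\text{-}\crit}$: the Hodge subspace $\Fil^{k-1}$ is identified with $e(T_p)H^0(X,\omega^k) \cong M_k^\ord$ by Proposition \ref{prop: duality wt k}, and $M_k^\ord$ carries $U$ with slope $0$, forcing $\Fil^{k-1} \subseteq \dR_k^{\varphi\text{-}\crit}$. A rank count using the Hodge SES, Hida's classicality, and Corollary \ref{cor: local constant rank Coleman} shows that the two sides have equal $\Z_p$-rank, so this inclusion is an equality; this yields $\KS_k(M_k^\ord) = \dR_k^{\varphi\text{-}\crit}$ and identifies the stabilization $i$ with the resulting isomorphism. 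For $\dR_k^{\varphi\text{-}\ord}$: since $\KS_k$ is $U$-equivariant and $M_k^{\dagger,\crit}$ is $U$-critical, its image lies in $\dR_k^{\varphi\text{-}\ord}$ with kernel $M_k^\aord$ (by Proposition \ref{prop: coleman non-split}, made $\Z_p$-saturated via Proposition \ref{prop: ao sublattice}). The composition
\[
M_k^{\dagger,\crit}/M_k^\aord \rinj \dR_k^{\varphi\text{-}\ord} \isoto \dR_k/\Fil^{k-1} \isoto \cH^{1,\ord}_k
\]
equals $\pi_k$ by Proposition \ref{prop: integral quotient map}, so its surjectivity promotes the injection to an equality and identifies $j$ with the isomorphism induced on $\cH^{1,\ord}_k$ by the Hodge-quotient projection.

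Assembling both identifications gives the direct sum decomposition $M_k^\ord \oplus M_k^{\dagger,\crit}/M_k^\aord \isoto \dR_k$ via $\KS_k$, with compatibility with $i$ and $j$ built into the identifications above. The hard part will be the first step: while the $\varphi$-slope decomposition is routine over $\Q_p$, establishing it $\Z_p$-integrally on the lattice $\dR_k$ requires a Hensel-type argument applied to the quadratic relation between $U$ and $\varphi$. This should reduce to the fact that $T_p$ is a unit on $\dR_k$ by construction and that the discriminant $T_p^2 - 4p^{k-1}\lr{p}_N$ is consequently also a unit for $p \geq 5$ and $k \geq 3$. The remaining steps are assembly of compatibilities already established in the paper.
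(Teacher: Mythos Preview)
Your argument contains a genuine error in the step claiming $\Fil^{k-1} \subseteq \dR_k^{\varphi\text{-}\crit}$. The Hodge subspace $\Fil^{k-1}$ is by Definition~\ref{defn: dR} the image $\KS_k(e(T_p)H^0(X,\omega^k))$ of \emph{unstabilized} level-$\Gamma_1(N)$ forms, whereas $\KS_k(M_k^\ord)$ is the image of $U_p$-ordinary $p$-stabilizations included via \eqref{eq: classical inclusion}. These are two different sublattices of $\dR_k$. The stabilization isomorphism $i$ is a $\bT[\,]$-module isomorphism but carries no $U$-equivariance (there is no $U$ on its source), so you cannot transport the $U$-slope of $M_k^\ord$ to $\Fil^{k-1}$. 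Concretely, as computed in \eqref{eq: find Hodge}, an unstabilized eigenform $f$ decomposes as a nontrivial combination of $f_\alpha$ (landing in $\dR_k^{\varphi\text{-}\crit}$) and $f_\beta$ (landing in $\dR_k^{\varphi\text{-}\ord}$), so $\Fil^{k-1}$ is in general position relative to the $\varphi$-decomposition. Indeed, Lemma~\ref{lem: isocrystal splittability} later shows that $\Fil^{k-1}$ is $\varphi$-stable precisely in the bi-ordinary case. This is also why the proposition describes $i$ as the \emph{projection} from $\Fil^{k-1}$ to $\dR_k^{\varphi\text{-}\crit}$ rather than an equality; your reading would make that projection the identity. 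The same error propagates to your second bullet: the composite $\dR_k^{\varphi\text{-}\ord} \rinj \dR_k \rsurj \dR_k/\Fil^{k-1}$ is not the identity but rather the (inverse) stabilization $j$.

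The correct route to $\KS_k(M_k^\ord) = \dR_k^{\varphi\text{-}\crit}$ is direct: $M_k^\ord \subset M_{k,\Q_p}^\dagger$ is $U$-ordinary, hence by Proposition~\ref{prop: FV on dR} its $\KS_k$-image has $\varphi$-slope $k-1$; it injects (classical $U$-ordinary forms are not $\theta^{k-1}$-exact since $\theta^{k-1}$ raises $U$-slope), and a rank count finishes. The paper treats the whole proposition as an immediate consequence of Theorem~\ref{thm: lattice main} (integral structure on the $\varphi$-ordinary side) and Proposition~\ref{prop: FV on dR} (the $\varphi$/$U$ slope swap), with the stabilization compatibilities already established in the proof of Proposition~\ref{prop: integral quotient map}. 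No Hensel argument is needed: the $\varphi$-ordinary/$\varphi$-critical splitting is simply the $e(U')$/$e(U)$ idempotent decomposition on the $T_p$-ordinary part.
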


The crystalline direct sum decomposition \eqref{eq: dR crit and ord} is the anchor for our $\Lambda$-adic interpolation, so we will need to carefully understand the Hodge filtration with respect to it. For $U^\star = U, U'$, we let 
\[
\tau(U^\star) = \tau_k(U^\star) := (1-p^{k-1}\lr{p}_N{U^\star}^{-2})^{-1}.
\]
The idea behind $\tau_k$ is that if $f_\alpha$ is a $U_p$-ordinary stabilization of an eigenform $f \in e(T_p)H^0(X, \omega^k)_E$ with $U_p$-eigenvalue $\alpha$, so that the other root of the $T_p$-Hecke polynomial is $\beta = p^{k-1}\chi_f(p)\alpha^{-1}$ and $f_\beta \in M_k^\crit$ denotes the $U_p$-critical stabilization, then $\tau_k(U_p)(f_\alpha) = \frac{\alpha}{\alpha-\beta}f_\alpha$ and, symmetrically, $\tau_k(U_p)(f_\beta) = \frac{\beta}{\beta-\alpha}f_\beta$. Therefore, combining the facts
\begin{itemize}
    \item  $f = (\beta f_\alpha - \alpha f_\beta)/(\beta-\alpha)$, 
    \item $p^{k-1}\lr{p}_N U^{-1}$ scales $f_\beta$ by $\alpha$, and so $U'$ scales $\zeta^{-1}(f_\beta)$ by $\alpha$, 
    \item $(\zeta_k \circ \iota_k)(f_\alpha) = f_\beta$, using the notation of \eqref{eq: identifications}, 
\end{itemize} 
we calculate that 
\begin{equation}
    \label{eq: find Hodge}
    f = [-p^{k-1}\lr{p}_N U^{-2} \cdot  \tau_k(U) \cdot f_\alpha] + [\tau_k(U')\cdot f_\beta]. 
\end{equation}

We have to be careful about integrality and existence of $\tau_k(U^\star)$, but this is quickly verified. 
\begin{lem}
    \label{lem: tau inverse}
    For $k \in \Z_{\geq 3}$, $\tau_k(U)$ is well-defined on $M_k^\ord$ with slope $0$. Likewise, $\tau_k(U')$ is well-defined on $M_k^{\dagger,\crit}$ of slope $0$. In particular, in each of these cases, $\tau_k(U^\star)$ is an automorphism. 
\end{lem}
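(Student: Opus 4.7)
The strategy in both cases is to invert $1 - p^{k-1}\lr{p}_N {U^\star}^{-2}$ via the geometric series
\[
\tau_k(U^\star) \;=\; \sum_{n \geq 0} \bigl(p^{k-1}\lr{p}_N {U^\star}^{-2}\bigr)^{n},
\]
reducing the lemma to checking that $p^{k-1}\lr{p}_N {U^\star}^{-2}$ is topologically nilpotent (of strictly positive slope) on the relevant finitely generated $\Z_p$-module. Once this is established, the sum converges $p$-adically and is congruent to $1$ modulo $p^{k-1}$, hence is a slope-$0$ automorphism. So the whole proof reduces to a slope computation in each of the two cases.

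For $\tau_k(U)$ on $M_k^\ord$: by the very definition of the ordinary subspace, $U$ acts invertibly with slope $0$ on the finitely generated $\Z_p$-module $M_k^\ord$, so $U^{-2}$ is a well-defined slope-$0$ endomorphism. Combined with the unit $\lr{p}_N$ and the scalar $p^{k-1}$, the operator $p^{k-1}\lr{p}_N U^{-2}$ has slope exactly $k-1 \geq 2$, and the geometric series converges to a $p$-integral automorphism.

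For $\tau_k(U')$ on $M_k^{\dagger,\crit}$: by Definition~\ref{defn: avatar} and the critical-slope property of $U$ on $M_k^{\dagger,\crit}$, $U'$ acts with slope $0$ and hence is invertible; equivalently, unwinding $U' = \lr{p}_N p^{k-1} U^{-1}$ gives
\[
p^{k-1}\lr{p}_N {U'}^{-2} \;=\; p^{-(k-1)} \lr{p}_N^{-1} U^{2},
\]
which has slope $2(k-1) - (k-1) = k-1 \geq 2$ because $U$ acts with slope $k-1$ on $M_k^{\dagger,\crit}$. The same geometric series argument then applies. The only point requiring care, rather than a genuine obstacle, is to verify that the resulting limit lies in $\End_{\Z_p}$ of the prescribed lattice; this is immediate since our $\Z_p$-lattices $M_k^\ord$ and $M_k^{\dagger,\crit}$ are by construction stable under $\bT[U]$ and $\bT[U']$ respectively (Definition~\ref{defn: Zp crit oc forms}) and are finitely generated over $\Z_p$, so $p$-adic convergence of operators can be checked on a finite generating set.
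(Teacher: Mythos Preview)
Your proposal is correct and matches the approach the paper has in mind: the paper does not spell out a proof (it only says ``this is quickly verified'' before the lemma), but it makes the intended argument explicit later when it writes $\tau_\Lambda(U) = \sum_{n \geq 0} (p^{\kappa-1}\lr{p}_N U^{-2})^n$ in the remark following Theorem~\ref{thm: main dR}. Your slope computations and the observation that slope-$0$ operators on a finite free $\Z_p$-module are invertible are exactly what is needed; the alternative unwinding $p^{k-1}\lr{p}_N {U'}^{-2} = p^{-(k-1)}\lr{p}_N^{-1} U^2$ is valid as a slope check but unnecessary, since the direct argument via $U'$ being slope $0$ on $M_k^{\dagger,\crit}$ already suffices and avoids any question of whether $U$ itself preserves this particular lattice.
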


In the following proposition, we use identification maps that are important to make clear. (Unlike most other parts of this manuscript, this is the main place where we need to deal with both $\bT[U]$-modules and $\bT[U']$-modules in a single package.) They are, for $k \in \Z_{\geq 3}$, using $\iota_k$ from Proposition \ref{prop: tord is ord}, 
\begin{equation}
    \label{eq: identifications}
    \begin{split}
     \xymatrix{
    M_k^\ord \ar[r]^\sim_{\iota_k} & M_k^\tord \ar@{=}[r] \ar@/^1pc/[rr]^{\zeta_k} & M_k^\crit \ar@{}[r]|{\subset} & M_k^{\dagger,\crit}
    }
    \\ 
\begin{matrix}
    \bT[U] & \isoto & \bT[U'] & \isoto & \bT[U] & \isoto & \bT[U'] \\
    U & \mapsto & U' & \mapsto & p^{k-1}\lr{p}_N U^{-1} & \mapsto & U'
\end{matrix}        
    \end{split}
\end{equation} 
where the lower two lines indicate the ring over which the top line is a module, and the maps between these $\bT[\,]$-algebras over which the top line of isomorphisms is covariant. The key inclusion of $\bT[U']$-modules is $M_k^\tord \subset M_k^{\dagger,\crit}$. We also have the $\Lambda$-adic interpolation 
\begin{equation}
    \label{eq: identificiations Lambda}
    M_\Lambda^\ord \quad  \mathrel{\mathop{\lra}_{\iota_\Lambda}^\sim} \quad  M_\Lambda^\tord \quad   \qquad  \mathrel{\mathop{\rinj}_{\zeta_\Lambda}}   \qquad  \quad M_\Lambda^\crit.
    \end{equation}

We also use $M_k^\ord$, identified with $e(T_p)H^0(X,\omega^k)$ via $i^{-1}$, as the source of the Hodge filtration, in order to ease the expression of the interpolation (since $M_\Lambda^\ord$ interpolates $M_k^\ord$ for $k \in \Z_{\geq 3}$ and $M_k^{\dagger,\ord}$ for $k \in \Z$; and only interpolates $e(T_p)H^0(X,\omega^k)$ for $k \in \Z_{\geq 3}$ via $i$). 

\begin{cor}
    \label{cor: Hodge wrt cris}
    Let $k \in \Z_{\geq 3}$.
    In terms of the crystalline decomposition of Proposition \ref{prop: dR summary}, the Hodge filtration is realized by the injection
    \begin{gather*}
        M_k^\ord \mathrel{\mathop{\lra}_{i^{-1}}^\sim} e(T_p)H^0(X,\omega^k) \buildrel{\KS_k\ }\over\lra M_k^\ord \oplus \frac{M_k^{\dagger,\crit}}{M_k^\aord} \isoto H^1_\dR(X^\mathrm{an}_{\Z_p}, \cF_k)^0 \\
        M_k^\ord \ni g \mapsto (-p^{k-1}\lr{p}_N U^{-2}\tau_k(U)(g), \zeta(\tau_k(U')(\iota_k(g)))) \in M_k^\ord \oplus \frac{M_k^{\dagger,\crit}}{M_k^\aord}
    \end{gather*}
    where we consider $f \in M_k^\crit$ under the identifications \eqref{eq: identifications}. 
\end{cor}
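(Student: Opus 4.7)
The plan is to reduce the claim to the identity $(\ref{eq: find Hodge})$ already derived in the preamble. First I would verify that both sides of the asserted formula define $\Z_p$-linear, $\bT[\,]$-equivariant (away from $p$) maps $M_k^\ord \to \dR_k$, so that the assertion is an equality of maps of finitely generated flat $\Z_p$-modules. Since $T_p$ acts semi-simply on $e(T_p)H^0(X,\omega^k)\otimes_{\Z_p}\oQ_p$, it suffices to check the equality after extending scalars to a finite extension $E/\Q_p$ containing all Hecke eigenvalues and then on each $T_p$-eigenform $f\in e(T_p)H^0(X,\omega^k)_E$. Equivalently, I would check it for $g=f_\alpha\in M_{k,E}^\ord$, where $f_\alpha$ is the ordinary $U_p$-stabilization of a $T_p$-eigenform $f$ whose Hecke polynomial factors as $(X-\alpha)(X-\beta)$.

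In this eigenform case, the formula is essentially $(\ref{eq: find Hodge})$ rephrased through the Coleman presentation $\KS_k$ and the crystalline splitting $\dR_k\cong M_k^\ord\oplus M_k^{\dagger,\crit}/M_k^\aord$ of Proposition~\ref{prop: dR summary}. Using $(\ref{eq: identifications})$, I would identify $\iota(g)$ with the critical $U_p$-stabilization $f_\beta\in M_k^\tord=M_k^\crit$, and $\zeta(\iota(g))$ with $f_\beta$ viewed inside $M_k^{\dagger,\crit}$. Because $U$ acts on $f_\alpha$ with eigenvalue $\alpha$ and $U'=p^{k-1}\lr{p}_NU^{-1}$ acts on $f_\beta\in M_k^\tord$ with eigenvalue $\alpha$, the eigenvalue calculations of $\tau_k(U)$ and $\tau_k(U')$ recorded above Lemma~\ref{lem: tau inverse} evaluate the two components of the formula explicitly as scalar multiples of $f_\alpha$ and $f_\beta$ respectively. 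Comparing with $(\ref{eq: find Hodge})$ shows that these two classical forms sum in $M_{k,E}^\dagger$ to $i^{-1}(g)=f$, whence applying $\KS_k$ componentwise (as prescribed by the decomposition) produces $\KS_k(f)\in\Fil^{k-1}\subset\dR_k$, which is exactly the image of $g$ under $\KS_k\circ i^{-1}$.

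Finally, to upgrade from $E$-coefficients back to $\Z_p$, I would observe that $M_k^\ord$ and $M_k^{\dagger,\crit}$ are each $\Z_p$-saturated inside their generic fibers (by Hida theory and Theorem~\ref{thm: lattice main}, respectively), and that $\tau_k(U)$, $\tau_k(U')$, and $U^{-2}$ preserve these lattices by Lemma~\ref{lem: tau inverse}. Thus the $E$-linear identity descends to the claimed integral identity, and the image is exactly $\KS_k(i^{-1}(M_k^\ord))=\KS_k(e(T_p)H^0(X,\omega^k))=\Fil^{k-1}$, matching the Hodge filtration of Definition~\ref{defn: dR}.

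The principal bookkeeping hazard — and the step that requires the most care — is keeping the two Hecke conventions $\bT[U]$ and $\bT[U']$ aligned across the identifications in $(\ref{eq: identifications})$: translating eigenvalues of $U'$ on $f_\beta\in M_k^\tord$ into eigenvalues of $U_p$ on $f_\beta\in M_k^\crit$ via $U'=p^{k-1}\lr{p}_NU^{-1}$, and similarly keeping track of how $\iota$ and $\zeta$ interact with the ordinary-critical swap $\alpha\leftrightarrow\beta$ in eigenvalues. Once this dictionary is in place, the algebraic verification on eigenforms is short and amounts to the content already isolated in $(\ref{eq: find Hodge})$.
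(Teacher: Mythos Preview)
Your proposal is correct and follows essentially the same approach as the paper: both reduce the claim to the eigenform identity \eqref{eq: find Hodge} via the identification $\iota_k(f_\alpha) = \zeta_k^{-1}(f_\beta)$ recorded in \eqref{eq: identifications}. The paper's proof is just a two-sentence pointer to those ingredients, whereas you spell out the scaffolding (pass to a splitting field, check on eigenforms using semi-simplicity, descend via Lemma~\ref{lem: tau inverse}) that the paper leaves implicit.
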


\begin{proof}
    In \eqref{eq: find Hodge} we saw that $f \in e(T_p)H^0(X, \omega^k)$ is recovered by applying the operator expression above to the stabilizations $f_\alpha$ and $f_\beta$. As indicated in and after \eqref{eq: identifications}, the $U$-ordinary stabilization $f_\alpha \in M_k^\ord$ is equal to the $U$-critical stabilization $f_\beta \in M_k^\crit$ when each is sent to $M_k^\tord$, that is, $\iota_k(f_\alpha) = \zeta_k^{-1}(f_\beta)$. 
\end{proof}

We notice that because the formula in Corollary \ref{cor: Hodge wrt cris} is symmetric in each summand, by Lemma \ref{lem: tau inverse} we can precompose with $\tau_k(U_p) : M_k^\ord \isoto M_k^\ord$ and propose the following ``formal'' definition of $\Lambda$-adic de Rham cohomology, where by \emph{formal} we mean that we give a formulation for $\dR_k$ that makes sense in every weight $k \in \Z_{\geq 3}$, but where 
\begin{itemize}
    \item the crystalline Frobenius decompositions are interpolable over varying $k$, but the Frobenius endomorphisms $\varphi_k$ are not; they are determined by a ratio that varies transcendentally over $\Lambda$ and is not an actual $\Lambda$-linear endomorphism 
    \item the Hodge filtration is the image $M_k^\ord$ under the map $\KS_k \circ i^{-1}$ of Corollary \ref{cor: Hodge wrt cris}, and while the $M_k^\ord$ interpolates to $M_\Lambda^\ord$, the maps are determined by scalars that are transcendental over $\Lambda$. 
\end{itemize}
The entire issue is that we must use a symbol ``$p^{\kappa-1}$'' to denote the association $\Z_{\geq 3} \ni k \mapsto p^{k-1} \in \Z_p$, which not interpolable over $\Lambda$. It can be considered as a formal power series variable over $\Lambda$, but we will not emphasize this. 

We make the follow definition slightly less transcendental by using the fact that the automorphism $\tau_k(U) : M_k^\ord \isoto M_k^\ord$ can be applied first as a factor of the map of Corollary \ref{cor: Hodge wrt cris}. 
\begin{defn}
    Let $\dR_\Lambda$ denote the triple
    \[
    \dR_\Lambda := (M_\Lambda^\ord \oplus \cH^{1,\ord}_\Lambda, \varphi_\Lambda, \Fil^{\kappa-1})
    \]  
    where $M_\Lambda^\ord \oplus \cH^{1,\ord}_\Lambda$ is equipped with its $\bT[U \oplus U']$ action (the sum acting coordinate-wise), and the following additional formal data. 
    \begin{itemize}
        \item $\Fil^{\kappa-1} \subset M_\Lambda^\ord \oplus \cH^{1,\ord}_\Lambda$, the image of $M_\Lambda^\ord$ under 
        \[
        h_\Lambda = (-p^{\kappa-1}\lr{p}_NU^{-2}, \pi_\Lambda \circ \zeta_\Lambda \circ \iota_\Lambda).
        \]
        \item $\varphi_\Lambda = U' = p^{\kappa-1}\lr{p}_N U \oplus U'$, where again the rightmost expression refers to an action coordinate-wise on $M_\Lambda^\ord \oplus \cH^{1,\ord}_\Lambda$.
        \item an action of $T_p = U + U'$ (which is formal on both summands), stabilizing $\Fil^{\kappa-1}$. 
    \end{itemize}
\end{defn}
Again using $p^{\kappa-1}$, we note that the formal interpolation of the automorphisms 
which is still not interpolable even though it is a $p$-unit.

\begin{rem}
According to \eqref{eq: find Hodge}, the Hodge filtration map that we are actually able to interpolate sends an eigenform $f$ of weight $k$ (as in the notation $f, f_\alpha, f_\beta$ of \eqref{eq: find Hodge}) to its multiple by $\tau_k(\alpha) = \sum_{n \geq 0} \frac{\beta^n}{\alpha^n}$. We could alternatively use the map $h'_\Lambda = (-p^{\kappa-1}\lr{p}_N U^{-2} \tau_\Lambda(U), \tau_\Lambda(U'))$ where
    $\tau_\Lambda(U) : M_\Lambda^\ord \isoto M_\Lambda^\ord$ is the \emph{formal} map 
        \[
        \tau_\Lambda(U) = (1-p^{\kappa-1}\lr{p}_N U^{-2})^{-1} = \sum_{n \geq 0} (p^{\kappa-1}\lr{p}_N U^{-2})^n. 
        \]
    The map $h'_\Lambda$ specializes along $\phi_k$ to the actual Hodge filtration map $\KS_k \circ i^{-1}$. Indeed, adding the two coordinates of $h'_\Lambda$ (after applying $\iota_\Lambda^{-1}$ to the right coordinate), the sum is $1$. 
\end{rem}

\begin{thm}
    \label{thm: main dR}
    Let $k \in \Z_{\geq 3}$. Upon specialization along $\phi_k : \Lambda \to \Z_p$, there is an $\bT[T_p]$-isomorphism of triples $\dR_\Lambda \otimes_{\Lambda,\phi_k} \Z_p \cong \dR_k$, where \begin{itemize}
        \item the symbol $p^{\kappa-1}$ is interpreted along $\phi_k$ as $p^{k-1}$, giving the specializations of $\varphi_\Lambda$ and the $T_p$-action.
        \item the symbol $\Fil^{\kappa-1}$ is interpreted along $\phi_k$ as the decreasing filtration determined by $\Fil^0 = \dR_\Lambda \otimes_{\Lambda,\phi_k} \Z_p$, $\Fil^i = 0$ for $i \geq k$, and 
        \[
        \text{for }0 < i < k, \  \Fil^i = \mathrm{image\ of\ } \KS_k \circ i^{-1} 
        \]
        as in Corollary \ref{cor: Hodge wrt cris}. Moreover, the formal map $h'_\Lambda$ specailizes to the natural map $e(T_p)H^0(X,\omega^k) \rinj \dR_k$ given by $\KS_k$. 
    \end{itemize}
\end{thm}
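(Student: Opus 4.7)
The plan is to reduce the claim to the weight-$k$ description of $\dR_k$ furnished by Proposition \ref{prop: dR summary}, combined with the (higher) Hida-theoretic control isomorphisms of Theorem \ref{thm: BP} and the $\Lambda$-adic extension sequence of Corollary \ref{cor: main Lambda SES}. First I would exploit Proposition \ref{prop: dR summary}, together with $\pi_k: M_k^{\dagger,\crit}/M_k^\aord \isoto \cH^{1,\ord}_k$ from Theorem \ref{thm: lattice main}, to obtain a canonical $\bT[\,]$-equivariant identification $\dR_k \cong M_k^\ord \oplus \cH^{1,\ord}_k$, under which the $\varphi$-critical (resp.\ $\varphi$-ordinary) summand corresponds to $M_k^\ord$ (resp.\ $\cH^{1,\ord}_k$). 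Specializing $M_\Lambda^\ord \oplus \cH^{1,\ord}_\Lambda$ along $\phi_k$ recovers this module $\bT[\,]$-equivariantly by the control isomorphisms of Theorem \ref{thm: BP}.

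Next I would verify the three additional pieces of structure in turn. For the Frobenius: on $\KS_k(M_k^\ord)$, Proposition \ref{prop: FV on dR} gives $\varphi_k = F' = p^{k-1}\lr{p}_N U^{-1}$, which is the specialization of the formal symbol $p^{\kappa-1}\lr{p}_N U^{-1}$, and $U$ is invertible on the ordinary module by Lemma \ref{lem: tau inverse}; on $\cH^{1,\ord}_k$, Proposition \ref{prop: FV on dR} identifies $\varphi_k$ with $F$, and the identification of Proposition \ref{prop: realize Ttord} matches $F$ with $U'$, giving the second summand of $\varphi_\Lambda$. For the $T_p$ action, Proposition \ref{prop: duality wt k} describes $T_p$ under each summand of the stabilization as $U+F$, which on our decomposition translates into $U \oplus U'$ plus the Frobenius contribution, and this matches the formal $T_p$ defined on $\dR_\Lambda$. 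The preservation of $\Fil^{\kappa-1}$ by $T_p$ then specializes to the well-known $T_p$-stability of the Hodge subspace $e(T_p)H^0(X,\omega^k) \subset \dR_k$.

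For the Hodge filtration, I would read off directly from Corollary \ref{cor: Hodge wrt cris} that
\[
\KS_k \circ i^{-1}(g) = \bigl(-p^{k-1}\lr{p}_N U^{-2}\tau_k(U)\,g,\ \pi_k\circ\zeta_k\circ\tau_k(U')\circ\iota_k(g)\bigr)
\]
inside $M_k^\ord \oplus \cH^{1,\ord}_k$. This is exactly the specialization at $\phi_k$ of
\[
h'_\Lambda = \bigl(-p^{\kappa-1}\lr{p}_N U^{-2}\,\tau_\Lambda(U),\ \pi_\Lambda\circ\zeta_\Lambda\circ\tau_\Lambda(U')\circ\iota_\Lambda\bigr),
\]
with the formal geometric series $\tau_\Lambda(U^\star)=\sum_{n\geq 0}(p^{\kappa-1}\lr{p}_N {U^\star}^{-2})^n$ acting term-by-term because the summands specialize to the convergent weight-$k$ geometric series of Lemma \ref{lem: tau inverse}. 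Since $\tau_k(U)$ is an automorphism of $M_k^\ord$, precomposition changes the image submodule by an isomorphism, and thus the $\Lambda$-pure submodule $\Fil^{\kappa-1} = h_\Lambda(M_\Lambda^\ord)$ specializes onto the same filtration piece as $h'_\Lambda(M_\Lambda^\ord)$, namely $\KS_k\circ i^{-1}(M_k^\ord)$, as required. Flatness of $M_\Lambda^\ord$ then guarantees that $\Fil^{\kappa-1}$ is $\Lambda$-pure.

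The main obstacle is the careful bookkeeping: on one side we have the $\bT[U]$-module $M_\Lambda^\ord$ while on the other we have the $\bT[U']$-module $\cH^{1,\ord}_\Lambda$, and the two are glued along identifications \eqref{eq: identifications}-\eqref{eq: identificiations Lambda} that swap $U$ and $U'$ via $\iota_\Lambda$, $\zeta_\Lambda$. One must track that the formal symbol $p^{\kappa-1}$ -- which is \emph{not} an element of $\Lambda$ -- enters only in expressions whose specialization at each $\phi_k$ is a genuine $\Z_p$-linear operator, and that the two summand Hecke actions (linked by $U \leftrightarrow U'$ up to $p^{k-1}\lr{p}_N$) are simultaneously compatible with $\varphi_k$ and with $T_p = U+F$. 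Once this is organized, the remaining assertions reduce to the already established control theorems and to the explicit formula of Corollary \ref{cor: Hodge wrt cris}.
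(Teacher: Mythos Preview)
Your proposal is correct and follows essentially the same route as the paper. In fact, the paper gives no separate proof for this theorem: it is stated as a direct consequence of the definition of $\dR_\Lambda$, Proposition \ref{prop: dR summary}, Corollary \ref{cor: Hodge wrt cris}, and the control isomorphisms of Theorem \ref{thm: BP}, and your write-up faithfully unpacks exactly those ingredients.
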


In the rest of the paper, we will discuss the kernel of $\pi_\Lambda \circ \zeta_\Lambda : M_\Lambda^\tord \to \cH^{1,\ord}_\Lambda$ (resp.\ of its reduction modulo $\ker\phi_k$) which amounts to those twist-ordinary (that is, classical critical) forms that are in the image of the map $\theta_\Lambda$ of Corollary \ref{cor: main Lambda SES} (resp.\ $\Theta^{k-1}$ of Proposition \ref{prop: ao sublattice}). This map gives us access to the relative position of the Hodge filtration and the crystalline decomposition without the failure of continuity over weights $k$ presented by $\tau_k(U)$ and $p^{k-1}$.

\section{Bi-ordinary complexes}
\label{sec: bi-ordinary}

The main application of critical $\Lambda$-adic forms that we will discuss is the construction of the bi-ordinary complex. This complex measures both the kernel and cokernel of the $\bT[U']$-equivariant map $\pi_\Lambda \circ \zeta_\Lambda$. 

\subsection{Initial observations about the bi-ordinary complex}

We use cochain conventions for complexes. 

\begin{defn}
    Let $k \in \Z_{\geq 3}$. We have a perfect complex of $\Z_p$-modules of length 1 with a $\bT[U']$ action, the \emph{weight $k$ bi-ordinary complex}
    \[
    \BO_k^\bullet := [M_k^\aord \oplus M_k^\tord \mathrel{\mathop{\lra}^{\theta^{k-1} + \zeta_k}}M_k^{\dagger,\crit}]
    \] 
    where the differential is the sum of the natural inclusions inherent in Definitions \ref{defn: ao wt k} and \ref{defn: tord forms}. The grading of the complex is $\BO_k^0 := M_k^\aord \oplus M_k^\tord$, $\BO_k^1 := M_k^{\dagger,\crit}$. We define the \emph{cuspidal weight $k$ bi-ordinary complex} similarly as 
    \[
    \SBO_k^\bullet := [M_k^\aord \oplus S_k^\tord \mathrel{\mathop{\lra}^{\theta^{k-1} + \zeta_k}} M_k^{\dagger,\crit}].
    \]
    
    Likewise, we have a perfect complex of $\Lambda$-modules of length 1 with a $\bT[U']$-action, the \emph{$\Lambda$-adic bi-ordinary complex} 
    \[
    \BO_\Lambda^\bullet := [M_\Lambda^\aord \oplus M_\Lambda^\tord 
    \mathrel{\mathop{\lra}^{\Theta_\Lambda + \zeta_\Lambda}} 
    M_\Lambda^\crit]
    \]
    and its cuspidal variant
    \[
    \SBO_\Lambda^\bullet := [M_\Lambda^\aord \oplus S_\Lambda^\tord \mathrel{\mathop{\lra}^{\Theta_\Lambda + \zeta_\Lambda}}  M_\Lambda^\crit].
    \]
    We also define $\BO_k^\bullet$ for general $k \in \Z$ by specializing $\BO_\Lambda^\bullet$ along $\phi_k$. 
\end{defn}

Here is a list of lemmas with immediate and basic facts about these bi-ordinary complexes. Later we will add its key additional structure, Serre self-duality. 

\begin{lem}
    \label{lem: BO control}
    For $k \in \Z_{\geq 3}$, there are canonical $\bT[U']$-equivariant control maps (of perfect complexes) 
    \[
    \BO_\Lambda^\bullet \otimes_{\Lambda,\phi_k} \Z_p \risom \BO_k^\bullet, \qquad 
    \SBO_\Lambda^\bullet \otimes_{\Lambda,\phi_k} \Z_p \risom \SBO_k^\bullet.
    \]
\end{lem}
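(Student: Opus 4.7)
The plan is to apply the functor $-\otimes_{\Lambda,\phi_k}\Z_p$ termwise to $\BO_\Lambda^\bullet$ (and likewise to $\SBO_\Lambda^\bullet$) and then recognize the resulting length-$1$ complex as $\BO_k^\bullet$ (resp.\ $\SBO_k^\bullet$). Since tensor product commutes with finite direct sums, and since each of the three constituent modules $M_\Lambda^\aord$, $M_\Lambda^\tord$ (resp.\ $S_\Lambda^\tord$), and $M_\Lambda^\crit$ is finitely generated and flat over $\Lambda$, the formation of the complex commutes with this specialization; it will suffice to identify each term after specialization together with the two components $\Theta_\Lambda$ and $\zeta_\Lambda$ of the differential, and then assemble these identifications.

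On the level of terms, the required control isomorphisms are already in place. Namely, $M_\Lambda^\aord\otimes_{\Lambda,\phi_k}\Z_p\cong M_k^\aord$ by Corollary~\ref{cor: aord control}; $M_\Lambda^\tord\otimes_{\Lambda,\phi_k}\Z_p\cong M_k^\tord$ and $S_\Lambda^\tord\otimes_{\Lambda,\phi_k}\Z_p\cong S_k^\tord$ by Definition~\ref{defn: Lambda-adic tord forms} together with Proposition~\ref{prop: tord control}; and $M_\Lambda^\crit\otimes_{\Lambda,\phi_k}\Z_p\cong M_k^{\dagger,\crit}$ by Corollary~\ref{cor: critical forms control}. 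Each of these isomorphisms is canonical and $\bT[U']$-equivariant, so the direct sum $(M_\Lambda^\aord\oplus M_\Lambda^\tord)\otimes_{\Lambda,\phi_k}\Z_p\cong M_k^\aord\oplus M_k^\tord$ is likewise canonical and $\bT[U']$-equivariant, and similarly in the cuspidal case.

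It remains to match the two components of the differential. The specialization of $\Theta_\Lambda$ will be identified with the inclusion $\theta^{k-1}:M_k^\aord\hookrightarrow M_k^{\dagger,\crit}$ via Corollary~\ref{cor: main Lambda SES}, which explicitly states that the short exact sequence $0\to M_\Lambda^\aord\xrightarrow{\Theta_\Lambda}M_\Lambda^\crit\xrightarrow{\pi_\Lambda}\cH^{1,\ord}_\Lambda\to 0$ specializes under $\phi_k$ to \eqref{eq: main SES k}, whose left-hand map is $\theta^{k-1}$. Similarly, the specialization of $\zeta_\Lambda:M_\Lambda^\tord\hookrightarrow M_\Lambda^\crit$ is identified with $\zeta_k:M_k^\tord\hookrightarrow M_k^{\dagger,\crit}$ via Proposition~\ref{prop: secondary SES Lambda}, which expresses $\zeta_k$ as the $\phi_k$-specialization of $\zeta_\Lambda$. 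Assembling the components yields the identification of the full differential $\Theta_\Lambda+\zeta_\Lambda$ with $\theta^{k-1}+\zeta_k$ after specialization.

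I do not anticipate a serious obstacle: the one potential worry --- that tensoring with $\Z_p$ over $\Lambda$ via $\phi_k$ could fail to commute with the formation of the two-term complex in a controlled way --- is bypassed because all three modules involved are $\Lambda$-flat, so the underived and derived tensor products agree, and the specialized complex is again perfect of length $1$. Finally, $\bT[U']$-equivariance is preserved at every stage because each cited control isomorphism is itself $\bT[U']$-equivariant.
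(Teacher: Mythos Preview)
Your argument is correct and follows essentially the same approach as the paper's proof, which simply cites the control results of Corollaries~\ref{cor: critical forms control} and~\ref{cor: main Lambda SES} and Proposition~\ref{prop: secondary SES Lambda}. You have spelled out more of the termwise identifications (invoking also Corollary~\ref{cor: aord control} and Proposition~\ref{prop: tord control}), but the substance is the same.
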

\begin{proof}
    This follows from the control results of Corollaries \ref{cor: critical forms control} and \ref{cor: main Lambda SES} and Proposition \ref{prop: secondary SES Lambda}, as well as control results for the twist-ordinary forms and anti-ordinary forms recorded in \S\ref{subsec: tord} and \S\ref{subsec: aord}. 
\end{proof}

\begin{lem}
    \label{lem: Eis difference}
    There is a short exact $\bT[U']$-equivariant sequence
    \[
    0 \to \SBO_\Lambda^\bullet \to \BO_\Lambda^\bullet \to (\Hom_\Lambda(J, \Lambda))^0 \to 0 
    \]
    of perfect $\Lambda$-complexes, where $J$ is the kernel of the natural surjection $\bT_\Lambda^\tord \rsurj \bT_\Lambda^{\tord,\circ}$.
\end{lem}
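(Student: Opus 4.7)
The plan is to read off the map of complexes directly from the definitions and then identify the cokernel cohomologically by dualising a short exact sequence of Hecke algebras.

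First, I would observe that the natural inclusion $S_\Lambda^\tord \rinj M_\Lambda^\tord$, induced by the surjection of Hecke algebras $\bT_\Lambda^\tord \rsurj \bT_\Lambda^{\tord,\circ}$ via $\Lambda$-linear duality (recall $M_\Lambda^\tord = \Hom_\Lambda(\bT_\Lambda^\tord,\Lambda)$ and $S_\Lambda^\tord = \Hom_\Lambda(\bT_\Lambda^{\tord,\circ},\Lambda)$ from Definition \ref{defn: Lambda-adic tord forms}), induces a map of complexes $\SBO_\Lambda^\bullet \to \BO_\Lambda^\bullet$ that is the identity on $\BO_\Lambda^1 = M_\Lambda^\crit$ and is $\id_{M_\Lambda^\aord} \oplus (S_\Lambda^\tord \rinj M_\Lambda^\tord)$ on degree $0$. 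In particular it is injective term-wise (so injective as a map of complexes), and its cokernel is concentrated in degree $0$ and equals $M_\Lambda^\tord / S_\Lambda^\tord$.

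Next I would compute $M_\Lambda^\tord / S_\Lambda^\tord$. Apply $\Hom_\Lambda(-,\Lambda)$ to the defining short exact sequence
\[
0 \to J \to \bT_\Lambda^\tord \to \bT_\Lambda^{\tord,\circ} \to 0,
\]
obtaining the long exact sequence
\[
0 \to S_\Lambda^\tord \to M_\Lambda^\tord \to \Hom_\Lambda(J,\Lambda) \to \Ext^1_\Lambda(\bT_\Lambda^{\tord,\circ},\Lambda) \to \cdots.
\]
By Proposition \ref{prop: tord control} the algebra $\bT_\Lambda^{\tord,\circ}$ is finite and flat over $\Lambda$, hence projective (since $\Lambda$ is a finite product of two-dimensional regular local rings $\Z_p\lb T\rb$), so the Ext term vanishes. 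This yields the clean identification $M_\Lambda^\tord / S_\Lambda^\tord \cong \Hom_\Lambda(J,\Lambda)$, and the cokernel complex is therefore $(\Hom_\Lambda(J,\Lambda))^0$, placed in degree $0$.

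The Hecke equivariance is automatic: the surjection $\bT_\Lambda^\tord \rsurj \bT_\Lambda^{\tord,\circ}$ is $\bT[U']$-equivariant by construction, so $J$ is a $\bT[U']$-stable ideal and all maps above carry natural $\bT[U']$-actions. Finally, for the perfectness claim, both $\SBO_\Lambda^\bullet$ and $\BO_\Lambda^\bullet$ are length $1$ complexes of finitely generated flat (hence projective) $\Lambda$-modules by the control/flatness results already in hand, and $\Hom_\Lambda(J,\Lambda)$ is a finitely generated $\Lambda$-module, which over the regular ring $\Lambda$ has finite projective dimension, so $(\Hom_\Lambda(J,\Lambda))^0$ is perfect as well. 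I anticipate that the only substantive point is the vanishing of $\Ext^1_\Lambda(\bT_\Lambda^{\tord,\circ},\Lambda)$, and this is essentially immediate from the established flatness of the cuspidal twist-ordinary Hecke algebra; everything else is just unwinding definitions.
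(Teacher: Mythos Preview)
Your proof is correct and follows essentially the same approach as the paper: the paper's one-line proof simply notes that the inclusion $S_\Lambda^\tord \rinj M_\Lambda^\tord$ is dual to the surjection $\bT_\Lambda^\tord \rsurj \bT_\Lambda^{\tord,\circ}$ of flat $\Lambda$-algebras, which is exactly the computation you carry out in detail. Your observation that $\Ext^1_\Lambda(\bT_\Lambda^{\tord,\circ},\Lambda)=0$ by flatness is the implicit content of the paper's phrase ``flat $\Lambda$-algebras''; note also that $J$, as the kernel of a surjection between finite projective $\Lambda$-modules, is itself finite projective, so $\Hom_\Lambda(J,\Lambda)$ is projective and the perfectness of the quotient complex is immediate without invoking regularity.
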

\begin{proof}
    This follows from the fact that the inclusion $S_\Lambda^\tord \rinj M_\Lambda^\tord$ is dual to the named surjection of flat $\Lambda$-algebras, as mentioned in Definition \ref{defn: Lambda-adic tord forms}. 
\end{proof}

\begin{lem}
\label{lem: Euler characteristic}
Let $k \in \Z_{\geq 3}$. The Euler characteristics of $\SBO_\Lambda^\bullet$ and $\SBO_k^\bullet$ are $0$.
\end{lem}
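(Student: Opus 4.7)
The plan is to reduce the claim to a statement about generic $\Lambda$-ranks of the four modules appearing, and then to identify the two ranks that do not obviously cancel using Serre duality. Since all the $\Lambda$-modules involved are finitely generated and flat (by Theorems \ref{thm: BP} and \ref{thm: main construction} and Propositions \ref{prop: tord control} and \ref{prop: aord TU algebra}), Euler characteristics of $\SBO_\Lambda^\bullet$ and $\SBO_k^\bullet$ are well-defined, and the control isomorphism of Lemma \ref{lem: BO control} shows that it suffices to prove the claim over $\Lambda$ (the specializations have the same generic rank).

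By definition,
\[
\chi(\SBO_\Lambda^\bullet) = \rk_\Lambda M_\Lambda^\aord + \rk_\Lambda S_\Lambda^\tord - \rk_\Lambda M_\Lambda^\crit.
\]
First I would apply Corollary \ref{cor: main Lambda SES}, which gives the short exact sequence
\[
0 \to M_\Lambda^\aord \xrightarrow{\Theta_\Lambda} M_\Lambda^\crit \xrightarrow{\pi_\Lambda} \cH_\Lambda^{1,\ord} \to 0
\]
of flat $\Lambda$-modules, yielding $\rk_\Lambda M_\Lambda^\crit = \rk_\Lambda M_\Lambda^\aord + \rk_\Lambda \cH_\Lambda^{1,\ord}$. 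Substituting this into the previous display cancels the $M_\Lambda^\aord$ contribution and reduces the claim to the equality
\[
\rk_\Lambda S_\Lambda^\tord = \rk_\Lambda \cH_\Lambda^{1,\ord}.
\]

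Finally, this equality is immediate from Corollary \ref{cor: tord serre duality}, which provides a perfect $\Lambda$-bilinear Serre duality pairing $\langle \, , \, \rangle'_\mathrm{SD} : S_\Lambda^\tord \times \cH_\Lambda^{1,\ord} \to \Lambda$. Hence $\chi(\SBO_\Lambda^\bullet) = 0$, and the statement for $\SBO_k^\bullet$ follows by flat specialization along $\phi_k$. There is no real obstacle here; the only subtlety is being careful that the Serre duality invoked is the $\bT[U']$-compatible variant of Corollary \ref{cor: tord serre duality} and not the original Boxer--Pilloni pairing between $S_\Lambda^\ord$ and $\cH_\Lambda^{1,\ord}$, but for the purpose of comparing generic ranks this distinction is immaterial since $S_\Lambda^\ord \cong S_\Lambda^\tord$ as $\Lambda$-modules.
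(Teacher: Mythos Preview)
Your proof is correct and follows essentially the same route as the paper: use the short exact sequence $0 \to M_\Lambda^\aord \to M_\Lambda^\crit \to \cH_\Lambda^{1,\ord} \to 0$ (the paper cites the weight-$k$ version, Theorem \ref{thm: lattice main}, while you cite its $\Lambda$-adic interpolation, Corollary \ref{cor: main Lambda SES}) to cancel the anti-ordinary contribution, and then invoke Serre duality to match $\rk S_\Lambda^\tord$ with $\rk \cH_\Lambda^{1,\ord}$. The paper's proof is simply terser, leaving the Serre-duality step implicit; one small citation fix is that the flatness of $M_\Lambda^\aord$ is Corollary \ref{cor: aord control} rather than Proposition \ref{prop: aord TU algebra}.
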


\begin{proof}
    The vanishing of the Euler characteristics is visible in the short exact sequence of Theorem \ref{thm: lattice main}. 
\end{proof}

 We will sometimes think of the natural $\Lambda$-algebra homomorphism
 \[
\psi_\Lambda : \bT_\Lambda^\crit \to \bT_\Lambda^\aord \times \bT_\Lambda^\tord 
\]
as a complex with source in grade 0 and target in grade 1. We write $(-)^\vee$ for duality of perfect complexes over $\Lambda$; in particular, the dual $(M^i)^\vee := \Hom_\Lambda(M_i, \Lambda)$ of the degree $i$ piece occurs in degree $-i$. The ``$a_1$-duality'' pairing we render as ``$\lr{}$'' is realizable on $\BO_\Lambda^\bullet$ as follows. 

\begin{lem}
\label{lem: BO psi duality}
The $\Lambda$-linear dual $(\BO_\Lambda^\bullet)^\vee[-1]$ of the complex $\BO_\Lambda^\bullet$ is canonically realized by $\psi_\Lambda$. Likewise, the dual of $\SBO_\Lambda^\bullet$ is realized by the natural $\Lambda$-algebra homomomorphism
\[
\psi_\Lambda^\circ : \bT_\Lambda^\crit \to \bT_\Lambda^\aord \times \bT_\Lambda^{\tord,\circ}. 
\]
\end{lem}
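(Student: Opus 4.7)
The plan is to verify the duality term by term using the perfect $a_1$-pairings, and then to check that the differential of $\BO_\Lambda^\bullet$ dualizes to $\psi_\Lambda$. Recall that $\BO_\Lambda^\bullet$ is concentrated in degrees $0$ and $1$, so $(\BO_\Lambda^\bullet)^\vee$ lives in degrees $-1$ and $0$, and the shift $[-1]$ puts it back in degrees $0$ and $1$. Thus we must produce canonical $\Lambda$-linear isomorphisms
\[
(M_\Lambda^\crit)^\vee \isoto \bT_\Lambda^\crit, \quad (M_\Lambda^\aord)^\vee \isoto \bT_\Lambda^\aord, \quad (M_\Lambda^\tord)^\vee \isoto \bT_\Lambda^\tord,
\]
and check that under these identifications the transpose of $\Theta_\Lambda \oplus \zeta_\Lambda$ is exactly the pair of surjections comprising $\psi_\Lambda$.

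First I would observe that these isomorphisms are built into the definitions: $M_\Lambda^\crit := \Hom_\Lambda(\bT_\Lambda^\crit,\Lambda)$, $M_\Lambda^\tord := \Hom_\Lambda(\bT_\Lambda^\tord,\Lambda)$, and $M_\Lambda^\aord := \Hom_\Lambda(\bT_\Lambda^\aord,\Lambda)$, with each Hecke algebra being $\Lambda$-finite and flat (by Theorem~\ref{thm: main construction}, Proposition~\ref{prop: tord control}, and Corollary~\ref{cor: aord control}). Double duality over $\Lambda$ for finitely generated flat modules then supplies the canonical inverses. Each identification is $\bT[U']$-equivariant by construction of the $a_1$-pairing $\lr{T,f} = a_1(T\cdot f)$.

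Next, I would identify the dual of the differential. The map $\Theta_\Lambda: M_\Lambda^\aord \rinj M_\Lambda^\crit$ is by Corollary~\ref{cor: aord duality} (and its $\Lambda$-adic upgrade used to prove Theorem~\ref{thm: main extension}) the $\Lambda$-dual under $a_1$-pairings of the natural surjection $\bT_\Lambda^\crit \rsurj \bT_\Lambda^\aord$; similarly, $\zeta_\Lambda: M_\Lambda^\tord \rinj M_\Lambda^\crit$ is the $a_1$-dual of $\bT_\Lambda^\crit \rsurj \bT_\Lambda^\tord$ by Proposition~\ref{prop: classical is saturated} (again, upgraded $\Lambda$-adically as in the proof of Proposition~\ref{prop: secondary SES Lambda}). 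Taking the sum, the dual of $\Theta_\Lambda + \zeta_\Lambda$ is precisely the product of the two structural surjections, which is exactly $\psi_\Lambda$. So $(\BO_\Lambda^\bullet)^\vee[-1]$ is canonically the two-term complex $[\bT_\Lambda^\crit \xrightarrow{\psi_\Lambda} \bT_\Lambda^\aord \times \bT_\Lambda^\tord]$, as claimed.

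The cuspidal case is identical except that $M_\Lambda^\tord$ is replaced by $S_\Lambda^\tord := \Hom_\Lambda(\bT_\Lambda^{\tord,\circ},\Lambda)$ (Definition~\ref{defn: Lambda-adic tord forms}), and the $a_1$-dual of the inclusion $S_\Lambda^\tord \rinj M_\Lambda^\crit$ is the surjection $\bT_\Lambda^\crit \rsurj \bT_\Lambda^{\tord,\circ}$ obtained by composing $\bT_\Lambda^\crit \rsurj \bT_\Lambda^\tord$ with $\bT_\Lambda^\tord \rsurj \bT_\Lambda^{\tord,\circ}$. The resulting pair is precisely $\psi_\Lambda^\circ$. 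There is no substantial obstacle here: the content is bookkeeping of perfect pairings, and the only point requiring care is to keep track that each of the three Hecke algebras acts $\Lambda$-finitely and flatly so that double duality is unambiguous.
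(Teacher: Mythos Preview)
Your proof is correct and is exactly the intended argument. The paper does not supply a proof for this lemma, treating it as immediate from the definitions of $M_\Lambda^\crit$, $M_\Lambda^\tord$, $M_\Lambda^\aord$ as $\Lambda$-duals of their Hecke algebras together with the fact (recorded in Corollary~\ref{cor: aord duality}, Proposition~\ref{prop: classical is saturated}, and their $\Lambda$-adic interpolations) that the inclusions $\Theta_\Lambda$ and $\zeta_\Lambda$ are $a_1$-dual to the structural surjections of Hecke algebras; your write-up makes this bookkeeping explicit.
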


\begin{lem}
    \label{lem: PD of H}
    $H^0(\BO_\Lambda^\bullet)$ is $\Lambda$-flat. $H^1(\BO_\Lambda^\bullet)$ has projective dimension at most $1$; what is equivalent, $H^1(\BO_\Lambda^\bullet)$ has no finite cardinality submodule. 
\end{lem}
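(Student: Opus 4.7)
The plan is to use the quasi-isomorphism $\BO_\Lambda^\bullet \simeq C^\bullet := [M_\Lambda^\tord \xrightarrow{f} \cH_\Lambda^{1,\ord}]$ with $f := \pi_\Lambda \circ \zeta_\Lambda$, obtained from Corollary~\ref{cor: main Lambda SES} by modding out the acyclic subcomplex $[M_\Lambda^\aord \xrightarrow{\mathrm{id}} M_\Lambda^\aord]$ of $\BO_\Lambda^\bullet$. Then $H^0(\BO_\Lambda^\bullet) = \ker f$ and $H^1(\BO_\Lambda^\bullet) = \coker f$; both terms of $C^\bullet$ are finitely generated flat (hence projective) $\Lambda$-modules by Theorem~\ref{thm: BP} and Proposition~\ref{prop: tord control}, while $\Lambda$ is a finite product of $2$-dimensional regular local rings.

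I would first establish flatness of $H^0$ by dimension shifting. The image $\mathrm{image}(f)$ is a submodule of the projective $\Lambda$-module $\cH_\Lambda^{1,\ord}$, hence finitely generated and torsion-free, and thus has projective dimension at most $1$ by Auslander--Buchsbaum. The short exact sequence
\[
0 \to \ker f \to M_\Lambda^\tord \to \mathrm{image}(f) \to 0
\]
with projective middle term then forces $\ker f = H^0(\BO_\Lambda^\bullet)$ to be projective, hence flat.

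For the projective dimension bound on $H^1$, I would exploit Lemma~\ref{lem: BO psi duality}: $R\Hom_\Lambda(\BO_\Lambda^\bullet, \Lambda)$ is canonically represented (after a shift) by the two-term complex $\psi_\Lambda = [\bT_\Lambda^\crit \to \bT_\Lambda^\aord \oplus \bT_\Lambda^\tord]$ of finite flat $\Lambda$-algebras in degrees $-1, 0$. In particular $\Ext_\Lambda^i(\BO_\Lambda^\bullet, \Lambda) = 0$ for $i \notin \{-1, 0\}$. Feeding this vanishing into the hyper-Ext spectral sequence
\[
E_2^{p,q} = \Ext_\Lambda^p(H^{-q}(\BO_\Lambda^\bullet), \Lambda) \Rightarrow \Ext_\Lambda^{p+q}(\BO_\Lambda^\bullet, \Lambda),
\]
together with the fact that $\Ext_\Lambda^i = 0$ for $i > 2$ (global dimension $2$), yields a canonical identification $\ker \psi_\Lambda \cong \Hom_\Lambda(H^1(\BO_\Lambda^\bullet), \Lambda)$ and a short exact sequence
\[
0 \to \Ext_\Lambda^1(H^1, \Lambda) \to \coker \psi_\Lambda \to \ker d \to 0,
\]
in which $d \colon \Hom_\Lambda(H^0, \Lambda) \twoheadrightarrow \Ext_\Lambda^2(H^1, \Lambda)$ is the unique non-trivial $E_2$-differential and is automatically surjective.

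The hard part will then be to force $\Ext_\Lambda^2(H^1, \Lambda) = 0$, i.e., to promote $d$ to the zero map; this will not follow from the two-term-flat structure of $\BO_\Lambda^\bullet$ alone. My plan is to apply the dimension-shift argument of the second paragraph symmetrically to $\psi_\Lambda$, which shows that $\ker \psi_\Lambda$ is projective, and then to bootstrap using the algebra structure of $\psi_\Lambda$ --- specifically, that $\bT_\Lambda^\crit$ is a finite flat $\Lambda$-algebra and $\psi_\Lambda$ an algebra homomorphism whose image is expected to be $\Lambda$-flat --- to upgrade the short exact sequence $0 \to \mathrm{image}(\psi_\Lambda) \to \bT_\Lambda^\aord \oplus \bT_\Lambda^\tord \to \coker \psi_\Lambda \to 0$ to a length-$1$ projective resolution of $\coker \psi_\Lambda$. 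This gives $\mathrm{pd}_\Lambda \coker \psi_\Lambda \leq 1$, which, combined with the SES of the third paragraph, forces $\Ext_\Lambda^2(H^1, \Lambda) = 0$ and hence $\mathrm{pd}_\Lambda H^1(\BO_\Lambda^\bullet) \leq 1$. Establishing the $\Lambda$-flatness of $\mathrm{image}(\psi_\Lambda)$ is the main obstacle, and will most likely require control to integer weights $k \geq 3$ combined with the perfect $a_1$-dualities already in play.
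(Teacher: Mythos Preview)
Your argument for the $\Lambda$-flatness of $H^0$ is exactly the paper's: the image of the differential is a finitely generated torsion-free $\Lambda$-module, hence of projective dimension at most $1$ by Auslander--Buchsbaum over the $2$-dimensional regular local factors of $\Lambda$, and dimension-shifting along $0 \to H^0 \to \BO_\Lambda^0 \to \mathrm{image}(d) \to 0$ forces $H^0$ to be projective.

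For the bound $\mathrm{pd}_\Lambda H^1 \leq 1$ the paper offers only ``standard facts about perfect complexes,'' and you correctly sense that this is insufficient: over a component of $\Lambda$ with regular parameters $x,y$, the complex $[\Lambda^2 \xrightarrow{(x,y)} \Lambda]$ has $H^0 \cong \Lambda$ but $H^1$ equal to the finite residue field, of projective dimension $2$. However, your proposed repair is circular. Under the $a_1$-duality $\psi_\Lambda \simeq (\BO_\Lambda^\bullet)^\vee[-1]$ of Lemma~\ref{lem: BO psi duality}, the roles of $H^1(\BO_\Lambda^\bullet)$ and $\coker\psi_\Lambda$ are completely symmetric; running your own spectral-sequence analysis with the two complexes interchanged shows that $\mathrm{pd}_\Lambda(\coker\psi_\Lambda) \leq 1$ is \emph{equivalent} to $\mathrm{pd}_\Lambda H^1(\BO_\Lambda^\bullet) \leq 1$. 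Since projectivity of $\mathrm{image}(\psi_\Lambda)$ is in turn equivalent (via $0 \to \mathrm{image}(\psi_\Lambda) \to \bT_\Lambda^\aord \times \bT_\Lambda^\tord \to \coker\psi_\Lambda \to 0$) to $\mathrm{pd}_\Lambda(\coker\psi_\Lambda) \leq 1$, your ``main obstacle'' is literally the claim in question. The ring structure does not break this symmetry either: $\mathrm{image}(\psi_\Lambda) = \bT_\Lambda^\crit/\ker\psi_\Lambda$ is the quotient of a finite flat $\Lambda$-algebra by a nilpotent ideal, and such quotients need not be $\Lambda$-flat. A genuine argument must input something specific to these Hecke modules beyond the formal $a_1$-duality; the paper, as written, does not supply one either.
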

\begin{proof}
These are standard facts about perfect complexes. In particular, $H^0(\BO_\Lambda^\bullet)$ is flat because it is the kernel of a presentation of a projective dimension 1 module by a projective module. For the equivalence, see Lemma \ref{lem: Lambda duality}. 
\end{proof}

The perfect duality between $\BO_\Lambda^\bullet$ and $\psi_\Lambda$ produces the following structures. 
\begin{lem}
    \label{lem: a1 duality for BO}
    There are canonical isomorphisms
    \begin{gather*}
        \ker \psi_\Lambda \cong \Hom_\Lambda(H^1(\BO_\Lambda^\bullet),\Lambda), \quad 
        H^0(\BO_\Lambda^\bullet) \cong \Hom_\Lambda(\coker \psi_\Lambda, \Lambda)
    \end{gather*}
    and canonical short exact sequences
    \begin{gather*}
    0 \to \Ext^1_\Lambda(\coker \psi_\Lambda,\Lambda) \to H^1(\BO_\Lambda^\bullet) \to \Hom_\Lambda(\ker \psi_\Lambda,\Lambda) \to 0 \\
    0 \to \Ext^1_\Lambda(H^1(\BO_\Lambda^\bullet), \Lambda) \to \coker \psi_\Lambda \to \Hom_\Lambda(H^0(\BO_\Lambda^\bullet),\Lambda) \to 0.
    \end{gather*}
    Likewise, there exist the same canonical data for $(\SBO_\Lambda^\bullet, \psi_\Lambda^\circ)$ in place of $(\BO_\Lambda^\bullet,\psi_\Lambda)$. All of the maps here are $\bT[U']$-equivariant.
\end{lem}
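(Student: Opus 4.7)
The engine is the duality $(\BO_\Lambda^\bullet)^\vee[-1] \simeq \psi_\Lambda$ of Lemma~\ref{lem: BO psi duality}, which by biduality of perfect complexes is symmetric: $(\psi_\Lambda)^\vee[-1] \simeq \BO_\Lambda^\bullet$. The plan is to translate this duality into long exact sequences of cohomology via the standard distinguished triangle attached to any perfect two-term complex, and then to extract the four claimed statements by showing the relevant $\Ext^2$ terms vanish via Lemma~\ref{lem: PD of H}.

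For any perfect complex $D^\bullet$ of finite flat $\Lambda$-modules concentrated in cohomological degrees $0$ and $1$, the canonical distinguished triangle $H^0(D^\bullet) \to D^\bullet \to H^1(D^\bullet)[-1]$ yields, under $R\Hom_\Lambda(-,\Lambda)$, the long exact sequence
\[
\cdots \to \Ext^{i+1}_\Lambda(H^1(D^\bullet), \Lambda) \to H^i(D^\vee) \to \Ext^i_\Lambda(H^0(D^\bullet), \Lambda) \to \Ext^{i+2}_\Lambda(H^1(D^\bullet), \Lambda) \to \cdots.
\]
Applied with $D^\bullet = \BO_\Lambda^\bullet$ (where $D^\vee \simeq \psi_\Lambda[1]$), the $i=-1$ portion collapses to the isomorphism $\Hom_\Lambda(H^1(\BO_\Lambda^\bullet), \Lambda) \isoto \ker\psi_\Lambda$, and the $i=0$ portion becomes a four-term sequence which, using $\Ext^2_\Lambda(H^1(\BO_\Lambda^\bullet),\Lambda) = 0$ from Lemma~\ref{lem: PD of H}, simplifies to the second claimed SES. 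Applied with $D^\bullet = \psi_\Lambda$ (where $D^\vee \simeq \BO_\Lambda^\bullet[1]$), the $i=-1$ portion gives $\Hom_\Lambda(\coker\psi_\Lambda,\Lambda) \isoto H^0(\BO_\Lambda^\bullet)$, while the $i=0$ portion yields the first claimed SES provided $\Ext^2_\Lambda(\coker\psi_\Lambda, \Lambda) = 0$.

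The main obstacle is this last vanishing, i.e., showing that $\coker\psi_\Lambda$ has projective dimension at most $1$ over $\Lambda$. The plan is to obtain it bootstrap-style from the second SES produced above: the quotient $\Hom_\Lambda(H^0(\BO_\Lambda^\bullet), \Lambda)$ is $\Lambda$-projective because $H^0(\BO_\Lambda^\bullet)$ is flat by Lemma~\ref{lem: PD of H}, and the subobject $\Ext^1_\Lambda(H^1(\BO_\Lambda^\bullet), \Lambda)$ has projective dimension at most $1$ because dualizing a length-$1$ projective resolution of $H^1(\BO_\Lambda^\bullet)$ (which exists by Lemma~\ref{lem: PD of H}) presents $\Ext^1_\Lambda(H^1(\BO_\Lambda^\bullet), \Lambda)$ as the cokernel of a square matrix over $\Lambda$ with nonzero determinant on each regular two-dimensional local component. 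An extension with both outer terms of projective dimension $\leq 1$ inherits the bound, yielding the required vanishing. Hecke-equivariance of all four arrows is automatic because every ingredient is functorial in the complex and the duality of Lemma~\ref{lem: BO psi duality} is already $\bT[U']$-equivariant. The cuspidal variant is handled by the same argument applied to $\SBO_\Lambda^\bullet$ and $\psi_\Lambda^\circ$.
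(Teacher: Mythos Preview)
Your approach is essentially the paper's: you unpack the duality $(\BO_\Lambda^\bullet)^\vee[-1] \simeq \psi_\Lambda$ of Lemma~\ref{lem: BO psi duality} via the hypercohomology spectral sequence (which for a two-term complex collapses to the long exact sequence you wrote down), and you correctly identify that the four claims follow once $\Ext^2_\Lambda(H^1(\BO_\Lambda^\bullet),\Lambda)=0$ and $\Ext^2_\Lambda(\coker\psi_\Lambda,\Lambda)=0$. The first vanishing is exactly Lemma~\ref{lem: PD of H}, as you say.

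Your bootstrap for the second vanishing has a gap. When you dualize a length-one projective resolution $0\to P_1\to P_0\to H^1(\BO_\Lambda^\bullet)\to 0$, the resulting map $P_0^\vee\to P_1^\vee$ is neither square in general (the ranks of $P_0$ and $P_1$ differ by the generic $\Lambda$-rank of $H^1(\BO_\Lambda^\bullet)$) nor injective (its kernel is $\Hom_\Lambda(H^1(\BO_\Lambda^\bullet),\Lambda)\cong\ker\psi_\Lambda$, which is typically nonzero). So you have not exhibited $\Ext^1_\Lambda(H^1(\BO_\Lambda^\bullet),\Lambda)$ as the cokernel of an injection of projectives, and the conclusion that it has projective dimension $\le 1$ does not follow from what you wrote. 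There is, however, a one-line fix that bypasses the bootstrap entirely: $\psi_\Lambda$ is itself a length-one perfect complex of finite flat $\Lambda$-modules (its terms $\bT_\Lambda^\crit$ and $\bT_\Lambda^\aord\times\bT_\Lambda^\tord$ are finite flat by Theorem~\ref{thm: main construction}, Proposition~\ref{prop: tord control}, and Corollary~\ref{cor: aord control}), so the same ``standard fact about perfect complexes'' invoked in Lemma~\ref{lem: PD of H} applies to $\psi_\Lambda$ verbatim, giving $\coker\psi_\Lambda=H^1(\psi_\Lambda)$ projective dimension $\le 1$ directly.
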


\begin{proof}
These sequences are exactly the output of the standard spectral sequence relating cohomology groups under the duality of perfect complexes $(\BO_\Lambda^\bullet)^\vee[-1] \cong \psi_\Lambda$ of Lemma \ref{lem: BO psi duality}, see e.g.\ \cite[Exer.\ 10.8.3]{weibel1994}
\end{proof}

Here is some useful information about $E^i(X) := \Ext^i_\Lambda(X, \Lambda)$ for finitely generated $\Lambda$-modules $X$, following \cite[\S4]{wake1}. Let $T_0(X)$ denote the maximum finite (cardinality) submodule and let $T_1(X)$ denote the maximum $\Lambda$-torsion submodule. 
\begin{lem}
\label{lem: Lambda duality}
Let $X$ be a finitely generated $\Lambda$-module. 
\begin{enumerate}
\item $E^1(X)$ is torsion.
\item If $X$ is torsion, then $T_0(E^1(X)) = 0$.
\item $E^1(X) \cong E^1(X/T_0(X))$, and $X/T_0(X)$ is flat if and only if $E^1(X) = 0$. 
\item $E^2(X) \cong T_0(X)^*$, where $(-)^*$ denotes Pontryagin duality. 
\item $E^1(E^1(X)) \cong T_1(X)/T_0(X)$.
\item Assume $X$ is torsion. There is a $\Lambda$-linear non-degenerate pairing
\[
X/T_0(X) \times E^1(X) \to Q(\Lambda)/\Lambda. 
\]
\end{enumerate}
\end{lem}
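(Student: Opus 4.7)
The plan is to exploit that $\Lambda\cong\Z_p\lb\Z_p^\times\rb$ decomposes (since $p\geq 5$) as a finite product of copies of $\Z_p\lb T\rb$, each a Gorenstein regular local ring of Krull dimension $2$ that is also a UFD; all six claims are compatible with this product, so I will work on a single such factor, which I continue to call $\Lambda$. I will obtain (4) from Grothendieck's local duality, which over $\Lambda$ gives $E^i(X)\cong \Hom_\Lambda(H_\frmm^{2-i}(X),E_\Lambda)$ (where $E_\Lambda$ is the injective hull of the residue field); for $i=2$ and using $H^0_\frmm(X)=T_0(X)$, this is (4) after checking that Matlis duality agrees with Pontryagin duality on finite modules. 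For (1), I will localize at the zero ideal: $Q(\Lambda)$ is a product of fields, so $E^1(X)_{(0)}=\Ext^1_{Q(\Lambda)}(X_{(0)},Q(\Lambda))=0$, whence $E^1(X)$ is $\Lambda$-torsion. For (3), applying $\Hom_\Lambda(-,\Lambda)$ to $0\to T_0(X)\to X\to X/T_0(X)\to 0$ and using that a finite length module has $E^0=E^1=0$ (its local cohomology is concentrated in degree $2$) produces $E^1(X)\cong E^1(X/T_0(X))$; the flatness criterion then follows from Auslander--Buchsbaum, since $X/T_0(X)$ has depth $\geq 1$ and flat $\Leftrightarrow$ depth $=2$ $\Leftrightarrow$ $H^1_\frmm=0$ $\Leftrightarrow$ $E^1=0$.

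For (2), (5), and (6) I will choose a free resolution $0\to P_2\to P_1\to P_0\to X\to 0$ and set $K:=\ker(P_0\rsurj X)$. Applying $\Hom_\Lambda(-,\Lambda)=(-)^\vee$ to $0\to K\to P_0\to X\to 0$ yields $E^1(X)\cong K^\vee/P_0^\vee$. The key input is that the dual $K^\vee$ of any torsion-free finitely generated $\Lambda$-module is reflexive, and over the $2$-dimensional regular local ring $\Lambda$ every reflexive module is free, so $K^\vee$ is free. For (2), the resulting short exact sequence $0\to P_0^\vee\to K^\vee\to E^1(X)\to 0$ has both of its leftmost terms free; the long exact sequence in local cohomology produces $H^0_\frmm(E^1(X))\hookrightarrow H^1_\frmm(P_0^\vee)=0$, i.e.\ $T_0(E^1(X))=0$.

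For (5), I will first reduce via (3) to the case $T_0(X)=0$, then use a long exact sequence on $0\to T_1(X)\to X\to X/T_1(X)\to 0$ together with the calculation $E^1(N)\cong(N^{\vee\vee}/N)^*$ (of finite length) for torsion-free $N$ to further reduce to the case where $X$ is torsion with $T_0(X)=0$; here $K$ and $P_0$ have equal rank, so $K^{\vee\vee}/K$ embeds as a finite submodule of $X$, and $T_0(X)=0$ forces $K^{\vee\vee}=K$. Dualizing $0\to P_0^\vee\to K^\vee\to E^1(X)\to 0$, using $E^1(K^\vee)=0$ because $K^\vee$ is free, then yields $E^1(E^1(X))\cong P_0/K^{\vee\vee}=P_0/K=X$. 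For (6), I will define the pairing by $(x,\alpha)\mapsto\tilde\alpha(\tilde x)\bmod\Lambda$, where $\tilde x\in P_0$ lifts $x\in X$ and $\tilde\alpha\in K^\vee$ represents $\alpha\in E^1(X)=K^\vee/P_0^\vee$, with $\tilde\alpha$ extended rationally to a map $P_0\otimes Q(\Lambda)\to Q(\Lambda)$ (using that $K$ and $P_0$ have equal rank). Ambiguity of the choices lies in $\Lambda$, so the pairing is well-defined. Non-degeneracy on the right is immediate since $\tilde\alpha(P_0)\subseteq\Lambda$ forces $\tilde\alpha\in P_0^\vee$. The left kernel will equal $K^{\vee\vee}/K$, and the main obstacle is to verify $K^{\vee\vee}/K=T_0(X)$: the containment $\subseteq$ is automatic since $K^{\vee\vee}/K$ is finite, while the reverse containment uses that $\Lambda$ is a UFD whose maximal ideal has height $2$ generated by two coprime elements, so any $y\in Q(\Lambda)$ with $\frmm^n y\subseteq\Lambda$ must already lie in $\Lambda$.
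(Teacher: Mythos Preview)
Your proof is correct and entirely self-contained, whereas the paper simply cites \cite[Prop.~4.2]{wake1} (based on \cite[\S5.4]{NSW2008}) for (1)--(5) and then handles (6) via Ischebeck's spectral sequence $\Ext^i_\Lambda(\Ext^j_\Lambda(M,\Lambda),N)\Rightarrow\Tor_{j-i}^\Lambda(N,M)$ with $M=X$, $N=f^{-1}\Lambda/\Lambda$ for $f$ annihilating $X$: the only term contributing to $\Tor_1^\Lambda(\Lambda/(f),X)\cong X$ is $\Hom_\Lambda(E^1(X),f^{-1}\Lambda/\Lambda)$, giving the pairing, and then perfectness follows from (5) applied to $E^1(X)$. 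Your route is genuinely different: you obtain (4) from Grothendieck local duality, (1)--(3) from localization and Auslander--Buchsbaum, and (2), (5), (6) via explicit free resolutions and the fact that reflexive modules over a $2$-dimensional regular local ring are free. In particular, for (6) you construct the pairing by hand (lifting through $K^\vee\to Q(\Lambda)$) and identify the left kernel with $K^{\vee\vee}/K=T_0(X)$ using the UFD property, rather than invoking a spectral sequence. The paper's argument is shorter once one accepts Ischebeck's result and the reference for (5), and it proves perfectness rather than mere non-degeneracy; your argument is more elementary and makes the pairing concrete, at the cost of several reductions.
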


Note that (6), with $\Z$ substituting for $\Lambda$, is simply Pontryagin duality. 
\begin{proof}
For (1)--(5), see \cite[Prop.\ 4.2]{wake1}, which is based on \cite[\S5.4]{NSW2008}. For (6), there is no harm in assuming $T_0(X) = 0$ in addition. Apply the first Grothendieck spectral sequence in \cite{ischebeck1969}, that is, 
\[
\Ext_\Lambda^i(\Ext_\Lambda^j(M,\Lambda),N) \Rightarrow \Tor_{j-i}^\Lambda(N,M)
\]
in the case $M = X$, $N = f^{-1}\Lambda/\Lambda$ where $f \cdot X = 0$. The only non-zero term among those influencing $\Tor_1^\Lambda(\Lambda/(f),X) \cong X$ is $\Hom_\Lambda(E^1(X),f^{-1}\Lambda/\Lambda)$. Thus we obtain a $\Lambda$-linear pairing $X \times E^1(X) \to f^{-1}\Lambda/\Lambda$. Now making $M = E^1(X)$, and applying the natural isomorphism $E^1(E^1(X)) \cong X$ of (5), we see that the pairing is perfect. 
\end{proof}

\subsection{Serre self-duality of the cuspidal bi-ordinary complex}

By applying Corollary \ref{cor: main Lambda SES}, we obtain a $\bT[U']$-equivariant quasi-isomorphism of $\Lambda$-\emph{perfect} complexes 
\begin{gather}
\label{eq: BO replace}
\BO_\Lambda^\bullet \cong [M_\Lambda^\tord \ \mathrel{\mathop{\lra}^{\pi_\Lambda \circ \zeta_\Lambda}} \ \cH^{1,\ord}_\Lambda] \\
\label{eq: SBO replace}
\SBO_\Lambda^\bullet \cong [S_\Lambda^\tord \ \mathrel{\mathop{\lra}^{\pi_\Lambda \circ \zeta_\Lambda}} \ \cH^{1,\ord}_\Lambda]. 
\end{gather}

\begin{defn}[Serre self-duality of the bi-ordinary complex]
    \label{defn: SD for SBO} 
    Let 
    \[
    \lr{}_\SBO : \SBO_\Lambda^\bullet \times \SBO_\Lambda^\bullet \to \Lambda[-1], \quad \text{ that is}, \quad
    \lr{}_\SBO \in \Hom_\Lambda(\SBO_\Lambda^\bullet, \SBO_\Lambda^\bullet[-1]),
    \]
    arising from the $\Lambda$-perfect (alternate) Serre duality pairing $\lr{}'_\mathrm{SD} : S_\Lambda^\tord \times \cH^{1,\ord}_\Lambda \to \Lambda$ of Corollary \ref{cor: tord serre duality}. There is also a variant for $\BO_\Lambda^\bullet$, using the pairing 
        $\lr{}^\prime_\mathrm{SD} : M_\Lambda^\tord \times \cH^{1,\ord}_\Lambda(-C) \to \Lambda$ of Corollary \ref{cor: tord serre duality} along with the projection $\cH^{1,\ord}_\Lambda(-C) \rsurj \cH^{1,\ord}_\Lambda$. We get $\lr{}_\BO \in \Hom_\Lambda(\BO_\Lambda^\bullet, \BO_\Lambda^\bullet[-1])$.
\end{defn}

Because $\lr{}_\SBO$ is an $\bT[U']$-equivariant isomorphism, we get the following standard consequence of the spectral sequence expressing this duality on cohomology, arriving at self-duality for $\SBO_\Lambda^\bullet$. There are also variants for $\BO_\Lambda^\bullet$ using Lemma \ref{lem: Eis difference}. 

\begin{thm}[Serre self-duality for $H^*(\SBO_\Lambda^\bullet)$]
    \label{thm: SD for BO}
    There is a canonical isomorphism
    \[
    H^0(\SBO_\Lambda^\bullet) \cong (H^1(\SBO_\Lambda^\bullet))^\vee
    \]
    and a canonical short exact sequence
    \begin{equation}
    \label{eq: H1 SES}
    0 \to \Ext_\Lambda^1(H^1(\SBO_\Lambda^\bullet),\Lambda) \to H^1(\SBO_\Lambda^\bullet) \to H^0(\SBO_\Lambda^\bullet)^\vee \to 0  
    \end{equation}
    under which $\Ext_\Lambda^1(H^1(\SBO_\Lambda^\bullet), \Lambda)$ maps isomorphically onto the maximal torsion submodule of $H^1(\SBO_\Lambda^\bullet)$. All of these maps are $\bT[U']$-equivariant. 
\end{thm}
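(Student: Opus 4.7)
The plan is to exhibit $\SBO_\Lambda^\bullet$ as quasi-isomorphic to its own shifted $\Lambda$-dual via the perfect Serre duality pairing, and then read off the cohomological consequences from the standard universal-coefficient (hyper-Ext) spectral sequence. By the quasi-isomorphism \eqref{eq: SBO replace}, I will replace $\SBO_\Lambda^\bullet$ by the two-term complex $C^\bullet := [S_\Lambda^\tord \xrightarrow{\pi_\Lambda \circ \zeta_\Lambda} \cH^{1,\ord}_\Lambda]$ of $\Lambda$-flat modules. The pairing $\lr{}'_\mathrm{SD}$ of Corollary \ref{cor: tord serre duality} is $\Lambda$-perfect and $\bT[U']$-equivariant between the two terms, so after the shift it produces, term-by-term, $\Lambda$-isomorphisms between $C^\bullet$ and $(C^\bullet)^\vee[-1]$.

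The essential step is then to check that these term-wise isomorphisms assemble to a morphism of complexes, i.e.\ that the differential $\pi_\Lambda\circ\zeta_\Lambda$ is self-adjoint under $\lr{}'_\mathrm{SD}$ (equivalently that its transpose is itself). This will be the main obstacle. I would handle it by tracing through Coleman's definition of $\pi_\Lambda$ in terms of the analytic Poincar\'e pairing $\lr{}_\dR$ (Theorem \ref{thm: lattice main}) together with the compatibility of $\lr{}_\dR$ and Serre duality on the graded Hodge pieces (Proposition \ref{prop: Hodge SES}), applying the $a_1$/Serre-duality compatibility encoded in the proof of Corollary \ref{cor: tord serre duality}. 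This identification extends Coleman's non-canonical duality between classical critical cusp forms and $H^1_\mathrm{par}$ to a canonical, $\Lambda$-adically interpolated self-duality of the two-term complex.

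Granted the quasi-isomorphism $\lr{}_\SBO\colon C^\bullet \isoto (C^\bullet)^\vee[-1]$, I would invoke the hyper-Ext spectral sequence
\[
E_2^{p,q} = \Ext^p_\Lambda(H^{-q}(C^\bullet),\Lambda) \Longrightarrow H^{p+q}((C^\bullet)^\vee).
\]
Because $C^\bullet$ has cohomology only in degrees $0$ and $1$, only the rows $q=0,-1$ contribute and every differential into these two total degrees is forced to vanish. The two surviving diagonals give a short exact sequence
\[
0 \to \Ext_\Lambda^1(H^1(C^\bullet),\Lambda) \to H^0((C^\bullet)^\vee) \to \Hom_\Lambda(H^0(C^\bullet),\Lambda) \to 0
\]
and an isomorphism $H^{-1}((C^\bullet)^\vee) \cong \Hom_\Lambda(H^1(C^\bullet),\Lambda)$. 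Translating $H^{i-1}((C^\bullet)^\vee) \cong H^i(C^\bullet)$ through the self-duality quasi-iso substitutes $H^1$ and $H^0$ on the right, yielding exactly the isomorphism $H^0(\SBO_\Lambda^\bullet)\cong (H^1(\SBO_\Lambda^\bullet))^\vee$ and the short exact sequence \eqref{eq: H1 SES} of the theorem. All arrows are $\bT[U']$-equivariant by functoriality of the spectral sequence.

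Finally, to identify the subobject $\Ext^1_\Lambda(H^1(\SBO_\Lambda^\bullet),\Lambda) \subset H^1(\SBO_\Lambda^\bullet)$ with the maximal $\Lambda$-torsion submodule, I would argue two inclusions. By Lemma \ref{lem: Lambda duality}(1), $\Ext^1_\Lambda(-,\Lambda)$ of a finitely generated module is always torsion, so the image lies in $T_1(H^1(\SBO_\Lambda^\bullet))$. For the reverse inclusion I would use that $H^0(\SBO_\Lambda^\bullet)$ is $\Lambda$-flat by Lemma \ref{lem: PD of H}; since $\Lambda$ is a finite product of regular local rings, a finitely generated flat $\Lambda$-module is free, and hence its $\Lambda$-dual $(H^0(\SBO_\Lambda^\bullet))^\vee$ is free and in particular torsion-free. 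Consequently the kernel of $H^1(\SBO_\Lambda^\bullet) \twoheadrightarrow (H^0(\SBO_\Lambda^\bullet))^\vee$ contains $T_1(H^1(\SBO_\Lambda^\bullet))$, forcing equality with $\Ext^1_\Lambda(H^1(\SBO_\Lambda^\bullet),\Lambda)$.
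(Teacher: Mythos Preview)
Your proposal follows the same route as the paper: exhibit $\SBO_\Lambda^\bullet \cong (\SBO_\Lambda^\bullet)^\vee[-1]$ via the Serre pairing $\lr{}'_\mathrm{SD}$ and extract the isomorphism and short exact sequence from the universal-coefficient spectral sequence, exactly as the paper does by invoking the argument of Lemma~\ref{lem: a1 duality for BO}. You are in fact more careful than the paper in two respects: you flag that self-adjointness of $\pi_\Lambda\circ\zeta_\Lambda$ under $\lr{}'_\mathrm{SD}$ must be verified (the paper simply asserts the complex-level pairing in Definition~\ref{defn: SD for SBO} without checking this), and your identification of $\Ext^1_\Lambda(H^1,\Lambda)$ with $T_1(H^1)$ via Lemma~\ref{lem: Lambda duality}(1) and torsion-freeness of $(H^0)^\vee$ is spelled out more explicitly than in the paper's terse proof.
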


The final short exact sequence actually has two duality statements within it. For notational convenience, we use the notation $E^i(-)$ and $T_i(-)$ of Lemma \ref{lem: Lambda duality}. 
\begin{cor}
    \label{cor: SD on HSBO}
    There are perfect $\bT[U']$-compatible duality pairings
    \begin{gather*}
        \frac{H^1(\SBO_\Lambda^\bullet)}{T_1(H^1(\SBO_\Lambda^\bullet))} \times H^0(\SBO_\Lambda^\bullet) \to \Lambda    \\
        T_1(H^1(\SBO_\Lambda^\bullet)) \times T_1(H^1(\SBO_\Lambda^\bullet)) \to Q(\Lambda)/\Lambda. 
    \end{gather*}
\end{cor}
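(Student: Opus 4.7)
The plan is to extract both pairings from Theorem \ref{thm: SD for BO} using the general $\Lambda$-module duality recorded in Lemma \ref{lem: Lambda duality}. Throughout, abbreviate $H^i := H^i(\SBO_\Lambda^\bullet)$ and write $E^i(-) := \Ext^i_\Lambda(-,\Lambda)$ and $T_i$ as in the statement of Lemma \ref{lem: Lambda duality}.

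First I would record the structural properties of $H^0$ and $H^1$ needed to run the argument. Exactly as in Lemma \ref{lem: PD of H} for $\BO_\Lambda^\bullet$ (whose proof uses only the length-$1$ perfect-complex structure, which also holds for $\SBO_\Lambda^\bullet$), we see that $H^0$ is $\Lambda$-flat and $H^1$ has no nonzero finite-cardinality submodule, i.e.\ $T_0(H^1) = 0$. Since $\Lambda$ is a product of two-dimensional regular local rings, finitely generated flat $\Lambda$-modules are projective and hence reflexive; in particular $H^0 \lrisom (H^0)^{\vee\vee}$.

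Next I would deduce the first pairing. Theorem \ref{thm: SD for BO} gives a canonical isomorphism $H^0 \cong (H^1)^\vee$ and identifies $E^1(H^1)$ with the maximal torsion submodule $T_1(H^1)$, yielding a short exact sequence
\[
0 \to T_1(H^1) \to H^1 \to (H^0)^\vee \to 0.
\]
Dualizing and using the reflexivity of $H^0$ gives a canonical $\bT[U']$-equivariant isomorphism $H^1/T_1(H^1) \lrisom (H^0)^\vee$, hence a $\Lambda$-bilinear pairing $(H^1/T_1(H^1)) \times H^0 \to \Lambda$. Since $(H^0)^\vee$ is the $\Lambda$-dual of a finitely generated projective module and the pairing is the tautological one, it is perfect on both sides.

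For the second pairing I would apply Lemma \ref{lem: Lambda duality} to $X := T_1(H^1)$. Because $T_0(H^1) = 0$ we also have $T_0(X) = 0$. Setting $X = T_1(H^1) = E^1(H^1)$, Lemma \ref{lem: Lambda duality}(5) gives
\[
E^1(X) = E^1(E^1(H^1)) \cong T_1(H^1)/T_0(H^1) = T_1(H^1) = X,
\]
and Lemma \ref{lem: Lambda duality}(6) provides a $\Lambda$-linear non-degenerate pairing $X/T_0(X) \times E^1(X) \to Q(\Lambda)/\Lambda$, which under the identifications above becomes a non-degenerate pairing $T_1(H^1) \times T_1(H^1) \to Q(\Lambda)/\Lambda$. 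Since both sides are finitely generated torsion $\Lambda$-modules, non-degeneracy on one factor is equivalent to perfectness (i.e.\ the induced map $T_1(H^1) \to \Hom_\Lambda(T_1(H^1), Q(\Lambda)/\Lambda)$ is an isomorphism), so the pairing is perfect. Hecke equivariance of both pairings is automatic from the naturality of all the constructions in Lemma \ref{lem: Lambda duality} together with the $\bT[U']$-equivariance statement already in Theorem \ref{thm: SD for BO}.

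The only genuine subtlety I anticipate is in the second pairing: verifying that the abstract pairing produced by Lemma \ref{lem: Lambda duality}(6) is symmetric enough to yield self-duality of $T_1(H^1)$ (rather than just a perfect pairing between $T_1(H^1)$ and $E^1(T_1(H^1))$). This is handled by the double-duality isomorphism $E^1(E^1(H^1)) \cong T_1(H^1)$ and the fact that Serre duality on $\SBO_\Lambda^\bullet$ is a self-duality (not just a duality with another complex), so the identifications $X \cong E^1(X)$ produced by the chain of isomorphisms agree up to the expected sign with the transpose of the pairing; this compatibility is the essential content being inherited from Theorem \ref{thm: SD for BO}.
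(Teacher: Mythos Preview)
Your argument is correct and matches the paper's approach: both pairings are read off from Theorem \ref{thm: SD for BO} together with Lemma \ref{lem: Lambda duality}, using the identification $E^1(H^1) \cong T_1(H^1)$ and then the biduality $E^1(E^1(H^1)) \cong T_1(H^1)$ from part (5) to feed into part (6). One small note: the passage from ``non-degenerate'' to ``perfect'' is not automatic for arbitrary finitely generated torsion $\Lambda$-modules --- what actually gives perfectness here is precisely the biduality $E^1(E^1(X)) \cong X$ (this is how the proof of Lemma \ref{lem: Lambda duality}(6) itself concludes), and your final paragraph on symmetry is unnecessary since the corollary only asserts existence of a perfect pairing, not that it is symmetric.
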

\begin{proof}
    The perfectness of the first pairing is visible in Theorem \ref{thm: SD for BO}. The perfectness of the second pairing follows from the isomorphism $E^1(H^1(\SBO_\Lambda^\bullet)) \isoto T_1(H^1(\SBO_\Lambda^\bullet))$ stated in Theorem \ref{thm: SD for BO} along with Lemma \ref{lem: Lambda duality}. 
\end{proof}

\begin{rem}
    There are similar duality theorems upon specialization to a weight $k \in \Z$, which come from $\lr{}_\SBO \otimes_{\Lambda,\phi_k} \Z_p \in \Hom_{\Z_p}(\SBO_k^\bullet, \SBO_k^\bullet[-1])$.  
\end{rem}

Theorem \ref{thm: SD for BO} can be used to rule out the presence of $\Lambda$-torsion-free parts of $H^*(\BO_\Lambda)$ that are not $\Lambda$-flat; this does not follow directly from Lemma \ref{lem: PD of H} alone. 
\begin{cor}
    \label{cor: flat plus torsion}
    As a $\Lambda$-module, $H^1(\SBO_\Lambda^\bullet)$ is expressable as direct sum of a $\Lambda$-flat submodule projecting canonically isomorphically onto $H^0(\SBO_\Lambda^\bullet)^\vee$ and its torsion submodule. Moreover, its maximal torsion submodule has projective dimension 1 if it is non-zero. 
\end{cor}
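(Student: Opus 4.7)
The key inputs are the Serre self-duality short exact sequence supplied by Theorem \ref{thm: SD for BO}, the $\Lambda$-flatness of $H^0(\SBO_\Lambda^\bullet)$ recorded in Lemma \ref{lem: PD of H}, and the structure of $\Lambda$ as a finite product of two-dimensional regular local rings. My plan is to show the sequence from Theorem \ref{thm: SD for BO} splits, and then read off the projective dimension of the torsion summand from Lemma \ref{lem: PD of H}.

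The first step is to rewrite
$$0 \to E^1(H^1(\SBO_\Lambda^\bullet)) \to H^1(\SBO_\Lambda^\bullet) \to H^0(\SBO_\Lambda^\bullet)^\vee \to 0$$
from Theorem \ref{thm: SD for BO}, together with the identification of the leftmost term as $T_1(H^1(\SBO_\Lambda^\bullet))$. Next, I would argue that the quotient is $\Lambda$-projective: $H^0(\SBO_\Lambda^\bullet)$ is finitely generated and $\Lambda$-flat by Lemma \ref{lem: PD of H}, and since $\Lambda$ decomposes as a product of local regular rings, every finitely generated flat $\Lambda$-module is projective. The $\Lambda$-linear dual of a finitely generated projective module is again projective, so $H^0(\SBO_\Lambda^\bullet)^\vee$ is projective and the sequence splits. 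This realizes $H^1(\SBO_\Lambda^\bullet)$ as the direct sum of its maximal torsion submodule with a complementary projective (hence flat) summand that maps isomorphically to $H^0(\SBO_\Lambda^\bullet)^\vee$.

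For the projective dimension assertion, let $T := T_1(H^1(\SBO_\Lambda^\bullet))$. By Lemma \ref{lem: PD of H}, $H^1(\SBO_\Lambda^\bullet)$ has projective dimension at most $1$, and since $T$ is now exhibited as a direct summand of $H^1(\SBO_\Lambda^\bullet)$, it inherits $\mathrm{pd}_\Lambda(T)\leq 1$. If $T \neq 0$ it cannot be projective, since any finitely generated projective $\Lambda$-module is torsion-free over each local factor of $\Lambda$; thus $\mathrm{pd}_\Lambda(T)=1$ exactly.

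I do not anticipate a serious obstacle: the argument is essentially unpacking Theorem \ref{thm: SD for BO} in combination with the flatness of $H^0$ and elementary homological algebra over the semilocal base $\Lambda$. The only point requiring minor care is upgrading ``finitely generated flat'' to ``projective'' for $H^0(\SBO_\Lambda^\bullet)$ (and hence for its $\Lambda$-linear dual), which is what actually makes the Serre self-duality sequence split rather than merely remaining an extension of a flat module by a torsion one.
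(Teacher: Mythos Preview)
Your proposal is correct and follows essentially the same approach as the paper: use the short exact sequence of Theorem \ref{thm: SD for BO}, observe that the quotient $H^0(\SBO_\Lambda^\bullet)^\vee$ is $\Lambda$-projective so the sequence splits, and deduce the projective-dimension claim from Lemma \ref{lem: PD of H}. Your write-up simply makes explicit the details (finitely generated flat over the semilocal Noetherian $\Lambda$ implies projective, and a nonzero torsion direct summand inherits $\mathrm{pd}\leq 1$ but cannot be projective) that the paper's two-line proof leaves to the reader.
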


\begin{proof}
    This follows from the short exact sequence in Theorem \ref{thm: SD for BO} because the quotient is a $\Lambda$-projective. The final claim follows from Lemma \ref{lem: PD of H}.   
\end{proof}

\begin{rem}
    This $\Lambda$-torsion part of $H^1(\BO_\Lambda^\bullet)$ is a mysterious aspect of the situation. The $H^1(\BO_\Lambda^\bullet)$ is designed to be the maximal quotient of $\Lambda$-adic coherent cohomology $\cH^{1,\ord}_\Lambda$ that supports a $p$-split Galois representation, and its torsion part consists of those $p$-adic modular forms that do not interpolate into a Hida family that supports a $p$-split Galois representation. Of course, it is natural to ask for an analytic expression for its characteristic ideal. 
\end{rem}

\begin{proof}[{Proof of Theorem \ref{thm: SD for BO}}]
The claimed isomorphism and short exact sequence follow just as in the proof of Lemma \ref{lem: a1 duality for BO} from the isomorphism $\SBO_\Lambda^\bullet \cong (\SBO_\Lambda^\bullet)^\vee[-1]$ coming from $\lr{}_\BO$. Because $\lr{}_\BO$ is Hecke-equivariant, the maps above are as well. The remaining parts of the theorem arise from $\mathrm{Eis}_\Lambda^\tord$ (in degree 0) being the kernel of $\BO_\Lambda^\bullet \rsurj \SBO_\Lambda^\bullet$. 
\end{proof}

By the duality between $\SBO_\Lambda^\bullet$ and $\psi_\Lambda^\circ$) of Lemma \ref{lem: a1 duality for BO}, there are also self-dualities for the Hecke algebras that are dual to those for $\BO_\Lambda^\bullet$ in Theorem \ref{thm: SD for BO}.

\subsection{The bi-ordinary Hecke algebras}

Now we define Hecke algebras acting on the bi-ordinary complexes. We let $k \in \Z$ be arbitrary, while there is only a control theorem for $k \in \Z_{\geq 3}$. 

\begin{defn}
Let $k \in \Z$. Let $\bT^\star_k$, $\bT^\star_\Lambda$ be the Hecke algebras (that is, the image algebras of $\bT[U']_\Lambda$) acting on $H^*(\star_k^\bullet)$ and $H^*(\star_\Lambda^\bullet)$, respectively, for $\star = \BO, \SBO$. We will refer to the Hecke algebras labeled by 
\begin{itemize}
    \item $\BO$ as \emph{bi-ordinary} Hecke algebras
    \item $\SBO$ as \emph{cuspidal bi-ordinary} Hecke algebras.
\end{itemize}
\end{defn}

It is also possible to define actions in the derived category, but we defer this to future work. We discuss the ``$\Lambda$-flat part'' of a derived, degree-shifting action in \S\ref{subsec: flat shifting}.

Here are the expressions of duality between the Hecke algebra and its module of forms in the bi-ordinary case. 

\begin{prop}
\label{prop: BO T to forms duality}
The natural projection $\bT^\SBO_\Lambda \isoto \bT[U'](H^1(\SBO_\Lambda^\bullet))$ is an isomorphism. 
There is a perfect $\Lambda$-linear duality 
\begin{equation}
    \label{eq: H0SBO duality}
    \bT[U']_\Lambda(H^0(\SBO_\Lambda^\bullet)) \times H^0(\SBO_\Lambda^\bullet) \to \Lambda;     
\end{equation}
in particular, $\bT[U']_\Lambda(H^0(\SBO_\Lambda^\bullet))$ is $\Lambda$-flat. On the other hand, the projection 
\[
\bT_\Lambda^\SBO \rsurj \bT[U']_\Lambda(H^0(\SBO_\Lambda^\bullet))
\]
has $\Lambda$-torsion kernel that is canonically isomorphic to $E^1(H^1(\SBO_\Lambda^\bullet))$. \end{prop}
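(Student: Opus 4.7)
The plan is to leverage the Serre self-duality of $\SBO_\Lambda^\bullet$ (Theorem~\ref{thm: SD for BO}) together with the perfect $a_1$-pairings of $\Lambda$-adic forms with their Hecke algebras. For (1), I would observe that if $T \in \bT[U']_\Lambda$ annihilates $H^1(\SBO_\Lambda^\bullet)$, then the Hecke-equivariant identification $H^0(\SBO_\Lambda^\bullet) \cong \Hom_\Lambda(H^1(\SBO_\Lambda^\bullet),\Lambda)$ (with dual action $T \cdot \phi = \phi \circ T$) forces $T$ to annihilate $H^0(\SBO_\Lambda^\bullet)$ as well, so $T$ vanishes in $\bT_\Lambda^\SBO$; this yields the asserted isomorphism.

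For (2), the $\bT[U']$-equivariant inclusion $H^0(\SBO_\Lambda^\bullet) \hookrightarrow M_\Lambda^\aord \oplus S_\Lambda^\tord$ is $\Lambda$-saturated, since $H^0$ is $\Lambda$-free by Lemma~\ref{lem: PD of H}. I would apply an orthogonal-complement argument under the perfect $a_1$-duality between $\bT_\Lambda^\aord \oplus \bT_\Lambda^{\tord,\circ}$ and $M_\Lambda^\aord \oplus S_\Lambda^\tord$. The orthogonal complement of $H^0$ equals its annihilator: if $\lr{T,f} = 0$ for all $f \in H^0$, then Hecke-stability together with the identity $\lr{T_nT,f} = \lr{T_n,Tf} = a_n(Tf)$ implies that every alternate Fourier coefficient of $Tf$ vanishes, forcing $Tf = 0$ by the injectivity of the alternate $q$-series embedding into $q\Lambda\lb q\rb \oplus q\Lambda\lb q\rb$. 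The pairing thus descends to the desired perfect $\Lambda$-duality between $\bT[U']_\Lambda(H^0) = (\bT_\Lambda^\aord \oplus \bT_\Lambda^{\tord,\circ})/\Ann(H^0)$ and $H^0$, and $\Lambda$-flatness of $\bT[U']_\Lambda(H^0)$ follows since it is $\Lambda$-dual to a $\Lambda$-free module.

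For the final claim, combining (1) with the self-duality identification $H^0 \cong (H^1/T_1(H^1))^\vee$ realizes the kernel of $\bT_\Lambda^\SBO \twoheadrightarrow \bT[U']_\Lambda(H^0)$ as $K = \{T \in \bT(H^1) : T \cdot H^1 \subset T_1(H^1)\}$. I would verify $K = T_1(\bT_\Lambda^\SBO)$ by elementary arguments: $\supseteq$ is direct since $\Lambda$-torsion Hecke operators must send the flat quotient $H^1/T_1(H^1)$ into torsion; conversely, if $T \in K$ and $0 \ne \lambda \in \Lambda$ annihilates the finitely generated torsion module $T_1(H^1)$, then $\lambda T$ kills $H^1$, hence $\lambda T = 0$ in $\bT_\Lambda^\SBO$ by the faithfulness from (1), making $T$ torsion. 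The canonical isomorphism $K \cong T_1(H^1)$ would then follow from upgrading the Hecke-equivariant perfect Serre pairing $T_1(H^1) \times T_1(H^1) \to Q(\Lambda)/\Lambda$ of Corollary~\ref{cor: SD on HSBO}, via Lemma~\ref{lem: Lambda duality}(5,6), to a Gorenstein-type identification on the Hecke algebra side.

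The main obstacle is the canonical identification $K \cong T_1(H^1)$: while the coincidence with $T_1(\bT_\Lambda^\SBO)$ as an abstract $\Lambda$-module is elementary, exhibiting a natural $\Lambda$-isomorphism with $T_1(H^1)$ requires transferring the Serre self-duality pairing into a Gorenstein relation between the Hecke algebra torsion and the cohomology torsion, potentially after localizing at height-one primes of $\Lambda$ to reduce to a classical Gorenstein-duality statement.
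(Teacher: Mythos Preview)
Your argument for (1) is correct and matches the paper's.

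For (2), however, there is a genuine gap. Working with the direct sum $M_\Lambda^\aord \oplus S_\Lambda^\tord$ and the sum pairing into $\Lambda$, your Hecke-stability computation only yields $a_n(T_1 f_1) + a_n(T_2 f_2) = 0$ for all $n$, i.e.\ that the \emph{sum} of the two alternate $q$-series vanishes, not that each component does. So you cannot conclude $Tf = (T_1 f_1, T_2 f_2) = 0$, and the orthogonal complement is strictly larger than the annihilator. In fact the product ring $\bT_\Lambda^\aord \oplus \bT_\Lambda^{\tord,\circ}$ does not even preserve $H^0$ (the idempotents $(1,0),(0,1)$ do not), and its annihilator quotient has $\Lambda$-rank $2\cdot\mathrm{rk}\,H^0$, not $\mathrm{rk}\,H^0$. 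The paper avoids this by observing that the projection to either single factor is injective on $H^0$ (an element of $H^0$ has the form $(-f,f)$), and then the standard $a_1$-duality argument applied to $H^0 \hookrightarrow S_\Lambda^\tord$ versus $\bT_\Lambda^{\tord,\circ}$ gives the perfect pairing directly.

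For the final claim, your identification of $K$ with $T_1(\bT_\Lambda^\SBO)$ is fine, but you correctly flag that upgrading this to a canonical isomorphism with $T_1(H^1)$ is the hard part, and your sketch via a ``Gorenstein-type identification'' is not yet a proof. The paper's route is different and more concrete: it uses the spectral-sequence short exact sequence of Lemma~\ref{lem: a1 duality for BO} (with $\coker\psi_\Lambda^\circ$ identified with $\bT_\Lambda^\SBO$ via Corollary~\ref{cor: BO is to and ao}) to produce directly a surjection $E^1(H^1(\SBO_\Lambda^\bullet)) \twoheadrightarrow K$. Injectivity is then checked by composing with the faithful action map $K \hookrightarrow \Hom_\Lambda(H^1, T_1(H^1))$ and recognizing the composite as realizing the perfect self-pairing of Corollary~\ref{cor: SD on HSBO}. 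Finally $E^1(H^1) \cong T_1(H^1)$ is already part of Theorem~\ref{thm: SD for BO}. This bypasses any need for a separate Gorenstein argument.
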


For clarity in the proof of Proposition \ref{prop: BO T to forms duality}, we provide the following elementary lemma. 
\begin{lem}
    \label{lem: cokernel of map to two quotients}
    Let $A$ be a commutative ring equipped with ring surjections $A \rsurj B_i$ for $i = 1,2$, from which arises an $A$-module map $\phi : A \buildrel{+}\over\to B_1 \oplus B_2$. There are natural $A$-algebra structures on $\coker \phi$, each of which is the additive inverse of the other, characterized by making the natural projection map $B_i \rsurj \coker \phi$ multiplicative. Such a choice of projection map factors through an $A$-algebra isomorphism $B_1 \otimes_A B_2 \isoto \coker \phi$. 
\end{lem}

\begin{proof}[{Proof of Proposition \ref{prop: BO T to forms duality}}]
    The first claim follows from Theorem \ref{thm: SD for BO}, namely, its result $H^0(\SBO_\Lambda^\bullet) \cong \Hom_\Lambda(H^1(\SBO_\Lambda^\bullet),\Lambda)$.

    We claim that the tensor product $\bT_\Lambda^{\tord,\circ} \otimes_{\bT_\Lambda^\crit} \bT_\Lambda^\aord$ of the two surjections $\bT_\Lambda^\crit \rsurj \bT_\Lambda^{\tord,\circ}$ and $\bT_\Lambda^\crit \rsurj \bT_\Lambda^\aord$ factors the surjection $\bT_\Lambda^\crit \rsurj \bT_\Lambda^\SBO$. The claim follows from considering that quotients of $M_\Lambda^\crit$ by $M_\Lambda^\aord$ (see Theorem \ref{thm: main extension}) and by $M_\Lambda^\tord$ (see Proposition \ref{prop: secondary SES Lambda} and Theorem \ref{thm: secondary Hecke SES Lambda}) have Hecke action given by $\bT_\Lambda^{\tord,\circ}$ and by $\bT_\Lambda^\aord$, respectively. Therefore the $\bT_\Lambda^\crit$ action on $H^1(\SBO_\Lambda^\bullet)$ factors through this tensor product, as desired. Hence we have a natural surjection 
    \begin{equation}
        \label{eq: tensor to SBO}
        \bT_\Lambda^{\tord,\circ} \otimes_{\bT_\Lambda^\crit} \bT_\Lambda^\aord \rsurj \bT_\Lambda^\SBO. 
    \end{equation}
    
    Applying Lemma \ref{lem: cokernel of map to two quotients} to the case $A = \bT_\Lambda^\crit$, $B_1 = \bT_\Lambda^{\tord,\circ}$, $B_2 = \bT_\Lambda^\aord$, $\phi = \psi_\Lambda^\circ$, we arbitrarily select the $\bT_\Lambda^\crit$-algebra structure on $\coker \psi_\Lambda^\circ$ that makes $\bT_\Lambda^{\tord,\circ} \rsurj \coker \psi_\Lambda^\circ$ multiplicative and identifies it with $\bT_\Lambda^{\tord,\circ} \otimes_{\bT_\Lambda^\crit} \bT_\Lambda^\aord$. For brevity, in this proof, we refer to these canonically isomorphic $\Lambda$-algebras by $A$, that is, $A := \bT_\Lambda^{\tord,\circ} \otimes_{\bT_\Lambda^\crit} \bT_\Lambda^\aord \cong \coker \psi_\Lambda^\circ$. 
    
    Dual to the arbitrary selection above, let us select (between the two options $S_\Lambda^\tord$ and $M_\Lambda^\aord$) $S_\Lambda^\tord$ to receive an injection from $H^0(\SBO_\Lambda^\bullet)$ via the composite 
    \[
    H^0(\SBO_\Lambda^\bullet) \rinj \SBO_\Lambda^0 \buildrel{\mathrm{pr}_\tord}\over\rsurj S_\Lambda^\tord,
    \]
    where the composite is injective because the differential of $\SBO_\Lambda^\bullet$ is an addition map on a direct sum of submodules with target being the ambient module. The surjection $\bT_\Lambda^{\tord,\circ} \rsurj \bT[U']_\Lambda(H^0(\SBO_\Lambda^\bullet))$ is dual to this injection under the usual $a_1$ pairing. 
    
    To prove the second claim of the proposition, i.e.\ that under $\bT_\Lambda^{\tord,\circ} \rsurj \bT[U']_\Lambda(H^0(\SBO_\Lambda^\bullet))$ one gets a perfect $a_1$-duality between $H^0(\SBO_\Lambda^\bullet)$ and its Hecke algebra, it suffices to note that $H^0(\SBO_\Lambda^\bullet)$ is $\Lambda$-flat (Lemma \ref{lem: PD of H}) and claim that the cokernel $C$ of $H^0(\SBO_\Lambda^\bullet) \rinj S_\Lambda^\tord$ is $\Lambda$-flat. Indeed, if this cokernel $C$ were not flat, then its $\Lambda$-projective dimension is $1$ and the quasi-isomorphic presentation of $\SBO_\Lambda^\bullet$ in \eqref{eq: SBO replace} could be rewritten quasi-isomorphically as $[C \rinj \cH_\Lambda^{1,\ord}]$, which is absurd because $H^1(\SBO_\Lambda^\bullet)$ has $\Lambda$-projective dimension $1$.

    It remains to prove the final claim of the proposition. Applying the definition of $A$ along with the isomorphism $\Hom_\Lambda(H^0(\SBO_\Lambda^\bullet), \Lambda) \cong \bT[U'](H^0(\SBO_\Lambda^\bullet))$, the $\SBO$ version of the final short exact sequence in Lemma \ref{lem: a1 duality for BO}, takes the form 
    \begin{equation}
    \label{eq: SBO surjection}
    0 \to E^1(H^1(\SBO_\Lambda^\bullet)) \to A \to \bT[U'](H^0(\SBO_\Lambda^\bullet)) \to 0    
    \end{equation}
    where the surjection is of $\bT_\Lambda^\crit$-algebras. In particular, by Lemma \ref{lem: Lambda duality}, we see that we have a canonical isomorphism $E^1(H^1(\SBO_\Lambda^\bullet)) \cong T_1(A) \subset A$, and $T_1(A) \subset A$ is an ideal. Therefore, to prove the final claim of the proposition, it suffices to show that $T_1(A)$ acts faithfully on $H^1(\SBO_\Lambda^\bullet)$. 

    Let $x \in T_1(A)$. Let $0 \neq \lambda \in \Lambda$ such that $\lambda \cdot x = 0$. Let $\tilde x \in \bT_\Lambda^{\tord,\circ} \times \bT_\Lambda^\aord$ be a lift of $x$ (over the projection to $\coker \psi_\Lambda^\circ = A$. Likewise, let $y \in \bT_\Lambda^\crit$ be a lift over $\psi_\Lambda^\circ$ of $\lambda \cdot \tilde x$, where $\lambda \tilde x \in \psi_\Lambda^\circ(\bT_\Lambda^\crit)$ because $\lambda \cdot x = 0$. 

    Interpreting $\tilde x$ and $y$ in terms of the duality between the respective Hecke algebras and their modules, we arrive at a commutative diagram
    \begin{equation}
    \label{eq: chase action}
    \xymatrix{
    S_\Lambda^\tord \oplus M_\Lambda^\aord \ar[r]^+ \ar[d]^{\tilde x} & M_\Lambda^\crit \ar[r] \ar[d]^{\frac{1}{\lambda}y} & H^1(\SBO_\Lambda^\bullet) \ar[d] \ar[r] & \frac{H^1(\SBO_\Lambda^\bullet)}{(H^0(\SBO_\Lambda^\bullet))^\vee} \ar[dl]_{\alpha(x)}
    \\
    \Lambda \ar[r] & \frac{1}{\lambda}\Lambda \ar[r] & \frac{\frac{1}{\lambda}\Lambda}{\Lambda}  & 
    }
    \end{equation}
    where the lower row's maps are the natural ones belonging to a short exact sequence. We claim that there exists a unique $\alpha(x)$ compatible with the diagram, in particular claiming that this map is independent of the choice of $\tilde x$ and of $y$. This follows quickly from the observation that $T_1(A)$ has been shown to be the annihilator of $H^0(\SBO_\Lambda^\bullet)$ and that $\lambda \cdot x = 0$. This construction of $\alpha(x)$ amounts to the standard diagram chase establishing the canonical isomorphism 
    \[
    E^1(A) = E^1(\coker \psi_\Lambda^\circ) \cong \Hom_\Lambda\left(\frac{H^1(\SBO_\Lambda^\bullet)}{(H^0(\SBO_\Lambda^\bullet))^\vee}, \frac{Q(\Lambda)}{\Lambda}\right) 
    \]
    of Lemma \ref{lem: Lambda duality}(6). Consequently, $x=0$ if and only if $\alpha(x) = 0$. 

    The implication $x \neq 0 \implies \alpha(x) \neq 0$ gives us the input we need to finish the proof that $E^1(A)$ acts faithfully on $H^1(\SBO_\Lambda^\bullet)$, as follows. Presume that $\tilde x$ kills $H^1(\SBO_\Lambda^\bullet)$, i.e.\ $\tilde x'$ lies in the kernel of the natural surjection $\bT_\Lambda^\crit \rsurj H^1(\SBO_\Lambda^\bullet)$. Next recall that the downward maps, which are given by $a_1$-duality, arise from acting by $\tilde x$ (resp.\ $y$) and then evaluating the $a_1$ map. Therefore, for all $f \in M_\Lambda^\crit$ and for all choices of lifts $\tilde x, y$, we have $y \cdot f \in (S_\Lambda^\tord + M_\Lambda^\aord)$. Returning to the interpretation of $\tilde x$ and $y$ as downward arrows in \eqref{eq: chase action}, we deduce that $\frac{1}{\lambda}y(f) \in \mathrm{image}(\tilde x)$, which implies that $\alpha(x)(\bar f) = 0 \in \frac{1}{\lambda}\Lambda/\Lambda$. Thus we arrive at the desired conclusion, that $A$ acts faithfully on $H^1(\SBO_\Lambda^\bullet)$. 
\end{proof}

The faithfulness claim at the end of the proof implies that \eqref{eq: tensor to SBO} is an isomorphism, which we record in this corollary. 
\begin{cor}
    \label{cor: BO is to and ao}
    The surjections of $\bT_\Lambda^\crit$-algebras $\bT_\Lambda^{\tord,\circ} \rsurj \bT_\Lambda^\SBO \lsurj \bT_\Lambda^\aord$ produce an isomorphism 
    $\bT_\Lambda^{\tord,\circ} \otimes_{\bT_\Lambda^\crit} \bT_\Lambda^\aord \isoto \bT_\Lambda^\SBO$. 
\end{cor}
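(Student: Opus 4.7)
The plan is to deduce this directly from the proof of Proposition \ref{prop: BO T to forms duality} by combining its ingredients into a short diagram chase. First, since both $\bT_\Lambda^{\tord,\circ}$ and $\bT_\Lambda^\aord$ are $\Lambda$-algebra quotients of $\bT_\Lambda^\crit$, say by ideals $I_\tord$ and $I_\aord$, the tensor product $\bT_\Lambda^{\tord,\circ}\otimes_{\bT_\Lambda^\crit}\bT_\Lambda^\aord$ is canonically $\bT_\Lambda^\crit/(I_\tord+I_\aord)$, and a direct verification shows that this quotient is in turn canonically isomorphic, both as a $\Lambda$-module and as a $\Lambda$-algebra, to $\coker\psi_\Lambda^\circ$ equipped with its induced multiplication (as noted inside the proof of Proposition \ref{prop: BO T to forms duality}). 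Hence the goal reduces to showing that the natural surjection
\[
\coker\psi_\Lambda^\circ\ \rsurj\ \bT_\Lambda^\SBO
\]
produced there is an isomorphism.

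To carry this out, I would compare two short exact sequences of $\Lambda$-modules sitting in a commutative diagram. The upper row is the $\SBO$-version of the second sequence of Lemma \ref{lem: a1 duality for BO}:
\[
0\to E^1(H^1(\SBO_\Lambda^\bullet))\to \coker\psi_\Lambda^\circ\to \Hom_\Lambda(H^0(\SBO_\Lambda^\bullet),\Lambda)\to 0,
\]
where the rightmost term is identified with $\bT[U']_\Lambda(H^0(\SBO_\Lambda^\bullet))$ via the perfect pairing \eqref{eq: H0SBO duality}. The lower row is the defining sequence for the kernel $K$ of the surjection $\bT_\Lambda^\SBO\rsurj \bT[U']_\Lambda(H^0(\SBO_\Lambda^\bullet))$ of Proposition \ref{prop: BO T to forms duality}. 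The vertical maps are the candidate isomorphism $\coker\psi_\Lambda^\circ\rsurj \bT_\Lambda^\SBO$ in the middle, the identity on the right, and the induced map $\gamma\colon E^1(H^1(\SBO_\Lambda^\bullet))\to K$ on the left.

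The critical observation, which is precisely the content of the last paragraph of the proof of Proposition \ref{prop: BO T to forms duality}, is that $\gamma$ is an isomorphism: its surjectivity follows from the snake lemma applied to the commutative square on the right, and its injectivity is established there by exploiting the faithful action of $K$ on $T_1(H^1(\SBO_\Lambda^\bullet))$ combined with the perfect self-duality of $T_1(H^1(\SBO_\Lambda^\bullet))$ furnished by Corollary \ref{cor: SD on HSBO}, under the identification $E^1(H^1(\SBO_\Lambda^\bullet))\cong \Hom_\Lambda(T_1(H^1(\SBO_\Lambda^\bullet)),Q(\Lambda)/\Lambda)$ of Lemma \ref{lem: Lambda duality}(6). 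Granted that $\gamma$ is an isomorphism and that the map on quotients is the identity, the five lemma forces the middle arrow $\coker\psi_\Lambda^\circ\rsurj \bT_\Lambda^\SBO$ to be an isomorphism, completing the proof. The main obstacle, namely the injectivity of $\gamma$, has therefore already been handled in the preceding proposition, so the only new work here is the formal diagram chase and the identification of the tensor product with $\coker\psi_\Lambda^\circ$ as rings.
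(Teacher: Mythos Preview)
Your proposal is correct and follows essentially the same approach as the paper. The paper leaves this corollary without an independent proof, simply noting that it was established within the proof of Proposition \ref{prop: BO T to forms duality}; you have accurately unpacked that argument---the identification of the tensor product with $\coker\psi_\Lambda^\circ$, the comparison of the two short exact sequences via $\gamma$, and the application of the five lemma once $\gamma$ is known to be an isomorphism.
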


\subsection{A $\Lambda$-flat degree-shifting Hecke action}
\label{subsec: flat shifting}

We describe a degree-shifting manifestation of a derived action of $\bT[U']$ on $\SBO_\Lambda^\bullet$. In the broader context of derived actions, the idea is that the degree $-1$ part of the derived endomorphisms of $\SBO_\Lambda^\bullet$ is $\Hom_\Lambda(H^1(\SBO_\Lambda), H^0(\SBO_\Lambda))$. We show that those endomorphisms that might possibly come from a $\bT[U]$-action actually come from $\ker \psi_\Lambda \cong \ker \psi_\Lambda^\circ$.

\begin{thm}
    \label{thm: flat derived action}
    There is a well-defined $\bT[U']_\Lambda$-equivariant isomorphism 
    \[
    \delta : \ker\psi_\Lambda \isoto \Hom_{\bT[U']_\Lambda}(H^1(\SBO_\Lambda^\bullet), H^0(\SBO_\Lambda^\bullet))
    \]
    given by sending the pair $(T,x)$, $T \in \ker \psi_\Lambda$ and $x \in H^1(\SBO_\Lambda^\bullet)$, the $q$-series realization of $T \cdot x \in d(\SBO_\Lambda^0)$. As $\bT[U'](\SBO_\Lambda^\bullet)$-modules, the source and target of $\delta$ are isomorphic to its dualizing module.  
\end{thm}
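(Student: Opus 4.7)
The plan is to build $\delta$ explicitly via the $\bT_\Lambda^\crit$-action on $M_\Lambda^\crit$ and then verify it is an isomorphism by identifying both its source and target with the $\Lambda$-dualizing module $\omega := \Hom_\Lambda(\bT_\Lambda^\SBO, \Lambda)$ of the finite $\Lambda$-algebra $\bT_\Lambda^\SBO$.

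For the construction, given $T \in \ker\psi_\Lambda$ and $x \in H^1(\SBO_\Lambda^\bullet)$, I would lift $x$ to $\tilde x \in M_\Lambda^\crit$ and show that $T \cdot \tilde x$ lies in the intersection $\Theta_\Lambda(M_\Lambda^\aord) \cap \zeta_\Lambda(S_\Lambda^\tord) \subset M_\Lambda^\crit$. Inside this intersection, the pair $(a,s) \in H^0(\SBO_\Lambda^\bullet) = \ker d_\SBO$ determined by $\Theta_\Lambda(a) = -\zeta_\Lambda(s) = T\cdot\tilde x$ is pinned down by the $q$-series of $T\cdot\tilde x$, since both $M_\Lambda^\aord$ and $S_\Lambda^\tord$ embed into $\Lambda\lb q\rb$; this is what ``$q$-series realization'' concretely encodes. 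The containment $T\cdot\tilde x \in \Theta_\Lambda(M_\Lambda^\aord)$ follows because $T$ annihilates the Hecke quotient $\cH^{1,\ord}_\Lambda$ of Corollary~\ref{cor: main Lambda SES} (whose Hecke algebra is $\bT_\Lambda^{\tord,\circ}$); symmetrically, $T\cdot\tilde x \in \zeta_\Lambda(M_\Lambda^\tord)$ because $T$ annihilates $Q_\Lambda$ from Proposition~\ref{prop: secondary SES Lambda} (whose Hecke algebra is $\bT_\Lambda^\aord$, by Theorem~\ref{thm: secondary Hecke SES Lambda}). The cuspidal refinement from $\zeta_\Lambda(M_\Lambda^\tord)$ to $\zeta_\Lambda(S_\Lambda^\tord)$ is the subtle point; I would deduce it from the equality $\ker\psi_\Lambda = \ker\psi_\Lambda^\circ$ as submodules of $\bT_\Lambda^\crit$. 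The quotient $\ker\psi_\Lambda^\circ/\ker\psi_\Lambda$ injects into the Eisenstein ideal $J := \ker(\bT_\Lambda^\tord \rsurj \bT_\Lambda^{\tord,\circ})$, which is $\Lambda$-torsion-free as a submodule of the $\Lambda$-flat $\bT_\Lambda^\tord$; on the other hand, the two kernels have equal $\Lambda$-rank because Lemma~\ref{lem: a1 duality for BO} presents them as the $\Lambda$-duals of $H^1(\BO_\Lambda^\bullet)$ and $H^1(\SBO_\Lambda^\bullet)$ respectively, and these agree by Theorem~\ref{thm: main BO intro}(3). An equal-rank inclusion of torsion-free $\Lambda$-modules whose quotient embeds into a torsion-free module must have vanishing quotient, so the equality follows, and then $T$ acts as zero on all of $\zeta_\Lambda(M_\Lambda^\tord)$ via its image in $\bT_\Lambda^\tord$. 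Independence from the choice of lift and Hecke-equivariance are immediate from $T$ annihilating $d(\SBO_\Lambda^0)$ and from Hecke commutativity, respectively.

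To prove $\delta$ is bijective, I would identify both sides with $\omega$. For the source, the chain
\[
\ker\psi_\Lambda \;\cong\; H^1(\BO_\Lambda^\bullet)^\vee \;\cong\; H^1(\SBO_\Lambda^\bullet)^\vee \;\cong\; H^0(\SBO_\Lambda^\bullet) \;\cong\; \Hom_\Lambda(\coker\psi_\Lambda^\circ, \Lambda) \;=\; \omega
\]
uses Lemma~\ref{lem: a1 duality for BO}, Theorem~\ref{thm: main BO intro}(3), Theorem~\ref{thm: SD for BO}, and the identification $\bT_\Lambda^\SBO \cong \coker\psi_\Lambda^\circ$ from Corollary~\ref{cor: BO is to and ao} and Proposition~\ref{prop: BO T to forms duality}. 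For the target, the defining adjunction $\Hom_A(M,\omega_A) \cong \Hom_\Lambda(M,\Lambda)$ of the dualizing module of a finite $\Lambda$-algebra $A = \bT_\Lambda^\SBO$, applied to $M = H^1(\SBO_\Lambda^\bullet)$ with $\omega_A \cong H^0(\SBO_\Lambda^\bullet)$, yields
\[
\Hom_{\bT_\Lambda^\SBO}(H^1(\SBO_\Lambda^\bullet), H^0(\SBO_\Lambda^\bullet)) \;\cong\; \Hom_\Lambda(H^1(\SBO_\Lambda^\bullet), \Lambda) \;\cong\; H^0(\SBO_\Lambda^\bullet) \;\cong\; \omega.
\]
Checking that the explicit $\delta$ realizes this abstract composite reduces to tracing the $a_1$-pairing of Lemma~\ref{lem: a1 duality for BO} through the $q$-series realization, which is essentially tautological once the Hecke-equivariance of both sides is in hand.

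The hard part will be the cuspidal refinement. By Corollary~\ref{cor: Eisenstein theta}, $\Theta_\Lambda(M_\Lambda^\aord)$ and $\zeta_\Lambda(M_\Lambda^\tord)$ share their Eisenstein components inside $M_\Lambda^\crit$, so the naive intersection $\Theta_\Lambda(M_\Lambda^\aord) \cap \zeta_\Lambda(M_\Lambda^\tord)$ is naturally $H^0(\BO_\Lambda^\bullet)$ and is strictly larger than $H^0(\SBO_\Lambda^\bullet)$ by the Eisenstein gap $\Hom_\Lambda(J,\Lambda)$ dictated by the long exact sequence of Lemma~\ref{lem: Eis difference}. The rank-counting identification $\ker\psi_\Lambda = \ker\psi_\Lambda^\circ$ is precisely what excludes this Eisenstein contribution from the image of $\delta$; everything else flows from the dualities and self-dualities already established in the preceding sections.
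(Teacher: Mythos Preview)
Your approach diverges from the paper's in both the construction and the bijectivity argument. The paper first shows that $T\in\ker\psi_\Lambda$ sends $M_\Lambda^\crit$ into $d(\SBO_\Lambda^0)$, then lands in $H^0(\SBO_\Lambda^\bullet)$ via an eigensystem argument: for an eigenvector $\bar x$ in $H^1(\SBO_\Lambda^\bullet)/T_1$, Serre self-duality (Corollary~\ref{cor: SD on HSBO}) forces its eigensystem to factor through $\bT[U'](H^0(\SBO_\Lambda^\bullet))$, so the $q$-series of $T\bar x$ agrees with that of some element of $H^0(\SBO_\Lambda^\bullet)$. Bijectivity then comes from injectivity (via Lemma~\ref{lem: a1 duality for BO}), $\Lambda$-saturation of the image, and the scalar-endomorphism property of dualizing modules. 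Your intersection argument $T\tilde x\in\Theta_\Lambda(M_\Lambda^\aord)\cap\zeta_\Lambda(M_\Lambda^\tord)$ is more direct, and your rank proof of $\ker\psi_\Lambda=\ker\psi_\Lambda^\circ$ supplies a justification for something the paper asserts only in passing.

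However, your deduction of the cuspidal refinement has a gap. You conclude that $\ker\psi_\Lambda=\ker\psi_\Lambda^\circ$ implies ``$T$ acts as zero on all of $\zeta_\Lambda(M_\Lambda^\tord)$,'' but this is already immediate from $T\in\ker\psi_\Lambda$ and does not force $T\tilde x\in\zeta_\Lambda(S_\Lambda^\tord)$. Concretely, Eisenstein series lie in $\Theta_\Lambda(M_\Lambda^\aord)\cap\zeta_\Lambda(M_\Lambda^\tord)$ but not in $\zeta_\Lambda(S_\Lambda^\tord)$, so your intersection naturally produces an element of $H^0(\BO_\Lambda^\bullet)$, not $H^0(\SBO_\Lambda^\bullet)$. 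Your dualizing-module framework can in fact close this gap, but by a different mechanism than you indicate: applying the adjunction $\Hom_A(M,\omega_A)\cong\Hom_\Lambda(M,\Lambda)$ once with $A=\bT_\Lambda^\BO$ and once with $A=\bT_\Lambda^\SBO$ (both for $M=H^1(\SBO_\Lambda^\bullet)$) shows that the inclusion
\[
\Hom_{\bT}(H^1(\SBO_\Lambda^\bullet),H^0(\SBO_\Lambda^\bullet))\;\hookrightarrow\;\Hom_{\bT}(H^1(\SBO_\Lambda^\bullet),H^0(\BO_\Lambda^\bullet))
\]
is an isomorphism, since both sides identify via $\phi\mapsto a_1\circ\phi$ with $(H^1(\SBO_\Lambda^\bullet))^\vee$. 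Tracing your $\delta$ through this same $a_1$-formula recovers the isomorphism of Lemma~\ref{lem: a1 duality for BO}, proving bijectivity in one stroke and forcing the target to be cuspidal. So the ingredients are all present in your sketch, but the causal attribution---placing the cuspidal refinement on $\ker\psi_\Lambda=\ker\psi_\Lambda^\circ$ rather than on the adjunction---obscures why the argument works.
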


To make sense of the theorem, we recall that the cuspidal bi-ordinary complex simply arises from addition of submodules of $q \Lambda\lb q \rb$, $\SBO_\Lambda^0 = M_\Lambda^{\tord,\circ} \oplus M_\Lambda^\aord \to M_\Lambda^\crit = \SBO_\Lambda^1$. We write $d$ for the differential, the addition map. This also justifies a fact we will use in the proof, which is that $T \in \bT_\Lambda^\crit$ lies in $\ker \psi_\Lambda$ if and only if $T$ annihilates $\SBO_\Lambda^0$ if and only if it annihilates $d(\SBO_\Lambda^0)$.

\begin{proof}
    Due to this fact, and due to the implication of Corollary \ref{cor: BO is to and ao} that $T$ annihilates $H^1(\SBO_\Lambda^\bullet)$, the action of $T \in \ker_\Lambda$ on $\SBO_\Lambda^1$ factors through the quotient $\SBO_\Lambda^1 \rsurj H^1(\SBO_\Lambda^\bullet)$ and is valued in $d(\SBO_\Lambda^\bullet)$. That is, $T \in \Hom_\Lambda(H^1(\SBO_\Lambda^\bullet), d(\SBO_\Lambda^\bullet))$. Because $d(\SBO_\Lambda^\bullet)$ is $\Lambda$-flat, this map factors through the quotient of its source by $T_1(H^1(\SBO_\Lambda^\bullet)$; moreover, $T$-acts $\bT[U']$-equivariantly. Thus we may now consider 
    \[
    T \in \Hom_{\bT[U']_\Lambda}(H^1(\SBO_\Lambda^\bullet)/T_1(H^1(\SBO_\Lambda^\bullet)), d(\SBO_\Lambda^0)).
    \]

    Consider the case where $\bar x \in H^1(\SBO_\Lambda^\bullet)/T_1(H^1(\SBO_\Lambda^\bullet))$ is $\bT[U']$-eigen. Then, because of the aforementioned Hecke-equivariance, the $T_n$-eigenvalues of $\bar x$ determine the $T_n$-eigenvalues of $T \cdot \bar x$. Therefore $T \cdot \bar x$ is a $\Lambda$-multiple of the normalized $q$-series in $d(\BO_\Lambda^0) \subset q\Lambda \lb q \rb$ specified in the usual way. By Corollary \ref{cor: SD on HSBO}, this $\bT[U']$-eigensystem factors through $\bT[U'](H^0(\SBO_\Lambda^\bullet))$. Therefore, thinking of $d(\SBO_\Lambda^0)$ and $H^0(\SBO_\Lambda^\bullet)$ as submodules of $q\Lambda \lb q\rb$, $T \cdot \bar x$ lies in the intersection of $H^0(\SBO_\Lambda^\bullet)$ and $d(\SBO_\Lambda^0)$. 

    Because $H^0(\SBO_\Lambda^\bullet)$ admits a $\bT[U']_\Lambda$-eigenbasis after a finitely generated integral extension of $\Lambda$, the argument above along with a descent argument (back to $\Lambda$) shows that $T$ produces a $\bT[U']_\Lambda$-equivariant map
    \[
    \delta(T) : H^1(\SBO_\Lambda^\bullet) \to H^0(\SBO_\Lambda^\bullet).
    \]
    This map is injective by the first statement in Lemma \ref{lem: a1 duality for BO}, which also implies that the image of $\delta$ on $\ker \psi_\Lambda \times H^1(\SBO_\Lambda^\bullet)$ in $d(\SBO_\Lambda^\bullet)$ is $\Lambda$-saturated. Since $\ker \psi_\Lambda$, $H^1(\SBO_\Lambda^\bullet)$, and $H^0(\SBO_\Lambda^\bullet)$ all have the same $\Lambda$-rank, the cokernel of $\delta$ is finite cardinality. 

    Due to the $\bT[U']$-compatible dualities of Proposition \ref{prop: BO T to forms duality} and Corollary \ref{cor: SD on HSBO}, there exist $\bT[U'](H^0(\SBO_\Lambda^\bullet))$-module isomorphisms
    \[
    \frac{H^1(\SBO_\Lambda^\bullet)}{T_1(H^1(\SBO_\Lambda^\bullet))} \simeq \bT[U'](H^0(\SBO_\Lambda^\bullet)), \quad H^0(\SBO_\Lambda^\bullet) \simeq \bT[U'](H^0(\SBO_\Lambda^\bullet))^\vee. 
    \]
    It is well known that $\bT[U'](H^0(\SBO_\Lambda^\bullet))^\vee$ is a dualizing module for $\bT[U'](H^0(\SBO_\Lambda^\bullet))$: one needs the fact that $\bT[U'](H^0(\SBO_\Lambda^\bullet))$ is Cohen-Macaulay because it is finite flat over a regular local ring, along with \cite[Thm.\ 3.3.7, pg.\ 111]{BH1993}. Therefore the source and target of $\delta$ are isomorphic to this dualizing module. Since for a commutative ring $R$, the $R$-linear endomorphisms of a dualizing module for $R$ are exactly the $R$-scalars, and $\bT[U'](H^0(\SBO_\Lambda^\bullet))$ is $\Lambda$-flat, the only possible finite cardinality cokernel of $\delta$ is the trivial cokernel. 
\end{proof}

\begin{rem}
    Here is a more homological interpretation of $\delta(T)$. Because $M_\Lambda^\crit$ is $\Lambda$-flat, there is a $\Lambda$-linear $h : \SBO_\Lambda^1 \to \SBO_\Lambda^0$ such that $d \circ h = T$ in $\End_\Lambda(\SBO_\Lambda^1)$. Then one can check that $h\vert_{d(\SBO_\Lambda^0)}$ is valued in $H^0(\SBO_\Lambda^\bullet)$ and that our action map $\delta(T) \in \Hom_\Lambda(H^1(\SBO_\Lambda^\bullet), H^0(\SBO_\Lambda^\bullet))$ is equal to $h \circ T$; in particular, $h \circ T$ is independent of the choice of $h$. One can view $h$ as a chain homotopy expressing that $T$, as an element of $\End_\Lambda(\SBO_\Lambda^\bullet)$, is null-homotopic. 
\end{rem}

\section{Galois representations}
\label{sec: galois}

In this section, we show that the 2-dimensional Galois representations $\rho_f : G_\Q \to \GL_2(\oQ_p)$ associated to classical bi-ordinary eigenforms $f$ are reducible and decomposable upon restriction to a decomposition group $G_p \subset G_\Q$ at $p$, and in a robust way with respect to interpolation. We also prove a partial converse, recovering in Theorem \ref{thm: BE alternate} part of this result of Breuil--Emerton. 
\begin{thm}[{Breuil--Emerton \cite[Thm.\ 1.1.3]{BE2010}}]
    \label{thm: BE}
    Let $g$ be a classical $U_p$-critical cuspidal eigenform of level $\Gamma_1(Np^r)$ and weight $k \in \Z_{\geq 2}$. Then $\rho_g \vert_{G_p}$ is reducible decomposable if and only if there exists a $U$-ordinary overconvergent form $h$ of weight $2-k$ such that $\theta^{k-1}(h) = g$.  
\end{thm}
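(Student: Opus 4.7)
The plan is to prove the theorem by comparing the Hodge filtration against the crystalline Frobenius decomposition on the Hecke isotypic summand of $\bH^1_\mathrm{par}(X_{\Q_p}, \cF_k)$ attached to $g$. I focus on the case $r = 1$: $g$ has level $\Gamma_1(N) \cap \Gamma_0(p)$ and arises as the $U_p$-critical $p$-stabilization of a classical cuspidal eigenform $g^\flat$ of tame level $\Gamma_1(N)$ and weight $k$. This is the restriction of Breuil--Emerton to which the paper explicitly alludes as the ``Coleman's presentation variant'' of the proof; the argument uses Theorems~\ref{thm: lattice main} and~\ref{thm: p-integral dR cris}, Proposition~\ref{prop: dR summary}, and Corollary~\ref{cor: Hodge wrt cris}.

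First I would pass to the $2$-dimensional $\bT[\,]$-isotypic summand $D \subset \bH^1_\mathrm{par}(X_{\Q_p}, \cF_k)$ for the Hecke eigensystem of $g^\flat$ (working over the Hecke field of $g^\flat$, a finite extension of $\Q_p$). By the crystalline/de Rham comparison of Proposition~\ref{prop: cris dR}, $D$ is a weakly admissible filtered $\varphi$-module whose associated Galois representation is $\rho_g|_{G_p}$. The crystalline Frobenius has eigenvalues $\alpha$ (a unit) and $\beta$ (of slope $k-1$), giving a decomposition $D = V_\alpha \oplus V_\beta$. By Proposition~\ref{prop: dR summary}, the $g^\flat$-isotypic parts of $V_\alpha$ and $V_\beta$ are identified with $\KS_k(M_k^{\dagger,\crit})$ and $\KS_k(M_k^\ord)$ respectively; furthermore, the former is identified via $\pi_k$ of Theorem~\ref{thm: lattice main} with the $g^\flat$-isotypic line of $\cH^{1,\ord}_k$. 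As noted in \S\ref{subsec: intro BO}, Kisin's theorem makes $\rho_g|_{G_p}$ automatically reducible, so the only content left is decomposability.

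The key filtered-$\varphi$-module observation is: since $V_\alpha$ is a $\varphi$-stable subspace of Newton slope $0$, weak admissibility forces its induced Hodge weight to be $0$, so $\Fil^{k-1} D \neq V_\alpha$. Thus $D$ decomposes as a filtered $\varphi$-module (equivalently, $\rho_g|_{G_p}$ is decomposable) if and only if $\Fil^{k-1} D = V_\beta$, i.e.\ the projection of $\Fil^{k-1} D$ onto $V_\alpha$ along $V_\beta$ vanishes. Corollary~\ref{cor: Hodge wrt cris} supplies an explicit formula: the Hodge filtration is the image of $M_k^\ord$ under a map whose $V_\alpha$-coordinate in $M_k^{\dagger,\crit}/M_k^\aord$ is $\zeta_k(\tau_k(U')(\iota_k(-)))$. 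Evaluated at the ordinary stabilization of $g^\flat$, this coordinate is $\tau_k(U')$ applied to the class of the critical stabilization $g$ in $M_k^{\dagger,\crit}/M_k^\aord$; under the identification $M_k^{\dagger,\crit}/M_k^\aord \isoto \cH^{1,\ord}_k$ from Theorem~\ref{thm: lattice main} and the $\bT[U']$-equivariance of $\pi_k$, this is a unit multiple (as $\tau_k(U')$ is an automorphism by Lemma~\ref{lem: tau inverse}) of $\pi_k(g)$. So the projection vanishes iff $\pi_k(g) = 0$, which by the short exact sequence of Theorem~\ref{thm: lattice main} is equivalent to $g \in \theta^{k-1}(M_{2-k}^{\dagger,\ord})$, i.e.\ to the existence of the desired $h$.

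The hard part, which this approach does not overcome, is the level $\Gamma_1(Np^r)$ case for $r \geq 2$. Coleman's presentation of analytic de Rham cohomology and its integral refinement in Theorem~\ref{thm: p-integral dR cris} rely on $X = X_1(N)$ being smooth over $\Z_p$, whereas $X_1(Np^r)$ is not for $r \geq 2$; the Hodge-filtration formula of Corollary~\ref{cor: Hodge wrt cris} and the clean identification of the $V_\alpha$-projection with $\pi_k(g)$ therefore do not extend in an obvious way. Breuil--Emerton's original argument for $r \geq 2$ instead invokes Colmez's $(\varphi, \Gamma)$-module theory on the eigencurve, outside the integral analytic framework developed here.
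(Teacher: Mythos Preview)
Your proposal is correct and matches the paper's own treatment. Note first that the paper does \emph{not} prove Theorem~\ref{thm: BE} in full generality: it is stated as a citation of Breuil--Emerton, and only the level $\Gamma_1(N)\cap\Gamma_0(p)$ case is reproved, as Theorem~\ref{thm: BE alternate}. Your focus on $r=1$ and your honest disclaimer about $r\geq 2$ are therefore exactly aligned with what the paper does.

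For that restricted case, your argument is essentially the same as the paper's, packaged slightly differently. The paper isolates the key step as Lemma~\ref{lem: isocrystal splittability}: $\dR_f$ splits as a filtered $\varphi$-module iff $f_\beta\in M^\aord_{k,E}$, proved via the identity $f=(\beta f_\alpha-\alpha f_\beta)/(\beta-\alpha)$ together with the observation that $\KS_k(f_\alpha)$ never vanishes in $\dR_f$ and that $\KS_k(f)$ never lies in the $\varphi$-slope-$0$ line. You instead invoke the explicit Hodge-filtration formula of Corollary~\ref{cor: Hodge wrt cris} and read off the $V_\alpha$-coordinate as a unit multiple of $\pi_k(g)$. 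These are two phrasings of the same computation: the paper's $\KS_k(f)$ colinear with $\KS_k(f_\alpha)$ is exactly your vanishing of the $V_\alpha$-projection, and both reduce to $g\in\ker\pi_k=\theta^{k-1}(M_{2-k}^{\dagger,\ord})$. One small point: to pass from splittability of the filtered $\varphi$-module to decomposability of $\rho_g|_{G_p}$ you should cite Proposition~\ref{prop: isocrystal and galois rep} (the $D_\mathrm{cris}$ comparison) rather than only Proposition~\ref{prop: cris dR}, as the paper does in the proof of Theorem~\ref{thm: BE alternate}.
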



The main input we will apply is the strong characterization of ordinary $\Lambda$-adic \'etale cohomology of tame level $\Gamma_1(N)$ established by Fukaya--Kato \cite{FK2012}. 

\subsection{Conventions for Galois representations}

We add some conventions to those of \S \ref{subsec: ANT context}. For $a \in A^\times$, $\nu(\alpha) : G_p \to A^\times$ denotes the unique unramified character sending arithmetic Frobenius $\Frob_p$ to $\alpha$.  Given a Galois representation $V$ over a $p$-adic ring, we let $V(i)$ denote its $i$th Tate twist. We let $\kappa$ denote the $p$-adic cyclotomic character.  
 We implicitly always refer to  continuous Galois representations and pseudorepresentations without mentioning continuity. 

For conventions relating modular eigenforms $f$ to Galois representations $\rho_f$, we refer to \S \ref{subsec: Gal rep discussion}. 

\subsection{$\Lambda$-adic \'etale cohomology}

Now we describe some results from \cite{FK2012}. Let $H$ denote the $\bT_\Lambda^\ord[G_\Q]$-module of $\Lambda$-adic \'etale cohomology of $X$ as defined in \cite[\S1.5.1]{FK2012}.  In \cite{FK2012} the additive model of the modular curve and dual Hecke operators is used, but we apply the $w$-type isomorphisms of \cite[\S1.4]{FK2012} to translate these to our setting of the multiplicative model and standard Hecke operators (see also the overview of translation between models and standard/dual operators in \cite[\S1.7.16]{FK2012}). 

Because they use the Igusa tower and weight 2 (trivial coefficients) \'etale cohomology, they get the following normalization. 
\begin{thm}[{\cite[\S\S 1.2.9, 1.7.13]{FK2012}}]
    \label{thm: FK Lamda-adic etale}
    For a prime $\ell \nmid Np$, the characteristic polynomial of $\Frob_\ell$ acting on $H$ is $X^2 - \ell^{-1}T_\ell X + \ell^{-2}[\ell] \lr{\ell}_N$. There is a short exact sequence of $\Lambda$-flat $\bT_\Lambda^{\ord,\circ}[G_p]$-modules
    \begin{equation}
        \label{eq: H reducibility}
        0 \to H_\mathrm{sub} \to H \to H_\mathrm{quo} \to 0,     
    \end{equation}
    and there exist $\bT_\Lambda^{\ord,\circ}$-module isomorphisms $H_\mathrm{sub} \simeq \bT_\Lambda^{\ord,\circ}$ and $H_\mathrm{quo} \simeq \Hom_\Lambda(\bT_\Lambda^{\ord,\circ}, \Lambda) \cong S_\Lambda^\ord$. The action of $G_p$ on $H_\mathrm{sub}$ and $H_\mathrm{quo}$ is described by $\bT_\Lambda^\ord$-valued characters 
    \[
    \chi_\mathrm{sub} = \dia \nu(U_p^{-1}\lr{p}_N)(-1), \qquad \chi_\mathrm{quo} = \nu(U_p)(-1). 
    \]
\end{thm}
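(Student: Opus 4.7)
The plan is to derive the statement as a reformulation of Ohta's and Fukaya--Kato's theorems on Iwahori-ordinary $\Lambda$-adic étale cohomology, translated from their setup (the additive model $X'_0(p)$, dual Hecke operators, $w$-normalized lattice) to ours (the multiplicative model $X_0(p)$, standard Hecke operators). Concretely, I would carry out three steps.

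First, I would record the original version of the theorem proved by Fukaya--Kato on $H'$, their counterpart of $H$. This provides, on the dual side, a short exact $G_p$-equivariant sequence $0 \to H'_\mathrm{sub} \to H' \to H'_\mathrm{quo} \to 0$ of flat $\bT^{\ord,\circ,*}_\Lambda$-modules such that $H'_\mathrm{sub}$ is free of rank one (canonically the Hecke algebra) and $H'_\mathrm{quo}$ is its $\Lambda$-linear dual (hence isomorphic to $S_\Lambda^{\ord,*}$), with sub and quotient $G_p$-characters expressed in terms of the dual operator $U_p^*$. Next, I would apply the $w$-type isomorphism $w_{Np}: X'_0(p) \isoto X_0(p)$ and its cohomological counterpart from \cite[\S1.4, \S1.7.16]{FK2012} to convert dual to standard Hecke operators: $U_p^* \mapsto U_p$, $T_\ell^* \mapsto T_\ell \lr{\ell}_N^{-1}$ ($\ell \nmid Np$), and $\lr{d}_N^* \mapsto \lr{d}_N^{-1}$. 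This transports the FK sub/quotient to \eqref{eq: H reducibility} and recasts the two characters in terms of $U_p$ and $\lr{p}_N$, producing the formulas $\chi_\mathrm{sub} = \dia\,\nu(U_p^{-1}\lr{p}_N)(-1)$ and $\chi_\mathrm{quo} = \nu(U_p)(-1)$; the uniform Tate twist by $(-1)$ is inherent to the weight-$2$ normalization of $\Lambda$-adic étale cohomology in \cite{FK2012}, and it accounts for the factors $\ell^{-1}$ and $\ell^{-2}$ (combined with the weight variable $[\ell]$) in the claimed Frobenius characteristic polynomial.

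Third, as a sanity check I would specialize along $\phi_k$ for $k \in \Z_{\geq 3}$ to a classical ordinary cuspidal eigenform $f$ of weight $k$ with nebentypus $\chi$ and $U_p$-unit eigenvalue $\alpha$: the specialization of $\chi_\mathrm{quo}$ becomes the unramified character sending $\Frob_p$ to $\alpha$ (Tate-twisted by $-1$), and the specialization of $\chi_\mathrm{sub}$ becomes $\chi\kappa^{k-1}\nu(\alpha)^{-1}(-1)$, reproducing the known form of $\rho_f(-1)|_{G_p}$ along with the Hecke polynomial $X^2 - a_\ell(f)X + \ell^{k-1}\chi(\ell)$ after the Tate twist. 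Since these specializations are dense in $\Spec\Lambda$ and everything in sight is $\Lambda$-flat, matching them forces the stated $\Lambda$-adic formulas.

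The main obstacle is bookkeeping in the second step: conventions for arithmetic vs.\ geometric Frobenius, for Tate twists, for the diamond character $\dia$ coming from $\Lambda^\times$, and for the interaction of $w_{Np}$ with the ordinary idempotent all differ subtly between sources and must be tracked simultaneously. Once these are aligned (as is done implicitly in \cite[\S1.7]{FK2012}), the theorem is a direct consequence, with no further cohomological input required.
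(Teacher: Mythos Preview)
Your proposal is correct and matches the paper's approach: the paper does not give an independent proof but simply cites \cite[\S\S1.2.9, 1.7.13]{FK2012} and, in the paragraph preceding the theorem, notes that the $w$-type isomorphisms of \cite[\S1.4, \S1.7.16]{FK2012} are used to translate from the additive model with dual Hecke operators to the multiplicative model with standard operators. Your three-step outline (record Fukaya--Kato's original statement, transport via $w_{Np}$, sanity-check by specialization) is exactly this translation made explicit, and your caveat about bookkeeping conventions is the only genuine content beyond the citation.
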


Since $H$ is set up to interpolate the Galois representations associated to $U_p$-ordinary forms of weight $2$ and levels $\Gamma_1(Np^r)$, we need to clarify its realization with varying weights $k \in \Z_{\geq 3}$. This is discussed clearly in \cite[\S1.5]{FK2012}. It will suffice to describe the necessary twist when specializing $H$ along a cuspidal eigenform $f \in S_{k,\oQ_p}^\ord$ for $k \in \Z_{\geq 2}$ with $U_p$-eigenvalue $\alpha$ and character $\chi_f : (\Z/N\Z)^\times \to \oQ_p^\times$. We use $f$ to label the map $\bT_\Lambda^{\ord,\circ} \to \oQ_p$ corresponding to $f$. 

\begin{lem}[{\cite[Prop.\ 1.5.8]{FK2012}}]
    Let $k, f$ as above. Then there is a $\bT[U][G_\Q]$-isomorphism $H \otimes_{\Lambda, f} \oQ_p(2-k) \simeq H^1_\et(X_{\oQ}, \cF_k)_f$, these are 2-dimensional $\oQ_p$-vector spaces with the characteristic polynomial of the $\Frob_\ell$-action ($\ell \nmid Np$) being $X^2 - \ell^{1-k}T_\ell X + \chi(\ell) \ell^{1-k}$, and they have a $\oQ_p$-basis with $G_p$-action given by 
    \[
    \ttmat{\nu(\alpha^{-1}\chi_f(p))}{*}{0}{\nu(\alpha)(1-k)}. 
    \]
\end{lem}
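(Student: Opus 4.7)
The plan is to deduce this lemma by applying the specialization-and-Tate-twist functor $(-) \otimes_{\Lambda, f} \oQ_p(2-k)$ to Theorem \ref{thm: FK Lamda-adic etale}. The isomorphism $H \otimes_{\Lambda, f} \oQ_p(2-k) \simeq H^1_\et(X_{\oQ}, \cF_k)_f$ reflects the fact that $H$ is constructed to interpolate weight $2$ \'etale cohomology along the Igusa tower, and the Tate twist by $(2-k)$ converts weight $2$ coefficients into weight $k$ coefficients $\cF_k$ at the level of local systems on $X_{\oQ}$; this is essentially the content of \cite[Prop.~1.5.8]{FK2012}, so the first step is simply to cite that identification (with the conventions of \S\ref{subsec: conventions} accounted for via \cite[\S1.4]{FK2012}).

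Next, I would verify the characteristic polynomial of $\Frob_\ell$ for $\ell \nmid Np$ by direct computation. Starting from the polynomial $X^2 - \ell^{-1} T_\ell X + \ell^{-2}[\ell] \lr{\ell}_N$ given by Theorem \ref{thm: FK Lamda-adic etale}, I would track the effect of a Tate twist by $(n)$, which multiplies the eigenvalues of $\Frob_\ell$ by $\ell^n$ and thus sends $X^2 - aX + b$ to $X^2 - \ell^n a X + \ell^{2n} b$. Specializing the coefficients under $f$ via $T_\ell \mapsto a_\ell(f)$, $\lr{\ell}_N \mapsto \chi_f(\ell)$, and $[\ell] \mapsto \ell^{k-1}$ should directly yield the stated $X^2 - \ell^{1-k} T_\ell X + \chi_f(\ell) \ell^{1-k}$.

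For the $G_p$-action, I would apply the specialization-twist functor to the short exact sequence \eqref{eq: H reducibility}, producing an upper-triangular representation whose sub and quotient are $1$-dimensional $\oQ_p$-lines. The character on the quotient is the specialization-twist of $\chi_\mathrm{quo} = \nu(U_p)(-1)$: sending $U_p \mapsto \alpha$ and then twisting by $(2-k)$ gives $\nu(\alpha)(1-k)$, matching the lower-right entry. The character on the sub is the specialization-twist of $\chi_\mathrm{sub} = \dia \, \nu(U_p^{-1} \lr{p}_N)(-1)$: the factor $\dia$ specializes at weight $k$ to the $(k-1)$-st power of the cyclotomic character on $G_p$, and combining this with $\nu(\alpha^{-1}\chi_f(p))$, the twist $(-1)$, and the overall twist $(2-k)$ should cancel all cyclotomic contributions, yielding $\nu(\alpha^{-1}\chi_f(p))$ as the upper-left entry. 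The off-diagonal entry need not be controlled.

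The main obstacle is bookkeeping rather than substance: one must carefully navigate the translation between the multiplicative and additive models of the modular curve and between standard and dual Hecke operators (handled in \cite[\S1.4, \S1.7]{FK2012}), and interpret the factor $\dia$ in $\chi_\mathrm{sub}$ correctly as the tautological $\Lambda^\times$-valued character of $G_p$ so that its specialization at weight $k$ is the $(k-1)$-st power of the cyclotomic character. Once these conventions are pinned down, all that remains is routine arithmetic.
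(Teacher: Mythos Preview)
Your proposal is correct. The paper gives no proof of this lemma at all—it is stated with the citation \cite[Prop.~1.5.8]{FK2012} and then the text moves on—so your derivation by specializing Theorem~\ref{thm: FK Lamda-adic etale} along $f$ and Tate-twisting by $(2-k)$ is exactly the natural unpacking of that citation, and your bookkeeping (including the identification of $\dia$ under $\phi_k$ with $\kappa^{k-1}$, which the paper confirms in the parenthetical after Definition~\ref{defn: weight normalized etale coh}) checks out.
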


Therefore we want the following twist-normalization of the Galois representations associated to $\bT_\Lambda^\ord$ in order to make them vary with weight and not with level. 
\begin{defn}
    \label{defn: weight normalized etale coh}
    Let $V_\Lambda^\ord$ denote $H \otimes_\Lambda \dia^{-1}(1)$, that is, the twist by the character $\dia^{-1}(1)$. (Note that $\dia^{-1}(1) \otimes_{\Lambda, \phi_k} \Z_p \simeq \Z_p(2-k)$.) Let $V_k^\ord := V_\Lambda^\ord \otimes_{\Lambda, \phi_k} \Z_p$. 
\end{defn}

For clarity, we describe $V_\Lambda^\ord$ as in Theorem \ref{thm: FK Lamda-adic etale}. 
\begin{cor}
    \label{cor: geom Lambda-adic etale}
    For a prime $\ell \nmid Np$, the characteristic polynomial of $\Frob_\ell$ acting on $V^\ord$ is $X^2 - [\ell]^{-1}T_\ell X + [\ell]^{-1} \lr{\ell}_N$.  There is a short exact sequence of $\Lambda$-flat $\bT_\Lambda^{\ord,\circ}[G_p]$-modules 
        $0 \to V^\ord_\mathrm{sub} \to V^\ord \to V^\ord_\mathrm{quo} \to 0$ with $\bT_\Lambda^{\ord,\circ}$-module isomorphisms as in Theorem \ref{thm: FK Lamda-adic etale}. The action of $G_p$ on $V^\ord_\mathrm{sub}$ and $V^\ord_\mathrm{quo}$ is described by $\bT_\Lambda^\ord$-valued characters 
    \[
    \chi^\ord_\mathrm{sub} = \nu(U_p^{-1}\lr{p}_N), \qquad \chi^\ord_\mathrm{quo} = \dia^{-1}\nu(U_p). 
    \]
\end{cor}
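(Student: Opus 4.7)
The plan is to derive the corollary directly from Theorem \ref{thm: FK Lamda-adic etale} by tracking the effect of the twist $V_\Lambda^\ord := H \otimes_\Lambda \dia^{-1}(1)$ of Definition \ref{defn: weight normalized etale coh}. Viewed as a $G_\Q$-character, this twist sends $\Frob_\ell$ (for $\ell \nmid Np$) to $\ell \cdot [\ell]^{-1}$, and on $G_p$ it is the global product $\dia^{-1} \kappa$, where $\kappa$ is the $p$-adic cyclotomic character. Thus every structural statement for $V_\Lambda^\ord$ is obtained by applying the exact twist functor to the corresponding statement for $H$.

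The first step is to recompute the characteristic polynomial of $\Frob_\ell$. Twisting a rank-two representation by a character $\chi$ replaces $X^2 - aX + b$ by $X^2 - \chi(\Frob_\ell)\, a X + \chi(\Frob_\ell)^2 b$. Applied to the polynomial $X^2 - \ell^{-1}T_\ell X + \ell^{-2}[\ell]\lr{\ell}_N$ of Theorem \ref{thm: FK Lamda-adic etale} with $\chi(\Frob_\ell) = \ell\,[\ell]^{-1}$, the $\ell^{\pm 1}$ and $\ell^{\pm 2}$ factors cancel cleanly and yield the claimed $X^2 - [\ell]^{-1}T_\ell X + [\ell]^{-1}\lr{\ell}_N$.

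The second step is to tensor the sequence \eqref{eq: H reducibility} with the invertible $\Lambda$-module realizing the twist. This functor is exact, commutes with the $\bT_\Lambda^{\ord,\circ}$-action (the twist is central with respect to Hecke), and preserves $\Lambda$-flatness; so the short exact sequence of $\bT_\Lambda^{\ord,\circ}[G_p]$-modules survives intact, as do the $\bT_\Lambda^{\ord,\circ}$-module identifications $H_\mathrm{sub} \simeq \bT_\Lambda^{\ord,\circ}$ and $H_\mathrm{quo} \simeq S_\Lambda^\ord$.

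The third step is to multiply the $G_p$-characters of Theorem \ref{thm: FK Lamda-adic etale} by the twist character $\dia^{-1}\kappa$. On the sub, the factor $\dia$ cancels against $\dia^{-1}$ and the Tate twist $(-1)$ cancels against $\kappa$, leaving the unramified character $\nu(U_p^{-1}\lr{p}_N)$; on the quotient, only the factor $\dia^{-1}$ survives, producing $\dia^{-1}\nu(U_p)$. There is no substantive obstacle to anticipate: the whole statement is a bookkeeping exercise, and the nontrivial reconciliation of the additive/multiplicative models and standard/dual Hecke operators has already been absorbed into the formulation of Theorem \ref{thm: FK Lamda-adic etale} via the translation of \cite[\S1.4]{FK2012}.
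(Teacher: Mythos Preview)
Your proposal is correct and is exactly the intended argument: the paper states this corollary without proof, treating it as an immediate consequence of Definition~\ref{defn: weight normalized etale coh} and Theorem~\ref{thm: FK Lamda-adic etale}, and your three steps simply make that twist computation explicit. The arithmetic checks (the cancellation of $\ell^{\pm1}$, $\ell^{\pm2}$ in the characteristic polynomial and of $\dia$, $\kappa^{\pm1}$ in the $G_p$-characters) are accurate.
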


Let $D_\mathrm{cris}$ denote the usual covariant functor from crystalline $G_p$-representations over $\Q_p$ to weakly admissible filtered isocrystals over $\Q_p$, 
\[
D_\mathrm{cris} : V \mapsto (V \otimes_{\Q_p} B_\mathrm{cris})^{G_p}. 
\]

\begin{prop}
\label{prop: isocrystal and galois rep}
The $G_p$-module $V_k^\ord$  of Definition \ref{defn: weight normalized etale coh} is crystalline for $k\geq 3$. Moreover,  $D_\mathrm{cris}(V_{k,\Q_p}^\ord)$ is naturally isomorphic to the weakly admissible filtered isocrystal $\dR_k \otimes_{\Z_p} \Q_p$ (notation of Definition \ref{defn: dR}). 
\end{prop}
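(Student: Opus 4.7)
The plan is to deduce the proposition from the étale-crystalline comparison theorem of Faltings--Tsuji applied to the smooth proper curve $X/\Z_p$ with coefficients in the lisse/filtered F-isocrystal $\cF_k$ arising from the Gauss--Manin connection (with logarithmic structure at the cusps, to account for the semistable coefficient system). The first input is Definition \ref{defn: weight normalized etale coh} together with the lemma immediately preceding it, which identifies $V_k^\ord$ with $e(T_p) H^1_\et(X_{\oQ_p}, \cF_k)$: by construction $V_\Lambda^\ord = H \otimes_\Lambda \dia^{-1}(1)$ is twisted precisely so that its specialization along $\phi_k$ absorbs the Tate twist $(2-k)$ appearing in the cited lemma, thereby interpolating in weight $k$ the $T_p$-ordinary part of weight $k$ étale cohomology.

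First, I would invoke the crystalline comparison isomorphism
\[
D_\mathrm{cris}\bigl(H^1_\et(X_{\oQ_p}, \cF_k)\bigr) \isoto \bH^1_\dR(X_{\Q_p}, \cF_k),
\]
which is $\bT[T_p]$-equivariant by functoriality of the comparison with respect to correspondences, and which respects the Hodge filtration on the right and carries crystalline Frobenius on the left to the Frobenius structure induced (via Proposition \ref{prop: cris dR}) by crystalline cohomology on the right. Applying the $T_p$-ordinary projector $e(T_p)$ on both sides and projecting to the $T_p$-ordinary summand of $\bH^1_\dR(X_{\Q_p}, \cF_k)^0$ (which is cut out by the crystalline Frobenius), the right-hand side becomes the underlying $\Q_p$-vector space of $\dR_k \otimes_{\Z_p} \Q_p$ by Definition \ref{defn: dR}, and the left-hand side becomes $D_\mathrm{cris}(V_{k,\Q_p}^\ord)$. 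Crystallinity of $V_k^\ord$ follows since it is a direct summand of a crystalline representation.

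Next I would verify the structural compatibilities. The Frobenius match is immediate from the last sentence of Proposition \ref{prop: FV on dR}, identifying the Frobenius $\varphi$ on $\dR_k$ with the crystalline Frobenius coming from the de Rham-crystalline comparison. The Hodge filtration on $D_\mathrm{cris}(V_{k,\Q_p}^\ord)$ is by definition the one pulled back from $\bH^1_\dR(X_{\Q_p}, \cF_k)^0$ via the comparison, and this is precisely the Hodge filtration on $\dR_k$ specified in Definition \ref{defn: dR} (with $\Fil^{k-1}$ realized as the image of weight $k$ modular forms under $\KS_k$). The main obstacle, though a routine one, is citing the correct version of the comparison theorem in the presence of the logarithmic structure at the cusps and the non-trivial coefficient system $\cF_k$; this is standard (e.g.\ in work of Faltings, Tsuji, or Scholze for log schemes) and is the same input implicitly used in the Breuil--Emerton paper \cite{BE2010} invoked later.
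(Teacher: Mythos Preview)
Your proposal is correct and takes essentially the same approach as the paper: the paper's proof is the single citation ``See e.g.\ \cite[Thm.\ 5.6]{faltings1989},'' and you have correctly unpacked this into the $p$-adic comparison theorem for $X/\Z_p$ with coefficients $\cF_k$, then projected to the $T_p$-ordinary part and matched the Frobenius and Hodge structures via Proposition \ref{prop: cris dR} and Definition \ref{defn: dR}. Your write-up is in fact more detailed than the paper's own proof.
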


\begin{proof}
    See e.g.\ \cite[Thm.\ 5.6]{faltings1989}. (Recall that $k\in\Z_{\geq 3}$ for simplicity.) 
\end{proof}

\subsection{Bi-ordinary implies $p$-split and a converse statement}

Now we will see that any of the 2-dimensional Galois representations supported by the bi-ordinary Hecke algebra is $p$-split, that is, reducible and decomposable. It will suffice for our purposes to state the following form. 
\begin{prop}
    \label{prop: BO implies split}
    Let $\bT_\Lambda^\SBO \to F$ be an algebraic extension of a residue field of a prime ideal of $\bT_\Lambda^\SBO$ such that the associated semi-simple Galois representation $\rho : G_\Q \to \GL_2(F)$ has absolutely irreducible Jordan--H\"older factors. If $\dia \otimes_\Lambda F \neq  1$ or $\lr{p}_N^{-1} U_p^2 \neq 1 \in F$, then $\rho\vert_{G_p}$ is reducible and decomposable. 
\end{prop}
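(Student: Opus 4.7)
The plan is to exploit Corollary~\ref{cor: BO is to and ao}, $\bT_\Lambda^\SBO \cong \bT_\Lambda^{\tord,\circ} \otimes_{\bT_\Lambda^\crit} \bT_\Lambda^\aord$, which factors the map $\bT_\Lambda^\SBO \to F$ through both $\bT_\Lambda^{\tord,\circ}$ and $\bT_\Lambda^\aord$. Each factorization will produce a one-dimensional $G_p$-stable subspace of $\rho$; the hypothesis will force these two lines to carry distinct characters, making $\rho|_{G_p}$ split as their direct sum. The absolutely irreducible Jordan--H\"older assumption ensures that $\rho$ is determined by its pseudorepresentation, so both extracted $G_p$-structures live in the same $\rho$.

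For the tord factorization, I compose with the isomorphism $\bT_\Lambda^{\tord,\circ} \cong \bT_\Lambda^{\ord,\circ}$ of Proposition~\ref{prop: tord control} and apply Corollary~\ref{cor: geom Lambda-adic etale} to the Fukaya--Kato $\Lambda$-adic \'etale cohomology $V_\Lambda^\ord$; pushing forward to $F$ exhibits a $G_p$-stable line in $\rho$ with character $\chi_1 := \nu(U'^{-1}\lr{p}_N)$, identifying $U \in \bT[U]$ with $U' \in \bT[U']$ via the tord-ord comparison. For the aord factorization, I invoke Proposition~\ref{prop: aord TU algebra}'s twisted isomorphism $\bT_\Lambda^\aord \cong \bT_\Lambda^\ord$, under which aord weight $k$ corresponds to ord weight $2-k$ and the operator $U'$ corresponds to $\lr{p}_N U_p^{-1}$. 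At any classical weight $k \in \Z_{\geq 3}$, the aord eigenform is $\theta^{k-1}(g)$ for an ord form $g$ of weight $2-k$, so its Galois representation is the arithmetic Tate twist by $(k-1)$ of the weight-$(2-k)$ ord one; translating into the paper's normalization, this is $V^\ord_\Lambda$ in weight $2-k$ tensored with the $\Lambda$-adic character $\dia^{-1}$. The $G_p$-filtration from Corollary~\ref{cor: geom Lambda-adic etale} then yields a second $G_p$-stable line in $\rho$, with character $\chi_2 := \dia^{-1}\nu(U')$, after substituting $\lr{p}_N U'^{-1}$ for the ord $U_p$-eigenvalue.

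The ratio is $\chi_1 \chi_2^{-1} = \dia \cdot \nu(\lr{p}_N U'^{-2})$, whose ramified part is $\dia$ and whose Frobenius value on the unramified part is $\lr{p}_N U'^{-2}$. The hypothesis is exactly the negation of the simultaneous triviality of both factors, so $\chi_1 \neq \chi_2$ as $F$-valued characters of $G_p$, and $\rho|_{G_p}$ contains two distinct one-dimensional subrepresentations, forcing $\rho|_{G_p} \cong F(\chi_1) \oplus F(\chi_2)$. The main technical obstacle is rigorously packaging the aord $G_p$-filtration $\Lambda$-adically; one would either construct $V^\aord_\Lambda$ intrinsically as a $\bT_\Lambda^\aord[G_\Q]$-module by twisting $V^\ord_\Lambda$ along the identifications of Proposition~\ref{prop: aord TU algebra}, or verify the $G_p$-filtration at classical specializations in weights $k \in \Z_{\geq 3}$ and invoke Zariski density in $\Spec \bT_\Lambda^\aord$ to propagate the conclusion to the given prime. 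The tord half is a direct application of Fukaya--Kato.
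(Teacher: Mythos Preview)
Your proposal is correct and follows essentially the same approach as the paper. The paper likewise factors the map $\bT_\Lambda^\SBO \to F$ through the surjections from $\bT_\Lambda^{\tord,\circ}$ and $\bT_\Lambda^{\aord,\circ}$, obtains from the Fukaya--Kato module (in its twist-ordinary and anti-ordinary realizations $H^\tord$, $H^\aord$) the two opposite $G_p$-flags with Jordan--H\"older factors $\nu(U'^{-1}\lr{p}_N)$ and $\dia^{-1}\nu(U')$, and concludes decomposability once these characters are distinct; what you call the ``main technical obstacle'' is handled in the paper exactly as in your first suggestion, by twisting $V_\Lambda^\ord$ along the identifications of Proposition~\ref{prop: aord TU algebra} to produce $H^\aord$ directly.
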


To understand what these Galois representations are, and to start toward the proof of Proposition \ref{prop: BO implies split}, we describe the $G_p$-stable flags appearing in the realization of $H$ as a $\bT_\Lambda^\tord[G_\Q]$- and $\bT_\Lambda^\aord[G_\Q]$-module. We call these $H^\tord$ and $H^\aord$, respectively. Because the comparison $\bT_\Lambda^{\tord,\circ} \isoto \bT_\Lambda^{\ord,\circ}$ is as simple as the replacement $U' \mapsto U$ (Proposition \ref{prop: tord control}), the form of $H^\tord$ is straightforward from Theorem \ref{thm: FK Lamda-adic etale}. On the other hand, the twist in the isomorphism $\bT_\Lambda^{\aord,\circ} \isoto \bT_\Lambda^{\ord,\circ}$ described in Proposition \ref{prop: aord TU algebra} results in stabilizing the opposite flag. We apply Corollary \ref{cor: geom Lambda-adic etale} and think of the (generalized) matrix realizations of the $G_p$-actions on $V^\tord$ and $V^\aord$ as 
\[
\tord 
: \ttmat{\nu(U'^{-1}\lr{p}_N)}{*}{0}{\dia^{-1}\nu(U')}, \quad 
\aord: \ttmat{\nu(U'^{-1}\lr{p}_N)}{0}{*}{\dia^{-1}\nu(U')}.
\]
We will refer to these as the forms of the \emph{twist- and anti-ordinary flags}. Working over a field, the existence of these two stable flags must imply $p$-splitness as long as the two characters comprising the Jordan--H\"older factors are distinct. 

\begin{proof}[{Proof of Proposition \ref{prop: BO implies split}}]
    We assume that $\rho$ is absolutely irreducible, since the complementary situation where $\rho$ is reducible semi-simple is trivial. 
    
    Due to the surjection $\bT_\Lambda^{\tord,\circ} \rsurj \bT_\Lambda^\SBO$, there is an isomorphism of Galois representations $\rho \simeq H^\tord \otimes_{\bT_\Lambda^{\tord,\circ}} F$ because they are irreducible and the characteristic polynomials of Frobenius elements $\Frob_\ell \in G_\Q$ are identical. Therefore $\rho\vert_{G_p}$ admits a twist-ordinary flag. Likewise, the surjection $\bT_\Lambda^{\aord,\circ} \rsurj \bT_\Lambda^\SBO$ implies that $\rho \simeq H^\aord \otimes_{\bT_\Lambda^{\aord,\circ}} F$ and therefore $\rho\vert_{G_p}$ admits an anti-ordinary flag as well. By the Jordan--H\"older theorem, therefore $\rho\vert_{G_p}$ must be $p$-split unless the two Jordan--H\"older factors are identical. The assumptions of the theorem are equivalent to the distinctness of the two Jordan--H\"older factors of $\rho\vert_{G_p}$. 
\end{proof}

Here is the extent to which we can reprove Breuil--Emerton's Theorem \ref{thm: BE} using the connections between $\SBO_\Lambda^\bullet$ and classical modular forms that we have established so far. Recall that $M_k^\tord$ ($k \in \Z_{\geq 3}$) is nothing other than classical $U_p$-critical forms with $U'$ acting by $\lr{p}_Np^{k-1}U_p^{-1}$. As usual, let $f_\beta$ (resp.\ $f_\alpha$) denote the $U_p$-critical (resp.\ $U_p$-ordinary) $p$-stabilization of $f$, a classical normalized eigenform of level $\Gamma_1(N) \cap \Gamma_0(p)$ with $U_p$-eigenvalue $\beta$ (resp.\ $\alpha$). Let $\phi_f : \bT[T_p](e(T_p)H^0(X,\omega^k)) \to E$ arise from the Hecke action on $f$. Let $\dR_f$ denote $\dR_k \otimes_{\bT[T_p], \phi_f} E$ with $\dR_k$ as in Definition \ref{defn: dR}. Let $\rho_f : G_\Q \to \GL_2(E)$ denote the associated 2-dimensional semi-simple Galois representation, which is $\rho_f \simeq (V_k^\ord)_f(k-1)$ when $f$ is cuspidal. 

\begin{thm}
    \label{thm: BE alternate}
    Let $k \in \Z_{\geq 3}$ and let $f \in e(T_p)H^0(X_E, \omega^k)$ and $f_\beta \in M_{k,E}^\tord$ be as above; in particular, $f_\beta$ is $U_p$-critical.  The following conditions are equivalent. 
    \begin{enumerate}
        \item $f$ is in the image of the map $\theta^{k-1}$ of \eqref{eq: theta} $\otimes E$, or, what is the same, $f \in M_{k,E}^\aord$
        \item $\rho_f\vert_{G_p}$ is reducible and decomposable
        \item the $f$-isotypical part of $H^*(\BO_k^\bullet \otimes_{\Z_p} E)$ is non-zero, i.e.\ its Hecke eigensystem is bi-ordinary. 
    \end{enumerate} 
\end{thm}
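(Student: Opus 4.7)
My plan is to first prove $(1) \Leftrightarrow (3)$ by direct Hecke-eigenspace analysis of $\BO_k^\bullet$ combined with the weight-$k$ specialization of Serre self-duality (Theorem~\ref{thm: SD for BO}), and then prove $(1) \Leftrightarrow (2)$ by identifying decomposability of $\rho_f\vert_{G_p}$ with a coincidence between the Hodge line and the $\varphi$-critical line inside $(\dR_k)_f \otimes \Q_p$, using Proposition~\ref{prop: isocrystal and galois rep} to identify this isocrystal with $D_{\mathrm{cris}}(\rho_f\vert_{G_p})$ and Corollary~\ref{cor: Hodge wrt cris} to compute the Hodge line explicitly. This will give the promised ``Coleman's presentation variant'' of the level $\Gamma_1(N)$ case of Breuil--Emerton's theorem. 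For Eisenstein $f$ all three conditions hold trivially by Corollary~\ref{cor: Eisenstein theta}, so from here on I focus on cuspidal $f$.

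For $(1) \Leftrightarrow (3)$: unpacking the differential $\theta^{k-1} + \zeta_k$ yields
\[
H^0(\BO_k^\bullet \otimes E) \cong M_{k,E}^\aord \cap \zeta_k(M_{k,E}^\tord), \qquad H^1(\BO_k^\bullet \otimes E) \cong \frac{M_{k,E}^{\dagger,\crit}}{M_{k,E}^\aord + \zeta_k(M_{k,E}^\tord)}.
\]
Since the $f$-eigenspace of $M_{k,E}^\tord$ is one-dimensional, spanned by the $U_p$-critical stabilization $f_\beta$, we obtain $H^0(\BO_k^\bullet \otimes E)_f \neq 0 \Leftrightarrow f_\beta \in M_{k,E}^\aord$, which is (1). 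The weight-$k$ specialization of the Serre pairing of Definition~\ref{defn: SD for SBO}, combined with Lemma~\ref{lem: Eis difference} reducing to $\SBO_k^\bullet$ in the cuspidal case, yields an $E$-linear duality $H^1(\BO_k^\bullet \otimes E)_f \cong H^0(\BO_k^\bullet \otimes E)_f^\ast$, so $H^1_f \neq 0 \Leftrightarrow H^0_f \neq 0$, completing $(1) \Leftrightarrow (3)$.

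For $(1) \Leftrightarrow (2)$: since $f$ is $T_p$-ordinary of weight $k \geq 3$, the crystalline Frobenius on $(\dR_k)_f \otimes \Q_p$ has two eigenvalues $\alpha,\beta$ with distinct $p$-adic valuations $0$ and $k-1$, producing a decomposition into a $\varphi$-ordinary line and a $\varphi$-critical line. Decomposability of $\rho_f\vert_{G_p}$ is equivalent to $(\dR_k)_f \otimes \Q_p$ splitting as a direct sum of two filtered rank-$1$ $\varphi$-modules, and weak admissibility rules out the Hodge line $\Fil^{k-1}$ lying in the $\varphi$-ordinary line; hence decomposability is equivalent to $\Fil^{k-1}$ equaling the $\varphi$-critical line. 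By Proposition~\ref{prop: dR summary} the $\varphi$-critical summand of $(\dR_k)_f$ corresponds to the second factor of $(M_k^\ord \oplus M_k^{\dagger,\crit}/M_k^\aord)_f$, and by Corollary~\ref{cor: Hodge wrt cris} the Hodge line is the image of $f_\alpha \in (M_k^\ord)_f$ under
\[
g \mapsto \bigl( -p^{k-1}\lr{p}_N U^{-2}\tau_k(U)(g),\ \zeta_k(\tau_k(U')(\iota_k(g))) \bigr).
\]
Using the identity $\zeta_k(\iota_k(f_\alpha)) = f_\beta$ in $M_k^{\dagger,\crit}$, the Hodge line equals the $\varphi$-critical line iff $\tau_k(U')(f_\beta) \in M_{k,E}^\aord$; and since by Lemma~\ref{lem: tau inverse} the operator $\tau_k(U')$ is an automorphism built from $U' \in \bT[U']$ and hence preserves the $\bT[U']$-submodule $M_k^\aord$, this is equivalent to $f_\beta \in M_{k,E}^\aord$, which is (1).

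The main obstacle I anticipate is the careful bookkeeping of the identifications in \eqref{eq: identifications} between $M_k^\ord, M_k^\tord, M_k^\crit, M_k^\aord$ and their two Hecke algebra structures $\bT[U_p]$ and $\bT[U']$, so that Corollary~\ref{cor: Hodge wrt cris} is correctly evaluated at the $f$-isotypic component and the identity $\zeta_k(\iota_k(f_\alpha)) = f_\beta$ in $M_k^{\dagger,\crit}$ is firmly established; once this is done, the remaining Hecke eigenspace and filtered isocrystal arguments are routine.
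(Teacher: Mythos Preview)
Your argument is correct and reaches the result, but with one slip and via a somewhat different organization than the paper.

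The slip: by Proposition~\ref{prop: dR summary} the $\varphi$-critical summand of $\dR_k$ is $\KS_k(M_k^\ord)$, i.e.\ the \emph{first} factor of $M_k^\ord \oplus (M_k^{\dagger,\crit}/M_k^\aord)$, not the second as you write. Your subsequent criterion ``Hodge line $=$ $\varphi$-critical line $\iff \tau_k(U')(f_\beta)\in M_{k,E}^\aord$'' is nonetheless correct, since that is precisely the vanishing of the second coordinate in Corollary~\ref{cor: Hodge wrt cris}; only the label is swapped.

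On the comparison: for $(1)\Leftrightarrow(2)$ the paper does essentially what you do, packaged as Lemma~\ref{lem: isocrystal splittability} (working directly with the stabilization identity $f=(\beta f_\alpha-\alpha f_\beta)/(\beta-\alpha)$ rather than via the explicit operator formula of Corollary~\ref{cor: Hodge wrt cris}). For $(1)\Leftrightarrow(3)$ your route is more self-contained: you read off $H^0$ directly as an intersection and then invoke the weight-$k$ specialization of Serre self-duality to get $H^1_f\neq 0\Leftrightarrow H^0_f\neq 0$. The paper instead proves $(1)\Rightarrow(3)$ via the Hecke-algebra pushout $\bT_\Lambda^{\tord,\circ}\otimes_{\bT_\Lambda^\crit}\bT_\Lambda^\aord\cong\bT_\Lambda^\SBO$ of Corollary~\ref{cor: BO is to and ao}, $(3)\Rightarrow(1)$ via Coleman's generalized-eigenspace criterion (Proposition~\ref{prop: coleman non-split}), and separately $(3)\Rightarrow(2)$ via the two-flag Galois argument of Proposition~\ref{prop: BO implies split}. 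Your route bypasses Proposition~\ref{prop: BO implies split} entirely, which is a mild simplification; the paper's route has the advantage of making explicit the connections to the $\Lambda$-adic Hecke-algebra picture and to Coleman's classicality results.
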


\begin{proof}
    In the case that $f$ is Eisenstein, which is well-known to be equivalent to $\rho\vert_{G_\Q}$ being reducible, condition (1) is true by Corollary \ref{cor: Eisenstein theta}, while condition (3) is true by the first statement in Theorem \ref{thm: SD for BO}. 

    When $f$ is cuspidal, (3) implies (2) by Proposition \ref{prop: BO implies split}, while (1) implies (3) by Corollary \ref{cor: BO is to and ao}. We also know (3) implies (1) by Proposition \ref{prop: coleman non-split}. 

    The key additional input is Proposition \ref{prop: isocrystal and galois rep}, which allows us to deduce (2) $\Rightarrow$ (1) from Lemma \ref{lem: isocrystal splittability}. 
\end{proof}

\begin{lem}
    \label{lem: isocrystal splittability}
    $\dR_f$ is splittable into two weakly admissible filtered isocrystals of $E$-dimension $1$ if and only if $f_\beta \in M^\aord_{k,E}$. 
\end{lem}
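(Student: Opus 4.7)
The approach is to reduce the question about the abstract filtered $\varphi$-module $\dR_f$ to an explicit computation of the position of its Hodge line, using the crystalline decomposition of Proposition~\ref{prop: dR summary} together with the explicit formula for the Hodge filtration in Corollary~\ref{cor: Hodge wrt cris}. First I will note that $\dR_f$ is a 2-dimensional $E$-vector space equipped with a crystalline Frobenius $\varphi$ whose eigenvalues are the roots $\alpha,\beta$ of the $T_p$-Hecke polynomial $X^2-a_p(f)X+\chi_f(p)p^{k-1}$ of $f$ (and these are distinct since $v(\alpha)=0$ while $v(\beta)=k-1$ in the $T_p$-ordinary situation), and a Hodge filtration whose only non-trivial step is a line $\Fil^{k-1}\subset\dR_f$. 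Since the two Frobenius eigenspaces $\ell_\alpha,\ell_\beta\subset\dR_f$ are the only $\varphi$-stable lines, any decomposition of $\dR_f$ as a direct sum of filtered $\varphi$-modules forces $\Fil^{k-1}$ to be one of $\ell_\alpha$ or $\ell_\beta$; weak admissibility on each summand then rules out $\Fil^{k-1}=\ell_\alpha$ (its Hodge slope $k-1$ would exceed its Newton slope $0$), so splittability as weakly admissible filtered isocrystals is equivalent to the single condition $\Fil^{k-1}=\ell_\beta$.

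Next I will translate this condition into an assertion about modular forms by invoking the crystalline decomposition
\[
\dR_k\isoto M_k^\ord\oplus\frac{M_k^{\dagger,\crit}}{M_k^\aord}
\]
of Proposition~\ref{prop: dR summary}: the first summand is $\dR_k^{\varphi\text{-}\crit}$ (Frobenius eigenvalue $\beta$) and the second is $\dR_k^{\varphi\text{-}\ord}$ (Frobenius eigenvalue $\alpha$). On the $f$-isotypic component these specialize to $\ell_\beta=E\cdot f_\alpha$ and $\ell_\alpha=E\cdot\overline{\zeta_k(f_\beta)}$ respectively. Applying the explicit description of $\Fil^{k-1}$ from Corollary~\ref{cor: Hodge wrt cris} to $g=f_\alpha\in M_{k,E}^\ord$, and using that $U\cdot f_\alpha=\alpha f_\alpha$, $U'\cdot\iota_k(f_\alpha)=\alpha\iota_k(f_\alpha)$, and $\zeta_k(\iota_k(f_\alpha))=f_\beta$, the Hodge line in $\dR_f$ becomes the span of
\[
\bigl(-\tfrac{\beta}{\alpha-\beta}\,f_\alpha,\ \tfrac{\alpha}{\alpha-\beta}\,\overline{\zeta_k(f_\beta)}\bigr)\in\ell_\beta\oplus\ell_\alpha,
\]
where I have substituted $\tau_k(\alpha)=\alpha/(\alpha-\beta)$ and $p^{k-1}\chi_f(p)\alpha^{-1}=\beta$.

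Finally I will read off the criterion. Since $\alpha,\beta$ are non-zero, the first coordinate never vanishes, so $\Fil^{k-1}\ne\ell_\alpha$ always. The second coordinate $\tfrac{\alpha}{\alpha-\beta}\,\overline{\zeta_k(f_\beta)}$ vanishes if and only if $\zeta_k(f_\beta)\in M_k^\aord$, which by Proposition~\ref{prop: classical is saturated} and Definition~\ref{defn: ao wt k} is equivalent to $f_\beta\in M_{k,E}^\aord$. Thus $\Fil^{k-1}=\ell_\beta$ exactly when $f_\beta\in M_{k,E}^\aord$, which is the desired equivalence. The main conceptual step is the compatibility of the algebraic Hodge filtration with Coleman's analytic presentation at the level of the $p$-integral sublattices (so that the formula of Corollary~\ref{cor: Hodge wrt cris} faithfully records the position of $\Fil^{k-1}$ inside the crystalline decomposition); all of this has already been assembled in Section~\ref{sec: p-integral cric oc} and~\ref{subsec: Lambda-adic coh}, so the remaining content of the lemma is the short linear-algebra computation above.
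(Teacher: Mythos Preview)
Your proposal is correct and follows essentially the same approach as the paper: both arguments identify the two $\varphi$-eigenlines in $\dR_f$, compute the position of the Hodge line relative to the crystalline decomposition (you via Corollary~\ref{cor: Hodge wrt cris}, the paper via the equivalent formula $f=(\beta f_\alpha-\alpha f_\beta)/(\beta-\alpha)$ and Coleman's presentation), and conclude that the Hodge line equals the slope-$(k-1)$ eigenline precisely when $f_\beta\in M_{k,E}^\aord$. The only cosmetic difference is that you invoke weak admissibility up front to rule out $\Fil^{k-1}=\ell_\alpha$, whereas the paper first shows $\KS_k(f)$ never lies in the $f_\beta$-isotypic line directly from the modular-forms computation and defers the ``brief argument about filtered isocrystals'' to the end.
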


\begin{proof}
    We assemble some consequences, relevant for his classicality theorem, of Coleman's Proposition \ref{prop: Coleman pres dR} -- that is, the image of $\theta^{k-1}$ has slope $\geq k-1$.  Note that subscripts $f_\alpha,f_\beta$ refer to $\bT[U]$-isotypical parts. 
    \begin{itemize}
        \item $\KS_k(f_\alpha)$ never vanishes in $\dR_f$.
        \item Because $f = (\beta f_\alpha - \alpha f_\beta)/(\beta - \alpha)$, $f_\beta \in M^\aord_{k,E}$ if and only if $\KS_k(f)$ and $\KS_k(f_\alpha)$ are $E$-colinear in $\dR_f$. 
        \item In no case does the submodule $\KS_k(f)$ lie in the $f_\beta$-isotypic line $(\dR_f)_{f_\beta}$. 
    \end{itemize} 
    The 1-dimensional subspaces $(M^\ord_{k,E})_{f_\alpha} = \lr{f_\alpha}$ and $(\dR_f)_{f_\beta}$ in $\dR_f$ are the two lines stable under crystalline Frobenius $\varphi = \lr{p}_N p^{k-1}U^{-1}$. They have $\varphi$-slopes $k-1$ and $0$, respectively. 
    
    Putting together the facts above, we deduce that the $\bT[T_p]$-submodule $\KS_k(\lr{f}) \subset \dR_f$, which is the non-trivial submodule of the Hodge filtration of $\dR_f$, is $\varphi$-stable if and only if $f_\beta \in M^\aord_{k,E}$. Then a brief argument about filtered isocrystals shows that this criterion is equivalent to the splittability statement of the lemma. 
\end{proof}

As a result, we can obtain a partial converse to Proposition \ref{prop: BO implies split}. We think of this as a generalization of Ghate--Vatsal's result (\cite[Thm.\ 3]{GV2004}, Proposition \ref{prop: GV})that modular forms with bi-ordinary Hecke eigensystems capture the $G_p$-decomposability property for general residual eigensystems. They show CM forms capture this property when the residual eigensystem is nice enough. 

Let $\cF$ denote a twist-ordinary Hida family of tame level $N$ valued in a finite flat $\Lambda$-domain $A$. The typical example of $A$ is the normalization of a quotient of $\bT_\Lambda^\tord$ by a minimal prime ideal. In the following statements, subscripts ``$A$'' refer to the base change along $- \otimes_\Lambda A$ of the usual $\Lambda$-valued object. For example, $\cF$ is a Hecke eigen-element of $M_A^\tord$. 

\begin{thm}
\label{thm: BO equals split in Hida families}
Let $\cF$ be a twist-ordinary Hida family valued in $A$ as above. The following conditions are equivalent. 
\begin{enumerate}
    \item $\cF$ lies in the image of $\Theta_A : M_A^\aord \rinj M_A^\crit$
    \item the associated Galois representation $\rho_\cF$ is reducible and decomposable on $G_p$
    \item the $\cF$-isotypic part of $H^*(\BO_A^\bullet \otimes_A Q(A)) $ is non-zero, that is, its Hecke eigensystem is bi-ordinary. 
\end{enumerate}
\end{thm}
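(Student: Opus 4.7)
My plan is to establish the cycle of implications $(1) \Rightarrow (3) \Rightarrow (2) \Rightarrow (1)$, with the last being the most technical.

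For $(1) \Rightarrow (3)$: This is essentially structural. Using the quasi-isomorphism of perfect complexes $\SBO_\Lambda^\bullet \simeq [S_\Lambda^\tord \to \cH^{1,\ord}_\Lambda]$ from \eqref{eq: SBO replace}, the cohomology $H^0(\SBO_\Lambda^\bullet)$ is identified with the kernel of $\pi_\Lambda \circ \zeta_\Lambda$. Saying $\cF$ lies in the image of $\Theta_\Lambda$ is, by the exactness of $0 \to M_\Lambda^\aord \to M_\Lambda^\crit \to \cH^{1,\ord}_\Lambda \to 0$ in Corollary \ref{cor: main Lambda SES}, equivalent to saying $(\pi_\Lambda \circ \zeta_\Lambda)(\cF) = 0$. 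Thus $\cF$ contributes to $H^0(\SBO_\Lambda^\bullet)$, which by Theorem \ref{thm: main BO intro}(6) (or Corollary \ref{cor: BO is to and ao}) is the definition of being bi-ordinary.

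For $(3) \Rightarrow (2)$: This follows directly from Proposition \ref{prop: BO implies split} applied at the generic residue field $F$ of the Hida family $\cF$. The distinctness hypothesis $\dia \otimes_\Lambda F \neq 1$ or $\lr{p}_N^{-1} U_p^2 \neq 1 \in F$ is satisfied because the Hida family $\cF$ covers a generic point of $\Spec \Lambda$: the tautological character $\dia$ remains non-trivial upon restriction to any such generic point, since $\Lambda \to F$ does not factor through a weight specialization $\phi_k$.

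For $(2) \Rightarrow (1)$: I will argue by specialization. The set of arithmetic specializations $\phi_k$ for $k \in \Z_{\geq 3}$ constitutes a Zariski-dense set of height-one primes of $\Lambda$. By finite flatness of $\bT_\Lambda^{\tord,\circ}$ over $\Lambda$ (Proposition \ref{prop: tord control}) and Hida's classicality theorem translated to the twist-ordinary setting, at each such $k$ the specialization $\cF_k$ is a classical $U_p$-critical eigenform $f_{\cF,k} \in M_k^\tord = M_k^\crit$. The hypothesis that $\rho_\cF\vert_{G_p}$ is reducible decomposable is preserved under specialization, so each $\rho_{f_{\cF,k}}\vert_{G_p}$ is also $p$-split. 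Applying Theorem \ref{thm: BE alternate} at each such classical weight, we conclude $f_{\cF,k} \in M_k^\aord$, or equivalently $(\pi_k \circ \zeta_k)(f_{\cF,k}) = 0$ in $\cH^{1,\ord}_k$. By the control isomorphisms $\cH^{1,\ord}_\Lambda \otimes_{\Lambda,\phi_k} \Z_p \cong \cH^{1,\ord}_k$ of Theorem \ref{thm: BP} together with the $\Lambda$-flatness of $\cH^{1,\ord}_\Lambda$, the element $(\pi_\Lambda \circ \zeta_\Lambda)(\cF) \in \cH^{1,\ord}_\Lambda$ (interpreted inside the $\cF$-isotypic quotient) vanishes at a Zariski-dense set of primes of $\Lambda$, hence vanishes. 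Thus $\zeta_\Lambda(\cF) \in \ker \pi_\Lambda = \Theta_\Lambda(M_\Lambda^\aord)$, yielding~$(1)$.

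The main obstacle is the density argument in $(2) \Rightarrow (1)$, specifically making the statement ``$\cF$ lies in the image of $\Theta_\Lambda$'' precise when $\cF$ denotes a Hida family (a prime of $\bT_\Lambda^{\tord,\circ}$) rather than a single form. The cleanest implementation is to localize/tensor at the prime corresponding to $\cF$, reducing to a statement about a $\Lambda$-flat finite module on which the vanishing of a specific element at infinitely many height-one primes forces the element to be zero. A secondary subtlety is handling the finite set of ``bad'' weights where either $\cF_k$ fails to be classical of the expected form or the hypotheses of Theorem \ref{thm: BE alternate} degenerate; however, excluding finitely many weights from a Zariski-dense set does not affect the density conclusion.
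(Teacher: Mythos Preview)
Your proposal is correct and follows essentially the same strategy as the paper: the equivalence $(1)\Leftrightarrow(3)$ is structural from the definition of the bi-ordinary complex, $(3)\Rightarrow(2)$ is Proposition~\ref{prop: BO implies split}, and $(2)\Rightarrow(1)$ proceeds by specializing to classical weights, invoking Theorem~\ref{thm: BE alternate} there, and concluding by Zariski density.

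Two small differences in packaging are worth noting. First, the step ``$\rho_\cF\vert_{G_p}$ decomposable at the generic point implies decomposable at almost all specializations'' is not entirely formal---one must control the lattice, and this is exactly the content of Lemma~\ref{lem: GV splitting} (Ghate--Vatsal), which the paper cites explicitly; you handle this implicitly and correctly flag the finitely many bad weights. Second, for the density conclusion the paper works on the Hecke-algebra side via Proposition~\ref{prop: BO T to forms duality} (the kernel of $\bT_\Lambda^\SBO \rsurj \bT[U']_\Lambda(H^0(\SBO_\Lambda^\bullet))$ is $\Lambda$-torsion), which cleanly sidesteps the issue you identify of interpreting ``$\cF$ as an element'' of $M_\Lambda^\tord$; your direct module-theoretic density argument in $\cH^{1,\ord}_\Lambda$ is equivalent once you pass to the normalization $\tilde S$ of the Hida component and realize $\cF$ as an eigenvector there.
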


\begin{proof}
    We may presume $\cF$ is cuspidal because the Eisenstein case is trivial (Lemma \ref{lem: Eis difference}). The equivalence $(1) \Leftrightarrow (3)$ follows from the definition of the bi-ordinary complex. By Proposition \ref{prop: BO implies split}, we know $(3) \Rightarrow (2)$, so it remains to establish $(2) \Rightarrow (3)$. 

    The Galois representation associated to $\cF$ is $\rho_\cF : G_\Q \to \GL_2(Q(A))$ arising from $H_\Lambda^\tord$; because $\cF$ is cuspidal, $\rho_\cF$ is absolutely irreducible. Combining  Theorem \ref{thm: BE alternate} and the argument of Ghate--Vatsal that we record in Lemma \ref{lem: GV splitting}, we deduce that for all but finitely many $k \in \Z_{\geq 3}$, the specialization of $\cF$ to weight $k$ has a bi-ordinary eigensystem. Due to Proposition \ref{prop: BO T to forms duality}, specifically the statement that the kernel of $\bT_\Lambda^\SBO \rsurj \bT[U']_\Lambda(H^0(\SBO_\Lambda^\bullet))$ is $\Lambda$-torsion, this implies that $\bT_\Lambda^{\tord,\circ} \rsurj S$ factors through $H^0(\SBO_\Lambda^\bullet)$. Thus, applying Corollary \ref{cor: BO is to and ao}, the eigensystem of $\cF$ is bi-ordinary. 
\end{proof}

\begin{lem}[Ghate--Vatsal]
    \label{lem: GV splitting}
    Given $A$ and $\rho_\cF$ as in the proof above, $\rho_\cF\vert_{G_p}$ is reducible and decomposable if and only if it is reducible and decomposable at all but finitely many height 1 primes of $A$. 
\end{lem}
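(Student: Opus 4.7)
My plan is to reformulate the question in terms of the vanishing of a single Galois cohomology class and then reduce to a commutative-algebraic principle over the $2$-dimensional normal Noetherian domain $\tilde S$. The twist-ordinary filtration of Corollary~\ref{cor: geom Lambda-adic etale}, pushed from $\bT_\Lambda^{\tord,\circ}$ to $\tilde S$, gives a short exact sequence of $\tilde S[G_p]$-modules $0 \to L_1 \to V_\cF \to L_2 \to 0$, with $L_i$ free of rank $1$ and $G_p$ acting by characters $\chi_1, \chi_2$ whose ratio $\chi := \chi_1 \chi_2^{-1}$ is non-trivial over $\tilde S$ because it involves the tautological character $\dia$. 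Reducibility-and-decomposability of $\rho_\cF|_{G_p}$ over $Q(\tilde S)$ is then equivalent to the extension class $c \in H^1(G_p, \tilde S(\chi))$ being $\tilde S$-torsion, while at any height $1$ prime $\p$ of $\tilde S$ with $\chi \bmod \p$ non-trivial (true for all but finitely many $\p$), reducibility-and-decomposability of $\rho_\cF|_{G_p} \otimes k(\p)$ is governed by the vanishing of the specialization $c_\p \in H^1(G_p, k(\p)(\chi_\p))$.

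The ``only if'' direction will be quick: if $c$ is killed by some $f \in \tilde S \setminus \{0\}$, then $c_\p = 0$ for every height $1$ prime $\p$ not containing $f$, which is cofinite. For the ``if'' direction, the long exact sequence associated to $0 \to \tilde S_\p(\chi) \xrightarrow{\pi} \tilde S_\p(\chi) \to k(\p)(\chi_\p) \to 0$ (with $\pi$ a generator of the maximal ideal of the DVR $\tilde S_\p$), combined with $H^0$-vanishing, yields an injection $M_\p/\p M_\p \rinj H^1(G_p, k(\p)(\chi_\p))$, where $M := H^1(G_p, \tilde S(\chi))$ is finitely generated over $\tilde S$ (since $G_p$ has finite cohomological dimension with $p$-adic coefficients). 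Hence $c_\p = 0$ forces $c \in \p M_\p$. The key commutative-algebra step is: for a non-zero element $\bar c$ of a finitely generated torsion-free module $N$ over $\tilde S$, the set of height $1$ primes $\p$ with $\bar c \in \p N_\p$ is finite. This follows because $\tilde S_\p$ is a DVR, so via Smith normal form the condition $\bar c \in \p N_\p$ is equivalent to the quotient $Q := N/\tilde S \bar c$ having $\tilde S_\p$-torsion at $\p$; but the torsion submodule $T(Q) \subset Q$ is supported on a proper closed subset of $\Spec \tilde S$ whose codimension-$1$ part consists of only finitely many height $1$ primes. Applied to the image of $c$ in $M/(\text{torsion})$, this shows that this image vanishes, so $c$ is $\tilde S$-torsion as desired.

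The main obstacle will be to identify and control the finite set of exceptional height $1$ primes that must be absorbed into the ``finitely many'' of the lemma statement---primes where $\chi_\p$ trivializes, where $H^2$ obstructions appear (e.g., where $\chi_\p$ specializes to the cyclotomic character), or where $\p$ is not principal in $\tilde S$ so that the Koszul devissage must be done more carefully. The template for managing these subtleties is supplied by Ghate--Vatsal's \cite[Thm.\ 3]{GV2004}, whose analogous analysis in the CM setting transfers to our $\tilde S$ essentially verbatim, using only that $\tilde S$ is a $2$-dimensional normal Noetherian domain finite over $\Lambda$.
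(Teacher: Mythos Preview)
Your proposal is essentially correct and supplies a detailed cohomological outline. The paper's own proof is simply a citation to \cite[Thm.\ 18, proof]{GV2004} (note: Theorem 18 rather than Theorem 3) together with a clarification of what ``reducible and decomposable at a height $1$ prime'' means, so there is little to compare beyond the fact that you have unpacked what the paper leaves as a reference.

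One technical point in your sketch deserves attention: the long exact sequence step implicitly identifies $H^1(G_p, \tilde S(\chi))_\p$ with $H^1(G_p, \tilde S_\p(\chi))$, but $\tilde S_\p$ carries no natural profinite topology, and continuous group cohomology does not automatically commute with localization. This is not a serious obstacle---it can be handled via the fact that $\RG(G_p, \tilde S(\chi))$ is represented by a perfect complex of $\tilde S$-modules, after which flat base change applies---or bypassed by a more elementary reformulation: choosing $g_0 \in G_p$ with $\chi(g_0) \neq 1$ in $\tilde S$, splitting over $Q(\tilde S)$ amounts to the vanishing in $\tilde S$ of $d(g) := c(g)(\chi(g_0)-1) - c(g_0)(\chi(g)-1)$ for every $g$, and since a nonzero element of the $2$-dimensional domain $\tilde S$ lies in only finitely many height $1$ primes, cofinite vanishing of $d(g) \bmod \p$ forces each $d(g) = 0$. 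This is your commutative-algebra step specialized to rank one and avoids the localization issue entirely.
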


\begin{proof}
    This is found in \cite[Thm.\ 18, proof]{GV2004}. One may make sense of ``$\rho_\cF$ is reducible and decomposable at a height 1 prime $P$ of $A$'' by these properties for $H_\Lambda^\ord \otimes_{\bT_\Lambda^\ord} \overline{Q(A/P)}$, which matches the notion used \textit{ibid}.
\end{proof}

\section{Deformation theory and ``$R = \bT\,$''}
\label{sec: deformations}

In this section, we set up deformation rings $R^\star$ for four deformation problems ``$\star$'' that we expect to coincide with the Galois representations arising from the various flavors of $\Lambda$-adic Hecke algebras, and then prove $R^\star \cong \bT^\star$. The options for $\star$ we will address are: twist- and anti-ordinary, bi-ordinary, and critical.  Of note, the formulation of the critical deformation rings seems to be new. We will treat $\star = $ twist/anti-ordinary as base cases, since they are twists of the $\star = \ord$ case where $R^\ord \cong \bT_\Lambda^\ord$ has been established under mild (Taylor--Wiles) hypotheses since the landmark work on modularity by Wiles and Taylor--Wiles \cite{wiles1995, TW1995} followed by many others. Under that hypothesis, we deduce in Theorems \ref{thm: R=T BO} and \ref{thm: R=T crit} that 
\[
R^\BO_\Lambda \cong \bT_\Lambda^\BO \quad \text{and} \quad R^\crit_\Lambda \cong \bT^\crit_\Lambda. 
\]

This answers a question that we posed in our previous work \cite[\S1.6]{CWE1}. There we called the deformation problem $\cD = \BO$ by the name of ``$p$-split'' and wrote ``$\star = \spl$,'' since it parameterizes global Galois representations that are reducible and decomposable upon restriction to a decomposition group at $p$. The main theorem of \cite{CWE1} is a criterion for $R^\BO_\Lambda$ to be isomorphic to CM Hecke algebra $\bT^{\CM,\ord}_\Lambda$; because it is reasonable to expect that this criterion sometimes fails to be true, there is a need for a notion of modular form matching the $p$-split deformation problem. Our formulation of the bi-ordinary complex and the proof in this section of ``$R^\BO_\Lambda \cong \bT^\BO_\Lambda$'' satisfies this question. 

We remark that while ``SBO'' for ``cuspidal bi-ordinary'' might be more appropriate than ``BO'' in the following discussion, we will only work with deformation theory over residually non-Eisenstein localizations, where the difference vanishes. 

\subsection{Ordinary and critical Galois representations}
\label{subsec: Gal rep discussion}

Here we discuss the usual ordinary deformation condition, and then describe standard facts about Galois representations associated to critical classical and critical overconvergent \emph{generalized} Hecke eigenforms.  

When $f$ is classical eigenform of weight $k$ on $X_1(p^r)$ (the modular curve of level $\Gamma_1(Np^r)$) and character $\chi$, our normalization for the Galois representation $\rho_f : G_\Q \to \GL_2(\oQ_p)$ coming from $f$ is that 
\begin{equation}
    \label{eq: rho normalization}
    \Tr\rho_f(\Frob_\ell) = T_\ell, \qquad  \det \rho_f(\Frob_\ell) = \ell^{k-1}\chi(\ell)    
\end{equation}
for primes $\ell \nmid Np$. When $r=0$ and $f$ is $T_p$-ordinary and thereby has ordinary $p$-stabilization $f_\alpha$ with $U_p$-eigenvalue $\alpha$, or when $r \geq 1$ and $f$ has $U_p$-eigenvalue $\alpha$, the local shape of $\rho_f$ is
\begin{equation}
\label{eq: ordinary shape}
0 \to \nu(\alpha^{-1}\chi(p))\chi_{p^r}\kappa^{k-1} \to \rho_f\vert_{G_p} \to \nu(\alpha) \to 0
\end{equation}
where $\nu(\alpha) : G_p \to \oQ_p^\times$ sends arithmetic Frobenius $\Frob_p$ to $\alpha$ and $\chi_{p^r}$ is the $p$-primary part of the nebencharacter. When discussing arithmetic normalizations, we say that $\kappa$ has Hodge--Tate weight $1$. 

This is the usual reducible shape that we will use to define an ordinary deformation problem (dating back to \cite[\S2.5]{mazur1989}). We will soon define related reducible shapes of Galois representations that we will call \emph{twist-ordinary} and \emph{anti-ordinary} to correspond to the related twists of modular forms discussed in \S\S\ref{subsec: tord}-\ref{subsec: aord}, which are nothing fundamentally new because they are straightforward twists of the ordinary shape. 

In contrast, in order to define a condition on 2-dimensional $G_p$-representations coming from critical overconvergent generalized eigenforms, we must allow for contributions that do not come (directly; only by interpolation) from the \'etale cohomology of varieties; after all, critical overconvergent forms are not spanned by classical forms, so we do not expect their Galois representation, restricted to $G_p$, to be de Rham. The following example is instructive. 

\begin{eg}
	\label{eg: hsu}
    Chi-Yun Hsu \cite{hsu2020} has explained the shape of a Galois representation $\rho_f$ arising from a critical overconvergent generalized eigenform associated to a critical CM (classical cuspidal) eigenform $f$ of weight $k \geq 2$ with Hecke field $E/\Q_p$ and imaginary qudaratic field $K/\Q$ of CM; conversely, this gives an arithmetic expression for the $q$-series of a strictly generalized eigenform [Thm.\ 1.1, \textit{loc.\ cit.}]. 
    
    In this case, $\rho_f \simeq \Ind_K^\Q \nu$ where $\nu : G_\Q \to E^\times$ and $p$ splits in $K/\Q$, hence 
    \[
    \rho_f \vert_{G_p} \simeq \ttmat{\nu}{0}{0}{\nu^c}\Big\vert_{G_p}.
    \]
    (For the definition of $(-)^c$, actions of complex conjugation on characters $\nu$ and extension classes $e$, see \cite[\S5.3]{CWE1}.) For simplicity, we assume that the generalized eigenspace with eigensystem $f$ is 2-dimensional; see \cite[Prop.\ 4.3]{hsu2020} for an equivalent arithmetic characterization. Then generalized eigenspace can be thought of as an eigenform $\tilde f$ valued in $E[\ep]/\ep^2$. The associated Galois representation $\rho_{\tilde f}$ has restriction to $G_p$ of the form
    \begin{equation}
    \label{eq: tilde rho at p} 
    \rho_{\tilde f}\vert_{G_p} \simeq \ttmat{\nu}{\ep \cdot e}{\ep \cdot e^c}{\nu^c}
    \end{equation}
    where $e$ is a representative of an extension class $[e] \in \Ext^1_{\Q_p[G_K]}(\nu^c, \nu)$ such that both $[e]\vert_{G_p}$ and $[e^c]\vert_{G_p}$ are non-trivial. 
    \end{eg}

    Our initial observations are that Galois representations associated to critical overconvergent generalized eigenforms 
    \begin{itemize}
	\item need not be reducible on $G_p$, unlike the case of ordinary eigenforms
	\item but their associated pseudorepresentation (trace and determinant functions) equal those of a reducible representation (namely, $\rho_f \otimes_E E[\ep]/\ep^2$ in this case).
    \end{itemize}

To discuss Example \ref{eg: hsu} further, let us say that we are in the case where $\nu$ has Hodge--Tate weight $k-1$ while $\nu^c$ has Hodge--Tate weight $0$, meaning that $e^c\vert_{G_p}$ (not $e\vert_{G_p}$) produces the failure of the de Rham condition on $\rho_{\tilde f}\vert_{G_p}$ of \eqref{eq: tilde rho at p}: the Hodge--Tate weights are in the ``wrong order.'' 

According to Kisin \cite[Thm.,\ p.\ 376]{kisin2003}, the Galois representations associated to $p$-adic families of overconvergent eigenforms  have an interpolable crystalline eigenvalue in the maximal crystalline quotient. For example, a critical overconvergent eigenform $g$ with $p$-local shape $\rho_g\vert_{G_p} \simeq \sm{\nu}{0}{e^c}{\nu^c}$ 
(just using these expressions as an example of local representations and not insisting that $g$ is globally CM) is not crystalline but has a critical crystalline eigenvalue in the crystalline quotient. This is the form of the Galois representation of an anti-ordinary weight $k$ eigenform that is not also twist-ordinary. On the other hand, a twist-ordinary form has $p$-local shape $\sm{\nu}{e}{0}{\nu^c}$, is crystalline, and has both an ordinary and critical slope crystalline eigenvalue. 

We propose an extension of Kisin's characterization to the case of infinitesimal coefficients, ultimately justifying this characterization by proving $R^\crit \cong \bT^\crit_\Lambda$ (Theorem \ref{thm: R=T crit}) for a \emph{critical deformation problem} represented by $R^\crit$ that we will now go on to define. To develop the critical deformation problem, it will be helpful to discuss the infinitesimal flexibility of critical slope eigenvalue in Example \ref{eg: hsu} a bit more. It is a flexibility that ordinary slope eigenvalues lack. This has of course already been observed Galois-theoretically, e.g.\ in Hsu's Example \ref{eg: hsu} and Bergdall's study \cite{bergdall2014}. 

Consider again the deformation $\rho_{\tilde f}$ of $\rho_f$ as in Example \ref{eg: hsu}. 
\begin{itemize}
    \item The slope $0$ crystalline eigenvalue of $\nu^c$ cannot deform from $E$ to $E[\ep]/\ep^2$ because $e^c\vert_{G_p}$ is a non-crystalline extension class, making it impossible for the maximal crystalline quotient of $\rho_{\tilde f}\vert_{G_p}$ to have a $E[\ep]/\ep^2$-rank 1  composition factor with $G_p$-action deforming $\nu^c$. 
    \item On the other hand, the slope $k-1$ crystalline eigenvalue of $\nu$ does deform: the maximal crystalline $G_p$-linear quotient of $\rho_{\tilde f}$ has a $G_p$-linear sub that is of rank 1 over $E[\ep]/\ep^2$ and deforms $\nu$. 
\end{itemize}

The upshot of this discussion is that a critical slope crystalline eigenvalue (in a rank 2 representation) can interpolate over a non-crystalline extension class as long as the non-crystalline extension can be killed off in a quotient that preserves the interpolation of the eigenvalue. Thus, equivalently, the ``bad'' extensions should not show up in the characteristic polynomials of $\rho_{\tilde f}\vert_{G_p}$. This is what led us to formulate the critical deformation problem using pseudorepresentations. 

\subsection{Deformation conditions}

Now we set up definitions for ``ordinary,'' etc., which we will use in deformation theory. While ``weight $k$'' will mean that the Hodge--Tate weights are $[0,k-1]$, we avoid using Hodge--Tate weights in the following general formulation because formulating Hodge--Tate weights for representations with coefficients in a topological $\Z_p$-algebra is an additional complexity that we do not require in these cases. 

\begin{defn}
    \label{defn: p-local conditions}
    Let $A$ be a topological $\Z_p$-algebra and let $V$ be a projective $A$-module of rank 2 with an $A$-linear action of $G_p$. Let $\alpha, \beta \in A^\times$. Denote by $\nu(\alpha) : G_p \to A^\times$ the unramified character sending arithmetic Frobenius $\Frob_p$ to $\alpha$. Call $V$ 
    \begin{itemize}[leftmargin=1.5em]
        \item \emph{ordinary of weight $k$ and $\Frob_p$-eigenvalue $\alpha$} if $\chi_p := (\det V)(1-k)\vert_{I_p}$ has finite order and $V$ has a $G_p$-stable submodule $W$ of rank $1$ such that $V/W \simeq \nu(\alpha)$. (In particular, $V/W$ is unramified.)
        \item \emph{twist-ordinary of weight $k$ and $\Frob_p$-eigenvalue $\alpha$} if $\chi_p := ((\det V)(1-k)\vert_{I_p})^{-1}$ has finite order and $V$ has a $G_p$-stable submodule $W$ of rank $1$ such that $V/W \otimes \chi_p \simeq \nu(\alpha)$. (In particular, $W$ is crystalline of Hodge--Tate weight $1-k$ and its critical crystalline eigenvalue is determined by $\alpha$ and $\det V$.)
        \item \emph{anti-ordinary of weight $k$ and $\Frob_p$-eigenvalue $\beta$} if $\chi_p := (\det V)(1-k)\vert_{I_p}$ has finite order and $V$ has a $G_p$-stable submodule $W$ of rank $1$ such that $V/W(1-k) \simeq \nu(\beta)$. In particular, $V/W(1-k)$ is unramified. (Note that anti-ordinary representations are not de Rham when $k \geq 2$ and $W$ does not have a complimenary $G_p$-subrepresentation.) 
        \item \emph{bi-ordinary of weight $k$ and with twist-$\Frob_p$-eigenvalue $\alpha$ and anti-$\Frob_p$-eigenvalue $\beta$} if $\chi_p := (\det V)(1-k)\vert_{I_p}$ has finite order and $V$ splits as a $A[G_p]$-module into a direct sum of rank $1$ summands $W, W'$, where $W \otimes \chi_p^{-1} \simeq \nu(\alpha)$ and $W'(1-k) \simeq \nu(\beta)$. In particular, both $W \otimes \chi_p^{-1}$ and $W'(1-k)$ are unramified.  
        \item \emph{critical of weight $k$ and $\varphi$-eigenvalue $\alpha$} if $\chi_p := (\det V)(1-k)\vert_{I_p}^{-1}$ has finite order and there exist rank $1$ $A[G_p]$-modules $W,W'$ such that $W \otimes \chi_p \simeq \nu(\alpha)$ is unramified and there is an equality of 2-dimensional pseudorepresentations $\psi(V) = \psi(W \oplus W')$. In particular, this implies that $W'(1-k)$ is unramified. 
    \end{itemize}
\end{defn}

\begin{rem}
    Twist-ordinary could be called ``classical critical'' instead, but we reserve ``critical'' entirely for the overconvergent setting. Also, in this paper we mainly work at level $\Gamma_1(N)$ so that the Galois representations arising have $(\det V)(1-k)\vert_{I_p}$ trivial, not merely of finite order. In this case, ordinary is equivalent to twist-ordinary, although they have different eigenvalues. We are including the more flexible case, where $p$ divides the level, for future use. 
\end{rem}

\begin{rem}
    \label{rem: relations of conditions}
    Notice the following relation between the conditions, 
    \[
    \BO \iff \mathrm{to} \text{ and } \aord, \quad \mathrm{to} \text{ or } \aord \implies \mathrm{crit}. 
    \]
    When $A$ is a domain, then critical $\Rightarrow$ twist-ordinary or anti-ordinary, but it will be important that there are critical (overconvergent) \emph{generalized} eigenforms that are neither twist-ordinary nor anti-ordinary; indeed, this is what $H^1(\BO_\Lambda^\bullet)$ represents.    
\end{rem}

Next we state the $\Lambda$-adic interpolation of the definition above. These are somewhat more strict in that they specialize along $\phi_k : \Lambda \to \Z_p$ to the definition above with the additional stipulation that $\chi_p = 1$. This is to be expected since $\Lambda$ parameterizes exactly $\det V\vert_{I_p}$, i.e.\ both weight and the $p$-part of the nebencharacter, while our notion of ``weight $k$'' allows for a general nebencharacter. 
\begin{defn}
    \label{defn: p-local conditions Lambda}
    Let $A$ be a topological $\Lambda$-algebra and let $V$ be a projective $A$-module of rank 2 with an $A$-linear action of $G_p$. Let $\alpha \in A^\times$. Denote by $\dia$ the character
    \[
    \dia : G_p \to \Lambda^\times, \qquad G_p \rsurj \Z_p^\times \rinj \Z_p\lb \Z_p^\times \rb^\times = \Lambda^\times.
    \]
    where $G_p \rsurj \Z_p^\times$ is the isomoprhism of local class field theory cut out by the maximal abelian totally ramified algebraic extension of $\Q_p$. 
    Call $V$ 
    \begin{itemize}
        \item \emph{$\Lambda$-ordinary of $\Frob_p$-eigenvalue $\alpha$} if $I_p$ acts on $\det V$ by $\dia$ and $V$ has a $G_p$-stable submodule $W$ of rank $1$ such that $V/W$ unramified with $G_p/I_p$ acting by $\nu(\alpha)$. 
        \item \emph{$\Lambda$-twist-ordinary of $\Frob_p$-eigenvalue $\alpha$}: this is the same as the definition of $\Lambda$-ordinary
        \item \emph{$\Lambda$-anti-ordinary of $\Frob_p$-eigenvalue $\beta$} if $I_p$ acts on $\det V$ by $\dia$ and $V$ has a $G_p$-stable submodule $W$ of rank $1$ such that $W \otimes_A A\dia^{-1}$ is unramified with $G_p/I_p$ acting by $\nu(\beta)$.
        \item \emph{$\Lambda$-bi-ordinary of twist-$\Frob_p$-eigenvalue $\alpha$ and anti-$\Frob_p$-eigenvalue $\beta$} if $I_p$ acts on $\det V$ by $\dia$ and $V$ splits as a $A[G_p]$-module into a direct sum of $A$-rank $1$ summands $W, W'$, where $W$ is unramified with $G_p/I_p$ acting by $\nu(\alpha)$ and $W' \otimes_A A\dia^{-1}$ is unramified with $G_p/I_p$ acting by $\nu(\beta)$. 
        \item \emph{$\Lambda$-critical of $\varphi$-eigenvalue $\alpha$} if $I_p$ acts on $\det V$ by $\dia$ and there exist $A$-rank $1$ $A[G_p]$-modules $W,W'$ such that $W \simeq \nu(\alpha)$ is unramified and there is an equality of 2-dimensional pseudorepresentations $\psi(V) = \psi(W \oplus W')$.
    \end{itemize}
\end{defn}

\subsection{Ordinary universal Galois deformation rings}

Now we will formulate a standard context in which ordinary $R=\bT$ is established, which we will use as a hypothesis in our applications. Particularly, we assume Taylor--Wiles and residually $p$-distinguished hypotheses about the residual representation $\rho$. Of course, because we are interested in Galois representations that are reducible on $G_p$ and satisfy the conditions defined above, we ask that $\rho\vert_{G_p}$ is reducible with an unramified Jordan--H\"older factor. It is not necessary to assume that $\rho\vert_{G_p}$ is splittable, but the interaction between the conditions twist-ordinary, anti-ordinary, and critical will be rather trivial on deformations of $\rho$ unless $\rho\vert_{G_p}$ is splittable. 

\begin{defn}[Assumptions on $\rho$]
    \label{defn: rho ord assumptions}
    Let $\F$ be a finite field of characteristic $p$. Let $\rho : G_\Q \to \GL_2(\F)$ be a representation satisfying the following assumptions. 
    \begin{enumerate}
        \item $p \geq 5$ (which is assumed throughout the paper)
        \item $\rho$ is odd and has Artin conductor $N$
        \item $\rho\vert_{G_M}$ is absolutely irreducible, where $M = \Q(\sqrt{(-1)^{(p-1)/2}p})$
        \item $\rho\vert_{G_p}$ is reducible with semi-simplification isomorphic to a sum of characters $\chi_1 \oplus \chi_2 : G_p \to \GL_2(\F)$, where
        \begin{itemize}
            \item $\chi_2$ is unramified
            \item $\chi_1 \neq \chi_2$.
        \end{itemize}
    \end{enumerate}
\end{defn}

Using assumptions (3) and (4), the following deformation problems will be pro-representable by deformation rings. The category of coefficient rings is $\cA_{W}$, the Artinian local $W=W(\F)$-algebras with residue field identified with $\F$ compatibly with the canonical residue map $W \rsurj \F$. We write $\hat \cA_W$ for Noetherian local $W$-algebras with residue field $\F$. Also write $G_{\Q,S}$ for the Galois group of the maximal algebraic extension of $\Q$ ramified only at the places $S$ dividing $Np\infty$. As we express this deformation problem, we consider a deformation $\rho_A$ to give $A$ a $\Lambda$-algebra structure via the moduli interpretation of $\Lambda$ applied to $\det \rho_A$. Let $\Lambda_W := \Lambda \otimes_{\Z_p} W$. 

\begin{defn}[{The minimal ordinary deformation functor, e.g.\ \cite[\S3.1]{DFG2004}}]
\label{defn: Dord}
Let $D^\ord : \hat\cA_W \ra \mathrm{Sets}$ be the functor associating to $A$ the set of strict equivalence classes of homomorphisms $\rho_A : G_{\Q,S} \to \GL_2(A)$ such that
\begin{enumerate}[label=(\roman*), leftmargin=2em]
\item $\rho_A \otimes_A \F = \rho$;
\item $\rho_A\vert_{G_p}$ is $\Lambda$-ordinary with unramified quotient $\nu(\alpha)$ deforming $\chi_2$ $(\alpha \in A^\times)$
\item for primes $\ell \mid N$ such that $\#\rho(I_\ell) \neq p$, reduction modulo $\m_A$ induces an isomorphism $\rho_A(I_\ell) \risom \rho(I_\ell)$;
\item for primes $\ell \mid N$ such that $\#\rho(I_\ell) = p$, $\rho_A^{I_\ell}$ is $A$-free of rank 1. 
\end{enumerate}

Deformations $\rho_A$ of $\rho$ satisfying the conditions defining $D^\ord$ will be known as \emph{ordinary of tame level $N$}, or just \emph{ordinary}. 

Also, for $k \in \Z$, let $D^\ord_k$ denote the subfunctor of $D^\ord$ cut out by the additional condition that $\det\rho_A(1-k)$ is ramified only at primes dividing $N$. 
\end{defn}

One can readily check that $D^\ord \times_{\Spf \Lambda,\phi_k} \Spf \Z_p \cong D^\ord_k$. Also, under the residually $p$-distinguished assumption that we have imposed, the datum of the $\Frob_p$-eigenvalue $\alpha \in A^\times$ is determined by $\rho_A$. 

\begin{prop}
\label{prop: ord R to T}
Under the assumptions of Definition \ref{defn: rho ord assumptions}, $D^\ord$ is representable by $R^\ord \in \hat\cA_W$ and there is a universal ordinary deformation $\rho^\ord : G_{\Q,S} \ra \GL_2(R^\ord)$ of $\rho$. Moreover, the universal $\chi_2^\ord$ equals $\nu(u)$ for some $u \in R^{\ord,\times}$. Likewise, $D^\ord_k$ is representable by $R^\ord_k \in \hat\cA_W$ and there is a natural surjection $R^\ord \rsurj R^\ord_k$ realizing an isomorphism $R^\ord/(\ker \phi_k) R^\ord$, where $\Lambda \to R^\ord$ arises from $\det \rho^\ord$. 
\end{prop}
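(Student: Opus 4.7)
The plan is to assemble this from standard ingredients in Mazur-style deformation theory, since each piece is well-established; the value of the proposition for us lies in recording it in the precise form we need.

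First, I would establish representability of the unrestricted deformation functor. The hypothesis that $\rho\vert_{G_M}$ is absolutely irreducible (with $M = \Q(\sqrt{(-1)^{(p-1)/2}p})$) implies that $\rho$ itself is absolutely irreducible, so by the classical theorem of Mazur the functor sending $A \in \hat\cA_W$ to strict equivalence classes of lifts $\rho_A$ satisfying (i) alone is pro-representable by a complete Noetherian local $W$-algebra. I would then argue that each additional condition appearing in Definition \ref{defn: Dord} is a \emph{relatively representable} local condition in the sense of Ramakrishna, so that the intersection remains pro-representable: the ordinary/$\Lambda$-ordinary condition at $p$ is relatively representable because under the $p$-distinguished hypothesis $\chi_1 \neq \chi_2$ the line in $\rho_A\vert_{G_p}$ reducing to the subspace for $\chi_1$ is uniquely determined (this is the classical lifting lemma invoked e.g.\ in Wiles \cite{wiles1995}); and the minimal-at-$\ell$ conditions in (iii), (iv) for primes $\ell \mid N$ are the familiar Ramakrishna-type local conditions whose relative representability is routine.

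Second, I would exploit the unique filtration produced by $p$-distinguishedness to obtain the unit $u$. The quotient line of $\rho^\ord\vert_{G_p}$ corresponding to $\chi_2$ is well defined and unramified, hence gives an unramified character $\chi_2^\ord : G_p \to R^{\ord,\times}$; its value $u := \chi_2^\ord(\Frob_p)$ is by definition a unit in $R^\ord$, and by construction $\chi_2^\ord = \nu(u)$. The same $p$-distinguished hypothesis, applied to $A = \F$, shows that the splitting of $\rho\vert_{G_p}^\mathrm{ss}$ into $\chi_1 \oplus \chi_2$ selects $\alpha = u \bmod \m_{R^\ord}$ unambiguously, which is what makes the deformation problem well-posed (no auxiliary choice of $\Frob_p$-eigenvalue is needed as data).

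Third, I would set up the $\Lambda$-algebra structure and verify the weight $k$ specialization. The character $\det \rho^\ord : G_{\Q,S} \to R^{\ord,\times}$ restricted to $I_p$ factors, by the $\Lambda$-ordinary condition and the description of $\det$ on the ordinary filtration, through the composite $I_p \to \Z_p^\times \to \Lambda^\times \to R^{\ord,\times}$; this is exactly the $\Lambda$-algebra structure on $R^\ord$ asserted in the statement. For $D^\ord_k$, the additional condition that $(\det\rho_A)(1-k)$ be ramified only at primes dividing $N$ is equivalent, at $p$, to requiring $\det\rho_A\vert_{I_p} = \kappa^{k-1}\vert_{I_p}$; under the moduli interpretation of $\Lambda$ this is precisely the condition that the tautological map $\Lambda \to A$ factor through $\phi_k : \Lambda \to \Z_p$. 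Hence the subfunctor $D^\ord_k \subset D^\ord$ is cut out on $R^\ord$ by the ideal $(\ker\phi_k)R^\ord$, yielding $R^\ord_k \cong R^\ord/(\ker\phi_k)R^\ord$ and the asserted surjection.

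The most delicate step is the second one: making sure that the $\Lambda$-ordinary filtration is truly canonical on every $A$-valued point (and compatible under base change along $A \to A'$), so that $\chi_2^\ord$ is a well-defined character valued in $R^{\ord,\times}$ rather than merely in some localization. This however is exactly the content of the standard $p$-distinguished lifting lemma, and no further work beyond citing it is required. Everything else is a bookkeeping exercise within Mazur's framework.
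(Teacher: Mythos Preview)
Your proposal is correct and matches the paper's approach: the paper simply cites \cite[Prop.\ 2.3.2]{CWE1} for representability and declares the remaining statements to be standard consequences, which is precisely the Mazur/Ramakrishna representability argument together with the $p$-distinguished lifting lemma and $\Lambda$-algebra bookkeeping that you have spelled out.
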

\begin{proof}
    The first statement is \cite[Prop.\ 2.3.2]{CWE1}. The remaining statements are standard consequences. 
\end{proof}

Next we discuss the map $R^\ord \to (\bT^\ord_\Lambda)_\rho$ arising from the representation $\rho_{\bT^\ord_\Lambda}: G_{\Q,S} \to \GL_2(\bT_\Lambda^\ord)$ described in e.g.\ \cite[Prop.\ 2.1.7]{CWE1}, which is the twist of the geometrically normalized Galois representation of Corollary \ref{cor: geom Lambda-adic etale} by $\dia$. In particular, it satisfies
\[
 \det(X \cdot I_{2\times2}-\rho(\Frob_\ell)) = X^2 - T_\ell X + [\ell] \lr{\ell}_N ,  \quad \text{ for } \ell \nmid Np \text{ prime},
\]
and has $p$-local form 
\begin{equation}
\label{eq: ord form of rho_T}
\rho_{\bT_\Lambda^\ord}\vert_{G_p} \simeq \ttmat{\dia \cdot \nu({\lr{p}_N} U_p^{-1})}{*}{0}{\nu(U_p)}.
\end{equation}

By the universal property of $R^\ord$, we have $R^\ord \to (\bT^\ord_\Lambda)_\rho$ which is easily seen to be a surjection due to the design of Definitions \ref{defn: rho ord assumptions} and \ref{defn: Dord}. For instance, by Proposition \ref{prop: ord R to T}, $U_p$ is in the image. 

Since we are interested in relying on known proofs of $R^\ord \cong (\bT_\Lambda^\ord)_\rho$ and deducing consequences, we will simply take this isomorphism as a declared \emph{hypothesis} in what follows. For example, sufficient conditions are given at the start of \cite[\S3.1]{DFG2004}, relying on \cite{diamond1997}.

\subsection{Twist/anti-ordinary, bi-ordinary, and critical universal deformation rings}

Next we set up global deformation problems with the $p$-local conditions we need: twist-ordinary, anti-ordinary, and critical. The residual representation $\rho$ is twisted from the ordinary setting above, followed by the critical. The goal is for these first two deformation functors is to be sub-functors of the critical deformation problem after a twist back. 

\begin{defn}
    We define the twist-ordinary deformation functor $D^\tord$ in exactly the same way as the ordinary deformation functor $D^\ord$ in Definition \ref{defn: Dord}. (Or we could say we replace ``$\Lambda$-ordinary'' with ``$\Lambda$-twist-ordinary, which is the same thing.) We define the anti-ordinary deformation functor $D^\aord$ to be the same as in Definition \ref{defn: Dord} but with ``$\Lambda$-anti-ordinary'' replacing ``$\Lambda$-ordinary.'' 
\end{defn}

\begin{lem}
    \label{lem: twists of deformation problems}
    The deformation problems $D^\tord$ and $D^\aord$ are representable by rings $R^\tord$ and $R^\aord$ in $\hat \cA_W$, respectively. 
    
    There is a natural isomorphism $R^\tord \isoto R^\ord$ under which the $\Frob_p$-eigenvalues $\alpha$ match. 

    There is a natural isomorphism $R^\aord \isoto R^\ord_{\rho \otimes (\det\rho)^{-1}}$ sending $\beta \mapsto \alpha$. It is given by applying the anti-ordinary deformation functor to $(\rho')^\ord \otimes (\det (\rho')^\ord_p)^{-1}$, where $(\rho')^\ord$ denotes the universal ordinary deformation, valued in $R^\ord$, of $\rho \otimes (\det\rho)^{-1}_p$; and $(-)_p$ means that we take the $p$-ramified part of a character of $G_\Q$. 
\end{lem}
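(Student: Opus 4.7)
The plan is to handle representability and the twist-ordinary identification in a single step, and then devote the main effort to the anti-ordinary twist.

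\textbf{Representability and the twist-ordinary case.} For both $D^\tord$ and $D^\aord$, the local condition at $p$ is of the same general form as in $D^\ord$: the existence of a $G_p$-stable line in the deformation lifting a prescribed residual line, together with a prescribed normalization of $\det \rho_A\vert_{I_p}$ by $\dia$. Each such condition is a deformation condition in the sense of Mazur (it is closed under fibre products of coefficient rings and it specializes well), and the conditions at primes $\ell \mid N$ are identical to those in $D^\ord$. Combined with the Taylor--Wiles absolute irreducibility of $\rho\vert_{G_M}$ (Definition \ref{defn: rho ord assumptions}(3)), the standard Mazur--Schlessinger machinery yields pro-representability by $R^\tord, R^\aord \in \hat\cA_W$. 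Comparing Definition \ref{defn: p-local conditions Lambda} line by line, ``$\Lambda$-twist-ordinary'' is literally defined as the same condition as ``$\Lambda$-ordinary,'' so $D^\tord = D^\ord$ as functors, producing the tautological isomorphism $R^\tord \isoto R^\ord$ with matching $\Frob_p$-eigenvalue $\alpha$.

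\textbf{The twist construction for anti-ordinary.} Let $\chi := (\det\rho)_p$ be the $p$-ramified part of $\det\rho$, viewed as a character of $G_\Q$ with $\chi\vert_{I_p} = \det\rho\vert_{I_p}$ and $\chi\vert_{I_\ell} = 1$ for $\ell \neq p$; there is a canonical $\Lambda$-valued lift obtained from $\dia$. The recipe stated in the lemma sends an ordinary deformation $(\rho')^\ord_A$ of $\rho \otimes \chi^{-1}$ to
\[
\rho_A := (\rho')^\ord_A \otimes (\det (\rho')^\ord_A)_p^{-1}.
\]
Three things must be verified. First, $\rho_A$ reduces modulo $\m_A$ to $\rho$, since twisting by the inverse of the $p$-ramified part of the determinant undoes the original twist residually. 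Second, $\rho_A\vert_{G_p}$ is anti-ordinary: if $W \subset (\rho')^\ord_A$ is the rank-one $G_p$-submodule for the ordinary condition, so that $W\otimes A\dia^{-1}$ is unramified and the quotient is unramified with Frobenius eigenvalue $\alpha$, then after twisting by $(\det (\rho')^\ord_A)_p^{-1}$ the resulting submodule $W'$ of $\rho_A$ has $G_p$-action matching the anti-ordinary template (the role of ``twisted by $\dia$'' and ``unramified'' is exchanged between sub and quotient, while the Frobenius eigenvalue $\alpha$ is carried along unchanged). Third, the conditions at primes $\ell \mid N$ are preserved, because the twisting character is unramified outside $p$. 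By universality, this yields a morphism $R^\aord \to R^\ord_{\rho\otimes(\det\rho)^{-1}}$; running the analogous construction backwards on anti-ordinary deformations of $\rho$ produces an inverse. The identification of $\beta$ with $\alpha$ is read off the twist.

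\textbf{Main obstacle.} The only substantive point to verify is step two above: one must carefully track how the twist interchanges the ``ramified'' and ``unramified'' pieces of the local flag, including the fact that twisting by a $p$-ramified character inverts the normalization of $\det \rho_A\vert_{I_p}$. Once the convention for the target deformation ring $R^\ord_{\rho \otimes (\det\rho)^{-1}}$ is set up so that its determinant normalization matches the image of the twist, the verification reduces to a direct computation with local characters on $G_p$, and the two constructions are manifestly mutually inverse.
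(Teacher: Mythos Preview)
The paper provides no formal proof of this lemma; it is followed only by two explanatory sentences clarifying the meaning of ``$p$-ramified part'' and observing that the passage $\rho \mapsto \rho \otimes (\det\rho)_p^{-1}$ swaps the ramified and unramified Jordan--H\"older factors of $\rho\vert_{G_p}$. Your proposal supplies the argument the paper leaves implicit and follows exactly the intended route: representability is standard Mazur--Schlessinger, $R^\tord = R^\ord$ is tautological from Definition~\ref{defn: p-local conditions Lambda}, and the anti-ordinary identification is the twist computation the paper's remark points toward.

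One clarification on your ``Main obstacle'' paragraph. You write that the convention on $R^\ord_{\rho\otimes(\det\rho)^{-1}}$ should be ``set up so that its determinant normalization matches the image of the twist.'' But the determinant convention on both deformation problems is already fixed at $\dia$ by Definition~\ref{defn: Dord}; there is no freedom to adjust it. What actually happens is that the twist by $(\det(\rho')^\ord_A)_p^{-1}$ carries $\det\vert_{I_p} = \dia$ to $\dia^{-1}$, so the resulting isomorphism $R^\aord \isoto R^\ord_{\rho\otimes(\det\rho)_p^{-1}}$ is a $W$-algebra isomorphism but not a $\Lambda$-algebra isomorphism: it intertwines the two $\Lambda$-structures via the involution $[a]\mapsto[a]^{-1}$, precisely mirroring the Hecke-side statement in Proposition~\ref{prop: aord TU algebra}. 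Since the lemma is stated only in $\hat\cA_W$, this is consistent with the claim, but your phrasing suggests the issue is resolved by a choice of convention rather than by recognizing that $\Lambda$-linearity genuinely fails.
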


The $p$-ramified part refers to the character of $G_\Q$ given by the summand $\Z_p^\times \subset G_\Q^\mathrm{ab}$ coming from class field theory. 

Notice that the passage $\rho \mapsto \rho \otimes (\det\rho)^{-1}_p$ ``swaps'' the unramified vs.\ possibly ramified Jordan--H\"older factors of $\rho\vert_{G_p}$: $\chi_1 \otimes (\det\rho)^{-1}_p\vert_{G_p}$ is unramified, while $\chi_2 \otimes (\det\rho)^{-1}_p\vert_{G_p}$ may be ramified. 

\begin{rem}
    Up to one of the deformation rings $R^\tord,R^\aord$ may be the zero ring. This occurs when $\rho\vert_{G_p}$ splits the ``wrong way'' relative to the flag imposed in the definition. Of course, this possibility was already present when we defined $R^\ord$. 
\end{rem}

While $R^\ord$ and $R^\tord$ are identical, the reason we distinguish them is that we are reserving them for comparison with different Hecke algebras. The essential difference is that $\bT^\ord_\Lambda$ is a $\bT[U]$-algebra while $\bT^\tord_\Lambda$ is a $\bT[U']$-algebra, while we will send 
\[
R^\ord \ni \alpha \mapsto U \in \bT^\ord_\Lambda \text{ and } R^\tord \ni \alpha \mapsto U' \in \bT^\tord_\Lambda.
\]
Correspondingly, the twist-ordinary Galois representation varies in exactly that way from \eqref{eq: ord form of rho_T}, that is, 
\begin{equation}
\label{eq: twist-ord form of rho_T}
\rho_{\bT_\Lambda^\tord}\vert_{G_p} \simeq \ttmat{\dia \cdot \nu({\lr{p}_N} U'^{-1})}{*}{0}{\nu(U')}.
\end{equation}

Likewise, applying the twisted $\bT[U']$-algebra structure of $\bT_\Lambda^\aord$ described in Proposition \ref{prop: aord TU algebra} (in particular, $U' \mapsto U_p^{-1}\lr{p}_N \in \bT_\Lambda^\aord$) to \eqref{eq: ord form of rho_T}, the Galois representation $\rho_{\bT_\Lambda^\aord} : G_{\Q,S} \to \GL_2(\bT_\Lambda^\aord)$ has $p$-local form
\begin{equation}
\label{eq: anti-ord form of rho_T}
\rho_{\bT_\Lambda^\aord}\vert_{G_p} \simeq \ttmat{\dia \cdot \nu(\lr{p}_NU'^{-1})}{0}{*}{\nu(U')}. 
\end{equation}

\begin{defn}
    We define the bi-ordinary and critical deformation problems $D^\BO$ and $D^\crit$ in the same way as in Definition \ref{defn: Dord} except for replacing condition (ii) with ``$\Lambda$-bi-ordinary'' and ``$\Lambda$-critical'' respectively. 
\end{defn}

\begin{prop}
    \label{prop: BO and crit rings}
    Under the assumptions on $\rho$ of Definition \ref{defn: rho ord assumptions}, $D^\BO$ and $D^\crit$ are representable by rings $R^\BO$ and $R^\crit$ in $\hat \cA_W$, respectively. There are natural surjections
    \[
    R^\crit \rsurj R^\tord, R^\crit \rsurj R^\aord, \quad R^\tord \rsurj R^\BO, R^\aord \rsurj R^\BO
    \]
    forming a pushout $R^\tord \otimes_{R^\crit} R^\aord \cong R^\BO$. 
\end{prop}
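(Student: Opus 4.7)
The plan is first to establish representability of $R^\crit$ and $R^\BO$, next to construct the four surjections between representing rings, and finally to verify the pushout identification. For representability, the unconstrained deformation functor $D^\square$ parameterizing deformations satisfying conditions (i), (iii), (iv) of Definition~\ref{defn: Dord} together with the implicit $\Lambda$-algebra structure from the determinant is representable by some $R^\square \in \hat{\cA}_W$, using absolute irreducibility of $\rho\vert_{G_M}$ to invoke Schlessinger--Mazur criteria. The $p$-local conditions of Definition~\ref{defn: p-local conditions Lambda} defining $D^\crit$ and $D^\BO$ are then relatively representable as closed subfunctors of $D^\square$. For $D^\crit$: the condition that the pseudorepresentation $\psi(\rho^\square\vert_{G_p})$ decompose as $\psi(\nu(\alpha)) + \psi(\chi)$ for some unit $\alpha \in (R^\square)^\times$ and some character $\chi$ of $G_p$ imposes closed conditions on coefficients after adjoining $\alpha^{\pm 1}$, and the unramified constraint on $\nu(\alpha)$ is likewise closed. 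For $D^\BO$: the splitting condition is relatively representable because $p$-distinguishedness ($\chi_1 \neq \chi_2$) makes residual idempotent decompositions of $V\vert_{G_p}$ lift uniquely along any deformation.

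The four surjections $R^\crit \rsurj R^\tord$, $R^\crit \rsurj R^\aord$, $R^\tord \rsurj R^\BO$, and $R^\aord \rsurj R^\BO$ follow from the implications between the deformation conditions listed in Remark~\ref{rem: relations of conditions}. Explicitly, a twist-ordinary or anti-ordinary deformation is critical by taking the unramified factor appearing in the filtration to play the role of $W$ in the critical notation; a bi-ordinary deformation is both twist-ordinary and anti-ordinary by using the natural rank-one summands of the direct sum to supply the required subs. These implications produce inclusions of subfunctors of $D^\square$, which pass to the claimed surjections on representing rings.

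For the pushout identification $R^\tord \otimes_{R^\crit} R^\aord \cong R^\BO$, viewing both sides as quotients of $R^\square$, one must show that a deformation of $\rho$ is bi-ordinary if and only if it is simultaneously twist-ordinary and anti-ordinary in a manner witnessing the common critical structure. The bi-ordinary-implies-both direction is immediate from the previous paragraph. For the converse, a deformation $V$ equipped with compatible twist-ordinary and anti-ordinary structures carries rank-one $G_p$-stable data together with the two distinct $\Frob_p$-eigenvalue parameters $\alpha$ and $\beta$, constrained by the shared critical data over $R^\crit$. The key observation is that the combined structure recovers, on the Frobenius-semisimple part, both of the crystalline eigenvalues predicted by the bi-ordinary shape; by $p$-distinguishedness and the distinctness of the two residual characters guaranteed by hypothesis (iv) of Definition~\ref{defn: rho ord assumptions}, these eigenvalues are residually distinct units, and the idempotent cutting out the unramified summand lifts uniquely to $V\vert_{G_p}$ by Hensel's lemma. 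This gives the direct sum decomposition $V\vert_{G_p} = W \oplus W'$ with $\alpha,\beta$ recovering the bi-ordinary data. The main obstacle is to verify this lifting of the idempotent in families over $\hat{\cA}_W$ and to verify that the resulting splitting is natural, so that the universal object it produces corepresents the same functor as $R^\BO$; once this is in hand, Yoneda's lemma delivers the claimed isomorphism.
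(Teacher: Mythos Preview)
Your proposal is correct and matches the paper's approach closely. The paper is in fact terser than you are: it cites Ghate--Vatsal for the representability of $R^\BO$, constructs $R^\crit$ as $\tilde R \otimes_{R^\ps_p} R^{\ps,\crit}_p$ using the universal $p$-local pseudodeformation ring (your ``closed condition on the pseudorepresentation'' is the same thing, though your phrase ``after adjoining $\alpha^{\pm 1}$'' is unnecessary --- under $p$-distinguishedness the two character-factors are already canonically separated), and then simply cites Remark~\ref{rem: relations of conditions} for both the surjections and the pushout.

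Your pushout argument supplies detail the paper omits, but it is over-engineered. The references to ``Frobenius-semisimple part'' and ``crystalline eigenvalues'' are misplaced: these $G_p$-representations are not crystalline in general (indeed, the anti-ordinary ones typically are not de Rham). The Hensel/idempotent-lifting framing is also more indirect than necessary. The direct argument is: a twist-ordinary structure gives a $G_p$-stable rank-one sub $W_1 \subset V\vert_{G_p}$ reducing to one of the $\chi_i$-lines, while an anti-ordinary structure gives a $G_p$-stable rank-one sub $W_2$ reducing to the \emph{other} $\chi_j$-line; by $p$-distinguishedness these are residually transverse, so $W_1 \oplus W_2 \to V\vert_{G_p}$ is surjective by Nakayama and hence an isomorphism by rank. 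This is all Remark~\ref{rem: relations of conditions} is encoding.
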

\begin{proof}
    The representability of $R^\BO$ is due to Ghate--Vatsal \cite[Prop.\ 3.1]{GV2011}. 
    
    Let $\tilde R$ denote the deformation ring representing the moduli problem given by Definition \ref{defn: Dord} except that condition (ii), the $p$-local condition, is eliminated. There exists such a $\tilde R$ satisfying conditions (i), (iii), and (iv) due to standard arguments, cf.\ \cite[Prop.\ 2.3.2]{CWE1}; let $\tilde \rho$ denote the universal representation over $\tilde R$. Let $R^\ps_p$ denote the ($p$-local) universal pseudodeformation ring of $\rho\vert_{G_p}$. Then the pseudorepresentation of $\tilde \rho\vert_{G_p}$ produces a homomorphism $R^\ps_p \to \tilde R$ in $\cA_W$. 

    It is a standard consequence of the theory of pseudorepresentations that the $\Lambda$-critical condition of Definition \ref{defn: p-local conditions Lambda} is Zariski closed in $\Spf R^\ps_p$, producing a quotient $R^\ps_p \rsurj R^{\ps,\crit}_p$. Then one may let $R^\crit := \tilde R \otimes_{R^\ps_p} R^{\ps,\crit}_p$. 

    The existences of the claimed surjections and pushout follow from the logical relations of the various deformation conditions discussed in Remark \ref{rem: relations of conditions}. 
\end{proof}

\begin{rem}
    Under our residually $p$-distinguished hypothesis, one can show that $R^{\ps,\crit}_p \cong \Lambda\lb t_1, t_2\rb$ parameterizing the pseudorepresentation of $\dia \nu(1+t_1) \oplus \nu(1+t_2)$. 
\end{rem}

First we characterize how much $R^\crit$ differs from $R^\tord$ and $R^\aord$. 
\begin{prop}
    \label{prop: R square nilpotent}
    The kernel of the natural map $\psi^R : R^\crit \rsurj R^\tord \times R^\aord$ is square-nilpotent, its $R^\crit$-module structure factors through $R^\BO$, and it is finitely generated as a $R^\BO$-module. 
\end{prop}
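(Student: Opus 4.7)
The plan is to exploit the concrete structure of $\rho^\crit|_{G_p}$ forced by the critical condition. Since all four of the deformation conditions in question agree away from $p$, everything reduces to understanding the ideals of $R^\crit$ coming from the local data at $p$. Using the residually $p$-distinguished hypothesis $\chi_1 \neq \chi_2$, fix a basis of the universal representation space in which $\bar\rho|_{G_p}$ is diagonal, and write
\[
\rho^\crit|_{G_p}(g) = \begin{pmatrix} a(g) & b(g) \\ c(g) & d(g) \end{pmatrix},
\]
with $a \equiv \chi_1$, $d \equiv \chi_2$ mod $\mathfrak{m}_{R^\crit}$ and $b, c \in \mathfrak{m}_{R^\crit}$. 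Let $W = \nu(\alpha):G_p \to R^{\crit,\times}$ be the universal unramified character and $W' = (\det \rho^\crit|_{G_p})/W$. The critical condition (giving $a+d = W+W'$ and $ad-bc=WW'$) together with the substitution $\tilde a := a - W' = W - d$ yields the identity $bc = \tilde a(W-W') - \tilde a^2$ $(\star)$. Applying $\rho^\crit(gh) = \rho^\crit(g)\rho^\crit(h)$ in the $(1,1)$-entry yields
\[
b(g)c(h) = \tilde a(gh) - W'(g)\tilde a(h) - \tilde a(g)W'(h) - \tilde a(g)\tilde a(h). \qquad (\star\star)
\]

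Let $\mathcal{B}, \mathcal{C}, \tilde{\mathcal{A}} \subset R^\crit$ denote the ideals generated by the values of $b$, $c$, $\tilde a$. From $(\star\star)$ one gets $\mathcal{B}\cdot\mathcal{C} \subseteq \tilde{\mathcal{A}}$, while evaluating $(\star)$ at some $g$ with $(W-W')(g)$ a unit (which exists since $\chi_1\neq\chi_2$ residually) and propagating via $(\star\star)$ gives $\tilde{\mathcal{A}} \subseteq \mathcal{B}\mathcal{C}$, hence $\tilde{\mathcal{A}} = \mathcal{B}\mathcal{C}$. The twist-ordinary quotient $R^\tord$ is characterized by $c=0$ and $d=W$ (i.e.\ $\tilde a = 0$); combined with $\tilde{\mathcal{A}} \subseteq \mathcal{C}$ this forces $I_\tord = \mathcal{C}$, and symmetrically $I_\aord = \mathcal{B}$. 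Therefore $K = \mathcal{B}\cap\mathcal{C}$ and $J := I_\tord + I_\aord = \mathcal{B}+\mathcal{C}$, and it remains to prove the vanishing $J\cdot K = 0$ in $R^\crit$, which simultaneously gives the square-nilpotence $K^2 \subseteq JK = 0$ and the factorization of the $R^\crit$-action through $R^\BO = R^\crit/J$. Finite generation of $K$ as an $R^\BO$-module is then automatic, since $R^\crit$ is a complete local Noetherian ring (a quotient of a power-series ring over $W(\F)$).

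The vanishing $JK = 0$ is the technical heart of the proof and the main obstacle. My plan to establish it is to apply $(\star\star)$ to triple products: the cocycle relations $b(gh) = a(g)b(h)+b(g)d(h)$ and the symmetric one for $c$ allow one to compute $b(g)c(h)b(k)$ and $b(g)c(h)c(k)$ as explicit $R^\crit$-linear combinations of values of $\tilde a$, which can then be re-expressed using $(\star)$ and forced to collapse to zero; this is the GMA principle that, for a 2-dimensional representation whose pseudo-representation is reducible into distinct characters, the image of $\mathcal{B}\otimes_R\mathcal{C}\to R$ satisfies a square-zero relation with the off-diagonal modules themselves. Working this out by hand is fiddly, so I expect the cleanest argument to invoke the reducibility-ideal theory of Bellaïche--Chenevier for two-dimensional GMAs, or alternatively a universal-property argument showing that every critical deformation factors as a square-zero extension of a bi-ordinary one. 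Either approach should produce the required vanishing and complete the proof.
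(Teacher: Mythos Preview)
Your setup coincides with the paper's: write $\rho^\crit|_{G_p}$ in a basis adapted to the residual splitting, let $\mathcal{B}, \mathcal{C}$ be the ideals generated by the off-diagonal entries, and identify $I_\tord = \mathcal{C}$, $I_\aord = \mathcal{B}$. The gap is that you miss the decisive simplification and leave the argument unfinished. The critical condition says precisely that the $p$-local pseudorepresentation equals $\psi(W \oplus W')$, hence is reducible over $R^\crit$ itself. The Bella\"iche--Chenevier result you gesture toward (their Prop.~1.5.1) then gives $\mathcal{B}\mathcal{C} = 0$ outright --- not merely a ``square-zero relation with the off-diagonal modules,'' but literal vanishing of the product ideal. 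From this single fact the whole proposition is one line: $K^2 \subseteq \mathcal{B}\mathcal{C} = 0$, and $(\mathcal{B}+\mathcal{C})(\mathcal{B}\cap\mathcal{C}) \subseteq \mathcal{B}\mathcal{C} + \mathcal{C}\mathcal{B} = 0$, so the $R^\crit$-action on $K$ factors through $R^\BO$. Your detour through $\tilde a$ and the ideal $\tilde{\mathcal{A}}$ is correct as far as it goes but superfluous: once $\mathcal{B}\mathcal{C} = 0$ you get $\tilde{\mathcal{A}} = 0$ (that is, $a = W'$ and $d = W$ on the nose), and what you flag as the ``technical heart'' requiring ``fiddly'' triple-product identities simply evaporates. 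This is exactly how the paper proceeds.

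Your argument for finite generation --- $R^\crit$ lies in $\hat\cA_W$ by construction and is therefore Noetherian, so $\ker\psi^R$ is finitely generated over $R^\crit$ and hence over $R^\BO$ once the module structure factors --- is valid and in fact cleaner than the paper's pointer to adjoint Galois cohomology.
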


\begin{proof}
    Let $b, c : G_p \to R^\crit$ denote the functions realizing the $B$ and $C$-coordinates of the universal representation $\rho^\crit$. Viewing the deformation problems, we see the the ideal $(b(G_p)) \subset R^\crit$ is the kernel of $R^\crit \rsurj R^\aord$ while $(c(G_p))$ is the kernel of $R^\crit \rsurj R^\tord$. On the other hand, due to the requirement that the pseudorepresentation of $\rho^\crit\vert_{G_p}$ is reducible, the product ideal $(b(G_p))(c(G_p))$ vanishes (see e.g.\ \cite[Prop.\ 1.5.1]{BC2009}). Therefore the kernel $(b(G_p)) \cap (c(G_p))$ of $\psi^R$ is square-nilpotent. 

    It is also evident that $\ker \psi^R$ is annihilated by $(b(G_p))$ and by $(c(G_p))$, and therefore, by Proposition \ref{prop: BO and crit rings}, its $R^\crit$-module structure factors through $R^\BO$. 

    The kernel is finitely generated as a $R^\BO$-module due to a standard argument using adjoint Galois cohomology; see the proof of Theorem \ref{thm: R=T crit} for a similar argument.
\end{proof}

\subsection{$R^\star = \bT^\star$ results}

Now we prove $R^\star=\bT^\star$ results in the bi-ordinary and critical cases, taking the ordinary result as a hypothesis. 

\begin{prop}
    \label{prop: R=T for twist and anti}
    Assume $R^\ord \cong (\bT_\Lambda^\ord)_\rho$. The maps arising from \eqref{eq: twist-ord form of rho_T} and \eqref{eq: anti-ord form of rho_T} are isomorphisms $R^\tord \isoto (\bT^\tord_\Lambda)_\rho$, $R^\aord \isoto (\bT^\aord_\Lambda)_\rho$. Under each isomorphism, $R^{\tord/\aord} \ni \alpha \mapsto U' \in (\bT_\Lambda^{\tord/\aord})_\rho$. 
\end{prop}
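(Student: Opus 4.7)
The twist-ordinary case is essentially tautological. By construction the functor $D^\tord$ coincides with $D^\ord$ on $\hat\cA_W$, so canonically $R^\tord = R^\ord$. Proposition \ref{prop: tord control} supplies an isomorphism $\bT_\Lambda^\ord \isoto \bT_\Lambda^\tord$ realized by the ring map $\bT[U] \to \bT[U']$, $U \mapsto U'$. Comparing the two universal Hecke-valued Galois representations \eqref{eq: ord form of rho_T} and \eqref{eq: twist-ord form of rho_T}, the identification above carries $\rho_{\bT_\Lambda^\ord}$ to $\rho_{\bT_\Lambda^\tord}$ (trace of $\Frob_\ell$ goes to trace of $\Frob_\ell$ for $\ell \nmid Np$, and the $p$-local shape is preserved). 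Transporting the hypothesized isomorphism $R^\ord \isoto (\bT_\Lambda^\ord)_\rho$ through these identifications produces the desired isomorphism $R^\tord \isoto (\bT_\Lambda^\tord)_\rho$ sending the universal $\Frob_p$-eigenvalue on the unramified quotient to $U'$.

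For the anti-ordinary case, the plan is to realize both sides as compatible twists of the ordinary setup applied to $\rho' := \rho \otimes (\det\rho)^{-1}_p$. Note that $\rho'$ still satisfies the assumptions of Definition \ref{defn: rho ord assumptions}: its semisimple $G_p$-restriction has unramified Jordan--H\"older factor $\chi_1 \otimes (\det\rho)^{-1}_p|_{G_p}$ (since $\chi_1 \chi_2 = \det\rho|_{G_p}$ modulo inertia), and the Taylor--Wiles conditions are twist-invariant, so the hypothesis ``$R^\ord \cong (\bT_\Lambda^\ord)_\rho$'' also applies to $\rho'$. On the deformation side, Lemma \ref{lem: twists of deformation problems} identifies $R^\aord \isoto R^\ord_{\rho'}$, with the universal anti-ordinary representation obtained from $(\rho')^\ord$ by tensoring with the inverse of its $p$-ramified determinant character. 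On the Hecke side, Proposition \ref{prop: aord TU algebra} identifies $\bT_\Lambda^\ord \isoto \bT_\Lambda^\aord$ via the substitutions $T_n \mapsto [n]^{-1}T_n$ (for $p \nmid n$), $\lr{d}_N \mapsto \lr{d}_N$, $U' \mapsto U_p^{-1}\lr{p}_N$, and $[a] \mapsto [a]^{-1}$.

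The key check is that these two twists match. By local class field theory, the character $\dia : G_p \to \Lambda^\times$ satisfies $\dia(\Frob_\ell) = [\ell]$ for $\ell \neq p$ after appropriate normalization, so the Hecke substitution $T_n \mapsto [n]^{-1}T_n$ on traces of Frobenius encodes precisely tensoring a Galois representation by $\dia^{-1}$; likewise $[a] \mapsto [a]^{-1}$ inverts the Iwasawa character controlling the ramified part of the determinant. Comparing with \eqref{eq: anti-ord form of rho_T}, this combined twist carries the $\Lambda$-ordinary universal Galois representation $\rho_{\bT_\Lambda^\ord}$ (associated to residual eigensystem $\rho'$) to the $\Lambda$-anti-ordinary universal representation $\rho_{\bT_\Lambda^\aord}$ (associated to residual eigensystem $\rho$), precisely the Hecke analogue of the deformation-side twist. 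Composing the hypothesized isomorphism $R^\ord_{\rho'} \isoto (\bT_\Lambda^\ord)_{\rho'}$ with these compatible twists then yields $R^\aord \isoto (\bT_\Lambda^\aord)_\rho$, under which the universal $\Frob_p$-eigenvalue is sent to $U'$ (tracing through Lemma \ref{lem: twists of deformation problems} and the substitution $U' \mapsto U_p^{-1}\lr{p}_N$ followed by its inverse).

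The main obstacle is the careful bookkeeping of the anti-ordinary twist: one must verify that the Iwasawa character $\dia$ governing the Hecke substitutions of Proposition \ref{prop: aord TU algebra} matches, under local class field theory, the $p$-ramified part of the universal determinant character governing the Galois twist of Lemma \ref{lem: twists of deformation problems}, and that the $p$-primary nebencharacter conventions agree on both sides so that the formula for the $\Frob_p$-eigenvalue is consistent with the statement. Once this dictionary is verified against the explicit $p$-local shapes \eqref{eq: ord form of rho_T} and \eqref{eq: anti-ord form of rho_T}, the proposition follows formally from the hypothesized ordinary $R=\bT$ isomorphism.
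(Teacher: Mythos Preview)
Your proof is correct and follows essentially the same approach as the paper: both cases are obtained by transporting the hypothesized ordinary $R=\bT$ isomorphism through the compatible twisting isomorphisms on the deformation side (Lemma \ref{lem: twists of deformation problems}) and on the Hecke side (Propositions \ref{prop: tord control} and \ref{prop: aord TU algebra}), exactly as the paper does. Your write-up is in fact more explicit than the paper's, which simply asserts the compatibility of these twists and refers to the passage from \eqref{eq: ord form of rho_T} to \eqref{eq: anti-ord form of rho_T}.

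One point worth flagging: in the anti-ordinary case you correctly observe that what is actually needed is $R^\ord_{\rho'} \cong (\bT_\Lambda^\ord)_{\rho'}$ for the twisted residual representation $\rho' = \rho \otimes (\det\rho)_p^{-1}$, and you justify this by noting that the hypotheses of Definition \ref{defn: rho ord assumptions} are twist-invariant. Strictly speaking this goes slightly beyond the literal hypothesis ``$R^\ord \cong (\bT_\Lambda^\ord)_\rho$'' for the fixed $\rho$, but the paper's proof makes the same implicit move (the hypothesis is a stand-in for the known modularity results, which apply equally to $\rho'$). Your making this explicit is an improvement in clarity, not a gap.
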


\begin{proof}
    These two maps are twists of $R^\ord \cong \bT_\Lambda^\ord$. In the anti-ordinary case, this follows from observing that the twisting from $\bT^\ord_\Lambda$ to $\bT_\Lambda^\aord$ of Proposition \ref{prop: aord TU algebra} is compatible with the twisting from $R^\ord$ to $R^\aord$ in Lemma \ref{lem: twists of deformation problems}; this also corresponds to the passage from \eqref{eq: ord form of rho_T} to \eqref{eq: anti-ord form of rho_T}. In the twist-ordinary case, this follows from the twisting isomorphisms (just replacing $U$ by $U'$) between the ordinary and twist-ordinary theories in Proposition \ref{prop: tord control} and Lemma \ref{lem: twists of deformation problems}. This also corresponds to the passage from \eqref{eq: ord form of rho_T} to \eqref{eq: twist-ord form of rho_T}. 
\end{proof}

Thus we can arrive at the result that was one of the main motivations of the theory of critical $\Lambda$-adic forms, addressing the question we asked in \cite[\S1.6]{CWE1}. 
\begin{thm}
    \label{thm: R=T BO}
    Assume $R^\ord \cong (\bT_\Lambda^\ord)_\rho$. The natural map $R^\BO \rsurj (\bT^\BO_\Lambda)_\rho$ is an isomorphism. 
\end{thm}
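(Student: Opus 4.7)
The plan is to deduce the isomorphism from compatible pushout descriptions on the two sides. On the deformation side, Proposition~\ref{prop: BO and crit rings} gives $R^\BO \cong R^\tord \otimes_{R^\crit} R^\aord$. On the Hecke side, Corollary~\ref{cor: BO is to and ao} together with Theorem~\ref{thm: main BO intro}(1), combined with the observation that $(\bT_\Lambda^\SBO)_\rho = (\bT_\Lambda^\BO)_\rho$ and $(\bT_\Lambda^{\tord,\circ})_\rho = (\bT_\Lambda^\tord)_\rho$ at the residually absolutely irreducible localization (by Lemma~\ref{lem: Eis difference}), gives $(\bT_\Lambda^\BO)_\rho \cong (\bT_\Lambda^\tord)_\rho \otimes_{(\bT_\Lambda^\crit)_\rho} (\bT_\Lambda^\aord)_\rho$. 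Under the hypothesis $R^\ord \cong (\bT_\Lambda^\ord)_\rho$, Proposition~\ref{prop: R=T for twist and anti} upgrades the corners of these pushouts into isomorphisms $R^\tord \isoto (\bT_\Lambda^\tord)_\rho$ and $R^\aord \isoto (\bT_\Lambda^\aord)_\rho$, each carrying the universal unramified eigenvalue to $U'$.

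The key intermediate step is to construct a natural surjection $R^\crit \rsurj (\bT_\Lambda^\crit)_\rho$ compatible with the identifications above. Since $\rho$ is absolutely irreducible, the pseudorepresentation $G_{\Q,S} \to (\bT_\Lambda^\crit)_\rho$ determined on Frobenius elements by $\Frob_\ell \mapsto T_\ell$ lifts uniquely to a genuine Galois representation $\rho^\crit_\bT$. To verify that $\rho^\crit_\bT$ satisfies the $\Lambda$-critical condition of Definition~\ref{defn: p-local conditions Lambda}, I would use the $p$-distinguished hypothesis of Definition~\ref{defn: rho ord assumptions}(4) to decompose the residual pseudorepresentation at $p$ uniquely as $\chi_1 + \chi_2$ with $\chi_2$ unramified, and then lift this by a standard Hensel-type argument for pseudorepresentations of local groups to a decomposition of the pseudorepresentation of $\rho^\crit_\bT|_{G_p}$ as $\psi_1 + \psi_2$ with $\psi_2$ unramified. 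Compatibility with the two ordinary quotients, which both carry the pseudorepresentation $\nu(U') + \dia \cdot \nu(\lr{p}_N U'^{-1})$ at $p$ by \eqref{eq: twist-ord form of rho_T} and \eqref{eq: anti-ord form of rho_T}, forces $\psi_2 = \nu(U')$ and identifies the inertial part of $\det \rho^\crit_\bT$ with $\dia$. The tame level conditions at primes dividing $N$ pull back from the corresponding conditions on the ordinary quotients. The universal property of $R^\crit$ then yields the map, and surjectivity is immediate because $(\bT_\Lambda^\crit)_\rho$ is generated over $\Lambda$ by the $T_\ell$ and by $U'$, all in the image.

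Finally, set $K := \ker(R^\crit \rsurj (\bT_\Lambda^\crit)_\rho)$. Because the composite $R^\crit \rsurj R^\tord \isoto (\bT_\Lambda^\tord)_\rho$ agrees by construction with $R^\crit \rsurj (\bT_\Lambda^\crit)_\rho \rsurj (\bT_\Lambda^\tord)_\rho$, and analogously for the anti-ordinary quotient, $K$ lies in both $\ker(R^\crit \rsurj R^\tord)$ and $\ker(R^\crit \rsurj R^\aord)$. The $R^\crit$-actions on $R^\tord$ and $R^\aord$ therefore factor through $R^\crit/K = (\bT_\Lambda^\crit)_\rho$, producing the canonical chain of isomorphisms $R^\BO \cong R^\tord \otimes_{R^\crit} R^\aord \cong (\bT_\Lambda^\tord)_\rho \otimes_{(\bT_\Lambda^\crit)_\rho} (\bT_\Lambda^\aord)_\rho \cong (\bT_\Lambda^\BO)_\rho$. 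The main obstacle is the construction in the previous paragraph: verifying the $\Lambda$-critical condition on $(\bT_\Lambda^\crit)_\rho$, particularly the canonical decomposition of its $p$-local pseudorepresentation and the identification $\psi_2 = \nu(U')$, so that $R^\crit$ maps to $(\bT_\Lambda^\crit)_\rho$ compatibly with the ordinary quotients; once this is in place, the pushout comparison is formal.
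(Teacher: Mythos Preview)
Your overall strategy---identifying both $R^\BO$ and $(\bT_\Lambda^\BO)_\rho$ as pushouts and matching the corners via Proposition~\ref{prop: R=T for twist and anti}---is exactly the paper's approach. However, your detour through constructing $R^\crit \rsurj (\bT_\Lambda^\crit)_\rho$ is unnecessary for this particular theorem, and the obstacle you flag (verifying the $\Lambda$-critical condition on $(\bT_\Lambda^\crit)_\rho$, specifically that $\psi_2$ is unramified rather than merely unramified on the two quotients) is a genuine difficulty: the map $\psi_\Lambda$ has nonzero kernel, so compatibility with the quotients alone does not pin down $\psi_2\vert_{I_p}$.

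The paper bypasses this entirely. Rather than factor $R^\crit$ through $(\bT_\Lambda^\crit)_\rho$, one checks directly that the two composites
\[
(\bT_\Lambda^\crit)_\rho \to (\bT_\Lambda^\tord)_\rho \cong R^\tord \rsurj R^\BO
\quad\text{and}\quad
(\bT_\Lambda^\crit)_\rho \to (\bT_\Lambda^\aord)_\rho \cong R^\aord \rsurj R^\BO
\]
agree. This is immediate on the Hecke generators $T_\ell, U_\ell, \lr{d}_N, U'$: by \eqref{eq: twist-ord form of rho_T} and \eqref{eq: anti-ord form of rho_T}, each is sent via both routes to the same Galois-theoretic quantity in $R^\BO$ (trace of Frobenius, value of the unramified character at $\Frob_p$, etc.). The universal property of the Hecke-side pushout then yields a map $(\bT_\Lambda^\BO)_\rho \to R^\BO$ inverse to the natural surjection, and one is done. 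In short: you only need $(\bT_\Lambda^\crit)_\rho$ to map compatibly into $R^\BO$, not $R^\crit$ to map into $(\bT_\Lambda^\crit)_\rho$; the former is elementary because $(\bT_\Lambda^\crit)_\rho$ is generated by Hecke operators, while the latter is precisely what Theorem~\ref{thm: R=T crit} later works hard to establish.
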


\begin{proof}
    This map arises from the isomorphisms of Proposition \ref{prop: R=T for twist and anti} by pushing out along their sources and targets, according to Proposition \ref{prop: BO and crit rings}) and Corollary \ref{cor: BO is to and ao}. (Note also that $(\bT_\Lambda^\BO)_\rho \isoto (\bT_\Lambda^\SBO)_\rho$ because we are localizing at a residually non-Eisenstein maximal ideal.) 
\end{proof}

\begin{thm}
    \label{thm: R=T crit}
    Assume $R^\ord \cong (\bT_\Lambda^\ord)_\rho$. Then $R^\crit_\Lambda \cong (\bT^\crit_\Lambda)_\rho$. 
\end{thm}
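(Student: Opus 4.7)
The plan is to show that the natural map $\phi : R^\crit \to (\bT_\Lambda^\crit)_\rho$ induced by the critical Galois representation $\rho_{\bT_\Lambda^\crit}$ is an isomorphism by fitting it into a commutative diagram of short exact sequences of $R^\BO$-algebras
\begin{equation*}
\begin{CD}
0 @>>> \ker\psi^R @>>> R^\crit @>{\psi^R}>> R^\tord \times_{R^\BO} R^\aord @>>> 0 \\
@. @VVV @VV{\phi}V @VV{\wr}V @. \\
0 @>>> (\ker\psi_\Lambda)_\rho @>>> (\bT_\Lambda^\crit)_\rho @>>> \bigl((\bT_\Lambda^\tord)_\rho \times_{(\bT_\Lambda^\BO)_\rho} (\bT_\Lambda^\aord)_\rho\bigr) @>>> 0
\end{CD}
\end{equation*}
and applying the five lemma.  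The rightmost vertical is an isomorphism by Proposition~\ref{prop: R=T for twist and anti} and Theorem~\ref{thm: R=T BO} together with the naturality of fiber products; the exactness on the right follows from the pushout description of Proposition~\ref{prop: BO and crit rings} and its Hecke-algebraic counterpart in Corollary~\ref{cor: BO is to and ao}.

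Surjectivity of $\phi$ I would handle directly: the generators of $(\bT_\Lambda^\crit)_\rho$ are all visible on the Galois side, namely each $T_\ell = \operatorname{tr}\rho^\crit(\Frob_\ell)$ for $\ell \nmid Np$, the diamond operators via $\det\rho^\crit$, the $U_\ell$ for $\ell\mid N$ via conditions~(iii)--(iv), and---crucially---$U'$ as the image of the universal $\varphi$-eigenvalue $\alpha\in R^{\crit,\times}$ provided by the critical condition of Definition~\ref{defn: p-local conditions Lambda}.  Once $\phi$ is surjective, the snake lemma supplies a surjection $\ker\psi^R \twoheadrightarrow (\ker\psi_\Lambda)_\rho$ of $R^\BO$-modules, so in particular $(\ker\psi_\Lambda)_\rho$ inherits the square-nilpotency of $\ker\psi^R$ (Proposition~\ref{prop: R square nilpotent}), and $\ker\phi\subseteq \ker\psi^R$ is itself a square-nilpotent, finitely generated $R^\BO$-module.

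The heart of the argument is then to promote this surjection on kernels to an isomorphism, which by the five lemma will yield the theorem.  On the Hecke side, the degree-shifting isomorphism
\[
\delta : \ker\psi_\Lambda \ \isoto\ \Hom_{\bT[U']_\Lambda}\bigl(H^1(\SBO_\Lambda^\bullet),H^0(\SBO_\Lambda^\bullet)\bigr)
\]
of Theorem~\ref{thm: flat derived action}, combined with the Serre self-duality of Theorem~\ref{thm: SD for BO} and the Hecke-module identifications of Proposition~\ref{prop: BO T to forms duality}, exhibits $\ker\psi_\Lambda$ as a dualizing module for $\bT[U']_\Lambda(H^0(\SBO_\Lambda^\bullet))$; after localization at $\rho$, its generic $\Lambda$-rank equals that of $(\bT_\Lambda^\BO)_\rho$.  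On the Galois side, $\ker\psi^R$ parameterizes square-zero critical deformations of the universal bi-ordinary representation $\rho^\BO$ whose images in both $R^\tord$ and $R^\aord$ are trivial; these are naturally identified with a Selmer subgroup of $H^1(G_{\Q,S},\mathrm{ad}^0\rho^\BO)$ cut out by a local-at-$p$ condition that requires only pseudo-reducibility while permitting ``wrong Hodge--Tate order'' extensions of the type exhibited in Example~\ref{eg: hsu}.

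The main obstacle will be matching these two $R^\BO$-module ranks.  Concretely, the Galois-cohomological computation reduces to a Poitou--Tate duality estimate for the above Selmer group, which under the Taylor--Wiles and $p$-distinguished hypotheses should identify its generic $\Lambda$-rank with that of $R^\BO \cong (\bT_\Lambda^\BO)_\rho$.  This is the analogue, for the critical problem, of the numerical-criterion input to the standard proof of $R^\ord\cong (\bT_\Lambda^\ord)_\rho$: a surjection of $R^\BO$-modules that are $\Lambda$-torsion-free and of the same generic rank must be an isomorphism, and this will force the injectivity of $\phi$.
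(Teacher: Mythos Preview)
Your overall architecture---fitting $\phi:R^\crit\to(\bT_\Lambda^\crit)_\rho$ into a diagram over $\psi^R$ and $\psi_\Lambda$ and reducing to a comparison of kernels---matches the paper's. The identification of the images with the fiber products is correct, and your treatment of surjectivity of $\phi$ is fine. But the heart of your argument, the step you flag as ``the main obstacle,'' is a genuine gap.

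You propose to match the \emph{generic $\Lambda$-ranks} of $\ker\psi^R$ and $(\ker\psi_\Lambda)_\rho$ and then conclude by ``surjection of $\Lambda$-torsion-free modules of equal rank is an isomorphism.'' Two problems: (i) you never establish that $\ker\psi^R$ is $\Lambda$-torsion-free---indeed you do not yet know $R^\crit$ is $\Lambda$-flat, so there is no reason its ideal $\ker\psi^R$ should be; (ii) you do not actually compute the Galois-side rank, you only assert that a Poitou--Tate estimate ``should identify'' it with the rank of $R^\BO$. The paper's proof avoids both issues. It reduces modulo $\m_\Lambda$ first: since $R^\tord\times R^\aord$ is $\Lambda$-finite flat and $\ker\psi^R$ is $R^\BO$-finite, $R^\crit$ is $\Lambda$-finite, so Nakayama makes it enough to show $\bar R^\crit\to\bar\bT^\crit$ is an isomorphism over $\F$. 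There the Euler characteristic of $\SBO^\bullet$ is zero (Lemma~\ref{lem: Euler characteristic}), so $\dim_\F\ker\bar\psi=\dim_\F\coker\bar\psi=\dim_\F\bar R^\BO$, and the problem becomes the inequality $\dim_\F\ker\bar\psi^R\le\dim_\F\bar R^\BO$.

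That inequality is the actual content, and it is proved by a concrete local computation you do not supply: viewing the square-zero extension $\bar R^\BO\oplus\ker\bar\psi^R$ as a deformation, global Tate duality produces a surjection from $H^2_{(c)}$ of $\ad^0\rho^\BO$ onto $\ker\bar\psi^R$, and the vanishing of $\ker\psi^R$ in $\bar R^\aord$ means this surjection factors through the image of the local map $H^1(\Q_p,(\chi^\BO)^*(1))\to H^2_{(c)}$, where $\chi^\BO$ is the ratio of the two $p$-local characters. The $p$-distinguished hypothesis ensures $\chi\neq 1,\omega$, so the local Euler characteristic formula gives $\dim_\F H^1(\Q_p,(\chi^\BO)^*(1))=\dim_\F\bar R^\BO$, yielding the bound. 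Your invocation of Theorem~\ref{thm: flat derived action} for the Hecke-side rank is unnecessary here; the simple Euler-characteristic input already gives $\dim_\F\ker\bar\psi$ directly.
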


\begin{proof}
    We have a diagram of finite $\Lambda$-algebras maps, each with a square-nilpotent kernel by Proposition \ref{prop: R square nilpotent}, 
    \begin{equation}
        \label{eq: R-T diagram}
        \begin{aligned}
        \xymatrix{
    R_\Lambda^\crit \ar[r]^{\psi_\Lambda^R } \ar@{->>}[d] & R_\Lambda^\tord \times R_\Lambda^\aord \ar[d]^\sim \\
    (\bT_\Lambda^\crit)_\rho \ar[r]^{\psi_\Lambda} & (\bT_\Lambda^\tord)_\rho \times (\bT_\Lambda^\aord)_\rho
    }
    \end{aligned}
    \end{equation}
    Because all rings in the diagram except $R^\crit_\Lambda$ are already known to be finite and flat over $\Lambda$ (by Theorem \ref{thm: main construction} and Proposition \ref{prop: R=T for twist and anti}), it follows from Proposition \ref{prop: R square nilpotent} that $R^\crit$ is $\Lambda$-finite as well. It also follows that it will suffice to ``prove the theorem modulo the maximal ideal $\m_\Lambda \subset \Lambda$'' in the following sense. For convenience, denote
    \[
    \bar R^\star := R^\star/\m_\Lambda R^\star \text{ for } \star = \crit, \tord,\aord, 
    \]
    and likewise write $\bar\bT^\star$ and $\bar \psi^R, \bar\psi$ for the reduction of \eqref{eq: R-T diagram} modulo $\m_\Lambda$. It is enough to prove that $\bar R^\crit \rsurj \bar \bT^\crit$ is an isomorphism. 
    
    Since the right map of \eqref{eq: R-T diagram} is an isomorphism, it also suffices to prove that $\ker \bar\psi_R \rsurj \ker \bar\psi$ is an isomorphism. We will prove this using a dimension count. Since the domain and codomain of $\bar\psi$ have equal $\F$-dimension (by Lemma \ref{lem: Euler characteristic} and the fact that we have localized at a non-Eisenstein eigensystem), and we know that $R^\BO \cong (\bT^\BO_\Lambda)_\rho \simeq (\coker \psi_\Lambda)_\rho$ (the right isomorphism follows from Corollary \ref{cor: BO is to and ao}, as noted in the proof of Proposition \ref{prop: BO T to forms duality}), it will suffice to prove that 
\[
\dim_\F \ker \bar\psi^R  \leq \dim_\F \bar R^\BO. 
\]
We will do this by beginning with the result of Proposition \ref{prop: R square nilpotent} that $\ker \psi^R$ is a $R^\BO$-module and then showing that $\ker \bar\psi^R$ has the cardinality of a cyclic $\bar R^\BO$-module. 

Because of the axioms on $\rho\vert_{G_p}$ declared in Definition \ref{defn: rho ord assumptions}, and letting $\chi = \chi_1 \chi_2^{-1}$ using the notation there, at least one of $\chi$ and $\chi^{-1}$ is not equal to $\omega$. Without loss of generality (because we can swap the role of ``anti-ordinary'' and ``twist-ordinary'' later in this proof) we assume that $\chi \neq \omega$. Write $\rho^\BO : G_{\Q,S} \to \GL_2(\bar R^\BO)$ for the universal Galois representation over $\bar R^\BO$ and $\tilde \rho^\BO$ for the square-nilpotent deformation of $\rho^\BO$ from $\bar R^\BO$ to $\bar R^\BO \oplus \ker \bar\psi^R$. Write the $p$-split form of $\rho^\BO$ as
\[
\rho^\BO\vert_{G_p} \simeq \ttmat{\chi_1^\BO}{0}{0}{\chi_2^\BO}, \qquad \chi_i : G_p \to (R^\BO)^\times
\]
and let $\chi^\BO := \chi_1^\BO (\chi_2^\BO)^{-1}$, which deforms $\chi$. 

It is a standard deformation-theoretic fact that the deformation $\tilde \rho^\BO$ gives rise to an element of 
\[
H^1(\Z[1/Np], (\ad^0 \rho^\BO) \otimes_{\bar R^\BO} \ker\bar\psi^R).
\]
By global Tate duality, using the fact that $\bar R^\BO$ has finite cardinality, there is a dual surjection $u: H^2_{(c)}(\Z[1/Np], (\ad^0 \rho^\BO)^*(1)) \rsurj \ker \bar\psi^R$, where the surjectivity arises from the universality of $\tilde \rho^\BO$ and $(-)^*$ represents the Pontryagin dual with the contragredient action. This manifestation of global Tate duality is visible in \cite[Prop.\ 3.4.2]{CWE1}, where the spectral sequence there is trivial since our coefficient ring can be taken to be $\Lambda/\m_\Lambda \cong \F$. (The reader can also consult \cite[\S3.4]{CWE1} and the references therein for standard definitions of global Galois cohomology with compact support $H^i_{(c)}(\Z[1/Np], -)$ and variants.) 

The fact that $\ker \psi^R$ vanishes under $\bar R^\crit \rsurj \bar R^\aord$ means that the image of $H^1(\Q_p, (\chi^\BO)^*(1)) \to H^2_{(c)}(\Z[1/Np], (\ad^0 \rho^\BO)^*(1))$ -- arising from the Tate-dual map of the composition of the restriction map 
\[
H^1(\Z[1/Np], (\ad^0 \rho^\BO) \otimes_{\bar R^\BO} \ker\bar\psi^R) \to H^1(\Q_p, (\ad^0 \rho^\BO) \otimes_{\bar R^\BO} \ker\bar\psi^R)
\]
with the projection to the summand $\chi^\BO\vert_{G_p} \subset \ad^0 \rho^\BO\vert_{G_p}$, which is the coordinate that must vanish on $G_p$ in order to be anti-ordinary -- projects surjectively to $\ker \bar\psi^R$. That is, we have a surjection $H^1(\Q_p, (\chi^\BO)^*(1)) \rsurj \ker \bar\psi^R$. Because we have arranged that $(\chi^\BO)^{-1}(1)$ is not in the congruence class of $\omega$ (since $\chi \neq 1$) nor the trivial character (since $\chi \neq \omega$), the Tate local duality and Euler characteristic formula imply that 
\[
\dim_\F H^1(\Q_p, (\chi^\BO)^*(1)) = \dim_\F (\chi^\BO)^{-1}(1) = \dim_\F \bar R^\BO. \qedhere 
\] 
\end{proof}

\section{Degree-shifting action of anti-cyclotomic units}
\label{sec:deg-shifting}

Our goals are to interpret the result of Ghate--Vatsal \cite{GV2004} on the CM-ness of Hida families with bi-ordinary Galois representations in terms of the bi-ordinary complex, apply $R^\crit \cong \bT_{\Lambda,\rho}^\crit$ to interpret $(\ker \psi_\Lambda)_\rho$ (along with its degree-shifting action on $H^*(\SBO_\Lambda)_\rho$) in terms of anti-cyclotomic global units, and then specialize this action to weight 1. 

\subsection{Complex multiplication}
\label{subsec: CM} 

Our next goal is to apply the following result of Ghate--Vatsal \cite{GV2004}. For the notion of modular forms with complex multiplication (``CM'') by an imaginary quadratic field $K/\Q$, see e.g.\ \cite[\S2.1.4]{CWE1}. There we also discuss the $\Lambda$-flat module $S_\Lambda^{\ord,\CM} \subset S_\Lambda^\ord$ of $\Lambda$-adic ordinary CM forms, which interpolates ordinary CM forms of weight $k \in \Z_{\geq 3}$, $S_k^{\ord,\CM}$, in the usual sense of Hida theory. Its Hecke algebra $\bT_\Lambda^{\ord,\CM}$ is a $\Lambda$-flat quotient of $\bT_\Lambda^{\ord, \circ}$ and inherits a $\Lambda$-perfect pairing $\bT_\Lambda^{\ord,\CM} \times S_\Lambda^{\ord,\CM} \to \Lambda$ from the ordinary pairing. 

\begin{prop}[{Ghate--Vatsal \cite[Thm.\ 3]{GV2004}}]
\label{prop: GV}
Under hypotheses (1), (3), and (4) of Definition \ref{defn: rho ord assumptions} about $\rho$, there are no non-CM $N$-primitive cuspidal $\bT[U]_\Lambda$-eigenforms in $S_{\Lambda,\rho}^\ord) \otimes_{\Lambda_W} \overline{Q(\Lambda_W)}$ that support a bi-ordinary Galois representation.
\end{prop}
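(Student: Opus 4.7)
Following Ghate--Vatsal, the plan is a proof by contradiction. Suppose there exists a non-CM $N$-primitive cuspidal $\bT[U]_\Lambda$-eigenform $\cF$ in $(S_\Lambda^\ord)_\rho \otimes_{\Lambda_W} \overline{Q(\Lambda_W)}$ supporting a bi-ordinary Galois representation $\rho_\cF$. By Theorem \ref{thm: BO equals split in Hida families}, $\rho_\cF|_{G_p}$ is reducible and decomposable over the function field of $\cF$.

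The first step is to reduce to the classical setting by specializing $\cF$ at arithmetic points. Applying Lemma \ref{lem: GV splitting} shows that for a Zariski-dense set of arithmetic primes of weight $k \geq 2$, the specialization $\rho_{f_k}|_{G_p}$ is reducible and decomposable; hypothesis (3) ensures each $\rho_{f_k}$ is globally absolutely irreducible, and hypothesis (4) supplies $p$-distinguishedness (so the two Jordan--H\"older factors on $G_p$ are distinct modulo $p$, hence canonically ordered as ``ordinary'' vs.\ the Hodge--Tate weight $k-1$ piece).

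The central step is the classical content of \cite[Thm.\ 3]{GV2004}: such an $f_k$ must have complex multiplication by an imaginary quadratic field in which $p$ splits. The mechanism combines the Weil bound $|\iota\alpha|=|\iota\beta|=p^{(k-1)/2}$ at archimedean embeddings $\iota$ of the crystalline Frobenius eigenvalues with the rigidity constraint that $\rho_{f_k}|_{G_p}$ splits, after appropriate twist, into two unramified characters. This algebraic rigidity forces $\alpha/\beta$ to be a root of unity times a power of $p$, from which one deduces via Faltings' isogeny theorem and induction from a CM field that $\rho_{f_k}$ is induced from an algebraic Hecke character of an imaginary quadratic field.

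The final step is to propagate: the CM field $K_k$ is constrained by the residual representation $\rho$ (via the unique quadratic character cutting out the CM induction) to be a single field $K$ independent of $k$, so CM-ness of a Zariski-dense set of specializations forces $\rho_\cF$ itself to be induced from a $\Lambda$-adic Hecke character of $G_K$---contradicting the non-CM assumption on $\cF$. The main obstacle is the central step, where the delicate interplay between the Weil bound and crystalline eigenvalue rigidity is essential; this is the technical core of Ghate--Vatsal's argument and the only place where the hypotheses on $\rho$ beyond bi-ordinariness intervene in a nontrivial way.
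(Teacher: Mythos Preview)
The paper does not supply its own proof of this proposition; it is a direct citation of \cite[Thm.~3]{GV2004}. Your sketch, however, contains a fatal gap in the ``central step.''

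The claim that a classical weight-$k$ eigenform $f_k$ with $\rho_{f_k}\vert_{G_p}$ reducible and decomposable must have CM is precisely Greenberg's question (Question~\ref{ques: CG} in this paper, attributed in the introduction to \cite[Ques.~1]{GV2004}), and it is \emph{open}. You have reduced the $\Lambda$-adic statement you are trying to prove to a strictly stronger, unresolved statement about individual classical forms. The mechanism you propose---archimedean Weil bounds on $\alpha,\beta$ combined with Faltings' isogeny theorem to force induction from a CM field---is not a valid argument for this implication and does not correspond to anything in the literature; for an ordinary form one has $v_p(\alpha)=0$ and $v_p(\beta)=k-1$, so no purity-type constraint on $\alpha/\beta$ is available, and local splitting at a single prime is far from the input Faltings' theorem requires. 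Moreover, when you invoke ``the classical content of \cite[Thm.~3]{GV2004},'' you are misreading that reference: Theorem~3 there \emph{is} the $\Lambda$-adic result being quoted as Proposition~\ref{prop: GV}, not a classical ingredient you may cite.

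Ghate--Vatsal's actual proof works at the $\Lambda$-adic level throughout and never passes through the classical Greenberg question. A Hida family carries rigidity that a single classical specialization lacks: under hypotheses (1), (3), (4) one has access to large-image results for non-CM $\Lambda$-adic representations, and the $p$-split condition on $\rho_\cF\vert_{G_p}$ then forces the global image of $\rho_\cF$ into the normalizer of a torus, contradicting non-CM. The logical flow is the reverse of yours: one deduces information about classical specializations \emph{from} the $\Lambda$-adic theorem, which is exactly how the paper uses Proposition~\ref{prop: GV} in Corollaries~\ref{cor: SBO is CM} and~\ref{cor: T1H1 is exceptional}.
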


We have the following interpretation of the result of Proposition \ref{prop: GV} in terms of CM forms, writing it generally enough that we do not require any hypotheses on $\rho$, e.g.\ we allow $\rho$ to be Eisenstein. 
\begin{prop}
    \label{prop: equivalent CM conditions}
    Only in this proposition do we allow for an arbitrary residual Hecke eigensystem $\rho$. The following conditions are equivalent. 
    \begin{enumerate}
        \item The result of Proposition \ref{prop: GV} is true, that is, there are no twist-ordinary $\Lambda$-adic non-CM cuspidal $\bT[U']$-eigenforms with residual Hecke eigensystem $\rho$ that support a bi-ordinary Galois representation
        \item $H^0(\SBO_\Lambda^\bullet)_\rho$ is isomorphic to $S_{\Lambda,\rho}^{\tord,\CM}$ under $H^0(\SBO_\Lambda^\bullet)_\rho \rinj S_{\Lambda,\rho}^\tord$
        \item The Hida families of $H^0(\BO_\Lambda^\bullet)_\rho$ are either CM or Eisenstein series 
        \item the maximal $\Lambda$-torsion-free quotient of $H^1(\SBO_\Lambda^\bullet)_\rho$ has a CM Hecke action. 
    \end{enumerate} 
\end{prop}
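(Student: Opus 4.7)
The plan is to establish the cycle $(1) \Leftrightarrow (2) \Leftrightarrow (3)$ and $(2) \Leftrightarrow (4)$, using Theorem \ref{thm: BO equals split in Hida families} as the bridge between the arithmetic condition ``bi-ordinary Galois representation'' and the cohomological condition ``lies in $H^0(\SBO_\Lambda^\bullet)$.''

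\textbf{Step 1: $(1) \Leftrightarrow (2)$.} First I would record an always-valid inclusion $(S_\Lambda^{\tord,\CM})_\rho \subseteq H^0(\SBO_\Lambda^\bullet)_\rho$. Indeed, a twist-ordinary CM cuspidal Hida family is induced from a ray class character of an imaginary quadratic field $K$ in which $p$ splits (splitting is forced by ordinarity), so its Galois representation is automatically reducible and decomposable on $G_p$ via the factorization $p = \frp\bar\frp$; Theorem \ref{thm: BO equals split in Hida families} then places the family inside $H^0(\SBO_\Lambda^\bullet)$. Conversely, Theorem \ref{thm: BO equals split in Hida families} identifies the cuspidal Hida families in $H^0(\SBO_\Lambda^\bullet)_\rho$ with exactly those twist-ordinary cuspidal Hida families whose Galois representation is $p$-split. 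Condition (1) asserts that every such family is CM, which is the reverse containment.

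\textbf{Step 2: $(2) \Leftrightarrow (3)$.} I would apply Lemma \ref{lem: Eis difference}: taking $H^0$ of the short exact sequence $0 \to \SBO_\Lambda^\bullet \to \BO_\Lambda^\bullet \to (\Hom_\Lambda(J,\Lambda))^0 \to 0$ expresses $H^0(\BO_\Lambda^\bullet)_\rho$ as an extension of $H^0(\SBO_\Lambda^\bullet)_\rho$ by an Eisenstein contribution. Consequently, the cuspidal Hida families inside $H^0(\BO_\Lambda^\bullet)_\rho$ coincide with those of $H^0(\SBO_\Lambda^\bullet)_\rho$, and the remaining Hida families are Eisenstein. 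Thus (3) is tautologically equivalent to the statement that the cuspidal Hida families of $H^0(\SBO_\Lambda^\bullet)_\rho$ are CM, which is the content of (2).

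\textbf{Step 3: $(2) \Leftrightarrow (4)$.} Corollary \ref{cor: SD on HSBO} supplies a $\bT[U']$-compatible perfect $\Lambda$-bilinear pairing
\[
\frac{H^1(\SBO_\Lambda^\bullet)_\rho}{T_1(H^1(\SBO_\Lambda^\bullet))_\rho} \times H^0(\SBO_\Lambda^\bullet)_\rho \To \Lambda,
\]
and Proposition \ref{prop: BO T to forms duality} identifies both Hecke algebras acting faithfully on the source and target. Since $(S_\Lambda^{\tord,\CM})_\rho$ sits in $(S_\Lambda^\tord)_\rho$ as a saturated Hecke-stable direct summand (after a suitable integral extension), condition (2) is equivalent to the faithful Hecke algebra on $H^0(\SBO_\Lambda^\bullet)_\rho$ being $(\bT_\Lambda^{\tord,\CM})_\rho$, which by the duality is condition (4).

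\textbf{Main obstacle.} The principal technical point is Step 3: one must translate ``CM Hecke action'' on the torsion-free quotient of $H^1$ into the precise module-level statement (2). The $\Lambda$-flatness of $H^0(\SBO_\Lambda^\bullet)$ (Lemma \ref{lem: PD of H}), combined with the perfectness of the duality in Corollary \ref{cor: SD on HSBO} and the always-valid inclusion from Step 1, guarantee that ``eigensystems on the torsion-free quotient are CM'' upgrades to the module equality in (2); without these inputs the two statements could, a priori, differ by a non-saturated piece.
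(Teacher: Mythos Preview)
Your proposal is correct and follows essentially the same approach as the paper: $(1)\Leftrightarrow(2)$ via Theorem \ref{thm: BO equals split in Hida families} (plus the observation that CM twist-ordinary families always have $p$-split Galois representation), $(2)\Leftrightarrow(3)$ via Lemma \ref{lem: Eis difference}, and $(2)\Leftrightarrow(4)$ via the Serre self-duality of Theorem \ref{thm: SD for BO} (you cite its Corollary \ref{cor: SD on HSBO} instead, which is equivalent here). Your ``Main obstacle'' paragraph is more cautious than necessary: since Theorem \ref{thm: SD for BO} already identifies the maximal $\Lambda$-torsion-free quotient of $H^1(\SBO_\Lambda^\bullet)$ with $H^0(\SBO_\Lambda^\bullet)^\vee$ Hecke-equivariantly, the passage between (2) and (4) is immediate without worrying about saturation issues.
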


Note that (2)-(4) are considered to be true when the said $\bT[U']_\Lambda$-modules vanish. 

\begin{proof}
Because all twist-ordinary CM forms admit bi-ordinary Galois representations, conditions (1) and (2) are equivalent by Theorem \ref{thm: BO equals split in Hida families}. Lemma \ref{lem: Eis difference} implies the equivalence of (2) with (3), since Eisenstein series support bi-ordinary Galois representations. And Theorem \ref{thm: SD for BO} implies the equivalence of (2) with (4). 
\end{proof}

Returning to the restrictions of Definition \ref{defn: rho ord assumptions} on $\rho$, we have the following implications of Ghate--Vatsal's result. We also remark that these hypotheses imply that there is at most one imaginary quadratic field $K$ for which some Hecke eigensystem in the congruence class of $\rho$ has CM by $K$, and in this case $\rho \simeq \Ind_K^\Q \eta$ for some $\eta : G_K \to \F^\times$. We will call such $\rho$ \textit{CM}, for short. 

We let $\cH^{1,\ord}_{\Lambda,\CM}$ denote the maximal CM quotient of $\cH^{1,\ord}_\Lambda$. 
\begin{cor}
    \label{cor: SBO is CM}
    Under the hypotheses on $\rho$ of Definition \ref{defn: rho ord assumptions}, 
    and there exist $\bT[U']_\Lambda$-equivariant maps as follows. 
    \begin{enumerate}
        \item A surjection $H^1(\BO_\Lambda^\bullet)_\rho \rsurj \cH^{1,\ord}_{\Lambda,\CM,\rho} \cong H^0(\BO_\Lambda^\bullet)_\rho^\vee$ with $\Lambda$-torsion kernel, factoring the surjection $\cH_{\Lambda,\rho}^{1,\ord} \rsurj \cH^{1,\ord}_{\Lambda,\CM,\rho}$ with complementary factor $\cH_{\Lambda,\rho}^{1,\ord} \rsurj H^1(\BO_\Lambda^\bullet)_\rho$ coming from \eqref{eq: BO replace}
        \item Isomorphisms $H^0(\BO_\Lambda^\bullet)_\rho  \isoto S_{\Lambda,\rho}^{\tord,\CM}$, $H^0(\BO_\Lambda^\bullet)_\rho  \isoto S_{\Lambda,\rho}^{\aord,\CM}$ arising from the natural inclusion $H^0(\BO_\Lambda) \rinj M_\Lambda^\tord$. 
        \item A surjection $\bT_{\Lambda,\rho}^\SBO \rsurj \bT_{\Lambda,\rho}^{\tord, \CM} \cong \bT[U'](H^0(\SBO_\Lambda^\bullet))_\rho$ with $\Lambda$-torsion kernel isomorphic to $T_1(H^1(\SBO_\Lambda^\bullet))_\rho$. The Hecke eigensystems of the $\bT[U']_\Lambda$-action on $T_1(H^1(\SBO_\Lambda^\bullet))_\rho$ are non-CM. 
    \end{enumerate}
    In particular, $\rho$ is non-CM if and only if $H^0(\BO_\Lambda^\bullet)_\rho = 0$ if and only if $H^1(\BO_\Lambda^\bullet)_\rho$ and $\bT_{\Lambda,\rho}^\SBO$ are $\Lambda$-torsion. 
\end{cor}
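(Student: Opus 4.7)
The plan is to reduce everything to the cuspidal complex and then package Ghate--Vatsal (Proposition \ref{prop: GV}) through Proposition \ref{prop: equivalent CM conditions} to obtain each of (1)--(3). Since $\rho$ satisfies condition (3) of Definition \ref{defn: rho ord assumptions}, it is residually non-Eisenstein, so the quotient $(\Hom_\Lambda(J,\Lambda))^0_\rho$ of Lemma \ref{lem: Eis difference} vanishes and $(\BO_\Lambda^\bullet)_\rho \isoto (\SBO_\Lambda^\bullet)_\rho$; I work with the cuspidal complex throughout. First, Proposition \ref{prop: GV} supplies condition (1) of Proposition \ref{prop: equivalent CM conditions} at $\rho$, which is the equivalent statement (2) there: the natural inclusion $H^0(\SBO_\Lambda^\bullet) \rinj M_\Lambda^\tord$ induces $H^0(\SBO_\Lambda^\bullet)_\rho \isoto (S_\Lambda^{\tord,\CM})_\rho$. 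For the analogous isomorphism with $S_\Lambda^{\aord,\CM}$, I would use that a twist-ordinary CM form of tame level $N$ associated to $\rho \simeq \Ind_K^\Q \eta$ has $\rho_f\vert_{G_p}$ split (as $p$ splits in $K$, by hypothesis (4)), hence is bi-ordinary and lies equally in $M_\Lambda^\aord$ via the projection $\BO_\Lambda^0 \rsurj M_\Lambda^\aord$; applying Ghate--Vatsal with the roles of twist- and anti-ordinary interchanged (which amounts to the symmetric twist of Proposition \ref{prop: aord TU algebra}) yields $H^0(\SBO_\Lambda^\bullet)_\rho \isoto (S_\Lambda^{\aord,\CM})_\rho$.

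Part (1) will follow from combining (2) with the Serre self-duality of Theorem \ref{thm: SD for BO} and Corollary \ref{cor: flat plus torsion}. That theorem produces a canonical short exact sequence whose quotient is $H^0(\SBO_\Lambda^\bullet)_\rho^\vee$, and Corollary \ref{cor: flat plus torsion} identifies this quotient with the maximal $\Lambda$-torsion-free quotient of $H^1(\SBO_\Lambda^\bullet)_\rho$. Substituting (2) then expresses this quotient as $(S_\Lambda^{\tord,\CM})_\rho^\vee$, which by the $\bT[U']_\Lambda$-equivariant pairing $\lr{}'_\mathrm{SD}$ of Corollary \ref{cor: tord serre duality} restricted to the CM components is naturally $(\cH^{1,\ord}_{\Lambda,\CM})_\rho$. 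To verify that the composite $(\cH^{1,\ord}_\Lambda)_\rho \rsurj H^1(\SBO_\Lambda^\bullet)_\rho \rsurj (\cH^{1,\ord}_{\Lambda,\CM})_\rho$ coincides with the canonical CM projection, I would argue by Hecke-equivariance: both surjections kill exactly the non-CM Hecke eigensystems after inverting $p$ on each $\Lambda$-height one prime (again by Ghate--Vatsal applied pointwise), and both targets are $\Lambda$-flat, so the maps agree.

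Part (3) combines Proposition \ref{prop: BO T to forms duality}, which supplies a surjection $\bT_{\Lambda,\rho}^\SBO \rsurj \bT[U'](H^0(\SBO_\Lambda^\bullet))_\rho$ with kernel canonically isomorphic to $T_1(H^1(\SBO_\Lambda^\bullet))_\rho$, with part (2), which identifies $\bT[U'](H^0(\SBO_\Lambda^\bullet))_\rho \cong \bT_{\Lambda,\rho}^{\tord,\CM}$; the non-CM claim for $T_1(H^1(\SBO_\Lambda^\bullet))_\rho$ is then Proposition \ref{prop: equivalent CM conditions}(4). Finally, the ``in particular'' claim: if $\rho$ is non-CM then $(S_\Lambda^{\tord,\CM})_\rho = 0$, so by (2) $H^0(\SBO_\Lambda^\bullet)_\rho = 0$, and by Theorem \ref{thm: SD for BO} this forces $H^1(\SBO_\Lambda^\bullet)_\rho$ to coincide with its maximal $\Lambda$-torsion submodule, and $\bT_\Lambda^\SBO{}_\rho$ to be $\Lambda$-torsion by (3); the converses follow because CM Hida families have $\Lambda$-flat Hecke algebras of positive rank. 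The main obstacle I anticipate is the symmetric statement in (2): one must ensure the CM forms contributing to $H^0$ are detected equally well by their twist-ordinary and anti-ordinary $p$-stabilizations, which in our setup relies on the splitness of $\rho\vert_{G_p}$ inherited from the CM structure together with the fact that Ghate--Vatsal's argument (Lemma \ref{lem: GV splitting}) is insensitive to which $p$-stable line is distinguished.
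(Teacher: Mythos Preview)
Your proposal is correct and follows essentially the same route as the paper: reduce to the cuspidal complex via residual non-Eisensteinness, feed Ghate--Vatsal (Proposition \ref{prop: GV}) into Proposition \ref{prop: equivalent CM conditions} to get $H^0(\SBO_\Lambda^\bullet)_\rho \cong (S_\Lambda^{\tord,\CM})_\rho$, then use the Serre self-duality of Theorem \ref{thm: SD for BO} together with the $\bT[U']$-compatible pairing $\lr{}'_\mathrm{SD}$ of Corollary \ref{cor: tord serre duality} to identify the torsion-free quotient of $H^1$ with $(\cH^{1,\ord}_{\Lambda,\CM})_\rho$, and finally invoke Proposition \ref{prop: BO T to forms duality} for the Hecke-algebra statement. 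The paper's proof is terser but uses exactly these ingredients; your extra care with the factorization in (1), the symmetric $\aord$ statement in (2), and the ``in particular'' clause are all reasonable elaborations that the paper leaves implicit.
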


\begin{proof}
    Because  $\Lambda$-adic Serre duality $\lr{}'_\mathrm{SD}$ of Corollary \ref{cor: tord serre duality} is $\bT[U']$-compatible, the maximal CM sub $S_{\Lambda,\rho}^{\tord,\CM} \subset S_{\Lambda,\rho}^\tord$ corresponds to the maximal CM quotient $H^1(\SBO_\Lambda^\bullet)_\rho \rsurj \cH^{1,\ord}_{\Lambda,\CM, \rho}$ under this duality. Then (1) follows from Proposition \ref{prop: equivalent CM conditions} and Theorem \ref{thm: SD for BO} because we know that the hypothesis of Proposition \ref{prop: GV} is satisfied. Part (2) follows directly from Proposition \ref{prop: equivalent CM conditions}. Part (3) follows from the previous parts and Proposition \ref{prop: BO T to forms duality}, using the fact that $\bT_{\Lambda,\rho}^{\tord,\CM}$ is the maximal CM quotient of $\bT_{\Lambda,\rho}^\tord$. 
\end{proof}

As a result of our discussion of the CM property, we have now identified the obstructions to a positive answer to Question \ref{ques: CG}. 

\begin{cor}
    \label{cor: T1H1 is exceptional}
    Under the assumption on $\rho$ of Definition \ref{defn: rho ord assumptions}, the height 1 primes of $\Lambda$ supporting $T_1(H^1(\BO_\Lambda^\bullet))_\rho$ (resp.\ supporting $\ker(\bT_{\Lambda,\rho}^\SBO \rsurj \bT_{\Lambda,\rho}^{\tord,\CM})$) are the height 1 primes supporting exceptions to Question \ref{ques: CG} that lie in the congruence class of $\rho$. 
\end{cor}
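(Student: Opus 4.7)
The plan is to derive the correspondence from a universal coefficient short exact sequence obtained by specializing the perfect complex $\SBO_\Lambda^\bullet$ at a height 1 prime $\phi$ of $\Lambda$, combined with the two structural inputs we have about $H^*(\SBO_\Lambda^\bullet)_\rho$: the flat--torsion decomposition of Corollary \ref{cor: flat plus torsion} and the CM dichotomy of Corollary \ref{cor: SBO is CM}. Throughout, I would use that $H^1(\BO_\Lambda^\bullet)_\rho \cong H^1(\SBO_\Lambda^\bullet)_\rho$, because $\rho$ is residually non-Eisenstein by the Taylor--Wiles hypothesis of Definition \ref{defn: rho ord assumptions}, killing the Eisenstein cokernel of Lemma \ref{lem: Eis difference}. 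Fixing a generator $f_\phi \in \Lambda$ of $\ker\phi$ and using the termwise $\Lambda$-flatness of $\SBO_\Lambda^\bullet$, the tautological short exact sequence $0 \to \SBO_\Lambda^\bullet \xrightarrow{f_\phi} \SBO_\Lambda^\bullet \to \SBO_\phi^\bullet \to 0$ yields a long exact sequence, which after $\rho$-localization and using the $\Lambda$-flatness of $H^0(\SBO_\Lambda^\bullet)$ from Lemma \ref{lem: PD of H} collapses to the Hecke-equivariant
\[
0 \to H^0(\SBO_\Lambda^\bullet)_\rho / f_\phi \to H^0(\SBO_\phi^\bullet)_\rho \to H^1(\SBO_\Lambda^\bullet)_\rho[f_\phi] \to 0.
\]

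Next I would split $H^1(\SBO_\Lambda^\bullet)_\rho$ as a $\Lambda$-module via Corollary \ref{cor: flat plus torsion} into a $\Lambda$-flat summand (carrying no $f_\phi$-torsion) and its torsion part $T_1(H^1(\SBO_\Lambda^\bullet))_\rho$. Because that torsion part has projective dimension at most one by the same corollary, all of its associated primes have height one, so a standard associated-primes argument over the regular local ring $\Lambda$ yields: a height 1 prime $\phi$ lies in its support if and only if $T_1(H^1(\SBO_\Lambda^\bullet))_\rho[f_\phi] \neq 0$. In particular the right-hand term of the SES equals $T_1(H^1(\SBO_\Lambda^\bullet))_\rho[f_\phi]$, and is nonzero exactly when $\phi$ lies in the support of $T_1(H^1(\BO_\Lambda^\bullet))_\rho$. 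The ``resp.'' half of the statement then reduces to the $\bT_\Lambda^\SBO$-module isomorphism $T_1(H^1(\SBO_\Lambda^\bullet))_\rho \cong \ker(\bT_{\Lambda,\rho}^\SBO \rsurj \bT_{\Lambda,\rho}^{\tord,\CM})$ provided by Corollary \ref{cor: SBO is CM}(3).

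It remains to translate the nonvanishing of $T_1(H^1(\SBO_\Lambda^\bullet))_\rho[f_\phi]$ into the presence of a non-CM form in $H^0(\SBO_\phi^\bullet)_\rho$, interpreting Question \ref{ques: CG} at the height 1 prime $\phi$ by asking whether $H^0(\SBO_\phi^\bullet)_\rho$ consists of CM forms. Corollary \ref{cor: SBO is CM}(2) identifies $H^0(\SBO_\Lambda^\bullet)_\rho$ with $(S_\Lambda^{\tord,\CM})_\rho$, so the leftmost term $H^0(\SBO_\Lambda^\bullet)_\rho/f_\phi$ of our SES consists of specializations of CM Hida families; hence any non-CM form in $H^0(\SBO_\phi^\bullet)_\rho$ must project nontrivially onto $T_1(H^1(\SBO_\Lambda^\bullet))_\rho[f_\phi]$. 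For the converse, Hecke-equivariance of the SES combined with Corollary \ref{cor: SBO is CM}(3) -- that Hecke eigensystems supporting $T_1$ are non-CM -- ensures any lift to $H^0(\SBO_\phi^\bullet)_\rho$ of a nonzero element of this torsion carries a non-CM Hecke eigensystem.

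The main obstacle is this last converse -- verifying that the non-CM property of $\Lambda$-adic Hecke eigensystems on $T_1$ survives specialization at $\phi$. The cleanest route is to work at the level of Hecke algebras: since $\bT_{\Lambda,\rho}^{\tord,\CM}$ is $\Lambda$-flat by Proposition \ref{prop: BO T to forms duality}, reducing the short exact sequence $0 \to T_1(H^1(\SBO_\Lambda^\bullet))_\rho \to \bT_{\Lambda,\rho}^\SBO \to \bT_{\Lambda,\rho}^{\tord,\CM} \to 0$ modulo $f_\phi$ remains exact and exhibits $T_1(H^1(\SBO_\Lambda^\bullet))_\rho/f_\phi$ as the kernel of the weight-$\phi$ CM quotient map of Hecke algebras. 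Non-vanishing of this kernel -- which is equivalent to the non-vanishing of $T_1(H^1(\SBO_\Lambda^\bullet))_\rho[f_\phi]$ by the purity recalled above -- then produces weight-$\phi$ Hecke eigensystems supported on $H^0(\SBO_\phi^\bullet)_\rho$ that do not factor through $\bT_{\Lambda,\rho}^{\tord,\CM}/f_\phi$, and hence are non-CM.
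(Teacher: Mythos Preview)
Your approach is essentially identical to the paper's: both arguments hinge on the universal-coefficient short exact sequence
\[
0 \to H^0(\SBO_\Lambda^\bullet)_\rho \otimes_\Lambda \Lambda/P \to H^0(\SBO_P^\bullet)_\rho \to \Tor_1^\Lambda(H^1(\SBO_\Lambda^\bullet)_\rho, \Lambda/P) \to 0
\]
(your $H^1[f_\phi]$ is exactly this $\Tor_1$), invoke Corollary \ref{cor: SBO is CM} to identify the left term with CM forms and the $\Tor$ term's eigensystems as non-CM, and use projective dimension $\leq 1$ to equate nonvanishing of the $\Tor$ term with $P$ lying in the support of $T_1(H^1)$. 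Your extra paragraph justifying the converse via the Hecke-algebra short exact sequence is a reasonable expansion of what the paper leaves to the reader.
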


\begin{proof}
    For a height 1 prime ideal $P \subset \Lambda$, let $\SBO_P^\bullet := \SBO_\Lambda^\bullet \otimes_\Lambda \Lambda/P$. There is a $\bT[U']_\Lambda$-equivariant short exact sequence 
    \[
    0 \to H^0(\SBO_\Lambda^\bullet) \otimes_\Lambda \Lambda/P \to H^0(\SBO_P^\bullet) \to \Tor_1^\Lambda(H^1(\SBO_\Lambda^\bullet), \Lambda/P) \to 0
    \]
    and the claim follows from this, Corollary \ref{cor: SBO is CM}, and the fact that the the $\rho$-localization of the $\Tor$ term vanishes unless $P$ is in the support of $H^1(\SBO_\Lambda^\bullet)_\rho$ (because it has projective dimension $\leq 1$, Lemma \ref{lem: PD of H}). 
\end{proof}

\subsection{Arithmetic interpretation of the $\Lambda$-flat degree-shifting action}

We saw in the proof of Theorem \ref{thm: R=T crit} that there is a quotient ring $\tilde \bT_{\Lambda,\rho}^\BO := \bT_{\Lambda,\rho}^\SBO \oplus (\ker \psi_\Lambda)_\rho$ of $\bT_\Lambda^\crit$, a square-nilpotent extension of $\bT_{\Lambda,\rho}^\SBO$. Putting together Theorem \ref{thm: R=T crit} and the degree-shifting action of $\ker \psi_\Lambda$ of Theorem \ref{thm: flat derived action}, we deduce the following action on $H^*(\SBO_\Lambda^\bullet)_\rho = H^0(\SBO_\Lambda^\bullet)_\rho \oplus H^1(\SBO_\Lambda^\bullet)_\rho$. Recall that $\ker \psi_\Lambda^R$ is the natural surjection $R_\Lambda^\crit \to R_\Lambda^\tord \times R_\Lambda^\aord$ of \eqref{eq: R-T diagram}, and notice that Theorem \ref{thm: R=T crit} implies that we have a square-nilpotent lift 
\[
\tilde R^\BO := R^\BO \oplus \ker \psi_\Lambda^R \isoto \tilde \bT_{\Lambda,\rho}^\BO := (\bT_\Lambda^\SBO)_\rho \oplus (\ker \psi_\Lambda)_\rho
\]
of the isomorphism $R^\BO \cong \bT_{\Lambda,\rho}^\BO$ of Theorem \ref{thm: R=T BO}. 

\begin{cor}
    \label{cor: RBO shifts degrees}
    Assume $R^\ord \cong (\bT_\Lambda^\ord)_\rho$. There is a faithful action of $R^\BO \oplus \ker \psi_\Lambda^R$ on $H^*(\SBO_\Lambda^\bullet)_\rho$ where $R^\BO$ acts in degree $0$ and $\ker \psi_\Lambda^R$ acts in degree $-1$. 
\end{cor}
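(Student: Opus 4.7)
The strategy is to assemble the three main ingredients already in hand: the isomorphism $R^\BO \cong (\bT_\Lambda^\BO)_\rho$ of Theorem \ref{thm: R=T BO}, the isomorphism $R^\crit \cong (\bT_\Lambda^\crit)_\rho$ of Theorem \ref{thm: R=T crit}, and the $\Lambda$-flat degree-shifting Hecke action of Theorem \ref{thm: flat derived action}. The first two, taken together with the compatibility between the diagrams $\psi_\Lambda^R : R^\crit \to R^\tord \times R^\aord$ and $\psi_\Lambda : \bT_\Lambda^\crit \to \bT_\Lambda^\tord \times \bT_\Lambda^\aord$ recorded in \eqref{eq: R-T diagram} (combined with Proposition \ref{prop: R=T for twist and anti}), give a canonical $\Lambda$-module isomorphism $\ker \psi_\Lambda^R \isoto (\ker \psi_\Lambda)_\rho$ and, more importantly, an isomorphism of square-nilpotent extensions $\tilde R^\BO \isoto \tilde \bT_{\Lambda,\rho}^\BO$ sitting over $R^\BO \isoto (\bT_\Lambda^\BO)_\rho \cong (\bT_\Lambda^\SBO)_\rho$.

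Once this identification is in place, the action is assembled as follows. First I would define the $R^\BO$-action on $H^*(\SBO_\Lambda^\bullet)_\rho$ in the obvious way: via the surjection $(\bT_\Lambda^\crit)_\rho \rsurj (\bT_\Lambda^\SBO)_\rho \cong R^\BO$ (whose existence uses Corollary \ref{cor: BO is to and ao}), $R^\BO$ acts in degree $0$ on both $H^0$ and $H^1$. Second, I would transport the isomorphism of Theorem \ref{thm: flat derived action} along $\ker \psi_\Lambda^R \cong (\ker \psi_\Lambda)_\rho$ to obtain the $(-1)$-degree-shifting map
\[
\delta_\rho : \ker \psi_\Lambda^R \isoto \Hom_{(\bT_\Lambda^\SBO)_\rho}\bigl(H^1(\SBO_\Lambda^\bullet)_\rho,\; H^0(\SBO_\Lambda^\bullet)_\rho\bigr).
\]
The two actions are compatible in the sense that $\ker \psi_\Lambda^R$ is square-nilpotent inside $\tilde R^\BO$ (Proposition \ref{prop: R square nilpotent}), the $R^\BO$-action on the target of $\delta_\rho$ is via $\tilde R^\BO \rsurj R^\BO$, and $R^\BO$-equivariance of $\delta_\rho$ is built into the statement of Theorem \ref{thm: flat derived action}. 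This gives a well-defined action of the square-nilpotent extension $\tilde R^\BO = R^\BO \oplus \ker \psi_\Lambda^R$, which is what the corollary asks for.

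Finally, I would verify faithfulness degree by degree. Faithfulness of $R^\BO$ on $H^0(\SBO_\Lambda^\bullet)_\rho$ is immediate from the identification of $R^\BO$ with $\bT[U']_\Lambda(H^0(\SBO_\Lambda^\bullet))_\rho$ supplied by Proposition \ref{prop: BO T to forms duality} combined with Corollary \ref{cor: SBO is CM}(3); faithfulness of the $\ker \psi_\Lambda^R$-action follows because $\delta_\rho$ is an \emph{isomorphism}, so a non-zero element of $\ker \psi_\Lambda^R$ produces a non-zero homomorphism $H^1 \to H^0$, which is in particular a non-zero action on $H^1(\SBO_\Lambda^\bullet)_\rho$.

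I expect the main subtlety to lie not in any single step but in making sure the various identifications are compatible: specifically, that the $(\bT_\Lambda^\crit)_\rho$-module structure on $H^1(\SBO_\Lambda^\bullet)_\rho$ induced by the degree-shifting map $\delta$ of Theorem \ref{thm: flat derived action} matches the restriction of the $R^\crit$-module structure coming from Theorem \ref{thm: R=T crit}. This is essentially a bookkeeping task given the commuting square in \eqref{eq: R-T diagram} together with the explicit $q$-series description of $\delta$ in the proof of Theorem \ref{thm: flat derived action}, but it is the point where one has to be careful that no twist has been missed. Everything else is a formal consequence of theorems already proved.
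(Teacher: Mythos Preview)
Your approach is essentially the same as the paper's: the corollary is stated there without a separate proof, and the text immediately preceding it derives it exactly as you do, by combining Theorem \ref{thm: R=T crit} (to identify $\ker\psi_\Lambda^R$ with $(\ker\psi_\Lambda)_\rho$), Theorem \ref{thm: R=T BO}, and the degree-shifting action of Theorem \ref{thm: flat derived action}.

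There is one slip in your faithfulness argument. You claim that $R^\BO$ is identified with $\bT[U']_\Lambda(H^0(\SBO_\Lambda^\bullet))_\rho$, but this is not what Proposition \ref{prop: BO T to forms duality} says: that proposition gives $\bT_\Lambda^\SBO \isoto \bT[U'](H^1(\SBO_\Lambda^\bullet))$, while the projection $\bT_\Lambda^\SBO \rsurj \bT[U'](H^0(\SBO_\Lambda^\bullet))$ has kernel $T_1(H^1(\SBO_\Lambda^\bullet))$, which need not vanish. So $R^\BO$ need not act faithfully on $H^0$ alone. The fix is immediate: faithfulness of $R^\BO$ on $H^*(\SBO_\Lambda^\bullet)_\rho$ follows directly from the definition of $\bT_\Lambda^\SBO$ as the image in $\End(H^*)$ (or, equivalently, from the first sentence of Proposition \ref{prop: BO T to forms duality}), and then faithfulness of the combined action of $R^\BO \oplus \ker\psi_\Lambda^R$ follows since the degree-$0$ and degree-$(-1)$ components can be separated by looking at the action on $H^1$. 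Your invocation of Corollary \ref{cor: SBO is CM}(3) is unnecessary and does not help here.
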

The idea for why $\ker \psi_\Lambda^R$ should act in degree $-1$ is that that $R^\crit \rsurj \tilde R^\BO$ is a classical realization of the derived tensor product of $R^\tord \lsurj R^\crit \rsurj R^\aord$, which surjects onto $R^\BO$ with ``properly derived'' square-nilpotent kernel $\ker \psi_\Lambda^R$. 

Next we interpret square-nilpotent extensions in terms of Galois cohomology. We assume that $\rho$ has CM by an imaginary quadratic field $K$ so that the degree shifting action is non-trivial. 
\begin{defn}
    For $\star = \tord, \aord,\crit, \BO, \tilde\BO$, let $J^\star$ denote the kernel of the surjection
    \[
    R^\star \rsurj \bT_{\Lambda,\rho}^{\tord,\CM}. 
    \]
    Here ``$\tilde\BO$'' refers to $\tilde R^\BO$. 
\end{defn}

In our previous work \cite{CWE1}, we identified the Galois cohomology groups that are canonically isomorphic to $J^\star/{J^\star}^2$ for $\star = \tord, \BO$; of course, $\star=\aord$ will exhibit a  symmetry with $\star=\tord$. To deal with all cases of $\star$, we import some notation from \cite{CWE1} (except that we use $\eta$ as follows and sometimes conflate it with its multiplicative lift). In this list, all of the references point to \cite{CWE1}. 
\begin{itemize}
    \item We continue letting $\eta : G_K \to \F^\times$ such that $\rho \simeq \Ind_K^\Q \eta$.
    \item For any character $\eta'$ of $G_K$, we let $\eta'^- := \eta \cdot (\eta^c)^{-1}$.
    \item Let $p$ factor as $p\cO_K = \frp\frp^*$ in $K$.
    \item Let $\tilde \Lambda$ denote the Iwasawa algebra of \S4.1, which we think of as a minimal deformation ring for $\eta$ in the sense of Lemma 4.1. There is also an isomorphism $\tilde \Lambda \cong \bT_{\Lambda,\rho}^{\tord,\CM}$ (Proposition 4.2). 
    \item Let $\widetilde{\Lambda}^-$ denote the anti-cyclotomic Iwasawa algebra of \S3.1. It admits the isomorphism $\tilde \delta : \widetilde{\Lambda}^- \to \tilde \Lambda$ of (4.2.1). We think of $\widetilde{\Lambda}^-$ of as a universal anti-cyclotomic deformation ring for $\eta^-$; let $\widetilde{\Lambda}_\#^-$ to denote the module of the inverse of the universal character. 
    \item Let $\cY^-_\infty(\eta^-) \rsurj \cX^-_\infty(\eta^-) \lsurj \cY^-_\infty(\eta^-)^*$ denote the anti-cyclotomic Iwasawa class groups defined in \S3.3, which are naturally $\widetilde{\Lambda}^-$-modules. In particular, $\cX^-_\infty(\eta^-)$ is unramified, $\cY^-_\infty(\eta^-)$ allows for ramification at $\frp$, and $\cY^-_\infty(\eta^-)^*$ allows for ramification at $\frp^*$. 
    \item As new notation, let $\frX^-_\infty(\eta^-)$ denote the analogous Iwasawa class group with ramification allowed at both $\frp$ and $\frp^*$, resulting in natural surjections $\cY^-_\infty(\eta^-) \lsurj \frX^-_\infty(\eta^-) \rsurj \cY^-_\infty(\eta^-)^*$. 
    \item Let $\cE^-_\infty(\eta^-)$ denote the Iwasawa module of global units of \S3.3, containing a submodule of elliptic units. 
\end{itemize}

As in \cite[\S3.5]{CWE1}, Kummer theory interprets these objects, but we will be especially interested in a different long exact sequence in Galois  cohomology than the one named \textit{ibid.} Our long exact sequence comes from the short exact Meyer--Vietoris-type sequence of cochains 
\begin{align*}
0 \to R\Gamma_{(p)}(\cO_K[1/Np], -)  \buildrel{(+,-)}\over\lra R\Gamma_{(\frp)}&(\cO_K[1/Np], -) \oplus R\Gamma_{ (\frp^*)}(\cO_K[1/Np], -) \\ 
& \to R\Gamma(\cO_K[1/Np], -) \to 0
\end{align*}
applied to $\widetilde{\Lambda}_\#^-(1)$, where ``$(p)$'' refers to the support condition at both of its divisors, $\frp$ and $\frp^*$. (For definitions of support conditions, see \cite[\S3.4]{CWE1} and references therein.) In the following long exact sequences, to shorten notation, we write ``$H^i_\square$'' in place of $H^i_\square(\cO_K[1/Np], \widetilde{\Lambda}_\#^-(1))$ for the various support conditions $\square$ . 
\begin{prop}[{\cite[Prop.\ 3.5.1, Lem.\ 3.5.4]{CWE1}}]
    \label{prop: kummer}
    Under the assumptions on $\rho$ of \cite[\S1.2.3]{CWE1}, there are naturally isomorphic long exact sequences of $\widetilde{\Lambda}^-$-modules 
    \[
    0 \to H^1 \to H^2_{(p)} \buildrel{(+,-)}\over\lra  H^2_{(\frp)} \oplus H^2_{(\frp^*)} \to H^2 \to 0
    \]
    and
    \[
    0 \to \cE^-_\infty(\eta^-) \to \frX^-_\infty(\eta^-) \buildrel{(+,-)}\over\lra \cY^-_\infty(\eta^-) \oplus \cY^-_\infty(\eta^-)^* \to \cX^-_\infty(\eta^-) \to 0.
    \]
\end{prop}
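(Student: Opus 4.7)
The plan is to deduce the top sequence from the Mayer--Vietoris-type short exact sequence of complexes displayed immediately before the proposition. Applying it with coefficients $\widetilde{\Lambda}_\#^-(1)$ and taking the associated long exact cohomology sequence, one obtains an \emph{a priori} infinite sequence in each degree; the task is then to truncate it to the four-term form displayed. On the left, $H^0_{(\square)}(\cO_K[1/Np], \widetilde{\Lambda}_\#^-(1))$ and $H^1_{(\square)}(\cO_K[1/Np], \widetilde{\Lambda}_\#^-(1))$ vanish for the support conditions $\square = (p), (\frp), (\frp^*)$: this is standard given that $\eta^-$ is residually non-trivial (by the CM-distinguished assumption on $\rho$) and that the relevant support conditions are locally $H^1_f$-type, reducing the vanishings to Nakayama and to the computations already done in \cite[Prop.\ 3.5.1]{CWE1}. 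On the right, the sequence terminates at $H^2$ because the relevant supported $H^3$ classes vanish by a Poitou--Tate/Euler-characteristic argument, as in \textit{ibid}.

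For the bottom sequence, four of the five identifications are already in place in \cite{CWE1}: namely
\[
H^1 \cong \cE^-_\infty(\eta^-), \quad H^2_{(\frp)} \cong \cY^-_\infty(\eta^-), \quad H^2_{(\frp^*)} \cong \cY^-_\infty(\eta^-)^*, \quad H^2 \cong \cX^-_\infty(\eta^-),
\]
via \cite[Prop.\ 3.5.1, Lem.\ 3.5.4]{CWE1}. The only genuinely new identification needed is
\[
H^2_{(p)}(\cO_K[1/Np], \widetilde{\Lambda}_\#^-(1)) \cong \frX^-_\infty(\eta^-).
\]
The plan for this is to run the same Kummer-theoretic argument as in \cite{CWE1}: Poitou--Tate duality realizes $H^2_{(p)}$ as the Pontryagin dual of an Iwasawa module of cocycle classes that are required to be ``locally trivial'' away from the set of primes above $p$ but are allowed to ramify at both $\frp$ and $\frp^*$; unwinding via Kummer theory identifies this Pontryagin dual with the class group $\frX^-_\infty(\eta^-)$. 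The three natural surjections
\[
\cY^-_\infty(\eta^-) \twoheadleftarrow \frX^-_\infty(\eta^-) \twoheadrightarrow \cY^-_\infty(\eta^-)^*
\]
come out of this identification automatically by the functoriality of Poitou--Tate in the support condition, precisely matching the cohomological $(+,-)$ map in the first sequence.

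The final task is to verify that the connecting homomorphism $H^2 \to H^3_{(p)}$ vanishes (truncating the long exact sequence on the right) and that the composed map
\[
H^2_{(\frp)} \oplus H^2_{(\frp^*)} \to H^2
\]
corresponds to the difference map
\[
\cY^-_\infty(\eta^-) \oplus \cY^-_\infty(\eta^-)^* \to \cX^-_\infty(\eta^-)
\]
(i.e.\ the two natural surjections subtracted). The former follows from the Euler-characteristic/Poitou--Tate input mentioned above; the latter is a diagram chase in the Mayer--Vietoris sequence, using that the vertical class-group identifications are natural in the allowed ramification.

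The main obstacle is not really arithmetic but bookkeeping: one must keep track of signs and verify that ``support at $(p)$'' in the cochain formalism of \cite{CWE1} translates, after Poitou--Tate duality and Kummer theory, to ``ramification at both $\frp$ and $\frp^*$'' in the class group formalism, with the correct $(+,-)$ convention. This is a mild extension of the corresponding verification done in \cite[\S3.5]{CWE1} for one prime at a time, and requires no new arithmetic input beyond \textit{ibid}.
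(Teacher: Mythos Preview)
Your proposal is correct and follows essentially the same approach as the paper's proof: take the long exact sequence from the Meyer--Vietoris short exact sequence of cochain complexes, truncate it using the same vanishing results already established in \cite[Prop.\ 3.5.1]{CWE1}, import the four known identifications from \cite{CWE1}, and handle the one new term $H^2_{(p)} \cong \frX^-_\infty(\eta^-)$ by rerunning the argument of \cite[Eq.\ (3.5.2)]{CWE1} with support at $(p)$ in place of $(\frp)$. The paper's proof is more terse but makes exactly these moves; your additional remarks on signs and on naturality of the $(+,-)$ maps are fine elaborations rather than a different route.
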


\begin{rem}
    This exact sequence is related to the study in \cite{BCGKPST2020} of second Chern classes in Iwasawa theory. 
\end{rem}

\begin{proof}
    The exactness is clear from the Meyer--Vietoris construction. The isomorphisms follow from the references above for all terms except $\frX^-_\infty(\eta^-)$; the isomorphism in this last term follows by the same argument as the argument for \cite[Eq (3.5.2)]{CWE1} being an isomorphism, with support at $(p)$ replacing support at $(\frp)$. The terminal zero terms in the long exact sequence vanish; they are exactly the same vanishing terms as in the long exact sequence of \cite[Prop.\ 3.5.1]{CWE1}. 
\end{proof}

\begin{prop}
    \label{prop: conormal}
    The pushout diagram underlying $R^\tord \otimes_{R^\crit} R^\aord \isoto R^\BO$ produces a long sequence of conormal modules relative to these rings' and $\tilde R^\BO$'s 
    surjections to $\widetilde{\Lambda}^- \cong \bT_\Lambda^{\tord,\CM}$,  
    \begin{gather*}
        0 \to \ker\psi_\Lambda^R \to J^\crit/{J^\crit}^2 \buildrel{(+,-)}\over\lra J^\tord/{J^\tord}^2 \oplus J^\aord/{J^\aord}^2 \to J^\BO/{J^\BO}^2 \to 0,
    \end{gather*}
    that is naturally isomorphic to the long exact sequences of Proposition \ref{prop: kummer}.
\end{prop}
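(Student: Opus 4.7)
My plan is to first construct the algebraic sequence from the pushout structure, and then identify each term with the corresponding Galois cohomology group by reading off the conormal modules as tangent spaces of the respective universal deformation problems.

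Step one (algebraic construction). The pushout $R^\tord \otimes_{R^\crit} R^\aord \cong R^\BO$ of Proposition \ref{prop: BO and crit rings}, combined with the common augmentation over $\widetilde\Lambda^-\cong\bT_{\Lambda,\rho}^{\tord,\CM}$ (the CM quotient, using Corollary \ref{cor: SBO is CM}), immediately yields surjections of augmentation ideals $J^\crit\rsurj J^\tord$ and $J^\crit\rsurj J^\aord$, along with the sequence
\[
J^\crit/(J^\crit)^2 \xrightarrow{(+,-)} J^\tord/(J^\tord)^2 \oplus J^\aord/(J^\aord)^2 \to J^\BO/(J^\BO)^2 \to 0,
\]
which is right-exact by the pushout property. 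The leftmost arrow $\ker\psi_\Lambda^R\to J^\crit/(J^\crit)^2$ arises from the inclusion $\ker\psi_\Lambda^R\subset J^\crit$ (valid because each component of $R^\tord\times R^\aord$ augments to $\widetilde\Lambda^-$) composed with the quotient map.

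Step two (Galois-cohomological identification). For each $\star\in\{\tord,\aord,\crit,\BO\}$, I use the $R=\bT$ theorems (Theorems \ref{thm: R=T BO}, \ref{thm: R=T crit}, Proposition \ref{prop: R=T for twist and anti}) to view $R^\star$ as a universal deformation ring, and then read off $J^\star/(J^\star)^2$ as the reduced cotangent space of the functor of deformations from $\widetilde\Lambda^-$ to $\widetilde\Lambda^-[\epsilon]/\epsilon^2$ of the CM representation $\rho_{\widetilde\Lambda^-}\cong\Ind_K^\Q\eta_\Lambda$. Applying Shapiro's lemma (using $\rho\simeq\Ind_K^\Q\eta$) and separating out the trivial $\chi_K$-summand of $\mathrm{Ad}^0\rho$, the $H^1(G_{\Q,S},\mathrm{Ad}^0\rho_{\widetilde\Lambda^-})$-type cohomology with the local condition at $p$ prescribed by $\star$ becomes an $H^1(G_K,\widetilde\Lambda^-_\#\otimes\text{char})$-group with a support condition at $\frp,\frp^*$. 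Poitou--Tate duality then converts the tangent space to an $H^2_\square(\cO_K[1/Np],\widetilde\Lambda_\#^-(1))$ with the appropriate support condition: no support for $\BO$, support at $\frp$ for $\tord$, support at $\frp^*$ for $\aord$ (symmetric to the $\tord$ case), and support at both (i.e.\ at $(p)$) for $\crit$. Under Kummer theory these match, respectively, $\cX_\infty^-(\eta^-)$, $\cY_\infty^-(\eta^-)$, $\cY_\infty^-(\eta^-)^*$, and $\frX_\infty^-(\eta^-)$. The first three identifications are essentially established in \cite{CWE1}; the fourth is the new input.

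Step three (matching with Proposition \ref{prop: kummer} and left-exactness). Once the identifications of Step two are in place and shown to commute with the natural restriction maps, the algebraic sequence of conormal modules gets mapped isomorphically to the right half of the Mayer--Vietoris sequence of Proposition \ref{prop: kummer}. The exactness of that sequence on the left then forces the algebraic sequence to be exact on the left as well, and identifies $\ker\psi_\Lambda^R$ canonically with $H^1(\cO_K[1/Np],\widetilde\Lambda_\#^-(1))\cong\cE_\infty^-(\eta^-)$. In particular this gives \emph{a posteriori} the injectivity of $\ker\psi_\Lambda^R\to J^\crit/(J^\crit)^2$, which is not evident from the algebraic construction alone since square-nilpotence of $\ker\psi_\Lambda^R$ does not by itself guarantee $\ker\psi_\Lambda^R\cap(J^\crit)^2=0$.

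The hard part will be the identification $J^\crit/(J^\crit)^2\cong\frX_\infty^-(\eta^-)$. The twist- and anti-ordinary conditions are classical \emph{representation-level} reducibility conditions on $\rho\vert_{G_p}$ and translate to a sub-or-quotient character being unramified; via Shapiro these produce the ramification allowance at exactly one of $\frp,\frp^*$. The critical condition, in contrast, is a condition on the \emph{pseudorepresentation} (reducibility of $\psi(\rho\vert_{G_p})$), not of the representation itself, as motivated by Example \ref{eg: hsu} and codified in Definition \ref{defn: p-local conditions Lambda}. This added flexibility—that the deformation $\tilde\rho$ need not itself split on $G_p$ even though its trace/determinant do—is precisely what, when analysed via the reducibility locus of a two-dimensional pseudodeformation ring, permits ramification at \emph{both} primes above $p$. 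Making this translation rigorous (via the description of the local critical pseudodeformation ring $R^{\ps,\crit}_p$ and its universal pair of characters, combined with Shapiro) is the key technical step, and completing it delivers the identification with $\frX_\infty^-(\eta^-)$ and hence the naturality of the whole correspondence.
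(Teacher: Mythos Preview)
Your proposal is correct and follows essentially the same route as the paper: construct the conormal sequence from the pushout $R^\tord \otimes_{R^\crit} R^\aord \cong R^\BO$, identify the three rightmost nonzero terms with the corresponding $H^2_\square(\cO_K[1/Np],\widetilde\Lambda_\#^-(1))$ (citing \cite[\S5]{CWE1} for $\star=\tord,\BO$, symmetry for $\star=\aord$, and the analogous argument with support at $(p)$ for $\star=\crit$), check functoriality with respect to the Mayer--Vietoris maps, and then identify $\ker\psi_\Lambda^R$ with $\cE_\infty^-(\eta^-)$ by matching kernels of the $(+,-)$ arrows.

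One minor point of comparison: the paper asserts that the full exactness of the conormal sequence ``follows directly'' from Propositions \ref{prop: BO and crit rings} and \ref{prop: R square nilpotent}, whereas you derive only right-exactness from the pushout and obtain exactness on the left a posteriori from the comparison with the Galois-cohomological Mayer--Vietoris sequence. Your caution about the injectivity of $\ker\psi_\Lambda^R \to J^\crit/(J^\crit)^2$ is reasonable---square-nilpotence of $\ker\psi_\Lambda^R$ alone does not obviously force $\ker\psi_\Lambda^R \cap (J^\crit)^2 = 0$---and the paper's own final step (``identifying the kernels of the arrows labeled with $(+,-)$'') effectively recovers this the same way you do. So the two arguments coincide in substance; you have simply been more explicit about which parts of the exactness are purely algebraic and which rely on the Galois side.
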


We suspect that this sequence is realizable in a long exact sequence of homotopy groups of relative cotangent complexes arising from the intersection of the twist-ordinary and anti-ordinary loci within $\Spec R^\crit$, since $R^\star$ is complete intersection for $\star = \tord,\aord$ under our assumptions. Indeed, $\Tor_1^{R^\crit}(R^\tord,R^\aord) \cong \ker \psi_\Lambda^R$. We just give this down-to-earth argument. 
\begin{proof}
    The existence and exactness of this long exact sequence follows directly from Propositions \ref{prop: BO and crit rings} and \ref{prop: R square nilpotent}. The term-by-term isomorphisms with the exact sequences of Proposition \ref{prop: kummer}, for the final three non-zero terms, follow from the arguments of \cite[\S5]{CWE1}, which culminate in [Thm.\ 5.4.1, \textit{loc.\ cit.}]. There the cases $\star = \tord, \BO$ are dealt with explicitly \textit{ibid}., and then $\star = \aord$ follows by swapping the roles of $\frp$ and $\frp^*$. The isomorphism $J^\crit/{J^\crit}^2 \cong H^2_{(p)}(\cO_K[1/Np], \widetilde{\Lambda}_\#^-(1))$ follow in exactly the same way as the argument for the case for $J^\tord/{J^\tord}^2 \cong H^2_{(\frp)}(-)$ discussed in the proof \textit{ibid.} Finally, the isomorphisms connecting the final three non-zero terms are functorial with respect to the natural arrows in these Meyer--Vietoris type sequences. Then we compatibly produce the isomorphism in the leftmost term of the sequences by naturally identifying the kernels of the arrows labeled with $(+,-)$. 
\end{proof}

\begin{thm}
    \label{thm: elliptic action}
    Under the assumptions on $\rho$ of \cite[\S1.2.3]{CWE1}, there is a faithful degree $-1$ action of $\cE_\infty^-(\eta^-)$ on $H^*(\SBO_\Lambda^\bullet)_\rho$, 
    \[
    \cE_\infty^-(\eta^-) \lrisom \Hom_{\bT[U']_\Lambda}(H^1(\SBO_\Lambda^\bullet)_\rho, H^0(\SBO_\Lambda^\bullet)_\rho) \cong \Hom_{\bT[U']_\Lambda}(\cH^{1,\ord}_{\Lambda,\CM,\rho}, S_{\Lambda,\rho}^{\tord,\CM}). 
    \]
\end{thm}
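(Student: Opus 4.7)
The plan is to assemble the desired isomorphism by chaining together four results already established in the paper: the degree-shifting action of $\ker\psi_\Lambda$ on $H^*(\SBO_\Lambda^\bullet)$ from Theorem \ref{thm: flat derived action}, the critical $R=\bT$ theorem (Theorem \ref{thm: R=T crit}) which identifies $\ker\psi_\Lambda$ with $\ker\psi_\Lambda^R$ after localization at $\rho$, the conormal module computation in Proposition \ref{prop: conormal} which identifies $\ker\psi_\Lambda^R$ with $\cE_\infty^-(\eta^-)$ in the CM case, and Corollary \ref{cor: SBO is CM} which rewrites the bi-ordinary cohomology in weight $1$ CM terms.

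First, I would localize Theorem \ref{thm: flat derived action} at the (CM) residual eigensystem $\rho$, obtaining a $\bT[U']_\Lambda$-equivariant isomorphism
\[
\delta_\rho : (\ker\psi_\Lambda)_\rho \isoto \Hom_{\bT[U']_\Lambda}\bigl(H^1(\SBO_\Lambda^\bullet)_\rho, H^0(\SBO_\Lambda^\bullet)_\rho\bigr),
\]
noting that localization is exact and compatible with the $a_1$-realization of $\delta$ described in the proof there. Next, I would apply the critical $R=\bT$ isomorphism $R^\crit \isoto (\bT_\Lambda^\crit)_\rho$ of Theorem \ref{thm: R=T crit}, together with Proposition \ref{prop: R=T for twist and anti}, to identify $(\ker\psi_\Lambda)_\rho$ with $\ker\psi_\Lambda^R$ as $\bT[U']_\Lambda$-modules. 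The assumptions of Proposition \ref{prop: GV} hold because of the Taylor--Wiles and $p$-distinguished hypotheses on $\rho$ (which, by assumption, is CM induced from $\eta$), so we are in the setting where the previous results apply cleanly.

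The heart of the argument is then Proposition \ref{prop: conormal}, which under the CM hypothesis on $\rho$ identifies the leftmost term $\ker\psi_\Lambda^R$ in the conormal long exact sequence with $\cE_\infty^-(\eta^-)$, via the isomorphism to the Meyer--Vietoris sequence of Proposition \ref{prop: kummer}. Concatenating these identifications produces the claimed isomorphism
\[
\cE_\infty^-(\eta^-) \isoto \Hom_{\bT[U']_\Lambda}\bigl(H^1(\SBO_\Lambda^\bullet)_\rho, H^0(\SBO_\Lambda^\bullet)_\rho\bigr).
\]
For the final displayed isomorphism of $\Hom$-modules, one applies Corollary \ref{cor: SBO is CM}: its part (1) provides a surjection $H^1(\SBO_\Lambda^\bullet)_\rho \rsurj \cH^{1,\ord}_{\Lambda,\CM,\rho}$ with $\Lambda$-torsion kernel, while part (2) identifies $H^0(\SBO_\Lambda^\bullet)_\rho$ with $S^{\tord,\CM}_{\Lambda,\rho}$. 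Because $S^{\tord,\CM}_{\Lambda,\rho}$ is $\Lambda$-flat, any $\bT[U']_\Lambda$-homomorphism from $H^1(\SBO_\Lambda^\bullet)_\rho$ to it must annihilate the $\Lambda$-torsion submodule, and hence factors uniquely through $\cH^{1,\ord}_{\Lambda,\CM,\rho}$; this yields the second isomorphism.

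The main obstacle will be verifying that the chain of identifications is truly $\bT[U']_\Lambda$-equivariant and produces a \emph{faithful} action (rather than merely an injection), which is ensured by the fact that each of the intermediate isomorphisms is already known to be an isomorphism of $\bT[U']_\Lambda$-modules and that $\delta$ in Theorem \ref{thm: flat derived action} is itself an isomorphism onto the Hom-module. A subsidiary point requiring care is that the identification $\ker\psi_\Lambda^R \cong \cE_\infty^-(\eta^-)$ in Proposition \ref{prop: conormal} is only established as a module over $\widetilde\Lambda^- \cong \bT_{\Lambda,\rho}^{\tord,\CM}$; one must check, by tracing through the proof of Proposition \ref{prop: conormal} and the Galois-theoretic description of $\ker\psi_\Lambda^R$ in the proof of Theorem \ref{thm: R=T crit}, that the full $\bT[U']_\Lambda$-action factors through this quotient and is compatible with the Kummer-theoretic identification. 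This compatibility is essentially already encoded in \cite{CWE1}, §5, and should require only bookkeeping rather than new ideas.
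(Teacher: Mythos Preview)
Your proposal is correct and follows essentially the same approach as the paper's proof: localize the degree-shifting action of Theorem \ref{thm: flat derived action} at $\rho$, identify $(\ker\psi_\Lambda)_\rho \cong \ker\psi_\Lambda^R \cong \cE_\infty^-(\eta^-)$ via Theorem \ref{thm: R=T crit} and Proposition \ref{prop: conormal}, and then use Corollary \ref{cor: SBO is CM} together with $\Lambda$-flatness of the target to obtain the second isomorphism by factoring through the maximal $\Lambda$-torsion-free quotient. The paper's proof is simply a terser version of what you wrote.
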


\begin{proof}
    Combine Proposition \ref{prop: conormal} with the localization at $\rho$ of the action of Theorem \ref{thm: flat derived action}. This action factors through $H^1(\SBO_\Lambda^\bullet)_\rho \rsurj \cH^{1,\ord}_{\Lambda,\CM,\rho}$ because its target is $\Lambda$-flat and, by Corollary \ref{cor: SBO is CM}, $\cH^{1,\ord}_{\Lambda,\CM,\rho}$ is the maximal $\Lambda$-torsion-free quotient. 
\end{proof}

\subsection{Weight 1}

We now specialize to weight 1 and aim to realize the action of a Stark unit group. Let $f$ be a normalized weight 1 classical $T_p$-ordinary CM cuspidal eigenform of level $\Gamma_1(N)$ with coefficients in a number field $L$ such that its associated Artin representation $\rho_f : G_\Q \to \GL_2(\cO_L)$ is definable over $\cO_L$. Let $H$ denote the number field cut out by the trace-zero adjoint representation $\Ad^0 \rho_f$.  As explained in \cite[\S1.2]{DHRV2022}, Dirichlet's unit theorem gives that $U_f := (\cO_H^\times \otimes (\Ad^0 \rho_f)^\vee)^{G_\Q}$, called the \emph{Stark unit group}, has $\cO_L$-rank $1$. 

We let $E$ be our fixed $p$-adic completion of $L$ and let $f_\beta$ be a $U_p$-ordinary stabilization (which is also $U_p$-critical). We think of $f_\beta \in M_{1,\cO_E}^{\dagger,\tord}$ as a $\bT[U']$-eigenform. To reduce the complexity of the proof, and because we are mainly interested in a proof of principle, we assume that the class number $h_K$ of $K$ is not divisible by $p$. This assumption makes the weight map for the CM Hecke algebra $\Lambda \to \bT_{\Lambda,\rho}^{\tord,\CM} \cong \widetilde{\Lambda}^-$ an isomorphism after replacing $\Lambda$ with $\Lambda_W = \Lambda \otimes_{\Z_p} W$ for $W = W(\F)$ \cite[Lem.\ 4.1.4]{CWE1}. Also, we can take $\cO_E = W$,  $f_\beta$ becomes $\phi_1 : \Lambda_W \to W$, and $\eta^-$ is defined over $W$. 
\begin{cor}
    \label{cor: Stark unit action}
    Assume that the residual Hecke eigensystem $\rho$ of $f$ satisfies the assumptions of \cite[\S1.2.3]{CWE1} and that $p \nmid h_K$. If $\cX_\infty^-(\eta^-) = 0$, then the Stark unit group admits a faithful action 
    \[
    U_f \rinj \Hom_{\bT[U']}(\cH^{1,\ord}_{\Lambda,\CM, \rho} \otimes_{\bT_\Lambda^{\tord,\CM},\phi_{f_\beta}} \cO_E, S_{\Lambda,\rho}^{\tord,\CM} \otimes_{\bT_\Lambda^{\tord,\CM},\phi_{f_\beta}} \cO_E)
    \]
    that is an isomorphism on $U_f \otimes_{\cO_L} \cO_E$. 
\end{cor}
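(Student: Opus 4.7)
The plan is to specialize the $\Lambda$-adic action of Theorem~\ref{thm: elliptic action} at the weight one CM eigenform $f_\beta$ along the character $\phi_{f_\beta} : \bT_\Lambda^{\tord,\CM} \cong \widetilde{\Lambda}^- \to \cO_E$, and then match the weight one specialization of the Iwasawa module of anti-cyclotomic global units with the Stark unit group $U_f$. Under the hypothesis $p \nmid h_K$ the weight map $\Lambda_W \to \bT_{\Lambda,\rho}^{\tord,\CM} \cong \widetilde{\Lambda}^-$ is an isomorphism, so ``specialize along $\phi_{f_\beta}$'' is the same as specializing along $\phi_1 : \Lambda_W \to W$, which simplifies the bookkeeping.

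First I would analyze the right-hand side of Theorem~\ref{thm: elliptic action}. Corollary~\ref{cor: SBO is CM} and Corollary~\ref{cor: tord serre duality} together imply that $\cH^{1,\ord}_{\Lambda,\CM,\rho}$ and $S_{\Lambda,\rho}^{\tord,\CM}$ are each isomorphic as $\bT_\Lambda^{\tord,\CM}$-modules to the dualizing module of $\bT_\Lambda^{\tord,\CM}$; under the current hypotheses this algebra is regular of dimension one and both modules are free of rank one. Consequently base change along $\phi_{f_\beta}$ commutes with the internal $\Hom$, giving a canonical $\cO_E$-linear isomorphism
\[
\Hom_{\bT[U']_\Lambda}(\cH^{1,\ord}_{\Lambda,\CM,\rho}, S_{\Lambda,\rho}^{\tord,\CM}) \otimes_{\bT_\Lambda^{\tord,\CM},\phi_{f_\beta}} \cO_E \isoto \Hom_{\bT[U']}(\cH^{1,\ord}_{\Lambda,\CM, \rho} \otimes_{\phi_{f_\beta}} \cO_E,\, S_{\Lambda,\rho}^{\tord,\CM} \otimes_{\phi_{f_\beta}} \cO_E).
\]

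Next I would specialize the left-hand side, $\cE_\infty^-(\eta^-) \otimes_{\widetilde{\Lambda}^-,\phi_{f_\beta}} \cO_E$. The vanishing assumption $\cX_\infty^-(\eta^-) = 0$ collapses the four-term exact sequence of Proposition~\ref{prop: kummer} to a short exact sequence
\[
0 \to \cE_\infty^-(\eta^-) \to \frX_\infty^-(\eta^-) \to \cY_\infty^-(\eta^-) \oplus \cY_\infty^-(\eta^-)^* \to 0,
\]
which in particular forces $\cE_\infty^-(\eta^-)$ to have projective dimension at most one over $\widetilde{\Lambda}^-$, ensuring clean behavior under $\phi_{f_\beta}$-specialization (no $\Tor_1$ obstruction). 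The specialization at the trivial cyclotomic character is then computable by a standard descent argument: it is the kernel of the analogous unit-to-Selmer-type sequence for $\eta^-$ viewed as a character of $G_K$, which by Kummer theory and Shapiro's lemma for $\Ind_K^\Q \eta^-$ recovers precisely the $\eta^-$-isotypic piece of global units in the field cut out by $\Ad^0 \rho_f$. Using the decomposition $\Ad^0 \rho_f \cong \chi_{K/\Q} \oplus \Ind_K^\Q \eta^-$ and the $T_p$-ordinarity (which rules out the $\chi_{K/\Q}$-contribution via local signs), this piece is canonically $U_f \otimes_{\cO_L} \cO_E$ as an $\cO_E$-module. Combining with Step one yields the desired $\cO_E$-linear isomorphism $U_f \otimes_{\cO_L} \cO_E \isoto \Hom(\dotsb)$, and the injection $U_f \rinj \Hom(\dotsb)$ follows since $U_f$ is $\cO_L$-flat.

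The main obstacle, I expect, is not the first step but the second: the rigorous identification of the $\phi_1$-specialization of $\cE_\infty^-(\eta^-)$ with the Stark unit group $U_f$. The Iwasawa module $\cE_\infty^-(\eta^-)$ is defined as an inverse limit of $p$-completed units up the anti-cyclotomic tower, and extracting the weight one fiber from this system (not merely as a $\widetilde{\Lambda}^-$-module, but compatibly with its Galois-theoretic origin) requires controlling both the $\Tor_1^{\widetilde{\Lambda}^-}(\frX_\infty^-(\eta^-), \cO_E)$ term that arises from the displayed short exact sequence and the finite-level descent from units in the tower to units in the field fixed by $\Ad^0 \rho_f$. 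The hypothesis $\cX_\infty^-(\eta^-) = 0$ is precisely what guarantees that this $\Tor_1$ vanishes (since it maps into $(\cY \oplus \cY^*) \otimes \cO_E$ with cokernel controlled by $\cX_\infty^-$), but verifying that the resulting $\cO_E$-module is canonically the Stark unit tensored with $\cO_E$ — as opposed to some twisted or $p$-completed variant — is the substantive arithmetic input and where most of the work will concentrate.
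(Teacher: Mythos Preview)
Your two-step plan matches the paper's: specialize the $\Lambda$-adic action of Theorem~\ref{thm: elliptic action} along $\phi_{f_\beta}$, then identify the weight-one fiber of $\cE_\infty^-(\eta^-)$ with $U_f \otimes_{\cO_L} \cO_E$. Step one is essentially identical to the paper's argument, including the use of freeness (from Gorensteinness/regularity of $\bT_{\Lambda,\rho}^{\tord,\CM}$ under $p \nmid h_K$) to commute $\Hom$ with base change.

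Where you diverge is in Step two. You propose specializing the four-term Mayer--Vietoris sequence of Proposition~\ref{prop: kummer} after collapsing it via $\cX_\infty^-(\eta^-)=0$, and you try to deduce that $\cE_\infty^-(\eta^-)$ has projective dimension $\le 1$ from the resulting short exact sequence. That deduction is not justified: you would need $\frX_\infty^-(\eta^-)$ to be projective, which is not established. Your parenthetical about which $\Tor_1$ is controlled by $\cX_\infty^-$ is also tangled --- you seem to be tracking $\Tor_1$ terms arising from $\frX$ and $\cY\oplus\cY^*$ rather than the relevant obstruction. The paper bypasses all of this: it applies the change-of-rings spectral sequence (Weibel, Thm.~5.6.4) \emph{directly} to the no-support-condition Iwasawa cohomology $H^i = H^i(\cO_K[1/Np], \widetilde{\Lambda}_\#^-(1))$, obtaining the three-term sequence
\[
\cE_\infty^-(\eta^-) \otimes_{\Lambda_W,\phi_1} W \to H^1(\cO_K[1/Np], (\eta^-)^{-1}(1)) \to \Tor_1^{\Lambda_W}(\cX_\infty^-(\eta^-), \Lambda_W/\ker\phi_1),
\]
whose rightmost term vanishes exactly when $\cX_\infty^-(\eta^-) = 0$. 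Then $U_f \otimes_{\cO_L} \cO_E$ is placed inside the middle $H^1$ by Kummer theory, and the argument concludes. This is shorter and avoids any claim about the projective dimension of $\cE_\infty^-$ or the structure of $\frX_\infty^-$. A small side point: your reason for discarding the $\chi_{K/\Q}$-piece (``$T_p$-ordinarity via local signs'') is off --- that component of the units has rank zero simply because $K$ is imaginary quadratic.
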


\begin{rem}
    \label{rem: X-infty}
    The main theorem of \cite{CWE1} is that $\cX_\infty^-(\eta^-) = 0$ if and only if the surjection $\bT_{\Lambda,\rho}^\BO \rsurj \bT_{\Lambda,\rho}^{\tord,\CM}$ is an isomorphism. However, it is possible to have both $\cX_\infty^-(\eta^-) = 0$ and also proper surjections $\bT_{\Lambda,\rho}^\tord\rsurj \bT_{\Lambda,\rho}^{\tord,\CM}$ and $\bT_{\Lambda,\rho}^\aord\rsurj \bT_{\Lambda,\rho}^{\aord,\CM}$. While there are examples of one of these surjections being proper (some are listed in \cite[\S1.8]{CWE1}), we are not aware of a case where both surjections are proper, nor the even more narrow case where $\cX_\infty^-(\eta^-) \neq 0$. 
\end{rem}

\begin{proof}
    First we point out a few useful facts. As per Remark \ref{rem: X-infty}, our assumptions imply that the surjection $\bT_{\Lambda, \rho}^\SBO \rsurj \bT_\Lambda^{\tord,\CM}$ is an isomorphism, which by Corollary \ref{cor: SBO is CM} and Proposition \ref{prop: BO T to forms duality} implies that $T_1(H^1(\SBO_\Lambda^\bullet)_\rho) = 0$. Next we note that  $H^0(\SBO_\Lambda^\bullet)_\rho \cong S_{\Lambda,\rho}^{\tord,\CM}$ and $H^1(\SBO_\Lambda^\bullet)_\rho \cong \cH^{1,\ord}_{\Lambda,\CM,\rho}$ are $\bT_{\Lambda,\rho}^{\tord,\CM}$-free of rank $1$. This follows from the perfect dualities of Proposition \ref{prop: BO T to forms duality} and Corollary \ref{cor: SD on HSBO}, the fact that $\bT[U'](H^0(\SBO_\Lambda^\bullet)_\rho) \cong \bT_{\Lambda,\rho}^{\tord,\CM}$ (from Corollary \ref{cor: SBO is CM}), and the fact that the Iwasawa algebra $\bT_\Lambda^{\tord,\CM} \cong \widetilde{\Lambda}^-$ is complete intersection (and also regular because $p \nmid h_K$), and therefore Gorenstein. 

    Now that we know this freeness, we can specialize the action of Theorem \ref{thm: elliptic action} along the map $\phi_{f_\beta} : \bT_\Lambda^{\tord,\CM} \to \cO_E$ corresponding to $f_\beta$ and obtain an action map 
    \[
    \cE_\infty^-(\eta^-) \otimes_{\widetilde{\Lambda}^-, \phi} \cO_E \isoto 
    \Hom_{\bT[U']}(\cH^{1,\ord}_{\Lambda,\CM,\rho} \otimes_{\phi_{f_\beta}} \cO_E, S_{\Lambda,\rho}^{\tord,\CM} \otimes_{\phi_{f_\beta}} \cO_E).
    \]

    It remains to show that $U_f \otimes_{\cO_L} \cO_E$ admits an injection to $\cE_\infty^-(\eta^-)$ that is an isomorphism if $\cX_\infty^-(\eta^-) = 0$. Using the abbreviations of Proposition \ref{prop: kummer}, we can compute $H^*(\cO_K[1/Np], (\eta^-)^{-1}(1))$ from the $H^i$ of Proposition \ref{prop: kummer} under a short exact sequence coming from the spectral sequence \cite[Thm.\ 5.6.4]{weibel1994}
    \[
    \cE_\infty^-(\eta^-) \otimes_{\Lambda_W, \phi_1} W \to H^1(\cO_K[1/Np], (\eta^-)^{-1}(1)) \to \Tor_1^{\Lambda_W}(\cX_\infty^-(\eta^-), \Lambda_W/\ker \phi_1). 
    \]
    Kummer theory realizes $U_f \otimes_{\cO_L} \cO_E$ as a subgroup of $H^1(\cO_K[1/Np], (\eta^-)^{-1}(1))$, so we have the desired result provided that $\cX_\infty^-(\eta^-) = 0$. 
\end{proof}

It is possible to apply the stabilization map of Boxer--Pilloni \cite[Thm.\ 4.18(3)]{BP2022} in weight 1 to realize this action at level $\Gamma_1(N)$. 

\begin{prop}[{Weight 1 case of \cite[Thm.\ 4.18]{BP2022}}]
    \label{prop: weight 1 stabilize}
    Stabilization maps produce a commutative diagram of Serre duality pairings $\lr{}_\mathrm{SD}$
    \[
    \xymatrix@C=0em{
    e(U_p)H^0(X^\ord, \omega(-C)) & \times & e(F)H^1_c(X^\ord, \omega) \ar[rr] \ar[d]^j & \ &  \Z_p \\
    e(T_p)H^0(X, \omega(-C)) \ar[u]^i & \times & e(T_p)H^1(X, \omega) \ar[rru] & \ & 
    }
    \]
    where the top pairing is the specialization along $\phi_1 : \Lambda \to \Z_p$ of the $\Lambda$-perfect Serre duality pairing of Theorem \ref{thm: BP}. 
\end{prop}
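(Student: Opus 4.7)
My plan is to deduce this as a weight $k=1$ specialization of the $\Lambda$-adic structures already in place, rather than as a direct coherent-cohomology calculation. The top pairing already has a definition as the specialization along $\phi_1$ of the $\Lambda$-perfect Serre duality pairing $\lr{}_{\mathrm{SD},\Lambda}:S_\Lambda^\ord\times \cH^{1,\ord}_\Lambda\to\Lambda$ of Theorem~\ref{thm: BP}, applied using the specialization isomorphisms $S_\Lambda^\ord\otimes_{\Lambda,\phi_1}\Z_p\cong e(U_p)H^0(X^\ord,\omega(-C))$ and $\cH^{1,\ord}_\Lambda\otimes_{\Lambda,\phi_1}\Z_p\cong e(F)H^1_c(X^\ord,\omega)$ from Theorem~\ref{thm: BP}. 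So the substantive content is to produce the maps $i$ and $j$ and to verify commutativity of the trapezoid.

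For the definitions of the stabilization maps, I would take the most natural constructions that make sense in any weight: let $i$ be the composition of restriction $H^0(X,\omega(-C))\to H^0(X^\ord,\omega(-C))$ with the ordinary projector $e(U_p)$, restricted to $e(T_p)H^0(X,\omega(-C))$; and dually let $j$ be the composition of the natural map $H^1_c(X^\ord,\omega)\to H^1(X,\omega)$ with $e(T_p)$, restricted to $e(F)H^1_c(X^\ord,\omega)$. These are compatible with the stabilization maps of Proposition~\ref{prop: duality wt k} in weight $k\geq 3$ by construction, and they are the natural degenerations of those maps. The commutativity $\lr{i(f),\eta}_\mathrm{SD}=\lr{f,j(\eta)}_\mathrm{SD}$ for $f\in e(T_p)H^0(X,\omega(-C))$ and $\eta\in e(F)H^1_c(X^\ord,\omega)$ then reduces, after unwinding the definitions and using the naturality of Serre duality for the open inclusion $X^\ord\subset X$, to the adjointness formula $\lr{\lr{p}_N^{-1}U_p\cdot f,\eta}_{\mathrm{SD},\Lambda}=\lr{f,F\cdot\eta}_{\mathrm{SD},\Lambda}$ of Proposition~\ref{prop: interpolated Serre duality}, specialized along $\phi_1$. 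The key simplification in weight $k=1$ is that $\phi_1([p])=p^0=1$, so $\lr{p}_N^{-1}U_p$ acts in weight $1$ simply as $U_p$ after dividing out the unit $\lr{p}_N$, which matches the operator $T_p-F$ under the decomposition inherent in the stabilization maps.

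For the identification of the top pairing with the specialization of $\lr{}_{\mathrm{SD},\Lambda}$, I would rely on the fact that Boxer--Pilloni construct $\lr{}_{\mathrm{SD},\Lambda}$ directly from Serre duality on the Igusa tower (with values in the Igusa sheaf $\omega^{\kappa^\mathrm{un}}$) and that its specialization along every $\phi_k$ ($k\in\Z$, including $k=1$) yields the classical Serre duality pairing on the specialized coherent cohomology groups. The weight-$k\geq 3$ cases in Proposition~\ref{prop: duality wt k} are already proved this way in~\cite[Thm.~4.18]{BP2022}, and the same construction applies verbatim in weight $k=1$ because the construction of $\lr{}_{\mathrm{SD},\Lambda}$ and of the specialization maps in Theorem~\ref{thm: BP} is uniform in the weight.

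The main obstacle I foresee is bookkeeping rather than substance: weight $1$ is singular in the sense that Hida's classicality theorem fails and $i$ need not be an isomorphism, so one must be careful not to implicitly invoke classicality. But the statement of the proposition only requires commutativity of the trapezoid, not that $i$ or $j$ is an isomorphism, so the argument should go through by taking the definitions of $i$ and $j$ at face value and tracking the specialization of the adjunction formula in Proposition~\ref{prop: interpolated Serre duality} carefully.
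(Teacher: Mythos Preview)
The paper does not prove this proposition at all: it is stated as a direct citation of \cite[Thm.~4.18]{BP2022} (the weight~$1$ case), with no accompanying argument. The paragraph following the statement is not a proof but a list of consequences (perfectness of the top pairing, injectivity of $i$, surjectivity of $j$ onto the torsion-free quotient). So there is no ``paper's own proof'' to compare against.

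Your proposal goes further than the paper by actually sketching why the diagram commutes. The strategy is sound and is essentially what Boxer--Pilloni do: the $\Lambda$-adic pairing $\lr{}_{\mathrm{SD},\Lambda}$ is constructed uniformly over weight space, the specialization isomorphisms of Theorem~\ref{thm: BP} hold for all $k\in\Z$, and the stabilization maps and their adjointness with respect to Serre duality are established in \cite[Thm.~4.18]{BP2022} without any restriction to $k\geq 3$ on the level of the maps themselves (only the \emph{isomorphy} of $i,j$ requires $k\geq 3$). Your observation that the proposition only asserts commutativity, not that $i$ or $j$ is an isomorphism, is exactly the point. One small quibble: your remark about $\phi_1([p])=1$ and the ``key simplification'' is not really needed---the adjunction $\lr{\lr{p}_N^{-1}U\cdot f,g}=\lr{f,F\cdot g}$ of Proposition~\ref{prop: interpolated Serre duality} holds at every weight and does not simplify specially at $k=1$; the commutativity follows from it uniformly.
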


We know that the top pairing is $\Z_p$-perfect. Because $e(T_p)H^0(X,\omega(-C))$ is $\Z_p$-torsion-free and $i$ is injective, it follows that $j$ is surjective at least after projection to the $\Z_p$-torsion-free quotient, the left kernel of the lower pairing is trivial, and the right kernel consists of $e(T_p)H^1(X,\omega)[p^\infty]$.  

Using $R=\bT$ theorems, we will be able to factor (in the proof of Theorem \ref{thm: classical Stark unit action}) the localization of $i$ and $j$ at $f$ through bi-ordinary cohomology via maps $\lambda$ and $\sigma$, respectively. We will also need the following maps. Thinking of $\SBO_1^\bullet$ in its quasi-isomorphic form 
\[
\SBO_1^\bullet \cong [M_1^{\dagger,\tord} \to e(F)H^1_c(X^\ord,\omega)], 
\]
as in \eqref{eq: SBO replace}, we have a natural injection $\lambda : H^0(\SBO_1^\bullet) \rinj M_1^{\dagger,\tord}$ and a natural surjection $\sigma : e(F)H^1_c(X^\ord,\omega) \rsurj H^1(\SBO_1^\bullet)$.  

Let $\bar\upsilon_f \in e(T_p)H^1(X_{\cO_E},\omega)_f$ denote a choice of $\bT[U']$-eigenclass that pairs to $1$ with $f$ under $\lr{}_\mathrm{SD}$. This $\bar\upsilon_f$ is well defined up to $e(T_p)H^1(X,\omega)_f[p^\infty]$, but this torsion vanishes due to the isomorphism $H^1(\SBO_\Lambda^\bullet)_\rho \isoto \cH^{1,\ord}_{\Lambda,\CM, \rho}$ appearing in the proof of Corollary \ref{cor: Stark unit action}. 

\begin{thm}
    \label{thm: classical Stark unit action}
    Under the assumptions of Corollary \ref{cor: Stark unit action} including $\cX_\infty^-(\eta^-) = 0$ and that $f$ is CM, there exists an faithful action of $U_f$ via 
    \[
    U_f \rinj \Hom_{\cO_E}(e(T_p)H^1(X_{\cO_E},\omega)_f, e(T_p)H^0(X_{\cO_E},\omega(-C))_f).
    \]
    It is determined by sending the $\cO_E$-generator $\bar\upsilon_f$ of $e(T_p)H^1(X_{\cO_E},\omega)_f/[p^\infty]$ to 
    \[
    (i^{-1} \circ \lambda \circ \mathrel{\mathop{\mathrm{act}}^\text{\ref{cor: Stark unit action}}} \circ\, \sigma \circ j^{-1})(\bar\upsilon_f). 
    \]
\end{thm}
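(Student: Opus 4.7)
The plan is to transport the action produced by Corollary \ref{cor: Stark unit action} into classical weight~1 coherent cohomology by means of the weight~1 stabilization maps of Proposition \ref{prop: weight 1 stabilize} together with the comparison maps $\sigma$, $\lambda$, and $\iota_1$. Concretely, I will build the composition
\[
e(T_p)H^1(X_{\cO_E},\omega)_f \xrightarrow{j^{-1}} e(F)H^1_c(X^\ord,\omega)_{\rho,f_\beta} \xrightarrow{\sigma} H^1(\SBO_1^\bullet)_{\rho,f_\beta} \xrightarrow{u} H^0(\SBO_1^\bullet)_{\rho,f_\beta} \xrightarrow{\lambda} M_{1,\cO_E}^{\dagger,\tord}\!\!\!_{,f_\beta} \xrightarrow{(\iota_1\circ i)^{-1}} e(T_p)H^0(X_{\cO_E},\omega(-C))_f,
\]
where the middle map $u$ is the image of a Stark unit under Corollary \ref{cor: Stark unit action}, specialized along $\phi_1$ and localized at the $f_\beta$-eigensystem of $\bT_\Lambda^{\tord,\CM}$. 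The convention in the statement is to absorb the identification $\iota_1$ into the symbol ``$i^{-1}$''.

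The work reduces to verifying that, after localizing at the residual Hecke eigensystem $\rho$, specializing along $\phi_1$, and restricting to the $f_\beta$-isotypic component (equivalently, the $f$-isotypic on the classical side), each of $i$, $j$, $\sigma$, and $\lambda$ is an isomorphism of free rank-$1$ $\cO_E$-modules. Freeness of rank $1$ at $H^i(\SBO_1^\bullet)_{\rho,f_\beta}$ is already established inside the proof of Corollary \ref{cor: Stark unit action}, using that $\bT_\Lambda^{\tord,\CM}\cong\widetilde\Lambda^-$ is regular (here $p\nmid h_K$) and Gorenstein together with the self-duality of $\SBO_\Lambda^\bullet$; the same argument shows that $\sigma$ and $\lambda$ induce isomorphisms onto these rank-$1$ components after restriction to the $f_\beta$-part, because $H^1(\SBO_\Lambda^\bullet)_\rho\isoto\cH^{1,\ord}_{\Lambda,\CM,\rho}$ and $H^0(\SBO_\Lambda^\bullet)_\rho\isoto S_{\Lambda,\rho}^{\tord,\CM}$ under the canonical maps. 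The stabilization isomorphism on the $f$-isotypic part of coherent cohomology -- the generalization of Proposition \ref{prop: duality wt k} from $k\geq 3$ to weight $1$ -- is the main issue.

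The main obstacle is therefore controlling the weight-$1$ pathology: Proposition \ref{prop: weight 1 stabilize} asserts only that $j$ is surjective modulo the $p^\infty$-torsion of $e(T_p)H^1(X,\omega)$ and that $i$ is injective, while in weight $1$ these maps need not be isomorphisms globally. I will argue that, after localization at $\rho$, the obstruction vanishes at $f$. Under the assumption $\cX_\infty^-(\eta^-)=0$, the proof of Corollary \ref{cor: Stark unit action} already shows that $T_1(H^1(\SBO_\Lambda^\bullet)_\rho)=0$ and that $H^1(\SBO_\Lambda^\bullet)_\rho$ is $\Lambda$-free of rank one over $\widetilde\Lambda^-$. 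Combined with the vanishing of $\Tor_1^\Lambda$ and the perfectness of the $\Lambda$-adic Serre pairing of Theorem \ref{thm: BP}, this implies that after $\otimes_{\Lambda,\phi_1}\cO_E$ and projection to the $f_\beta$-eigensystem, both $i$ and $j$ induce isomorphisms onto rank-$1$ free $\cO_E$-modules; in particular $\bar\upsilon_f$ admits a unique preimage under $j$ in the $f$-isotypic component (which is the reason the element $\bar\upsilon_f$ in the preamble is well-defined in the first place).

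Once the four isomorphisms are in place, the composite above is an $\cO_E$-linear map whose assembly inherits $\bT[T_p]$-equivariance from the intermediate maps. The faithfulness of $U_f\rinj\cdots$ follows immediately from the faithfulness of Corollary \ref{cor: Stark unit action} together with the fact that the flanking isomorphisms $i$ and $j\circ\sigma$ are $\cO_E$-linear isomorphisms on the relevant rank-one pieces, so composition with them cannot annihilate a non-zero Stark unit.
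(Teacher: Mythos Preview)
Your overall strategy matches the paper's---transport the action of Corollary \ref{cor: Stark unit action} through the weight-1 stabilization diagram of Proposition \ref{prop: weight 1 stabilize}---and your identification of the weight-1 stabilization maps $i,j$ as the crux is correct. However, your resolution of this obstacle has a genuine gap.

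You argue that ``the vanishing of $\Tor_1^\Lambda$ and the perfectness of the $\Lambda$-adic Serre pairing of Theorem \ref{thm: BP}'' imply that $i$ and $j$ become isomorphisms on $f$-isotypic parts after $\otimes_{\Lambda,\phi_1}\cO_E$. But the $\Lambda$-adic objects $S_\Lambda^\ord$, $\cH_\Lambda^{1,\ord}$, $\SBO_\Lambda^\bullet$ control only the \emph{stabilized} cohomology $e(U_p)H^0(X^\ord,\omega(-C))$ and $e(F)H^1_c(X^\ord,\omega)$---that is, the top row of Proposition \ref{prop: weight 1 stabilize}. The maps $i,j$ connect the classical bottom row to this top row, and in weight~1 they are \emph{not} specializations of $\Lambda$-adic maps; nothing in Hida theory or the bi-ordinary complex alone forces $e(T_p)H^1(X,\omega)_f$ to be torsion-free of rank one, nor forces the image of your composite to land inside $\mathrm{im}(i)$. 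The regularity of $\widetilde\Lambda^-$ only tells you the $f_\beta$-pieces on the \emph{stabilized} side are rank-one free.

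The paper closes this gap with an ingredient you omit: it invokes the result (\cite[Thm.\ 1.1]{CS2019}) that the $G_\Q$-pseudorepresentations attached to classical weight-1 coherent cohomology---including the $p$-power torsion classes---are unramified at $p$. Under the $p$-distinguished hypothesis this forces bi-ordinarity, so the Hecke action on $e(T_p)H^1(X_{\cO_E},\omega)_\rho$ factors through $R^\BO \cong (\bT_\Lambda^\BO)_\rho$. Combined with $\cX_\infty^-(\eta^-)=0$ and $p\nmid h_K$, this places the classical weight-1 Hecke action under the control of the regular ring $\widetilde\Lambda^-$, which is what ultimately lets one show that $\sigma(j^{-1}(\bar\upsilon_f))$ generates $H^1(\SBO_1^\bullet)_f$ and that the resulting element of $H^0(\SBO_1^\bullet)_f$ lies in the image of $i$ (the latter via the compatibility of Serre dualities in Proposition \ref{prop: weight 1 stabilize} with the bi-ordinary Serre self-duality). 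You should add this external input; without it, the passage from the stabilized to the classical side remains unjustified.
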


\begin{proof}
    We apply the fact that the Galois representations in the congruence class of $\rho$ associated to weight 1 coherent cohomology, even $\Z_p$-torsion classes, are bi-ordinary. See e.g.\ \cite[Thm.\ 1.1]{CS2019}, which states that the $G_\Q$-pseudorepresentations arising from weight 1 coherent cohomology are unramified at $p$; because pseudodeformations of $\rho$ are equivalent to deformations of $\rho$, and because any $p$-local extension class under our residually $p$-distinguished hypothesis (Definition \ref{defn: rho ord assumptions}(4)) is ramified, this implies the bi-ordinary condition. 

    Therefore our choice $\beta$ of $U_p$-eigenvalue of the stabilization $f_\beta$ of $f$ determines a map $R^\BO \to \cO_E$, and the isomorphism $R^\BO \cong \bT_{\Lambda,\rho}^\BO$ of Theorem \ref{thm: R=T BO}, along with the characterizations of $H^1(\SBO_\Lambda^\bullet)_\rho$ described immediately before the statement (of Theorem \ref{thm: classical Stark unit action}), implies that $H^1(\SBO_1^\bullet)_f$ is a free $\cO_E$-module generated by the image under $\sigma$ of $j^{-1}(\bar\upsilon_f)$. Indeed, this follows, by specialization to the $\cO_E$-valued eigensystem of $f$, from the statement early in the proof of Corollary \ref{cor: Stark unit action} that $H^1(\SBO_\Lambda^\bullet)_\rho$ is $\bT_{\Lambda,\rho}^{\tord,\CM}$-free of rank 1.

    Applying now the compatibility of Proposition \ref{prop: weight 1 stabilize} between the two Serre dualities, along with the reduction modulo $\ker \phi_1 \subset \Lambda_W$ of the bi-ordinary Serre duality pairing of Theorem \ref{thm: SD for BO} (which arises from the stabilized Serre duality of Boxer--Pilloni, Theorem \ref{thm: BP}), we deduce that $U_f \cdot \sigma(j^{-1}(\bar\upsilon_f))$ lies in the image of $i$ evaluated on $e(T_p)H^0(X,\omega^k(-C))_f$, and therefore the stated definition of the action of $U_f$ makes sense. 
\end{proof}

\section{{Correction to \cite{CWE1}}}
\label{sec: correction}

We thank Bharath Palvannan and Shaunak Deo for pointing out to us that the commutative algebra arguments of \cite[Prop.\ 6.1.2]{CWE1} are incorrect, partially invalidating this statement. Repairing this statement requires an additional assumption, which also adds a corresponding condition to the theorems stated in \cite[\S1.4]{CWE1}. We will describe these issues and the corrected statement and theorems below. 

First we address the independence of this article on \cite{CWE1}. The arguments in this article are independent of the error in \cite{CWE1} because they do not apply any of the results following from the faulty Prop.\ 6.1.2 \textit{ibid}. More specifically, in this paper we only rely on \cite{CWE1} for theorems stated in \cite[\S1.3]{CWE1}, while only the results stated in \cite[\S1.4]{CWE1} are invalid and need to be repaired as described below. 

\subsection{Repaired statement in commutative algebra}

Here is a claim similar to \cite[Prop.\ 6.1.2]{CWE1}, using similar notation, which has an additional condition that is required to make it valid. We use similar notation to \textit{ibid}. 

\begin{prop}
    \label{prop: corrected from CWE1}
    Let $R$ be a complete Noetherian regular local ring. Let $S$ be an
augmented reduced local $R$-algebra that is finitely generated, torsion-free as an $R$-
module, and monogeneric as an $R$-algebra. Let $T$ be an augmented local $R$-algebra quotient of $S$, and denote by $K$ the kernel of $T \rsurj R$. 

Assume that $K/K^2$ is supported in codimension at least $2$ as an $R$-module. Then $T$ has generic rank $1$. 
\end{prop}

In \cite[Proof of Prop.\ 6.1.2]{CWE1}, $\cG$ denotes a minimal set of generators of the kernel $J$ of $S \rsurj R$, which is also a minimal set of $R$-algebra generators for $S$ and, by Nakayama's lemma, a minimal set of $R$-module generators for $J/J^2$. The claimed reduction step \textit{ibid}.\ from general $\#\cG$ to $\#\cG=1$ is invalid. But the rest of the proof is valid. This is why Proposition \ref{prop: corrected from CWE1}, which differs from \cite[Prop.\ 6.1.2]{CWE1} only in its requirement that $S$ is monogeneric, is a best-possible repair. 

\subsection{{Correction to other claims of \cite{CWE1}}}

As we have just remarked, the new restriction ``$\#\cG=1$'' is equivalent to the conormal module $J/J^2$ being cyclic as an $R$-module. In \cite{CWE1}, a ring $\Lambda$ plays the role of $R$ once we assume that $p$ does not divide the class number of the imaginary quadratic field $K$ of CM, that is, $p \nmid h_K$. By \cite[Thm.\ 5.4.1]{CWE1}, under $p\nmid h_K$,  $J/J^2$ is $\Lambda$-cyclic if and only if a certain Iwasawa class group denoted $\cY^-_\infty(\psi^-)$ \textit{ibid.}\ is $\Lambda$-cyclic. Thus, adding a cyclicity assumption on $\cY^-_\infty(\psi^-)$ to some of the claims in \cite{CWE1}, we can fix the mistakes that are downstream from the error in \cite[Prop.\ 6.1.2]{CWE1}. 

Here we list all of these corrections to \cite{CWE1}, with all references and notation referring to \cite{CWE1}. 

\begin{itemize}
    \item Lemma 6.2.1: add the assumption that $\cY_\infty^-(\psi^-)$ is cyclic as a $\Lambda^-$-module. 
    \item Remark 6.2.2: add ``the cyclicity of $\cY_\infty^-(\psi^-)$'' as an assumption needed to prove Theorem 1.4.1. 
    \item Theorem 1.4.1: add cyclicity of $\cY_\infty^-(\psi^-)$ to the theorem's assumptions
    \item Theorem 1.4.4: add cyclicity of $\cY_\infty^-(\psi^-)$ to the theorem's assumptions. 
\end{itemize}

\begin{rem}
    Along the lines of \cite[Rem.\ 1.4.2]{CWE1}, we point out that, due to the symmetry between twist-ordinary and anti-ordinary forms explained in this article, one could weaken the above assumption on  $\cY_\infty^-(\psi^-)$ to the assumption that just one of two class groups (called $\cY_\infty^-(\eta^-)$ and $\cY_\infty^-(\eta^-)^*$ in this article) is cyclic.     
\end{rem}

\subsection{{Corrections to the proof of \cite[Prop.\ 6.1.2]{CWE1}}}

The proof of Proposition \ref{prop: corrected from CWE1} appearing as \cite[Proof of Prop.\ 6.1.2]{CWE1} is essentially correct once $\#\cG=1$ is assumed and the content to reduce to the case $\#\cG = 1$ is ignored. Nonetheless there are a few misstatements that are only correct case where $R = \Lambda$. Here are corrections to these misstatements that work in for general regular Noetherian local $R$. 

\begin{itemize}
    \item on page 41 of \cite{CWE1}, the claim that ``this follows directly from the assumption that $K/K^2$ is supported in codimension $2$'' should be restated as ``this follows directly from the assumption that $K/K^2$ is supported in codimension at least $2$ and that $R$ is regular Noetherian local.''
    \item in the last sentence on page 41 of \cite{CWE1} the term ``finite'', referring to an $R$-module, should be replaced by ``supported in codimension at least $2$.'' 
\end{itemize}




\bibliographystyle{alpha}
\bibliography{CWEbib-2025-CG}

\newcommand{\etalchar}[1]{$^{#1}$}
\def\cprime{$'$} \def\Dbar{\leavevmode\lower.6ex\hbox to 0pt{\hskip-.23ex
  \accent"16\hss}D} \def\cfac#1{\ifmmode\setbox7\hbox{$\accent"5E#1$}\else
  \setbox7\hbox{\accent"5E#1}\penalty 10000\relax\fi\raise 1\ht7
  \hbox{\lower1.15ex\hbox to 1\wd7{\hss\accent"13\hss}}\penalty 10000
  \hskip-1\wd7\penalty 10000\box7}
  \def\cftil#1{\ifmmode\setbox7\hbox{$\accent"5E#1$}\else
  \setbox7\hbox{\accent"5E#1}\penalty 10000\relax\fi\raise 1\ht7
  \hbox{\lower1.15ex\hbox to 1\wd7{\hss\accent"7E\hss}}\penalty 10000
  \hskip-1\wd7\penalty 10000\box7} \def\Dbar{\leavevmode\lower.6ex\hbox to
  0pt{\hskip-.23ex \accent"16\hss}D}
  \def\cfac#1{\ifmmode\setbox7\hbox{$\accent"5E#1$}\else
  \setbox7\hbox{\accent"5E#1}\penalty 10000\relax\fi\raise 1\ht7
  \hbox{\lower1.15ex\hbox to 1\wd7{\hss\accent"13\hss}}\penalty 10000
  \hskip-1\wd7\penalty 10000\box7}
  \def\cftil#1{\ifmmode\setbox7\hbox{$\accent"5E#1$}\else
  \setbox7\hbox{\accent"5E#1}\penalty 10000\relax\fi\raise 1\ht7
  \hbox{\lower1.15ex\hbox to 1\wd7{\hss\accent"7E\hss}}\penalty 10000
  \hskip-1\wd7\penalty 10000\box7}
\begin{thebibliography}{DHRV22}

\bibitem[BC09]{BC2009}
Jo{\"e}l Bella{\"{\i}}che and Ga{\"e}tan Chenevier.
\newblock Families of {G}alois representations and {S}elmer groups.
\newblock {\em Ast\'erisque}, (324):xii+314, 2009.

\bibitem[BCG{\etalchar{+}}20]{BCGKPST2020}
F.~M. Bleher, T.~Chinburg, R.~Greenberg, M.~Kakde, G.~Pappas, R.~Sharifi, and
  M.~J. Taylor.
\newblock Higher {C}hern classes in {I}wasawa theory.
\newblock {\em Amer. J. Math.}, 142(2):627--682, 2020.

\bibitem[BE10]{BE2010}
Christophe Breuil and Matthew Emerton.
\newblock Repr\'esentations {$p$}-adiques ordinaires de {${\rm GL}_2(\bold
  Q_p)$} et compatibilit\'e local-global.
\newblock {\em Ast\'erisque}, (331):255--315, 2010.

\bibitem[Bel12]{bellaiche2012L}
Jo\"{e}l Bella\"{\i}che.
\newblock Critical {$p$}-adic {$L$}-functions.
\newblock {\em Invent. Math.}, 189(1):1--60, 2012.

\bibitem[Ber14]{bergdall2014}
John Bergdall.
\newblock Ordinary modular forms and companion points on the eigencurve.
\newblock {\em J. Number Theory}, 134:226--239, 2014.

\bibitem[BH93]{BH1993}
Winfried Bruns and J\"urgen Herzog.
\newblock {\em Cohen-{M}acaulay rings}, volume~39 of {\em Cambridge Studies in
  Advanced Mathematics}.
\newblock Cambridge University Press, Cambridge, 1993.

\bibitem[BO78]{BO1978}
Pierre Berthelot and Arthur Ogus.
\newblock {\em Notes on crystalline cohomology}.
\newblock Princeton University Press, Princeton, N.J., 1978.

\bibitem[BP22]{BP2022}
George Boxer and Vincent Pilloni.
\newblock Higher {H}ida and {C}oleman theories on the modular curve.
\newblock {\em \'{E}pijournal G\'{e}om. Alg\'{e}brique}, 6:Art. 16, 33, 2022.

\bibitem[Buz03]{buzzard2003}
Kevin Buzzard.
\newblock Analytic continuation of overconvergent eigenforms.
\newblock {\em J. Amer. Math. Soc.}, 16(1):29--55, 2003.

\bibitem[Cai18a]{cais2018}
Bryden Cais.
\newblock The geometry of {H}ida families {I}: {$\Lambda$}-adic de {R}ham
  cohomology.
\newblock {\em Math. Ann.}, 372(1-2):781--844, 2018.

\bibitem[Cai18b]{cais2018CM}
Bryden Cais.
\newblock The geometry of {H}ida families {II}: {$\Lambda$}-adic
  {$(\varphi,\Gamma)$}-modules and {$\Lambda$}-adic {H}odge theory.
\newblock {\em Compos. Math.}, 154(4):719--760, 2018.

\bibitem[Cal20]{calegari2020}
Frank Calegari.
\newblock Motives and {$L$}-functions.
\newblock In {\em Current developments in mathematics 2018}, pages 57--123.
  Int. Press, Somerville, MA, 2020.

\bibitem[CGJ95]{CGJ1995}
Robert~F. Coleman, Fernando~Q. Gouv\^ea, and Naomi Jochnowitz.
\newblock {$E_2$}, {$\Theta$}, and overconvergence.
\newblock {\em Internat. Math. Res. Notices}, (1):23--41, 1995.

\bibitem[CM98]{CM1998}
R.~Coleman and B.~Mazur.
\newblock The eigencurve.
\newblock In {\em Galois representations in arithmetic algebraic geometry
  ({D}urham, 1996)}, volume 254 of {\em London Math. Soc. Lecture Note Ser.},
  pages 1--113. Cambridge Univ. Press, Cambridge, 1998.

\bibitem[Col89]{coleman1989}
Robert~F. Coleman.
\newblock Reciprocity laws on curves.
\newblock {\em Compositio Math.}, 72(2):205--235, 1989.

\bibitem[Col94a]{coleman1994-barsotti}
Robert~F. Coleman.
\newblock A {$p$}-adic inner product on elliptic modular forms.
\newblock In {\em Barsotti {S}ymposium in {A}lgebraic {G}eometry ({A}bano
  {T}erme, 1991)}, volume~15 of {\em Perspect. Math.}, pages 125--151. Academic
  Press, San Diego, CA, 1994.

\bibitem[Col94b]{coleman1994}
Robert~F. Coleman.
\newblock A {$p$}-adic {S}himura isomorphism and {$p$}-adic periods of modular
  forms.
\newblock In {\em {$p$}-adic monodromy and the {B}irch and {S}winnerton-{D}yer
  conjecture ({B}oston, {MA}, 1991)}, volume 165 of {\em Contemp. Math.}, pages
  21--51. Amer. Math. Soc., Providence, RI, 1994.

\bibitem[Col96]{coleman1996}
Robert~F. Coleman.
\newblock Classical and overconvergent modular forms.
\newblock {\em Invent. Math.}, 124(1-3):215--241, 1996.

\bibitem[CS19]{CS2019}
Frank Calegari and Joel Specter.
\newblock Pseudorepresentations of weight one are unramified.
\newblock {\em Algebra Number Theory}, 13(7):1583--1596, 2019.

\bibitem[CWE22]{CWE1}
Francesc Castella and Carl Wang-Erickson.
\newblock Class groups and local indecomposability for non-{CM} forms.
\newblock {\em J. Eur. Math. Soc. (JEMS)}, 24(4):1103--1160, 2022.
\newblock With an appendix by Haruzo Hida.

\bibitem[DFG04]{DFG2004}
Fred Diamond, Matthias Flach, and Li~Guo.
\newblock The {T}amagawa number conjecture of adjoint motives of modular forms.
\newblock {\em Ann. Sci. \'{E}cole Norm. Sup. (4)}, 37(5):663--727, 2004.

\bibitem[DHRV22]{DHRV2022}
Henri Darmon, Michael Harris, Victor Rotger, and Akshay Venkatesh.
\newblock The derived {H}ecke algebra for dihedral weight one forms.
\newblock {\em Michigan Math. J.}, 72:145--207, 2022.

\bibitem[Dia97]{diamond1997}
Fred Diamond.
\newblock The {T}aylor-{W}iles construction and multiplicity one.
\newblock {\em Invent. Math.}, 128(2):379--391, 1997.

\bibitem[Fal89]{faltings1989}
Gerd Faltings.
\newblock Crystalline cohomology and {$p$}-adic {G}alois-representations.
\newblock In {\em Algebraic analysis, geometry, and number theory ({B}altimore,
  {MD}, 1988)}, pages 25--80. Johns Hopkins Univ. Press, Baltimore, MD, 1989.

\bibitem[FK12]{FK2012}
Takako Fukaya and Kazuya Kato.
\newblock On conjectures of {S}harifi.
\newblock Preprint, 2012.

\bibitem[GK09]{GK2009}
P.~Guerzhoy and Z.~Kent.
\newblock {$p$}-adic liftings of the supersingular {$j$}-invariants and
  {$j$}-zeros of certain {E}isenstein series.
\newblock {\em J. Number Theory}, 129(12):3059--3068, 2009.

\bibitem[Gro90]{gross1990}
Benedict~H. Gross.
\newblock A tameness criterion for {G}alois representations associated to
  modular forms (mod {$p$}).
\newblock {\em Duke Math. J.}, 61(2):445--517, 1990.

\bibitem[GV04]{GV2004}
Eknath Ghate and Vinayak Vatsal.
\newblock On the local behaviour of ordinary {$\Lambda$}-adic representations.
\newblock {\em Ann. Inst. Fourier (Grenoble)}, 54(7):2143--2162 (2005), 2004.

\bibitem[GV11]{GV2011}
Eknath Ghate and Vinayak Vatsal.
\newblock Locally indecomposable {G}alois representations.
\newblock {\em Canad. J. Math.}, 63(2):277--297, 2011.

\bibitem[GV18]{GV2018}
S.~Galatius and A.~Venkatesh.
\newblock Derived {G}alois deformation rings.
\newblock {\em Adv. Math.}, 327:470--623, 2018.

\bibitem[Hid86a]{hida1986}
Haruzo Hida.
\newblock Galois representations into {${\rm GL}_2({\bf Z}_p[[X]])$} attached
  to ordinary cusp forms.
\newblock {\em Invent. Math.}, 85(3):545--613, 1986.

\bibitem[Hid86b]{hida1986a}
Haruzo Hida.
\newblock Iwasawa modules attached to congruences of cusp forms.
\newblock {\em Ann. Sci. \'Ecole Norm. Sup. (4)}, 19(2):231--273, 1986.

\bibitem[Hor23]{horawa2023}
Aleksander Horawa.
\newblock Motivic {A}ction on {C}oherent {C}ohomology of {H}ilbert {M}odular
  {V}arieties.
\newblock {\em Int. Math. Res. Not. IMRN}, (12):10439--10531, 2023.

\bibitem[Hsu20]{hsu2020}
Chi-Yun Hsu.
\newblock Fourier coefficients of the overconvergent generalized eigenform
  associated to a {CM} form.
\newblock {\em Int. J. Number Theory}, 16(6):1185--1197, 2020.

\bibitem[HV19]{HV2019}
Michael Harris and Akshay Venkatesh.
\newblock Derived {H}ecke algebra for weight one forms.
\newblock {\em Exp. Math.}, 28(3):342--361, 2019.

\bibitem[Isc69]{ischebeck1969}
Friedrich Ischebeck.
\newblock Eine {D}ualit\"{a}t zwischen den {F}unktoren {E}xt und {T}or.
\newblock {\em J. Algebra}, 11:510--531, 1969.

\bibitem[Kat73]{katz1973}
Nicholas~M. Katz.
\newblock {$p$}-adic properties of modular schemes and modular forms.
\newblock In {\em Modular functions of one variable, {III} ({P}roc. {I}nternat.
  {S}ummer {S}chool, {U}niv. {A}ntwerp, {A}ntwerp, 1972)}, pages 69--190.
  Lecture Notes in Mathematics, Vol. 350, 1973.

\bibitem[Kis03]{kisin2003}
Mark Kisin.
\newblock Overconvergent modular forms and the {F}ontaine-{M}azur conjecture.
\newblock {\em Invent. Math.}, 153(2):373--454, 2003.

\bibitem[Maz89]{mazur1989}
B.~Mazur.
\newblock Deforming {G}alois representations.
\newblock In {\em Galois groups over {${\bf Q}$} ({B}erkeley, {CA}, 1987)},
  volume~16 of {\em Math. Sci. Res. Inst. Publ.}, pages 385--437. Springer, New
  York, 1989.

\bibitem[MC10]{MC2010}
Ken McMurdy and Robert Coleman.
\newblock Stable reduction of {$X_0(p^3)$}.
\newblock {\em Algebra Number Theory}, 4(4):357--431, 2010.
\newblock With an appendix by Everett W. Howe.

\bibitem[NSW08]{NSW2008}
J{\"u}rgen Neukirch, Alexander Schmidt, and Kay Wingberg.
\newblock {\em Cohomology of number fields}, volume 323 of {\em Grundlehren der
  Mathematischen Wissenschaften [Fundamental Principles of Mathematical
  Sciences]}.
\newblock Springer-Verlag, Berlin, second edition, 2008.

\bibitem[Oht05]{ohta2005}
Masami Ohta.
\newblock Companion forms and the structure of {$p$}-adic {H}ecke algebras.
\newblock {\em J. Reine Angew. Math.}, 585:141--172, 2005.

\bibitem[PV21]{PV2021}
Kartik Prasanna and Akshay Venkatesh.
\newblock Automorphic cohomology, motivic cohomology, and the adjoint
  {$L$}-function.
\newblock {\em Ast\'{e}risque}, (428):viii+132, 2021.

\bibitem[PW24]{PW2024}
Robert Pollack and Preston Wake.
\newblock Iwasawa invariants in residually reducible {H}ida families.
\newblock To appear in {\it Tunisian J. Math.} arXiv:2401.14518v1 [math.NT],
  2024.

\bibitem[TW95]{TW1995}
Richard Taylor and Andrew Wiles.
\newblock Ring-theoretic properties of certain {H}ecke algebras.
\newblock {\em Ann. of Math. (2)}, 141(3):553--572, 1995.

\bibitem[Ven19]{venkatesh2019}
Akshay Venkatesh.
\newblock Derived {H}ecke algebra and cohomology of arithmetic groups.
\newblock {\em Forum Math. Pi}, 7:e7, 119, 2019.

\bibitem[Wak15]{wake1}
Preston Wake.
\newblock Hecke algebras associated to {$\Lambda$}-adic modular forms.
\newblock {\em J. Reine Angew. Math.}, 700:113--128, 2015.

\bibitem[Wei94]{weibel1994}
Charles~A. Weibel.
\newblock {\em An introduction to homological algebra}, volume~38 of {\em
  Cambridge Studies in Advanced Mathematics}.
\newblock Cambridge University Press, Cambridge, 1994.

\bibitem[Wil88]{wiles1988}
A.~Wiles.
\newblock On ordinary {$\lambda$}-adic representations associated to modular
  forms.
\newblock {\em Invent. Math.}, 94(3):529--573, 1988.

\bibitem[Wil95]{wiles1995}
Andrew Wiles.
\newblock Modular elliptic curves and {F}ermat's last theorem.
\newblock {\em Ann. of Math. (2)}, 141(3):443--551, 1995.

\bibitem[WWE18]{WWE1}
Preston Wake and Carl Wang-Erickson.
\newblock Pseudo-modularity and {I}wasawa theory.
\newblock {\em Amer. J. Math.}, 140(4):977--1040, 2018.

\bibitem[Zha23]{robin-zhang-I}
Robin Zhang.
\newblock The {H}arris-{V}enkatesh conjecture for derived {H}ecke operators
  {I}: imaginary dihedral forms.
\newblock arXiv:2301.00570 [math.NT], 2023.

\end{thebibliography}
\end{document}